\renewcommand{\Comment}{$\hspace*{-0.075em} //$ } 
\numberwithin{section}{chapter}
\numberwithin{equation}{chapter}
\numberwithin{figure}{chapter}
\newcounter{algorithm}
\numberwithin{algorithm}{chapter}
\theoremstyle{plain}
\newtheorem{thm}{Theorem}[chapter]
\newtheorem{lem}[thm]{Lemma}
\newtheorem{cl}[thm]{Corollary}
\newtheorem{pr}[thm]{Proposition}
\newtheorem{conj}[thm]{Conjecture}
\theoremstyle{definition}
\newtheorem{deff}[thm]{Definition}
\theoremstyle{remark}
\newtheorem{rem}[thm]{Remark}
\providecommand{\abs}[1]{\left\lvert #1 \right\rvert}
\providecommand{\ceil}[1]{\left\lceil #1 \right\rceil}
\providecommand{\set}[1]{\left\lbrace #1 \right\rbrace}
\providecommand{\gen}[1]{\left\langle #1 \right\rangle}
\providecommand{\Sym}[1]{\operatorname{Sym}( #1 )}
\renewcommand{\Pr}[1]{\operatorname{Pr}[ #1 ]}
\newcommand{\field}[1]{\mathbb{#1}}
\newcommand{\N}{\field{N}}
\newcommand{\Z}{\field{Z}}
\newcommand{\OO}{\field{O}}
\newcommand{\F}{\field{F}}
\newcommand{\PS}{\field{P}}
\newcommand{\GAP}{\textsf{GAP}}
\newcommand{\MAGMA}{\textsc{Magma}}
\newcommand{\OV}{\mathcal{O}}
\DeclareMathOperator{\GL}{GL}
\DeclareMathOperator{\PGL}{PGL}
\DeclareMathOperator{\SL}{SL}
\DeclareMathOperator{\SO}{SO}
\DeclareMathOperator{\Sz}{Sz}
\DeclareMathOperator{\Sp}{Sp}
\DeclareMathOperator{\chr}{char}
\DeclareMathOperator{\Aut}{Aut}
\DeclareMathOperator{\PSL}{PSL}
\DeclareMathOperator{\PPSL}{(P)SL}
\DeclareMathOperator{\diag}{diag}
\DeclareMathOperator{\Tr}{Tr}
\DeclareMathOperator{\SLP}{\mathtt{SLP}}
\DeclareMathOperator{\G2}{{^2}G_2}
\DeclareMathOperator{\LargeRee}{{^2}F_4}
\DeclareMathOperator{\Ree}{Ree}
\DeclareMathOperator{\Norm}{N}
\DeclareMathOperator{\Cent}{C}
\DeclareMathOperator{\Zent}{Z}
\DeclareMathOperator{\EndR}{End}
\DeclareMathOperator{\Hom}{Hom}
\DeclareMathOperator{\Ker}{Ker}
\DeclareMathOperator{\O2}{O_2}
\DeclareMathOperator{\RP}{\bf{RP}}
\DeclareMathOperator{\NP}{\bf{NP}}
\DeclareMathOperator{\PP}{\bf{P}}
\DeclareMathOperator{\coRP}{\bf{co-RP}}
\DeclareMathOperator{\ZPP}{\bf{ZPP}}
\DeclareMathOperator{\Imm}{Im}
\DeclareMathOperator{\Mat}{Mat}
\DeclareMathOperator{\Dih}{D}
\newcommand{\OR}[1]{\operatorname{O} \bigl( #1 \bigr)}
\title{Algorithmic problems in twisted groups of Lie type}
\author{Jonas Henrik Ambj\"orn B\"a\"arnhielm}
\address{A thesis submitted for the degree of Doctor of Philosophy at}
\author{Queen Mary, University of London}
\date{August 2007}
\begin{document}

\mainmatter

\begin{titlepage}
\refstepcounter{page}


\begin{center}
\begin{figure}[b]
\includegraphics[scale=1.5]{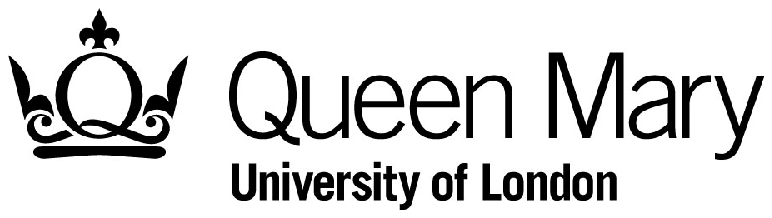}
\end{figure}

\mbox{}

\Huge
\textbf{Algorithmic problems in \\ twisted groups of Lie type}

\mbox{}

\LARGE
Jonas Henrik Ambj\"orn B\"a\"arnhielm

\mbox{}

\large
A thesis submitted for the degree of \\
\emph{Doctor of Philosophy} \\
at Queen Mary, University of London

\mbox{}

\end{center}

\end{titlepage}


\onehalfspacing

\chapter*{Declaration}
I hereby declare that, to the best of my knowledge, the material
contained in this thesis is original and my own work, except where
otherwise indicated, cited, or commonly-known.

I have not submitted any of this material in partial of complete
fulfilment of requirements for another degree at this or any other
university.

\chapter*{Abstract}
  This thesis contains a collection of algorithms for working with the
  twisted groups of Lie type known as \emph{Suzuki} groups, and small and
  large \emph{Ree} groups.
  
  The two main problems under consideration are \emph{constructive
    recognition} and \emph{constructive membership testing}. We also
  consider problems of generating and conjugating Sylow and maximal
  subgroups.

  The algorithms are motivated by, and form a part of, the Matrix Group
  Recognition Project. Obtaining both theoretically and practically
  efficient algorithms has been a central goal. The algorithms have
  been developed with, and implemented in, the computer algebra system
  $\MAGMA$.

\tableofcontents

\listoffigures

\chapter*{Acknowledgements}

I would first of all like to thank my supervisor, Charles Leedham-Green, for
endless help and encouragement. I would also like to thank the
following people, since they have helped me to various extent during
the work presented in this thesis: John Bray, Peter Brooksbank, John
Cannon, Sergei Haller, Derek Holt, Alexander Hulpke, Bill Kantor, Ross Lawther,
Martin Liebeck, Klaus Lux, Frank L\"ubeck, Scott Murray, Eamonn
O'Brien, Geoffrey Robinson, Colva Roney-Dougal, Alexander Ryba, Ákos
Seress, Leonard Soicher, Mark Stather, Bill Unger, Maud de Visscher,
Robert Wilson.

\chapter*{Notation}
\begin{itemize}
\item[$\bar{F}$] Algebraic closure of the field $F$
\item[$g^h$] conjugate of $g$ by $h$, \emph{i.e.} $g^h = h^{-1} g h$
\item[${[g, h]}$] commutator of $g$ and $h$, \emph{i.e.} $[g, h] = g^{-1} h^{-1} g h$
\item[$\Tr(g)$] trace of the matrix $g$
\item[$\Cent_n$] cyclic group of order $n$, \emph{i.e.} $\Cent_n \cong \Z / n\Z$
\item[$\F_q$] finite field of size $q$ (or its additive group)
\item[$\F_q^{\times}$] multiplicative group of $\F_q$
\item[$\xi(d)$] The number of field operations required by a random element oracle for $\GL(d, q)$.
\item[$\xi$] The number of field operations required by a random element oracle for $\GL(k, q)$ with $k$ constant
\item[$\chi_D(q)$] number of field operations required by a discrete logarithm oracle in $\F_q$
\item[$\chi_F(d, q)$] number of field operations required by an integer factorisation oracle (which factorises $q^i - 1$, for $1 \leqslant i \leqslant d$)
\item[$\Mat_n(R)$] matrix algebra of $n \times n$ matrices over ring $R$
\item[$I_n$] identity $n \times n$ matrix
\item[$E_{i,j}$] matrix with $1$ in position $(i,j)$ and $0$ elsewhere
\item[$\abs{g}$] order of a group element $g$
\item[$\phi$] Euler totient function
\item[$\sigma_0(n)$] number of positive divisors of $n \in \N$ (where $\sigma_0(n) < 2^{(1 + \varepsilon) \log_{\mathrm{e}}(n) / \log\log_{\mathrm{e}}(n)}$)
\item[$\psi$] Frobenius automorphism in a field $F$ (\emph{i.e.} $\psi : x \mapsto x^p$ where $\chr(F) = p$)
\item[$\Phi(G)$] Frattini subgroup of $G$ (intersection of all maximal subgroups of $G$)
\item[$\mathrm{O}_p(G)$] largest normal $p$-subgroup of $G$
\item[$G_P$] stabiliser in $G$ of the point $P$
\item[$G^{\prime}$] derived subgroup (commutator subgroup) of $G$
\item[$\Norm_G(H)$] normaliser of $H$ in $G$
\item[$\Cent_G(g)$] centraliser of $g$ in $G$
\item[$\Zent(G)$] centre of $G$
\item[$\EndR_H(M)$] algebra of $H$-endomorphisms of $G$-module $M$, where $H \leqslant G$
\item[$\Aut(M)$] automorphism group of $G$-module $M$ (if $G \leqslant \GL(d, q) = H$ then $\Aut(M) = \Cent_H(G)$)
\item[$\mathcal{S}^2(M)$] symmetric square of module $M$ over a field $F$ (where $M \otimes M = \mathcal{S}^2(M) \oplus \wedge^2(M)$ if $\chr(F) > 2$)
\item[$\wedge^2(M)$] exterior square of module $M$
\item[$\Sym{O}$] symmetric group on the set $O$
\item[$\Sym{n}$] symmetric group on $n$ points
\item[$\Dih_n$] dihedral group of order $n$
\item[$\PS(V)$] projective space corresponding to vector space $V$
\item[$\PS^n(F)$] $n$-dimensional projective space over the field $F$
\item[${[G : H]}$] index of $H$ in $G$
\item[$G {.} H$] extension of $G$ by $H$ (a group $E$ such that $G
  \trianglelefteqslant E$ and $E / G \cong H$)
\item[$G {:} H$] split extension of $G$ by $H$ (as with $G {.} H$ but
  $E$ also has a subgroup $H_0 \cong H$ such that $E = G H_0$ and $G \cap H_0 = \gen{1}$)
\item[$\OR{\cdot}$] standard time complexity notation
\item[$\Psi$] Automorphisms defining $\Sz(q)$ or $\Ree(q)$
\item[$\varphi, \theta, \rho$] group homomorphisms
\end{itemize}

\chapter{Introduction and preliminaries}
\label{chapter:intro}

\section{Introduction}

This thesis contains algorithms for some computational problems
involving a few classes of the finite simple groups. The main focus is
on providing efficient algorithms for constructive recognition and
constructive membership testing, but we also consider the conjugacy problem for Sylow and maximal subgroups.

The work is in the area of \emph{computational group theory}
  (abbreviated CGT), where
  one studies algorithmic aspects of groups, or solves group theoretic
  problems using computers. A group can be represented as a data
  structure in several ways, and perhaps the most important ones are
  \emph{permutation groups}, \emph{matrix groups} and \emph{finitely presented groups}.

The permutation group setting has been studied
since the early 1970's, and the basic technique which underlies most
algorithms is the construction of a \emph{base} and a \emph{strong
generating set}. If $G$ is a permutation
group of degree $n$, this involves constructing a descending chain of
subgroups of $G$, where each subgroup in the chain has index at most
$n$ in its predecessor. The permutation group algorithm machinery is
summarised in \cite{seress03}.

For matrix groups, the classical method is also to construct a base and strong generating set. However, in
general a matrix group has no faithful permutation representation whose degree
is polynomial in the size of the input. Hence the indices in
the subgroup chain will be too large and the permutation group
algorithms will not be efficient. For example, $\SL(d, q)$ has no
proper subgroup of index less than $(q^d - 1) / (q - 1)$, and $q$ is
exponential in the size of the input, since a matrix has size $\OR{d^2
\log(q)}$.

Historically there were two schools within CGT. One consists of people
with a more computational complexity background, whose primary goal
was to find theoretically good (polynomial time) algorithms.  The
implementation and practical performance of the algorithms were less
important, since the computational complexity view, based on many real
examples, is that if the \lq\lq polynomial barrier'' is broken, then
further research will surely lead also to good practical algorithms.
The other school consists of people with a more group theoretic
background, whose primary goal was to solve computational problems in
group theory (historically often one-time problems with the sporadic
groups), and hence to develop algorithms that can be easily
implemented and that run fast on the current hardware and on the
specific input in question. The asymptotic complexity of the
algorithms was less important, and perhaps did not even make sense in
the case of sporadic groups.

The distinction between these schools has become less noticeable during
the last $15$ years, but during that time there has also been much
work on algorithms for matrix groups, and there are two main
approaches that roughly correspond to these two schools. The first
approach is the \lq\lq black box'' approach, that considers the matrix
groups as \emph{black box groups} (see \cite[pp. 17]{seress03}). This
was initiated by \cite{luks92computing} (but goes back to
\cite{BabaiSzemeredi84}) and much of it is summarised in
\cite{babaibeals01}. The other approach is the \lq\lq geometric''
approach, also known as \emph{The Matrix Group Recognition Project}
(abbreviated MGRP), whose underlying idea is to use a famous theorem
of Aschbacher\cite{aschbacher84} which roughly says that a matrix
group either preserves some geometric structure, or is a simple group.

Although the author has a background perhaps more in the
computational complexity school, the work in this thesis forms a part
of MGRP. The first specific goal in that approach is to obtain an efficient
algorithm that finds a composition series of a matrix group. There
exists a recursive algorithm\cite{crlg01} for this, which relies on a
number of other algorithms that determine which kind of geometric
structure is preserved, and the base cases in the recursion are the
classes of finite simple groups.

Hence this algorithm reduces the problem of finding
a composition series to various problems concerning the composition
factors of the matrix group, which are simple groups. The work
presented here is about computing with some of these simple groups.

For each simple group, a number of problems arise. The simple group is
given as $G = \gen{X} \leqslant \PGL(d, q)$ for some $d, q$
and we need to consider the following problems:
\begin{enumerate}
\item The problem of \emph{recognition} or \emph{naming} of $G$, \emph{i.e.} decide the name of $G$, as in the classification of the finite simple groups.
\item The \emph{constructive membership} problem. Given $g \in
  \PGL(d, q)$, decide whether or not $g \in G$, and if so express
  $g$ as a straight line program in $X$.
\item The problem of \emph{constructive recognition}. Construct an
  isomorphism $\varphi$ from $G$ to a \emph{standard copy} $H$ of $G$
  such that $\varphi(g)$ can be computed efficiently for every $g \in
  G$. Such an isomorphism is called \emph{effective}. Also of interest
  is to construct an effective inverse of $\varphi^{-1}$, which
  essentially is constructive membership testing.
\end{enumerate}

To find a composition series using \cite{crlg01}, the problems involving the composition factors that we need to solve are naming
and constructive membership. However, the effective isomorphisms of these composition factors to standard copies can also be very useful. Given such isomorphisms, many problems,
sometimes including constructive membership, can be reduced to the standard
copies. Hence these isomorphisms play a central role in computing with
a matrix group once a composition series has been constructed.

In general, the constructive recognition problem is computationally
harder than the constructive membership problem, which in turn is
harder than the naming problem. Most of our efforts will therefore go
towards solving constructive recognition. In fact, the naming problem is not
considered in every case here, since the algorithm of
\cite{general_recognition} solves this problem. However, that
algorithm is a Monte Carlo algorithm, and in some cases we can improve
on that and provide Las Vegas algorithms (see Section \ref{section:prob_algorithms}).

The algorithms presented here
are not \emph{black box} algorithms, but
rely heavily on the fact that the group elements are matrices, and use
the representation theory of the groups in question. However, the
algorithms will work with all possible representations of the groups,
so a user of the algorithms can consider them black box in that sense.

\section{Preliminaries}

We now give preliminary discussions and results that will be necessary later on.

\subsection{Complexity}

We shall be concerned with the time complexity of the algorithms
involved, where the basic operations are the field operations, and not
the bit operations. All simple arithmetic with matrices can be done
using $\OR{d^3}$ field operations, and raising a matrix to the
$\OR{q}$ power can be done using $\OR{d^3}$ field operations using
\cite{crlg95}. These complexity bounds arise when using the naive
matrix multiplication algorithm, which uses $\OR{d^3}$ field
operations to multiply two $d \times d$ matrices. More efficient algorithms for matrix multiplication do exist. Some are also fast in practice,
like the famous algorithm of Strassen\cite{MR0248973}, which uses $\OR{d^{\log_2 7}}$
field operations. Currently, the most efficient matrix multiplication algorithm is the Coppersmith-Winograd algorithm of \cite{MR1056627, 1097473}, which uses
$\OR{d^{2.376}}$ field operations, but it is not practical. The
improvements made by these algorithms over the naive matrix
multiplication algorithm are not noticeable in practice for the matrix
dimensions that are currently within range in the MGRP. Therefore we will only use the naive algorithm, which also
simplifies the complexity statements.

When we are given a group $G \leqslant \GL(d, q)$ defined by
a set $X$ of generators, the size of the input is $\OR{\abs{X}
  d^2 \log(q)}$. A field element takes up $\OR{\log(q)}$ space, and a
matrix has $d^2$ entries.

We shall often assume an oracle for the discrete logarithm problem in
$\F_q$ (see \cite[Section $20.3$]{VonzurGathen03} and \cite[Chapter
$3$]{MR1745660}). In the general discrete logarithm problem, we
consider a cyclic group $G$ of order $n$. The input is a generator
$\alpha$ of $G$, and some $x \in G$. The task is to find $1 \leqslant
k < n$ such that $\alpha^k = x$. In $\F_q$ the multiplicative group
$\F_q^{\times}$ is cyclic, and the discrete logarithm problem turns
up.  It is a famous and well-studied problem in theoretical computer
science and computational number theory, and it is unknown if it is $\NP$-complete or if it is in
$\PP$, although the latter would be very surprising. Currently the
most efficient algorithm has sub-exponential complexity. There are
also algorithms for special cases, and an important case for us is
when $q = 2^n$. Then we can use Coppersmith's algorithm of
\cite{cop84, 705538}, which is much faster in practice than the
general algorithms. It is not polynomial time, but has time complexity
$\OR{\exp (c n^{1/3} \log(n)^{2/3})}$, where $c > 0$ is a small
constant. We shall assume that the discrete logarithm oracle in $\F_q$
uses $\OR{\chi_D(q)}$ field operations.

Similarly we will sometimes assume an oracle for the integer
factorisation problem (see \cite[Chapter $19$]{VonzurGathen03}), whose
status in the complexity hierarchy is similar to the discrete
logarithm problem. More precisely, we shall assume we have an oracle that, given $d
\geqslant 1$ and $\F_q$, factorises all the integers $q^i - 1$ for $1
\leqslant i \leqslant d$, using $\OR{\chi_F(d, q)}$ field operations. By \cite[Theorem $8.2$]{babaibeals01},
assuming the Extended Riemann Hypothesis this is equivalent to the
standard integer factorisation problem. The reason for having this slightly
different factorisation oracle will become clear in Section
\ref{section:matrix_orders}.

Except for these oracles, our algorithms will be polynomial time, so
from a computational complexity perspective our results will imply
that the problems we study can be reduced to the discrete logarithm
problem or the integer factorisation problem. This is in line with
MGRP, whose goal from a complexity point of view is to prove that
computations with matrix groups are not harder than (and hence equally
hard as) these two well-known problems.

\subsection{Probabilistic algorithms}
\label{section:prob_algorithms}
The algorithms we consider are probabilistic of the types known as
\emph{Monte Carlo} or \emph{Las Vegas} algorithms. These types of
algorithms are discussed in \cite[Section 1.3]{seress03} and
\cite[Section 3.2.1]{hcgt}. In short, a Monte Carlo algorithm for a
language $X$ is a
probabilistic algorithm with an input parameter $\varepsilon \in (0,
1)$ such that on input $x$
\begin{itemize}
\item if $x \in X$ then the algorithm returns \texttt{true} with
  probability at least $1 - \varepsilon$ (otherwise it returns \texttt{false}),
\item if $x \notin X$ then the algorithm returns \texttt{false}.
\end{itemize}
The parameter $\varepsilon$ is therefore the maximum error
  probability. This type of algorithm is also called \emph{one-sided Monte Carlo
  algorithm} with no \emph{false negatives}. The languages with such
  algorithms form the complexity class $\RP$. In the same way one can
  define algorithms with no false positives, and the corresponding
  languages form the class $\coRP$. The class $\ZPP = \RP \cap \coRP$
  consists of the languages that have Las Vegas algorithms, and these
  are the type of algorithms that we will be most concerned with. A Las Vegas algorithm either returns \texttt{failure}, with
probability at most $\varepsilon$, or otherwise returns a correct
result. Such an algorithm is easily contructed given a Monte Carlo
  algorithm of each type. The time complexity of a Las Vegas algorithm
  naturally depends on $\varepsilon$.

Las Vegas algorithms can be presented concisely as probabilistic
algorithms that either return a correct result, with probability
bounded below by $1/p(n)$ for some polynomial $p(n)$ in the size $n$
of the input, or otherwise return \texttt{failure}. By enclosing such
an algorithm in a loop that iterates $\ceil{\log \varepsilon / \log{(1
    - 1/p(n))}}$ times, we obtain an algorithm that returns
\texttt{failure} with probability at most $\varepsilon$, and hence is
a Las Vegas algorithm in the above sense. Clearly if the enclosed
algorithm is polynomial time, the Las Vegas algorithm is polynomial
time.

One can also enclose the algorithm in a loop that iterates until the
algorithm returns a correct result, thus obtaining a probabilistic
time complexity, and the expected number of iterations is then
$\OR{p(n)}$. This is the way we present Las Vegas algorithms since it
is the one that is closest to how the algorithm is used in practice.

\subsection{Straight line programs}

For constructive membership testing, we want to express an element of a group $G = \gen{X}$ as
a word in $X$. Actually, it should be
a \emph{straight line program}, abbreviated to $\SLP$. If we express the elements as
words, the length of the words might be too large, requiring
exponential space complexity. 

An $\SLP$ is a data structure for words, which ensures that during
evaluation, subwords occurring multiple times are not computed more
often than during construction. Often we want to express an element as
an $\SLP$ in order to obtain its homomorphic image in another group $H = \gen{Y}$ where $\abs{X} = \abs{Y}$. The
evaluation time for the $\SLP$ is then bounded by the time to
construct it times the ratio of the time required for a group
operation in $H$ and in $G$.

Formally, given a set of generators $X$,
an $\SLP$ is a sequence $(s_1, s_2, \dotsc, s_n)$ where
each $s_i$ represents one of the following
\begin{itemize}
\item an $x \in X$
\item a product $s_j s_k$, where  $j, k < i$
\item a power $s_j^n$ where $j < i$ and $n \in \Z$
\item a conjugate $s_j^{s_k}$ where $j, k < i$
\end{itemize}
so $s_i$ is either a pointer into $X$, a pair of pointers to earlier elements
of the sequence, or a pointer to an earlier element and an integer.

To construct an $\SLP$ for a word, one starts by listing
pointers to the generators of $X$, and then builds up the word. To
evaluate the $\SLP$, go through the sequence and perform the
specified operations. Since we use pointers to the elements of $X$, we
can immediately evaluate the $\SLP$ on $Y$, by just changing the pointers so that they point to elements of $Y$.

\subsection{Solving polynomial equations}
One of the main themes in this work is that we reduce search problems
in computational group theory to the problem of solving polynomial
equations over finite fields. The method we use to find the
solutions of a system of polynomial equations is the classical
resultant technique, described in \cite[Section
$6.8$]{VonzurGathen03}. For completeness, we also state the corresponding result for univariate polynomials.

\begin{thm} \label{thm_solve_univariate_polys}
Let $f \in \F_q[x]$ have degree $d$. There exists a Las Vegas algorithm that finds all the roots of $f$ that lie in $\F_q$. The expected time complexity is $\OR{d (\log{d})^2 \log\log(d) \log(dq)}$ field operations.
\end{thm}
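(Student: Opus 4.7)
The plan is to combine distinct-degree with equal-degree factorisation (the Cantor--Zassenhaus strategy) and equip the polynomial arithmetic with fast (FFT-based) multiplication and a fast Euclidean algorithm, so that multiplication in $\F_q[x]/(f)$ costs $M(d) = \OR{d \log d \log\log d}$ field operations and a gcd of two polynomials of degree at most $d$ costs $\OR{M(d) \log d}$ field operations.

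First I would isolate the $\F_q$-rational roots by computing $h(x) = \gcd(f(x),\, x^q - x)$. Since $x^q - x = \prod_{\alpha \in \F_q}(x - \alpha)$, the polynomial $h$ is exactly the squarefree product of the monic linear factors of $f$ over $\F_q$, so $f$ and $h$ have the same roots in $\F_q$. The element $x^q \bmod f$ can be obtained by repeated squaring modulo $f$ using $\OR{\log q}$ multiplications in $\F_q[x]/(f)$, for a total of $\OR{M(d) \log q}$ field operations, and the single gcd then costs $\OR{M(d) \log d}$. Thus Step~1 runs in $\OR{M(d)(\log q + \log d)} = \OR{M(d)\log(dq)}$ field operations.

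Next I would split $h$ into its linear factors by the Cantor--Zassenhaus equal-degree factorisation. If $q$ is odd, pick a random $\alpha \in \F_q$ and compute $\gcd(h(x),\, (x+\alpha)^{(q-1)/2} - 1)$; standard analysis shows this separates the roots of $h$ into two nontrivial parts with probability at least $1/2 - \OR{1/q}$. If $q$ is even one uses the analogous trace-based splitting with the additive polynomial $T(y) = y + y^2 + \dots + y^{q/2}$. Each splitting step again costs $\OR{M(d)\log(dq)}$, dominated by the modular exponentiation. Recurring on the two factors, the expected depth of the recursion tree is $\OR{\log d}$, so the total expected cost of Step~2 is $\OR{M(d)(\log d)\log(dq)} = \OR{d (\log d)^2 \log\log d \,\log(dq)}$, which dominates Step~1 and gives the claimed bound. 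Failure (a root-finding step producing no split) is detected, so the algorithm can be wrapped into a Las Vegas routine in the sense of Section~\ref{section:prob_algorithms}.

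The main obstacle is not the algorithmic idea, which is classical, but packaging the polynomial arithmetic carefully enough to reach the quoted bound: one must use FFT-based multiplication to get $M(d) = \OR{d \log d \log\log d}$ and the half-gcd algorithm to obtain gcds in $\OR{M(d)\log d}$, and one must verify that the expected recursion depth of the equal-degree step is $\OR{\log d}$ with a constant per-level success probability uniform in $q$. All of these ingredients are developed in~\cite[Ch.~14 and \S 6.8]{VonzurGathen03}, so in practice I would assemble the proof by citing those building blocks and combining the costs as above.
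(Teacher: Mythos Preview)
Your proposal is correct and is precisely the content behind the cited result; the paper's own proof is a one-line citation of \cite[Corollary~14.16]{VonzurGathen03}, so your sketch of the Cantor--Zassenhaus root-finding algorithm with fast polynomial arithmetic actually supplies more detail than the paper does.
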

\begin{proof}
Immediate from \cite[Corollary 14.16]{VonzurGathen03}.
\end{proof}

\begin{thm} \label{thm_poly_eqns_2vars} 
Let $f_1, \dotsc, f_k \in
  \F_q[x, y] = R$ be such that the ideal $I = \gen{f_1, \dotsc, f_k}
  \trianglelefteqslant R$ is zero-dimensional. Let $n_x =
  \max_i{\deg_x f_i} \geqslant \max_i{\deg_y f_i} = n_y$. There exists a
  Las Vegas algorithm that finds the corresponding affine variety
  $V(I) \subset \F_q^2$. The expected time complexity is $\OR{k n_x^3 (\log{n_x})^2 \log\log(n_x) \log(n_x q)}$ field operations.
\end{thm}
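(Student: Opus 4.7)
The plan is to reduce to the univariate case by eliminating $y$ via resultants and then applying Theorem \ref{thm_solve_univariate_polys} twice, first to recover the $x$-coordinates of $V(I)$ and then, after back-substitution, the $y$-coordinates. Concretely, using the classical resultant machinery of \cite[Section $6.8$]{VonzurGathen03}, I would compute, for each $i = 2, \dotsc, k$, the resultant $R_i(x) = \Res_y(f_1, f_i) \in \F_q[x]$. Each $R_i$ has degree $\OR{n_x n_y} = \OR{n_x^2}$ in $x$, and its roots in $\F_q$ contain the $x$-coordinates of the common zeros of $f_1$ and $f_i$ in $\F_q^2$. Setting $g(x) = \gcd(R_2, \dotsc, R_k) \in \F_q[x]$, zero-dimensionality of $I$ forces $g \neq 0$ (after the precaution described below), and every $x$-coordinate of a point of $V(I)$ is a root of $g$.

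Next I would find the roots of $g$ in $\F_q$ using Theorem \ref{thm_solve_univariate_polys}. For each such root $\alpha$, the univariate polynomials $h_i(y) = f_i(\alpha, y) \in \F_q[y]$ simultaneously vanish at $\beta$ precisely when $(\alpha, \beta) \in V(I)$, so I compute $h(y) = \gcd(h_1, \dotsc, h_k)$ and invoke Theorem \ref{thm_solve_univariate_polys} again to extract its roots. A final evaluation of each $f_i$ at each candidate pair removes any spurious solutions produced by the elimination step, yielding $V(I)$ exactly.

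For the complexity bookkeeping, computing each $R_i$ (for instance by evaluation at $\OR{n_x^2}$ specialisations of $x$, each of which reduces to a univariate resultant of two polynomials of degree at most $n_y \leqslant n_x$, followed by interpolation) costs $\OR{n_x^3}$ field operations, and there are $k - 1$ of them. The subsequent $\gcd$ and univariate root-finding on a polynomial of degree $\OR{n_x^2}$ contribute an $\OR{n_x^2 (\log n_x)^2 \log\log(n_x) \log(n_x q)}$ term by Theorem \ref{thm_solve_univariate_polys}, and the back-substitution phase is similarly bounded since $\lvert V(I)\rvert = \OR{n_x^2}$. Summing yields the stated bound $\OR{k n_x^3 (\log n_x)^2 \log\log(n_x) \log(n_x q)}$.

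The main obstacle, and the source of the Las Vegas nature of the procedure, is the degenerate situation in which $f_1$ and some $f_i$ share a non-constant common factor in $\F_q[x, y]$: then $R_i \equiv 0$ and the elimination collapses. Although zero-dimensionality of $I$ forbids a common factor across the entire generating set, it does not forbid one in an individual pair. I would handle this by replacing $f_1$ with a random $\F_q$-linear combination $\tilde{f} = \sum_i c_i f_i$: for all but a thin subset of coefficient tuples $(c_i)$, the polynomial $\tilde{f}$ is coprime in $y$ to each of the other (similarly randomised) polynomials used in the computation, so the resultants are nonzero and the gcd $g$ really does witness precisely the $x$-coordinates of $V(I)$. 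The probability of a bad choice is controlled using the Schwartz--Zippel inequality, producing the required Las Vegas behaviour.
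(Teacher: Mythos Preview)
Your proposal is correct and follows essentially the same classical resultant-plus-univariate-factorisation approach as the paper: eliminate $y$ via $k-1$ resultants, find the common $x$-roots, back-substitute, and invoke Theorem~\ref{thm_solve_univariate_polys} at each stage. The only cosmetic difference is that the paper intersects the root sets of the individual resultants rather than taking a $\gcd$ first; your version is equally valid and fits the same complexity bound.

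One point worth noting: the paper simply asserts that the resultants are nonzero ``since the ideal is zero-dimensional'', whereas you correctly observe that zero-dimensionality of the \emph{full} ideal does not prevent a single pair $(f_1,f_i)$ from sharing a factor, and you fix this with a random linear combination. Your treatment of this degeneracy is in fact more careful than the paper's.
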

\begin{proof}
Following \cite[Section $6.8$]{VonzurGathen03}, we compute $k - 1$
pairwise resultants of the $f_i$ with respect to $y$ to obtain $k - 1$
univariate polynomials in $x$. By \cite[Theorem
$6.37$]{VonzurGathen03}, the expected time complexity will be $\OR{k
(n_x n_y^2 + n_x^2 n_y)}$ field operations and the resultants will be non-zero
since the ideal is zero-dimensional. 

We can find the set $X_1$ of roots of the first polynomial, then find
the roots $X_2$ of the second polynomial and simultaneously find
$X_1 \cap X_2$. By continuing in the same way we can find the set $X$ of common roots of the resultants. Since their degrees will be
$\OR{n_x^2}$, by Theorem \ref{thm_solve_univariate_polys} we can find
$X$ using $\OR{k n_x^2 (\log{(n_x^2)})^2
  \log\log(n_x^2) \log(n_x^2 q)}$ field operations. Clearly $\abs{X}
\in \OR{n_x^2}$.

We then substitute each $a \in X$ into the $k$ polynomials and
obtain univariate polynomials $f_1(a, y), \dotsc, f_k(a, y)$. These will have degrees $\OR{n_y}$ and as above
we find the set $Y_a$ of their common roots using $\OR{k n_y
(\log{(n_y)})^2 \log\log(n_y) \log(n_y q)}$ field operations. Clearly
$V(I) = \set{(a, b) \mid a \in X, b \in Y_a}$ and hence we can find $V(I)$ using $\OR{\abs{X} k n_y (\log{(n_y)})^2 \log\log(n_y) \log(n_y q)}$ field operations. Thus the time complexity is as stated.
\end{proof}

The following result is a generalisation of the previous result, and we omit the proof, since it is complicated and outside the scope of this thesis.

\begin{thm} \label{thm_poly_eqns_many_vars}
Let $f_1, \dotsc, f_k \in
  \F_q[x_1, \dotsc, x_k] = F$ be such that the ideal $I = \gen{f_1, \dotsc, f_k}
  \trianglelefteqslant F$ is zero-dimensional. Let $n =
  \max_{i,j}{\deg_{x_j} f_i}$. There exists a Las Vegas algorithm that
  finds the corresponding affine variety $V(I) \subset \F_q^k$. The
  expected time complexity is $\OR{(k n^3 + n^k k^2 (\log{n})^2
  \log\log(n^k) \log(n^kq))^k}$ field operations.
\end{thm}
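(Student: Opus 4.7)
The plan is to generalise the argument for Theorem \ref{thm_poly_eqns_2vars} by induction on the number $k$ of variables, eliminating one variable at a time via resultants, solving the reduced system in $k-1$ variables recursively, and back-substituting to recover the eliminated coordinate. The base case $k=1$ is Theorem \ref{thm_solve_univariate_polys}, and $k=2$ is Theorem \ref{thm_poly_eqns_2vars}.

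For the inductive step, I would first eliminate $x_k$ by forming the $k-1$ pairwise resultants $g_i = \mathrm{Res}_{x_k}(f_1, f_i) \in \F_q[x_1, \dotsc, x_{k-1}]$ for $i = 2, \dotsc, k$, exactly as in the two-variable case. Because $I$ is zero-dimensional, the elimination ideal $J = \gen{g_2, \dotsc, g_k}$ is non-zero and also zero-dimensional, so the $g_i$ form a valid input to a recursive call. By \cite[Theorem $6.37$]{VonzurGathen03}, each resultant costs $\OR{n^3}$ field operations at the current degrees, giving a contribution of $\OR{k n^3}$ to the work at this level.

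Next, I would apply the algorithm recursively to $\set{g_2, \dotsc, g_k}$ to obtain the variety $V(J) \subset \F_q^{k-1}$, noting that by a B\'ezout-type bound $\abs{V(J)} \in \OR{n^k}$. For each $(a_1, \dotsc, a_{k-1}) \in V(J)$, substitute into the original $f_1, \dotsc, f_k$ to obtain $k$ univariate polynomials in $x_k$ of degree at most $n$, and find their common roots using Theorem \ref{thm_solve_univariate_polys} as in the proof of Theorem \ref{thm_poly_eqns_2vars}. The union of these fibres over $V(J)$ is $V(I)$, and the back-substitution step contributes $\OR{n^k \cdot k n (\log n)^2 \log\log(n) \log(nq)}$ field operations, absorbed into the stated bound.

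The hard part, and the reason the author omits a full proof, is the complexity bookkeeping: tracking how the variable degrees and the size of intermediate varieties compound through $k$ successive resultant rounds, so that the per-level cost of $k n^3 + n^k k^2 (\log n)^2 \log\log(n^k) \log(n^k q)$ raised through $k$ levels of recursion yields the claimed overall bound. One must verify that non-vanishing of each resultant is preserved (which follows from zero-dimensionality of each elimination ideal) and that the degree growth at each step is correctly amortised against the $n^k$-sized variety, so that the recursive call is on an instance whose ``effective'' degree parameter is still bounded by a polynomial in $n$ and $k$ of the form already reflected in the statement.
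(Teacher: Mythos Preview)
The paper does not actually prove this theorem: immediately before the statement it says ``The following result is a generalisation of the previous result, and we omit the proof, since it is complicated and outside the scope of this thesis.'' So there is nothing to compare your proposal against directly.

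That said, your inductive resultant approach is exactly the natural generalisation of the two-variable argument the paper does give, and is presumably what the author has in mind. The paper also remarks, just after the theorem, that an alternative is to compute a Gr\"obner basis and appeal to \cite{MR1106424} for a bound of $\OR{n^{\OR{k}}}$; either route justifies the qualitative claim the thesis actually uses, namely that for bounded $k$ and bounded degree the algorithm is polynomial time.

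One genuine subtlety you flag but perhaps understate: after eliminating $x_k$, the resultants $g_i$ can have degree up to $\OR{n^2}$ in each remaining variable, so the degree parameter is not preserved across recursive levels. Tracking this growth carefully is needed to land on the specific bound stated, and is likely why the author declines to present the argument. Your claim that the elimination ideal $J = \gen{g_2, \dotsc, g_k}$ is itself zero-dimensional also needs justification: the true elimination ideal $I \cap \F_q[x_1,\dotsc,x_{k-1}]$ is zero-dimensional, but the resultants may generate a strictly smaller ideal, and one must argue that their common zero set is still finite.
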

%

As can be seen, if the number of equations, the number of variables
and the degree of the equations are constant, then the variety of a
zero-dimensional ideal can be found in polynomial time. That is the
situation we will be most concerned with. As an alternative to the
resultant technique, one can compute a Gr\"obner basis and then find the
variety. By \cite{MR1106424}, if the ideal is zero-dimensional the time
complexity is $\OR{n^{\OR{k}}}$ where $n$ is the maximum degree of the
polynomials and $k$ is the number of variables. Hence in the situation above
this will be polynomial time.

\subsection{Orders of invertible matrices}
\label{section:matrix_orders}

The \emph{order} of a group element $g$ is the smallest $k \in \N$
such that $g^k = 1$. We denote the order of $g$ by $\abs{g}$. For
elements $g$ of a matrix group $G \leqslant \GL(d, q)$, an algorithm
for finding $\abs{g}$ is presented in \cite{crlg95}. In general, to
obtain the \emph{precise} order, this algorithm requires a
factorisation of $q^i - 1$ for $1 \leqslant i \leqslant d$, otherwise
it might return a multiple of the correct order. Therefore it depends
on the presumably difficult problem of integer factorisation, see
\cite[Chapter $19$]{VonzurGathen03}.

However, in most of the cases we will consider, it will turn out that
a multiple of the correct order will be sufficient. For example, at
certain points in our algorithms we shall be concerned with finding
elements of order \emph{dividing} $q - 1$. Hence if we use the above
algorithm to find the order of $g \in G$ and it reports that
$\abs{g} = q - 1$, this is sufficient for us to use $g$, even though
we might have $\abs{g}$ properly dividing $q - 1$. Hence
integer factorisation is avoided in this case.

In \cite[Section $8$]{babaibeals01} the concept of \emph{pseudo-order}
is defined. A pseudo-order of an element $g$ is a product of primes
and pseudo-primes. A pseudo-prime is a composite factor of $q^i - 1$
for some $i \leqslant d$ that cannot conveniently be factorised. The
order of $g$ is a factor of the pseudo-order in which each
pseudo-prime is replaced by a non-identity factor of that
pseudo-prime. Hence a pseudo-prime is not a multiple of a \lq\lq
known'' prime, and any two pseudo-primes are relatively prime.

The algorithm of \cite{crlg95} can also be used to obtain a
pseudo-order, and for this it has time complexity $\OR{d^3
  \log{(q)}\log{\log{(q^d)}}}$ field operations. In fact, the
algorithm computes the order factorised into primes and pseudo-primes.
However, even if we have just the pseudo-order, we can still determine
if a given prime divides the order, without integer factorisation. 

\begin{pr} \label{pr_element_powering} Let $G \leqslant \GL(d, q)$.
  There exists a Las Vegas algorithm that, given $g \in G$, a prime $p \in \N$ and $e \in \N$, determines if $p^e \mid
  \abs{g}$ and if so finds the power of $g$ of order $p^e$, using
  $\OR{d^3 \log(q) \log\log(q^d)}$ field operations.
\end{pr}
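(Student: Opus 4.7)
The plan is to reduce the problem to the pseudo-order computation of \cite{crlg95}. First, apply that algorithm to $g$ to produce the pseudo-order $m$ of $g$ together with its factorisation into known primes and pseudo-primes, at a cost of $\OR{d^3 \log(q) \log\log(q^d)}$ field operations.

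Since pseudo-primes are, by definition, coprime to every known prime, and the prime $p$ is known (being part of the input), the $p$-part of the pseudo-order appears explicitly in this factorisation as some $p^A$, with $A$ equal to the true $p$-adic valuation of $\abs{g}$ (the multiplicity of any known prime in the pseudo-order being exact). Hence we decide whether $p^e \mid \abs{g}$ simply by testing $e \leqslant A$.

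If $e \leqslant A$, set $h := g^{m/p^e}$. Writing $\abs{g} = p^A s$ with $\gcd(p, s) = 1$ and $m = p^A r$ with $s \mid r$, one has $m/p^e = p^{A-e} r$ and $\gcd(\abs{g}, m/p^e) = p^{A-e} s$, so $\abs{h} = p^A s / (p^{A-e} s) = p^e$, as required. This is the power we need to return.

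The main technical issue is to compute $h$ within the stated time bound: a direct repeated-squaring evaluation of $g^{m/p^e}$ would cost $\OR{d^3 \log m}$, which can be as large as $\OR{d^5 \log q}$. The remedy is to reuse the invariant-subspace decomposition of $g$ that \cite{crlg95} already produces during the order computation: on an invariant block of dimension $d_i$, the relevant exponent divides $q^{d_i}$ up to a small power of $p$, so the Cayley--Hamilton trick of \cite{crlg95} for raising a matrix to an $\OR{q}$-th power computes each block power in $\OR{d_i^3 \log(q)}$ field operations. Summing over blocks yields $\OR{d^3 \log(q)}$ overall, which is absorbed into the pseudo-order bound.
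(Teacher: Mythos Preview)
Your proof is correct and follows essentially the same approach as the paper: compute the pseudo-order via \cite{crlg95}, use that the $p$-part is exact because $p$ is a known prime, and then power up --- your $g^{m/p^e}$ is exactly the paper's $g^{s p^{l-e}}$ in different notation. If anything you are slightly more careful than the paper, which does not explicitly justify why the final powering fits in the stated bound, whereas you (correctly) point to the block decomposition and Cayley--Hamilton machinery already used inside \cite{crlg95}.
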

\begin{proof}
  Use \cite{crlg95} to find a pseudo-order $n$ of $g$. Assume that $n
  = p^k s$ where $p \nmid s$, and $\abs{g} = p^l r$ where $p \nmid r$,
  $l \leqslant k$ and $r \mid s$. Since $p$ is given, \cite{crlg95}
  will make sure that $k = l$, so we assume that this is the case.
  Moreover, from \cite{crlg95} we see $\gcd(r, s/r) = 1$. If a prime $p_1$ divides $s/r$ we must have $p_1 \neq p$. So if $p_1 \mid \abs{g}$ we must have $p_1 \mid r$ and hence $p_1 = 1$. Thus $\gcd(s/r, \abs{g}) = 1$. 

  Hence $\abs{g^{s/r}} = \abs{g}$, $\abs{g^r} = p^l$ and $\abs{g^s} =
  \abs{(g^{s/r})^r} = p^l$. Therefore $g^{s p^{l - e}} \neq 1$ if and
  only if $p^e \mid \abs{g}$. Then $g^{s p^{l - e}}$ has order $p^e$.
\end{proof}

\subsection{Random group elements}
\label{section:random_elements}

Our analysis assumes that we can construct uniformly (or nearly
uniformly) distributed random elements of a group $G = \gen{X}
\leqslant \GL(d, q)$. The algorithm of \cite{babai91} produces
independent nearly uniformly distributed random elements, but
it is not a practical algorithm. It has a preprocessing step with time
complexity $\OR{\log{\abs{G}}^5}$ group operations, and each random
element is then found using $\OR{\log{\abs{G}}}$ group operations.

A more commonly used algorithm is the \emph{product replacement}
algorithm of \cite{lg95}. It also consists of a preprocessing step,
which is polynomial time by \cite{Pak00}, and each random element is
then found using a constant number of group operations (usually $2$).
This algorithm is practical and included in $\MAGMA$ and $\GAP$. Most
of the theory about it is summarised in \cite{MR1829489}. For a
discussion of both these algorithms, see \cite[pp. 26-30]{seress03}.

We shall assume that we have a random element oracle, which produces a
uniformly random element of $\gen{X}$ using $\OR{\xi(d)}$ field
operations, and returns it as an $\SLP$ in $X$.

An important issue is the length of the $\SLP$s that are computed. The
length of the $\SLP$s must be polynomial, otherwise it would not be
polynomial time to evaluate them. We assume that $\SLP$s of random
elements have length $\OR{n}$ where $n$ is the number of random
elements that have been selected so far during the execution of the
algorithm. 

In \cite{lgsm02}, a variant of the product replacement algorithm is
presented that finds random elements of the normal closure of a
subgroup. This will be used here to find random elements of the
derived subgroup of a group $\gen{X}$, using the fact that this is
precisely the normal closure of $\gen{[x, y] : x, y \in X}$.

\subsection{Constructive recognition overview} 
\label{section:algorithm_overview}

If $V$ is an $FG$-module for some group $G$ and field $F$, with action
$f : V \times FG \to V$, and if $\Phi$ is an automorphism of $G$,
denote by $V^{\Phi}$ the $FG$-module which has the same elements as
$V$ and where the action is given by $(v, g) \mapsto f(v, \Phi(g))$
for $g \in G$ and $v \in V^{\Phi}$, extended to $FG$ by linearity. We
call $V^{\Phi}$ a \emph{twisted version} of $V$, or $V$ \emph{twisted
  by} $\Phi$. If $G$ is a matrix group and the automorphism $\Phi$ is
a field automorphism, we call it a \emph{Galois twist}.

From \cite[Section $7.5.4$]{hcgt} we know that $G$ preserves a
classical (non-unitary) form if and only if $V$ is isomorphic to its
dual. We shall use this fact occasionally.

When we say that an algorithm is \lq\lq given a group $\gen{X}$'',
then the generating set $X$ is fixed and known to the algorithm. In
other words, the algorithm is given the generating set $X$ and
will operate in $\gen{X}$.

\begin{deff}
Let $G, H$ be matrix groups. An isomorphism $\varphi : G \to H$ is \emph{effective} if there exists a polynomial time Las Vegas algorithm that computes $\varphi(g)$ for any given $g \in G$.
\end{deff}

Of course, an effective isomorphism might be deterministic, since $\PP \subseteq \ZPP$.

\begin{deff}
The problem of \emph{constructive recognition} is:
\begin{itemize}
\item[\textbf{Input}:] A matrix group $G = \gen{X}$ with standard copy $H \cong G$. 
\item[\textbf{Ouput}:] An effective isomorphism $\varphi : G \to H$, such that $\varphi^{-1}$ is also effective.
\end{itemize}
\end{deff}

Now consider an exceptional group with standard copy $H \leqslant
\GL(d, \F_q)$, where $\F_q$ has characteristic $p$. The standard copies of the exceptional groups under consideration will be defined in Chapter \ref{chapter:group_def_theory}. Our algorithms
should be able to constructively recognise any input group
$G \leqslant \GL(d^{\prime}, q^{\prime})$ that is isomorphic to $H$. 

The assumptions that we make on the input group $G$ are:
\begin{enumerate}
\item $G$ acts absolutely irreducibly on $\F_{q^{\prime}}^{d^{\prime}}$,
\item $G$ is written over the minimal field modulo scalars,
\item $G$ is known to be isomorphic to $H$, and hence $d$ and $q$ are known.
\end{enumerate}

A user of our algorithms can easily first apply the algorithms of
\cite{meataxe} and \cite{smallerfield}, described in Sections
\ref{section:meataxe} and \ref{section:smallerfield}, to make the
group satisfy the first two assumptions. The last two assumptions
remove much of the need for input verification using non-explicit
recognition. They are motivated by the context in which our algorithms
are supposed to be used. The idea is that our algorithms will serve as
a base case for the algorithm of \cite{crlg01} or a similar algorithm.
In the base case it will be known that the group under consideration
is almost simple modulo scalars. We can then assume that the algorithm
can decide if it is dealing with a group of Lie type. Then it can use
the Monte Carlo algorithm of \cite{blackbox_char} to determine the
defining characteristic of the group, and next use the Monte Carlo
algorithm of \cite{general_recognition} to determine the name of the
group, as well as the defining field size $q$. This standard machinery
motivates our assumptions. Because the group has only been identified
by a Monte Carlo algorithm, there is a small non-zero probability that
our algorithms might be executed on the wrong group. This has to be
kept in mind when implementing the algorithms.

We do not assume that the input is tensor
indecomposable, since the tensor decomposition algorithm described in
Section \ref{section:tensor_decomposition} is not polynomial time.

A number of different cases arise:
\begin{enumerate}
\item $G \leqslant \GL(d^{\prime}, \F_{q^{\prime}})$ where
  $\F_{q^{\prime}}$ has characteristic $p^{\prime} \neq p$. This is
  called the \emph{cross characteristic} case. Then \cite{min_deg_1}
  and \cite{min_deg_2} tells us that $q \in \OR{f(d^{\prime})}$ for
  some polynomial $f$. This means that $q$ is polynomial in the size
  of the input, which is not the case in general. In this case we can
  therefore use algorithms which normally are exponential time.  In
  particular, by \cite[Theorem 8.6]{babaibeals01} we can use the
  classical permutation group methods.  Therefore we will only
  consider the case when we are given a group in \emph{defining
    characteristic}, so that $p = p^{\prime}$.

\item $G \leqslant \GL(d^{\prime}, \F_s)$ where $d^{\prime} > d$ and $\F_s \leqslant \F_q$. Let
  $W$ be the module of $G$. If $W$ is isomorphic to a tensor product
  of two modules which both have dimension less than $\dim W$, then we
  say that $W$ is \emph{tensor decomposable}. Otherwise $W$ is
  \emph{tensor indecomposable}. 

  Every possible $W$ is isomorphic to a tensor product of twisted
  versions of tensor indecomposable modules of $G$ (and hence of
  $H$). By the Steinberg tensor product theorem of \cite{steinberg63},
  in our cases the twists are Galois twists and the number of tensor
  indecomposable modules is independent of the field size, up to twists.

  If $W$ is tensor decomposable, we want to construct a tensor
  indecomposable representation $V$ of $G$. In general, this is done
  using the tensor decomposition algorithm described in Section
  \ref{section:tensor_decomposition} on $W$, which also provides an
  effective isomorphism from $W$ to $V$ (\emph{i.e.} between their
  acting groups). But since the algorithm is not polynomial time, a
  special version of one its subroutines has to be provided for each
  exceptional group.

  If $W$ is tensor indecomposable, we want to construct a
  representation $V$ of $G$ of dimension $d$, and we want do it in a
  way that also constructs an effective isomorphism. In principle this
  is always possible by computing tensor products of $W$ and chopping
  them with the MeatAxe, because a composition factor of dimension $d$
  will always turn up. However, this is not always a
  practical algorithm, and the time complexity is not very good.
  
  Note that if the minimal field $\F_s$ is a proper subfield of
  $\F_q$, then the tensor decomposition will not succeed. Since we
  assume that we know $q$, we can embed $W$ canonically into an $\F_q
  G$-module. In this case we shall therefore always assume that $s =
  q$, contrary to our second assumption above.

\item $G \leqslant \GL(d, \F_q)$, so that, by \cite{steinberg63}, $G$
  is conjugate to $H$ in $\GL(d, \F_q)$. This is the most interesting
  case since there are no standard methods, and we shall devote much
  effort to this case for the exceptional groups that we consider. A
  central issue will be to find elements of order a multiple of $p$. This is a 
  serious obstacle since by \cite{MR1345299, MR1829478}, the proportion $\rho(G)$ of these elements in $G$ satisfies
\begin{equation}
\frac{2}{5q} < \rho(G) < \frac{5}{q}.
\end{equation}
Hence we cannot find elements of order a multiple of $p$ by random search in polynomial time, so there is no straightforward way to find them. 
\end{enumerate}

To be able to deal with these various cases, we need to know all the
absolutely irreducible tensor indecomposable representations of $H$ in
defining characteristic. We also need to know how they arise from the
\emph{natural representation}, which is the representation of
dimension $d$ over $\F_q$. In our cases, this information is provided by \cite{MR1901354}.

\subsection{Constructive membership testing overview}
The other computational problem that we shall consider is the
following.

\begin{deff}
The problem of \emph{constructive membership testing} is:
\begin{itemize}
\item[\textbf{Input}:] A matrix group $G = \gen{X}$, an element $g \in
  U \geqslant G$. 
\item[\textbf{Output}:] If $g \in G$, then \texttt{true} and an $\SLP$
  for $g$ in $X$, \texttt{false} otherwise.
\end{itemize}
\end{deff}

In our cases, $U$ is always taken to be the general linear group. One
can take two slightly different approaches to the problem of
expressing an element as an $\SLP$ in the given generators, depending
on whether one wants to find an effective isomorphism or find standard
generators.

\begin{enumerate}
\item The approach using an effective isomorphism.
\begin{enumerate} 
\item Given $G = \gen{X}$ with standard copy $H$, first solve
  constructive recognition and obtain an effective isomorphism
  $\varphi : G \to H$. Hence obtain a generating set $\varphi(X)$ of
  $H$.
\item Given $g \in G$, express $\varphi(g)$ as an $\SLP$ in
  $\varphi(X)$, hence also expressing $g$ in $X$.
\end{enumerate}
\item The approach using standard generators.
\begin{enumerate}
\item Given $G = \gen{X}$ with standard copy $H = \gen{y_1, \dotsc, y_k}$, find
  $g_1, \dotsc, g_k \in G$ as $\SLP$s in
  $X$, such that the mapping $g_i \mapsto y_i$ is an isomorphism.
\item Given $g \in G$, express $g$ as an $\SLP$ in $\set{g_1, \dotsc,
  g_k}$, hence also expressing it in $X$.
\end{enumerate}
\end{enumerate}

In the first case, the constructive membership testing takes place in
$H$, which is probably faster than in $G$, so in this case we use the
standard copy in computations. In the second case, the standard
copy is only used as a theoretical tool. As it stands, the first
approach is stronger, since it provides the effective isomorphism, and
the standard generators in $G$ can be obtained in the first
approach, if necessary. However, if the representation theory of $G$ is
known, so that we can construct a module isomorphic to the module of
$G$ from the module of $H$, then the standard generators can be used,
together with the MeatAxe, to solve constructive recognition. Hence
the two approaches are not very different. One can also mix them in
various ways, for example in the first case by finding standard
generators in $H$ expressed in $\varphi(X)$, and then only express
each element in the standard generators, which might be easier than to
express the elements directly in $\varphi(X)$.

\subsection{CGT methods}
Here we describe some algorithmic methods that we will use. Like many
methods in CGT they are not really algorithms (\emph{i.e.} they may
not terminate on all inputs), or if they are they
have very bad (worst-case) time complexity. Nevertheless, they can be
useful for particular groups, as in our cases.

\subsubsection{The dihedral trick}

This trick is a method for conjugating involutions (\emph{i.e.}
elements of order $2$) to each
other in a black-box group, defined by a set of generators. The nice
feature is that if the involutions are given as straight line programs
in the generators, the conjugating element will be found as a straight
line program. The dihedral trick is based on the following
observation.

\begin{pr} \label{dihedral_trick_basis}
Let $G$ be a group and let $a, b \in G$ be involutions such that $\abs{ba} = 2k + 1$ for some $k \in \Z$. Then $(ba)^k$ conjugates $a$ to $b$.
\end{pr}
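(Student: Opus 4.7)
The plan is to work directly with the dihedral relation generated by the two involutions. Write $r = ba$; since $a^2 = b^2 = 1$ we have $a^{-1} = a$ and $r^{-1} = (ba)^{-1} = a^{-1}b^{-1} = ab$. The key observation is the conjugation relation
\[
ara = a(ba)a = (ab)(a \cdot a) \cdot \text{?}
\]
computed more carefully as $a \cdot r = aba$ and $r^{-1} \cdot a = ab \cdot a = aba$, so $ar = r^{-1}a$. This is just the standard relation in a dihedral group, but it is exactly what is needed and requires nothing more than $a$ and $b$ being involutions.

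From $ar = r^{-1}a$ I would deduce $ar^k = r^{-k}a$ by a trivial induction on $k$. Then
\[
a^{(ba)^k} = r^{-k} a r^k = r^{-k}\cdot r^{-k} a = r^{-2k} a.
\]
Finally, the hypothesis $|r| = 2k+1$ gives $r^{2k+1} = 1$, hence $r^{-2k} = r$, so
\[
a^{(ba)^k} = r \cdot a = (ba)a = b,
\]
which is the desired conclusion.

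There is no real obstacle here; the proposition is essentially the statement that in a dihedral group of odd order $2k+1$, any two reflections are conjugate by a specific power of the rotation. The only thing one has to be careful about is the conjugation convention ($g^h = h^{-1}gh$, as fixed in the notation section), and the direction of the relation $ar = r^{-1}a$ versus $ra = ar^{-1}$, but both follow immediately from $a^2 = b^2 = 1$.
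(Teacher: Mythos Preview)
Your proof is correct and is essentially the same as the paper's: both exploit the dihedral relation coming from $a^2=b^2=1$. The only difference is presentational---the paper writes the computation as a single chain $(ba)^{-k}a(ba)^k=(ba)^{k+1}a(ba)^k=(ba)^k b(ba)^k=\dotsb=(ba)a=b$, whereas you first isolate the relation $ar=r^{-1}a$ and then apply it once; the underlying idea is identical.
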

\begin{proof}
Observe that
\[ (ba)^{-k} a (ba)^k = (ba)^{k + 1} a (ba)^k = (ba)^k b (ba)^k = (ba)^k a (ba)^{k - 1} = \dotsb = (ba) a = b\]
since $a$ and $b$ are involutions.
\end{proof}

\begin{thm}[The dihedral trick] \label{dihedral_trick} Let $G =
  \gen{X} \leqslant \GL(d, q)$. Assume that the probability of the
  product of two
  random conjugate involutions in $G$ having odd order is at least
  $1/c$. There exists a Las Vegas algorithm that, given conjugate
  involutions $a, b \in G$, finds $g \in G$ such that $a^g = b$. If
  $a, b$ are given as $\SLP$s of lengths $l_a, l_b$, then $g$ will be found as an $\SLP$ of
  length $\OR{c (l_a + l_b)}$. The algorithm has expected time complexity
  $\OR{c(\xi(d) + d^3 \log(q) \log\log(q^d))}$ field operations.
\end{thm}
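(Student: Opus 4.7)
The plan is to randomise Proposition~\ref{dihedral_trick_basis} by repeatedly replacing $b$ with a random conjugate of itself. Concretely, I would draw a random $r \in G$ from the random element oracle, form the involution $t := b^r$, and test whether the product $ta$ has odd order. The hypothesis of the theorem says that the product of two random conjugate involutions has odd order with probability at least $1/c$, and because $a$ and $b$ are conjugate the involution $t$ has the same distribution as a random element of the conjugacy class of $a$; so in expectation $\OR{c}$ draws will produce a successful $r$. Once $|ta| = 2k+1$, Proposition~\ref{dihedral_trick_basis} gives $a^{(ta)^k} = t = b^r$, whence $g := (ta)^k r^{-1}$ satisfies $a^g = b$.

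The main technical obstacle is that we cannot afford to compute $|ta|$ exactly, since that would require integer factorisation. Instead I would use the algorithm of~\cite{crlg95} to compute a pseudo-order $n$ of $ta$, factorised into primes and pseudo-primes. Because $2$ is a known prime (pseudo-primes are coprime to the known primes), the $2$-part of $n$ equals that of $|ta|$, so parity of $|ta|$ is correctly reported by parity of $n$ (and can in any case be verified via Proposition~\ref{pr_element_powering}). When $n$ is odd, write $n = m\cdot|ta|$ with $m$ odd and $|ta| = 2\ell+1$. For $k := (n-1)/2$ one finds
\[ k - \ell \;=\; \tfrac{n-1}{2} - \tfrac{|ta|-1}{2} \;=\; \tfrac{(m-1)\,|ta|}{2}, \]
which is an integer multiple of $|ta|$ since $m-1$ is even. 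Hence $(ta)^k = (ta)^\ell$ inside $\gen{ta}$, and this element still conjugates $a$ to $t$ by Proposition~\ref{dihedral_trick_basis}. So using $k = (n-1)/2$ together with the pseudo-order is sound, and the algorithm needs no integer factorisation at all.

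For the complexity analysis, each iteration costs one random element ($\OR{\xi(d)}$ field operations), $\OR{d^3}$ for the relevant matrix products, and $\OR{d^3 \log(q) \log\log(q^d)}$ for the pseudo-order computation, the last term dominating. Multiplying by the $\OR{c}$ expected iterations yields the stated total cost. The output SLP is assembled from the SLPs for $a$, $b$, and $r$ via one conjugation ($t = b^r$), one product ($ta$), one power ($(ta)^k$), and one product with $r^{-1}$; each is a constant number of SLP nodes above the SLPs for its inputs. Since the oracle returns $r$ as an SLP whose length is bounded by the number of random elements drawn, which is $\OR{c}$, the resulting SLP for $g$ fits within the stated $\OR{c(l_a + l_b)}$ bound.
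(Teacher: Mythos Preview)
Your proposal is correct and follows essentially the same approach as the paper: randomise one of the two involutions by a random conjugate, test whether the resulting product has odd order, and then apply Proposition~\ref{dihedral_trick_basis}. The only cosmetic difference is that the paper conjugates $a$ (setting $a_1=a^h$ and testing $b a_1$) while you conjugate $b$; the two are symmetric. Your explicit verification that the pseudo-order $n$ can be used in place of the true order---because the odd multiple $m$ in $n=m\,|ta|$ forces $(n-1)/2 \equiv (|ta|-1)/2 \pmod{|ta|}$---is a point the paper leaves implicit, and it is exactly what is needed to justify the stated complexity without an integer-factorisation oracle.
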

\begin{proof}
The algorithm proceeds as follows:

\begin{enumerate}
\item Find random $h \in G$ and let $a_1 = a^h$. 
\item Let $b_1 = b a_1$. Use Proposition \ref{pr_element_powering} to determine if $b_1$ has even order, and if so, return to the first step.
\item Let $n = (\abs{b_1} - 1) / 2$ and let $g = h b_1^n = h (a^h b)^n$.
\end{enumerate}

By Proposition \ref{dihedral_trick_basis}, this is a Las Vegas algorithm. The probability that $b_1$ has
odd order is $1/c$ and hence the expected time complexity is as
stated. Note that if $a$ and $b$ are given as $\SLP$s in $X$, then we obtain $g$
as an $\SLP$ in $X$.
\end{proof}

\subsubsection{Involution centralisers} \label{section:inv_centraliser}
In \cite{ryba_trick} an algorithm is described that reduces the
constructive membership problem in a group $G$ to the same problem in
three involution centralisers in $G$. The reduction algorithm is known
as the \emph{Ryba algorithm} and can be a convenient method to solve
the constructive membership problem. However, there are obstacles:
\begin{enumerate}
\item We have to solve the constructive membership problem in the
  involution centralisers of $G$. In principle this can be done using
  the Ryba algorithm recursively, but such a blind descent might not
  be very satisfactory. For instance, it might not be easy to
  determine the time complexity of such a procedure. Another approach
  is to provide a special algorithm for the involution centraliser. This
  assumes that the structure of $G$ and its involution centralisers
  are known, which it will be in the cases we consider.
\item We have to find involutions in $G$. As described in Section \ref{section:algorithm_overview}, this is a serious obstacle if the defining field $\F_q$ of $G$ has characteristic $2$. In odd characteristic the
  situation is better, and in \cite{ryba_trick} it is proved that the
  Ryba algorithm is polynomial time in this case. Another approach is
  to provide a special algorithm that finds involutions.

\item We have to find generators $Y$ for $\Cent_G(j)$ of a
  given involution $j \in G = \gen{X}$. This is possible using the \emph{Bray
    algorithm} of \cite{bray00}. It works by computing random elements of $\Cent_G(j)$ until the whole
centraliser is generated. This automatically gives the elements of $Y$
as $\SLP$s in $X$, which is a central feature needed by the Ryba algorithm.

There are two issues involved when using this algorithm. First, the
generators that are computed may not be uniformly random in $\Cent_G(j)$, so that we
might have trouble generating the whole centraliser. In
\cite{ryba_trick} it is shown that this is not a problem with the
exceptional groups. Second, we need to provide an algorithm that
determines if the whole centraliser has been generated. In the cases
that we will consider, this will be possible. It should be noted that
the Bray algorithm works for any black-box group and not just for
matrix groups.
\end{enumerate}

Given these obstacles, we will still use the Ryba algorithm for
constructive membership testing in some cases. We will also use the Bray algorithm independently, since it is a powerful tool.

\subsubsection{The Formula}

Like the dihedral trick, this is a method for conjugating elements to
each other. For a group $G$, denote by $\Phi(G)$ the \emph{Frattini
  subgroup} of $G$, which is the intersection of the maximal subgroups
of $G$. 

\begin{lem}[The Formula] \label{lem_the_formula} Let $G \cong H {:}
  \Cent_n$, where $H$ is a $2$-group and $n$ is odd. If $a, b \in G$ have
  order $n = 2k + 1$ and $a \equiv b \mod H$, then $b \equiv a^g \mod
  \Phi(H)$ where $g = (b a)^k$.
\end{lem}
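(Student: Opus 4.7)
The plan is to reduce to the quotient $\bar{G} := G/\Phi(H)$, in which $\bar{H} := H/\Phi(H)$ is an elementary abelian $2$-group, and then perform a direct calculation in $\bar{G} \cong \bar{H} {:} \Cent_n$. It suffices to show $\bar{g}^{-1} \bar{a} \bar{g} = \bar{b}$ for $\bar{g} = (\bar{b}\bar{a})^k$, so write $V := \bar{H}$ additively and regard it as an $\F_2[\Cent_n]$-module encoding the conjugation action of the complement.

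Since $a \equiv b \pmod{H}$, the images decompose uniquely as $\bar{a} = (v_a, c)$ and $\bar{b} = (v_b, c)$ for the \emph{same} generator $c \in \Cent_n$ and some $v_a, v_b \in V$. A straightforward induction using the semidirect product multiplication $(v_1, c_1)(v_2, c_2) = (v_1 + c_1 v_2, c_1 c_2)$ shows that $\abs{a} = \abs{b} = n$ translates into the norm conditions
\[
N(v_a) = N(v_b) = 0, \qquad N := 1 + c + c^2 + \dotsb + c^{n-1} \in \F_2[\Cent_n].
\]

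A direct calculation gives $\bar{b}\bar{a} = (v_b + c v_a, c^2)$ and then, by induction,
\[
(\bar{b}\bar{a})^k = \bigl( S_k (v_b + c v_a),\, c^{2k} \bigr), \qquad S_k := 1 + c^2 + c^4 + \dotsb + c^{2(k-1)},
\]
where $c^{2k} = c^{-1}$ because $c^n = 1$. Substituting these into the relation $\bar{g} \bar{a} = \bar{b} \bar{g}$ reduces the problem (using $-1 = 1$ in $\F_2$) to verifying the identity $(c + c^2) S_k (v_b + c v_a) = v_b + c v_a$ in $V$. The key algebraic observation is the telescoping
\[
(c + c^2) S_k = c + c^2 + \dotsb + c^{n-1} = N + 1 \quad \text{in } \F_2[\Cent_n],
\]
combined with the fact that $N$ commutes with multiplication by $c$. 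Applying this to $v_b + c v_a$ and using the norm conditions collapses $(N + 1)(v_b + c v_a)$ to $v_b + c v_a$, which completes the verification.

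Conceptually, the main obstacle is less the calculation itself than recognising that reducing modulo $\Phi(H)$ is exactly what makes the problem linear: it turns $\bar{H}$ into a module over $\F_2[\Cent_n]$ and activates the characteristic-$2$ cancellations (most notably $N - 1 = N + 1$) without which no such clean identity holds inside $H$.
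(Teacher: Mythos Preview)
Your argument is correct and follows essentially the same route as the paper: reduce modulo $\Phi(H)$ to make $H$ elementary abelian, write $a$ and $b$ with a common $\Cent_n$-component, expand $(ba)^k$, and verify the conjugation identity directly. The paper carries this out multiplicatively by writing $a=a_1g$, $b=b_1g$ and pushing powers of $g$ to the right; your $\F_2[\Cent_n]$-module formulation with the norm element $N$ is a tidier packaging of the same computation.

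One small slip: you state the target equation as $\bar g\,\bar a=\bar b\,\bar g$, but $a^g=b$ means $\bar a\,\bar g=\bar g\,\bar b$. Fortunately the identity you actually verify, $(c+c^2)S_k(v_b+cv_a)=v_b+cv_a$, is exactly the reduction of the \emph{correct} equation $\bar a\,\bar g=\bar g\,\bar b$ (after multiplying through by $c$), so the proof stands once you fix that typo.
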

\begin{proof}
  The orders of $a,b$ are their orders in $\Cent_n$. Hence we can replace
  $H$ with $H / \Phi(H)$ without affecting the rest of the
  assumptions. We can therefore reduce to the case when $\Phi(H) = \gen{1}$,
  in other words when $H$ is elementary abelian.

  Then $a = a_1 g$, $b = b_1 g$, with $\abs{g} = n$ and $a_1, b_1
  \in H$. We want to prove that $a^h = b$ where $h = (ba)^k$ or
  equivalently that $(ba)^k b = a (ba)^k$.

  Now $(ba)^k = (b_1 g a_1 g)^k = (b_1 a_1^{g^{-1}} g^2)^k$. We can
  move all occurrences of $g$ to the right, so that
\[ (ba)^k = b_1^{1 + g^{-2} + g^{-4} + \dotsb + g^{-2(k-1)}} a_1^{g^{-1} + g^{-3} + \dotsb + g^{-2k - 1}} g^{2k}\]
from which we see that
\begin{align*}
(ba)^k b_1 g &= b_1^{1 + g^{-2} + g^{-4} + \dotsb + g^{-2k}} a_1^{g^{-1} + g^{-3} + \dotsb + g^{-2k - 1}} g^{2k + 1} \\
a_1 g (ba)^k &= b_1^{g^{-1} + g^{-3} + \dotsb + g^{-2k - 1}} a_1^{1 + g^{-2} + g^{-4} + \dotsb + g^{-2k}} g^{2k + 1} 
\end{align*}
and we want these to be equal. Since $g^{2k + 1} = 1$, we see that
$(ba)^k b, a(ba)^k \in H$, and because $H$ is elementary abelian, they
are equal if and only if their product is the identity. But clearly $(ba)^k b a (ba)^k = b_1^s a_1^s$, where $s = \sum_{i = 0}^k g^{-i}$, and finally $b_1^s a_1^s = (b_1 g)^{2k + 1} (a_1 g)^{2k + 1} = 1$.
\end{proof}

\begin{cl} \label{cl_the_formula} Let $H \cong P {:} \Cent_n$, where $P$
  is a $2$-group and $n$ is odd, and let $G \cong H {:} S$ for some
  group $S$. If $a \in H$ has order $n = 2k + 1$ then for $h \in G$, such that $a \equiv a^h \mod P$, we have
  $a^{g^{-1} h} \equiv a \mod \Phi(P)$, where $g = (a^h a)^k$.
\end{cl}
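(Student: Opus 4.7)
The plan is a direct reduction to Lemma~\ref{lem_the_formula}, applied inside $H \cong P \rtimes \Cent_n$ with $b := a^h$. Three things must be checked to invoke the lemma: that $a^h$ lies in $H$ (true, since $H \trianglelefteqslant G$); that $a^h$ has order $n = 2k+1$ (true, as conjugation preserves order); and that $a^h \equiv a \mod P$ (given). The lemma, applied with its $H$ playing the role of our $P$ and its $G$ playing the role of our $H$, then produces
\[
a^g \equiv a^h \mod \Phi(P), \qquad g = (a^h \cdot a)^k.
\]

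The next step is to promote $\Phi(P)$ from being merely $H$-normal to being $G$-normal, since $h$ need not lie in $H$ and we wish to manipulate the congruence freely in $G$. Here Sylow theory does the work: because $n$ is odd and $\abs{P}$ is a power of $2$, the subgroup $P$ is the unique Sylow $2$-subgroup of $H$ and is therefore characteristic in $H$. Combined with $H \trianglelefteqslant G$ this forces $P \trianglelefteqslant G$, and since $\Phi(P)$ is characteristic in $P$ we conclude $\Phi(P) \trianglelefteqslant G$. This is the only genuinely structural step.

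Finally I would rearrange. Using the convention $a^x = x^{-1} a x$, the lemma's output reads $g^{-1} a g \equiv h^{-1} a h \mod \Phi(P)$; conjugating both sides by a suitable element of $G$ (all such conjugations preserve $\Phi(P)$ by the previous paragraph) and reading off the result yields the asserted congruence $a^{g^{-1} h} \equiv a \mod \Phi(P)$. Since $g = (a^h \cdot a)^k$ lies in $H$, every conjugation involved acts on $\Phi(P)$ trivially as a set.

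Since Lemma~\ref{lem_the_formula} does the heavy algebraic work, I expect the only real hurdle to be the promotion of $\Phi(P)$ from $H$-normal to $G$-normal, which crucially uses the odd order of $\Cent_n$; everything else is routine bookkeeping of conjugations.
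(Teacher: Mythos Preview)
Your proposal is correct and takes essentially the same approach as the paper: verify that $a$ and $a^h$ both lie in $H$ with order $n$, invoke Lemma~\ref{lem_the_formula} to obtain $a^g \equiv a^h \bmod \Phi(P)$, and then rearrange. The paper's proof is terser (it simply says ``the result follows''), whereas you supply the extra justification that $\Phi(P) \trianglelefteq G$ via the chain $P$ characteristic in $H \trianglelefteq G$ --- a point the paper leaves implicit but which is genuinely needed for the final step, since $h$ need not lie in $H$.
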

\begin{proof}
Observe that both $a$ and $a^h$ have order $n$ and lie in $H \trianglelefteq G$. Now apply Lemma \ref{lem_the_formula}, conclude that $a^g \equiv a^h \mod \Phi(P)$, and the result follows.
\end{proof}

\subsubsection{Recognition of $\PSL(2, q)$}
In \cite{psl_recognition}, an algorithm for constructive recognition
and constructive membership testing of $\PSL(2, q)$ is presented. This
algorithm is in several aspects the original which our algorithms are
modelled after, and it is in itself an extension of \cite{MR1829474},
which handles the natural representation.

We will use \cite{psl_recognition} since $\PSL(2, q)$ arise as
subgroups of some of the exceptional groups that we consider. Because
of this, we state the main results here. Let $\sigma_0(d)$ be the
number of divisors of $d \in \N$. From \cite[pp. $64, 359,
262$]{MR568909}, we know that for every $\varepsilon > 0$, if $d$ is
sufficiently large then $\sigma_0(d) < 2^{(1 + \varepsilon)
  \log_{\mathrm{e}}(d) / \log\log_{\mathrm{e}}(d)}$.

Here, $\PSL(2,q)$ is viewed as a quotient of $\SL(2, q)$. Hence the
elements are cosets of matrices.

\begin{thm} \label{thm_psl_recognition} Assume an oracle for the
  discrete logarithm problem in $\F_q$. There exists a Las Vegas
  algorithm that, given $\gen{X} \leqslant \GL(d, q)$ satisfying the
  assumptions in Section \ref{section:algorithm_overview}, with
  $\gen{X} \cong \PPSL(2, q)$ and $q = p^e$, finds an effective isomorphism $\varphi
  : \gen{X} \to \PPSL(2, q)$ and performs preprocessing for constructive membership testing. The algorithm has expected time
  complexity
\[ \OR{(\xi(d) + d^3 \log(q) \log \log(q^d)) \log \log(q) + d^5 \sigma_0(d) \abs{X} + d \chi_D(q) + \xi(d) d} \]
 field operations.

 The inverse of $\varphi$ is also effective. Each image of $\varphi$ can be computed using $\OR{d^3}$ field
 operations, and each pre-image using $\OR{d^3 \log(q)\log\log(q) + e^3}$ field operations.
 After the algorithm has run, constructive membership testing of $g
 \in \GL(d, q)$ uses $\OR{d^3 \log(q)\log\log(q) + e^3}$ field operations, and the
 resulting $\SLP$ has length $\OR{\log (q) \log\log(q)}$.
\end{thm}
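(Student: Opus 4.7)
The overall strategy is to reduce the general constructive recognition problem to the natural representation in dimension $2$, where the Bruhat decomposition of $\SL(2,q)$ gives an explicit, matrix-entry-level formula for writing any element as an $\SLP$ in standard generators. Once the natural module is constructed together with an effective isomorphism, every subsequent computation can be carried out on $2 \times 2$ matrices, which explains the low constructive membership cost of $\OR{d^3\log(q)\log\log(q) + e^3}$ per element.

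First, I would construct the natural $2$-dimensional module $V$ from the given $d$-dimensional module $W$. By Steinberg's tensor product theorem, every absolutely irreducible $\F_q \PPSL(2,q)$-module is isomorphic to a tensor product of Galois twists $\mathcal{S}^{i_1}(V)^{\Phi_1} \otimes \dotsb \otimes \mathcal{S}^{i_r}(V)^{\Phi_r}$ of symmetric powers of the natural module. The divisor function $\sigma_0(d)$ appearing in the complexity corresponds to iterating over the possible factorisations of $d$ into such tensor shapes. For each candidate shape I would form the appropriate tensor products of $W$ and its duals/twists, apply the MeatAxe of Section \ref{section:meataxe} to extract composition factors, and test the resulting $2$-dimensional candidate by checking that the group it generates satisfies the defining relations of $\SL(2,q)$. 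The discrete logarithm oracle enters here because undoing the Frobenius twist on a torus eigenvalue requires solving a discrete log in $\F_q^\times$, which accounts for the $d\,\chi_D(q)$ term.

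Second, with the effective map $W \to V$ in hand, I would locate standard generators of $\SL(2,q)$ inside $V$: a split torus generator $t$ of order dividing $q-1$ (easily found by random sampling, since a positive proportion of elements of $\SL(2,q)$ are split semisimple), a Weyl-type element $w$ swapping the two eigenspaces of $t$, and a non-trivial unipotent $u$ of order $p$. Constructive membership is then done by the usual Bruhat trick: given $g = \begin{pmatrix} a & b \\ c & d \end{pmatrix}$, express $g$ as $u_1(x) t^k w^\epsilon u_2(y)$ by reading off $x$ and $y$ from the entries, solving for $k$ with one discrete logarithm, and encoding the whole decomposition as an $\SLP$ of length $\OR{\log q \log \log q}$ using repeated-squaring on $t$ and $u$. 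The preprocessing cost $d^5 \sigma_0(d) \abs{X}$ reflects the tensor-and-chop search, while the $(\xi(d) + d^3 \log(q) \log\log(q^d)) \log\log(q)$ term is the cost of the iterated pseudo-order computations of Section \ref{section:matrix_orders} used to calibrate the torus element $t$.

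The main obstacle is producing the unipotent generator $u$ of order exactly $p$ (or, equivalently, of order divisible by $p$) when $p = 2$. As observed in Section \ref{section:algorithm_overview}, the proportion of such elements is only $\Theta(1/q)$, so naive random search is exponential in the input size. For $\PSL(2,q)$ the way around this is to exploit the small rank: pick a random torus element $t$, find a Sylow-$p$ subgroup normalised by $t$ by examining the centraliser of an involution (odd $q$) or by computing inside a Borel subgroup reconstructed from $\gen{t, t^g}$ for a random $g$, and extract a unipotent element from there. Verifying that this construction is both correct and polynomial time — in particular, that it terminates with constant success probability independent of $q$ — is the crux of \cite{psl_recognition} and the reason the dihedral trick and Bray algorithm of the preceding subsections have been set up.
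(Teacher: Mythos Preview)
This theorem is not proved in the paper at all; it is simply quoted from \cite{psl_recognition}. The surrounding text says explicitly: ``In \cite{psl_recognition}, an algorithm for constructive recognition and constructive membership testing of $\PSL(2, q)$ is presented\ldots\ Because of this, we state the main results here.'' Theorem~\ref{thm_psl_recognition} is then stated without any proof, and the paper uses it as a black box throughout (for instance inside Lemma~\ref{lem_psl_maps} and Theorem~\ref{thm_tensor_decompose_dim27}).

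So there is nothing in this paper for your proposal to be compared against. Your sketch is a plausible outline of what the Conder--Leedham-Green--O'Brien algorithm does, and you have correctly identified the central obstacle (finding an element of order divisible by $p$ when their proportion is $\Theta(1/q)$), but grading it would require consulting \cite{psl_recognition} rather than the present paper. If you intended to reconstruct the argument of that reference, be aware that the actual mechanism for producing the $p$-element in $\PSL(2,q)$ is more delicate than ``examine the centraliser of an involution or reconstruct a Borel from $\gen{t,t^g}$''; in particular the Bray and dihedral machinery you invoke is set up in this paper for the exceptional groups, not for $\PSL(2,q)$, and \cite{psl_recognition} uses a different device.
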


The existence of this constructive recognition algorithm has led to
several other constructive recognition algorithms for the classical
groups\cite{MR1829473, MR1958954, MR2051584, MR2228648}, which are
polynomial time assuming an oracle for constructive recognition of
$\PSL(2, q)$.

In some situations we shall also need a fast non-constructive
recognition algorithm of $\PSL(2, q)$. It will be used to test if a given
subgroup of $\PSL(2, q)$ is in fact the whole of $\PSL(2, q)$, so it
is enough to have a Monte Carlo algorithm with no false positives. We
need to use it in any representation, so the correct context is a
black-box group.

\begin{thm} \label{thm_psl_naming}
  Let $G = \gen{X} \leqslant \GL(d, q^{\prime})$. Assume that $G$ is
  isomorphic to a subgroup of $\PSL(2, q)$ and that $q$ is known.
  There exists a one-sided Monte Carlo algorithm with no false
  positives that determines if $G \cong \PSL(2, q)$. Given maximum
  error probability $\varepsilon > 0$, the time complexity is
  \[ \OR{(\xi(d) + d^3 \log(q) \log\log(q^d)) \ceil{\log(\varepsilon) / \log(\delta)}} \] field
  operations, where
\[\delta = (1 - \phi(q - 1)/(q - 1)) (1 - \phi(q + 1)/(q + 1)). \]
\end{thm}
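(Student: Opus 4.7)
The plan is to produce a certificate for $G = \PSL(2,q)$ by sampling random elements and examining their orders. The idea is to exploit Dickson's classification of subgroups of $\PSL(2,q)$: every proper subgroup is contained in a Borel subgroup, in the normaliser of a split torus (dihedral of order $2(q-1)/d$ with $d = \gcd(2,q-1)$), in the normaliser of a non-split torus (dihedral of order $2(q+1)/d$), in a subfield subgroup $\PSL(2, q_0)$ with $q_0$ a proper subfield, or in one of the exceptional subgroups $\Alt(4),\Sym(4),\Alt(5)$. Reading off element orders in each case, one checks that no proper subgroup of $\PSL(2,q)$ simultaneously contains an element of order $(q-1)/d$ and an element of order $(q+1)/d$ (for all but a handful of very small $q$, which are handled separately). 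Thus if such a pair is found in $G$, we may safely output \texttt{true}; this ensures the algorithm has no false positives.

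The algorithm therefore proceeds as follows: repeatedly select random elements of $G = \gen{X}$ using the random element oracle (cost $\OR{\xi(d)}$), compute their orders via the algorithm of \cite{crlg95} (cost $\OR{d^3 \log(q) \log\log(q^d)}$), and maintain two flags that are set as soon as an element of order exactly $(q-1)/d$ or $(q+1)/d$ is encountered. As soon as both flags are set, return \texttt{true}; if the prescribed iteration budget is exhausted without this happening, return \texttt{false}.

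For the probabilistic analysis, I would bound from below the proportion of generators of maximal split and non-split tori in $\PSL(2,q)$. The number of maximal split tori equals the index of their normaliser, so the fraction of order-$(q-1)/d$ elements is a full orbit-size times $\phi((q-1)/d)$ divided by $|\PSL(2,q)|$; after simplification this is at least $\phi(q-1)/(q-1)$, and similarly the proportion of order-$(q+1)/d$ elements is at least $\phi(q+1)/(q+1)$. A single trial that picks one element to test against each target therefore fails with probability at most
\[ \delta = \left(1 - \frac{\phi(q-1)}{q-1}\right)\left(1 - \frac{\phi(q+1)}{q+1}\right). \]
Iterating $\lceil \log(\varepsilon)/\log(\delta) \rceil$ times drives the overall failure probability below $\varepsilon$, and each iteration costs $\OR{\xi(d) + d^3 \log(q)\log\log(q^d)}$ field operations, yielding the stated total complexity.

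The main obstacle is the subgroup-theoretic step: one must verify carefully, using Dickson's classification, that no proper subgroup of $\PSL(2,q)$ can contain both types of witnesses. The Borel and split-dihedral cases exclude order $(q+1)/d$, the non-split-dihedral excludes order $(q-1)/d$, subfield subgroups $\PSL(2,q_0)$ contain only elements whose orders divide $p$, $(q_0-1)/d_0$ or $(q_0+1)/d_0$ (all strictly smaller than $(q\pm 1)/d$), and the exceptional subgroups have element orders bounded by $5$, ruling them out except for a finite list of small $q$ which can be hard-coded. A secondary technical point is the precise torus-counting argument needed to turn the orbit sizes into the clean lower bounds $\phi(q\pm 1)/(q\pm 1)$ that appear in $\delta$.
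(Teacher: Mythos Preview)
Your approach closely parallels the paper's, but with one technical difference in the certificate. The paper does not test for \emph{exact} orders $(q\pm 1)/d$; instead it checks whether a random element $g$ satisfies $g^{(q-1)/2}=1$ and then rules out the subfield subgroups directly by verifying $g^{(s-1)/2}\neq 1$ for each proper subfield $\F_s<\F_q$ (and likewise with $(q+1)/2$ for the second witness). This distinction matters for the complexity claim: certifying that an element has order \emph{exactly} $(q-1)/d$ requires the prime factorisation of $q-1$, which is neither assumed available nor budgeted in the stated bound, whereas the paper's divisibility-plus-subfield test needs only powerings and pseudo-order computation. Your Dickson-based argument for no false positives is more explicit than the paper's (which simply asserts the generation fact as ``well known''); note that the paper's subfield check is precisely what eliminates the subfield-subgroup case of Dickson's list, so the two certificates ultimately exclude the same proper subgroups, and both use the proportion of elements of exact order $(q\pm 1)/2$ only as a \emph{lower bound} on the per-trial success probability.
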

\begin{proof}
  It is well known that $\PSL(2, \F_q)$ is generated by two elements
  having order dividing $(q \pm 1) / 2$, unless one of them lie in
  $\PSL(2, \F_s)$ for some $\F_s < \F_q$. We are thus led to an
  algorithm that performs at most $n$ steps. In each step it finds two
  random elements $g$ and $h$ of $G$ and computes their pseudo-orders.
  If $g$ has order dividing $(q - 1) / 2$, it then determines if $g$ lies
  in some $\PSL(2, \F_s)$ by testing if $g^{(s - 1) / 2} = 1$. If $q =
  p^a$ then the possible values of $s$ are $p^b$ where $b \mid a$, so
  there are $\sigma_0(a)$ subfields. If $g$ does not lie in any
  $\PSL(2, \F_s)$ then $g$ is remembered. 

Now do similarly for $h$. Then, if it
  has found the two elements, it returns
  \texttt{true}. On the other hand, if it completes the $n$ steps
  without finding these elements, it returns \texttt{false}.

  The proportion in $\PSL(2, q)$ of elements of order $(q \pm 1) / 2$ is
  $\phi(q \pm 1) / (q \pm 1)$, so the probability that $G \cong
  \PSL(2, q)$ but we fail to find the elements is at most $\delta^n$. We
  require $\delta^n \leqslant \varepsilon$ and hence $n$ can be chosen as
  $\ceil{\log(\varepsilon) / \log(\delta)}$.
\end{proof}

\subsection{Aschbacher classes}
\label{section:aschbacher_algorithms}
The Aschbacher classification of \cite{aschbacher84} classifies matrix
groups into a number of classes, and a major part of the MGRP has been
to develop algorithms that determine constructively if a given matrix
group belongs to a certain class. Some of these algorithms are used
here.

\subsubsection{The MeatAxe}
\label{section:meataxe}

Let $G = \gen{X} \leqslant \GL(d, q)$ acting on a
module $V \cong \F_q^d$. The algorithm known as the \emph{MeatAxe}
determines if $V$ is irreducible. If not, it finds a proper
non-trivial submodule $W$ of $V$, and a change of basis matrix $c \in
\GL(d, q)$ that exhibits the action of $G$ on $W$ and on $V / W$. In
other words, the first $\dim W$ rows of $c$ form a basis of $W$, and
$g^c$ is block lower triangular for every $g \in G$.

Applying the MeatAxe recursively, one finds a composition series of
$V$, and a change of basis that exhibits the action of $G$ on the
composition factors of $V$. Hence we also obtain effective
isomorphisms from $G$ to the groups acting on the composition factors of $V$.

The MeatAxe was originally developed by Parker in \cite{Parker84} and
later extended and formalised into a Las Vegas algorithm by Holt and
Rees in \cite{meataxe}. They also explain how very similar algorithms
can be used to test if $V$ is absolutely irreducible, and if two
modules are isomorphic. These are also Las Vegas algorithms with the
same time complexity as the MeatAxe. The worst case of the MeatAxe is
treated in \cite{better_meataxe}, where it is proved that the expected
time complexity is $\OR{\abs{X} d^4}$ field operations. Unless the module is reducible and all composition factors of the module are isomorphic, the
expected time complexity is $\OR{\abs{X} d^3}$ field
operations. 

The MeatAxe is also fast in practice and is implemented both in
$\MAGMA$ and $\GAP$. This important feature is the reason that it is
used rather than the first known polynomial time algorithm for the same
problem, which was given in \cite{ronyai90} (and is, at best, $\OR{\abs{X} d^6}$).

Some related problems are the following:
\begin{itemize}
\item Determine if $G$ acts absolutely irreducibly on $V$.
\item Given two irreducible $G$-modules $U, W$ of dimension $d$, determine if they are isomorphic, and if so find a change of basis matrix $c \in \GL(d, q)$ that conjugates $U$ to $W$.
\item Given that $G$ acts absolutely irreducibly on $V$, determine if $G$ preserves a classical form and if so find a matrix for the form.
\item Find a basis for $\EndR_G(V)$ as matrices of degree $d$.
\end{itemize}

Algorithms for these problems are described in \cite[Section
$7.5$]{hcgt} and they all have expected time complexity $\OR{\abs{X} d^3}$ field operations. We will refer to the algorithms for all these problems as \lq\lq the MeatAxe''.

\subsubsection{Writing matrix groups over subfields}
\label{section:smallerfield}

If $G = \gen{X} \leqslant \GL(d, \F_q)$, then $G$ might be conjugate
in $\GL(d, \F_q)$ to a subgroup of $\GL(d, \F_s)$ where $\F_s < \F_q$,
so that $q$ is a proper power of $s$. An algorithm for deciding if
this is case is given in \cite{smallerfield}. It is a Las Vegas
algorithm with expected time complexity $\OR{\sigma_0(\log(q))
  (\abs{X} d^3 + d^2 \log(q))}$ field operations. In case $G$ can be
written over a subfield, then the algorithm also returns a conjugating
matrix $c$ that exhibits this fact, \emph{i.e.} so that $G^c$ can be
immediately embedded in $\GL(d, \F_s)$. The algorithm can also write a
group over a subfield modulo scalars.

\subsubsection{Tensor decomposition}
\label{section:tensor_decomposition}

Now let $G = \gen{X} \leqslant \GL(d, q)$ acting on a module $V \cong
\F_q^d$. The module might have the structure of a tensor product $V
\cong U_1 \otimes U_2$, so that $G \leqslant G_1 \circ G_2$ where $G_1
\leqslant \GL(U_1)$ and $G_2 \leqslant \GL(U_2)$.

The Las Vegas algorithm of \cite{tensorprodalg} determines if $V$ has the
structure of a tensor product, and if so it also returns a change of
basis $c \in \GL(d, q)$ which exhibits the tensor decomposition. In
other words, $g^c$ is an explicit Kronecker product for each $g \in
G$. The images of $g$ in $G_1$ and $G_2$ can therefore immediately be
extracted from $g^c$, and hence we obtain an effective embedding of $G$ into $G_1 \circ G_2$.

By \cite{tensorprodtheory}, for tensor decomposition it is sufficient
to find a \emph{flat} in a projective geometry corresponding to the
decomposition. A flat is a subspace of $V$ of the form $A \otimes U_2$ or $U_1 \otimes B$ where $A$ and $B$ are proper subspaces of $U_1$ and $U_2$ respectively. This flat contains a
\emph{point}, which is a flat with $\dim A = 1$ or $\dim B = 1$. If we can provide a
proposed flat to the algorithm of \cite{tensorprodalg}, then it will verify that it is a flat, and if so find a
tensor decomposition, using expected $\OR{\abs{X} d^3
  \log(q)}$ field operations.

However, in general there is no efficient algorithm for finding a flat
of $V$. If we want a polynomial time algorithm for decomposing a
specific tensor product, we therefore have to provide an efficient
algorithm that finds a flat.

\subsection{Conjectures}
\label{section:intro_conj}

Most of the results presented will depend on a few conjectures. This
might be considered awkward and somewhat non-mathematical, but it is a
result of how the work in this thesis was produced. In almost every
case with the algorithms that are presented, the implementation of the
algorithm did exist before the proof of correctness of the algorithm.
In fact, the algorithms have been developed using a rather empirical
method, an interplay between theory (mathematical thought) and
practice (programming). We consider this to be an essential
feature of the work, and it has proven to be an effective way to
develop algorithms that are good in both theory and practice.

However, it has lead to the fact that there are certain results that
have been left unproven, either because they have been too hard to
prove or have been from an area of mathematics outside the scope of
this thesis (usually both). But because of the way the algorithms have
been developed, there should be no doubt that every one of the
conjectures are true. The implementations of the algorithms have been
tested on a vast number of inputs, and therefore the conjectures have also
been tested equally many times. There has been no case of a conjecture failing.

More detailed information about the implementations can be found in Chapter \ref{chapter:implementation}.

\chapter{Twisted exceptional groups} \label{chapter:group_def_theory}

Here we will present the necessary theory about the twisted groups
under consideration. 

\section{Suzuki groups} \label{section:suzuki_theory} 

The family of exceptional groups now known as the \emph{Suzuki groups}
were first found by Suzuki in \cite{MR0120283, suzuki62, MR0162840},
and also described in \cite[Chapter $11$]{huppertIII} which is the
exposition that we follow. They should not be confused with the Suzuki
$2$-groups or the sporadic Suzuki group.

\subsection{Definition and properties}

We begin by defining our standard copy of the Suzuki group. Following
\cite[Chapter $11$]{huppertIII}, let $q = 2^{2m + 1}$ for some $m > 0$
and let $\pi$ be the unique automorphism of $\F_q$ such that $\pi^2(x)
= x^2$ for every $x \in \F_q$, \emph{i.e.} $\pi(x) = x^t$ where $t =
2^{m + 1} = \sqrt{2q}$. For $a, b \in \F_q$ and $c \in \F_q^{\times}$, define the
following matrices:
\begin{align}
S(a, b) &= \begin{bmatrix}
1 & 0 & 0 & 0 \\
a & 1 & 0 & 0 \\
b & \pi(a) & 1 & 0 \\
a^2 \pi(a) + ab + \pi(b) & a \pi(a) + b & a & 1 
\end{bmatrix}, \\
M(c) &= \begin{bmatrix}
c^{1 + 2^m} & 0 & 0 & 0 \\
0 & c^{2^m} & 0 & 0 \\
0 & 0 & c^{-2^m} & 0 \\
0 & 0 & 0 & c^{-1 - 2^m}
\end{bmatrix}, \\ 
T &= \begin{bmatrix}
0 & 0 & 0 & 1 \\
0 & 0 & 1 & 0 \\
0 & 1 & 0 & 0 \\
1 & 0 & 0 & 0 
\end{bmatrix}.
\end{align}
By definition,
\begin{equation} \label{suzuki_def}
\Sz(q) = \gen{S(a, b), M(c), T \mid a, b \in  \F_q, c \in \F_q^{\times}}.
\end{equation}
If we define 
\begin{align}
\mathcal{F} &= \set{S(a, b) \mid a, b \in \F_q} \\
\mathcal{H} &= \set{M(c) \mid c \in \F_q^{\times}} 
\end{align}
then $\mathcal{F} \leqslant \Sz(q)$ with 
$\abs{\mathcal{F}} = q^2$ and $\mathcal{H} \cong \F_q^{\times}$ so
that $\mathcal{H}$ is cyclic of order $q - 1$. Moreover, we can write $M(c)$ as
\begin{equation}
M(c) = M^{\prime}(\lambda) = \begin{bmatrix}
\lambda^{t + 1} & 0 & 0 & 0 \\
0 & \lambda & 0 & 0 \\
0 & 0 & \lambda^{-1} & 0 \\
0 & 0 & 0 & \lambda^{-t-1}
\end{bmatrix}
\end{equation}
where $\lambda = c^{2^m}$. Hence $M(\lambda)^t = M^{\prime}(\lambda)$.

The following result follows from \cite[Chapter $11$]{huppertIII}.
\begin{thm} \label{thm_suzuki_props}
\begin{enumerate}
\item The order of the Suzuki group is
\begin{equation} \label{suzuki_order}
\abs{\Sz(q)} = q^2 (q^2 + 1) (q - 1)
\end{equation}
and $q^2 + 1 = (q + t + 1)(q - t + 1)$.
\item $\gcd(q - 1, q^2 + 1) = 1$ and hence the three factors in \eqref{suzuki_order} are pairwise relatively prime.
\item For all $a_1, b_1, a_2, b_2 \in \F_q$ and $\lambda \in \F_q^{\times}$:
\begin{align}
S(a_1, b_1) S(a_2, b_2) &= S(a_1 + a_2, b_1 + b_2 + a_1^t a_2) \label{sz_matrix_id1} \\
S(a, b)^{-1} &= S(a, b + a^{t + 1}) \\
S(a_1, b_1)^{S(a_2, b_2)} &= S(a_1, b_1 + a_1^t a_2 + a_1 a_2^t) \\
S(a, b)^{M^{\prime}(\lambda)} &= S(\lambda^t a, \lambda^{t + 2} b). \label{sz_matrix_id2}
\end{align}
\item There exists $\OV \subseteq \PS^3(\F_q)$ on which $\Sz(q)$ acts faithfully and doubly transitively, such that no nontrivial element of $\Sz(q)$ fixes
  more than $2$ points. This set is
\begin{equation} \label{sz_ovoid_def}
\OV = \set{(1 : 0 : 0 : 0)} \cup \set{(a b + \pi(a) a^2 + \pi(b) : b : a : 1) \mid a,b \in \F_q}.
\end{equation}
\item The stabiliser of $P_{\infty} = (1 : 0 : 0 : 0) \in \OV$ is $\mathcal{F} \mathcal{H}$
and if $P_0 = (0 : 0 : 0 :1)$ then the stabiliser of $(P_{\infty},
P_0)$ is $\mathcal{H}$. 
\item $\Zent(\mathcal{F}) = \set{S(0, b) \mid b \in \F_q}$.
\item $\mathcal{FH}$ is a Frobenius group with Frobenius kernel $\mathcal{F}$.
\item $\Sz(q)$ has cyclic Hall subgroups $U_1$ and $U_2$ of orders $q
\pm t + 1$. These act fixed point freely on $\OV$ and irreducibly on
$\F_q^4$. For each non-trivial $g \in U_i$, we have $\Cent_G(g) = U_i$.
\item The conjugates of $\mathcal{F}$, $\mathcal{H}$, $U_1$ and $U_2$ partition $\Sz(q)$.
\item The proportion of elements of order $q-1$ in $\mathcal{FH}$ is $\phi(q - 1) / (q - 1)$, where $\phi$ is the Euler totient function.
\end{enumerate}
\end{thm}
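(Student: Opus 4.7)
The plan is to handle the ten assertions by combining mechanical matrix computation with orbit--stabiliser arguments, closely following Huppert's Chapter~11. The identities in (3) are routine: multiplying $S(a_1,b_1)$ and $S(a_2,b_2)$, inverting, and conjugating by $M^{\prime}(\lambda)$, one uses only that $\pi$ is a field automorphism satisfying $\pi(\pi(x))=x^2$ on $\F_q$ (which holds since $t^2=2q$ and $x^q=x$). From \eqref{sz_matrix_id1} one reads off that $\mathcal{F}$ is closed under multiplication, that $S(a,b)\mapsto a$ is a surjective homomorphism onto $(\F_q,+)$ with kernel $\{S(0,b)\}$, and hence $|\mathcal{F}|=q^2$; the commutator formula identifies $\Zent(\mathcal{F})$ with this kernel, giving (6). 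The map $c\mapsto M(c)$ is an injective homomorphism, so $\mathcal{H}\cong\F_q^{\times}$ is cyclic of order $q-1$; and \eqref{sz_matrix_id2}, together with the bijectivity of $\lambda\mapsto\lambda^t$ on $\F_q^{\times}$, shows that $\mathcal{H}$ acts on $\mathcal{F}\setminus\{1\}$ without non-trivial fixed points, so $\mathcal{FH}$ is Frobenius with kernel $\mathcal{F}$, giving (7).

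For (4)--(5), I would verify by direct substitution that each generator permutes $\OV$: the elements $S(a,b)$ act on the affine chart parametrising $\OV\setminus\{P_{\infty}\}$ by $(a^{\prime},b^{\prime})\mapsto(a+a^{\prime},b+b^{\prime}+\pi(a)a^{\prime})$, the diagonal $M(c)$ scales coordinates, and $T$ interchanges $P_{\infty}$ with $P_0$. The same formulas show $\mathcal{F}$ acts regularly on $\OV\setminus\{P_{\infty}\}$ and that $\mathcal{H}$ fixes both $P_{\infty}$ and $P_0$, giving both the stabiliser statements (5) and transitivity on $\OV$; applying $T$ upgrades this to $2$-transitivity. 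Orbit--stabiliser now yields $|\Sz(q)|=(q^2+1)\,q^2(q-1)$, hence (1). The factorisation $q^2+1=(q+t+1)(q-t+1)$ reduces to $t^2=2q$, and (2) follows because $q-1$ is odd and $q^2+1\equiv 2\pmod{q-1}$. The bound on fixed points in (4) is then equivalent to the assertion that two distinct conjugates of $\mathcal{H}$ intersect trivially; this holds because $\mathcal{H}$ is a Frobenius complement in $\mathcal{FH}$, so any non-identity element of $\mathcal{H}$ fixing a third point would lie in two distinct complements.

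The main obstacle is (8), the Hall subgroups of coprime orders $q\pm t+1$ together with their self-centralising, irreducible action. The plan is as follows. By (2) the three factors of $|\Sz(q)|$ are pairwise coprime, so every prime $r\mid q^2+1$ avoids the conjugates of $\mathcal{F}$ and of $\mathcal{H}$; by the fixed-point bound in (4), any $r$-element therefore acts semi-regularly on $\OV$, so each orbit has size dividing $q^2+1=(q+t+1)(q-t+1)$. Assembling these $r$-elements for all $r\mid q+t+1$, respectively $r\mid q-t+1$, yields cyclic Hall subgroups $U_1$ and $U_2$ of the stated orders, acting fixed-point-freely on $\OV$. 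Irreducibility on $\F_q^4$ follows because a generator of $U_i$ has order coprime to $q^k-1$ for $k\leqslant 3$: indeed $\gcd(q-1,q^2+1)=1$ by (2), $\gcd(q^2-1,q^2+1)=1$ since $q$ is even, and $\gcd(q^3-1,q^2+1)=\gcd(q+1,q^2+1)$ divides $2$ and hence equals $1$. So no proper invariant subspace exists, forcing the minimal polynomial to have degree $4$, and Schur's lemma then gives $\Cent_G(g)=U_i$ for every non-trivial $g\in U_i$.

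With (8) established, (9) is a counting argument: the four conjugacy classes of cyclic subgroups $\mathcal{F}$, $\mathcal{H}$, $U_1$, $U_2$ contribute a total of $\sum|G:\Norm_G(H)|(|H|-1)=|G|-1$ non-identity elements — a check that reduces to the index computations $|G:\mathcal{FH}|=q^2+1$, $|G:\Norm_G(\mathcal{H})|=q^2(q^2+1)/2$, and analogous formulas for $U_i$ — and these conjugates are pairwise disjoint outside the identity by the fixed-point bound. Finally (10) is immediate, since $\mathcal{H}$ is cyclic of order $q-1$ with $\phi(q-1)$ generators, and these are exactly the elements of $\mathcal{FH}$ of order $q-1$, as $|\mathcal{F}|=q^2$ is coprime to $q-1$ so no non-trivial coset representative of $\mathcal{H}$ in $\mathcal{FH}$ can supply elements of that order.
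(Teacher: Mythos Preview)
The paper does not prove this theorem; it simply records that the result follows from \cite[Chapter~11]{huppertIII}. Your sketch therefore goes beyond what the paper supplies, and for items (1)--(7), (9), and (10) your outline is sound and would reconstruct the standard arguments with only cosmetic repairs (for instance, in (10) the $\phi(q-1)$ elements of order $q-1$ are distributed over all $q^2$ Frobenius complements in $\mathcal{FH}$, not just $\mathcal{H}$ itself; the factor $q^2$ then cancels in the proportion).

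The real gap is in (8). You correctly argue that any element $g$ of prime order $r\mid q^2+1$ acts irreducibly on $\F_q^4$ (the multiplicative order of $q$ modulo $r$ being $4$), so that $\Cent_G(g)$ is cyclic by Schur's Lemma, and that $\Cent_G(g)$ acts fixed-point-freely on $\OV$ and hence has order dividing $q^2+1$. But ``assembling these $r$-elements for all $r\mid q+t+1$'' is not a construction: nothing in your argument forces the Sylow $r$-subgroups for the various primes $r\mid q+t+1$ to lie in a common cyclic subgroup, nor does it pin $\abs{\Cent_G(g)}$ down to $q\pm t+1$ rather than some other unitary divisor of $q^2+1$ --- in particular, $q^2+1$ itself is not excluded by what you have written. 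Your subsequent irreducibility claim (``a generator of $U_i$ has order coprime to $q^k-1$'') and the self-centralising conclusion $\Cent_G(g)=U_i$ both presuppose the existence of such a generator of the exact order $q\pm t+1$, which is precisely what is at stake. In Huppert's treatment (and Suzuki's original) this step is handled by an explicit construction --- e.g.\ computing characteristic polynomials of elements such as $S(0,b)T$ and determining their splitting behaviour --- or by first establishing the normaliser structure $[\Norm_G(U_i):U_i]=4$, after which the counting you sketch in (9) closes the partition. Without one of these inputs, the ``assembly'' step does not go through.
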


\begin{rem}[Standard generators of $\Sz(q)$]
  As standard generators for $\Sz(q)$ we will use 
\[\set{S(1, 0), M^{\prime}(\lambda), T},\] 
where $\lambda$ is a primitive element of
  $\F_q$, whose minimal polynomial is the defining polynomial of
  $\F_q$. Other sets are possible: in \cite{bray_sz_presentation}, the
  standard generators are 
\[ \set{S(1, 0)^{-1}, M^{\prime}(\lambda)^{2^m}, T},\]
 and $\MAGMA$ uses 
\[ \set{S(1, 0)^{M^{\prime}(\lambda)^{q / 2}}, M^{\prime}(\lambda)^{1 - 2^m}, T}. \]
\end{rem}

From \cite[Chapter $11$, Remark $3.12$]{huppertIII} we also immediately obtain the following result.
\begin{thm} \label{sz_maximal_subgroups}
A maximal subgroup of $G = \Sz(q)$ is conjugate to one of the following subgroups.
\begin{enumerate}
\item The point stabiliser $\mathcal{F} \mathcal{H}$.
\item The normaliser $\Norm_G(\mathcal{H}) \cong \Dih_{2(q - 1)}$.
\item The normalisers $\mathcal{B}_i = \Norm_G(U_i)$ for $i = 1,2$. These satisfy $\mathcal{B}_i = \gen{U_i, t_i}$ where $u^{t_i} = u^q$ for every $u \in U_i$ and $[\mathcal{B}_i : U_i] = 4$.
\item $\Sz(s)$ where $q$ is a proper power of $s$.
\end{enumerate}
\end{thm}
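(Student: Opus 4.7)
The plan is to classify the maximal subgroups $M$ of $G = \Sz(q)$ by exploiting the partition of $G \setminus \set{1}$ by the conjugates of $\mathcal{F}, \mathcal{H}, U_1, U_2$ (Theorem \ref{thm_suzuki_props}.9), the trivial-intersection property of $\mathcal{F}$ (an immediate consequence of the partition), and the doubly transitive action on the ovoid $\OV$ (Theorem \ref{thm_suzuki_props}.4). The analysis splits on the parity of $\abs{M}$.

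First I would treat the case where $M$ has even order. Let $P$ be a Sylow $2$-subgroup of $M$; by the trivial-intersection property, $P \leqslant \mathcal{F}^g$ for a unique $g \in G$. If $P = \mathcal{F}^g$, then I would show that $P$ is the unique Sylow $2$-subgroup of $M$: otherwise $M$ would contain an element moving $\mathcal{F}^g$ to a distinct conjugate Sylow, and using the double transitivity on $\OV$ together with the fact that $\mathcal{F}^g$ is the stabiliser of a unique point of $\OV$ (Theorem \ref{thm_suzuki_props}.5) this would eventually force $M = G$. Hence $M \leqslant \Norm_G(\mathcal{F}^g) = (\mathcal{F}\mathcal{H})^g$, and maximality gives case~1. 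If $P$ is a proper subgroup of $\mathcal{F}^g$, I would argue that $M$ is defined over a proper subfield $\F_s \subset \F_q$: the parameterisation of $\mathcal{F}$ and the conjugation rules \eqref{sz_matrix_id1}--\eqref{sz_matrix_id2} of Theorem \ref{thm_suzuki_props}.3 are purely $\F_q$-linear, so a subgroup of $\mathcal{F}$ invariant under a non-trivial element of a conjugate of $\mathcal{H}$ corresponds to an $\F_s$-subspace for some subfield $\F_s \leqslant \F_q$; a Steinberg-style generation argument then shows that $M$ contains the defining generators of $\Sz(s)$, giving $M = \Sz(s)^h$ by maximality, which is case~4.

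Next I would handle the case where $M$ has odd order. By Theorem \ref{thm_suzuki_props}.2 the integers $q-1$, $q-t+1$, $q+t+1$ are pairwise coprime, so every non-identity element of $M$ lies in a unique conjugate of exactly one of $\mathcal{H}$, $U_1$, $U_2$. The goal is to show that all these elements lie in the \emph{same} conjugate. If $x \in M \cap \mathcal{H}^{g_1}$ and $y \in M$ lies in a different partition class, then using the self-centralising properties of these subgroups (Theorem \ref{thm_suzuki_props}.8 for $U_i$, and its analogue for $\mathcal{H}$ deduced from Theorem \ref{thm_suzuki_props}.5 and \ref{thm_suzuki_props}.7) $x$ and $y$ fail to commute, whereas their orders are coprime; a short combinatorial argument using the partition then rules out this configuration. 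Consequently $M \leqslant C^g$ for a single $C \in \set{\mathcal{H}, U_1, U_2}$, and maximality gives $M = \Norm_G(C^g)$, which is case~2 or~3.

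The hard part will be the subfield case: showing that the failure of $P$ to fill out $\mathcal{F}^g$ forces $M$ to be exactly a conjugate of $\Sz(s)$, rather than some other solvable or mixed subgroup containing $P$. This requires carefully propagating the $\F_s$-subfield structure from $P$ through the action of $M \cap \mathcal{H}^g$ (or a suitable Cartan-like element) to reconstruct a full Steinberg presentation of $\Sz(s)$ inside $M$. The rigorous execution of these arguments is the content of \cite[Chapter~$11$, Remark~$3.12$]{huppertIII}.
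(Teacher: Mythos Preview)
The paper does not give its own proof of this statement; it simply cites \cite[Chapter~$11$, Remark~$3.12$]{huppertIII} as the source. Your proposal is therefore not being compared against an argument in the paper but against the classical classification itself, and as a sketch of that classification it has a genuine structural gap.

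Your case split on the parity of $\abs{M}$ does not line up with the outcome. All four types of maximal subgroup have even order: $\Norm_G(\mathcal{H}) \cong \Dih_{2(q-1)}$ contains an involution, and $\mathcal{B}_i = \Norm_G(U_i)$ has order $4(q \pm t + 1)$. So your odd-order branch is vacuous (if $M$ has odd order, your own argument shows $M$ lies inside a cyclic $C^g$, hence strictly inside $\Norm_G(C^g)$, contradicting maximality; you cannot conclude $M = \Norm_G(C^g)$ as you wrote, since that normaliser has even order). More importantly, your even-order branch only offers the dichotomy ``$P = \mathcal{F}^g$ gives case~1'' versus ``$P \lneq \mathcal{F}^g$ gives the subfield case~4''. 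But $\Norm_G(\mathcal{H})$ has Sylow $2$-subgroup of order~$2$ and $\mathcal{B}_i$ has Sylow $2$-subgroup of order~$4$, both strictly inside a conjugate of $\mathcal{F}$, yet neither is a subfield $\Sz(s)$. Your propagation argument (``a subgroup of $\mathcal{F}$ invariant under a non-trivial element of a conjugate of $\mathcal{H}$ corresponds to an $\F_s$-subspace'') presupposes that $M$ contains an element of $\mathcal{H}$-type normalising $P$; for $\mathcal{B}_i$ the odd-order elements of $M$ come from $U_i$, not $\mathcal{H}$, so this step does not apply.

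The actual proof in Suzuki's papers and in Huppert--Blackburn is considerably more delicate: one first analyses subgroups containing more than one Sylow $2$-subgroup via the doubly transitive action (this is where the subfield subgroups emerge, through a Zassenhaus-type argument on the one-point stabiliser), and then separately classifies the ``small'' subgroups whose Sylow $2$-subgroups are cyclic or quaternion, which is where $\Norm_G(\mathcal{H})$ and the $\mathcal{B}_i$ appear. Your sketch would need an additional branch, after the full-Sylow and subfield cases, handling the situation where $M$ has a Sylow $2$-subgroup of order $2$ or $4$ and recovering cases~2 and~3 from the structure of the odd part of $M$.
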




\begin{pr} \label{pr_sz_subgroups_conj}
Let $G = \Sz(q)$.
\begin{enumerate}
\item Distinct conjugates of $\mathcal{F}$, $\mathcal{H}$, $U_1$ or $U_2$ intersect trivially.
\item The subgroups of $G$ of order $q^2$ are conjugate, and there are $q^2 + 1$ distinct conjugates.
\item The cyclic subgroups of $G$ of order $q - 1$ are conjugate, and there are $q^2 (q^2 + 1) / 2$ distinct conjugates.
\item The cyclic subgroups of $G$ of order $q \pm t + 1$ are conjugate, and there are $q^2 (q - 1) (q \mp t + 1) / 4$ distinct conjugates.
\end{enumerate}
\end{pr}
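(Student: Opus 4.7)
The plan is to read off all four parts from the structural facts already assembled in Theorem \ref{thm_suzuki_props} (particularly the partition property in part (9), the coprimality in part (2), and the doubly transitive action on the ovoid) together with the list of maximal subgroups in Theorem \ref{sz_maximal_subgroups}. Part (1) is essentially a tautology: since every non-identity element of $\Sz(q)$ lies in \emph{exactly one} conjugate of $\mathcal{F}$, $\mathcal{H}$, $U_1$ or $U_2$, two distinct such conjugates can share no non-identity element.

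For part (2), I would observe that $|\Sz(q)|_2 = q^2$, so any subgroup of order $q^2$ is a Sylow $2$-subgroup, hence conjugate to $\mathcal{F}$ by Sylow. To count conjugates I use that $\mathcal{F}\mathcal{H}$ is the point stabiliser of $P_{\infty}$ in the doubly transitive action on the ovoid $\mathcal{O}$ of $q^2+1$ points, so $[\Sz(q) : \mathcal{F}\mathcal{H}] = q^2+1$. It remains to check that $\Norm_G(\mathcal{F}) = \mathcal{F}\mathcal{H}$: the former contains the latter and is a proper subgroup of $G$ because, for example, $T$ does not normalise $\mathcal{F}$ (it moves $P_\infty$); combined with the maximality of $\mathcal{F}\mathcal{H}$ from Theorem \ref{sz_maximal_subgroups}, equality follows.

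For parts (3) and (4) the strategy is the same: given a cyclic subgroup $C$ of order $n\in\{q-1,\,q-t+1,\,q+t+1\}$, pick a generator $g$; by the partition property (9) it lies in a unique conjugate of $\mathcal{F}$, $\mathcal{H}$, $U_1$ or $U_2$. Since $n$ is odd while $|\mathcal{F}|=q^2$ is a $2$-power, and since part (2) of Theorem \ref{thm_suzuki_props} gives $\gcd(q-1,q^2+1)=1$ (and so $q-1$, $q-t+1$, $q+t+1$ are pairwise coprime), exactly one of the four candidate subgroup orders is divisible by $n$: that of $\mathcal{H}$ when $n=q-1$ and that of $U_i$ when $n=q\mp t+1$. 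Hence $C \leqslant \mathcal{H}^h$ or $C \leqslant U_i^h$ for some $h\in G$, and matching orders gives equality. Counting then reduces to computing normaliser indices; from Theorem \ref{sz_maximal_subgroups} we have $|\Norm_G(\mathcal{H})|=2(q-1)$ and $|\Norm_G(U_i)|=4|U_i|$, and dividing $|\Sz(q)|$ by these yields exactly $q^2(q^2+1)/2$ and $q^2(q-1)(q\mp t+1)/4$ as stated.

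The only genuinely non-routine step is the classification of a given cyclic subgroup of order $n$ as a conjugate of the correct Hall subgroup; once the coprimality argument above has been set up, everything else is bookkeeping with Sylow/normaliser indices and orders of point stabilisers.
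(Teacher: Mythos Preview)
Your proposal is correct and follows essentially the same route as the paper for parts (2)--(4). The one substantive difference is in part (1): you read trivial intersection off directly from the partition statement in Theorem~\ref{thm_suzuki_props}(9), treating it as a tautology, whereas the paper re-derives it case by case --- for $\mathcal{F}$ via the action on $\OV$ (an element in two distinct conjugates would fix two points, hence lie in a conjugate of $\mathcal{H}$, contradicting the partition), for $\mathcal{H}$ via the bound on fixed points, and for $U_i$ via the centraliser description $\Cent_G(g)=U_i$. Your reading of ``partition'' is the standard one, so your argument is legitimate and shorter; the paper's version has the modest advantage of making the mechanism behind each trivial intersection explicit rather than packaging it into the cited Theorem. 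For part (2) you count conjugates as $[G:\Norm_G(\mathcal{F})]$ after identifying the normaliser with the maximal subgroup $\mathcal{F}\mathcal{H}$, while the paper counts them as points of $\OV$; these are two phrasings of the same bijection. Parts (3) and (4) match the paper's argument exactly.
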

\begin{proof}
\begin{enumerate}
\item By Theorem \ref{thm_suzuki_props}, each conjugate of
$\mathcal{F}$ fixes exactly one point of $\OV$. If an element $g$ lies
in two distinct conjugates it must fix two distinct points and hence
lie in a conjugate of $\mathcal{H}$. But, by the partitioning, the
conjugates of $\mathcal{H}$ and $\mathcal{F}$ intersect trivially, so
$g = 1$.

If $\mathcal{H} \neq \mathcal{H}^x$ for some $x \in G$ and $g \in
\mathcal{H} \cap \mathcal{H}^x$, then $g$ fixes more than $2$ points
of $\OV$, so that $g = 1$. If $U_i \neq U_i^x$ for some $x \in G$ and
$g \in U_i \cap U_i^x$, then $\Cent_G(g) = \gen{U_i \cup U_i^x} = G$, so that $g = 1$.
\item This is clear since these subgroups are Sylow $2$-subgroups, and hence conjugate to $\mathcal{F}$. Each subgroup fixes a point of $\OV$ and hence there are $\abs{\OV}$ distinct conjugates.
\item Because of the partitioning, an element of order $q - 1$ must lie in a conjugate of
$\mathcal{H}$, which must be the cyclic subgroup that it generates. By
Theorem \ref{sz_maximal_subgroups} there are $[G :
\Norm_G(\mathcal{H})] = q^2 (q^2 + 1) / 2$ distinct conjugates.
\item Analogous to the previous case.
\end{enumerate}
\end{proof}

\begin{pr} \label{element_proportions}
Let $G = \Sz(q)$ and let $\phi$ be the Euler totient function.
\begin{enumerate}
\item The number of elements in $G$ that fix at least one point of $\OV$
is $q^2 (q - 1) (q^2 + q + 2) / 2$.
\item The number of elements in $G$ of order $q - 1$ is $\phi(q - 1) q^2 (q^2 + 1) / 2$.
\item The number of elements in $G$ of order $q \pm t + 1$ is $\phi(q \pm t + 1) (q \mp t + 1) q^2 (q - 1) / 4$.
\end{enumerate}
\end{pr}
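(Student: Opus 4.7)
The plan is to use the partitioning of $G$ into conjugates of $\mathcal{F}$, $\mathcal{H}$, $U_1$, $U_2$ (Theorem \ref{thm_suzuki_props}(9)), together with the fact from Proposition \ref{pr_sz_subgroups_conj}(1) that distinct conjugates of each of these four subgroups intersect trivially. Once these two tools are in hand, every part reduces to multiplying a count of subgroups by a count of relevant elements in one of them.

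For part $1$, first identify which classes of the partition contribute. By Theorem \ref{thm_suzuki_props}(8), nontrivial elements of any conjugate of $U_1$ or $U_2$ are fixed-point-free on $\OV$, so they contribute nothing. On the other hand, every nontrivial element of a conjugate of $\mathcal{F}$ fixes the unique point of $\OV$ stabilised by that Sylow $2$-subgroup, and every nontrivial element of a conjugate of $\mathcal{H}$ fixes the $2$ points of $\OV$ whose pairwise stabiliser is that conjugate. Hence the set of elements fixing at least one point of $\OV$ is the disjoint union (under the trivial-intersection property) of $\{1\}$, the non-identity elements in the $q^2+1$ conjugates of $\mathcal{F}$, and the non-identity elements in the $q^2(q^2+1)/2$ conjugates of $\mathcal{H}$. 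This gives
\begin{equation}
1 + (q^2+1)(q^2-1) + \frac{q^2(q^2+1)(q-2)}{2} = \frac{q^2(q^3+q-2)}{2} = \frac{q^2(q-1)(q^2+q+2)}{2},
\end{equation}
as claimed, after noting the factorisation $q^3+q-2=(q-1)(q^2+q+2)$.

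Parts $2$ and $3$ follow the same template. By the partitioning, any element of order $q-1$ must lie in some conjugate of $\mathcal{H}$ (the conjugates of $\mathcal{F}$ have $2$-power order, and the $U_i$ have orders coprime to $q-1$ by Theorem \ref{thm_suzuki_props}(2)). Each conjugate of $\mathcal{H}$ is cyclic of order $q-1$ and so contains exactly $\phi(q-1)$ elements of order $q-1$; by trivial intersection of distinct conjugates, the total is $\phi(q-1) \cdot q^2(q^2+1)/2$. Similarly, an element of order $q\pm t+1$ must lie in a conjugate of the corresponding cyclic Hall subgroup (order considerations again rule out the other three classes), contributing $\phi(q\pm t+1)$ elements per conjugate, and multiplying by the conjugate count $q^2(q-1)(q\mp t+1)/4$ from Proposition \ref{pr_sz_subgroups_conj}(4) gives the stated formula.

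The only real subtlety is justifying in parts $2$ and $3$ that an element of the stated order cannot lie in any conjugate of the other three subgroups; this is where one uses that $q^2$, $q-1$, $q+t+1$, $q-t+1$ are pairwise coprime, which is essentially Theorem \ref{thm_suzuki_props}(2). Everything else is bookkeeping, so I do not anticipate a serious obstacle.
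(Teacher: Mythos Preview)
Your proof is correct and follows essentially the same approach as the paper: partition $G$ into conjugates of $\mathcal{F}$, $\mathcal{H}$, $U_1$, $U_2$, use trivial intersection of distinct conjugates, and then multiply the number of conjugates (from Proposition~\ref{pr_sz_subgroups_conj}) by the number of relevant elements in one copy. The only cosmetic difference is that in part~1 the paper phrases the two contributions as ``elements fixing exactly one point'' and ``elements fixing exactly two points'' rather than ``non-identity elements in conjugates of $\mathcal{F}$'' and ``in conjugates of $\mathcal{H}$'', but these are the same sets and the arithmetic is identical.
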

\begin{proof}
\begin{enumerate}
\item  By Theorem \ref{thm_suzuki_props}, if $g \in G$ fixes exactly one point,
  then $g$ is in a conjugate of $\mathcal{F}$, and if $g$ fixes two
  points, then $g$ is in a conjugate of $\mathcal{H}$. Hence by Proposition \ref{pr_sz_subgroups_conj}, there are $(\abs{\mathcal{F}} - 1)(q^2 + 1)$ elements that fix
  exactly one point. Similarly, there are $q^2 (q^2 + 1) (\abs{\mathcal{H}} - 1) / 2$
  elements that fix exactly two points.
  
  Thus the number of elements that fix at least one point is
\begin{equation}
1 + (\abs{\mathcal{F}} - 1)(q^2 + 1) + q^2 (q^2 + 1)(\abs{\mathcal{H}} - 1) / 2 = \frac{q^2 ( q - 1) (q^2 + q + 2)}{2}.
\end{equation}
\item By Proposition \ref{pr_sz_subgroups_conj}, an element of order
$q - 1$ must lie in a conjugate of $\mathcal{H}$. Since distinct
conjugates intersect trivially, the number of such elements is the
number of generators of all cyclic subgroups of order $q - 1$.
\item Analogous to the previous case.
\end{enumerate}
\end{proof}

\begin{pr} \label{sz_totient_prop}
If $g \in G = \Sz(q)$ is uniformly random, then
\begin{align}
\Pr{\abs{g} = q - 1} = \frac{\phi(q - 1)}{2(q - 1)} &> \frac{1}{12\log{\log(q)}} \\
\Pr{\abs{g} = q \pm t + 1} = \frac{\phi(q \pm t + 1)}{4(q \pm t + 1)} &> \frac{1}{24\log{\log(q)}} \\
\Pr{g \ \text{fixes a point of $\OV$}} = \frac{q^2 + q + 2}{2(q^2 + 1)} &> \frac{1}{2}
\end{align}
and hence the expected number of random selections required to obtain an element of order $q - 1$ or $q \pm t + 1$ is
$\OR{\log\log{q}}$, and $\OR{1}$ to obtain an element that fixes a point.
\end{pr}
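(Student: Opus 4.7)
\medskip

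\noindent\emph{Proof proposal.} The three equalities are immediate bookkeeping: divide each count from Proposition~\ref{element_proportions} by $\abs{G} = q^2(q^2+1)(q-1)$ of Theorem~\ref{thm_suzuki_props}. For instance, the number of elements of order $q-1$ is $\phi(q-1) q^2 (q^2+1)/2$, and dividing by $\abs{G}$ yields $\phi(q-1)/(2(q-1))$; the cases of order $q \pm t + 1$ and of fixing a point of $\OV$ are analogous. So the content of the proposition is in the three inequalities.

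The inequality $\Pr{g \text{ fixes a point of } \OV} > 1/2$ is elementary: it reduces to $q^2 + q + 2 > q^2 + 1$, which is obvious. For this a one-line verification suffices.

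The main work is the two totient bounds. My plan is to invoke a standard effective lower bound on $\phi(n)/n$ of Rosser--Schoenfeld type: for all $n \geqslant 3$,
\begin{equation*}
  \frac{\phi(n)}{n} \;>\; \frac{1}{e^{\gamma} \log\log(n) + \frac{3}{\log\log(n)}},
\end{equation*}
where $\gamma$ is the Euler--Mascheroni constant. Applied to $n = q-1$ and $n = q \pm t + 1$, and using the fact that $q = 2^{2m+1} \geqslant 8$ (so that $q \pm t + 1 \geqslant q^{1/2}$ and $\log\log(q \pm t + 1) \leqslant \log\log(q) + O(1)$), this yields $\phi(n)/n > 1/(6 \log\log q)$ in each case once $q$ is large enough. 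Dividing by $2$ respectively $4$ gives the claimed bounds $1/(12 \log\log q)$ and $1/(24 \log\log q)$. The hard part, such as it is, is bookkeeping on the constants: I would pick the weakest conveniently-quotable form of the totient bound, absorb the $\log\log(q - 1)$ vs.\ $\log\log(q \pm t + 1)$ discrepancy into the constant, and separately verify the remaining finitely many small values of $q$ (say $q \in \{8, 32, 128\}$) directly so that the asymptotic bound can be applied for $q$ above a fixed threshold.

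Finally, the statement about expected number of selections is immediate: if a Bernoulli trial has success probability at least $p$, the expected number of trials until success is $1/p$, so the three probability lower bounds give the asserted $\OR{\log\log q}$ and $\OR{1}$ expectations.
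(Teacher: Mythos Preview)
Your proposal is correct and follows essentially the same approach as the paper. The paper's proof is terser: it obtains the equalities from Theorem~\ref{thm_suzuki_props} and Proposition~\ref{element_proportions}, cites \cite[Section~II.8]{totient_prop} for the totient inequalities (which is precisely a Rosser--Schoenfeld type effective bound on $\phi(n)/n$), and appeals to the geometric distribution for the expectation claim --- exactly the ingredients you spelled out.
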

\begin{proof}
The first equality follows immediately from Theorem \ref{thm_suzuki_props} and Proposition \ref{element_proportions}. The
inequalities follow from \cite[Section II.8]{totient_prop}.

Clearly the number of selections required is geometrically distributed, where
the success probabilities for each selection are given by the
inequalities. Hence the expectations are as stated.
\end{proof}

\begin{pr} \label{lemma_double_coset} 
Let $G = \Sz(q)$.
\begin{enumerate}
\item For every $g \in G$, distinct conjugates of $\Cent_G(g)$ intersect trivially.
\item If $H \leqslant G$ is cyclic of order $q - 1$ and $g \in G \setminus \Norm_G(H)$ then $\abs{HgH} = (q - 1)^2$.
\end{enumerate}
\end{pr}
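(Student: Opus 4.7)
My plan is to exploit the partitioning of $\Sz(q)$ into conjugates of the four subgroups $\mathcal{F}$, $\mathcal{H}$, $U_1$, $U_2$ (Theorem~\ref{thm_suzuki_props}(9)) together with Proposition~\ref{pr_sz_subgroups_conj}(1), which asserts that distinct conjugates of any of these intersect trivially. Part (ii) will fall out quickly from part (i) combined with the standard double-coset formula, so essentially all the work is in part (i).

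For part (i) I would first identify $\Cent_G(g)$, up to conjugacy, by splitting on the partition class of $g$. If $g$ lies in a conjugate of $U_i$, Theorem~\ref{thm_suzuki_props}(8) gives $\Cent_G(g) = U_i$, and the result follows at once from Proposition~\ref{pr_sz_subgroups_conj}(1). If $g$ lies in a conjugate of $\mathcal{H}$, then $g$ fixes exactly the two points stabilised by $\mathcal{H}$, so $\Cent_G(g) \leqslant \Norm_G(\mathcal{H}) \cong \Dih_{2(q-1)}$; the elements outside $\mathcal{H}$ invert $\mathcal{H}$ and can centralise $g$ only if $g$ is an involution, but $q - 1$ is odd, so $\Cent_G(g) = \mathcal{H}$ and the conclusion again follows from Proposition~\ref{pr_sz_subgroups_conj}(1).

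The main obstacle is the unipotent case $g \in \mathcal{F}$. Since $P_\infty$ is the only fixed point of $g$, we have $\Cent_G(g) \leqslant \mathcal{F}\mathcal{H}$, and the Frobenius property of $\mathcal{F}\mathcal{H}$ (Theorem~\ref{thm_suzuki_props}(7)) forces $\Cent_G(g) \leqslant \mathcal{F}$. The central case $g \in \Zent(\mathcal{F})$ is immediate, because then $\Cent_G(g) = \mathcal{F}$ and Proposition~\ref{pr_sz_subgroups_conj}(1) applies directly. Otherwise, write $g = S(a, b)$ with $a \neq 0$ and suppose $1 \neq h \in \Cent_G(g)^x \cap \Cent_G(g)^y$. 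Then $h \in \mathcal{F}^x \cap \mathcal{F}^y$, so $\mathcal{F}^x = \mathcal{F}^y$ by Proposition~\ref{pr_sz_subgroups_conj}(1), and we may write $yx^{-1} = uv$ with $u \in \mathcal{F}$ and $v = M'(\lambda) \in \mathcal{H}$. Using the identities \eqref{sz_matrix_id1}--\eqref{sz_matrix_id2}, I would check that $\Cent_\mathcal{F}(S(a, b))$ depends only on the first coordinate $a$, that conjugation by $u$ fixes this first coordinate, and that conjugation by $M'(\lambda)$ multiplies it by $\lambda^t$. Non-trivial intersection of the two centralisers in $\mathcal{F}$ therefore forces $\lambda^t = 1$; but $t = 2^{m+1}$ is a $2$-power while $\lambda \in \F_q^\times$ has odd order dividing $q - 1$, so $\lambda = 1$. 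Hence $yx^{-1} \in \mathcal{F}$ normalises $\Cent_G(g)$ (since this centraliser depends only on $a$) and the two conjugates coincide.

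For part (ii), the standard double-coset identity gives $\abs{HgH} = \abs{H}^2 / \abs{H \cap g^{-1}Hg}$. Since $H$ is cyclic of order $q - 1$ it is a conjugate of $\mathcal{H}$, and so is $g^{-1}Hg$; the hypothesis $g \notin \Norm_G(H)$ means that these two conjugates are distinct, so Proposition~\ref{pr_sz_subgroups_conj}(1) yields $H \cap g^{-1}Hg = 1$ and hence $\abs{HgH} = (q - 1)^2$.
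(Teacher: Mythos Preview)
Your treatment of part (ii) and of the semisimple cases in part (i) (when $g$ lies in a conjugate of $\mathcal{H}$, $U_1$ or $U_2$) is correct and matches the paper. The gap is in the order-$4$ case $g = S(a,b)$ with $a \neq 0$. You correctly note that $\Cent_G(g) \leqslant \mathcal{F}$ and that this centraliser depends only on $a$; indeed from the conjugation formula one finds $\Cent_{\mathcal{F}}(S(a,b)) = \Zent(\mathcal{F}) \cup S(a,0)\,\Zent(\mathcal{F})$, a subgroup of order $2q$. But the step ``non-trivial intersection of the two centralisers forces $\lambda^t = 1$'' is false: both $\Cent_G(g)$ and $\Cent_G(g)^{M'(\lambda)} = \Cent_G\bigl(S(\lambda^t a,\,\cdot\,)\bigr)$ contain $\Zent(\mathcal{F})$, so their intersection is at least $\Zent(\mathcal{F})$ for every $\lambda$. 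Taking any $\lambda \neq 1$ (so $\lambda^t a \neq a$, since $\gcd(t,q-1)=1$) produces two \emph{distinct} conjugates of $\Cent_G(g)$ meeting in $\Zent(\mathcal{F}) \neq 1$.

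Thus part (i), as stated, is actually false for elements of order $4$, and no argument along your lines can rescue it. The paper's own proof of this case is the single line ``$\Cent_G(g) \leqslant F$; the result follows from Proposition~\ref{pr_sz_subgroups_conj}'', which has exactly the same gap: containment in a common conjugate of $\mathcal{F}$ does not force the two centralisers to coincide. The only later use of part (i), in Lemma~\ref{lemma_sz_standard_recognition}, in fact needs only the weaker statement that if a \emph{conjugate of $g$} lies in $\Cent_G(g) \cap \Cent_G(g)^x$ then $\Cent_G(g) = \Cent_G(g)^x$; this does hold, because such a conjugate has order $4$, hence lies outside $\Zent(\mathcal{F})$, hence has first coordinate equal to $a$, giving $\Cent_G(g)^x = \Cent_G(g)$.
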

\begin{proof}
\begin{enumerate}
\item By Theorem \ref{thm_suzuki_props}, we consider three cases. If
$g$ lies in a conjugate $F$ of $\mathcal{F}$, then $\Cent_G(g)
\leqslant F$. If $g \neq 1$ lies in a conjugate $H$ of $\mathcal{H}$, then
$\Cent_G(g) = H$ and if $g$ lies in a conjugate $U$ of $U_1$ or $U_2$,
then $\Cent_G(g) = U$. In each case the result follows from
Proposition \ref{pr_sz_subgroups_conj}.
\item Since $\abs{H} = q - 1$ it is enough to show that $H \cap H^g =
  \gen{1}$. This follows immediately from Proposition
  \ref{pr_sz_subgroups_conj}.
\end{enumerate}
\end{proof}

\begin{pr} \label{sz_conjugacy_classes}
Elements of odd order in $\Sz(q)$  that have the same trace are conjugate.
\end{pr}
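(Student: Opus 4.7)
Let $G = \Sz(q)$ with $q = 2^{2m+1}$ and $t = 2^{m+1}$. Since $q$ is even, every odd-order element of $G$ is semisimple; and by Theorem~\ref{thm_suzuki_props}(9), every non-identity one lies in a unique conjugate of exactly one of the cyclic tori $\mathcal{H}$, $U_1$, $U_2$, of orders $q - 1$, $q + t + 1$, $q - t + 1$, which are pairwise coprime (Theorem~\ref{thm_suzuki_props}(2), together with $\gcd(q + t + 1, q - t + 1) = \gcd(q + 1, t) = 1$ because $t$ is a power of $2$ and $q + 1$ is odd). The plan is to show that every odd-order $g \in G$ has, in the natural $4$-dimensional representation, characteristic polynomial
\[ p_g(x) = x^4 + \tau x^3 + \tau^t x^2 + \tau x + 1, \qquad \tau = \Tr g, \]
so that the trace determines the multiset of eigenvalues in $\bar{\F}_q$, and then to bridge from equal eigenvalue multiset to $G$-conjugacy.

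The key identity is $e_2(g) = \tau^t$, where $e_2$ is the second elementary symmetric function of the eigenvalues; the palindromic pattern follows from the standard inclusion $\Sz(q) \leqslant \Sp(4, q)$. For $M^{\prime}(\lambda) \in \mathcal{H}$ the eigenvalues are $\lambda^{\pm 1}$ and $\lambda^{\pm(t+1)}$, and direct computation using $t^2 \equiv 2 \pmod{q-1}$ (from $t^2 = 2q$) plus additivity of $x \mapsto x^t$ in characteristic $2$ gives $e_2 = \lambda^{\pm t} + \lambda^{\pm(t+2)} = \tau^t$. For $u \in U_i$, irreducibility of $U_i$ on $\F_q^4$ (Theorem~\ref{thm_suzuki_props}(8)) identifies the eigenvalues with the $\F_q$-Galois conjugates $\{u, u^q, u^{q^2}, u^{q^3}\}$, and using the congruence $q^2 \equiv -1 \pmod{q \pm t + 1}$ (from $(q-t+1)(q+t+1) = q^2 + 1$) the same identity $e_2 = \tau^t$ is verified by bookkeeping the relevant exponents.

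Now let $g_1, g_2 \in G$ be of odd order with equal trace. By the formula they share their characteristic polynomial, hence their eigenvalue multiset. The characteristic polynomial is irreducible over $\F_q$ exactly for $U_i$-type elements and splits completely for $\mathcal{H}$-type ones, so the two lie in conjugates of tori of the same kind; and if both are in $U_i$-conjugates then the multiplicative order of any eigenvalue, being determined by the characteristic polynomial, together with coprimality of the two $U_i$-orders, forces $i$ to match. Within a fixed $U_i$, two elements with the same eigenvalue multiset lie in the same $\F_q$-Galois orbit, which by Theorem~\ref{sz_maximal_subgroups} coincides with the Weyl orbit of $N_G(U_i)/U_i \cong \Cent_4$ acting by the $q$-power map, so they are $G$-conjugate. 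Within $\mathcal{H}$, the equality of multisets $\{\lambda^{\pm 1}, \lambda^{\pm(t+1)}\} = \{\mu^{\pm 1}, \mu^{\pm(t+1)}\}$ is handled by case analysis on which element equals $\mu$: the cases $\mu \in \{\lambda, \lambda^{-1}\}$ are exactly the Weyl orbit from Theorem~\ref{sz_maximal_subgroups}, while $\mu = \lambda^{\pm(t+1)}$ forces $\lambda^{(t+1)^2 \mp 1} = 1$, and combining with $\gcd(q - 1, 2^m + 1) = 1$ (since $2^m \equiv -1 \pmod{2^m + 1}$ gives $2^{2m+1} - 1 \equiv 1$) forces $\lambda = 1$.

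The main obstacle is establishing $e_2(g) = \tau^t$ for all odd-order $g$; this reflects the origin of $\Sz(q)$ as the fixed-point subgroup of a Steinberg endomorphism, and once it is in hand the remainder is a routine case analysis on the three tori.
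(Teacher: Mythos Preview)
Your argument is correct, but it takes a genuinely different route from the paper's.

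The paper's proof is a two-line counting argument: it cites \cite[\S17]{suzuki62} for the fact that $\Sz(q)$ has exactly $q-1$ conjugacy classes of non-identity odd-order elements, then exhibits the family $S(0,b)T$ (for $b\in\F_q^\times$) realising every non-zero trace. Since trace is a class function and there are $q-1$ classes and $q-1$ non-zero traces, pigeonhole forces a bijection.

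Your approach is structural rather than enumerative. You establish the explicit formula $p_g(x)=x^4+\tau x^3+\tau^t x^2+\tau x+1$ for the characteristic polynomial of any odd-order element, so that trace determines the full eigenvalue multiset, and then you push from equal eigenvalue multisets to $G$-conjugacy by analysing the Weyl-group orbits on each of the three torus types. The key step $e_2(g)=\Tr(g)^t$ is exactly the shadow of the exceptional isogeny $\Psi$ of $\Sp(4,q)$ described in Section~\ref{section:sz_alt_def}, so your remark about the Steinberg endomorphism is on point.

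What each buys: the paper's proof is much shorter and uses essentially nothing beyond the cited class count, but it imports that count as a black box. Your proof is self-contained (modulo the torus and normaliser structure already recorded in Theorem~\ref{thm_suzuki_props} and Theorem~\ref{sz_maximal_subgroups}) and yields the stronger intermediate statement that the trace determines the characteristic polynomial; this is occasionally useful elsewhere.

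One small point to tighten: Theorem~\ref{thm_suzuki_props}(8) asserts that the cyclic group $U_i$ acts irreducibly, not that every non-identity element of $U_i$ does. You need the latter to identify the eigenvalues of an arbitrary $g\in U_i$ with its $\F_q$-Galois conjugates. It follows because $\abs{U_i}$ divides $q^2+1$, which is coprime to $q^2-1$, so any non-identity $g\in U_i$ has no eigenvalue in $\F_{q^2}$ and hence has irreducible characteristic polynomial of degree $4$.
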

\begin{proof}
  From \cite[§17]{suzuki62}, the number of conjugacy classes of
  non-identity elements of odd order is $q - 1$, and all elements of even
  order have trace $0$.  Observe that
\begin{equation}
S(0, b) T = \begin{bmatrix}
0 & 0 & 0 & 1 \\
0 & 0 & 1 & 0 \\
0 & 1 & 0 & b \\
1 & 0 & b & b^t 
\end{bmatrix}.
\end{equation}
Since $b$ can be any element of $\F_q$, so can $\Tr{(S(0, b) T)}$,
and this also implies that $S(0, b) T$ has odd order when $b \neq 0$.
Therefore there are $q - 1$ possible traces for
non-identity elements of odd order, and elements with different trace
must be non-conjugate, so all conjugacy classes must have different
traces.
\end{proof}

\begin{pr} \label{sz_trace0_order4}
The proportion of elements of order $4$ among the elements of trace $0$ is $1 - 1/q + 1/q^2 - 1/q^3$.
\end{pr}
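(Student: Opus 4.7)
The plan is to identify the elements of trace $0$ in $\Sz(q)$ with the elements of $2$-power order, which are exactly the elements lying in some Sylow $2$-subgroup; once this is done, the desired proportion reduces to a direct count in $\mathcal{F}$ combined with the conjugacy data already established.

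First I would determine the order structure of $\mathcal{F}$. Applying identity \eqref{sz_matrix_id1} in characteristic $2$ gives
\[ S(a,b)^2 = S(2a,\, 2b + a \cdot a^t) = S(0,\, a^{t+1}), \]
so $S(a,b)$ is an involution (or the identity) exactly when $a = 0$, and has order $4$ exactly when $a \neq 0$. Hence $\mathcal{F}$ contains precisely $q-1$ involutions and $q^2 - q$ elements of order $4$.

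Next I would show that the trace-$0$ elements of $G = \Sz(q)$ are exactly the elements of $2$-power order. The proof of Proposition \ref{sz_conjugacy_classes} establishes that every non-identity element of even order has trace $0$, that there are $q-1$ conjugacy classes of non-identity elements of odd order, and that the elements $S(0,b)T$ with $b \neq 0$ already realise $q-1$ distinct odd-order traces, namely $\Tr(S(0,b)T) = b^t$ for $b \neq 0$. Since $x \mapsto x^t$ is a bijection of $\F_q$, these traces are exactly the non-zero elements of $\F_q$, so every non-identity odd-order element has non-zero trace. Together with $\Tr(I_4) = 4 = 0$, this identifies the trace-$0$ elements with the elements of $2$-power order, i.e.\ with $\bigcup_{x \in G} \mathcal{F}^x$.

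Finally I would count. By Proposition \ref{pr_sz_subgroups_conj}, there are $q^2 + 1$ conjugates of $\mathcal{F}$, pairwise intersecting trivially, so the total number of trace-$0$ elements is $1 + (q^2+1)(q^2 - 1) = q^4$, while the total number of elements of order $4$ is $(q^2 + 1)(q^2 - q)$. The required proportion is therefore
\[ \frac{(q^2 + 1)(q^2 - q)}{q^4} = \frac{(q-1)(q^2+1)}{q^3} = 1 - \frac{1}{q} + \frac{1}{q^2} - \frac{1}{q^3}. \]
The only mildly delicate step is the characterisation of trace-$0$ elements; once one checks via the $S(0,b)T$ computation that the $q-1$ odd-order trace values are precisely the non-zero elements of $\F_q$, the rest is routine counting.
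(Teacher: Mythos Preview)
Your proof is correct and follows essentially the same approach as the paper: identify the trace-$0$ elements with the elements of $2$-power order (via the conjugacy-class/trace analysis of Proposition~\ref{sz_conjugacy_classes}), then count both the trace-$0$ elements and the order-$4$ elements using the $q^2+1$ trivially intersecting conjugates of $\mathcal{F}$ from Proposition~\ref{pr_sz_subgroups_conj}. You are a bit more explicit than the paper in verifying the order structure of $\mathcal{F}$ and in spelling out why the odd-order traces exhaust $\F_q^\times$, but the argument is the same.
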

\begin{proof}
  The elements of trace $0$ are those with orders $1,2,4$, and apart
  from the identity these are the elements that fix precisely one
  point of $\OV$. From the proof of Proposition
  \ref{element_proportions}, there are $q^4$ elements of trace $0$.

  The elements of order $4$ lie in a conjugate of $\mathcal{F}$, and
  there are $q^2 - q$ elements in each conjugate. Hence from
  Proposition \ref{pr_sz_subgroups_conj} there are $q (q - 1)(q^2 +
  1)$ elements of order $4$.
\end{proof}

\begin{pr} \label{prop_trick_general_case}
Let $P = (p_1 : p_2 : p_3 : p_4) \in \OV^g$ be uniformly random, where $\OV^g = \set{Rg \mid R \in \OV}$ for some $g \in \GL(4, q)$. Then
\begin{enumerate}
\item \begin{equation}
\Pr{p_i \neq 0 \mid i = 1,\dotsc,4} \geqslant \left(1 - \frac{q + 2}{q^2 + 1}\right)^4.
\end{equation}
\item If $Q = (q_1 : q_2 : q_3 : q_4) \in \OV^g$ is fixed, then
\begin{equation}
\Pr{p_2^t q_3^{t+2} \neq q_2^t p_3^{t+2}} \geqslant 1 - \frac{1 + (t + 2)q}{q^2 + 1}.
\end{equation}
\end{enumerate}
\end{pr}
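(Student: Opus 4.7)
Both parts reduce to counting $R \in \OV$ satisfying an algebraic condition on $P = R g$, and I would use the parametrisation $\OV = \set{P_\infty} \cup \set{P(a,b) \mid (a, b) \in \F_q^2}$ from \eqref{sz_ovoid_def} together with the defining ovoid property. For part (1), $p_i = 0$ is exactly the event that $R$ lies on the hyperplane $H_i = \set{X \in \PS^3(\F_q) \mid (X g)_i = 0}$; the non-collinearity of any three points of $\OV$ forces $\abs{\OV \cap H_i} \leqslant q + 1 \leqslant q + 2$, giving $\Pr{p_i = 0} \leqslant (q+2)/(q^2+1)$ for each $i$. To lift this to the joint product bound I would iterate, conditioning successively on $p_1, \dotsc, p_{j-1} \neq 0$; the slack between the sharp ovoid bound $q+1$ and the $q+2$ in the statement absorbs the shrinkage of the conditional sample space, keeping each conditional probability above $1 - (q+2)/(q^2+1)$ and yielding the product.

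For part (2) the key first step is algebraic. Writing $q = 2^{2m+1}$ and $t = 2^{m+1}$, so that $t \cdot 2^m = q$, the map $x \mapsto x^t$ is the $(m+1)$-fold Frobenius on $\F_q$, hence a bijection with inverse $x \mapsto x^{2^m}$. Applying $2^m$-th powers to both sides of $p_2^t q_3^{t+2} = q_2^t p_3^{t+2}$ and using $x^q = x$ in $\F_q$ converts it into the equivalent equation
\[ p_2 q_3^{t+1} = q_2 p_3^{t+1}, \]
which is linear in $p_2$ and of degree $t+1$ in $p_3$. I would then count $R \in \OV$ satisfying this.

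Generically, the restrictions of $p_2, p_3$ to the parametrisation of $\OV$ are linearly independent linear functions (this fails only when columns $2, 3$ of $g$ agree up to a scalar on their first three rows). In the generic case, for each value $c \in \F_q$ of $p_3(R)$ the equation forces the unique value $p_2(R) = (q_2/q_3^{t+1}) c^{t+1}$, so the solutions lie on a line in $\PS^3(\F_q)$ (the intersection of two independent hyperplanes), meeting $\OV$ in at most $2$ points by the ovoid property; summing over $c \in \F_q$ and adding $P_\infty$ gives at most $2q + 1$ solutions. In the exceptional case, $p_3$ becomes an affine function of $p_2$, so the equation collapses to a univariate polynomial of degree $t + 1$ in $p_2$; its at most $t + 1$ roots each determine a hyperplane containing at most $q + 1$ points of $\OV$, giving a total of at most $(t + 1)(q + 1) + 1 \leqslant 1 + (t + 2) q$, where the last inequality uses $q \geqslant t + 1$ (valid since $q = 2^{2m+1}$ and $t = 2^{m+1}$ with $m \geqslant 1$). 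The main obstacle is this case-split; once the algebraic simplification is in hand, the line/ovoid geometry in the generic case and the univariate degree argument in the exceptional case together deliver the claimed estimate.
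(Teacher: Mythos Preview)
Your Part~1 argument contains a genuine error: the conditional iteration does not close. After conditioning on $p_1 \neq 0$ the remaining sample space has size at least $q^2+1-(q+1)=q^2-q$, so the conditional probability of $p_2=0$ is bounded only by $(q+1)/(q^2-q)$; for this to be at most $(q+2)/(q^2+1)$ one would need $(q+1)(q^2+1)\leqslant(q+2)(q^2-q)$, i.e.\ $3q+1\leqslant 0$. One unit of slack between $q+1$ and $q+2$ cannot absorb the removal of $\Theta(q)$ points from a space of size $\Theta(q^2)$, and the discrepancy only worsens at later steps. The paper's own argument establishes just the per-coordinate estimate $\lvert\OV^g\cap\{p_i=0\}\rvert\leqslant q+2$ (by counting the $q+1$ lines of the plane through a chosen point of the section) and does not derive the product form either; only the qualitative $1-O(1/q)$ consequence, which already follows from the union bound $1-4(q+2)/(q^2+1)$, is ever used.

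For Part~2 you and the paper diverge after the common Frobenius reduction to $p_2\,q_3^{t+1}=q_2\,p_3^{t+1}$. The paper substitutes the explicit parametrisation $R=(a^{t+2}+ab+b^t,\,b,\,a,\,1)$, views $f(a,b)=p_2\,q_3^{t+1}-q_2\,p_3^{t+1}$ as a bivariate polynomial, bounds its $b$-degree, and counts at most $\deg_b f$ roots for each of the $q$ values of $a$; no case distinction on $g$ is needed. Your geometric slicing is a genuinely different idea and gives the sharper count $2q+1$ in the generic case, but the case analysis is incomplete: the division by $q_3^{t+1}$ presupposes $q_3\neq 0$, and neither branch covers $q_2=q_3=0$ or the sub-case where the linear parts of both $p_2$ and $p_3$ vanish. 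These are patchable, but the uniform polynomial-degree route avoids them.
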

\begin{proof}
\begin{enumerate}
\item By \cite[Chapter $11$, Lemma $3.4$]{huppertIII}, $\OV$ is an
  ovoid, so it intersects any (projective) line in $\PS^3(\F_q)$ in at most $2$
  points. The condition $p_i = 0$ defines a projective plane
  $\mathcal{P} \subseteq \PS^3(\F_q)$. If $\OV^g \cap \mathcal{P} \neq
  \emptyset$ then it contains a point $A$, and there are $q+1$ lines
  in $\mathcal{P}$ that passes through $A$. Each one of these lines passes
  through at most one other point of $\OV^g$, but each line contains
  $q+1$ points of $\mathcal{P}$, and hence at least $q$ of those points are not
  in $\OV^g$. Moreover, each pair of lines has only the point $A$ in common.
  
  Now we have considered $1 + q(q + 1)$ distinct points of $\mathcal{P}$, which
  are all points of $\mathcal{P}$, and we have proved that at most $q + 2$ of
  those lie in $\OV^g$.

\item Clearly, $p_2^t q_3^{t+2} = q_2^t p_3^{t+2}$ if and only if $p_2
  q_3^{t+1} = q_2 p_3^{t+1}$. If $P = P^{\prime} g$, where $P_{\infty}
  \neq P^{\prime} \in \OV$ and $g = [g_{i,j}]$, then $p_i = g_{1, i}
  (a^{t + 2} + b^t + ab) + g_{2,i} b + g_{3,i} a + g_{4,i}$ for some
  $a, b \in \F_q$.
  
  Introducing indeterminates $x$ and $y$ in place $a$ and $b$, it
  follows that the expression $p_2 q_3^{t+1} - q_2 p_3^{t+1}$ is a
  polynomial $f \in \F_q[x, y]$ with $\deg_x(f) \leqslant 3t+4$ and
  $\deg_y(f) \leqslant t + 2$. For each $a \in \F_q$, the number of
  roots of $f(a, y)$ is therefore at most $t + 2$, so the number of
  roots of $f$ is at most $q(t + 2)$.



\end{enumerate}

\end{proof}

\begin{pr} \label{sz_prop_frobenius}
If $g_1, g_2 \in \mathcal{FH}$ are uniformly random, then
\begin{equation}
\Pr{\abs{[g_1, g_2]} = 4} = 1 - \frac{1}{q - 1}.
\end{equation}
\end{pr}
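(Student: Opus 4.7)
The plan is to quotient $\mathcal{FH}$ by $\Zent(\mathcal{F})$ and reduce to counting commuting pairs in the affine group $\operatorname{AGL}(1, q) \cong \F_q \rtimes \F_q^{\times}$. By \eqref{sz_matrix_id1}, $S(a,b)^2 = S(0, a^{t+1})$, which is trivial if and only if $a = 0$. Therefore every non-identity element of $\mathcal{F}$ has order $2$ when $a = 0$ (so it lies in $\Zent(\mathcal{F})$) and order $4$ otherwise. Since $\mathcal{FH}/\mathcal{F} \cong \mathcal{H}$ is abelian, $[g_1, g_2] \in \mathcal{F}$, and hence $\abs{[g_1, g_2]} = 4$ holds exactly when the images of $g_1, g_2$ in $Q := \mathcal{FH}/\Zent(\mathcal{F})$ fail to commute.

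Next I identify $Q$. From $\mathcal{FH} = \mathcal{F} \rtimes \mathcal{H}$ we get $\abs{Q} = q(q-1)$. Identifying $\mathcal{F}/\Zent(\mathcal{F}) \cong \F_q^{+}$ via $S(a,b)\Zent(\mathcal{F}) \mapsto a$, identity \eqref{sz_matrix_id2} shows that conjugation by $M^{\prime}(\lambda)$ acts as multiplication by $\lambda^t$. Since $q - 1$ is odd and $t = 2^{m+1}$, the map $\lambda \mapsto \lambda^t$ is a bijection on $\F_q^{\times}$, so reparameterizing by $\mu = \lambda^t$ yields $Q \cong \operatorname{AGL}(1, q)$. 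Because $\abs{\Zent(\mathcal{F})} = q$ is constant over the fibres of the projection, a uniformly random element of $\mathcal{FH}$ maps to a uniformly random element of $Q$.

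Finally I count commuting pairs in $\operatorname{AGL}(1, q)$. Writing elements as $(a, \mu)$ with product $(a_1, \mu_1)(a_2, \mu_2) = (a_1 + \mu_1 a_2, \mu_1 \mu_2)$, the two commute if and only if $a_1(1 - \mu_2) = a_2(1 - \mu_1)$. A short case analysis on whether $\mu_1, \mu_2$ equal $1$ yields $q^2$ commuting pairs when $\mu_1 = \mu_2 = 1$, $q(q-2)$ pairs in each case where exactly one $\mu_i$ equals $1$ (forcing the corresponding $a$ to vanish), and $q(q-2)^2$ pairs when both $\mu_i \neq 1$ (since the defining equation has rank $1$, giving $q$ solutions $(a_1, a_2)$ for each $(\mu_1, \mu_2)$). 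Summing gives $q^2(q-1)$ commuting pairs out of $q^2(q-1)^2$ total, so the commuting probability in $Q$ is $1/(q-1)$, yielding $\Pr{\abs{[g_1, g_2]} = 4} = 1 - 1/(q-1)$. The case analysis is the only substantive step, and no real obstacle arises.
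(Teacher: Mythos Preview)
Your proof is correct and follows essentially the same approach as the paper: reduce modulo $\Zent(\mathcal{F})$ and count commuting pairs in the quotient $A = \mathcal{FH}/\Zent(\mathcal{F})$, which has order $q(q-1)$. The only cosmetic difference is that the paper organises the count by the centraliser of the first element (according to whether it is trivial, a nontrivial translation, or has nontrivial $\mathcal{H}$-part), whereas you identify $A$ explicitly with $\operatorname{AGL}(1,q)$ and split cases on the pair $(\mu_1,\mu_2)$; both arrive at $q^2(q-1)$ commuting pairs out of $q^2(q-1)^2$.
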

\begin{proof}
  Let $A = \mathcal{FH} / \Zent(\mathcal{F})$. By Theorem
  \ref{thm_suzuki_props}, $[g_1, g_2] \in \mathcal{F}$ and has order
  $4$ if and only if $[g_1, g_2] \notin \Zent(\mathcal{F})
  \triangleleft \mathcal{FH}$. It therefore suffices to find the
  proportion of pairs $k_1, k_2 \in A$ such that $[k_1, k_2] = 1$.

  If $k_1 = 1$ then $k_2$ can be any element of $A$, which contributes $q(q
  - 1)$ pairs.  If $1 \neq k_1 \in \mathcal{F} / \Zent(\mathcal{F})
  \cong \F_q$ then $\Cent_A(k_1) = \mathcal{F} / \Zent(\mathcal{F})$,
  so we again obtain $q(q - 1)$ pairs. Finally, if $k_1 \notin
  \mathcal{F} / \Zent(\mathcal{F})$ then $\abs{\Cent_A(k_1)} = q - 1$,
  so we obtain $q(q - 2)(q - 1)$ pairs. Thus we obtain $q^2 (q - 1)$ pairs
  from a total of $\abs{A \times A} =q^2 (q - 1)^2$ pairs, and the
  result follows.
\end{proof}

\begin{pr} \label{sz_2_generation}
Let $G = \Sz(q)$. If $x, y \in G$ are uniformly random, then 
\begin{equation}
\Pr{\gen{x, y} = G} = 1 - \OR{\sigma_0(\log(q))/q^2}
\end{equation}
\end{pr}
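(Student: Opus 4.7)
The plan is to apply the standard maximal-subgroup union bound. A pair $(x,y)$ fails to generate $G=\Sz(q)$ precisely when $\gen{x,y}$ lies in some maximal subgroup $M$, and for each fixed $M$ the probability of $x,y\in M$ is $(\abs{M}/\abs{G})^2$. Since $\Sz(q)$ is simple, every maximal subgroup is self-normalising, and therefore summing over its $[G:M]$ conjugates gives a contribution of exactly $1/[G:M]$ from the $G$-conjugacy class of $M$. Thus
\[
\Pr{\gen{x,y}\neq G} \;\leq\; \sum_{[M]}\frac{1}{[G:M]},
\]
where $[M]$ ranges over $G$-conjugacy classes of maximal subgroups.

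I would then invoke Theorem~\ref{sz_maximal_subgroups} to enumerate the classes and read off their indices from Theorem~\ref{thm_suzuki_props} and Proposition~\ref{pr_sz_subgroups_conj}:
\begin{align*}
[G:\mathcal{F}\mathcal{H}] &= q^2+1, \\
[G:\Norm_G(\mathcal{H})] &= q^2(q^2+1)/2, \\
[G:\mathcal{B}_i] &= q^2(q-1)(q\mp t+1)/4 \quad (i=1,2),
\end{align*}
together with $[G:\Sz(s)]=\abs{\Sz(q)}/\abs{\Sz(s)}\sim q^{5(r-1)/r}$ for each subfield $\F_s<\F_q$ with $q=s^r$. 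Maximality forces $r$ to be a prime divisor of $2m+1$, so $r\geq 3$ and $[G:\Sz(s)]\geq q^{10/3}$, while the number of subfield conjugacy classes is at most $\sigma_0(2m+1)\leq\sigma_0(\log q)$.

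Summing, the point stabiliser furnishes the dominant term $1/(q^2+1)=\Theta(1/q^2)$, the torus-normaliser classes contribute $\OR{q^{-4}}$ each, and the subfield subgroups contribute at most $\sigma_0(\log q)\cdot q^{-10/3}$ in total. Combining,
\[
\Pr{\gen{x,y}\neq G} \;\leq\; \frac{1}{q^2+1} + \OR{q^{-4}} + \OR{\frac{\sigma_0(\log q)}{q^{10/3}}} \;=\; \OR{\frac{\sigma_0(\log q)}{q^2}},
\]
as claimed.

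The main (and essentially only) point of care is the subfield case: one must verify that $\Sz(s)$ is self-normalising in $\Sz(q)$ and that the maximal subfield subgroups are parametrised by the prime divisors of $2m+1$. Both follow from the Galois correspondence together with the fact that $\Aut(\Sz(q))/\Sz(q)$ is cyclic of order $2m+1$; even if self-normality failed for some $M$, the contribution would only decrease, so the stated bound is robust. Everything else is routine arithmetic with the indices above.
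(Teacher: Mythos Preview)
Your proof is correct and follows essentially the same approach as the paper: a union bound over the conjugacy classes of maximal subgroups, using that each class of a self-normalising maximal $M$ contributes $1/[G:M]$, with the point stabiliser $\mathcal{F}\mathcal{H}$ giving the dominant $\Theta(1/q^2)$ term and $\OR{\sigma_0(\log q)}$ classes in total. The paper's version is slightly terser---it simply observes that every class contributes at most $1/(q^2+1)$ since $\mathcal{F}\mathcal{H}$ has the smallest index---whereas you compute the individual contributions explicitly, but the argument is the same.
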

\begin{proof}
By \eqref{suzuki_order} and Theorem \ref{sz_maximal_subgroups}, the maximal subgroup $M \leqslant G$ with smallest index is $M = \mathcal{FH}$. Then $[G : M] = q^2 + 1$ and since $M = \Norm_G(M)$, there are $q^2 + 1$ conjugates of $M$. 
\begin{equation}
\Pr{\gen{x, y} \leqslant M^g \; \text{some} \; g \in G} \leqslant \sum_{i = 1}^{q^2 + 1} \Pr{\gen{x, y} \leqslant M} = \frac{1}{q^2 + 1}
\end{equation}
The probability that $\gen{x, y}$ lies in any maximal not conjugate to
$M$ must be less than $1/(q^2 + 1)$ because the other maximals have larger indices. There are $\OR{\sigma_0(\log(q))}$ number of conjugacy classes of
maximal subgroups, and hence the probability that $\gen{x, y}$ lies in a maximal subgroup is $\OR{\sigma_0(\log(q))/q^2}$.
\end{proof}

\subsection{Alternative definition}
\label{section:sz_alt_def}
The way we have defined the Suzuki groups resembles the original
definition, but it is not clear that the groups are exceptional groups
of Lie type. This was first proved in \cite{ono62, MR0144967}. A more common
way to define the groups are as the fixed points of a certain
automorphism of $\Sp(4, q)$. This approach is followed in
\cite[Chapter $4.10$]{raw04}, and it provides a more straightforward method
to deal with non-constructive recognition of $\Sz(q)$.

Let $\Sp(4, q)$ denote the standard copy of the symplectic group, preserving the following symplectic form:
\begin{equation} \label{standard_symplectic_form}
J = \begin{bmatrix}
0 & 0 & 0 & 1 \\
0 & 0 & 1 & 0 \\
0 & 1 & 0 & 0 \\
1 & 0 & 0 & 0
\end{bmatrix}.
\end{equation}

From \cite[Chapter $4.10$]{raw04}, we know that the elements of $\Sz(q)$
are precisely the fixed points of an automorphism $\Psi$ of $\Sp(4,
q)$. Computing $\Psi(g)$ for some $g \in \Sp(4,q)$ amounts to taking a
submatrix of the exterior square of $g$ and then replacing each matrix
entry $x$ by $x^{2^m}$. Moreover, $\Psi$ is defined on $\Sp(4, F)$ for
$F \geqslant \F_q$. A more detailed description of how to compute
$\Psi(g)$ can be found in \cite[Chapter $4.10$]{raw04}.

\begin{lem} \label{lem_steinberg_lang}
Let $G \leqslant \Sp(4, q)$ have natural module $V$ and assume that $V$ is absolutely irreducible. Then $G^h \leqslant \Sz(q)$ for some $h \in \GL(4, q)$ if and only if $V \cong V^{\Psi}$.
\end{lem}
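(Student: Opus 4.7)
The plan is to translate both sides of the equivalence into concrete statements about a conjugating matrix, using the characterisation of $\Sz(q)$ as the fixed-point set of $\Psi$, and then link them by the Lang--Steinberg theorem.

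For the forward direction, suppose $G^h \leqslant \Sz(q)$. For each $g \in G$, the element $g^h = h^{-1}gh$ lies in $\Sz(q)$, hence is fixed by $\Psi$. Since $\Psi$ is a group endomorphism of $\Sp(4,q)$, I would compute
\begin{equation*}
h^{-1} g h \;=\; \Psi(h^{-1} g h) \;=\; \Psi(h)^{-1}\, \Psi(g)\, \Psi(h),
\end{equation*}
rearrange to obtain $\Psi(g) = g^{h \Psi(h)^{-1}}$, and set $k := h\,\Psi(h)^{-1}$. Then conjugation by $k$ realises $\Psi|_G$, which is exactly the statement that the matrix $k$ furnishes a $G$-module isomorphism $V \cong V^{\Psi}$.

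For the backward direction, assume $V \cong V^{\Psi}$, so there exists $k \in \GL(4,q)$ such that $\Psi(g) = k^{-1} g k$ for every $g \in G$. The goal is to find $h \in \GL(4,q)$ with $h\,\Psi(h)^{-1} = \lambda k$ for some scalar $\lambda \in \F_q^{\times}$, because then reversing the calculation above gives $\Psi(g^h) = g^h$ for all $g \in G$, hence $G^h \leqslant \Sz(q)$. Existence of such an $h$ (over the algebraic closure) is furnished by the Lang--Steinberg theorem: as explained in \cite[Chapter 4.10]{raw04}, $\Psi$ is a Steinberg endomorphism of the connected algebraic group $\Sp_4(\bar{\F}_q)$, so the Lang map $x \mapsto x\,\Psi(x)^{-1}$ is surjective on $\Sp_4(\bar{\F}_q)$. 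Applying surjectivity to a suitable scalar multiple of $k$ (chosen to lie in $\Sp_4$) produces $h_0 \in \Sp_4(\bar{\F}_q)$ with $h_0\,\Psi(h_0)^{-1}$ equal to that multiple of $k$.

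The main obstacle is rationality: a priori $h_0$ is only defined over $\bar{\F}_q$, whereas the statement requires $h \in \GL(4,q)$. This is where absolute irreducibility of $V$ enters. Since $G^{h_0} \leqslant \Sz(q) \leqslant \GL(4,q)$ while $G \leqslant \GL(4,q)$, for any Galois automorphism $\tau \in \Gal(\bar{\F}_q / \F_q)$ the matrix $h_0^{-1} h_0^{\tau}$ centralises $G$. Absolute irreducibility gives $\EndR_{\F_q G}(V) = \F_q$ by Schur's lemma, so $h_0^{-1} h_0^{\tau}$ is a scalar in $\bar{\F}_q^{\times}$; a standard cocycle/Hilbert 90 argument then shows that $h_0$ can be rescaled by an element of $\bar{\F}_q^{\times}$ to lie in $\GL(4,q)$, and since scalars act trivially by conjugation this does not affect $G^h \leqslant \Sz(q)$. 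Combining the two directions completes the proof.
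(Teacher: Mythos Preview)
Your approach matches the paper's: both directions hinge on the Lang--Steinberg theorem for $\Sp_4$, followed by a descent from $\bar{\F}_q$ to $\F_q$ using absolute irreducibility. For the descent you sketch an explicit Galois-cocycle / Hilbert~90 argument, whereas the paper simply invokes \cite[Theorem~29.7]{curtis_reiner}; these are equivalent packagings of the same idea.

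There is one genuine gap in your forward direction. You write $\Psi(h^{-1}gh) = \Psi(h)^{-1}\Psi(g)\Psi(h)$, but $\Psi$ is an automorphism of $\Sp(4,q)$, not of $\GL(4,q)$, and the hypothesis only gives $h \in \GL(4,q)$; so $\Psi(h)$ is not yet defined. The paper fixes this before computing: since $V$ is absolutely irreducible and both $G$ and $\Sz(q)$ preserve the standard symplectic form $J$, uniqueness of the form up to scalar forces $hJh^T = \lambda J$ for some $\lambda \in \F_q^{\times}$; replacing $h$ by $\sqrt{\lambda^{-1}}\,h$ (square roots exist since $q$ is even) puts $h$ into $\Sp(4,q)$ without changing $G^h$, after which your computation goes through. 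You already make the analogous adjustment in the backward direction (``a suitable scalar multiple of $k$ chosen to lie in $\Sp_4$''), so this is an oversight rather than a structural problem.
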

\begin{proof}
  Assume $G^h \leqslant \Sz(q)$. Both $G$ and $\Sz(q)$ preserve the
  form \eqref{standard_symplectic_form}, and this form is unique up to a
  scalar multiple, since $V$ is absolutely irreducible. Therefore $h J
  h^T = \lambda J$ for some $\lambda \in \F_q^{\times}$. But if $\mu =
  \sqrt{\lambda^{-1}}$ then $(\mu h) J (\mu h)^T = J$, so that $\mu h \in
  \Sp(4, q)$. Moreover, $G^h = G^{\mu h}$, and hence we may assume
  that $h \in \Sp(4, q)$. Let $x = h \Psi(h^{-1})$ and observe that
  for each $g \in G$, $\Psi(g^h) = g^h$. It follows that
\begin{equation}
g^x = \Psi(h) g^h \Psi(h^{-1}) = \Psi(h g^h h^{-1}) = \Psi(g)
\end{equation}
so $V \cong V^{\Psi}$.

Conversely, assume that $V \cong V^{\Psi}$. Then there is some $h \in
\GL(4, q)$ such that for each $g \in G$ we have $g^h = \Psi(g)$. As above, since
both $G$ and $\Psi(G)$ preserve the form
\eqref{standard_symplectic_form}, we may assume that $h \in \Sp(4,
q)$. 

Let $K$ be the algebraic closure of $\F_q$. The Steinberg-Lang
Theorem (see \cite{steinberg_lang}) asserts that there exists $x \in
\Sp(4, K)$ such that $h = x^{-1} \Psi(x)$. It follows that
\begin{equation}
\Psi(g^{x^{-1}}) = \Psi(g)^{h^{-1} x^{-1}} = g^{x^{-1}}
\end{equation}
so that $G^{x^{-1}} \leqslant \Sz(q)$. Thus $G$ is conjugate in
$\GL(4, K)$ to a subgroup $S$ of $\Sz(q)$, and it follows from
\cite[Theorem $29.7$]{curtis_reiner}, that $G$ is conjugate to $S$ in
$\GL(4, q)$.
\end{proof}

\subsection{Tensor indecomposable representations}
\label{section:sz_indecomposables}

It follows from \cite{MR1901354} that over an algebraically closed
field in defining characteristic, up to Galois twists, there is only
one absolutely irreducible tensor indecomposable representation of
$\Sz(q)$ : the natural representation.

\section{Small Ree groups}  \label{section:ree_theory}

The small Ree groups were first described in \cite{MR0125154, small_ree}. Their
structure has been investigated in \cite{MR0141707, ward66, ree85, kleidman88}. A short survey is also given in \cite[Chapter
$11$]{huppertIII}. They should not be confused with the Big Ree groups, which are described in Section \ref{section:big_ree_theory}.

\subsection{Definition and properties}
 We now define our standard copy of the Ree groups. The generators that we use are those described in \cite{ree_gens}. Let $q = 3^{2m +
   1}$ for some $m > 0$ and let $t = 3^m$. For $x \in \F_q$ and
 $\lambda \in \F_q^{\times}$, define the matrices
\begin{equation}
\alpha(x) = \begin{bmatrix}
1 & x^t & 0 & 0 & -x^{3 t + 1}  & -x^{3t + 2}  & x^{4t + 2}  \\
0 & 1 & x &  x^{t + 1} & -x^{2t + 1}  & 0 & -x^{3t + 2} \\
0 & 0 & 1 & x^t & -x^{2 t} & 0 & x^{3t + 1}  \\
0 & 0 & 0 & 1 & x^t & 0 & 0 \\
0 & 0 & 0 & 0 &	1 & -x & x^{t + 1} \\
0 & 0 & 0 & 0 & 0 & 1 & -x^t \\
0 & 0 & 0 & 0 & 0 & 0 &	1
\end{bmatrix} 
\end{equation}
\begin{equation}
\beta(x) = \begin{bmatrix}
1 & 0 & -x^t & 0 & -x & 0 & -x^{t + 1} \\
0 & 1 & 0 & x^t & 0 & -x^{2t} & 0 \\
0 & 0 & 1 & 0 & 0 & 0 & x \\
0 & 0 & 0 & 1 & 0 & x^t & 0 \\
0 & 0 & 0 & 0 &	1 & 0 & x^t \\
0 & 0 & 0 & 0 & 0 & 1 & 0 \\
0 & 0 & 0 & 0 & 0 & 0 &	1
\end{bmatrix} 
\end{equation}
\begin{equation}
\gamma(x) = \begin{bmatrix}
1 & 0 & 0 &  -x^t & 0 & -x &  -x^{2t} \\
0 & 1 & 0 & 0 &  -x^t & 0 & x  \\
0 & 0 & 1 & 0 & 0 & x^t & 0 \\
0 & 0 & 0 & 1 & 0 & 0 & -x^t \\
0 & 0 & 0 & 0 &	1 & 0 & 0 \\
0 & 0 & 0 & 0 & 0 & 1 & 0 \\
0 & 0 & 0 & 0 & 0 & 0 &	1
\end{bmatrix} 
\end{equation}
\begin{equation}
h(\lambda) = \begin{bmatrix}
\lambda^t & 0 & 0 & 0 & 0 & 0 & 0 \\
0 & \lambda^{1 - t} & 0 & 0 & 0 & 0 & 0 \\
0 & 0 & \lambda^{2t - 1} & 0 & 0 & 0  & 0 \\
0 & 0 & 0 & 1 & 0 & 0 & 0 \\
0 & 0 & 0 & 0 & \lambda^{1 - 2t} & 0 & 0 \\
0 & 0 & 0 & 0 & 0 & \lambda^{t - 1} & 0 \\
0 & 0 & 0 & 0 & 0 & 0 & \lambda^{-t} 
\end{bmatrix} 
\end{equation}
\begin{equation}
\Upsilon = \begin{bmatrix}
0 & 0 & 0 & 0 & 0 & 0 & -1 \\
0 & 0 & 0 & 0 & 0 & -1 & 0 \\
0 & 0 & 0 & 0 & -1 & 0 & 0 \\
0 & 0 & 0 & -1 & 0 & 0 & 0 \\
0 & 0 & -1 & 0 & 0 & 0 & 0 \\
0 & -1 & 0 & 0 & 0 & 0 & 0 \\
-1 & 0 & 0 & 0 & 0 & 0 & 0
\end{bmatrix}
\end{equation}

and define the Ree group as
\begin{equation}
\Ree(q) = \gen{ \alpha(x), \beta(x), \gamma(x), h(\lambda), \Upsilon \mid x \in \F_q, \lambda \in \F_q^{\times}}.
\end{equation}
Also, define the subgroups of upper triangular and diagonal matrices:
\begin{align}
U(q) &= \gen{\alpha(x), \beta(x), \gamma(x) \mid x \in \F_q} \\
H(q) &= \set{h(\lambda) \mid \lambda \in \F_q^{\times}} \cong \F_q^{\times}.
\end{align}

From \cite{ree85} we then know that each element of $U(q)$ can be
expressed in a unique way as 
\begin{equation}
S(a, b, c) = \alpha(a) \beta(b) \gamma(c)
\end{equation}
so that $U(q) = \set{S(a, b, c) \mid a, b, c \in \F_q}$, and it
follows that $\abs{U(q)} = q^3$. We also know that $U(q)$ is a Sylow
$3$-subgroup of $\Ree(q)$, and direct calculations show that 
\begin{align}
\label{ree_matrix_id1} \begin{split}
& S(a_1, b_1, c_1) S(a_2, b_2, c_2) = \\
& = S(a_1 + a_2, b_1 + b_2 - a_1 a_2^{3t}, c_1 + c_2 - a_2 b_1 + a_1 a_2^{3t + 1} - a_1^2 a_2^{3t}), 
\end{split} \\
& S(a, b, c)^{-1} = S(-a, -(b + a^{3t + 1}), -(c + ab - a^{3t + 2})), \\
\begin{split}
& S(a_1, b_1, c_1)^{S(a_2, b_2, c_2)} = \\
& = S(a_1, b_1 - a_1 a_2^{3t} + a_2 a_1^{3t}, c_1 + a_1 b_2 - a_2 b_1 + a_1 a_2^{3t + 1} - a_2 a_1^{3t + 1} - a_1^2 a_2^{3t} + a_2^2 a_1^{3t})
\end{split}
\end{align}
and
\begin{align} 
\label{ree_matrix_id2} S(a, b, c)^{h(\lambda)} &= S(\lambda^{3t - 2} a, \lambda^{1 - 3t} b, \lambda^{-1} c).
\end{align}

\begin{rem}[Standard generators of $\Ree(q)$]
  As standard generators for $\Ree(q)$ we will use 
\[\set{S(1, 0, 0), h(\lambda), \Upsilon},\] 
where $\lambda$ is a primitive element of
  $\F_q$, whose minimal polynomial is the defining polynomial of
  $\F_q$.
\end{rem}

The Ree groups preserve a symmetric bilinear form on $\F_q^7$, represented by the matrix
\begin{equation} \label{standard_bilinear_form}
J = \begin{bmatrix}
0 & 0 & 0 & 0 & 0 & 0 & 1 \\
0 & 0 & 0 & 0 & 0 & 1 & 0 \\
0 & 0 & 0 & 0 & 1 & 0 & 0 \\
0 & 0 & 0 & -1 & 0 & 0 & 0 \\
0 & 0 & 1 & 0 & 0 & 0 & 0 \\
0 & 1 & 0 & 0 & 0 & 0 & 0 \\
1 & 0 & 0 & 0 & 0 & 0 & 0
\end{bmatrix}
\end{equation}

From \cite{ward66} and \cite[Chapter $11$]{huppertIII} we immediately obtain
\begin{pr} \label{sylow3_props}
Let $G = \Ree(q)$.
\begin{enumerate}
\item $\abs{G} = q^3 (q^3 + 1) (q - 1)$ where $\gcd(q^3 + 1, q - 1) = 2$.
\item Conjugates of $U(q)$ intersect trivially.
\item The centre $\Zent(U(q)) = \set{S(0, 0, c) \mid c \in F_q}$.
\item The derived group $U(q)^{\prime} = \set{S(0, b, c) \mid b, c \in F_q}$, and its elements have order $3$.
\item The elements in $U(q) \setminus U(q)^{\prime} = \set{S(a, b, c) \mid a \neq 0}$ have order $9$ and their cubes form $\Zent(U(q)) \setminus \gen{1}$. 
\item $\Norm_G(U(q)) = U(q) H(q)$ and $G$ acts doubly transitively on the right cosets of $\Norm_G(U(q))$, \emph{i.e.} on a set of size $q^3 + 1$.
\item $U(q) H(q)$ is a Frobenius group with Frobenius kernel $U(q)$.
\item The proportion of elements of order $q-1$ in $U(q) H(q)$ is $\phi(q - 1) / (q - 1)$, where $\phi$ is the Euler totient function.
\end{enumerate}
\end{pr}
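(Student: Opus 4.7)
The plan is to dispatch the eight claims by combining direct matrix computation with the identities \eqref{ree_matrix_id1}--\eqref{ree_matrix_id2} with appeals to the cited references \cite{ward66} and \cite[Chapter~11]{huppertIII} for the deeper structural facts. Part (1) cites Ree's classical order formula; for the gcd, observe that $q^3 + 1 \equiv 1 + 1 = 2 \pmod{q - 1}$ so the gcd divides $2$, and since $q$ is odd both factors are even, so the gcd equals $2$ exactly. Part (6), the Borel subgroup structure together with the doubly transitive action on the coset space of size $[G : U(q)H(q)] = q^3(q^3+1)(q-1)/(q^3(q-1)) = q^3+1$, I would simply invoke from the references; once this is in hand, the remaining parts fall into place.

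Part (2) follows from part (6): two distinct conjugates of $U(q)$ stabilise two distinct points of the doubly transitive action, so their intersection lies in a two-point stabiliser, which is a conjugate of $H(q)$; since $\gcd(\abs{U(q)}, \abs{H(q)}) = \gcd(q^3, q - 1) = 1$, that intersection is trivial. For parts (3)--(5) I would work purely inside $U(q)$ using the multiplication rule \eqref{ree_matrix_id1}. Checking that $S(0, 0, c)$ commutes with every $S(a', b', c')$ is a direct substitution, and conversely any $S(a, b, c)$ with $(a, b) \neq (0, 0)$ fails to commute with a suitably chosen $\alpha(\mu)$, giving (3). The conjugation identity shows that every commutator has $a$-coordinate equal to $0$, yielding $U(q)' \subseteq \set{S(0, b, c)}$; specific commutators of pairs of the form $\alpha(\mu_1), \alpha(\mu_2)$ and $\alpha(\mu_1), \beta(\mu_2)$ then realise arbitrary $(b, c)$, giving the reverse inclusion. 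On the abelian set $\set{S(0, b, c)}$ the group law is additive in $(b, c)$, so every non-identity element has order $3$ in characteristic three. For (5), evaluating $S(a, b, c)^3$ using \eqref{ree_matrix_id1} twice in succession yields $S(0, 0, -a^{3t + 2})$, which lies in $\Zent(U(q)) \setminus \gen{1}$ precisely when $a \neq 0$; a further cube produces the identity.

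For part (7), the action formula \eqref{ree_matrix_id2} reduces the Frobenius property to a statement about characters of $H(q) \cong \F_q^{\times}$: a nontrivial $h(\lambda)$ with a nontrivial fixed point $S(a, b, c)$ forces $\lambda^{3t - 2} = 1$ whenever $a \neq 0$, $\lambda^{1 - 3t} = 1$ whenever $b \neq 0$, and $\lambda = 1$ whenever $c \neq 0$; combining these constraints with the coprimality of the relevant exponents forces $\lambda = 1$. Part (8) then reduces to standard Frobenius-group accounting: the complement $H(q) \cong \F_q^{\times}$ is cyclic of order $q - 1$ with exactly $\phi(q - 1)$ generators, and its $\abs{U(q)} = q^3$ conjugates partition $U(q)H(q) \setminus U(q)$, contributing $q^3 \phi(q - 1)$ elements of order $q - 1$ out of a total of $\abs{U(q)H(q)} = q^3 (q - 1)$, giving the claimed proportion $\phi(q - 1)/(q - 1)$.

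The main obstacle is part (6): the Borel structure and double transitivity genuinely require the theory of twisted groups of Lie type, either through the original Ree construction via fixed points of an automorphism or through the BN-pair analysis in \cite{ward66} and \cite[Chapter~11]{huppertIII}. Once (6) is available, the remaining work is elementary computation with $7 \times 7$ matrices using the explicit multiplication and conjugation identities already recorded.
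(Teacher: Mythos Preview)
The paper itself offers no proof beyond citing \cite{ward66} and \cite[Chapter~11]{huppertIII}, so your explicit computations go well beyond what the paper provides, and your arguments for (1)--(6) are essentially correct. One small omission in (5): you do not verify that $a \mapsto -a^{3t+2}$ is surjective onto $\F_q^\times$, but this follows since $(3t+2)(3t-2) = 3q-4 \equiv -1 \pmod{q-1}$ forces $\gcd(3t+2,\,q-1)=1$.

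Your argument for (7), however, contains a genuine error. The claimed ``coprimality of the relevant exponents'' fails: when $a=c=0$ and $b\neq 0$ the only constraint is $\lambda^{3t-1}=1$, and $\gcd(3t-1,\,q-1)=\gcd(3^{m+1}-1,\,3^{2m+1}-1)=3^{\gcd(m+1,\,2m+1)}-1=2$, not $1$. Taking $\lambda=-1$, formula \eqref{ree_matrix_id2} gives $S(0,b,0)^{h(-1)}=S(0,(-1)^{1-3t}b,0)=S(0,b,0)$ since $1-3t$ is even. Thus $h(-1)$ centralises the subgroup $\set{S(0,b,0)}$ of order $q$, and $U(q)H(q)$ is \emph{not} a Frobenius group in the standard sense; the stated (7) appears simply to be wrong (the analogous Suzuki statement holds precisely because there $q$ is even and $\mathcal{H}$ contains no involution). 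No argument along your lines can succeed here, because the target itself is false.

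This also breaks your derivation of (8), since the conjugates $H(q)^u$ do not partition $U(q)H(q)\setminus U(q)$. The conclusion of (8) is nonetheless correct and can be salvaged: by Schur--Zassenhaus all complements to $U(q)$ in $U(q)H(q)$ are conjugate, so an element of order $q-1$ generates exactly one $H(q)^u$; and since a \emph{generator} $h(\lambda)$ of $H(q)$ does satisfy $C_{U(q)}(h(\lambda))=1$ (the exponents $3t-2$, $3t-1$, $1$ are each strictly between $0$ and $q-1$), the $q^3$ conjugates are distinct, each contributing $\phi(q-1)$ elements of order $q-1$.
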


For our purposes, we want another set to act (equivalently) on.
\begin{pr} \label{ree_doubly_transitive_action}
There exists $\OV \subseteq \PS^6(\F_q)$ on which $G = \Ree(q)$ acts faithfully and doubly transitively. This set is
\begin{equation} \label{ree_ovoid_def}
\begin{split}
\OV = &\set{(0 : 0 : 0 : 0 : 0 : 0 : 1)} \cup \\
 &\lbrace (1 : a^t : -b^t : (ab)^t - c^t : -b - a^{3t + 1} - (ac)^t :  -c - (bc)^t - a^{3t + 2} - a^t b^{2t} : \\ 
& a^t c - b^{t + 1} + a^{4t + 2} - c^{2t} - a^{3t + 1} b^t - (abc)^t) \rbrace 
\end{split}
\end{equation}
Moreover, the stabiliser of $P_{\infty} = (0 : 0 : 0 : 0 : 0 : 0 : 1)$ is $U(q) H(q)$, the stabiliser of $P_0 = (1 : 0 : 0 : 0 : 0 : 0 : 0)$ is $(U(q) H(q))^{\Upsilon}$ and the stabiliser of $(P_{\infty}, P_0)$ is $H(q)$.
\end{pr}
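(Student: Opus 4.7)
My plan is to identify $\OV$ with the coset space $G / \Norm_G(U(q))$ on which $G$ already acts doubly transitively by Proposition \ref{sylow3_props}. Concretely, I will show that $P_\infty$ has stabiliser exactly $U(q)H(q)$ and that its $G$-orbit is $\OV$; since $\abs{\OV} = q^3 + 1 = [G : U(q)H(q)]$, the two actions will automatically coincide as $G$-sets.

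First I verify that $U(q)H(q)$ stabilises $P_\infty = (0:0:0:0:0:0:1)$. The matrices $\alpha(x), \beta(x), \gamma(x)$ are upper unitriangular, and $h(\lambda)$ is diagonal with last entry $\lambda^{-t} \neq 0$; hence every generator of $U(q)H(q)$ sends the row vector $(0,\dots,0,1)$ to a nonzero scalar multiple of itself, so $P_\infty$ is projectively fixed. Next, reading off the last row of $\Upsilon$, one sees $P_\infty \cdot \Upsilon = (-1:0:0:0:0:0:0) = P_0$, so $\Upsilon \notin \operatorname{Stab}(P_\infty)$ and the orbit of $P_\infty$ properly contains $\set{P_\infty}$.

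The core computation is to identify the $q^3$ non-infinity points of $\OV$ as the images $P_0 \cdot S(a,b,c)$ for $(a,b,c) \in \F_q^3$. Since $P_0 = (1,0,\dots,0)$, the vector $P_0 \cdot S(a,b,c)$ is simply the first row of $S(a,b,c) = \alpha(a)\beta(b)\gamma(c)$. I will carry out this $7 \times 7$ matrix product, reading the first row and matching it term by term with the seven coordinates listed in \eqref{ree_ovoid_def}; this is routine but long, and the identities \eqref{ree_matrix_id1}--\eqref{ree_matrix_id2} (which were themselves derived from these matrix products) give confidence in the expected form. To see that distinct parameters give distinct projective points, normalise by the first coordinate $1$: then the second, third, and fourth coordinates are $a^t$, $-b^t$, and $(ab)^t - c^t$ respectively, and since $x \mapsto x^t = x^{3^m}$ is a bijection of $\F_q$ (a power of the Frobenius), we can recover $a$, then $b$, then $c$ uniquely. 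This verifies $\abs{\OV} = q^3 + 1$.

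Combining these facts, the orbit $P_\infty^G$ contains $\set{P_\infty} \cup \set{P_0 \cdot S(a,b,c) : a,b,c \in \F_q}$, which has cardinality $q^3 + 1$; but $[G : \operatorname{Stab}(P_\infty)] \leqslant [G : U(q)H(q)] = q^3 + 1$, so equality holds throughout and $\operatorname{Stab}(P_\infty) = U(q)H(q)$, with orbit exactly $\OV$. The permutation action on $\OV$ is therefore $G$-equivalent to the coset action on $G/U(q)H(q)$, which is faithful and doubly transitive by Proposition \ref{sylow3_props}. Finally, conjugating by $\Upsilon$ (which swaps $P_\infty$ and $P_0$) gives $\operatorname{Stab}(P_0) = (U(q)H(q))^\Upsilon$, and the double-stabiliser is the intersection $\operatorname{Stab}(P_\infty) \cap \operatorname{Stab}(P_0)$, which contains $H(q)$ since $h(\lambda)$ is diagonal and hence fixes $P_0$ as well; by Proposition \ref{sylow3_props} and the orbit-stabiliser count $[\operatorname{Stab}(P_\infty) : \operatorname{Stab}(P_\infty,P_0)] = q^3$, equality with $H(q)$ follows. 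The main obstacle is purely bookkeeping: confirming that the first row of the triple product $\alpha(a)\beta(b)\gamma(c)$ matches the precise expressions given in \eqref{ree_ovoid_def}, which is a lengthy but entirely mechanical calculation in characteristic $3$.
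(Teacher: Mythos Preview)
Your proof is correct and follows essentially the same approach as the paper: both identify $\OV \setminus \{P_\infty\}$ with the first rows of the matrices $S(a,b,c)$ (equivalently, of elements of $U(q)H(q)$), and both transport the known doubly transitive coset action from Proposition~\ref{sylow3_props} onto $\OV$ via the map $(U(q)H(q))g \mapsto P_\infty g$. The only cosmetic difference is that the paper invokes the Bruhat decomposition $G = U(q)H(q) \cup U(q)H(q)\,\Upsilon\,U(q)H(q)$ to show this map lands in $\OV$, whereas you reach the same conclusion by an orbit--stabiliser count; your argument for the two-point stabiliser is in fact spelled out more fully than the paper's.
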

\begin{proof} Notice that $\OV \setminus
  \set{P_{\infty}}$ consists of the first rows of the elements of $U(q)H(q)$.
  From Proposition \ref{sylow3_props} it follows that $G$ is the
  disjoint union of $U(q)H(q)$ and $U(q)H(q) \Upsilon U(q)H(q)$.
  Define a map between the $G$-sets as $(U(q)H(q))g \mapsto P_{\infty}
  g$.

  If $g \in U(q) H(q)$ then $P_{\infty} g = P_{\infty}$ and hence the
  stabiliser of $P_{\infty}$ is $U(q) H(q)$. If $g \notin U(q) H(q)$
  then $g = x \Upsilon y$ where $x,y \in U(q) H(q)$. Hence $P_{\infty}
  g = P_0 y \in \OV$ since $P_0 y$ is the first row of $y$. It follows
  that the map defines an equivalence between the $G$-sets.
\end{proof}



\begin{pr} \label{pr_involution_props}
Let $G = \Ree(q)$.
\begin{enumerate}
\item The stabiliser in $G$ of any two distinct points of $\OV$ is conjugate to $H(q)$, and the stabiliser of any triple of points has order $2$.
\item The number of elements in $G$ that fix exactly one point is $q^6 - 1$.
\item All involutions in $G$ are conjugate in $G$.
\item An involution fixes $q + 1$ points.
\end{enumerate}
\end{pr}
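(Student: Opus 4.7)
The plan is to exploit the $2$-transitive action of $G = \Ree(q)$ on $\OV$ established in Proposition \ref{ree_doubly_transitive_action}, together with the explicit diagonal form of $H(q)$ and the parameterisation of $\OV \setminus \set{P_\infty}$. I would organise the proof in the order: pair case of (i), then (ii), then (iv), then (iii), and finally the triple case of (i), since the fixed-point computation in (iv) feeds into both (iii) and the triple stabiliser analysis.

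The pair case of (i) is immediate from $2$-transitivity and the fact that the stabiliser of $(P_\infty, P_0)$ is $H(q)$. For (ii), Proposition \ref{sylow3_props} says $U(q) H(q)$ is a Frobenius group with kernel $U(q)$, so $U(q)$ acts regularly on $\OV \setminus \set{P_\infty}$ and every non-identity $u \in U(q)$ fixes exactly $P_\infty$. Distinct conjugates of $U(q)$ intersect trivially, and there are $\abs{\OV} = q^3 + 1$ such conjugates (one stabilising each point of $\OV$), yielding $(q^3 + 1)(q^3 - 1) = q^6 - 1$ non-identity elements that each fix a unique point.

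For (iv), I would compute directly with $z = h(-1) \in H(q)$. Since $t = 3^m$ is odd, $z = \diag(-1, 1, -1, 1, -1, 1, -1)$. Substituting into the parameterisation of $\OV \setminus \set{P_\infty}$ and imposing the projective fixed-point condition on a point with parameters $(a, b, c)$ forces the scaling factor $\mu = -1$, then $a = 0$ (from position $2$) and $c = 0$ (from position $4$); the remaining coordinates then agree automatically, producing $q$ fixed points parameterised by $b \in \F_q$ and hence $q + 1$ in total with $P_\infty$. For (iii), since $H(q)$ is cyclic of even order with unique involution $z$, it suffices to show every involution in $G$ is conjugate into $H(q)$. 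I would argue that any involution $j$ fixes at least two points of $\OV$ (via analysis of its $\pm 1$-eigenspaces intersected with the ovoid), hence lies in a pair stabiliser, which is conjugate to $H(q)$ by the pair case of (i). Combined with (iv), each involution then fixes $q + 1$ points.

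The triple case of (i) is the main technical obstacle. After conjugating the first two points of the triple to $(P_\infty, P_0)$, the triple stabiliser embeds into $H(q)$ and fixes the image $R'$ of the third point. A careful case analysis on the nonzero coordinates of $R'$ is needed: the requirement that $h(\lambda)$ scales every nonzero coordinate of $R'$ by a common factor $\mu$, combined with the diagonal exponents $\set{t, 1-t, 2t-1, 1-2t, t-1, -t}$ of $h(\lambda)$ and the odd parity of $t$, collapses the system of exponent relations to $\lambda^{2} = 1$. This confines the stabiliser to $\gen{z}$, and the presence of $z$ itself in the stabiliser for the triples under consideration is confirmed by the orbit structure of $z$ in $\OV$ from (iv), giving the claimed order $2$.
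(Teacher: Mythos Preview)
Your arguments for (ii), (iv), and the pair case of (i) are correct and essentially coincide with the paper's. The paper, however, disposes of (i) and (iii) entirely by citing \cite[Chapter~XI, Theorem~13.2]{huppertIII}, and its logical order is the reverse of yours: it invokes (iii) first and then deduces (iv) for all involutions from the single computation with $h(-1)$ that you also carry out.

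Your triple case of (i) contains a genuine error. The exponent analysis, done carefully, shows only that the stabiliser of $(P_\infty, P_0, R')$ inside $H(q)$ is \emph{contained in} $\gen{z}$, not that it equals $\gen{z}$. Working with the parameter action $h(\lambda) : (a,b,c) \mapsto (\lambda^{3t-2}a, \lambda^{1-3t}b, \lambda^{-1}c)$ from \eqref{ree_matrix_id2}, one finds: if $c \neq 0$ the stabiliser is trivial; if $c=0$, $a\neq 0$ then $\lambda^{3t-2}=1$, and $\gcd(3t-2, q-1)=1$ forces $\lambda=1$; only when $a=c=0$, $b\neq 0$ does the condition reduce to $\lambda^2=1$. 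So for the $q^3 - q$ points $R'$ with $a\neq 0$ or $c\neq 0$, the triple stabiliser is trivial, and your claim that $z$ lies in every triple stabiliser contradicts (iv) itself (which gives $z$ only $q+1$ fixed points). The statement being cited from Huppert is that three-point stabilisers have order \emph{dividing} $2$; this is how it is used later (e.g.\ in Proposition~\ref{pr_element_props}), and that is exactly what your exponent argument, corrected, yields.

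Your route to (iii) also has a gap. You need every involution to fix at least two points of $\OV$, but ``analysis of its $\pm 1$-eigenspaces intersected with the ovoid'' is not a proof: nothing you have established rules out an involution acting fixed-point-freely on $\OV$. Parity ($\abs{\OV}=q^3+1$ is even) and the odd order of $U(q)$ eliminate the case of exactly one fixed point, but eliminating zero fixed points requires further input---either the character of the permutation module, or knowledge of which $\SO(7,q)$-involution classes meet $G$, neither of which you have at this stage. The paper avoids this by citing the conjugacy of involutions directly from Huppert and then reading off (iv) from the explicit $h(-1)$.
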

\begin{proof}
\begin{enumerate}
\item Immediate from \cite[Chapter $11$, Theorem $13.2$(d)]{huppertIII}.
\item A stabiliser of a point is conjugate to $U(q) H(q)$, and there
  are $\abs{\OV}$ conjugates. The elements fixing exactly one point are the non-trivial elements of $U(q)$. Therefore the number of such elements is 
$\abs{\OV} (\abs{U(q)} - 1) = (q^3 + 1)(q^3  - 1) = (q^6 - 1)$.
\item Immediate from \cite[Chapter $11$, Theorem $13.2$(e)]{huppertIII}.
\item Each involution is conjugate to \[h(-1) = \diag(-1, 1,
  -1, 1, -1, 1, -1).\] Evidently, $h(-1)$ fixes $P_{\infty}$ since
  $h(-1) \in H(q)$ and if $P = (p_1 : \dotsm : p_7) \in \OV$ with $p_1
  \neq 0$, then $P$ is fixed by $h(-1)$ if and only if $p_2 = p_4 = p_6 = 0$.
  But then $P$ is uniquely determined by $p_3$, so there are $q$
  possible choices for $P$. Thus the number of points fixed by $h(-1)$ is $q
  + 1$.
\end{enumerate}
\end{proof}

\begin{pr} \label{pr_inv_centraliser_split} Let $G = \Ree(q)$ with natural module $V$ and let $j \in G$ be an involution. Then $V\vert_{\Cent_G(j)} \cong S_j \oplus T_j$ where $\dim S_j =
  3$ and $\dim T_j = 4$. Moreover, $S_j$ is irreducible and $j$ acts
  trivially on $S_j$.
\end{pr}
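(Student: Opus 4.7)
The plan is first to use Proposition \ref{pr_involution_props}(3) to reduce to the single standard involution $j = h(-1) = \diag(-1, 1, -1, 1, -1, 1, -1)$, since conjugation in $\GL(7, q)$ preserves every claim in the statement (isomorphism type of the restricted module, dimensions of summands, and the action of $j$ on $S_j$). Because $q$ is a power of $3$ and hence odd, the $\pm 1$-eigenspaces of this diagonal matrix give the decomposition $V = S_j \oplus T_j$ with $S_j = \langle e_2, e_4, e_6 \rangle$ of dimension $3$ and $T_j = \langle e_1, e_3, e_5, e_7 \rangle$ of dimension $4$, and $j$ acts as the identity on $S_j$ by construction.

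The next step is the easy half: every $g \in \Cent_G(j)$ commutes with $j$ and therefore preserves each eigenspace of $j$, so $S_j$ and $T_j$ are $\Cent_G(j)$-submodules and $V\vert_{\Cent_G(j)} \cong S_j \oplus T_j$. This disposes of every assertion in the lemma except the irreducibility of $S_j$.

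The main step, where the real work lies, is to show that $S_j$ is irreducible. The plan is to invoke the classical description of involution centralisers in the small Ree groups (see \cite{ward66} and \cite{kleidman88}): $\Cent_G(j) \cong \langle j \rangle \times L$ with $L \cong \PSL(2, q)$. Since $j$ acts trivially on $S_j$, the action of $\Cent_G(j)$ on $S_j$ factors through $L$, and simplicity of $L$ together with the fact that $L \not\leqslant \Zent(\GL(V))$ forces this action to be faithful. In defining characteristic $p = 3$, any faithful $3$-dimensional $\F_q$-representation of $\PSL(2, q)$ is irreducible: by Steinberg's tensor product theorem it is a Frobenius twist of the restricted module of highest weight $2$ (equivalently, of the symmetric square of the natural $\SL(2, q)$-module, which descends to $\PSL$), and $\PSL(2, q)$ admits no faithful $1$- or $2$-dimensional $\F_q$-representation to peel off as a summand. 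Hence $S_j$ is irreducible, as required.

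The main obstacle I anticipate is the identification of the $L$-action on $S_j$ with the claimed module, since it rests on an outside structural result. A more self-contained alternative would be to exhibit explicitly enough elements of $\Cent_G(j)$ -- for example the squares in $H(q)$, which are diagonal and therefore manifestly centralise $j$, together with a single further centralising element that mixes the $H(q)$-weight spaces of $S_j$ -- and to rule out proper invariant subspaces by a short eigenvalue computation on the $3$-dimensional space $S_j$, essentially a miniature MeatAxe-style argument.
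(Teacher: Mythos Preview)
Your reduction to $j=h(-1)$ via Proposition~\ref{pr_involution_props}, the eigenspace decomposition, and the verification that $\Cent_G(j)$ preserves each eigenspace are exactly as in the paper.

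The irreducibility argument is where you diverge. The paper does not invoke Steinberg's tensor product theorem. Instead it uses the symmetric bilinear form $\gamma$ preserved by $G$: one checks $T_j \subseteq S_j^{\perp}$, so non-degeneracy of $\gamma$ on $V$ forces $\gamma\vert_{S_j}$ to be non-degenerate, whence $S_j$ is self-dual. If $S_j$ were reducible it would then contain a $2$-dimensional submodule, and the paper finishes by noting that $\PSL(2,q)$ has no $2$-dimensional irreducible. Your route via representation theory of $\PSL(2,q)$ reaches the same conclusion but trades the short form computation for heavier external input; in fact Steinberg is more than you need, since the only facts used are that $\PSL(2,q)$ is perfect (so $1$-dimensional representations are trivial) and has no $2$-dimensional irreducible.

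There is, however, a genuine gap in your main line. You claim the $L$-action on $S_j$ is faithful because ``$L$ is simple and $L \not\leqslant \Zent(\GL(V))$''. That does not follow: nontriviality on $V$ does not preclude $L$ acting trivially on $S_j$ while acting faithfully on the $4$-dimensional summand $T_j$. Simplicity of $L$ only tells you the kernel of $L\to\GL(S_j)$ is trivial or all of $L$; you still have to exclude the second possibility. Your own alternative in the final paragraph is exactly the right repair: the squares $h(\lambda)^2$ lie in $H(q)\cap\Cent_G(j)$, have odd order (hence lie in $L$), and act on $\langle e_2,e_4,e_6\rangle$ as $\diag(\lambda^{2(1-t)},1,\lambda^{2(t-1)})$, which is nontrivial for $\lambda$ of order $q-1$ since $q-1>2(t-1)$. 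With that single sentence inserted, your argument is complete.
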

\begin{proof}
By Proposition \ref{pr_involution_props}, $j$ is conjugate to $h(-1) =
\diag{(-1, 1, -1, 1, -1, 1, -1)}$ so it has two eigenspaces $S_j$ and
$T_j$ for $1$ and $-1$ respectively. Clearly $\dim S_j = 3$ and $\dim
T_j = 4$, and it is sufficient to show that these are preserved by
$\PSL(2, q)$, so that they are in fact submodules of $V_j$.

Let $v \in S_j$ and $g \in \PSL(2, q)$. Then $(vg) j = (vj)g = vg$
since $g$ centralises $j$ and $j$ fixes $v$, which shows that $vg \in
S_j$, so this subspace is fixed by $\PSL(2, q)$. Similarly, $T_j$ is
also fixed.

Let $\gamma : V \times V \to V$ be the bilinear form preserved by $G$.
Observe that if $x \in S_j$, $y \in V_j$ then $\gamma(x, y) = \gamma
(x j, yj) = \gamma(x, -y) = -\gamma(x,y) = 0$ and hence $V_j \subseteq
S_j^{\perp}$. If $\gamma\vert_{S_j}$ is degenerate then also $S_j \subseteq
S_j^{\perp}$ so that $S_j \subseteq V^{\perp}$ which is impossible
since $\gamma$ is non-degenerate. Hence $\gamma\vert_{S_j}$ is
non-degenerate and $S_j$ is isomorphic to its dual.

Now if $S_j$ is reducible, it must split as a direct sum of two
submodules of dimension $1$ and $2$. Since $j$ acts trivially on
$S_j$, it is in fact a module for $\PSL(2, q)$, but $\PSL(2, q)$ have
no irreducible modules of dimension $2$. Therefore $S_j$ must be
irreducible.
\end{proof}

\begin{lem} \label{lem_1eigenvalue}
  Let $g \in G \leqslant \GL(d, F)$ with $d$ odd and $F$ any finite
  field, and assume that $G$ preserves a non-degenerate bilinear form
  and that $\det(g) = 1$. Then $g$ has $1$ as an eigenvalue.
\end{lem}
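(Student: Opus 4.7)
The plan is to reduce the problem to the statement that the characteristic polynomial $p(x) = \det(xI - g)$ is self-reciprocal, and then argue by a parity count on multiplicities of eigenvalues.

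First I would exploit the preserved form. Let $J$ be the invertible matrix of the bilinear form, so that $g J g^T = J$, giving $g^{-1} = J g^T J^{-1}$. Since $g$ and $g^T$ always have the same characteristic polynomial, we conclude that $g$ and $g^{-1}$ have the same characteristic polynomial. A direct computation yields
\begin{equation}
\det(xI - g^{-1}) = \det(g)^{-1} (-1)^d x^d p(x^{-1}),
\end{equation}
so under the hypothesis $\det(g) = 1$ we obtain the functional equation $p(x) = (-1)^d x^d p(x^{-1})$. Equivalently, the multiset of eigenvalues of $g$ in $\bar{F}$ is invariant under $\lambda \mapsto \lambda^{-1}$.

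Next I would count multiplicities. Eigenvalues with $\lambda^2 \neq 1$ pair up under inversion and contribute an even total multiplicity, so the multiplicities of the self-inverse eigenvalues (those with $\lambda^2 = 1$) must sum to $d \pmod 2$, i.e.\ to an odd number. In characteristic $2$ the only self-inverse eigenvalue is $1$, so its multiplicity $m_1$ is odd and hence at least $1$. In odd characteristic the self-inverse eigenvalues are $\pm 1$, so $m_1 + m_{-1}$ is odd; now use $\det(g) = 1$: the inversion-pairs contribute $\lambda \cdot \lambda^{-1} = 1$ to the product of eigenvalues, leaving $(-1)^{m_{-1}} = 1$, so $m_{-1}$ is even, and therefore $m_1$ is odd. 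In both characteristics $m_1 \geq 1$, i.e.\ $1$ is an eigenvalue.

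There is no serious obstacle here; the only subtlety worth noting is the need to split by characteristic, because in characteristic $2$ one cannot separate the eigenvalues $+1$ and $-1$ and the condition $\det(g) = 1$ is automatic (it already follows from $g J g^T = J$), so the argument proceeds by the parity of $m_1$ alone, while in odd characteristic the determinant hypothesis is genuinely needed to rule out odd $m_{-1}$ and even $m_1$.
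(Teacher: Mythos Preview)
Your proof is correct and follows the same overall strategy as the paper: show the eigenvalue multiset is closed under inversion, pair off eigenvalues with $\lambda^2\neq 1$, and use the parity of $d$ together with $\det(g)=1$ to force $1$ as an eigenvalue. The paper derives the inversion symmetry geometrically, observing that for eigenvectors $v\in E_{\lambda_i}$ and $w\in E_{\lambda_j}$ one has $f(v,w)=0$ unless $\lambda_i\lambda_j=1$, and then invoking non-degeneracy; you derive it algebraically from $g^{-1}=Jg^{T}J^{-1}$ and the resulting self-reciprocity of the characteristic polynomial. Your route has the minor advantage that multiplicities are tracked automatically by the characteristic polynomial (the paper's eigenspace argument, as written, tacitly needs diagonalisability or generalised eigenspaces to pass from the set of eigenvalues to the multiset), and your explicit characteristic-$2$ case split is a little more careful than the paper's terse conclusion ``$1=\det(g)=\lambda_k$''.
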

\begin{proof}
  Let $V = F^d$ be the natural module of $G$. Let $f : V \times
  V \to F$ be a non-degenerate bilinear form preserved by $G$. Over
  $\bar{F}$, the multiset of eigenvalues of $g$ is $\lambda_1,
  \dotsc, \lambda_d$. Let $E_{\lambda_1}, \dotsc, E_{\lambda_d}$ be
  the corresponding eigenspaces (some of them might be equal).
  
  If $v \in E_{\lambda_i}$ and $w \in E_{\lambda_j}$ then
  \[ f(v, w) = f(vg, wg) = f(v \lambda_i, w \lambda_j) = \lambda_i
  \lambda_j f(v, w) \] so either $\lambda_i \lambda_j = 1$ or $f(v, w)
  = 0$. However, for a given $i$ there must be some $j$ such that
  $\lambda_i \lambda_j = 1$, otherwise $f(v, w) = 0$ for every $v \in
  E_{\lambda_i}$ and $w \in V$, which is impossible since $f$ is
  non-degenerate.
  
  Hence the eigenvalues can be arranged into pairs of inverse values.
  Since $d$ is odd, there must be a $k$ such that $\lambda_k$ is left
  over. The above argument then implies that $\lambda_k^2 = 1$, and
  finally $1 = \det(g) = \lambda_k$.
\end{proof}

From \cite{ree85} and \cite{kleidman88} we obtain
\begin{pr} \label{ree_maximal_subgroup_list}
A maximal subgroup of $G = \Ree(q)$ is conjugate to one of the following subgroups
\begin{itemize}
\item $\Norm_G(U(q)) = U(q) H(q)$, the point stabiliser .
\item $\Cent_G(j) \cong \gen{j} \times \PSL(2, q)$, the centraliser of an involution $j$.
\item $\Norm_G(A_0) \cong (\Cent_2 \times \Cent_2 \times A_0) {:} \Cent_6$, where $A_0 \leqslant \Ree(q)$ is cyclic of order $(q + 1) / 4$.
\item $\Norm_G(A_1) \cong A_1 {:} \Cent_6$, where $A_1 \leqslant \Ree(q)$ is cyclic of order $q + 1 - 3t$.
\item $\Norm_G(A_2) \cong A_2 {:} \Cent_6$, where $A_2 \leqslant \Ree(q)$ is cyclic of order $q + 1 + 3t$.
\item $\Ree(s)$ where $q$ is a proper power of $s$.
\end{itemize}
Moreover, all maximal subgroups except the last are reducible. 
\end{pr}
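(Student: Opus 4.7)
The plan is to split the proposition into two parts: (i) the classification of the maximal subgroups into the six listed conjugacy classes, and (ii) the claim that the first five classes consist of reducible subgroups.

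For part (i), the main work was already done in the cited papers: Levchuk--Nuzhin \cite{ree85} carried out the bulk of the analysis via the Borel--Tits theorem for the parabolic and normalizer cases, and Kleidman \cite{kleidman88} completed the classification and corrected some omissions. The plan is therefore to derive the list directly by citing these two works. In particular, the orders and isomorphism types of $\Norm_G(A_i)$ for $i = 0,1,2$ (cyclic tori of orders $(q+1)/4$, $q+1-3t$, $q+1+3t$, each extended by $\Cent_6$) are read off from the tables in \cite{kleidman88}. I do not expect to reprove this classification from scratch — that would require a separate chapter, and the point here is only to record the list that will be used algorithmically.

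For part (ii), each of the first five classes needs to be shown to fix some non-zero proper subspace of the natural $7$-dimensional module $V = \F_q^7$. The first case is immediate: $U(q)H(q)$ is by Proposition \ref{ree_doubly_transitive_action} the stabilizer of $P_\infty \in \OV \subseteq \PS^6(\F_q)$, hence fixes the corresponding $1$-dimensional subspace of $V$. The second case is already done for us by Proposition \ref{pr_inv_centraliser_split}, which gives the decomposition $V\vert_{\Cent_G(j)} \cong S_j \oplus T_j$ with $\dim S_j = 3$, $\dim T_j = 4$.

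The remaining three cases are the real content. For each $i \in \{0,1,2\}$, the subgroup $A_i \leqslant G$ is cyclic of order coprime to $3 = \chr(\F_q)$, and in fact coprime to $q$, so $A_i$ acts semisimply on $V \otimes \bar{\F}_q$. Because $G$ preserves the non-degenerate bilinear form \eqref{standard_bilinear_form} and $\det$ is trivial on $G$, Lemma \ref{lem_1eigenvalue} applies to every $g \in A_i$: the non-trivial eigenvalues of $g$ pair off as $(\lambda, \lambda^{-1})$, and since $\dim V = 7$ is odd, $1$ must occur as an eigenvalue with odd multiplicity. The plan is to upgrade this element-wise statement to a joint statement about $A_i$ by noting that the $+1$-eigenspace $V^{A_i} := \{v \in V : va = v \text{ for all } a \in A_i\}$ is the intersection of the $+1$-eigenspaces of a generator, hence a well-defined $\Norm_G(A_i)$-invariant subspace. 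One then has to rule out $V^{A_i} = 0$ and $V^{A_i} = V$. The latter is impossible because $A_i$ is non-trivial and acts faithfully on $V$ (since $G \leqslant \GL(7, q)$); the former requires a small additional calculation using the explicit eigenvalue structure of elements of $A_i$, which can be read off from the characteristic polynomial over $\bar{\F}_q$ once one observes that $A_i$ lies in a maximal torus whose character on $V$ is known from the adjoint-type description of $\Ree(q)$ as a $G_2$-subgroup.

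The main obstacle is this last eigenvalue-counting argument for $A_i$: one has to check that the common fixed subspace is non-trivial for each of the three torus types, which is really a statement about how the $7$-dimensional representation decomposes when restricted to a maximal torus of the relevant kind. If this turns out to be messy, an alternative plan is to observe directly from the explicit matrix description that $A_0$ contains $h(-1)$ and thus lies in an involution centralizer (already known to be reducible), and to treat $A_1, A_2$ by noting that their orders $q \pm 3t + 1$ divide $q^3 + 1$, and to combine this with the doubly transitive action of Proposition \ref{ree_doubly_transitive_action} to produce a fixed point or fixed pair and hence a fixed projective subspace.
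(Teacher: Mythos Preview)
Your plan is essentially the same as the paper's: cite \cite{ree85} and \cite{kleidman88} for the list, handle the point stabiliser and the involution centraliser exactly as you do, and for each $\Norm_G(A_i)$ exhibit the $1$-eigenspace of $A_i$ as an invariant proper subspace.

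The one place where you make extra work for yourself is the claim $V^{A_i} \neq 0$. You already note that $V^{A_i}$ coincides with the $1$-eigenspace of a single generator $x$ of the cyclic group $A_i$; Lemma~\ref{lem_1eigenvalue} applied once to $x$ (which lies in $\SO(7,q)$ and has determinant $1$) gives this eigenspace non-zero immediately. No further eigenvalue bookkeeping, torus characters, or case analysis via the $2$-transitive action is needed. The paper's argument is exactly this: pick a generator $x$, let $E$ be its $1$-eigenspace (non-zero by the lemma, proper since $x \neq 1$), and observe that for $h \in \Norm_G(A_i)$ and $v \in E$ one has $(vh)x^h = vxh = vh$, so $vh$ is fixed by $\langle x^h\rangle = \langle x\rangle$, hence $vh \in E$.
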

\begin{proof}
  It is sufficient to prove the final statement. 
  
  Clearly the point stabiliser is reducible, and the involution
  centraliser is reducible by Proposition \ref{pr_inv_centraliser_split}.

  Let $H$ be a normaliser of a cyclic subgroup and let $x$ be a
  generator of the cyclic subgroup that is normalised. Since $G \leqslant \SO(7, q)$, by Lemma
  \ref{lem_1eigenvalue}, $x$ has an eigenspace $E$ for the
  eigenvalue $1$, where $V \neq E \neq \set{0}$. Given $v \in E$ and
  $h \in H$, we see that $(vh) x^h = vh$ so that $vh$ is fixed by
  $\gen{x^h} = \gen{x}$. This implies that $vh \in E$ and thus $E$ is
  a proper non-trivial $H$-invariant subspace, so $H$ is reducible.
\end{proof}

\begin{pr} \label{cyclic_subgroups_conjugate}
Let $G = \Ree(q)$.
\begin{enumerate}
\item All cyclic subgroups of $G = \Ree(q)$ of order $q - 1$ are conjugate to $H(q)$ and hence each one is a stabiliser of two points of $\OV$.
\item All cyclic subgroups of order $(q + 1) / 2$ or $q \pm 3t + 1$ are conjugate.
\item If $C$ is a cyclic subgroup of order $q \pm 3t + 1$, then distinct conjugates of $C$ intersect trivially.
\item If $C$ is a cyclic subgroup of order $(q + 1) / 2$, $C > C^{\prime} \cong A_0$ and $x \in G \setminus \Norm_G(C^{\prime})$, then $C \cap C^x = \gen{1}$.
\end{enumerate}
\end{pr}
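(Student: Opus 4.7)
The plan is to identify, for each prescribed order, which maximal subgroup of $G$ from Proposition~\ref{ree_maximal_subgroup_list} contains a cyclic subgroup of that order, and then transfer conjugacy and intersection questions to the structure of those maximal subgroups. For part $(1)$, since $q = 3^{2m+1} \equiv 3 \pmod{4}$, the integer $(q-1)/2$ is odd, so any cyclic $C$ of order $q-1$ has a unique involution $j_C$ and sits inside $\Cent_G(j_C) \cong \gen{j_C} \times \PSL(2, q)$ by Proposition~\ref{ree_maximal_subgroup_list}. The split maximal tori of $\PSL(2, q)$ (cyclic of order $(q-1)/2$) form a single conjugacy class, so all cyclic $(q-1)$-subgroups of $\Cent_G(j_C)$ are conjugate; combined with the conjugacy of all involutions (Proposition~\ref{pr_involution_props}), this gives a single $G$-class, and $H(q)$ is a representative by Proposition~\ref{ree_doubly_transitive_action}.

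For part $(2)$, the factorisation $q^3 + 1 = (q+1)(q+3t+1)(q-3t+1)$ together with Proposition~\ref{ree_maximal_subgroup_list} shows that $q \pm 3t + 1$ divides only the orders of $N_G(A_1)$ or $N_G(A_2)$, so a cyclic subgroup of such order must be conjugate to $A_1$ or $A_2$ (the unique subgroup of that order inside $A_i {:} \Cent_6$). For order $(q+1)/2$, the integer $(q+1)/4$ is odd (since $q + 1 \equiv 4 \pmod 8$), so the same involution-uniqueness argument as in $(1)$ reduces matters to the conjugacy of cyclic $(q+1)/4$-subgroups inside $\PSL(2, q)$, which are Hall subgroups of the non-split maximal tori.

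Part $(3)$ rests on a centraliser argument: if $1 \neq y \in C \cap C^x$ with $|C| = q \pm 3t + 1$, then $\gen{C, C^x} \leqslant \Cent_G(y)$, and since $|y|$ is coprime to the orders of the other maximal subgroups of $G$, $\Cent_G(y)$ must lie in a conjugate of $N_G(A_i) \cong A_i {:} \Cent_6$. Within this, the outer $\Cent_6$ acts on $A_i$ by a cyclic subgroup of $\Aut(A_i)$ of order dividing $6$, and using $q^6 \equiv 1 \pmod{q \pm 3t + 1}$ together with a gcd computation one checks that this action is fixed-point-free on $A_i \setminus \{1\}$; hence $\Cent_G(y) = A_i$, and $C = A_i = C^x$.

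For part $(4)$, write $C = \gen{j_C} \times C'$ using that $(q+1)/4$ is odd, and take $1 \neq y \in C \cap C^x$. When $|y|$ is odd, $y \in C' \cap C'^x$, and a version of the part $(3)$ argument adapted to $N_G(A_0) \cong (\Cent_2 \times \Cent_2 \times A_0) {:} \Cent_6$---with $A_0$ the unique Hall $(q+1)/4$-subgroup of $\Cent_2 \times \Cent_2 \times A_0$---forces $C' = C'^x$, contradicting $x \notin N_G(C')$. When $|y|$ is even, separating its odd-order part from the involution $j_C$ reduces to the previous case unless $y = j_C$; in that remaining subcase $j_C^x = j_C$ places both $C, C^x$ in $\Cent_G(j_C) \cong \gen{j_C} \times \PSL(2, q)$, where the TI property of the non-split maximal tori of $\PSL(2, q)$ is then invoked to conclude $C' = C'^x$. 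This final subcase is the main technical obstacle: ruling out the pathology in which $C^x$ shares only its involution with $C$ requires a careful accounting of how cyclic $(q+1)/4$-subgroups of $\PSL(2, q)$ sit inside $\Cent_G(j_C)$ in relation to $N_G(C')$.
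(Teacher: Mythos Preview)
For parts (1)--(3) your arguments are correct, though they differ from the paper's. In part (1) the paper works directly with the action on $\OV$: an element of odd prime order $p \mid q-1$ has exactly two fixed points on a set of size $q^3+1 \equiv 2 \pmod p$, and double transitivity conjugates these to $(P_\infty, P_0)$, landing the whole cyclic group in $\Norm_G(H(q))$. Your detour through $\Cent_G(j_C)$ and the split tori of $\PSL(2,q)$ works equally well. For (2) the paper simply cites \cite{ree85}; for (3) both approaches use a centraliser argument, the paper asserting $\gen{C \cup C^x} = G$ for distinct conjugates while you justify this more explicitly via the fixed-point-free $\Cent_6$-action on $A_i$.

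For part (4), however, you have correctly located a real problem---and in fact the statement as written is \emph{false}. Write $\Cent_G(j_C) = \gen{j_C} \times L$ with $L \cong \PSL(2,q)$ and $C' \leqslant L$. For any $x \in L \setminus \Norm_L(C')$ (a nonempty set, since $\abs{L} = q(q^2-1)/2 > q+1 = \abs{\Norm_L(C')}$) one has $j_C^x = j_C$, so $j_C \in C \cap C^x$; the TI property of non-split tori in $L$ then gives $C' \cap (C')^x = \gen{1}$, hence $C \cap C^x = \gen{j_C} \neq \gen{1}$, while $x \notin \Norm_G(C')$. So the subcase you flag as the ``main technical obstacle'' is not an obstacle to be overcome: it is a counterexample. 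The paper's own one-line proof (``analogous to the previous case'') has the same gap, since the assertion $\gen{C \cup C^x} = G$ fails precisely when both $C$ and $C^x$ lie in a common involution centraliser. What \emph{is} true, and what suffices for the paper's later applications, is the weaker claim $C' \cap (C')^x = \gen{1}$ for $x \notin \Norm_G(C')$; your odd-order case essentially establishes this.
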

\begin{proof}
\begin{enumerate}
\item Let $C = \gen{g} \leqslant G$ be cyclic of order $q - 1$ and let $p$
  be an odd prime such that $p \mid q - 1$. Then there exists $k \in \Z$
  such that $\abs{g^k} = p$. Since $q^3 + 1 \equiv 2 \pmod p$, the
  cycle structure of $g^k$ on $\OV$ must be a number of $p$-cycles and
  $2$ fixed points $P$ and $Q$. Since $G$ is doubly transitive there
  exists $x \in G$ such that $Px = P_{\infty}$ and $Qx = P_0$.
  
  Now either $g$ fixes $P$ and $Q$ or interchanges them, so $g^x \in
  \Norm_G(H(q)) = \gen{H(q), \Upsilon} \cong \Dih_{2(q-1)}$. Hence
  $\gen{g^x} = H(q)$ since that is the unique cyclic subgroup of order
  $q - 1$ in $\gen{H(q), \Upsilon}$.
\item This follows immediately from \cite[Lemma $2$]{ree85}.
\item Let $C$ be such a cyclic subgroup. If $C \neq C^x$ for some $x \in G$ and $g \in C \cap C^x$, then $\Cent_G(g) = \gen{C \cup C^x}$. But $\gen{C \cup C^x} = G$, so that $g = 1$.
\item Since $\gen{C \cup C^x} = G$, this is analogous to the previous case.
\end{enumerate}
\end{proof}

\begin{pr} \label{pr_element_props}
Let $G = \Ree(q)$ and let $\phi$ be the Euler totient function.
\begin{enumerate}
\item The centraliser of an involution $j \in G$ is isomorphic to $\gen{j} \times \PSL(2, q)$ and hence has order $q (q^2 - 1)$.
\item The number of involutions in $G$ is $q^2 (q^2 - q + 1)$.
\item The number of elements in $G$ of order $q - 1$ is $\phi(q - 1) q^3 (q^3 + 1) / 2$.
\item The number of elements in $G$ of order $(q + 1) / 2$ is $\phi((q + 1) / 2) q^3 (q - 1) (q^2 - q + 1) / 6$.
\item The number of elements in $G$ of order $(q \pm 3t + 1)$ is $\phi(q \pm 3t + 1) q^3 (q^2 - 1) (q \mp 3t + 1) / 6$.
\item The number of elements in $G$ of even order is $q^2(7 q^5 - 23 q^4 + 8 q^3 + 23 q^2 - 39 q + 24) / 24$.
\item The number of elements in $G$ that fix at least one point is $q^2 (q^5 - q^4 + 3 q^2 - 5q + 2) / 2$
\end{enumerate}
\end{pr}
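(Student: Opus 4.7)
The plan is to combine orbit--stabiliser arguments on conjugacy classes of cyclic subgroups (for parts (1)--(5)), a direct enumeration by counting fixed points on $\OV$ (for part (7)), and an analysis of the fibres of the involution power map $g \mapsto g^{\abs{g}/2}$ (for part (6)).

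Parts (1) and (2) follow immediately: the isomorphism $\Cent_G(j) \cong \gen{j} \times \PSL(2,q)$ comes from Proposition \ref{ree_maximal_subgroup_list}, giving $\abs{\Cent_G(j)} = 2 \abs{\PSL(2,q)} = q(q^2 - 1)$ since $q$ is odd; then the single-class statement in Proposition \ref{pr_involution_props}(3) yields $\abs{G}/\abs{\Cent_G(j)} = q^2(q^2-q+1)$ involutions. For parts (3)--(5), I would exploit that the cyclic subgroups of the relevant order form a single conjugacy class (Proposition \ref{cyclic_subgroups_conjugate}) and that each such cyclic subgroup has exactly $\phi(d)$ generators, each of which is contained in a unique such cyclic subgroup; the answer is then $\phi(d) \cdot [G : \Norm_G(C)]$, where the normaliser is read off from Proposition \ref{ree_maximal_subgroup_list} (dihedral of order $2(q-1)$ for $d = q-1$; of order $6d$ for $d = q \pm 3t+1$; and for $d = (q+1)/2$ obtained by analysing the action of the $\Cent_6$ complement in $\Norm_G(A_0) \cong (\Cent_2 \times \Cent_2 \times A_0) {:} \Cent_6$ on the cyclic subgroups of order $(q+1)/2$ contained in its abelian normal subgroup).

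For (7), Proposition \ref{pr_involution_props}(1) forces every element of order greater than $2$ to fix at most two points. I would partition the elements fixing at least one point: the identity (fixing all $q^3 + 1$ points); elements fixing exactly one point, of which there are $q^6 - 1$ by Proposition \ref{pr_involution_props}(2); elements fixing exactly two points, which are the non-identity non-involution elements of a cyclic 2-point stabiliser of order $q-1$ (each of the $q^3(q^3+1)/2$ unordered pairs of points contributes $q - 3$ such elements); and the $q^2(q^2-q+1)$ involutions, each fixing $q+1$ points by Proposition \ref{pr_involution_props}(4). Summing $1 + (q^6 - 1) + q^3(q^3+1)(q-3)/2 + q^2(q^2-q+1)$ and simplifying yields $q^2(q^5 - q^4 + 3q^2 - 5q + 2)/2$.

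Part (6) is the main obstacle. I would consider the map $\Phi : \set{g \in G : \abs{g} \text{ even}} \to \set{\text{involutions of } G}$ given by $g \mapsto g^{\abs{g}/2}$. Each element in the domain commutes with its image, so the fibre $\Phi^{-1}(j)$ lies in $\Cent_G(j)$; writing $g = (j^a, h)$ in $\gen{j} \times \PSL(2, q)$ reduces $\Phi(g) = j$ to an arithmetic condition on $a$ and on the $2$-adic valuation of $\abs{h}$. By conjugacy of involutions all fibres have the same size, and the total count is the number of involutions times this fibre size. The fibre size is then computed by enumerating elements of $\PSL(2,q)$ satisfying the condition, broken into contributions from the identity, the two unipotent classes of order $3$, and the semisimple elements of the split and non-split tori of orders $(q-1)/2$ and $(q+1)/2$; the congruence $q \equiv 3 \pmod 8$ (so $(q-1)/2$ is odd and $(q+1)/2 \equiv 2 \pmod 4$) controls the $2$-adic subtleties. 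The hardest step is the polynomial manipulation that collapses the answer to the stated form $q^2(7q^5 - 23q^4 + 8q^3 + 23q^2 - 39q + 24)/24$.
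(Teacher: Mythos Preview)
For parts (1)--(3), (5), and (7) your plan coincides with the paper's: the paper counts cyclic subgroups of order $q-1$ as stabilisers of unordered pairs of points of $\OV$ rather than via $[G:\Norm_G(H(q))]$, but this gives the same number, and your partition in (7) into the identity, one-point-fixers, two-point-fixers, and involutions is exactly the paper's argument.

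For part (4) you are right to single out the $\Cent_6$-action on the three cyclic subgroups of order $(q+1)/2$ inside $\Cent_2^2\times A_0$ as requiring analysis. The paper's proof skips this and simply asserts that the number of such cyclic subgroups equals $[G:\Norm_G(A_0)]$. In fact the $\Cent_3\leq\Cent_6$ permutes the three involutions of $\Cent_2^2$ cyclically: otherwise an element of order~$3$ in $\Norm_G(A_0)$ would centralise some involution $j$ and normalise $A_0$, but inside $\Cent_G(j)=\gen{j}\times\PSL(2,q)$ the normaliser of $A_0$ is dihedral of order $q+1$, and $3\nmid q+1$. Hence $|\Norm_G(C_{(q+1)/2})|=2(q+1)$, and your method gives three times the stated formula.

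Your route to part (6) is genuinely different from the paper's and is the correct one. The paper argues by asserting (citing \cite{ree85}) that every element of even order lies in a cyclic subgroup of order $q-1$ or $(q+1)/2$, and then sums over those. But $G$ contains elements of order~$6$---for instance $(j,h)$ with $|h|=3$ inside $\gen{j}\times\PSL(2,q)$---and since $3\nmid q^2-1$ these lie in no such cyclic subgroup; the paper's count omits them. Your fibre argument, carried through, identifies $\Phi^{-1}(j)=\{(j,h):|h|\text{ odd}\}$, and a torus-by-torus count in $\PSL(2,q)$ (using $q\equiv 3\pmod 8$, so $(q-1)/2$ odd and $(q+1)/4$ odd) gives this fibre size $3q(q^2-1)/8$. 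Multiplying by the number of involutions yields $3\abs{G}/8$, not the stated polynomial. So you will not ``collapse the answer to the stated form'': the formulas in (4) and (6) are themselves in error, though the algorithmic lower bounds drawn from them in Lemma~\ref{ree_totient_prop} survive because the true probabilities are larger.
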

\begin{proof} 
\begin{enumerate}
\item Immediate from \cite[Chapter $11$]{huppertIII}.
\item All involutions are conjugate, and the index in $G$ of the involution centraliser is
\begin{equation}
\frac{q^3 (q^3 + 1) (q - 1)}{q (q^2 - 1)} = q^2 (q^2 - q + 1)
\end{equation}
where we have used the fact that $q^3 + 1 = (q + 1) (q^2 - q + 1)$.
\item By Proposition \ref{cyclic_subgroups_conjugate}, each cyclic subgroup of order $q - 1$ is a stabiliser of two
  points and is uniquely determined by the pair of points that it
  fixes. Hence the number of cyclic subgroups of order $q - 1$ is
\begin{equation}
\abs{\binom{\OV}{2}} = \frac{q^3 (q^3 + 1)}{2}.
\end{equation}
By Proposition \ref{pr_involution_props}, the intersection of two distinct subgroups has order $2$, so the number of elements of order $q - 1$ is the number of generators of all these subgroups. 
\item By Proposition \ref{cyclic_subgroups_conjugate}, the number of
  cyclic subgroups of order $(q + 1) / 2$ is $[G : \Norm_G(A_0)] = q^3
  (q - 1) (q^2 - q + 1) / 6$. Since distinct conjugates intersect
  trivially, the number of elements of order $(q + 1) / 2$ is the
  number of generators of these subgroups.
\item Analogous to the previous case.
\item By \cite[Lemma $2$]{ree85}, every element of even order lies in
  a cyclic subgroup of order $q - 1$ or $(q + 1) / 2$. In each cyclic subgroup of order
  $q - 1$ there is a unique involution and hence $(q - 3) / 2$
  non-involutions of even order, and similarly $(q - 3) / 4$ in a cyclic subgroup of order $(q + 1) / 2$. By Proposition \ref{cyclic_subgroups_conjugate} the total number of elements of even
  order is therefore
\begin{multline}
(q - 3)(q^3 + 1) q^3 / 4 + (q - 3) (q - 1) (q^2 - q + 1) q^3 / 24 + q^2 (q^2 - q + 1) \\
= q^2(7 q^5 - 23 q^4 + 8 q^3 + 23 q^2 - 39 q + 24) / 24
\end{multline}
\item The only non-trivial elements of $G$ that fix more than $2$
  points are involutions. Hence in each cyclic subgroup of order $q -
  1$ there are $q - 3$ elements that fix exactly $2$ points, so by Proposition \ref{pr_involution_props}, the
  number of elements that fix at least one point is
\begin{equation}
\begin{split}
& q^6 + \frac{(q - 3)(q^3 + 1) q^3}{2} + q^2 (q^2 - q + 1) = \\
& =  \frac{q^2 (q^5 - q^4 + 3 q^2 - 5q + 2)}{2}
\end{split}
\end{equation}
\end{enumerate}
\end{proof}

\begin{lem} \label{ree_totient_prop}
If $g \in G = \Ree(q)$ is uniformly random, then
\begin{align}
\Pr{\abs{g} = q - 1} = \frac{\phi(q - 1)}{2(q - 1)} &> \frac{1}{12\log{\log(q)}} \\
\Pr{\abs{g} = q \pm 3t + 1} = \frac{\phi(q \pm 3t + 1)}{6(q \pm 3t +1)} &> \frac{1}{36\log{\log(q)}}\\
\Pr{\abs{g} = (q + 1) / 2} = \frac{\phi((q + 1) / 2)}{6 (q + 1)} &> \frac{1}{36\log{\log(q)}} \\
\Pr{\abs{g} \text{even}} = \frac{7q^2 - 9q - 24}{24 q (q + 1)} &> 1/4  
\end{align}
\begin{equation}
\Pr{g \ \text{fixes a point}} = \frac{-2 + 3 q + q^4}{2(q + q^4)} \geqslant 1/2
\end{equation}
\end{lem}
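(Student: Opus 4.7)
The plan is to compute each probability directly by dividing the corresponding element counts from Proposition \ref{pr_element_props} by the group order $\abs{G} = q^3(q^3+1)(q-1)$, and then to read off the stated lower bounds, mirroring the approach used in Proposition \ref{sz_totient_prop} for the Suzuki groups. The first three identities reduce to routine cancellations after using the factorisations $q^3 + 1 = (q+1)(q^2 - q + 1)$ and $q^2 - q + 1 = (q - 3t + 1)(q + 3t + 1)$ (the latter valid because $9t^2 = 3q$). For example, for order $q-1$ one has
\[ \Pr{\abs{g} = q - 1} = \frac{\phi(q-1) q^3 (q^3+1)/2}{q^3 (q^3+1)(q-1)} = \frac{\phi(q-1)}{2(q-1)}, \]
and the other two totient identities are analogous.

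For the lower bounds involving $\phi$, I would invoke the classical estimate cited in the Suzuki case, namely \cite[Section II.8]{totient_prop}, which gives $\phi(n)/n > 1/(6 \log\log n)$ for $n$ sufficiently large. Since $q \pm 3t + 1 \leqslant 2q$ and $(q+1)/2 \leqslant q$, in each case $\log\log$ of the argument is bounded by $\log\log(q)$ up to absorbable constants, so the factors $1/12, 1/36, 1/36$ in front of $\log\log(q)$ follow after trivial bookkeeping of the constants $2, 6, 6$ that appear in the denominators.

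For the fourth claim, the identity
\[ \frac{q^2(7q^5 - 23q^4 + 8q^3 + 23q^2 - 39q + 24)/24}{q^3(q^3+1)(q-1)} = \frac{7q^2 - 9q - 24}{24 q(q+1)} \]
is verified by checking that $(7q^2 - 9q - 24)(q^2 - q + 1)(q - 1)$ equals the numerator polynomial; this is a direct expansion. The bound $> 1/4$ then reduces to $q^2 - 15q - 24 > 0$, which holds for all admissible $q \geqslant 27$. For the fifth claim, dividing by $\abs{G}$ and using $(q^5 - q^4 + 3q^2 - 5q + 2) = (q^4 + 3q - 2)(q - 1)$ yields the stated expression, and the inequality $\geqslant 1/2$ is equivalent to $2q \geqslant 2$.

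The only place requiring any thought is the bookkeeping of constants in the totient bounds; everything else is mechanical algebra combined with the element counts already established. No genuine obstacle arises.
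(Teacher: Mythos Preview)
Your proposal is correct and follows essentially the same approach as the paper: the equalities are obtained by dividing the element counts of Proposition~\ref{pr_element_props} by $\abs{G}$ (the paper also cites Proposition~\ref{sylow3_props} for $\abs{G}$), and the inequalities are handled via \cite[Section II.8]{totient_prop} together with the fact that $m>0$ forces $q\geqslant 27$. The paper's proof is terser (it simply asserts the non-totient inequalities are ``clear since $m>0$''), whereas you spell out the algebraic reductions explicitly, but there is no substantive difference.
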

\begin{proof}
In each case, the first equality follows from Proposition \ref{pr_element_props} and Proposition \ref{sylow3_props}. In the first case, the
inequality follows from \cite[Section II.8]{totient_prop}, and in the other cases the inequalities are clear since $m > 0$.
\end{proof}

\begin{cl} \label{cl_random_selections} In $G = \Ree(q)$, the expected
  number of random selections required to obtain an element of order
  $q - 1$, $q \pm 3t + 1$ or $(q + 1) / 2$ is $\OR{\log \log q}$, and
  $\OR{1}$ to obtain an element that fixes a point, or an element of
  even order.
\end{cl}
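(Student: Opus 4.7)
The plan is to observe that this corollary is an immediate consequence of Lemma \ref{ree_totient_prop} together with the elementary fact that the number of independent Bernoulli trials required to obtain a success is geometrically distributed with expectation equal to the reciprocal of the success probability. Exactly this reasoning was used in the Suzuki analogue, Proposition \ref{sz_totient_prop}, so the proof should be a near-verbatim transcription.

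More concretely, I would first fix the property in question (being of order $q-1$, of order $q \pm 3t + 1$, of order $(q+1)/2$, of even order, or fixing a point of $\OV$) and model the successive random selections as independent trials, each succeeding with probability $p$, where $p$ is the corresponding lower bound supplied by Lemma \ref{ree_totient_prop}. Since the random element oracle returns independent uniformly distributed elements of $G$, the number of selections until the first success is geometrically distributed with mean $1/p$.

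For the first three cases the relevant lower bounds are $p > 1/(12 \log\log q)$ and $p > 1/(36 \log\log q)$, giving expected number of selections in $\OR{\log\log q}$. For the even-order case and the point-fixing case the bounds are $p > 1/4$ and $p \geqslant 1/2$ respectively, giving expected number of selections in $\OR{1}$. Collecting these five estimates yields exactly the two asymptotic statements in the corollary.

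There is essentially no obstacle: all the nontrivial work (counting elements of each type, reducing to $\phi$-estimates, and invoking the totient lower bound of \cite[Section II.8]{totient_prop}) has already been done in Proposition \ref{pr_element_props} and Lemma \ref{ree_totient_prop}. The only thing to be careful about is to state explicitly that the selections are independent, so that the geometric-distribution formula applies and the expectations combine as claimed.
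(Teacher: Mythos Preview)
Your proposal is correct and matches the paper's own proof essentially verbatim: the paper simply observes that the number of selections is geometrically distributed with success probabilities given by Lemma~\ref{ree_totient_prop}, and concludes that the expectations are as stated.
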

\begin{proof}
  Clearly the number of selections is geometrically distributed, where
  the success probabilities for each selection are given by Lemma
  \ref{ree_totient_prop}. Hence the expectations are as stated.
\end{proof}

\begin{pr} \label{ree_conjugacy_classes}
Elements in $\Ree(q)$ of order prime to $3$ with the same trace are conjugate.
\end{pr}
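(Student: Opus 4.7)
The plan is to follow the proof of Proposition \ref{sz_conjugacy_classes}: determine the number $N$ of conjugacy classes of elements of order prime to $3$ in $\Ree(q)$, exhibit $N$ distinct trace values realised by such elements, and conclude by a pigeonhole argument on the finite set of semisimple conjugacy classes.

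An element of order coprime to $3$ is semisimple in characteristic $3$. By Proposition \ref{cyclic_subgroups_conjugate} and Proposition \ref{ree_maximal_subgroup_list}, each such non-identity element is conjugate into exactly one of the four cyclic subgroups $H(q)$ of order $q-1$, the cyclic subgroup of order $(q+1)/2$, $A_1$ of order $q+1-3t$, or $A_2$ of order $q+1+3t$, and its $G$-conjugacy class within that torus corresponds to an orbit of the associated normalizer quotient (inversion on $H(q)$, and a subquotient of $\Cent_6$ coming from the extensions in Proposition \ref{ree_maximal_subgroup_list} on the other three tori). Augmented by the identity class and the unique involution class, these orbit counts give the exact value of $N$; equivalently $N$ may be read off from Ward's character-table computation \cite{ward66}.

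Next, I would record an explicit trace formula on each torus. For $h(\lambda) \in H(q)$ the eigenvalues on the natural $7$-dimensional module are $\{1, \lambda^{\pm t}, \lambda^{\pm(t-1)}, \lambda^{\pm(2t-1)}\}$, giving
\[
f_0(\lambda) = 1 + \lambda^t + \lambda^{-t} + \lambda^{t-1} + \lambda^{1-t} + \lambda^{2t-1} + \lambda^{1-2t},
\]
manifestly invariant under $\lambda \mapsto \lambda^{-1}$. Analogous Laurent polynomial trace formulae hold on the other three tori, obtained from the short roots of the $G_2$ root system twisted by the Frobenius-like endomorphism defining $\Ree(q)$.

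The main obstacle is verifying that the trace map is \emph{injective} on the disjoint union of normalizer-orbits across all four tori. Within a fixed torus this reduces to a degree bound on the trace Laurent polynomial, ensuring that its generic fibres coincide with the normalizer orbits; across distinct tori one rules out coincidences by pairing the $N$ conjugacy classes with the $N$ candidate trace values via a direct enumeration. Alternatively, the values of the seven-dimensional character on semisimple conjugacy classes can be read off Ward's character table \cite{ward66} and checked for distinctness by inspection of the resulting closed-form expressions in $\lambda$, $t$ and $q$.
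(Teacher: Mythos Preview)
Your overall framework---count the semisimple conjugacy classes, exhibit that many distinct trace values, and conclude by pigeonhole---is exactly the one the paper uses, and is sound. The difference is in execution, and your version leaves the hard step incomplete.

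The paper does not attempt to compute trace formulae on each of the four tori and then rule out collisions between them. Instead it cites \cite{ward66} directly for the count $N = q-1$ of non-identity conjugacy classes of order prime to $3$, and then exhibits a \emph{single} one-parameter family
\[
g_\lambda = S(0,0,1)\,\Upsilon\,h(\lambda), \qquad \lambda \in \F_q^{\times},
\]
for which one checks by direct matrix multiplication that $\Tr(g_\lambda) = \lambda^t - 1$. Since $\lambda \mapsto \lambda^t$ is a bijection of $\F_q^{\times}$, this already yields $q-2$ distinct traces among the elements with $\lambda \neq -1$ (which have order prime to $3$); the remaining trace value $-1$ is supplied by the involution $h(-1)$. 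That gives $q-1$ distinct traces among $q-1$ classes, and the pigeonhole finishes.

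Your route via torus-by-torus trace formulae is in principle valid, but the step you label ``the main obstacle'' is precisely the content of the proposition, and you do not actually carry it out: saying that cross-torus collisions ``can be ruled out by inspection of Ward's table'' or ``by a degree bound'' is not a proof. The paper's device of a single family with trace $\lambda^t - 1$ sidesteps this entirely, because injectivity of $\lambda \mapsto \lambda^t - 1$ on $\F_q^{\times}$ is immediate. If you want to complete your argument along its own lines you would need to write down the four trace polynomials explicitly and verify their joint injectivity, which is substantially more work than the paper's two-line computation.
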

\begin{proof}
  From \cite{ward66}, the number of conjugacy classes of non-identity
  elements of order prime to $3$ is $q - 1$. Observe that for $\lambda
  \in \F_q^{\times}$, $\Tr (S(0, 0, 1) \Upsilon h(\lambda)) = \lambda^t -
  1$ and $\abs{S(0, 0, 1) \Upsilon h(\lambda)}$ is prime to $3$ if also
  $\lambda \neq -1$.
  
  Moreover, $h(-1)$ has order $2$ and trace $-1$ so there are $q - 1$
  possible traces for non-identity elements of order prime to $3$, and
  elements with different trace must be non-conjugate. Thus all
  conjugacy classes must have different traces.
\end{proof}

\begin{pr} \label{ree_dihedral_trick}
If $i, j \in Ree(q)$ are uniformly random involutions, then $\Pr{\abs{ij} \, \text{odd}} > c$ for some constant $c > 0$.
\end{pr}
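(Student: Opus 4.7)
The plan is to fix an involution $j$---by Proposition \ref{pr_involution_props} all involutions in $G = \Ree(q)$ are conjugate, so a symmetry argument shows the conditional probability over $i$ does not depend on the choice of $j$---and to lower-bound the number of involutions $i$ for which $ij$ has odd order. The key translation is that, setting $g = ij$, the element $i = gj$ is an involution exactly when $(gj)^2 = 1$ with $g \neq j$; this is equivalent to $j$ inverting $g$, and the constraint $g \neq j$ is automatic as soon as $\abs{g}$ is odd. So the problem reduces to bounding below the number of odd-order elements of $G$ inverted by $j$.

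I would get a constant lower bound by restricting to a single family, namely the conjugates of the maximal torus $H(q)$ of order $q - 1$. By Proposition \ref{cyclic_subgroups_conjugate} these are precisely the cyclic subgroups of order $q - 1$, in bijection with the unordered pairs of points of $\OV$, giving $q^3(q^3+1)/2$ of them; each normaliser is dihedral of order $2(q - 1)$, and since $q - 1$ is even it contains exactly $q$ involutions, one of which lies inside the torus while the remaining $q - 1$ are reflections inverting it. Double-counting pairs (involution, conjugate of $H(q)$ that it inverts) and invoking that all involutions in $G$ are conjugate then forces each $j$ to invert exactly $q(q^2 - 1)/2$ such tori. Since $q = 3^{2m+1} \equiv 3 \pmod{4}$, the $2$-part of $q - 1$ equals $2$, so each such torus contains exactly $(q-3)/2$ non-identity elements of odd order.

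The step I expect to require the most care is showing that the contributions from distinct tori are disjoint, so that the counted elements are genuinely distinct. Two different conjugates of $H(q)$ stabilise different pairs of points of $\OV$, so their intersection fixes at least three points of $\OV$; by Proposition \ref{pr_involution_props} the stabiliser of any triple of points has order $2$, so the intersection has order at most $2$ and in particular contains no non-trivial element of odd order. Assembling the counts I obtain
\[ \Pr{\abs{ij} \text{ odd}} \geqslant \frac{q(q^2 - 1)(q - 3)/4}{q^2(q^2 - q + 1)} = \frac{(q - 1)(q + 1)(q - 3)}{4 q (q^2 - q + 1)}, \]
which tends to $1/4$ as $q \to \infty$ and is already a positive constant for the smallest admissible value $q = 27$, giving the desired uniform bound $c > 0$.
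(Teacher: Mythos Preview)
Your argument is correct and gives a self-contained, elementary proof with an explicit lower bound tending to $1/4$. The paper, by contrast, simply invokes \cite[Theorem~13]{parkerwilson06} (Parker--Wilson), which establishes the analogous statement uniformly for groups of Lie type. Your route---fixing $j$, translating the condition to ``$j$ inverts an odd-order element $g$'', and then lower-bounding by the contribution from the $q(q^2-1)/2$ split tori inverted by $j$---avoids any external black box and exploits only facts already proved in the paper (conjugacy of involutions, the dihedral normaliser $\Norm_G(H(q)) \cong \Dih_{2(q-1)}$, the $2$-part of $q-1$ being $2$, and the triple-stabiliser having order $2$). The disjointness step is handled cleanly. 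The trade-off is that the Parker--Wilson citation gives the result for all exceptional groups at once (and indeed the paper uses the same citation for $\LargeRee(q)$ in Proposition~\ref{pr_invol_facts}), whereas your argument is specific to $\Ree(q)$; on the other hand, your version yields an explicit constant, which the citation does not.
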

\begin{proof}
Follows immediately from \cite[Theorem $13$]{parkerwilson06}.
\end{proof}

\begin{pr} \label{psl_generation}
Let $G = \PSL(2, q)$. If $x, y \in G$ are uniformly random, then 
\begin{equation}
\Pr{\gen{x, y} = G} = 1 - \OR{\sigma_0(\log(q)) / q}
\end{equation}
\end{pr}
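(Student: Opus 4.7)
The plan is to mimic the proof of Proposition \ref{sz_2_generation}: classify the maximal subgroups of $\PSL(2, q)$, find the one of smallest index, bound the number of conjugates of each maximal subgroup, and then apply a union bound over the conjugacy classes.

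First I would invoke Dickson's classification of maximal subgroups of $\PSL(2, q)$ (which can be found in \cite[Chapter $7$]{huppertIII}): these are the Borel subgroup $B$ of order $q(q-1)/\gcd(2, q-1)$, the dihedral normalisers of the cyclic maximal tori of orders $(q-1)/\gcd(2, q-1)$ and $(q+1)/\gcd(2, q-1)$, the subfield subgroups $\PSL(2, p^d)$ for each proper divisor $d$ of $n$ (where $q = p^n$), possibly extended by a field automorphism, and finally a bounded number of classes isomorphic to $A_4$, $S_4$ or $A_5$ (depending on congruence conditions on $q$). Each maximal subgroup is self-normalising in $G$.

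Second, I would observe that the maximal subgroup of smallest index is the Borel $B$, with $[G:B] = q + 1$. Since $B = \Norm_G(B)$ there are exactly $q+1$ conjugates, and by the same union-bound argument as in Proposition \ref{sz_2_generation},
\begin{equation}
\Pr{\gen{x, y} \leqslant B^g \; \text{some} \; g \in G} \leqslant \sum_{i=1}^{q+1} \frac{\abs{B}^2}{\abs{G}^2} = \frac{1}{q+1}.
\end{equation}
For every other maximal subgroup $M$ the index is strictly larger than $q+1$, so the analogous contribution is at most $1/(q+1)$. The genuinely worst of the \emph{other} classes is the subfield class $\PSL(2, p^{n/2})$ (when $2 \mid n$), whose index is $\Theta(q^{3/2})$, so each such class contributes $\OR{1/q^{3/2}}$.

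Third, I would count classes. The dihedral tori contribute $2$ classes, the sporadic $A_4/S_4/A_5$ at most $3$, the Borel $1$, and the subfield subgroups at most $2 \sigma_0(n) = \OR{\sigma_0(\log q)}$ classes (the factor $2$ for the possible extension by a field automorphism). Hence the number of conjugacy classes of maximal subgroups is $\OR{\sigma_0(\log q)}$. Summing the bounds above,
\begin{equation}
\Pr{\gen{x, y} \neq G} \;\leqslant\; \frac{1}{q+1} + \OR{\sigma_0(\log q)} \cdot \frac{1}{q+1} \;=\; \OR{\sigma_0(\log(q))/q},
\end{equation}
which gives the claim.

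The only mildly delicate point is the count of subfield classes (and making sure that field-automorphism extensions of $\PSL(2, p^d)$ are still included in the $\OR{\sigma_0(\log q)}$ count); everything else is a direct transcription of the argument for $\Sz(q)$ with the relevant index $q^2+1$ replaced by $q+1$.
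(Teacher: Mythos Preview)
Your proof is correct and follows essentially the same approach as the paper: identify the Borel subgroup as the maximal of smallest index $q+1$, bound the contribution of each conjugacy class by $1/(q+1)$ via the union bound, and multiply by the $\OR{\sigma_0(\log q)}$ classes of maximals. You supply more detail on the Dickson classification and the subfield count than the paper does, but the structure is identical.
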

\begin{proof}
The maximal subgroup $M \leqslant G$ consisting of the upper triangular matrices modulo scalars has index $q + 1$, and all subgroups isomorphic to $M$ are conjugate. Since $M = \Norm_G(M)$, there are $q + 1$ conjugates of $M$. 
\begin{equation}
\Pr{\gen{x, y} \leqslant M^g \; \text{some} \; g \in G} \leqslant \sum_{i = 1}^{q + 1} \Pr{\gen{x, y} \leqslant M} = \frac{1}{q + 1}
\end{equation}
The other maximal subgroups have index strictly greater than $q + 1$, so
the probability that $\gen{x, y}$ lies in any maximal not conjugate to
$M$ must be less than $1/(q + 1)$. The number of conjugacy classes of
maximal subgroups is $\OR{\sigma_0(\log(q))}$, and hence the probability that $\gen{x,
  y}$ lies in a maximal subgroup is $\OR{\sigma_0(\log(q)) / q}$.
\end{proof}

\begin{pr} \label{ree_ovoid_points}
Let $P = (p_1 : \dotsb : p_7) \in \OV^g$ be uniformly random, where $\OV^g = \set{Rg \mid R \in \OV}$ for some $g \in \GL(7, q)$. Then
\begin{enumerate}
\item \begin{equation}
\Pr{p_3 \neq 0} \geqslant 1 - \frac{t q^2 + 1}{q^3 + 1}.
\end{equation}
\item If $Q = (q_1 : \dotsb : q_7) \in \OV^g$ is given, then
\begin{equation}
\Pr{p_3 q_2^{3t + 1} \neq q_3 p_2^{3t + 1}} \geqslant 1 - \frac{1 + q^2(3t + 1)}{q^3 + 1}.
\end{equation}
\end{enumerate}
\end{pr}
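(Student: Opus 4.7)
The plan is to parameterise $\OV \setminus \set{P_\infty}$ by triples $(a,b,c) \in \F_q^3$ using the explicit formula in Proposition \ref{ree_doubly_transitive_action}, and in each part to reduce the inequality to a bound on the number of triples $(a,b,c) \in \F_q^3$ annihilating a certain polynomial in $\F_q[a,b,c]$. In both parts the ``$+1$'' records the possible contribution of the single remaining point $P_\infty g$.

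For part (1), setting $\ell_i = g_{i,3}$, the condition that the third coordinate of $Pg$ vanishes is the linear relation $\sum_{i=1}^{7} \ell_i p_i = 0$, with $(\ell_1,\dotsc,\ell_7) \neq 0$ because $g$ is invertible. Substituting $P = P(a,b,c)$ produces a non-zero polynomial $f \in \F_q[a,b,c]$; it is non-zero because the monomials arising from the seven ovoid coordinates are distinct, so $f \equiv 0$ would force every $\ell_i$ to vanish. I would then count triples $(a,b,c)$ with $f(a,b,c) = 0$ by viewing $f$ as a polynomial in a single variable with coefficients in $\F_q$ of the other two; a case split on which of $\ell_4, \ell_5, \ell_6, \ell_7$ are non-zero lets us pick the right reducing variable, and using the fact that $x \mapsto x^t$ is a bijection of $\F_q$ one can absorb the $2t$-power monomials such as $b^{2t}$ and $c^{2t}$ into quadratic ones in a relabelled variable. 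This brings the effective univariate degree down to $t$, so $\abs{V(f) \cap \F_q^3} \leqslant t q^2$.

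For part (2), fix $Q \in \OV^g$ and set $\alpha = q_2^{3t+1}$ and $\beta = q_3$, both constants in $\F_q$. The bad condition $\alpha p_3 - \beta p_2^{3t+1} = 0$ becomes, after substituting $P = P(a,b,c)$, a polynomial equation $F(a,b,c) = 0$. Since $p_2$ and $p_3$ are linear in the ovoid coordinates they are polynomials in $(a,b,c)$ of degree at most $4t+2$; using the identity $x^q = x$ on $\F_q$ to reduce the Frobenius powers that arise from $p_2^{3t+1}$, the effective $c$-degree of $F$ for fixed $(a,b)$ is at most $3t + 1$, which yields the bound $q^2 (3t + 1)$.

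The main obstacle is establishing the sharp bound $tq^2$ in part (1): a naive degree count in $b$ or $c$ gives at most $2t$, and in $a$ gives up to $4t + 2$, leading to bounds of the form $2tq^2$ or worse. To reach $tq^2$ one must exploit the bijectivity of the Frobenius $x \mapsto x^t$ on $\F_q$ to rewrite the leading $2t$-power monomials as squares in a new variable, and separately count the exceptional locus on which the leading coefficient of the chosen reduction variable vanishes. The same kind of degree-tracking drives the estimate in part (2), though there the looser target bound makes the case analysis more forgiving.
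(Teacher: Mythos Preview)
Your overall plan is exactly the paper's: parametrise $\OV \setminus \{P_\infty\}$ by $(a,b,c)\in\F_q^3$, express the relevant coordinate condition as a polynomial in $(a,b,c)$, and bound the number of $\F_q$-zeros by the degree in one variable times $q^2$, with the ``$+1$'' accounting for $P_\infty g$. The paper's proof is nothing more than that bare sentence: for part~(1) it simply asserts $\deg_c f \leqslant t$ and reads off $\leqslant t q^2$ roots (the printed ``$q^2(3t+1)$'' there is a visible copy-paste slip from part~(2)); for part~(2) it asserts $\deg_c F \leqslant 3t+1$. No case analysis, no Frobenius relabelling.

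You are right to flag the sharp constant in part~(1) as the obstacle, and in fact the paper's own assertion $\deg_z f \leqslant t$ is wrong for general $g$: the seventh ovoid coordinate contributes a $-c^{2t}$ term, so $\deg_c p_3 \leqslant 2t$ with equality whenever $g_{7,3}\neq 0$. Your proposed fix, however, does not work as stated. Substituting $c' = c^t$ turns $c^{2t}$ into $(c')^2$ and $c^t$ into $c'$, but the linear term $c$ becomes $(c')^{3t}$ on $\F_q$ (since $(c^t)^{3t}=c^{3t^2}=c^q=c$), so the degree goes \emph{up} to $3t$, not down to $t$. No single-variable relabelling of this kind reaches degree $t$ here. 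The same issue bites in part~(2): after reducing $p_2^{3t}$ via $c^q=c$ one gets $c$-degree $\leqslant 3t$, so $p_2^{3t+1}$ has $c$-degree $\leqslant 5t$, not $3t+1$, for general $g$.

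None of this matters downstream. The only consumer is Lemma~\ref{lem_diagonal_conj_ree}, which needs merely that the failure probability is $O(t/q)=O(q^{-1/2})$; the coarser bounds $2tq^2$ and $5tq^2$ already give that. (Indeed in that lemma $g=d^{-1}$ is \emph{diagonal}, so $p_2=a^t/d_2$ and $p_3=-b^t/d_3$ do not involve $c$ at all, and both estimates are trivial.) So your argument structure is sound; just drop the claim that the Frobenius trick reaches the exact constants, record the honest degree bounds, and note that they suffice for the application.
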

\begin{proof}
If $P = P^{\prime} g$, where $P_{\infty} \neq P^{\prime} \in
  \OV$ and $g = [g_{i,j}]$, then $p_i = g_{1, i} + a^t g_{2, i} -
  b^t g_{3, i} + ((ab)^t - c^t) g_{4, i} + (-b - a^{3t + 1} - (ac)^t)
  g_{5, i} + (-c - (bc)^t - a^{3t + 2} - a^t b^{2t}) g_{6, i} + (a^t c
  - b^{t + 1} + a^{4t + 2} - c^{2t} - a^{3t + 1} b^t - (abc)^t) g_{7,
    i}$ for some $a, b, c \in \F_q$.

\begin{enumerate}

\item By introducing indeterminates $x$, $y$ and $z$ in place of $a$, $b$ and $c$, it
  follows that $p_3$ is a
  polynomial $f \in \F_q[x, y, z]$ with $\deg_x(f) \leqslant 4t + 2$,
  $\deg_y(f) \leqslant 2t$ and $\deg_z(f) \leqslant t$. For each $(a, b) \in \F_q^2$, the number of
  roots of $f(a, b, z)$ is therefore at most $t$, so the number of
  roots of $f$ is at most $q^2(3t + 1)$.

\item Similarly, by introducing indeterminates $x$, $y$ and $z$ in place of $a$, $b$ and $c$, it
  follows that the expression $p_3 q_2^{3t + 1} - q_3 p_2^{3t + 1}$ is a
  polynomial $f \in \F_q[x, y, z]$ with $\deg_x(f) \leqslant 10t+6$,
  $\deg_y(f) \leqslant 5 t + 1$ and $\deg_z(f) \leqslant 3 t + 1$. For each $(a, b) \in \F_q^2$, the number of
  roots of $f(a, b, z)$ is therefore at most $3 t + 1$, so the number of
  roots of $f$ is at most $q^2(3t + 1)$.
\end{enumerate}
\end{proof}

\begin{pr} \label{ree_prop_frobenius}
If $g_1, g_2 \in U(q)H(q)$ are uniformly random and independent, then
\begin{equation}
\Pr{\abs{[g_1, g_2]} = 9} = 1 - \frac{1}{q - 1}
\end{equation}
\end{pr}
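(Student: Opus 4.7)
The plan is to mirror the argument used for the Suzuki analogue in Proposition \ref{sz_prop_frobenius}, since the two situations are structurally identical: a Frobenius group whose kernel has a central (or in this case nilpotency-class-2) structure, and we reduce modulo the derived subgroup of the kernel.

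First I would observe that since $U(q)H(q)/U(q) \cong H(q)$ is abelian, every commutator $[g_1,g_2]$ lies in $U(q)$. By parts (4) and (5) of Proposition \ref{sylow3_props}, an element of $U(q)$ has order $9$ if and only if it lies in $U(q)\setminus U(q)'$. Hence the event $\abs{[g_1,g_2]}=9$ coincides with the event $[g_1,g_2]\notin U(q)'$. Passing to the quotient $A = U(q)H(q)/U(q)'$ and writing $k_i$ for the image of $g_i$, this says that $\abs{[g_1,g_2]}=9$ iff $[k_1,k_2]\neq 1$ in $A$. So the problem reduces to computing the proportion of commuting pairs in $A\times A$.

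Next I would analyse $A$. Writing $K = U(q)/U(q)'\cong \F_q$ (additively) and using the conjugation formula \eqref{ree_matrix_id2}, $A = K\rtimes H(q)$ with $H(q)\cong\F_q^{\times}$ acting via $\lambda\cdot a = \lambda^{3t-2}a$. Because $U(q)H(q)$ is a Frobenius group with kernel $U(q)$ by Proposition \ref{sylow3_props}(7), every nontrivial $h\in H(q)$ acts without nonzero fixed point on $K$; equivalently, for each $1\neq h \in H(q)$, the endomorphism $1-h$ of $K$ is invertible. In particular $|A|=q(q-1)$.

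Now I would count pairs $(k_1,k_2)\in A\times A$ with $[k_1,k_2]=1$, splitting on the position of $k_1$:
\begin{itemize}
\item If $k_1=1$, any $k_2\in A$ commutes with $k_1$: $q(q-1)$ pairs.
\item If $1\neq k_1\in K$, then $\Cent_A(k_1)=K$ because only elements of $K$ and $h=1$ fix $k_1$, so there are $(q-1)\cdot q$ pairs.
\item If $k_1=(u,h)$ with $h\neq 1$, writing the commuting condition $(1-h')u = (1-h)v$ and using invertibility of $1-h$, $v$ is determined uniquely by $h'$; so the centraliser has order $q-1$, giving $q(q-2)(q-1)$ pairs.
\end{itemize}
Summing yields $q(q-1)\bigl[1+1+(q-2)\bigr]=q^{2}(q-1)$ commuting pairs out of $\abs{A\times A}=q^{2}(q-1)^{2}$, so $\Pr{[k_1,k_2]=1}=1/(q-1)$, and the stated probability follows by complementation.

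There is no real obstacle here; the only point requiring care is verifying that $1-h$ is invertible on $K$ for $h\neq 1$, which follows directly from the Frobenius property of $U(q)H(q)$ descending to $A$.
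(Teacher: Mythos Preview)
Your proof is correct and follows essentially the same approach as the paper's own proof: reduce modulo $U(q)'$ to the quotient $A=K\rtimes H(q)$ and count commuting pairs in three cases according to whether $k_1=1$, $k_1\in K\setminus\{1\}$, or $k_1\notin K$. Your write-up is slightly more explicit in justifying the centraliser orders (via the commuting relation $(1-h')u=(1-h)v$ and the Frobenius invertibility of $1-h$), but the argument is the same.
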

\begin{proof}
  By Proposition \ref{sylow3_props}, $[g_1, g_2] \in U(q)$ and
  has order $9$ if and only if $[g_1, g_2] \notin U(q)^{\prime}
  \triangleleft U(q)H(q)$. It is therefore sufficient to find the
  proportion of (unordered) pairs $k_1, k_2 \in U(q) H(q) / U(q)^{\prime} = A$ such that $[k_1,
  k_2] = 1$. 

  If $k_1 = 1$ then $k_2$ can be any element of $A$, which gives $q(q
  - 1)$ pairs.  If $1 \neq k_1 \in U(q) / U(q)^{\prime} \cong
  \F_q$ then $\Cent_A(k_1) = U(q) / U(q)^{\prime}$, so we again obtain
  $q(q - 1)$ pairs. Finally, if $k_1 \notin U(q)$ then
  $\abs{\Cent_A(k_1)} = q - 1$ so we obtain $q(q - 2)(q - 1)$ pairs.
  Thus we obtain $q^2 (q - 1)$ pairs from a total of $\abs{A \times A}
  =q^2 (q - 1)^2$ pairs, and the result follows.
\end{proof}

\subsection{Alternative definition} \label{section:alt_definition} The
definition of $\Ree(q)$ that we have given is the one that best suits
most our purposes. However, to deal with non-constructive recognition, we need to
mention the more common definition of $\Ree(q)$.

Following \cite[Chapter $4$]{raw04} and \cite{elementary_ree}, the exceptional group $G_2(q)$ is
constructed by considering the Cayley algebra $\OO$ (the octonion
algebra), which has dimension $8$, and defining $G_2(q)$ to be the
automorphism group of $\OO$. Thus each element of $G_2(q)$ fixes the
identity and preserves the algebra multiplication, and it follows that
it is isomorphic to a subgroup of $\SO(7, q)$.

Furthermore, when $q$ is an odd power of $3$, the group $G_2(q)$ has a
certain automorphism $\Psi$, sometimes called the \emph{exceptional outer
  automorphism}, whose set of fixed points form a group, and this is
defined to be the Ree group $\Ree(q) = \G2(q)$. A more detailed description of how to compute $\Psi(g)$ can be found in \cite{elementary_ree}.

\subsection{Tensor indecomposable representations}
\label{section:ree_indecomposables}

It follows from \cite{MR1901354} that over an algebraically closed
field in defining characteristic, up to Galois twists, there are
precisely two absolutely irreducible tensor indecomposable
representations of $\Ree(q)$: the natural representation and a
representation of dimension $27$. Let $V$ be the natural module of
$\Ree(q)$, of dimension $7$. The symmetric square $\mathcal{S}^2(V)$
has dimension $28$, and is a direct sum of two submodules of
dimensions $1$ and $27$. The $1$-dimensional submodule arises because
$\Ree(q)$ preserves a quadratic form.

\section{Big Ree groups} \label{section:big_ree_theory}

The Big Ree groups were first described in \cite{MR0125155, large_ree}, and are
covered in \cite[Chapter $4$]{raw04}. The maximal subgroups are given in
\cite{malle_bigree}, and representatives of the conjugacy classes
are given in \cite{large_ree_conj_1} and \cite{large_ree_conj_2}. An
elementary construction, suitable for our purposes, is described in
\cite{elementary_ree}.

\subsection{Definition and properties}

We take the definition of the standard copy of $\LargeRee(q)$ from
\cite{elementary_ree}. 

The exceptional group $\mathrm{F}_4(q)$ is constructed by considering
the exceptional Jordan algebra (the Albert algebra), which has
dimension $27$, and defining $\mathrm{F}_4(q)$ to be its automorphism
group. Thus each element of $\mathrm{F}_4(q)$ fixes the identity and
preserves the algebra multiplication, and one can show that it is a
subgroup of $\mathrm{O}^{-}(26, q)$.

Furthermore, when $q$ is an odd power of $2$, the group
$\mathrm{F}_4(q)$ has a certain automorphism $\Psi$ whose set of fixed
points form a group, and this is defined to be the Big Ree group
$\LargeRee(q)$. A more detailed description of how to compute
$\Psi(g)$ can be found in \cite{elementary_ree}.

Let $t = 2^{m + 1} = \sqrt{2q}$ and let $V = \F_q^{26}$. From \cite{elementary_ree} we
immediately obtain:

\begin{pr} \label{pr_torus_elt}
Let $G = \LargeRee(q)$ and $g \in G$ with $\abs{g} = q - 1$. Then $g$ is conjugate in $G$ to an element of the form
\begin{equation}
\begin{split}
\varsigma(a, b) = \diag(& a, b, a^{t - 1} b^{t - 1}, a b^{1 - t}, a^t b^{-1}, b a^{1 - t}, 
 b^{t - 1}, b^{1 - t} a^{2 - t}, a b^{-1}, a^{t - 1}, 1, a^{-1} b^t, \\
& b^{2 - t} a^{1 - t}, a^{t - 1} b^{t - 2}, a b^{-t}, 1, 
 a^{1 - t}, b a^{-1}, a^{t - 2} b^{t - 1}, b^{1 - t}, a^{t - 1} b^{-1}, \\
& b a^{-t}, a^{-1} b^{t - 1}, b^{1 - t} a^{1 - t}, b^{-1}, a^{-1})
\end{split}
\end{equation}
for some $a, b \in \F_q^{\times}$.
\end{pr}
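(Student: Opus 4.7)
The order $q-1$ is odd, hence coprime to $\chr \F_q = 2$, so $g$ is a semisimple element of $G$ in the sense of the theory of algebraic groups. The plan is to exhibit a split maximal torus $T \leqslant G$ whose elements are exactly those of the form $\varsigma(a,b)$, and then argue that every semisimple element of order $q-1$ is conjugate into $T$.

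First I would check that $\varsigma : (\F_q^{\times})^2 \to G$, $(a,b) \mapsto \varsigma(a,b)$, is an injective homomorphism into $G$ whose image $T$ is a maximal torus of order $(q-1)^2$. The diagonal entries of $\varsigma(a,b)$ are monomials $a^i b^j$ whose exponents record the $26$ weights of the natural module $V$ of $G$ restricted to the split maximal torus of $\mathrm{F}_4(q)$, evaluated at the torus element parametrised by $(a,b)$. This alone shows $\varsigma(a,b) \in \mathrm{F}_4(q)$; the exponents have moreover been arranged so that $\varsigma(a,b)$ is fixed by the twisting automorphism $\Psi$ from Section \ref{section:big_ree_theory}, so $\varsigma(a,b) \in \LargeRee(q)$. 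Injectivity of $\varsigma$ is immediate from the first two diagonal entries being $a$ and $b$, giving $\abs{T} = (q-1)^2$.

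Next, since $g$ is semisimple, it lies in some maximal torus $T_1$ of $G$. The maximal tori of $\LargeRee(q)$ have been classified, for example in \cite{malle_bigree} and implicitly in \cite{large_ree_conj_1}: there is a split torus of order $(q-1)^2$ (namely $T$, up to conjugacy), together with tori whose orders involve only factors such as $q+1$, $q^2 \pm q + 1$, $q^2 + 1$ and $q \pm \sqrt{2q} + 1$. An element of order exactly $q-1$ can live in such a torus only if its order is divisible by $q-1$ and admits a cyclic subgroup of that order; inspecting the list of torus orders shows that $T_1$ must be conjugate to $T$. Writing $T_1 = T^h$ for some $h \in G$, we conclude that $g^{h^{-1}} \in T$, which by construction is $\varsigma(a,b)$ for some $a, b \in \F_q^{\times}$.

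The principal obstacle is this last step: ruling out that $g$ could lie in a maximal torus whose order happens to be divisible by $q-1$ without being conjugate to $T$. This reduces to a combinatorial check against the tables of maximal tori of $\LargeRee(q)$ in \cite{malle_bigree, large_ree_conj_1}, rather than original work. An alternative route, which avoids direct use of the torus classification, is to match $g$ to the conjugacy class representative of order $q-1$ listed in \cite{large_ree_conj_1} by comparing traces on $V$; by Proposition \ref{ree_conjugacy_classes}-style arguments (here using \cite{large_ree_conj_1, large_ree_conj_2} in place of \cite{ward66}), equality of certain class invariants already forces conjugacy.
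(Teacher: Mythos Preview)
The paper gives no proof for this statement; it simply asserts that the result follows from \cite{elementary_ree}. Your attempt to supply an actual argument is therefore more than the paper does, but your main line of reasoning has a genuine gap.

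Your claim that ``inspecting the list of torus orders shows that $T_1$ must be conjugate to $T$'' is false. Besides the split torus $T(1)$ of order $(q-1)^2$, the maximal tori of $\LargeRee(q)$ (see \cite{large_ree_conj_2} or \cite{malle_bigree}) include tori such as $\Cent_{q-1} \times \Cent_{q+1}$, $T(4) \cong \Cent_{q-1} \times \Cent_{q+t+1}$ and $\Cent_{q-1} \times \Cent_{q-t+1}$, all of which contain elements of order $q-1$ while being pairwise non-conjugate and not conjugate to $T(1)$. You correctly flag this step as ``the principal obstacle'', but your proposed resolution---just reading off torus orders---does not dispose of it.

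What is actually needed is that an element of order $q-1$ lying in any of these tori is $G$-conjugate into $T(1)$. This is a statement about semisimple conjugacy classes, not about torus orders, and it comes from Shinoda's tables \cite{large_ree_conj_2}. Indeed, the paper uses precisely this in the very next result (Proposition~\ref{pr_diagonal_elt}), citing \cite{large_ree_conj_2} to show that the $\Cent_{q-1}$ factor inside $T(4)$ is conjugate into $T(1)$. Your alternative route via the conjugacy class tables is therefore the right one; just be aware that there is not a single class of elements of order $q-1$ but a whole family, so the trace-matching idea would have to be carried out class by class. What the tables actually deliver is that every semisimple class of order dividing $q-1$ already has a representative of the form $\varsigma(a,b)$.
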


Let $\set{e_1, \dotsc, e_{26}}$ be the standard basis of $V$. Following \cite{elementary_ree}, we then define the following matrices as permutations on this basis:
\begin{align}
\varrho =& (15, 12)(14, 13)(2, 5)(7, 8)(19, 20)(22, 25)(3, 4)(6, 9)(18, 21)(24, 23) \\
\kappa = \begin{split}
& (11, 16)(1, 2)(8, 13)(14, 19)(25, 26)(7, 10) \\
& (5, 12)(15, 22)(17, 20)(4, 6)(9, 18)(21, 23)
\end{split}
\end{align}

We also define linear transformations $z$ and $\nu$, where $z$ fixes $e_i$ for $i = 1, \dotsc, 15$ and otherwise acts as follows:
\begin{align}
e_{16} &\mapsto e_1 + e_{16} & e_{17} &\mapsto e_1 + e_{17} &  e_{18} &\mapsto e_2 + e_{18}  \\
e_{19} &\mapsto e_3 + e_{19} &  e_{20} &\mapsto e_4 + e_{20} & e_{21} & \mapsto e_5 + e_{21} \\
e_{22} &\mapsto e_2 + e_6 + e_{22} &  e_{23} &\mapsto e_3 + e_7 + e_{23} & e_{24} & \mapsto e_4 + e_8 + e_{24} \\
e_{25} &\mapsto e_5 + e_9 + e_{25} & e_{26} & \mapsto e_1 + e_{10} + e_{11} + e_{26} 
\end{align}

Furthermore, we define a block-diagonal matrix $\zeta$ as follows:
\begin{equation}
\zeta_1 = \begin{bmatrix}
1 & 0 & 0 & 0 \\
1 & 1 & 0 & 0 \\
1 & 1 & 1 & 0 \\
1 & 0 & 1 & 1 
\end{bmatrix} 
\end{equation}
\begin{equation}
\zeta_2 = \begin{bmatrix}
1 & 0 & 0 & 0 & 0 & 0 \\
0 & 1 & 0 & 0 & 0 & 0 \\
0 & 1 & 1 & 0 & 0 & 0 \\
1 & 1 & 1 & 1 & 0 & 0 \\
1 & 1 & 0 & 1 & 1 & 0 \\
0 & 0 & 1 & 0 & 0 & 1 
\end{bmatrix} 
\end{equation}
\begin{equation}
\zeta_3 = \begin{bmatrix}
1
\end{bmatrix}
\end{equation}
and then $\zeta$ has diagonal blocks $\zeta_3, \zeta_1, \zeta_1,
\zeta_3, \zeta_2, \zeta_3, \zeta_1, \zeta_1, \zeta_3$. 

Finally, $\nu$ fixes $e_i$ for $i \in \set{1, 3, 4, 5, 8, 9, 14, 15, 21, 24, 25}$ and otherwise acts as follows:
\begin{align}
e_2 &\mapsto e_1 + e_2 & e_6 &\mapsto e_4 + e_6 & e_7 &\mapsto e_5 + e_7 \\
e_{10} &\mapsto e_5 + e_{10} & e_{11} &\mapsto e_9 + e_{11} & e_{12} &\mapsto e_5 + e_7 + e_{10} + e_{12} 
\end{align}
\begin{align}
e_{13} &\mapsto e_8 + e_{13} & e_{16} &\mapsto e_9 + e_{16} & e_{17} &\mapsto e_{15} + e_{17} \\
e_{18} &\mapsto e_9 + e_{11} + e_{16} + e_{18} & e_{19} &\mapsto e_{14} + e_{19} & e_{20} &\mapsto e_{15} + e_{20} \\
e_{22} &\mapsto e_{15} + e_{17} + e_{20} + e_{22} & e_{23} &\mapsto e_{21} + e_{23} & e_{26} &\mapsto e_{25} + e_{26}
\end{align}

From \cite{elementary_ree} we then immediately obtain:
\begin{thm}
Let $\lambda$ be a primitive element of $\F_q$. The elements $\varrho$, $\kappa$, $z$, $\nu$ and $\zeta$ lie in $\LargeRee(q)$, and if $G = \gen{\varsigma(1, \lambda), \varrho \nu \zeta}$, then $G \cong \LargeRee(q)$.
\end{thm}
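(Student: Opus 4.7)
The plan is to establish two separate claims: first, that each of the matrices $\varrho$, $\kappa$, $z$, $\nu$, $\zeta$ actually lies in $\LargeRee(q)$; second, that the two-element set $\set{\varsigma(1, \lambda), \varrho \nu \zeta}$ generates the whole group.

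For the first claim, I would appeal to the setup of Section \ref{section:big_ree_theory}: $\LargeRee(q)$ is the fixed-point subgroup of the automorphism $\Psi$ acting on $\mathrm{F}_4(q)$, and $\mathrm{F}_4(q)$ itself is the automorphism group of the Albert algebra on $V$. Hence for each $g \in \set{\varrho, \kappa, z, \nu, \zeta}$ it suffices to verify two things: $(i)$ $g$ preserves the Albert algebra multiplication (so $g \in \mathrm{F}_4(q)$), and $(ii)$ $\Psi(g) = g$. Both are mechanical matrix calculations, done entry by entry on a basis; since the matrices have very sparse and explicit forms, the computations are routine. In fact, these matrices are constructed in \cite{elementary_ree} precisely so that $(i)$ and $(ii)$ hold, and one can either cite that directly or reproduce the verification.

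For the second claim, write $H = \gen{\varsigma(1,\lambda), \varrho\nu\zeta}$ and suppose for contradiction that $H$ is contained in a proper subgroup of $\LargeRee(q)$, hence in a maximal subgroup $M$. I would invoke Malle's classification of maximal subgroups of $\LargeRee(q)$ from \cite{malle_bigree} and rule out each class in turn. The element $\varsigma(1,\lambda)$ has order exactly $q-1$ (since $\lambda$ is primitive in $\F_q$) and by Proposition \ref{pr_torus_elt} lies in a maximal torus; this already eliminates most reducible parabolic-type candidates, since their intersections with such a torus are too small, and constrains the remaining cases (normalisers of other cyclic tori, the Tits subgroup, etc.). For each of the remaining candidate classes, I would exhibit a short word $w$ in the two generators (most naturally, a low power of $\varrho\nu\zeta$ or a product $(\varsigma(1,\lambda))^i (\varrho\nu\zeta)^j$ for a few small $i,j$) whose pseudo-order, trace, or fixed-space dimension is incompatible with membership in $M$. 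For the subfield subgroups $\LargeRee(q_0)$ with $q$ a proper power of $q_0$, it suffices to produce an element of $H$ having an entry (or trace) lying in $\F_q \setminus \F_{q_0}$, and the presence of $\lambda$ in $\varsigma(1,\lambda)$ makes this automatic up to a short calculation.

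The main obstacle is the case analysis in the second part: the list of maximal subgroups of $\LargeRee(q)$ is comparatively rich (tori normalisers of four different types, two parabolic subgroups, the involution centraliser, the Tits subgroup in the exceptional case $q=2$, and subfield groups), and each class demands its own witness. The membership verifications of the first part are essentially a bookkeeping exercise by contrast. In practice one would carry out the entire argument inside the explicit matrix model of \cite{elementary_ree}, where all these orders and traces can be written down directly in terms of $q$, $t$, and $\lambda$.
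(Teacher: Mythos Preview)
The paper does not give an independent proof of this statement: it is introduced with ``From \cite{elementary_ree} we then immediately obtain'' and no proof environment follows. In other words, the paper's ``proof'' is simply a citation to Wilson's elementary construction, where these matrices are explicitly defined and shown to generate $\LargeRee(q)$.

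Your proposal goes well beyond this. The first part (membership) is exactly right and essentially is what \cite{elementary_ree} does: the matrices are built there precisely to preserve the Albert algebra product and to be fixed by $\Psi$, and you correctly note that one can either cite that or redo the mechanical verification. For the second part (generation), you propose a maximal-subgroup elimination argument via Malle's list. This is a legitimate strategy and would work, but it is not what the cited reference does, and it is also only a plan: you have not actually produced the witnesses $w$ that rule out each maximal class, and for the larger tori normalisers and the parabolic cases this is where the real work lies. Since the paper is content to defer entirely to \cite{elementary_ree}, your level of detail already exceeds what is required here; if you wanted to match the paper, a single sentence citing \cite{elementary_ree} would suffice.
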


\begin{pr} \label{pr_diagonal_elt}
Let $G = \LargeRee(q)$ and $g \in G$ with $\abs{g} = (q - 1) (q + t +
1)$. Then $g^{q + t + 1}$ is conjugate in $G$ to $\varsigma(1, b)$, which we
write as
\begin{equation}
\begin{split}
h(\lambda, \mu) = \diag( & 1, \lambda, \mu \lambda^{-1}, \lambda \mu^{-1}, \lambda^{-1}, 
 \lambda, \mu \lambda^{-1}, \lambda \mu^{-1}, \lambda^{-1}, 1, 1, \mu, \lambda^2 \mu^{-1}, 
 \mu \lambda^{-2}, \\ 
& \mu^{-1}, 1, 1, \lambda, \mu \lambda^{-1}, \lambda \mu^{-1}, \lambda^{-1}, \lambda, \mu \lambda^{-1}, \lambda \mu^{-1}, \lambda^{-1}, 1)
\end{split}
\end{equation}
where $\lambda = b \in \F_q^{\times}$ and $\mu = \lambda^t$.
\end{pr}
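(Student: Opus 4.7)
The plan is first to show that $g^{q+t+1}$ has order exactly $q - 1$, next to apply Proposition \ref{pr_torus_elt} to obtain a conjugate of the form $\varsigma(a_0, b_0)$, and finally to exploit the structure of the maximal torus containing $g$ to force $a_0 = 1$.

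For the order, I first check that $\gcd(q - 1, q + t + 1) = 1$. Reducing modulo $q - 1$ we have $q + t + 1 \equiv t + 2 = 2(2^m + 1) \pmod{q - 1}$, and since $q - 1$ is odd this reduces to $\gcd(q - 1, 2^m + 1)$; the identity $q - 1 = 2(2^m - 1)(2^m + 1) + 1$ shows this gcd is $1$. Therefore $g^{q + t + 1}$ has order $(q-1)(q+t+1)/(q+t+1) = q - 1$, and by Proposition \ref{pr_torus_elt} some $G$-conjugate of $g^{q+t+1}$ equals $\varsigma(a_0, b_0)$ for certain $a_0, b_0 \in \F_q^\times$.

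The substantive step is to upgrade this to $a_0 = 1$. Since $\gen{g}$ is cyclic of order $(q-1)(q+t+1)$ with coprime factors, the Chinese Remainder decomposition gives $\gen{g} = \gen{g^{q+t+1}} \times \gen{g^{q-1}}$ with factor orders $q-1$ and $q+t+1$, and $\gen{g}$ is a cyclic maximal torus $T$ of $\LargeRee(q)$ of the non-split type appearing in \cite{large_ree_conj_1, large_ree_conj_2}. Matching $T$ with the explicit diagonal construction of \cite{elementary_ree} identifies the $(q-1)$-part $\gen{g^{q+t+1}}$, up to $G$-conjugacy, as the one-parameter subgroup $\{\varsigma(1, b) : b \in \F_q^\times\}$ inside the ambient rank-two split torus $\{\varsigma(a, b) : a, b \in \F_q^\times\}$. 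Hence $g^{q+t+1}$ is $G$-conjugate to $\varsigma(1, b)$ for some $b \in \F_q^\times$, and setting $\lambda = b$ and $\mu = \lambda^t$ then produces the diagonal form $h(\lambda, \mu)$ in the statement.

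The main obstacle lies in this last identification. A priori, the rank-two split torus $\{\varsigma(a, b)\}$ contains many cyclic subgroups of order $q - 1$ that are mutually non-conjugate in $G$, so one has to single out the specific one which arises as the $(q-1)$-part of a torus of order $(q-1)(q+t+1)$. The cleanest way I see of pinning this down is a root datum computation in $^2F_4$, carried out directly on the $26$-dimensional representation via the explicit matrix model of \cite{elementary_ree}, where the $(q+t+1)$-th power of a generator of such a torus is shown to land on the $\varsigma(1, \cdot)$ axis rather than on any other cyclic $q-1$ subtorus.
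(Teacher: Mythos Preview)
Your outline matches the paper's approach: show $g^{q+t+1}$ has order $q-1$, invoke Proposition~\ref{pr_torus_elt} to land in the split torus $\{\varsigma(a,b)\}$, and then argue that the correct $C_{q-1}$ subgroup is the one with $a=1$. You also correctly flag that this last identification is the real content of the proposition.

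The difficulty is that you do not actually carry out that identification; you only describe a plan (``a root datum computation \ldots via the explicit matrix model of \cite{elementary_ree}''). As written, this is a gap: the split torus contains several non-conjugate cyclic subgroups of order $q-1$, and nothing you have said so far forces the $(q-1)$-part of a $T(4)$-type torus onto the $\varsigma(1,\cdot)$ axis rather than, say, the $\varsigma(\cdot,1)$ axis or some other line.

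The paper closes this gap by citing the explicit parametrisation of the maximal tori in \cite[p.~10]{large_ree_conj_2}: in the notation there, an element of $T(4)\cong C_{q-1}\times C_{q+t+1}$ has the form such that its $(q+t+1)$-th power is $(\epsilon,\epsilon^{2\theta-1},1,1)$ for some $\epsilon\in\F_q^\times$, which lies in one of the two $C_{q-1}$ factors of $T(1)\cong C_{q-1}^2$. Translating that parametrisation to the $26$-dimensional model gives exactly $\varsigma(1,b)$. So the missing ingredient is precisely this citation (or an equivalent explicit computation); once you supply it, your argument is complete and coincides with the paper's.
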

 \begin{proof}
Using the notation of \cite[Page $10$]{large_ree_conj_2}, we see that
with respect to a suitable basis, $g$ lies in $T(4) \cong \Cent_{q - 1}
\times \Cent_{q +t + 1}$ and that $g^{q + t + 1}$ will have the form
$(\epsilon, \epsilon^{2 \theta - 1}, 1, 1)$ for some $\epsilon \in
\F_q^{\times}$. Hence it will lie in one of the factors of $T(1) \cong
\Cent_{q - 1}^2$.
\end{proof}

\begin{pr} \label{pr_random_torus_elt}
If $g \in G = \LargeRee(q)$ is uniformly random, then
\begin{equation}
\Pr{\abs{g} = (q - 1)(q + t + 1)} \geqslant \frac{5349}{54080 \log
  \log(q)} \approx \frac{1}{10 \log \log (q)}
\end{equation}
and hence the expected number of random selections required to obtain an element of order $(q - 1)(q + t + 1)$ is
$\OR{\log\log{q}}$.
\end{pr}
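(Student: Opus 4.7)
The plan is to count elements of order $(q - 1)(q + t + 1)$ in $G$ in closed form and then divide by $\abs{G}$, paralleling the arguments used for Propositions \ref{sz_totient_prop} and \ref{ree_totient_prop}. First, using Proposition \ref{pr_diagonal_elt} together with the conjugacy class data of \cite{large_ree_conj_1, large_ree_conj_2}, I would identify a cyclic maximal torus $T \leqslant G$ of order $(q - 1)(q + t + 1)$ and observe that every $g \in G$ with $\abs{g} = (q - 1)(q + t + 1)$ generates a conjugate of $T$, with all such cyclic subgroups forming a single $G$-conjugacy class.

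Second, I would show that if $g \in T$ is a generator, then $\Cent_G(g) = T$. This follows from the torus structure in \cite{large_ree_conj_2}: an element of maximal order in a maximal torus of a finite group of Lie type is regular, and the centraliser of a regular semisimple element is the ambient torus. As a consequence, distinct conjugates of $T$ share no generator: if $T \neq T^x$ and some generator $g$ of $T$ lay in $T^x$, then $T = \Cent_G(g) = T^x$. Hence each element of order $(q - 1)(q + t + 1)$ lies in a unique conjugate of $T$.

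Third, I would extract from \cite{large_ree_conj_2} the index $w = [\Norm_G(T) : T]$, which is a small fixed integer (a quotient of the Weyl group of type $^2F_4$). The number of conjugates of $T$ is then $[G : \Norm_G(T)] = \abs{G}/(w(q - 1)(q + t + 1))$, and each such cyclic subgroup contains exactly $\phi((q - 1)(q + t + 1))$ elements of the required order, so
\[ \Pr{\abs{g} = (q - 1)(q + t + 1)} = \frac{\phi((q - 1)(q + t + 1))}{w (q - 1)(q + t + 1)}. \]

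Finally, I would apply an effective lower bound of the form $\phi(n)/n > 1/(c \log \log n)$ from \cite[Section II.8]{totient_prop}, taking $n = (q - 1)(q + t + 1) \leqslant q^2$ so that $\log \log n \leqslant \log 2 + \log \log q$. Combining this estimate with the explicit value of $w$ produces the constant $5349/54080$ in the statement, and the geometric distribution of the hitting time gives the $\OR{\log \log q}$ bound on the expected number of selections. The main obstacle is pinning down $w$ precisely and carrying out the numerical bookkeeping on the totient estimate to recover exactly the stated fraction; the structural part of the argument is routine once the classification of maximal tori from \cite{large_ree_conj_1, large_ree_conj_2} is in hand.
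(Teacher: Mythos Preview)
Your structural approach is sound and is genuinely different from the paper's. You correctly identify a cyclic maximal torus $T$ of order $(q-1)(q+t+1)$ (note $\gcd(q-1,q+t+1)=1$ since $q-1$ is odd and $2^{2m+1}-1\equiv 1\pmod{2^m+1}$), and your disjointness argument for generators of conjugates of $T$ is fine: if $g$ generates $T$ and $g\in T^x$ then $T=\langle g\rangle\leqslant T^x$, forcing $T=T^x$ by order. This gives the exact equality $\Pr{\abs{g}=(q-1)(q+t+1)}=\phi(n)/(w\,n)$ with $n=(q-1)(q+t+1)$ and $w=[\Norm_G(T):T]$, which is cleaner than what the paper obtains.

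The paper takes a different route: it reads off from \cite[Table~IV]{large_ree_conj_2} the exact number of elements of types $t_9$ and $t_{10}$ (the regular elements lying in conjugates of $T(4)\cong\Cent_{q-1}\times\Cent_{q+t+1}$), bounds the proportion of required-order elements below by $\frac{\phi(q-1)\phi(q+t+1)}{(q-1)(q+t+1)}$ times that count divided by $\abs{G}$, and then minimises this rational function of $q$ at $m=1$. It is precisely this evaluation at $q=8$, $t=4$ that yields $5349/54080$.

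This is where your proposal has a gap. You assert that combining the totient estimate with ``the explicit value of $w$ produces the constant $5349/54080$''. It does not: that fraction has prime factorisation $3\cdot 1783/(2^6\cdot 5\cdot 13^2)$ and cannot arise as $1/w$ for any integer $w$, nor as (standard totient constant)$/w$. Your method yields a bound with leading constant $1/w$ (times whatever totient constant you use), not the paper's constant. To complete your argument you must actually extract $w$ from \cite{large_ree_conj_2} and then check that $1/w\geqslant 5349/54080$, i.e.\ $w\leqslant 10$; only then does your inequality imply the one stated. Given that the relevant twisted Weyl group has order $16$ this is very plausible, but it is work you have deferred, and without it you have proved the $\OR{\log\log q}$ asymptotic but not the proposition as written.
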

\begin{proof}
Using the notation of \cite[Page $10$]{large_ree_conj_2}, we see that
such elements lie in a conjugate of $T(4) \cong \Cent_{q - 1} \times
\Cent_{q +t + 1}$. The proportion of the elements in $T(4)$ is therefore
$\phi(q - 1) \phi(q + t + 1) / ((q - 1)(q + t + 1))$.

From \cite[Table $\mathrm{IV}$]{large_ree_conj_2} we see that the
elements in $T(4)$ are of types $t_9$ and $t_{10}$ and that the total
number of such elements in $G$ is 
\begin{equation*}
\frac{(q + t) \abs{G}}{4 q^2 (q + t + 1)(q - 1)(q^2 + 1)} + \frac{(q -
  2) (q + t) \abs{G}}{8 (q - 1) (q + t + 1)}.
\end{equation*}
The proportion in $G$ of the elements of the required order is
therefore at least
\begin{equation*}
\frac{\phi(q - 1) \phi(q + t + 1)}{(q - 1)(q + t + 1)} \left(
\frac{(q + t)}{4 q^2 (q + t + 1)(q - 1)(q^2 + 1)} + \frac{(q -
  2) (q + t)}{8 (q - 1) (q + t + 1)}\right)
\end{equation*}
and the expression in parentheses is minimised when $m = 1$. The result now follows from \cite[Section II.8]{totient_prop}.
\end{proof}

By definition we have the inclusions $\LargeRee(q) < \mathrm{F}_4(q) <
\mathrm{O}^{-}(26, q) < \Sp(26, q) < \SL(26, q) < \GL(26, q)$, so
$\LargeRee(q)$ preserves a quadratic form $Q^{*}$ with associated
bilinear form $\beta^{*}$. It follows from
\cite{elementary_ree} that
\begin{align}
Q^{*}(e_i) &= \begin{cases}
1 & i \in \set{11, 16} \\
0 & i \notin \set{11, 16}
\end{cases} \\
\beta^{*}(e_i, e_j) &= \delta_{i, 27 - j}
\end{align}

\begin{pr} \label{pr_invol_facts}
In $G = \LargeRee(q)$, there are two conjugacy classes of involutions. The \emph{rank} of an involution is the number of $2$-blocks in the Jordan form.

\begin{tabular}{l|l|l|l|l}
Name & Centraliser & Maximal parabolic & Rank & Representative \\
\hline 
$2A$ & $[q^{10}] {:} \Sz(q)$ & $[q^{10}] {:} (\Sz(q) \times \Cent_{q - 1})$ & $10$ & $\varrho$ \\
$2B$ & $[q^9] {:} \PSL(2, q)$ & $[q^{11}] {:} (\PSL(2, q) \times \Cent_{q - 1})$ & $12$ & $\kappa$  
\end{tabular}

Moreover, in the $2A$ case the centre of the centraliser has order $q$. The cyclic group $\Cent_{q - 1}$ acts fixed-point freely on $[q^{10}]$.

Let $i, j \in G$ be involutions. If $i$ and $j$ are conjugate, then $\abs{ij}$ is odd with probability $1 - \OR{1/q}$. If $i$ and $j$ are not conjugate, then $\abs{ij}$ is even.
\end{pr}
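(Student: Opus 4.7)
The plan is to split the statement into four parts: (i) the table of conjugacy class data together with the identification of the representatives $\varrho$ and $\kappa$; (ii) the centre of the $2A$-centraliser and the fixed-point-free action of the Levi torus; (iii) the $1 - \OR{1/q}$ bound for odd product of conjugate involutions; and (iv) the even-order statement for non-conjugate involutions.

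For (i), I would invoke the classification of conjugacy classes of $\LargeRee(q)$ due to Shinoda in \cite{large_ree_conj_1, large_ree_conj_2} for the existence of exactly two classes of involutions with centralisers $[q^{10}] \cln \Sz(q)$ and $[q^9] \cln \PSL(2, q)$, and \cite{malle_bigree} for the structure of the two classes of maximal parabolics. To place $\varrho$ and $\kappa$ in the correct class, I would compute the rank of $g - I$ on $V = \F_q^{26}$ directly from the definitions in Section \ref{section:big_ree_theory}: $\varrho$ is a product of $10$ disjoint basis-transpositions and $\kappa$ is a product of $12$ such transpositions, so they produce $10$ respectively $12$ Jordan blocks of size $2$. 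Since the two classes of involutions in $\LargeRee(q)$ are distinguished by the dimension of the $1$-eigenspace on the natural module, this pins $\varrho$ to $2A$ and $\kappa$ to $2B$.

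For (ii), I would work inside the $2A$-parabolic $P = [q^{10}] \cln (\Sz(q) \times \Cent_{q-1})$ from \cite{malle_bigree}. The unipotent radical $[q^{10}]$ is a Sylow $2$-subgroup of $P$, and from the Chevalley-style root data in \cite{elementary_ree} its centre is the root subgroup for the highest root, which has order $q$ and coincides with $\Zent(\Cent_G(\varrho))$. The Levi torus $\Cent_{q-1}$ acts on each root subgroup of $[q^{10}]$ by multiplication by a nontrivial power of a fixed generator of $\F_q^{\times}$, so it has no fixed points on $[q^{10}] \setminus \set{1}$, giving the required fixed-point-free action.

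For (iii), I would apply \cite[Theorem 13]{parkerwilson06} (used earlier in Proposition \ref{ree_dihedral_trick}), whose analysis of pairs of conjugate involutions with odd product extends to all finite groups of Lie type and yields $1 - \OR{1/q}$ here. If a direct citation is not available for $\LargeRee(q)$, the same bound can be obtained by summing class-multiplication constants from the character-table data of \cite{large_ree_conj_2}, since the classes of odd-order elements together account for all but an $\OR{1/q}$ proportion of the pairs. For (iv), if $i \not\sim_G j$ and $\abs{ij} = 2k + 1$ were odd, then Proposition \ref{dihedral_trick_basis} would give $(ij)^k$ conjugating $i$ to $j$, a contradiction; hence $\abs{ij}$ must be even.

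The main obstacle is part (iii): the explicit $1 - \OR{1/q}$ constant. The cleanest route is to quote \cite{parkerwilson06}, but verifying that their estimates cover $\LargeRee(q)$ (a twisted group whose class data is more intricate than $\mathrm{F}_4$) may require either locating an updated reference or doing the class-sum bookkeeping against Shinoda's tables — a routine but lengthy count that I would keep out of the body of the proof by citing the relevant results.
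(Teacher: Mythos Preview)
Your proposal is correct and follows essentially the same line as the paper: the paper's proof is just two citations --- \cite{malle_bigree} for the centraliser structure (implicitly covering the table, the centre, and the fixed-point-free action), and \cite[Theorem~13]{parkerwilson06} for both statements about $\abs{ij}$. You supply considerably more detail than the paper does, in particular the explicit rank computation for $\varrho$ and $\kappa$ and the root-subgroup argument for the centre and torus action; these are fine but the paper is content to absorb them into the Malle citation.

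The one genuine difference worth noting is your part (iv): you give the elementary contrapositive via Proposition~\ref{dihedral_trick_basis} (odd product of involutions forces conjugacy), whereas the paper bundles both (iii) and (iv) into the Parker--Wilson citation. Your argument for (iv) is cleaner and self-contained, and it removes any worry about whether \cite{parkerwilson06} explicitly addresses the non-conjugate case for $\LargeRee(q)$.
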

\begin{proof}
The structure of the centralisers can be found in \cite{malle_bigree}. The statement about $i$ and $j$ follows immediately from \cite[Theorem $13$]{parkerwilson06}.
\end{proof}

\begin{cl}[The dihedral trick] \label{bigree_dihedral_trick} 
There exists a Las Vegas algorithm that,
  given $\gen{X} \leqslant \GL(26, q)$ such that $\gen{X} = \LargeRee(q)$
  and given conjugate involutions $a, b \in \gen{X}$, finds $c \in \gen{X}$ such
  that $a^c = b$. If $a, b$ are given as $\SLP$s of length $\OR{n}$, then $c$ will be
  found as an $\SLP$ of length $\OR{n}$. The algorithm has expected time complexity $\OR{\xi}$ field operations.
\end{cl}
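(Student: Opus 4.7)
The plan is to specialise the general dihedral trick of Theorem~\ref{dihedral_trick} to $G = \LargeRee(q)$, since all the machinery is already in place and only one hypothesis needs to be checked for this particular group.

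First I would verify the input assumption of Theorem~\ref{dihedral_trick}, namely that the product of two random conjugate involutions in $G$ has odd order with probability at least $1/c$ for some constant $c$. This is supplied directly by Proposition~\ref{pr_invol_facts}: for conjugate involutions $i,j \in G$, $\abs{ij}$ is odd with probability $1 - \OR{1/q}$. In particular this probability is bounded below by an absolute constant (independent of $q$), so one may take $c = \OR{1}$.

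Next I would feed this into Theorem~\ref{dihedral_trick}. The resulting algorithm is the usual one: pick a random conjugate $a_1 = a^h$ of $a$, check whether $b a_1$ has odd order using Proposition~\ref{pr_element_powering} (and retry otherwise), and then set $c = h (b a_1)^{(\abs{b a_1} - 1)/2}$. Proposition~\ref{dihedral_trick_basis} ensures correctness, and the $\SLP$ for $c$ is obtained directly from the $\SLP$s for $a$, $b$ and $h$. Since the natural dimension here is $d = 26$, a constant, the general complexity bound $\OR{c(\xi(d) + d^3 \log(q) \log\log(q^d))}$ collapses to $\OR{\xi}$ field operations, and the $\SLP$ length bound $\OR{c(l_a + l_b)}$ collapses to $\OR{n}$, as claimed.

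The only mild obstacle is the implicit constant hidden in the $\OR{1/q}$ error term from Proposition~\ref{pr_invol_facts}: one must confirm that the success probability is bounded away from zero uniformly in $q$, so that the constant $c$ fed into Theorem~\ref{dihedral_trick} is genuinely absolute. For large $q$ this is immediate, and small values of $q$ can be handled by direct inspection; either way, no quantitative subtleties beyond Proposition~\ref{pr_invol_facts} enter the argument.
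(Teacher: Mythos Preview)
Your proposal is correct and matches the paper's own proof exactly: the paper simply says ``Follows from Proposition~\ref{pr_invol_facts} and Theorem~\ref{dihedral_trick}.'' One minor remark: your claim that the general bound $\OR{c(\xi(d) + d^3 \log(q) \log\log(q^d))}$ ``collapses to $\OR{\xi}$'' slightly overstates things, since the $\log(q)\log\log(q)$ term does not obviously get absorbed into $\xi$; but the paper is equally imprecise on this point, so you have reproduced the intended argument faithfully.
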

\begin{proof}
Follows from Proposition \ref{pr_invol_facts} and
Theorem \ref{dihedral_trick}.

\end{proof}

\begin{pr} \label{bigree_parabolic_gens}
  Let $\lambda$ be a primitive element of $\F_q$. A maximal parabolic
  of type $2A$ is conjugate to $\gen{\zeta, z^{\kappa \varrho
      \kappa}, \varrho, \nu, \varsigma(1, \lambda), \varsigma(\lambda,
    1)}$, which consists of lower block-triangular matrices. A maximal parabolic of type $2B$ is conjugate to
  \[ \gen{\varsigma(1, \lambda), \varsigma(\lambda, 1), \kappa, \nu, \zeta, \zeta^\kappa, \zeta^{\kappa \varrho}, \zeta^{\kappa \varrho \kappa}, \nu^{\varrho}, \nu^{\varrho \kappa}, \nu^{\varrho \kappa \varrho}}.\]
\end{pr}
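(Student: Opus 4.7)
The plan has two parallel parts, one for each parabolic, and each part splits into (i) showing the listed elements lie in the claimed parabolic, and (ii) matching the order of the generated group with the parabolic's order given in Proposition \ref{pr_invol_facts}.

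First I would set up the parabolics explicitly. By Proposition \ref{pr_invol_facts}, $\varrho$ is a $2A$ involution of rank $10$; its fixed subspace $V^{\varrho}$ has dimension $16$ and its $(-1)$-eigenspace has dimension $10$, yielding an invariant flag on $V$. The maximal parabolic $P_{2A} \cong [q^{10}]{:}(\Sz(q) \times \Cent_{q-1})$ is then the stabilizer of this flag, which in a basis adapted to the decomposition consists of the lower block-triangular matrices. Inclusion of each generator in $P_{2A}$ then amounts to inspecting its matrix form with respect to this basis: $\varrho, \nu, \varsigma(1, \lambda), \varsigma(\lambda, 1)$ already act diagonally on a suitable refinement of the flag, and the unipotent elements $\zeta$ and $z^{\kappa \varrho \kappa}$ can be checked to be lower block-triangular directly from their defining actions on the standard basis $\{e_1, \dotsc, e_{26}\}$. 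For $P_{2B}$, the involution $\kappa$ has rank $12$ and determines an analogous flag; the same matrix inspection confirms that all eleven listed generators lie in $P_{2B}$.

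Next I would match orders. For $P_{2A}$, the torus $\gen{\varsigma(1, \lambda), \varsigma(\lambda, 1)}$ is isomorphic to $\Cent_{q-1} \times \Cent_{q-1}$ by Proposition \ref{pr_torus_elt}, and projects onto the central $\Cent_{q-1}$ factor of the Levi complement. Together with $\varrho$ (the distinguished involution of $\Sz(q)$) and an appropriate unipotent element of $\Sz(q)$ extracted from $\nu$, this produces the Suzuki standard generators, so the Levi subgroup $\Sz(q) \times \Cent_{q-1}$ is recovered inside the generated group. The remaining task is to show that $\zeta$ and $z^{\kappa \varrho \kappa}$, under conjugation by the Levi subgroup already in hand, generate the full unipotent radical $[q^{10}]$; since the Levi acts transitively on each root subgroup of the unipotent radical, it suffices to verify that these two seed elements jointly intersect non-trivially every root subgroup in $[q^{10}]$. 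The order count then matches $\abs{P_{2A}} = q^{10} \cdot q^2(q^2+1)(q-1)^2$. For $P_{2B}$, the same outline applies: $\kappa$ together with the torus elements and $\nu$ generate the $\PSL(2, q) \times \Cent_{q-1}$ Levi, while the extra conjugates $\zeta^\kappa, \zeta^{\kappa \varrho}, \zeta^{\kappa \varrho \kappa}$ and $\nu^{\varrho}, \nu^{\varrho \kappa}, \nu^{\varrho \kappa \varrho}$ are precisely there to span the larger radical $[q^{11}]$.

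The main obstacle is the generation step for the unipotent radical: one must verify that the chosen seed unipotent elements, closed under conjugation by the already-recovered Levi, cover every root subgroup rather than falling inside a proper Levi-invariant subgroup of the radical. This reduces, via the elementary construction of \cite{elementary_ree}, to an explicit computation with the root-subgroup structure of $\LargeRee(q)$ inside $\mathrm{F}_4(q)$. The calculation is finite and mechanical, but tedious to do by hand; it is well-suited to verification in $\MAGMA$, and once complete, both statements follow by a direct order comparison against Proposition \ref{pr_invol_facts}.
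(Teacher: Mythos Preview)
The paper's own proof is a one-line citation: ``Follows immediately from \cite{elementary_ree}.'' The generators listed are taken verbatim from Wilson's elementary construction, where the root subgroups and Weyl group elements are written down explicitly and the parabolic structure is read off directly. So there is nothing to prove here beyond pointing at the source; your plan is a from-scratch verification of a fact the paper simply imports.

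That said, your outline contains a genuine error worth flagging. You speak of the $(-1)$-eigenspace of $\varrho$ having dimension $10$, but $\LargeRee(q)$ is defined over a field of characteristic $2$, so $-1 = 1$ and there is no such eigenspace decomposition. In characteristic $2$ an involution $j$ satisfies $(j-1)^2 = 0$, its Jordan form has only $1$- and $2$-blocks, and the ``rank'' in Proposition~\ref{pr_invol_facts} is the number of $2$-blocks. The relevant invariant flag is $\operatorname{Im}(j-1) \subseteq \operatorname{Ker}(j-1) \subseteq V$, not a splitting into eigenspaces. With that correction the rest of your plan (check the generators preserve the flag, then count orders against Proposition~\ref{pr_invol_facts}) is sound, and is indeed exactly the kind of calculation that \cite{elementary_ree} carries out and that one would redo in \textsc{Magma} for reassurance.
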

\begin{proof}
Follows immediately from \cite{elementary_ree}.
\end{proof}

\begin{conj} \label{pr_cent_invol} Let $j \in G = \LargeRee(q)$ be an
  involution of class $2A$ and let $H \leqslant \Cent_G(j)$ satisfy $H
  \geqslant \Zent(\Cent_G(j))$ and $H \geqslant S$ where $S \cong
  \Sz(q)$.

\begin{enumerate}
\item Let $g \in H$ be uniformly random such that $\abs{g} = 2l$. Then $g^l
\in \Zent(\Cent_G(j))$ with probability $1 - \OR{1/q}$.
\item If $H = \Cent_G(j)$ and $g \in H$ is uniformly random such that $\abs{g} = 4l$, then with high probability $g^l \in \O2(H)$ and $g^{2l} \in \Zent(H)$.
\end{enumerate}
\end{conj}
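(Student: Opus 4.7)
My plan is to exploit the semidirect decomposition $\Cent_G(j) = N \rtimes S$ with $N = \O2(\Cent_G(j))$ of order $q^{10}$, $S \cong \Sz(q)$, and $Z := \Zent(\Cent_G(j)) \leqslant N$ of order $q$, from Proposition \ref{pr_invol_facts}. Since $H$ contains both $Z$ and $S$ by hypothesis, we have $H = (H \cap N) \rtimes S$; I denote by $\bar{g}$ the image of $g \in H$ in $H/(H \cap N) \leqslant S$.

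For part (1): From $\abs{g} = 2l$ we get $\abs{\bar{g}} \mid 2l$. I would split on whether $\abs{\bar{g}}$ is odd or even. By Theorem \ref{thm_suzuki_props} and Proposition \ref{element_proportions}, the proportion of $2$-elements in $\Sz(q)$ is $q^4/\abs{\Sz(q)} = \OR{1/q}$, so $\bar{g}$ has odd order with probability $1 - \OR{1/q}$. When $\abs{\bar{g}}$ is odd, it divides $l$, so $\bar{g}^l = 1$ and therefore $g^l \in H \cap N$. The remaining, and principal, technical step is to argue that among such $g$, the involution $g^l$ lies in $Z$ rather than in $(H \cap N) \setminus Z$. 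For this I would classify the involutions of $N$ into $S$-orbits: $Z \setminus \set{1}$ consists of $q - 1$ fixed points, and every other orbit has size at least $q^2 + 1$ (the minimum index of a proper subgroup of $\Sz(q)$, from Theorem \ref{sz_maximal_subgroups}). For $u$ in a non-trivial orbit $\mathcal{O}$, any $g$ with $g^l = u$ lies in $\Cent_H(u)$, and $\abs{\Cent_H(u)} \cdot \abs{\mathcal{O}} = \abs{H}$. Bounding the number of order-$2l$ elements in $\Cent_H(u)$ and summing over $\mathcal{O}$ should show that the non-central contribution is $\OR{1/q}$ of the total.

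For part (2): When $\abs{g} = 4l$, again $\abs{\bar{g}}$ is odd with probability $1 - \OR{1/q}$, and in that case $\bar{g}^l = 1$, giving $g^l \in N$ immediately. For $g^{2l} \in Z$, note that $g^{2l} = (g^l)^2 \in N$ is an involution. Here I would use the structure of $N$ as the unipotent radical in the $2A$-parabolic from Proposition \ref{pr_invol_facts}: one expects $N$ to have nilpotency class $2$ with $[N, N] \leqslant Z$ and $x^2 \in Z$ for every $x \in N$, so that $N/Z$ is elementary abelian of order $q^9$. Under this structural fact, $(g^l)^2 \in Z$ for every $g^l \in N$, concluding part (2) with the same $1 - \OR{1/q}$ bound.

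The main obstacle is the detailed $\F_q S$-module analysis of $N$: specifically, the orbit structure of $S$ on the involutions of $N$ needed in part (1), and the verification that $N/Z$ is elementary abelian needed in part (2). Neither is explicitly stated in the references cited here, and deriving them rigorously would require working directly with the elementary construction in \cite{elementary_ree} (or with the detailed root datum of $\LargeRee(q)$). This is presumably why the statement is a conjecture rather than a theorem: the implementation reports overwhelming empirical agreement, but the module-theoretic accounting has not been carried out.
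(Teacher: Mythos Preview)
The paper does not prove this statement: it is explicitly labelled a \emph{Conjecture}, and the surrounding discussion (see Section~\ref{section:intro_conj}) makes clear that no proof is offered, only extensive experimental support. So there is nothing to compare your proposal against, and you correctly identify this at the end of your write-up.

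That said, your sketch for part~(2) rests on a structural expectation that contradicts the paper's own (also conjectural) description of $N = \O2(\Cent_G(j))$. You posit that $[N,N] \leqslant Z$ and $x^2 \in Z$ for every $x \in N$, making $N/Z$ elementary abelian of order $q^9$. But Conjecture~\ref{pr_invol_o2_facts} asserts instead that $N' = \Phi(N) = P_2$ has order $q^5$, while $Z = P_3$ has order only $q$; moreover item~(6) there says that elements of $N \setminus P_1$ have order $4$ and square to involutions in $P_2 \setminus P_3$, i.e.\ \emph{outside} $Z$. So the mechanism ``$g^l \in N \Rightarrow (g^l)^2 \in Z$'' fails for a positive proportion of elements, and the argument as written does not go through. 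A correct treatment of part~(2) would need to show that, conditionally on $\abs{g} = 4l$, the element $g^l$ lands in $P_1$ (rather than merely in $N$) with high probability, which is a more delicate counting problem than the one you set up.

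Your outline for part~(1) is closer in spirit to what one would expect, but the orbit-counting step is left entirely open, and the structure of $N$ just described suggests the bookkeeping is more involved than your sketch anticipates.
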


\begin{conj} \label{pr_cent_dihedral_trick}
Let $j_1, j_2 \in G = \LargeRee(q)$ be involutions of class $2A$ such that $j_2 \in \Zent(\Cent_G(j_1))$. Then the proportion of $h \in G$ such that $j_1^h = j_2$, $\abs{h} = q - 1$ and $\gen{\Cent_G(j_1), h} < G$ is high.
\end{conj}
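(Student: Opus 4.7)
The plan is to restrict attention to the maximal parabolic $P$ of type $2A$ from Proposition \ref{pr_invol_facts}, which contains $\Cent_G(j_1)$ with index $q - 1$ and has the shape $U {:} (\Sz(q) \times \Cent_{q - 1})$ where $U = [q^{10}]$. Since $P$ is maximal in $G$ and contains $\Cent_G(j_1)$, every $h \in P$ automatically satisfies $\gen{\Cent_G(j_1), h} \leqslant P < G$, so the third condition of the conjecture is free once one works inside $P$, and the task reduces to lower bounding the proportion in $G$ of $h \in P$ of order $q - 1$ with $j_1^h = j_2$.

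I would then use that $\Cent_G(j_1) = U {:} \Sz(q)$ is normal in $P$, with the cyclic quotient $P / \Cent_G(j_1) \cong \Cent_{q - 1}$ acting on $\Zent(\Cent_G(j_1)) \cong \F_q$ as the multiplicative group $\F_q^{\times}$ (this is the fixed-point-free $\Cent_{q - 1}$-action on $U$ from Proposition \ref{pr_invol_facts}, restricted to $\Zent(U)$, and hence regular on non-identity vectors). Thus the set of $h \in P$ with $j_1^h = j_2$ is a single coset $h_0 \Cent_G(j_1)$, where $h_0 \in \Cent_{q - 1}$ is uniquely determined by $j_2$. Writing $h = u y h_0$ with $u \in U$ and $y \in \Sz(q)$, its image in $L = \Sz(q) \times \Cent_{q - 1}$ is $y h_0$ of order $\mathrm{lcm}(\abs{y}, \abs{h_0})$, and by Proposition \ref{sz_totient_prop} a proportion of at least $1 / (12 \log \log q)$ of $y \in \Sz(q)$ have $\abs{y} = q - 1$, for which $\abs{y h_0} = q - 1$ regardless of $\abs{h_0}$.

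It then remains to control the lift from $L$ to $P$. Because $\abs{U}$ is a power of $2$ while $q - 1$ is odd, any extra factor in $\abs{u y h_0}$ must come from $U$, i.e.\ from $(u y h_0)^{q - 1} \in U$, and this vanishes for every $u \in U$ whenever $y h_0$ acts fixed-point-freely on $U$, since then the endomorphism $1 + y h_0 + \dotsb + (y h_0)^{q - 2}$ annihilates each composition factor of $U$. Granted this, each $y \in \Sz(q)$ of order $q - 1$ contributes all $q^{10}$ lifts, giving a proportion of good $h$ in the coset $h_0 \Cent_G(j_1)$ of at least $1 / (12 \log \log q)$, which in turn yields a proportion in $G$ of order $1 / (12 \abs{j_1^G} \log \log q)$, the kind of lower bound needed for the subsequent dihedral-trick style algorithm.

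The main obstacle is precisely this fixed-point-free step: the hypothesis gives such an action of the full $\Cent_{q - 1}$ on $U$, but for $y \in \Sz(q) \setminus \set{1}$ commuting with $h_0$ the product $y h_0$ need not inherit it, and establishing it uniformly appears to require either a root-subgroup calculation in the unipotent radical of the $2A$ parabolic, or a careful decomposition of $U / [U, U]$ and $[U, U]$ as $(\Sz(q) \times \Cent_{q - 1})$-modules in characteristic two. This technical point, together with the secondary issue that only a positive density of $u \in U$ may yield correctly lifted orders when the fixed-point-free property fails on some composition layer, is what prevents a clean group-theoretic proof and leaves the statement as a conjecture supported by the experimental evidence discussed in Section \ref{section:intro_conj}.
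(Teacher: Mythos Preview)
The paper does not prove this statement: it is explicitly labelled a \emph{Conjecture} and is one of the ``Big Ree Conjectures'' referred to throughout. No argument is offered beyond the general remarks in Section~\ref{section:intro_conj} about experimental support from the implementation. So there is no proof in the paper for your attempt to be compared against.

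That said, your sketch is a sensible line of attack and correctly locates the difficulty. The reduction to the maximal parabolic $P \cong [q^{10}] {:} (\Sz(q) \times \Cent_{q-1})$ containing $\Cent_G(j_1)$ is the natural move, and your coset analysis (a unique $\Cent_G(j_1)$-coset in $P$ sends $j_1$ to $j_2$, because $\Cent_{q-1}$ acts regularly on the nonidentity elements of $\Zent(\Cent_G(j_1)) \cong \F_q$) is correct. You also interpret ``high'' correctly: the literal proportion in $G$ of elements with $j_1^h = j_2$ is already only $1/\abs{j_1^G}$, so the intended content is that a bounded-away-from-zero \emph{conditional} proportion (of order $1/\log\log q$, matching the usage in Lemma~\ref{lem_centraliser_generators}) works for the algorithm. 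The genuine gap you identify --- lifting from the Levi $\Sz(q) \times \Cent_{q-1}$ back through the non-abelian unipotent radical $U$ without picking up a $2$-power factor in the order --- is exactly the kind of root-subgroup/module computation the author chose not to pursue; note also that $U$ has class~$2$ (Conjecture~\ref{pr_invol_o2_facts}), so your ``norm map'' argument would need to be run on the two layers $U/\Phi(U)$ and $\Phi(U)$ separately rather than on $U$ directly.
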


\begin{conj} \label{pr_centraliser_facts} Let $j \in G = \LargeRee(q)$
  be an involution. Let $H \leqslant \Cent_G(j)$ satisfy $H \geqslant
  \Zent(\Cent_G(j))$ and $H \geqslant S$ where $S \cong \Sz(q)$. Let
  $M$ be the natural module of $G$.

\begin{tabular}{l|l}
Class of $j$ & Constituents and multiplicities of $M\vert_H$ \\
\hline
$2A$ & $(S_4, 4)$, $(1, 6)$, $(S_4^{\psi^t}, 1)$ \\
$2B$ & $(R_2 \otimes R_2^{\psi^i}, 2)$, $(R_2^{\psi^j}, 2)$, $(R_2, 1)$, $(R_2^{\psi^k}, 4)$, $(1, 4)$
\end{tabular}

Where $S_4$ is the natural module for $\Sz(q)$, $R_2$ is the
natural module for $\SL(2, q)$ and $0 \leqslant i,j,k \leqslant 2m$.

In the $2A$ case, $M\vert_H$ has submodules
of dimensions 
\[ 26, 25, 21, 20, 17, 16, 15, 11, 10, 9, 6, 5, 1, 0. \] 
It has $q + 1$ submodules of dimensions $10$ and $16$, and the others are
unique.

In the $2B$ case, $M\vert_H$ has submodules of every dimension $0,
\dotsc, 26$. It has $2$ submodules of dimensions $7,9,10,13,16,17,19$,
$q + 1$ submodules of dimensions $11$ and $15$ and $q + 2$ submodules of
dimensions $12$ and $14$. All the others are unique.
\end{conj}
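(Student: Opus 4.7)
The plan is to analyse each conjugacy class of involutions separately, using the explicit representatives $\varrho$ (class $2A$) and $\kappa$ (class $2B$) given in Proposition \ref{pr_invol_facts}. First I would realise $H$ concretely inside $\GL(26,q)$ by intersecting $\LargeRee(q)$ with the centraliser of the chosen involution; the structures $[q^{10}]{:}\Sz(q)$ and $[q^{9}]{:}\PSL(2,q)$ given by Proposition \ref{pr_invol_facts} identify the composition factors of $H$ itself, and the hypothesis $H\geqslant \Zent(\Cent_G(j))$ and $H\geqslant S$ ensures that the simple Levi $S$ and the unipotent radical both act. Since by Section \ref{section:sz_indecomposables} (and its analogue for $\PSL(2,q)$ in \cite{MR1901354}) the only absolutely irreducible tensor indecomposable modules for $\Sz(q)$ and $\PSL(2,q)$ in defining characteristic are the natural ones, the Steinberg tensor product theorem of \cite{steinberg63} tells us that every composition factor of $M\vert_H$ is, up to Galois twist, a tensor product of copies of $S_4$ (respectively $R_2$).

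Next I would pin down the multiplicities and twists by character (trace) calculations on semisimple elements of $H$. In the $2A$ case, a maximal torus element of $\Sz(q)$, conjugate to $M'(\lambda)$ in Theorem \ref{thm_suzuki_props}, has eigenvalues on $M$ which can be read off from the diagonal form in Proposition \ref{pr_diagonal_elt} and matched against $\lambda^{\pm 1},\lambda^{\pm t}$ (the weights of $S_4$) and $\lambda^{\pm t^2}=\lambda^{\pm 2}$ (the weights of $S_4^{\psi^t}$); this forces the multiplicities $(S_4,4)$, $(1,6)$, $(S_4^{\psi^t},1)$. The $2B$ case is treated the same way: a torus element of $\PSL(2,q)$ has eigenvalues $\lambda^{\pm 1}$ on $R_2$ and $\lambda^{\pm 2^i}$ on $R_2^{\psi^i}$, and tracking the $26$ eigenvalues on $M\vert_H$ yields the listed constituents. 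The particular values of $i,j,k$ are then those twists which appear in the restriction of the $\mathrm{F}_4$-module to the Levi; no specific values are asserted, only their existence.

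For the submodule lattice I would use the following observation: the unipotent radical $U$ of $\Cent_G(j)$ acts trivially on every composition factor, so the structure of $M\vert_H$ as an $\F_q H$-module is governed by the action of $U$ on the generalised weight spaces and by the $\Ext^1$ groups between consecutive constituents as $S$-modules. The cases where a single dimension supports $q+1$ distinct submodules (dimensions $10$ and $16$ in class $2A$, and $11,15$ in class $2B$) correspond to a two-dimensional $\Ext^1$ group between consecutive factors, which is parametrised by $\PS^1(\F_q)$, exactly producing $q+1$ choices; the counts $q+2$ in class $2B$ arise from a three-dimensional $\Ext^1$ with exactly $q+2$ submodule-producing lines after quotienting by the fixed line coming from the identity component. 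The uniquely-dimensioned submodules are those obtained as kernels or images of fixed $H$-equivariant projections for which $\Ext^1=0$.

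The main obstacle is, as always in these Levi restriction problems, the explicit computation of the relevant $\Ext^1$ groups and the verification that no further submodules appear in dimensions not listed; this requires either a case-by-case analysis of the action of the unipotent radical on highest-weight vectors in each constituent, or a direct computer computation using the MeatAxe of Section \ref{section:meataxe}. Because the conjecture is stated uniformly in $q$, one cannot simply defer to a finite MeatAxe verification; it is this uniform bookkeeping of extensions and the precise enumeration of submodule dimensions which has so far resisted a complete proof, and which motivates leaving the statement as a conjecture.
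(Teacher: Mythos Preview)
The paper does not prove this statement: it is explicitly labelled a \emph{Conjecture} (one of the ``Big Ree Conjectures''), and Section~\ref{section:intro_conj} explains that such statements were left unproven because they ``have been too hard to prove or have been from an area of mathematics outside the scope of this thesis'', with only extensive computational testing offered as evidence. There is therefore no paper proof to compare your proposal against, and you yourself acknowledge this in your final paragraph.

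That said, a few remarks on your sketch. The identification of constituents via the Steinberg tensor product theorem and trace-matching on torus elements is sound in outline and is the natural approach. Your submodule-lattice argument is less convincing: the heuristic that ``a two-dimensional $\Ext^1$ gives $q+1$ submodules'' and ``a three-dimensional $\Ext^1$ gives $q+2$'' is not a general principle. The count $q+1$ in the $2A$ case at dimension $10$ is more plausibly explained by a one-parameter family of splittings of an extension $S_4 . S_4$ (or similar) --- that is, by a \emph{one}-dimensional $\Ext^1$ over $\F_q$ together with the split case, giving $\lvert \F_q\rvert + 1 = q+1$ submodules --- rather than by a two-dimensional $\Ext^1$ parametrised by $\PS^1(\F_q)$. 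Likewise the $q+2$ count does not arise cleanly from a three-dimensional $\Ext^1$; you would need to exhibit the actual Loewy structure and count submodules directly. The real difficulty, as you correctly identify, is doing this uniformly in $q$, and that is precisely why the paper leaves it open.
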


\begin{conj} \label{pr_invol_o2_facts} Let $j \in G = \LargeRee(q)$ be
  an involution of class $2A$, let $P = \O2(\Cent_G(j))$ and let $H$
  be the corresponding maximal subgroup.
\begin{enumerate}
\item $P$ is nilpotent of class $2$ and has exponent $4$.
\item $P$ has a subnormal series 
\[ P = P_0 \triangleright P_1 \triangleright P_2 \triangleright P_3 \triangleright P_4 = 1\]
where $\abs{P_0 / P_1} = \abs{P_2 / P_3} = q^4$, $\abs{P_1 / P_2} =
\abs{P_3 / P_4} = q$. 
\item The series induces a filtration of $H$, so $P_i / P_{i + 1}$ are
  $\F_q H$-modules, for $i = 0, \dotsc, 3$.
\item $\Phi(P_0) = P_0^{\prime} = P_2$, $P_1^{\prime} = P_3$, $P_2$ is elementary abelian and $\Zent(\Cent_G(j)) = \Zent(P) = P_3$.
\item $P$ has exponent-$2$ central series
\[ P = P_0 \triangleright P_2 \triangleright P_4 = 1\]
\item Pre-images of non-identity elements of $P_0 / P_1$ have order $4$ and their squares, which lie in $P_2$ and have non-trivial images in $P_2 / P_3$, are involutions of class $2B$.
\item Pre-images of non-identity elements of $P_1 / P_2$ have order $4$ and their squares, which lie in $P_3$ and have non-trivial images in $P_3 / P_4$, are involutions of class $2A$.
\item As $H$-modules, $P / P_2$ is not isomorphic to $P_2$.
\end{enumerate}
\end{conj}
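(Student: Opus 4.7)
The plan is to fix a concrete $2A$ involution, say $j = \varrho$ as in Proposition \ref{pr_invol_facts}, and to identify $P = \O2(\Cent_G(j))$ with the unipotent radical of the maximal parabolic of type $2A$ displayed in Proposition \ref{bigree_parabolic_gens}. Since $\LargeRee(q)$ arises as the fixed points in $\mathrm{F}_4(q)$ of the automorphism $\Psi$, the subgroup $P$ is generated by the $\Psi$-images of root subgroups belonging to a set of positive roots of $\mathrm{F}_4$; grouping these into $\Psi$-orbits and ordering them by height yields a filtration
\[ P = P_0 \triangleright P_1 \triangleright P_2 \triangleright P_3 \triangleright P_4 = 1. \]
A direct enumeration of the $\Psi$-orbits of $\mathrm{F}_4$-roots lying in this parabolic gives the required quotient orders $q^4, q, q^4, q$, establishing (2). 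Part (3) is then automatic: the Levi $\Sz(q)$ permutes the root subgroups preserving height, so each $P_i$ is $\Cent_G(j)$-invariant.

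With the filtration in place, the remaining structural claims (1), (4) and (5) reduce to explicit computation with the twisted Chevalley commutator and squaring formulas applied to parametrised root elements. One checks that $[P, P] = P_2$ and that $P_2$ is elementary abelian, which gives $\Phi(P) = P^{\prime} = P_2$ and the exponent-$2$ central series, and that the squaring map carries $P_0 \setminus P_1$ non-trivially into $P_2$, forcing exponent $4$. The identification $\Zent(P) = P_3$ then combines the order statement $\abs{\Zent(\Cent_G(j))} = q$ from Proposition \ref{pr_invol_facts} with the observation that the topmost root orbit, of size $q$, is centralised by all of $P$ in the twisted Chevalley group.

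For (6) and (7), a preimage $x \in P_i \setminus P_{i+1}$ has $x^2 \in P_{i+2}$, and the squaring formulas exhibit $x \mapsto x^2$ as an $\F_q$-linear map $P_i/P_{i+1} \to P_{i+2}/P_{i+3}$; verifying that this map is non-zero on an appropriate direct summand yields non-triviality of the image. The conjugacy class of the involution $x^2$ is then read off from its Jordan rank on the natural $26$-dimensional module, using the rank table in Proposition \ref{pr_invol_facts}: preimages from $P_0/P_1$ produce involutions of rank $12$, hence of class $2B$, while those from $P_1/P_2$ produce involutions of rank $10$, hence of class $2A$. Finally (8) follows by comparing $\Sz(q)$-composition structures: with the constituents listed in Conjecture \ref{pr_centraliser_facts}, the arrangement of the natural module $S_4$ relative to its Galois twist $S_4^{\psi^t}$ differs between $P/P_2$ and $P_2$, ruling out any isomorphism of $\F_q \Cent_G(j)$-modules.

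The principal obstacle is bookkeeping: one must list the ten positive roots of $\mathrm{F}_4$ spanning the parabolic, group them correctly into $\Psi$-orbits, track the explicit matrix entries through the generators $\zeta, z^{\kappa \varrho \kappa}, \nu$ and their conjugates, and carry out the commutator and squaring computations uniformly enough to extract each of (1)--(8). This is a finite but intricate calculation of the kind most conveniently verified by computer, which is presumably why the statement is recorded here as a conjecture.
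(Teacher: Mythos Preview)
The paper does not prove this statement: it is explicitly stated as a \emph{Conjecture}, and the section on Conjectures in the Introduction explains that such statements were left unproven because they were either too hard or outside the scope of the thesis, their truth being supported instead by extensive computational testing. There is therefore no proof in the paper against which to compare your proposal; you have in fact gone further than the paper by sketching an argument, and you correctly recognise in your final paragraph that the statement is recorded as a conjecture precisely because the verification is an intricate (if finite) calculation.

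Your outline is the natural one: fix the explicit $2A$ representative, identify $P$ with the unipotent radical of the displayed parabolic, read off the filtration from the root-group structure of $\mathrm{F}_4$ collapsed under $\Psi$, and check (1)--(7) by commutator and squaring formulas together with Jordan ranks on the $26$-dimensional module. One small point of caution in your bookkeeping: in the untwisted $\mathrm{F}_4$ the relevant unipotent radical has more than ten root subgroups; it is only after grouping them into $\Psi$-orbits that one obtains the five-layer structure with total order $q^{10}$, so the phrase ``ten positive roots of $\mathrm{F}_4$'' should really be ``the $\Psi$-orbits of positive roots lying in the radical''. For (8) your idea of distinguishing $P/P_2$ from $P_2$ by the placement of the twist $S_4^{\psi^t}$ among the $\Sz(q)$-constituents is plausible, but note that this appeals to Conjecture \ref{pr_centraliser_facts}, which is itself unproved in the paper; a self-contained argument would need to establish that module structure directly.
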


By \cite{malle_bigree}, $G$ has a
maximal subgroup $S \cong \Sz(q) \wr \Cent_2$, so from Section \ref{section:suzuki_theory} we know that $S$ contains elements of order $4(q - 1)$. 

\begin{conj} \label{pr_sz26_facts} Let $H \leqslant G = \LargeRee(q)$
  be such that $H \cong \Sz(q) \times \Sz(q)$ and let $\Sz(q) \cong S \leqslant H$
  be one of its direct factors. Let $M$ be the module of $S$ and let
  $S_4$ be the natural module of $S$.
\begin{enumerate}
\item $M \cong \left(\oplus_{i = 1}^4 1 + S_4 \right) \oplus (1 . S_4^{\psi^t} . 1)$.
\item $M$ has composition factors with multiplicities: $(S_4, 4), (1, 6), (S_4^{\psi^t}, 1)$.
\end{enumerate}
\end{conj}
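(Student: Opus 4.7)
The plan is to reduce the problem to a branching computation for the 26-dimensional natural module of $\mathrm{F}_4(q)$ in characteristic~$2$. First I would identify the embedding explicitly: by Proposition~\ref{pr_invol_facts} together with the maximal subgroup classification of \cite{malle_bigree}, the subgroup $\Sz(q) \wr \Cent_2$ arises as the $\Psi$-fixed points of a subsystem subgroup $K \cong (\Sp(4, q) \times \Sp(4, q)).2 \leqslant \mathrm{F}_4(q)$ corresponding to a $C_2 \times C_2$ root subsystem of $F_4$, with each $\Sp(4, q)$ factor carrying the Suzuki--Ono exceptional isogeny $\sigma$ whose fixed points give the corresponding $\Sz(q)$ factor of $H$. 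Making this embedding explicit is possible using the construction of $\LargeRee(q) \leqslant \mathrm{F}_4(q)$ recalled in \cite{elementary_ree}.

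Next I would decompose $M \downarrow K$ as an $(\Sp(4, q) \times \Sp(4, q))$-module. Let $V_1, V_2$ denote the natural $4$-dimensional modules of the two factors. A weight-space analysis for the 26-dimensional $\mathrm{F}_4$-module, restricted to a Cartan of $C_2 \times C_2$, determines the composition factors; these should involve $V_1 \otimes V_2$, tensor factors involving the twists $V_i^\sigma$, and trivials, arranged symmetrically under the swap. The crucial ingredient is that in characteristic~$2$ the exterior square $\wedge^2 V_i$ is uniserial with composition series $\mathbf{1}, V_i^\sigma, \mathbf{1}$ (a consequence of the exceptional isogeny), which upon restriction to the appropriate $\Sz(q)$ factor yields precisely the $\mathbf{1} . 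S_4^{\psi^t} . \mathbf{1}$ uniserial summand in the conjecture, with $\psi^t$ corresponding to the Suzuki field automorphism $x \mapsto x^t$.

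I would then restrict further from $K$ to $H$ and project onto a single factor $S$. Because the complementary $\Sp(4, q)$ acts trivially on $V_i$ for fixed $i$, a tensor composition factor of the form $V_1 \otimes V_2$ contributes $4$ copies of the natural Suzuki module $S_4$ on restriction to $S$; these combine with trivial summands arising from the remaining pieces of $M \downarrow K$ to produce the four uniserial $\mathbf{1} + S_4$ summands. The non-split nature of those extensions reflects the fact that $\mathrm{Ext}^1_{\Sz(q)}(S_4, \mathbf{1})$ is nonzero in defining characteristic, which can be verified either by Cline--Parshall--Scott type cohomology computations or directly from the explicit matrix realisation. Summing the pieces reproduces the decomposition in part~(1) and the composition factors in part~(2).

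The main obstacle will be pinning down the submodule (not merely composition factor) structure of $M \downarrow K$ in characteristic~$2$: branching rules for exceptional groups in small characteristic are delicate, and distinguishing split from uniserial extensions requires careful Ext-group computations or direct matrix calculation on the embedding from \cite{elementary_ree}. In practice, the conjecture has been verified extensively in small cases via the MeatAxe on explicit generators, which would also serve as an independent check of any theoretical derivation of the submodule lattice.
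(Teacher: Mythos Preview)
The paper does not prove this statement: it is explicitly labelled a \emph{Conjecture}, and the introduction (Section~\ref{section:intro_conj}) explains that such statements were left unproven because they were either too hard or outside the scope of the thesis, their truth being supported only by extensive computational testing. There is therefore no proof in the paper to compare your proposal against.

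That said, your outline is a reasonable strategy for attacking the conjecture. Identifying $\Sz(q)\wr\Cent_2$ as the $\Psi$-fixed points of a $C_2\times C_2$ subsystem subgroup of $\mathrm{F}_4$, and then branching the $26$-dimensional module through $\Sp(4,q)\times\Sp(4,q)$, is the natural approach. Two points of caution: first, the notation ``$1+S_4$'' in the conjecture is ambiguous, and you have read it as a non-split extension; you should check this reading against the submodule lattice data in Conjecture~\ref{pr_centraliser_facts} and the composition-factor ordering in Lemma~\ref{lem_centraliser_comp_series}, which concern the closely related restriction to an involution centraliser containing $S$. Second, as you acknowledge, the genuinely hard part is the submodule structure rather than the composition factors, and your sketch does not yet isolate which pieces of the branching produce the four copies of $1+S_4$ versus the uniserial $1.S_4^{\psi^t}.1$; making this precise would require either explicit weight computations with the long/short root interchange under $\Psi$, or a direct MeatAxe verification on the generators from \cite{elementary_ree}, which is essentially the computational evidence the thesis already relies on.
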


\begin{conj} \label{pr_sz_sz_facts1}
Let $S \leqslant G = \LargeRee(q)$ be such that $S \cong \Sz(q) \times \Sz(q)$
and let $M$ be the module of $G$.
\begin{enumerate}
\item The elements of $S$ of order $4(q - 1)$ have $1$ as an eigenvalue of
  multiplicity $6$. The proportion of these elements in $G$ (taken over every $S$) is $1 / (2q)$.
\item $M\vert_S \cong M_{16} \oplus M_{10}$ where $\dim M_i = i$. 
\item $M_{16}$ is absolutely irreducible and $M_{16} \cong S_1 \otimes S_2$. $M_{10}$ has shape $1 . (S_3 \oplus S_4) . 1$. The $S_i$ are natural $\Sz(q)$-modules.
\item $M\vert_S$ has endomorphism ring $\EndR(M\vert_S)
  \cong \F_q^3$ and automorphism group $\Aut(M\vert_S) \cong \Cent_2
  \times \Cent_{q - 1} \times \Cent_{q - 1}$.
\end{enumerate}
\end{conj}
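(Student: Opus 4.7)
The plan is to verify each of the four claims in turn, using the fact that $S$ sits inside the maximal subgroup $\Sz(q) \wr \Cent_2$ of $G$ as its index-$2$ base group, and leaning on Conjecture \ref{pr_sz26_facts} which controls how $M$ restricts to a single direct factor. Irreducible $S$-modules are outer tensor products $A \otimes B$ of irreducibles for each factor, and restricting to one factor $S_0 \cong \Sz(q)$ sends $A \otimes B$ to $A^{\dim B}$. The only way to produce the composition factors $(S_4, 4), (1, 6), (S_4^{\psi^t}, 1)$ of Conjecture \ref{pr_sz26_facts} on restriction is for $M\vert_S$ to have one copy of a tensor $S_1 \otimes S_2$ (natural for both factors, contributing $S_4^4$), one copy each of $S_4^{\psi^t} \otimes 1$ and $1 \otimes S_4^{\psi^t}$ (which I would identify with $S_3$ and $S_4$), and two trivial composition factors, adding up to $16 + 4 + 4 + 2 = 26$. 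This identification of constituents is the first thing to nail down.

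For part (1), since $q$ is an odd power of $2$ we have $\gcd(4, q-1) = 1$, so any $g \in S$ of order $4(q-1)$ is a commuting pair $(g_1, g_2)$ with $\set{\abs{g_1}, \abs{g_2}} = \set{4, q - 1}$. By Theorem \ref{thm_suzuki_props} and the explicit matrix $S(1,0)$, a Suzuki element of order $4$ acts on the natural module as a single Jordan block with eigenvalue $1$ (its minimal polynomial is $(x-1)^4$ in characteristic $2$), while an element of order $q - 1$ is conjugate to $M'(\lambda)$ and is diagonal with four distinct non-trivial eigenvalues. On $S_1 \otimes S_2$ the tensor-product eigenvalues are all of the form $1 \cdot \nu_j \neq 1$, giving no contribution to eigenvalue $1$; on $M_{10} = 1 . (S_3 \oplus S_4) . 1$ the unipotent factor contributes multiplicity $4$ via its action on one of $S_3, S_4$, the diagonalisable factor contributes $0$ on the other, and the two $1$-dimensional layers contribute $2$, totalling $6$. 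For the proportion I would count such elements per $S$ using Proposition \ref{element_proportions} and Proposition \ref{sz_trace0_order4}, obtaining $2 \cdot q(q-1)(q^2 + 1) \cdot \phi(q - 1) q^2 (q^2 + 1)/2$ elements, and divide by $\abs{N_G(S)} = 2 \abs{S}$ (using that $N_G(S)$ is the maximal subgroup $\Sz(q) \wr \Cent_2$), giving $\phi(q - 1)/(2 q (q - 1)) \sim 1/(2q)$.

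For parts (2)--(4), the absolutely irreducible summand $M_{16} \cong S_1 \otimes S_2$ (absolutely irreducible because it is the outer tensor of absolutely irreducibles in coprime factors) splits off once one verifies that it does not occur in the rest and that $\mathrm{Ext}^1_S(M_{16}, L) = 0$ for each composition factor $L$ of the complement; this is the ``easy'' splitting. To establish the shape $1.(S_3 \oplus S_4).1$ of $M_{10}$, the ring $\EndR(M\vert_S) \cong \F_q^3$, and the automorphism group, I would work with explicit matrix generators: locate $S$ concretely inside $G$ via the embedding in \cite{elementary_ree} and Proposition \ref{bigree_parabolic_gens}, compute the submodule lattice of $M\vert_S$, identify the three primitive idempotents of $\EndR(M\vert_S)$ and the three corresponding indecomposable summands, and compute $\Aut(M\vert_S) = \Cent_{\GL(26, q)}(S)$ as the unit group of $\EndR(M\vert_S)$.

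The main obstacle is the non-semisimple module theory inside $M_{10}$. In defining characteristic, modules for $\Sz(q) \times \Sz(q)$ are generically not semisimple, and determining that $M_{10}$ is a non-split extension with both socle and head trivial and middle layer $S_3 \oplus S_4$ requires either a direct cocycle computation in $H^1(S, S_3 \oplus S_4)$ or basis-level linear algebra on explicit generators of $\LargeRee(q)$ in its $26$-dimensional representation. Reconciling $\EndR(M\vert_S) \cong \F_q^3$ with the stated automorphism group will also require careful bookkeeping of which units of the endomorphism ring appear as honest module automorphisms. It is presumably for these reasons that the statement appears here as a conjecture, verified through the author's implementation rather than derived in closed form.
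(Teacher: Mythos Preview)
The paper offers no proof of this statement: it is explicitly labelled a \emph{Conjecture}, and the author explains in Section~\ref{section:intro_conj} that such statements have been verified experimentally through the implementation but left unproven. So there is nothing to compare against; your proposal is an attempt to supply what the paper deliberately omits, and you correctly recognise this in your final paragraph.

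Your outline is broadly sensible, but two points deserve attention. First, your identification of $S_3$ and $S_4$ with $S_4^{\psi^t}\otimes 1$ and $1\otimes S_4^{\psi^t}$ is forced by the restriction data of Conjecture~\ref{pr_sz26_facts}, yet part~(3) of the present conjecture calls the $S_i$ ``natural $\Sz(q)$-modules''. Either ``natural'' is being used loosely to mean any four-dimensional irreducible (all of which are Galois twists of the standard one), or there is a genuine tension between the two conjectures that your argument exposes rather than resolves. Second, your proportion count for part~(1) yields $\phi(q-1)/(2q(q-1))$, not the exact $1/(2q)$ asserted; these agree only asymptotically, so either the conjecture's constant is meant heuristically, or your count of elements of order exactly $4(q-1)$ in $S$ (and the passage from a single $S$ to ``taken over every $S$'' in $G$) needs revisiting. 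These are exactly the sort of loose ends one expects when trying to promote a computationally supported conjecture to a theorem.
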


\begin{pr} \label{pr_sz_sz_facts2}
Let $S \leqslant G = \LargeRee(q)$ such that $S \cong \Sz(q) \times \Sz(q)$
and let $M$ be the module of $G$. Then $\Aut(M\vert_S) \cap \mathrm{O}^{-}(26, q) \cong \Cent_2$.
\end{pr}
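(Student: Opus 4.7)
The plan is to exploit the $S$-module decomposition $M\vert_S \cong M_{16} \oplus M_{10}$ furnished by Conjecture~\ref{pr_sz_sz_facts1}, push the orthogonal-group constraints through that decomposition, and then argue summand by summand. Because $\dim M_{16} \neq \dim M_{10}$, the two summands are non-isomorphic as $S$-modules, so every $\varphi \in \Cent_{\GL(26, q)}(S) = \Aut(M\vert_S)$ preserves the decomposition and splits as $\varphi = \varphi_{16} \oplus \varphi_{10}$.

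Next I would establish that $M_{16}$ and $M_{10}$ are orthogonal with respect to the associated bilinear form $\beta^{*}$. The restriction of $\beta^{*}$ to $M_{16} \times M_{10}$ is an $S$-invariant pairing and therefore gives an $S$-homomorphism $M_{16} \to M_{10}^{*}$. Since $M_{16}$ is absolutely irreducible of dimension $16$ while $M_{10}^{*}$ has dimension $10$, this homomorphism must be zero, so the pairing vanishes. Consequently $\beta^{*}$ (and hence $Q^{*}$) restricts to a non-degenerate form on each summand, and $\varphi \in \mathrm{O}^{-}(26, q)$ if and only if each $\varphi_i$ preserves $Q^{*}\vert_{M_i}$.

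On $M_{16}$, Schur's Lemma applied to the absolutely irreducible module forces $\varphi_{16} = \lambda I$ for some $\lambda \in \F_q^{\times}$. Then $Q^{*}(v\lambda) = \lambda^{2} Q^{*}(v)$, so preservation requires $\lambda^{2} = 1$; since $\chr \F_q = 2$, this gives $\lambda = 1$, and $\varphi_{16}$ is forced to be the identity. On $M_{10}$, Conjecture~\ref{pr_sz_sz_facts1} gives $\Cent_{\GL(M_{10})}(S) \cong \Cent_2 \times \Cent_{q-1}$. I would identify the generator $\tau$ of the $\Cent_2$ factor explicitly from the extension structure $1 . (S_3 \oplus S_4) . 1$ (it is a unipotent $S$-endomorphism, and the natural candidate preserves $\beta^{*}$ because it only adds socle-valued terms in the direction paired with the top layer, so the resulting form change lies in a subspace on which $Q^{*}$ is controlled). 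For the $\Cent_{q-1}$ factor, one checks that its elements act by a scalar on the socle--top pairing block (and compatibly on the middle) and hence satisfy a relation of the form $\beta^{*}(v\mu, w\mu') = \mu\mu' \beta^{*}(v, w)$; since $q-1$ is odd and $\chr \F_q = 2$, preserving $\beta^{*}$ forces the relevant scalars to be $1$, killing the whole factor.

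Combining the three steps yields $\Aut(M\vert_S) \cap \mathrm{O}^{-}(26, q) = \{I\} \times \langle \tau \rangle \cong \Cent_2$. The main obstacle is step four: pinning down the $\Cent_2$ generator concretely inside $\Aut(M_{10}\vert_S)$ and confirming that it, rather than an element of the $\Cent_{q-1}$ factor, is the survivor under $Q^{*}$. This rests on a precise description of the (partially non-split) extension $1.(S_3 \oplus S_4).1$ and of how $Q^{*}\vert_{M_{10}}$ pairs the socle with the top layer; once those are written down explicitly, the verification that $\tau$ preserves $Q^{*}$ and that any nontrivial $\Cent_{q-1}$-element does not is a direct calculation.
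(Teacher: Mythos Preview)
Your overall strategy — split $M\vert_S = M_{16} \oplus M_{10}$, show the summands are mutually orthogonal under $\beta^{*}$, kill the scalars on $M_{16}$ via $\lambda^{2}=1$ in characteristic $2$, and then handle $M_{10}$ separately — is exactly the paper's route. The paper also records that $M_{16}$ carries a plus-type form (as a tensor product of Suzuki natural modules), so the relevant factor is $\mathrm{O}^{+}(16,q)\times\mathrm{O}^{-}(10,q)$, but that refinement is not needed once you have reduced to scalars on the $16$-part.

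Where you diverge is on $M_{10}$, and this is where your argument is genuinely incomplete. You read off from Conjecture~\ref{pr_sz_sz_facts1} that $\Aut(M_{10}\vert_S)\cong\Cent_2\times\Cent_{q-1}$ and then try to isolate the $\Cent_2$ generator. But the endomorphism ring you can actually extract from the shape $1.(S_3\oplus S_4).1$, together with $\dim\EndR(M\vert_S)=3$, is $\F_q[\eta]/(\eta^{2})$, where $\eta$ is the ``top~$\to$~socle'' map; its unit group is $\{\alpha I+\beta\eta:\alpha\neq 0\}$, of order $q(q-1)$. So there is an entire $\F_q$-family of unipotents $I+\beta\eta$, not a single $\tau$, and your sketch gives no mechanism for cutting this family down to two elements. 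The paper works directly with this description: in a suitable basis every $S$-endomorphism is $e(\alpha,\beta)=\alpha I_{10}+\beta E_{10,1}$; preserving $\beta^{*}\vert_{M_{10}}$ forces $\alpha=1$, and then preserving the quadratic form $Q^{*}\vert_{M_{10}}$ is the condition that $e(1,\beta)$ be an orthogonal transvection, which singles out exactly two values of $\beta$. That explicit computation is precisely the ``main obstacle'' you flag, and carrying it out — rather than appealing to a pre-packaged $\Cent_2$ — is what actually closes the argument.
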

\begin{proof}
The subgroup of $\mathrm{O}^{-}(26, q)$ that preserves the
direct sum decomposition of $M\vert_S$ has shape $\mathrm{O}^{+}(16,
q) \times \mathrm{O}^{-}(10, q)$ (the $16$-dimensional part is of plus
type since $M_{16}$ is a tensor product of natural Suzuki modules, and
these preserve orthogonal forms of plus type). This implies that
\[ \Aut(M\vert_S) \cap \mathrm{O}^{-}(26, q) = (\Aut(M_{16}) \cap \mathrm{O}^{+}(16, q)) \times (\Aut(M_{10}) \cap \mathrm{O}^{-}(10, q)) \]
Since $M_{16}$ is absolutely irreducible, $\Aut(M_{16})$ consists of
scalars only, but there are no scalars in $\mathrm{O}^{+}(16,
q)$. Hence it suffices to show that $\Aut(M_{10}) \cap
\mathrm{O}^{-}(10, q) \cong \Cent_2$.

With respect to a suitable basis, an endomorphism of $M_{10}$ has the
form $e(\alpha, \beta) = \alpha I_{10} + \beta E_{10, 1}$. It
preserves the bilinear form $\beta^{*}$ restricted to $M_{10}$ if
$\alpha = 1$, and it preserves the quadratic form $Q^{*}$ restricted
to $M_{10}$ if $e(1, \beta)$ is a transvection. This implies that only
$2$ values of $\beta$ are possible.
\end{proof}

\begin{pr} \label{bigree_suzuki2_gens}
  Let $\lambda$ be a primitive element of $\F_q$. The subgroup
  \[\gen{z, \varrho, \varrho^{\kappa}, \varsigma(1, \lambda),
    \varsigma(\lambda, 1)}\] is isomorphic to $\Sz(q) \wr \Cent_2$. It
  contains $S = \gen{z^{\kappa \varrho \kappa}, \varrho, \varsigma(1,
    \lambda)}$ and $h = \kappa \varrho \kappa$, where $S \cong \Sz(q)$
  and $[S^h, S] = \gen{1}$. The subgroup $\gen{\nu, \varsigma(1, \lambda), \varsigma(\lambda, 1), \kappa, \varrho, \varrho \kappa \varrho}$ is isomorphic to $\Sp(4, q) {:} \Cent_2$.
\end{pr}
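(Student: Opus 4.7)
The plan combines three inputs: the explicit matrix formulae of \cite{elementary_ree}, the maximal subgroup classification of $\LargeRee(q)$ from \cite{malle_bigree}, and the characterisation of $\Sz(q)$ as the fixed-point subgroup of the automorphism $\Psi$ of $\Sp(4, q)$ from Section \ref{section:sz_alt_def}. Both $\Sz(q) \wr \Cent_2$ and $\Sp(4, q) {:} \Cent_2$ appear as maximal subgroups of $\LargeRee(q)$ in \cite{malle_bigree}, so it is enough to show that each of the two listed generating sets produces a subgroup of the claimed structure lying inside such a maximal subgroup, and then rule out proper containment.

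For the Suzuki factor $S = \gen{z^{\kappa \varrho \kappa}, \varrho, \varsigma(1, \lambda)}$, the idea is to exhibit a $4$-dimensional subspace $W$ of the natural module $V = \F_q^{26}$ preserved by all three generators, on which the restricted bilinear form $\beta^{*}\vert_W$ is non-degenerate and symplectic and on which the action is absolutely irreducible. Applying Lemma \ref{lem_steinberg_lang} to $S\vert_W \leqslant \Sp(W) \cong \Sp(4, q)$, once it is verified that $W \cong W^{\Psi}$ as an $S$-module (a direct computation from the explicit entries of $\varrho$, $z$ and $\varsigma(1, \lambda)$), identifies $S\vert_W$ with a conjugate of the standard $\Sz(q)$. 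Faithfulness of the restriction follows from Proposition \ref{pr_invol_facts}, because $\varrho$ is a $2A$-involution and $\Cent_{\LargeRee(q)}(\varrho) / \O2(\Cent_{\LargeRee(q)}(\varrho)) \cong \Sz(q)$ already embeds faithfully on such a $4$-space.

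To see that $[S^h, S] = \gen{1}$ for $h = \kappa \varrho \kappa$, one checks that $h$ is an involution and that its conjugate $\varrho^h$ is a second $2A$-involution whose Suzuki constituent lives on a $4$-space $W^{\prime}$ orthogonal to $W$; block-diagonality of all the matrices on $W \oplus W^{\prime}$ then gives element-wise commutation. The subgroup $\gen{S, S^h, h}$ is therefore isomorphic to $\Sz(q) \wr \Cent_2$, and its generators coincide with $\set{z, \varrho, \varrho^{\kappa}, \varsigma(1, \lambda), \varsigma(\lambda, 1)}$ after trivial rewriting (note $z^{\kappa \varrho \kappa} = (z^h)^{\varrho}$ with $\varrho \in S$, so $z$ and its conjugate recover both Suzuki factors, while $\varrho^{\kappa}$ supplies the swap up to an element of $S$, and the pair $\varsigma(1, \lambda), \varsigma(\lambda, 1)$ spans the diagonal torus of the wreath product). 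The $\Sp(4, q) {:} \Cent_2$ claim is handled in the same style: $\kappa$ is a $2B$-involution, and the six listed generators preserve a $4$-dimensional symplectic subspace of $V$ and restrict there to standard generators of $\Sp(4, q)$, with one of $\kappa, \varrho, \varrho \kappa \varrho$ realising the outer $\Cent_2$. The principal obstacle in all of this is bookkeeping for the $26 \times 26$ matrix calculations; working modulo the filtration of $\O2(\Cent_{\LargeRee(q)}(\varrho))$ from Conjecture \ref{pr_invol_o2_facts} reduces the essential verifications to $4 \times 4$ computations on the Suzuki subquotients.
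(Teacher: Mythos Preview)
The paper's proof is a one-line citation: ``Follows immediately from \cite{elementary_ree}.'' That reference is precisely where the generators $\varrho$, $\kappa$, $z$, $\nu$, $\zeta$, $\varsigma(a,b)$ are defined and where the subgroup structure of $\LargeRee(q)$ inside the $26$-dimensional representation is worked out explicitly; the proposition here is just recording facts established there.

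Your proposal attempts something much more ambitious: an independent verification from the matrix formulae, using Lemma~\ref{lem_steinberg_lang}, orthogonal $4$-spaces, and the filtration of $\O2(\Cent_{\LargeRee(q)}(\varrho))$. The outline is plausible, but there are real gaps. First, you invoke Conjecture~\ref{pr_invol_o2_facts} to reduce the bookkeeping, yet that statement is explicitly labelled a conjecture in this thesis; you cannot use it to prove a proposition. Second, the step ``its generators coincide with $\{z,\varrho,\varrho^{\kappa},\varsigma(1,\lambda),\varsigma(\lambda,1)\}$ after trivial rewriting'' is doing a lot of work: you need $h=\kappa\varrho\kappa$ to lie in the group generated by the five listed elements, and your parenthetical suggests $\varrho^{\kappa}$ supplies the swap only up to an element of $S$, which is not the same as recovering $h$ itself. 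Third, the $\Sp(4,q){:}\Cent_2$ paragraph is a sketch of a sketch; you assert an invariant $4$-space and ``standard generators'' without saying which subspace or why those six particular elements restrict as claimed.

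If you want a self-contained argument rather than a citation, the cleanest route is to follow \cite{elementary_ree} directly: identify the explicit $4$-dimensional summands on which the two commuting Suzuki copies act, and verify the generator identities by matrix computation, rather than routing through Lemma~\ref{lem_steinberg_lang} and the centraliser structure.
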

\begin{proof}
Follows immediately from \cite{elementary_ree}.
\end{proof}


\begin{conj} \label{mult_implies_even_order2} 
Let $a \in G =
  \LargeRee(q)$ be such that $\abs{a} = q - 1$ and $a$ is conjugate to
  some $h(\lambda, \mu)$ with $\lambda^t = \mu \in \F_q^{\times}$. The
  proportion of $b \in G$ such that the elements in
  $\gen{a}b$ with $1$ as an eigenvalue of multiplicity $6$ also have even order and power up to an involution of class $2A$, is bounded below by a constant
  $c_2 > 0$.
\end{conj}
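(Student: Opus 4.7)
The plan is to realise $a$ inside a Suzuki-wreath subgroup of $G$, characterise the elements of the coset $\gen{a}b$ with a $6$-dimensional fixed space via module theory, and then count.

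\textbf{Step 1 (Embedding into $\Sz(q)\times\Sz(q)$).} By Proposition~\ref{pr_diagonal_elt} the hypothesis $\mu = \lambda^t$ pins $a$ down as a specific element $h(\lambda,\lambda^t)$ of the torus. The Frobenius relation $\mu=\lambda^t$ is exactly the condition that realises $a$ inside a subgroup $H\leqslant G$ with $H\cong\Sz(q)\times\Sz(q)$: take $H$ to be (a conjugate of) the derived group of the wreath product subgroup exhibited in Proposition~\ref{bigree_suzuki2_gens}, and observe that $a=(a_1,a_2)$ with each $a_i$ lying in a cyclic torus of order $q-1$ inside its $\Sz(q)$ factor, in the sense of Theorem~\ref{thm_suzuki_props}.

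\textbf{Step 2 (Geometric characterisation of the $6$-eigenspace).} By Conjecture~\ref{pr_sz_sz_facts1}(1)--(3), an element of $H$ acting on the $26$-dimensional module has $1$ as an eigenvalue of multiplicity $6$ precisely when it has order $4(q-1)$, the $6$-dimensional fixed subspace being forced by the two trivial composition factors of $M_{10}=1.(S_3\oplus S_4).1$. Conversely, by the proportion statement of Conjecture~\ref{pr_sz_sz_facts1}(1), every element of $G$ with this eigenvalue multiplicity lies in some conjugate of $H$ and has order $4(q-1)$. Thus the geometric condition is equivalent to the algebraic one, and I can work entirely inside (a $G$-conjugate of) $H$.

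\textbf{Step 3 (Order and involution class).} Elements of $\Sz(q)\times\Sz(q)$ of order $4(q-1)$ are pairs $(u_1,u_2)$ where one component has order $q-1$ (odd, so the torus part) and the other has order $4$ (a $\Sz(q)$-Sylow element). The $(q-1)$-st power of such an element is $(j,1)$ or $(1,j)$ for an involution $j$ of $\Sz(q)$; this is automatically even order, and squaring gives an involution of $G$. Using Proposition~\ref{bigree_suzuki2_gens} to write the embedding explicitly, one computes the Jordan form of such an involution on the $26$-dimensional module: it has rank $10$, because an $\Sz(q)$ involution contributes rank $2$ on each of the four copies of $S_4$ in $M|_H$ and rank $2$ on the summand $M_{10}$, giving $10$ in total. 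By Proposition~\ref{pr_invol_facts} this forces the involution into class $2A$, not $2B$ (whose rank is $12$), which is the required conclusion.

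\textbf{Step 4 (Coset counting).} Finally, I estimate the proportion of $b\in G$ for which $\gen{a}b$ contains such a $g$. The number of $g\in G$ of order $4(q-1)$ whose $(q-1)$-st power lies in $\gen{a}$ is at least $|G|/(2q)$ by Conjecture~\ref{pr_sz_sz_facts1}(1), once we account for the fact that each conjugate $H$ has a bounded number of order-$(q-1)$ tori. Each such $g$ contributes the single coset $\gen{a}g$ to the favourable set, and this coset corresponds to $q-1$ elements $b$. So the favourable proportion is bounded below by $(q-1)\cdot\frac{1}{2q}\geqslant \frac{1}{4}$, giving a constant $c_2>0$ as claimed.

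\textbf{Main obstacle.} The delicate point is Step~$3$: controlling the $G$-conjugacy class of the involutory power. A priori the square $g^{2(q-1)}$ could lie in the $2B$ class, which would ruin the conclusion. Ruling this out requires the detailed module-theoretic bookkeeping contained in Conjectures~\ref{pr_cent_invol}, \ref{pr_centraliser_facts}, \ref{pr_sz26_facts} and \ref{pr_sz_sz_facts1}, and amounts to an explicit Jordan-form computation for a product of a Suzuki transvection with a torus element on the $26$-dimensional module. This is exactly the kind of explicit representation-theoretic computation that has led the author to state the surrounding results as conjectures rather than theorems.
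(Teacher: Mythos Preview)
The statement you are attempting to prove is labelled a \emph{Conjecture} in the paper, and the paper gives no proof of it; it is explicitly listed among the ``Big Ree Conjectures'' that are supported only by experimental evidence (see Section~\ref{section:intro_conj}). So there is no paper proof to compare against, and the relevant question is simply whether your argument establishes the claim.

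It does not. There are two genuine gaps.

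\textbf{Step 2 is unjustified in the converse direction.} You assert that \emph{every} element of $G$ with $1$ as a $6$-fold eigenvalue lies in some conjugate of $H\cong\Sz(q)\times\Sz(q)$ and has order $4(q-1)$, citing the proportion clause of Conjecture~\ref{pr_sz_sz_facts1}(1). That clause says the elements of order $4(q-1)$ arising from the various $S$ account for a $1/(2q)$ fraction of $G$; to deduce equality of sets you would need to know that the eigenvalue-multiplicity-$6$ elements form \emph{exactly} the same fraction. But Conjecture~\ref{pr_bigree_goodelts} only says that fraction is $\OR{1/q}$, not that it equals $1/(2q)$. So the converse is not available, and without it you have no control over the elements of $\gen{a}b$ whose fixed space happens to be $6$-dimensional for some other reason.

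\textbf{Step 4 is both confused and addresses the wrong quantity.} The phrase ``$g\in G$ of order $4(q-1)$ whose $(q-1)$-st power lies in $\gen{a}$'' cannot be right: such a power has order $4$, while $\gen{a}$ has odd order $q-1$. More importantly, the conjecture asks for the proportion of cosets $\gen{a}b$ in which \emph{every} element with a $6$-dimensional fixed space has even order and powers to a $2A$ involution (this is what the algorithm in Theorem~\ref{thm_find_even_order_elt} needs at line~\ref{alg:even_order_test}); your count instead bounds the proportion of cosets that \emph{contain at least one} good element. If Steps~2--3 were correct these would coincide and indeed $c_2=1$, making Step~4 redundant; since Step~2 is not established, Step~4 as written proves neither statement.

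Finally, even setting these points aside, your argument is a reduction to Conjectures~\ref{pr_sz26_facts} and~\ref{pr_sz_sz_facts1}, which the paper also leaves unproved; you acknowledge this in your ``Main obstacle'' paragraph. So at best you have sketched why the various Big Ree Conjectures are mutually consistent, not proved any one of them.
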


\begin{conj} \label{pr_bigree_goodelts}
  The proportion of elements in $G$ that have $1$ as an eigenvalue of multiplicity $6$ is $c_3 \in \OR{1 / q}$.
\end{conj}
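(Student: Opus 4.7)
The plan is to prove matching upper and lower bounds of order $1/q$ by combining the lower bound implicit in Conjecture \ref{pr_sz_sz_facts1} with an upper bound obtained from the explicit list of conjugacy classes of $\LargeRee(q)$.

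For the lower bound, I would appeal directly to Conjecture \ref{pr_sz_sz_facts1}(1), which states that within a subgroup $S \cong \Sz(q) \times \Sz(q)$ the elements of order $4(q-1)$ have $1$ as an eigenvalue of multiplicity $6$, and that the proportion in $G$ (taken over all such subgroups $S$) is $1/(2q)$. This gives $c_3 \geqslant 1/(2q)$, which is the easy half and handles the $\Omega(1/q)$ side.

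For the upper bound, I would use the classification of conjugacy classes of $\LargeRee(q)$ in \cite{large_ree_conj_1, large_ree_conj_2}. Each class is parameterised by a type whose representative has an explicit rational canonical form on the $26$-dimensional module, built from the torus decomposition (Propositions \ref{pr_torus_elt} and \ref{pr_diagonal_elt}) for semisimple parts and unipotent blocks for the rest. For each family I would compute the multiplicity of $1$ as an eigenvalue as a function of the parameters, and determine for which parameter values this multiplicity is exactly $6$. Most semisimple classes contribute no such elements because their characters on the $26$-dimensional representation are dictated by the weight structure of $\mathrm{F}_4$ restricted to $\LargeRee(q)$, and only a bounded number of class families are compatible with the constraint ``exactly six $1$-eigenvalues''. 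Each such family contains $\OR{1}$ classes of size $\OR{\abs{G}/q}$ (the $q$-factor coming from the dimension of the relevant torus modulo the constraint). Summing yields $\OR{\abs{G}/q}$ elements in total and hence $c_3 \in \OR{1/q}$.

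The main obstacle is the case analysis: one has to work through the semisimple, unipotent and mixed classes in \cite{large_ree_conj_2} and, for each parameterised family, either exhibit the multiplicity of $1$ as a polynomial/rational condition on the torus parameters or rule out the multiplicity-$6$ condition on representation-theoretic grounds. The mixed classes, where the element decomposes as $su$ with $s$ semisimple and $u$ unipotent commuting, are the most delicate: one must analyse the action of $u$ on each eigenspace of $s$ and count the fixed vectors in the $1$-eigenspace. Once this bookkeeping is complete, combining it with the standard class-size formulae $\abs{g^G} = \abs{G}/\abs{\Cent_G(g)}$ gives the required $\OR{1/q}$ bound.
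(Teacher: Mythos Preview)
The paper does not prove this statement at all: it is explicitly labelled a \emph{Conjecture} and, as explained in Section~\ref{section:intro_conj}, is one of several results that were ``left unproven, either because they have been too hard to prove or have been from an area of mathematics outside the scope of this thesis''. It is bundled into the ``Big Ree Conjectures'' and used only as an assumption (e.g.\ in Lemma~\ref{lem_even_order_cosets} and Theorem~\ref{thm_find_even_order_elt}). So there is no paper proof to compare your proposal against.

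That said, two remarks on your sketch. First, your lower bound is not an independent argument: you invoke Conjecture~\ref{pr_sz_sz_facts1}(1), which is itself an unproven conjecture in this paper. If you want a self-contained proof you need to establish that $1/(2q)$ proportion directly, presumably by the same kind of conjugacy-class count you propose for the upper bound. Second, your upper-bound plan is the right shape --- run through the class list in \cite{large_ree_conj_1,large_ree_conj_2}, compute eigenvalue multiplicities on the $26$-dimensional module, and sum class sizes --- but as you acknowledge, the mixed classes are genuinely delicate, and ``a bounded number of class families'' and ``$\OR{1}$ classes of size $\OR{\abs{G}/q}$'' are assertions that would each need justification. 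The paper's authors evidently judged this bookkeeping to be outside their scope; your outline does not yet supply the missing detail.
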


\begin{conj} \label{conj_bigree_badcosets} Let $a \in G =
  \LargeRee(q)$ be such that $\abs{a} = q - 1$ and $a$ is conjugate to
  some $h(\lambda, \mu)$ with $\lambda^t = \mu \in \F_q^{\times}$.
  Then $\Norm_G(\gen{a}) \cong \Dih_{2(q - 1)} \times \Sz(q)$ and
  there exists an absolute constant $c$ such that for every $b \in G \setminus
  \Norm_G(\gen{a})$, the number of $g \in \gen{a}b$ that have $1$ as
  an eigenvalue of multiplicity at least $6$ is bounded above by $c$.
\end{conj}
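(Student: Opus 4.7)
The plan is to prove the two assertions separately. For the normaliser, the element $a = h(\lambda, \lambda^t)$ coincides with $\varsigma(1, \lambda)$, which by Proposition \ref{bigree_suzuki2_gens} lies inside one of the two commuting $\Sz(q)$ factors $S$ and $S^h$ of the maximal subgroup $\Sz(q) \wr \Cent_2 \leqslant G$. Inside $S$ the element $a$ has order $q - 1$, so Theorem \ref{sz_maximal_subgroups} yields $\Norm_S(\gen{a}) \cong \Dih_{2(q-1)}$, and since $S^h$ centralises $\gen{a}$ pointwise the inclusion $\Norm_G(\gen{a}) \geqslant \Dih_{2(q-1)} \times \Sz(q)$ is immediate. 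For equality I will compute $\Cent_G(a)$ in the algebraic group --- using that $a$ is a regular semisimple element of the Levi factor $\Sz(q)$ and standard Levi/centraliser theory in $\LargeRee$ --- to obtain $\Cent_G(a) = \gen{a} \times \Sz(q)$; overgroups strictly containing this are ruled out by a pass through Malle's maximal subgroup classification \cite{malle_bigree}, and the index $[\Norm_G(\gen{a}) : \Cent_G(\gen{a})] = 2$ is realised by the Weyl-type inversion sitting inside $\Norm_S(\gen{a})$.

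For the coset bound I work in a basis diagonalising $a$, so that $a^i = \diag(x^{k_1}, \dotsc, x^{k_{26}})$ with $x = \lambda^i$ and exponents $k_j \in \{0, \pm 1, \pm(t-1), \pm(t-2), \pm t\}$ read off Proposition \ref{pr_diagonal_elt} with multiplicities $(6, 4, 4, 4, 4, 1, 1, 1, 1)$. Using $\det(a^i b - I) = \det(b - a^{-i})$ together with the expansion
\begin{equation*}
\det(b - y) = \sum_{S \subseteq \{1, \dotsc, 26\}} (-1)^{|S|} \Bigl( \prod_{j \in S} y_j \Bigr) M_{S^c}(b),
\end{equation*}
and the analogous expansions for every $21 \times 21$ minor of $a^i b - I$, the condition $\dim \ker(a^i b - I) \geqslant 6$ becomes the simultaneous vanishing of a finite system of Laurent polynomials in $x$ whose coefficients are minors of $b$. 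Since only nine distinct $k_j$ occur, every such Laurent polynomial has the shape $F(x, x^t)$ for a two-variable Laurent polynomial $F \in \F_q[y^{\pm 1}, z^{\pm 1}]$ of bi-degree bounded by $(10, 6)$ independently of $q$.

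The argument then splits into two steps. The first step is to show that if $b \notin \Norm_G(\gen{a})$ then at least one of these Laurent polynomials is non-zero: simultaneous identical vanishing would impose every minor equation on $b$, which by a representation-theoretic calculation inside $\LargeRee(q)$ forces $b$ to preserve the $\gen{a}$-weight-space decomposition of the natural module and hence to lie in $\Norm_G(\gen{a})$, contrary to hypothesis. The main obstacle, and the second step, is to upgrade the naive degree bound of $\OR{t} = \OR{\sqrt{q}}$ on the number of $\F_q^{\times}$-roots of a single $F(x, x^t)$ to the claimed absolute constant $c$. My plan is to count the $\F_q$-rational points of the graph $\{(x, x^t) : x \in \bar{\F_q}^{\times}\}$ lying on the hypersurface $\{F = 0\}$ inside $\mathbb{G}_m^2$ by intersection theory, so that the bound depends only on the bi-degree of $F$ rather than on $q$, and then to leverage the over-determinedness of the full minor system to drive the bound down to an absolute constant. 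Making this intersection-theoretic count rigorous --- in particular, excluding the possibility that the Frobenius graph is contained in an irreducible component of $\{F = 0\}$ --- is the delicate point, consistent with the statement's appearance as a computationally-verified conjecture rather than a theorem.
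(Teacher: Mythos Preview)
The paper offers no proof of this statement: it is explicitly listed among the \emph{Big Ree Conjectures} (see Section~\ref{section:intro_conj}), supported only by extensive computational evidence. So there is nothing to compare your proposal against; the question is whether your sketch closes the gap the paper leaves open.

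Your treatment of the normaliser is a reasonable outline. The identification $h(\lambda,\lambda^t)=\varsigma(1,\lambda)$ and the use of Proposition~\ref{bigree_suzuki2_gens} to place $a$ inside one $\Sz(q)$ factor are correct, and the inclusion $\Dih_{2(q-1)}\times\Sz(q)\leqslant\Norm_G(\gen{a})$ follows as you say. The reverse inclusion via Malle's list is plausible but you have not actually carried out the case analysis; this is routine bookkeeping rather than a real obstacle.

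The coset bound is where the genuine difficulty lies, and your own final paragraph concedes that you have not resolved it. Your reduction is correct as far as it goes: the rank condition on $a^ib-I$ does become a system of Laurent polynomials of the shape $F(x,x^t)$ with $F$ of bi-degree bounded independently of $q$. But your step~1 (some $F$ is non-zero in $\F_q[y^{\pm1},z^{\pm1}]$) only yields that $F(x,x^t)$ is a non-zero Laurent polynomial in $x$ of degree $\OR{t}=\OR{\sqrt{q}}$, hence has $\OR{\sqrt{q}}$ roots --- not an absolute constant. Your proposed intersection-theoretic fix does not help: the curve $z=y^t$ has bi-degree $(t,1)$, so B\'ezout in $\mathbb{G}_m^2$ against a curve of bi-degree $(10,6)$ still gives a bound growing with $t$. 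What you actually need is that the full system of such $F$'s cuts out a \emph{zero-dimensional} variety in the two free variables $(y,z)$ --- this is exactly the content of Proposition~\ref{conj:bigree_trick}, which the paper derives \emph{from} the present conjecture, not the other way round. Establishing zero-dimensionality directly from the hypothesis $b\notin\Norm_G(\gen{a})$ is the missing idea, and neither the paper nor your proposal supplies it.
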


\begin{pr} \label{pr_reducible_maximals}
Let $G = \LargeRee(q)$ with natural module $M$ and let $H < G$ be a
maximal subgroup. Then either $M\vert_H$ is reducible or $H \cong
\LargeRee(s)$ where $q$ is a proper power of $s$.
\end{pr}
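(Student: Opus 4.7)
The plan is to invoke the classification of maximal subgroups of $G = \LargeRee(q)$ given in \cite{malle_bigree} and verify the dichotomy case by case. According to that classification, the maximal subgroups fall into the following classes: the two maximal parabolics (of types $2A$ and $2B$ from Proposition \ref{pr_invol_facts}); the wreath product $\Sz(q) \wr \Cent_2$; subgroups of the form $\Sp(4, q) {:} \Cent_2$, $\SU(3, q) {:} \Cent_2$ and $\PGU(3, q) {:} \Cent_2$; four conjugacy classes of normalisers of maximal cyclic tori; and the subfield subgroups $\LargeRee(s)$ with $q$ a proper prime power of $s$. For the subfield case the conclusion of the proposition holds, so it suffices to exhibit a proper non-zero $H$-invariant subspace of $M$ in each of the remaining classes.

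For the parabolic subgroups the claim is immediate from the definition: parabolics are stabilisers of chains of totally singular subspaces with respect to the form $Q^{*}$ preserved by $G$, hence they act reducibly on $M$. For $H \cong \Sz(q) \wr \Cent_2$, let $S \cong \Sz(q) \times \Sz(q)$ denote the base subgroup. By Conjecture \ref{pr_sz_sz_facts1} we have a decomposition $M\vert_S \cong M_{16} \oplus M_{10}$, and the outer involution of $H/S$ swaps the two Suzuki factors and therefore stabilises this decomposition, so both summands are $H$-invariant and $M\vert_H$ is reducible.

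For a normaliser of a maximal torus $H = \Norm_G(T)$, I would argue that $T$ has a non-trivial proper fixed space on $M$, which must then be preserved by $\Norm_G(T)$ since the normaliser permutes the weight spaces of $T$ while fixing the trivial weight. Embedding $T$ into a maximal algebraic torus of $\mathrm{F}_4(\bar{\F_q})$ (under the inclusion $\LargeRee(q) < \mathrm{F}_4(q)$ from Section \ref{section:big_ree_theory}) and using the known weight structure of the $26$-dimensional module of $\mathrm{F}_4$, whose weights are the $24$ short roots together with a zero-weight space of dimension $2$, one sees that the fixed space of any such $T$ on $M \otimes \bar{\F_q}$ has dimension at least $2$. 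Descending to $\F_q$ gives the required proper non-trivial $H$-invariant subspace.

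For the remaining cases $\Sp(4, q) {:} \Cent_2$, $\SU(3, q) {:} \Cent_2$ and $\PGU(3, q) {:} \Cent_2$, the argument is a Steinberg tensor product check: in defining characteristic, the absolutely irreducible representations of these rank-$\leqslant 2$ groups are tensor products of Frobenius twists of restricted irreducibles, and the restricted irreducible dimensions (computable from the highest weights) cannot combine to give $26$, so $M\vert_H$ decomposes. Finally, for $H \cong \LargeRee(s)$ with $q = s^r$, the natural module of $\LargeRee(s)$ is absolutely irreducible of dimension $26$ over $\F_s$, and base change to $\F_q$ yields exactly $M\vert_H$, which therefore remains absolutely irreducible; this is the unique case in which $M\vert_H$ is irreducible. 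The main obstacle in the plan is the torus case, since pinning down the fixed-space dimension rigorously requires care about how Steinberg-twisted tori of $\LargeRee(q)$ sit inside algebraic tori of $\mathrm{F}_4(\bar{\F_q})$; the other cases reduce to invocations of \cite{malle_bigree}, Conjecture \ref{pr_sz_sz_facts1}, or routine dimension bookkeeping.
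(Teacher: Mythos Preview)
The paper's entire proof is the single sentence ``Follows from \cite{malle_bigree}.'' In other words, the reducibility information for each maximal subgroup class is taken directly from Malle's paper, not re-derived. Your proposal is therefore not wrong in spirit---you correctly invoke the Malle classification---but it does far more work than the paper, and in doing so introduces both unnecessary dependencies and genuine gaps.

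Two specific concerns. First, for the $\Sz(q)\wr\Cent_2$ case you appeal to Conjecture~\ref{pr_sz_sz_facts1}, which in this thesis is explicitly a \emph{conjecture}; the paper's proof of Proposition~\ref{pr_reducible_maximals} does not rely on it, so you have made the result conditionally true where the paper has it unconditionally (modulo Malle). A cleaner argument here is purely dimensional: irreducible $\Sz(q)\times\Sz(q)$-modules are tensor products of Suzuki irreducibles, whose dimensions are powers of $4$, so $26$ cannot occur, and the induction/restriction to the wreath product cannot repair this. Second, your torus-normaliser case is, as you yourself flag, not rigorous: the assertion that the zero-weight space descends to a proper $\F_q$-rational $H$-invariant subspace needs the twisted torus structure of $\LargeRee(q)$ worked out explicitly, which you do not do. The $\Sp(4,q){:}2$, $\SU(3,q){:}2$, $\PGU(3,q){:}2$ cases are also only sketched. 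All of this is presumably why the paper simply defers to Malle rather than attempting a self-contained verification.
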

\begin{proof}
Follows from \cite{malle_bigree}.
\end{proof}

\subsection{Tensor indecomposable representations}
\label{section:bigree_indecomposables}

It follows from \cite{MR1901354} that over an algebraically closed
field in defining characteristic, up to Galois twists, there are
precisely three absolutely irreducible tensor indecomposable
representations of $\LargeRee(q)$:
\begin{enumerate}
\item the natural representation $V$ of dimension $26$,
\item a submodule of $S$ of $V \otimes V$ of dimension $246$,
\item a submodule of $V \otimes S$ of dimension $4096$
\end{enumerate}

\chapter{Constructive recognition and membership testing}
\label{chapter:recognition}

Here we will present the algorithms for constructive recognition and
constructive membership testing. The methods we use are
specialised to each family of exceptional groups, so we treat each
family separately. When the methods are similar between the families,
we present a complete account for each family, in order to make each
section self-contained. In the cases where we have a non-constructive
recognition algorithm that improves on \cite{general_recognition}, we
will also present it here.

Recall the various cases of constructive recognition of matrix groups,
given in Section \ref{section:algorithm_overview}. For each group, we
will deal with some, but not always all, of the cases that arise. 

We first give an overview of the various methods. From Chapter
\ref{chapter:group_def_theory} we know that both the Suzuki groups and
the small Ree groups act doubly transitively on $q^2 + 1$ and $q^3 +
1$ projective points, respectively (the fields of size $q$ have different
characteristics), and the idea of how to deal with these groups is to think
of them as permutation groups. In fact we proceed similarly as in
\cite{psl_recognition} for $\PSL(2, q)$, which acts doubly transitively
on $q + 1$ projective points. The essential problem in all these cases
is to find an efficient algorithm that finds an element of the group
that maps one projective point to another.

In $\PSL(2, q)$ the algorithm proceeds by finding a random element of
order $q - 1$ and considering a random coset of the subgroup generated
by this element. Since the coset is exponentially large, we cannot
process every element, and the idea is instead to construct the
required element by solving equations. We therefore consider a matrix
whose entries are indeterminates. In this way we reduce the coset
search problem to two other problems from computational algebra:
finding roots of quadratic equations over a finite field, and the
famous \emph{discrete logarithm problem}.

In the Suzuki groups, the number of points is $q^2 + 1$ instead of $q +
1$, and this requires us to consider double cosets of elements of order
$q - 1$, instead of cosets. The problem is again reduced to finding
roots of univariate polynomials, in this case of
degree at most $60$, as well as to the discrete logarithm problem.

The small Ree groups are dealt with slightly differently, since we can
easily find involutions by random search and then use the Bray
algorithm to find the centraliser of an involution.  Then the module
of the group restricted to the centraliser decomposes, and this is
used to find an element that maps one projective point to another.

The Big Ree groups cannot be considered as permutation groups in the
same way, so the essential problem in this case is to find an
involution expressed as a product of the given generators. Again the
idea is to find a cyclic subgroup of order $q - 1$ and search for elements of
even order in random cosets of this subgroup. The underlying
observation is that the elements of even order are characterised by
having $1$ as an eigenvalue with a certain multiplicity. Therefore we
can again consider matrices whose entries are indeterminates and
construct the required element of even order.

\section{Suzuki groups} \label{section:suzuki_main_results} Here we
will use the notation from Section \ref{section:suzuki_theory}. We
will refer to Conjectures \ref{conjecture_correctness},
\ref{lem1_basevalues}, \ref{lem2_basevalues}, \ref{cl_flatdim} and \ref{conj:sz_perm_iso} simultaneously as the \emph{Suzuki
Conjectures}. We now give an overview of the algorithm for
constructive recognition and constructive membership testing. It will
be formally proved as Theorem \ref{cl_sz_constructive_recognition}.

\begin{enumerate}
\item Given a group $G \cong \Sz(q)$, satisfying the assumptions in
  Section \ref{section:algorithm_overview}, we know from Section
  \ref{section:sz_indecomposables} that the module of $G$ is
  isomorphic to a tensor product of twisted copies of the natural
  module of $G$. Hence we first tensor decompose this module. This is
  described in Section \ref{section:sz_tensor_decompose}.

\item The resulting groups in dimension $4$ are conjugates of the standard copy, so we find a conjugating element. This is described in Section \ref{section:conjugating_element}.

\item Finally we are in $\Sz(q)$. Now we can perform preprocessing for constructive membership testing and other problems we want to solve. This is described in Section \ref{section:sz_constructive_membership}.

\end{enumerate}

Given a discrete logarithm oracle, the whole process has time
complexity slightly worse than $\OR{d^5 + \log(q)}$ field operations,
assuming that $G$ is given by a bounded number of generators.

\subsection{Recognition} \label{section:sz_recognition} We now discuss
how to non-constructively recognise $\Sz(q)$. We are given a group
$\gen{X} \leqslant \GL(4, q)$ and we want to decide whether or not
$\gen{X} = \Sz(q)$, the group defined in \eqref{suzuki_def}.

To do this, it suffices to determine if $X \subseteq \Sz(q)$ and if
$X$ does not generate a proper subgroup, \emph{i.e.} if $X$ is not
contained in a maximal subgroup. To determine if $g \in X$ is in
$\Sz(q)$, first determine if $g$ preserves the symplectic form of
$\Sp(4, q)$ and then determine if $g$ is a fixed point of the
automorphism $\Psi$ of $\Sp(4, q)$, mentioned in Section
\ref{section:suzuki_theory}.

The recognition algorithm relies on the following result.
\begin{lem} \label{lemma_sz_standard_recognition}
Let $H = \gen{X} \leqslant \Sz(q) = G$, where $X = \set{x_1, \dotsc, x_n}$,
let $C = \set{[x_i, x_j] \mid 1 \leqslant i < j \leqslant n}$ and let $M$ be the natural module of $H$. Then $H = G$ if and only if the
following hold:
\begin{enumerate}
\item $M$ is an absolutely irreducible $H$-module.
\item $H$ cannot be written over a proper subfield.
\item $C \neq \set{1}$ and for every $c \in C \setminus \set{1}$ there exists $x \in X$ such that $[c, c^x] \neq 1$.
\end{enumerate}
\end{lem}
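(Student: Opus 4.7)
The plan is to show that conditions (1)--(3) are jointly equivalent to $H$ failing to lie in any maximal subgroup of $G = \Sz(q)$, by systematically checking the classification in Theorem \ref{sz_maximal_subgroups}.

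For the forward direction, assume $H = G$. Condition (1) is the standard fact, recorded in Section \ref{section:sz_indecomposables}, that the natural module of $\Sz(q)$ is absolutely irreducible, and (2) is the standard fact that this representation is not realised over any proper subfield of $\F_q$. For (3), $C \neq \set{1}$ is immediate since $G$ is non-abelian. Suppose for contradiction that some $c \in C \setminus \set{1}$ satisfies $c^x \in \Cent_G(c)$ for every $x \in X$. By Theorem \ref{thm_suzuki_props}(9) and the centraliser analysis in the proof of Proposition \ref{lemma_double_coset}, there is a unique $K$ among the conjugates of $\mathcal{F}$, $\mathcal{H}$, $U_1$, $U_2$ such that $\Cent_G(c) \leqslant K$: when $c$ has odd order, $K$ is the cyclic torus containing $c$; when $c$ has order $2$ or $4$, $K$ is the Sylow $2$-subgroup containing $c$, using in the order-$4$ case that $c^2 \in \Zent(K) \setminus \set{1}$. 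Then $c^x \in K \cap K^x$, and the trivial-intersection property of Proposition \ref{pr_sz_subgroups_conj}(1) forces $K = K^x$, that is, $x \in \Norm_G(K)$. Hence $X \subseteq \Norm_G(K)$, but each such normaliser is proper by Theorem \ref{sz_maximal_subgroups}, contradicting $\gen{X} = G$.

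For the backward direction, assume (1)--(3) and suppose for contradiction that $H$ is contained in some maximal subgroup of $G$. The point stabiliser $\mathcal{FH}$ fixes a point of $\OV$ and hence a one-dimensional subspace of $\F_q^4$, so it is reducible and contradicts (1). The subgroup $\Norm_G(\mathcal{H}) \cong \Dih_{2(q - 1)}$ consists of monomial matrices with respect to the standard basis (since $\mathcal{H}$ is diagonal and $T$ is a permutation matrix), and preserves the proper subspace $\gen{e_1, e_4}$, again contradicting (1). If $H$ is contained in a subfield subgroup $\Sz(s) \leqslant \GL(4, s)$ with $\F_s$ a proper subfield of $\F_q$, then $H$ is realised over $\F_s$, contradicting (2).

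The main obstacle, and where condition (3) is essential, is the family of maximals $\mathcal{B}_i = \Norm_G(U_i)$: since $U_i$ is cyclic of order $q \pm t + 1$ dividing $q^2 + 1$ and acts irreducibly on $\F_q^4$ with commuting algebra $\F_{q^4}$, and since $\mathcal{B}_i / U_i \cong \Cent_4$ acts on $U_i$ as the Galois group of $\F_{q^4} / \F_q$, the whole of $\mathcal{B}_i$ is absolutely irreducible, so (1) does not apply. The key observation is that $\mathcal{B}_i / U_i$ being abelian forces $\mathcal{B}_i' \leqslant U_i$, and $U_i$ is abelian. Any $c \in C \setminus \set{1}$ lies in $H' \leqslant \mathcal{B}_i' \leqslant U_i$, and since $\mathcal{B}_i'$ is normal in $\mathcal{B}_i$, so does every $X$-conjugate $c^x$. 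Hence $[c, c^x] = 1$ for every $x \in X$, contradicting (3) and completing the proof.
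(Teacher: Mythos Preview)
Your proof is correct and follows the same overall strategy as the paper's: run through the maximal subgroups listed in Theorem~\ref{sz_maximal_subgroups} and show each is excluded by one of conditions (1)--(3). Two minor differences are worth noting. In the forward direction the paper appeals directly to Proposition~\ref{lemma_double_coset}(1): since distinct conjugates of $\Cent_G(c)$ intersect trivially and $c^x$ lies in two such conjugates, one obtains $\Cent_G(c) \trianglelefteq G$, contradicting simplicity; this is slightly more streamlined than your detour through the ambient partition member $K$, though the content is the same. In the backward direction the paper groups $\Norm_G(\mathcal{H})$ together with $\mathcal{B}_1$ and $\mathcal{B}_2$ and disposes of all three simultaneously via the metabelian argument (condition~(3)); you instead eliminate $\Norm_G(\mathcal{H}) = \gen{\mathcal{H}, T}$ by observing that it preserves $\gen{e_1, e_4}$ and so already fails condition~(1). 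Your observation is sharper---the paper's statement that $\Norm_G(\mathcal{H})$ ``satisfies the first two conditions'' is in fact inaccurate, though harmless since the metabelian argument covers it anyway.
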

\begin{proof}
  By Theorem \ref{sz_maximal_subgroups}, the maximal subgroups of $G$
  that do satisfy the first two conditions are
  $\Norm_G(\mathcal{H})$, $\mathcal{B}_1$ and $\mathcal{B}_2$. For
  each, the derived group is contained in the normalised
  cyclic group, so all these maximal subgroups are metabelian. If $H$
  is contained in one of them and $H$ is not abelian, then $C \neq
  \set{1}$, but $[c, c^x] = 1$ for every $c \in C$ and $x \in X$ since
  the second derived group of $H$ is trivial. Hence the last
  condition is not satisfied.

Conversely, assume that $H = G$. Then clearly, the first two
conditions are satisfied, and $C \neq \set{1}$. Assume that the last
condition is false, so for some $c \in C \setminus \set{1}$ we have that
$[c, c^x] = 1$ for every $x \in X$. This implies that $c^x \in \Cent_G(c) \cap
\Cent_G(c)^{x^{-1}}$, and it follows from Proposition \ref{lemma_double_coset}
that $\Cent_G(c) = \Cent_G(c)^{x^{-1}}$. Thus $\Cent_G(c) = \Cent_G(c)^g$ for all $g
\in G$, so $\Cent_G(c) \triangleleft G$, but $G$ is simple and we have a
contradiction.
\end{proof}

\begin{thm} \label{thm_sz_standard_recognition}
  There exists a Las Vegas algorithm that, given $\gen{X} \leqslant \GL(4, q)$, decides whether or not $\gen{X} = \Sz(q)$. It has expected time complexity $\OR{\abs{X}^2 + \sigma_0(\log(q))(\abs{X} + \log(q))}$
  field operations.
\end{thm}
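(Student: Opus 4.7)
The plan is to combine a direct membership test for $\Sz(q)$ with the three conditions of Lemma \ref{lemma_sz_standard_recognition}, exploiting the fact that $d = 4$ is constant so that every basic matrix operation in $\GL(4, q)$ costs $\OR{1}$ field operations.

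Stage 1 verifies $X \subseteq \Sz(q)$: for each $x \in X$ check that $x J x^{T} = J$ with $J$ the form of \eqref{standard_symplectic_form} and that $\Psi(x) = x$ using the explicit recipe for $\Psi$ from Section \ref{section:sz_alt_def}. Both checks are constant-time per generator, giving $\OR{\abs{X}}$ field operations. Stage 2 verifies conditions (1) and (2) of the lemma by running the MeatAxe on $\gen{X}$ for absolute irreducibility (expected $\OR{\abs{X}}$ field operations) and then the algorithm of Section \ref{section:smallerfield} to decide whether the group can be written over a proper subfield (expected $\OR{\sigma_0(\log q)(\abs{X} + \log q)}$ field operations). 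If either test fails, return \texttt{false}.

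Stage 3 verifies condition (3): form the commutator set $C = \set{[x_i, x_j] : 1 \leqslant i < j \leqslant \abs{X}}$ at cost $\OR{\abs{X}^2}$, and return \texttt{false} if $C = \set{1}$; otherwise for each $c \in C \setminus \set{1}$ scan the generators for some $x \in X$ with $[c, c^x] \neq 1$, moving to the next $c$ at the first witness and returning \texttt{false} if no witness exists. If every $c$ admits a witness, return \texttt{true}. Correctness is immediate from Lemma \ref{lemma_sz_standard_recognition}, since Stages 1--2 together ensure both that $\gen{X} \leqslant \Sz(q)$ and that conditions (1)--(2) hold, and Stage 3 certifies condition (3).

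The main obstacle is controlling the cost of Stage 3 within $\OR{\abs{X}^2}$ rather than the naive double enumeration cost $\OR{\abs{X}^3}$. Early termination handles this: a single $c$ that admits no witness costs $\OR{\abs{X}}$ and immediately halts the whole algorithm, so it can occur at most once; and when $\gen{X} = \Sz(q)$, Proposition \ref{lemma_double_coset}(1) implies that for each nontrivial $c \in C$ the set of $x \in G$ with $c^x \in \Cent_G(c)$ lies in a single coset of $\Cent_G(c)$, which, being of index $\geqslant q^2$ in $G$, cannot contain all of $X$ once $\gen{X}$ generates the whole group. Hence the inner search succeeds after $\OR{1}$ trials on average for each $c$, giving Stage 3 amortised cost $\OR{\abs{X}^2}$. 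Summing the three stages yields the stated complexity.
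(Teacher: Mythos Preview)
Your algorithm is correct, but the complexity analysis of Stage 3 has a genuine gap. The claim that the inner search succeeds after $\OR{1}$ trials ``on average'' is not justified: there is no randomness in the scan, and $X$ is arbitrary input, so there is no probability space over which to average. (Also, the set $\{x \in G : c^x \in \Cent_G(c)\}$ equals $\Norm_G(\Cent_G(c))$, a subgroup containing $\Cent_G(c)$, not a single coset of it.) Even if you randomised the scan order, the non-witnesses for a given $c$ could comprise all but one element of $X$, so the expected number of trials per $c$ could still be $\Theta(\abs{X})$, and your argument does not rule out Stage 3 costing $\OR{\abs{X}^3}$ in the worst case.

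The paper avoids this entirely by observing that a \emph{single} non-trivial $c \in C$ suffices. After Stages 1 and 2, $\gen{X}$ is either $\Sz(q)$ itself or one of the three metabelian maximal subgroups from Theorem \ref{sz_maximal_subgroups}. In the metabelian case $\gen{X}'' = 1$, so $[c, c^x] = 1$ for \emph{every} $c \in C$ and every $x \in X$; hence if the one chosen $c$ fails to find a witness in $X$, we may already return \texttt{false}. Conversely, if $\gen{X} = \Sz(q)$, Lemma \ref{lemma_sz_standard_recognition} guarantees a witness for every non-trivial $c$, so the single $c$ we chose finds one. Stage 3 thus reduces to: locate one non-trivial commutator (at most $\OR{\abs{X}^2}$), then scan $X$ once for that $c$ (cost $\OR{\abs{X}}$), giving $\OR{\abs{X}^2}$ with no amortisation argument needed.
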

\begin{proof}
The algorithm proceeds as follows.
\begin{enumerate}
\item Determine if every $x \in X$ is in $\Sz(q)$, and return \texttt{false} if not.

\item Determine if $\gen{X}$ is absolutely irreducible, and return \texttt{false} if not. 

\item Determine if $\gen{X}$ can be written over a smaller field. If so, return \texttt{false}.

\item Using the notation of Lemma \ref{lemma_sz_standard_recognition}, try to find $c \in C$ such that $c \neq 1$. Return \texttt{false} if it cannot be found.

\item If such $c$ can be found, and if $[c, c^x] \neq 1$ for some $x \in X$, then return \texttt{true}, else return \texttt{false}.

\end{enumerate}

From the discussion at the beginning of this section, the first step
is easily done using $\OR{\abs{X}}$ field operations. The MeatAxe can be used to determine if the
natural module is absolutely irreducible; the algorithm described in Section \ref{section:smallerfield} can be used to determine if $\gen{X}$ can be written over a smaller field.

  The rest of the algorithm is a straightforward application of the last
  condition in Lemma \ref{lemma_sz_standard_recognition}, except that it
  is sufficient to use the condition for one non-trivial commutator
  $c$. By Lemma
  \ref{lemma_sz_standard_recognition}, if $[c, c^x] \neq 1$ then $\gen{X}
  = \Sz(q)$; but if $[c, c^x] = 1$, then $\Cent_{\gen{X}}(c) \triangleleft
  \gen{X}$ and we cannot have $\Sz(q)$.

  It follows from Section \ref{section:aschbacher_algorithms} that
  the expected time complexity of the algorithm is as stated. Since
  the MeatAxe is Las Vegas, this algorithm is also Las Vegas.
\end{proof}

We are also interested in determining if a given group is a
\emph{conjugate} of $\Sz(q)$, without necessarily finding a
conjugating element. We consider the subgroups of $\Sp(4,
q)$ and rule out all except those isomorphic to $\Sz(q)$. This relies
on the fact that, up to Galois automorphisms, $\Sz(q)$ has only one
equivalence class of faithful representations in $\GL(4,q)$, so if we
can show that $G \cong \Sz(q)$ then $G$ is a conjugate of $\Sz(q)$.

\begin{thm} \label{thm_sz_conj_recognition}
  There exists a Las Vegas algorithm that, given $\gen{X} \leqslant \GL(4,
  q)$, decides whether or not there exists $h \in \GL(4, q)$ such that $\gen{X}^h = \Sz(q)$. The algorithm has expected time complexity $\OR{\abs{X}^2 + \sigma_0(\log(q)) (\abs{X} + \log(q))}$ field
  operations.
\end{thm}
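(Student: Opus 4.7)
The plan is to mirror the proof of Theorem~\ref{thm_sz_standard_recognition}, replacing the pointwise membership check ``every $x \in X$ lies in $\Sz(q)$'' by a conjugation-invariant test. The key ingredient is Lemma~\ref{lem_steinberg_lang}: provided $\gen{X}$ is absolutely irreducible and preserves a symplectic form, the group $\gen{X}$ is $\GL(4,q)$-conjugate into $\Sz(q)$ if and only if the natural module $V$ satisfies $V \cong V^{\Psi}$. Since the conditions of Lemma~\ref{lemma_sz_standard_recognition} are invariant under conjugation in $\GL(4,q)$, the commutator test in that lemma then distinguishes the full conjugate of $\Sz(q)$ from its proper subgroups.

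Concretely, the algorithm proceeds as follows. First, use the MeatAxe (Section~\ref{section:meataxe}) to test absolute irreducibility of $V$, returning \texttt{false} if it fails. Next, use the MeatAxe to test whether $V \cong V^{*}$ and, if so, recover a preserved non-degenerate form; return \texttt{false} unless this form is alternating (so that $\gen{X}$ is conjugate into $\Sp(4,q)$). Then apply the algorithm of Section~\ref{section:smallerfield} to test whether $\gen{X}$ can be written modulo scalars over a proper subfield, and return \texttt{false} if so. This eliminates the reducible maximals and the $\Sz(s)$ subfield subgroups in one stroke.

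Now $\gen{X}$ preserves a symplectic form; compute an explicit $h_0 \in \GL(4,q)$ conjugating this form to the standard one of \eqref{standard_symplectic_form}, so that $\gen{X}^{h_0} \leqslant \Sp(4,q)$. Apply the automorphism $\Psi$ of Section~\ref{section:sz_alt_def} to each $x^{h_0}$ to obtain a generating set for the twisted module, and use the MeatAxe's module isomorphism test to decide whether $V \cong V^{\Psi}$; return \texttt{false} if not. By Lemma~\ref{lem_steinberg_lang}, success here guarantees $\gen{X}^h \leqslant \Sz(q)$ for some $h \in \GL(4,q)$.

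Finally, rule out the remaining (absolutely irreducible, not writable over a subfield) maximal subgroups of $\Sz(q)$, which are metabelian by Theorem~\ref{sz_maximal_subgroups}. Exactly as in the proof of Theorem~\ref{thm_sz_standard_recognition}, search through commutators of the given generators for a non-trivial $c$ and some $x \in X$ with $[c,c^x] \neq 1$; return \texttt{true} if such a pair is found, \texttt{false} otherwise. The main obstacle is really bookkeeping of the complexity: each MeatAxe call costs $\OR{\abs{X} d^3}$, the subfield test costs $\OR{\sigma_0(\log q)(\abs{X} d^3 + d^2 \log q)}$, computing $\Psi$ on $\abs{X}$ generators is $\OR{\abs{X}}$, and the commutator enumeration contributes $\OR{\abs{X}^2}$; with $d = 4$ these combine to the stated bound $\OR{\abs{X}^2 + \sigma_0(\log q)(\abs{X} + \log q)}$. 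The Las Vegas property descends from that of the MeatAxe and the subfield-writing algorithm.
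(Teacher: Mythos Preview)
Your proposal is correct and follows essentially the same approach as the paper: test absolute irreducibility, find a preserved symplectic form, conjugate into $\Sp(4,q)$, apply Lemma~\ref{lem_steinberg_lang} via the module-isomorphism test $V \cong V^{\Psi}$, and then use the conjugation-invariant conditions of Lemma~\ref{lemma_sz_standard_recognition} to rule out proper subgroups. The only cosmetic difference is that you separate out the subfield test explicitly before the $\Psi$-test, whereas the paper bundles it into the final appeal to Lemma~\ref{lemma_sz_standard_recognition}; you also make the conjugation-invariance of that lemma's hypotheses explicit, which the paper leaves implicit.
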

\begin{proof}
  Let $G = \gen{X}$. The algorithm proceeds as follows.

\begin{enumerate}
\item Determine if $G$ is absolutely irreducible, and return \texttt{false} if not.

\item Determine if $G$ preserves a non-degenerate symplectic form $M$. If so we
  conclude that $G$ is a subgroup of a conjugate of $\Sp(4, q)$, and if not then return \texttt{false}. Since $G$ is
  absolutely irreducible, the form is unique up to a scalar multiple.

\item Conjugate $G$ so that it preserves the form $J$. This amounts to finding a symplectic basis, \emph{i.e.} finding an
invertible matrix $X$ such that $X J X^{T} = M$, which is easily done.
Then $G^X$ preserves the form $J$ and thus $G^X \leqslant \Sp(4, q)$ so
that we can apply $\Psi$.

\item Determine if $V \cong V^{\Psi}$, where $V$ is the natural module for $G$ and $\Psi$ is the automorphism from Lemma \ref{lem_steinberg_lang}. If so we conclude that $G$ is a subgroup of some conjugate of $\Sz(q)$, and if not then return \texttt{false}.

\item Determine if $G$ is a proper subgroup of $\Sz(q)$, \emph{i.e.} if it is contained in a maximal subgroup. This can
be done using Lemma \ref{lemma_sz_standard_recognition}. If so, then return \texttt{false}, else return \texttt{true}.
\end{enumerate}

From the descriptions in Section \ref{section:meataxe},
the algorithms for finding a preserved form and for module isomorphism
testing are Las Vegas, with the same expected time complexity as the
MeatAxe. Hence we obtain a Las Vegas algorithm, with the same expected
time complexity as the algorithm from Theorem
\ref{thm_sz_standard_recognition}.
\end{proof}

\subsection{Finding an element of a stabiliser} \label{section:sz_stabiliser_elements} 
In this section the matrix degree is constant, so we set $\xi = \xi(4)$. 
In constructive membership testing for $\Sz(q)$ the essential problem
is to find an element of the stabiliser of a given point $P \in \OV$,
expressed as an $\SLP$ in our given generators $X$ of $G = \Sz(q)$.
The idea is to map $P$ to $Q \neq P$ by a random $g_1 \in G$, and
then compute $g_2 \in G$ such that $Pg_2 = Q$, so that $g_1
g_2^{-1} \in G_P$.

Thus the problem is to find an element that maps $P$ to $Q$, and the
idea is to search for it in double cosets of cyclic subgroups of order
$q - 1$. We first give an overview of the method.

Begin by selecting random $a, h \in G$ such that
$a$ has pseudo-order $q - 1$, and consider the equation
\begin{equation} \label{stab_eqn1}
P a^j h a^i = Q
\end{equation}
in the two indeterminates $i,j$. If we can solve this equation for $i$ and
$j$, thus obtaining integers $k, l$ such that $1 \leqslant k, l \leqslant q - 1$ and $P a^l h a^k = Q$, then we have an
element that maps $P$ to $Q$.

Since $a$ has order dividing $q - 1$, by Proposition \ref{pr_sz_subgroups_conj}, $a$ is conjugate to a matrix $M^{\prime}(\lambda)$ for some
$\lambda \in \F_q^{\times}$. This implies that we can diagonalise $a$
and obtain a matrix $x \in \GL(4, q)$ such that $M^{\prime}(\lambda)^x
= a$. It follows that if we define $P^{\prime} = P x^{-1}$,
$Q^{\prime} = Q x^{-1}$ and $g = h^{x^{-1}}$ then \eqref{stab_eqn1} is
equivalent to
\begin{equation} \label{stab_eqn2}
P^{\prime} M^{\prime}(\lambda)^j g M^{\prime}(\lambda)^i = Q^{\prime}.
\end{equation}

Now change indeterminates to $\alpha$ and $\beta$ by letting $\alpha = \lambda^j$ and $\beta = \lambda^i$, so that we obtain the following equation:
\begin{equation} \label{stab_eqn3}
P^{\prime} M^{\prime}(\alpha) g M^{\prime}(\beta) = Q^{\prime}.
\end{equation}
This determines four equations in $\alpha$ and $\beta$, and in Section
\ref{solve_magic_poly} we will describe how to find solutions for
them. A solution $(\gamma, \delta) \in \F_q^{\times} \times \F_q^{\times}$ determines 
$M^{\prime}(\gamma), M^{\prime}(\delta) \in \mathcal{H}$, and hence
also $c, d \in H = \mathcal{H}^x$. 

If $\abs{a} = q - 1$ then $\gen{a} = H$, so that there exist integers $k$ and $l$ as above with $a^l = c$ and $a^k = d$. These
integers can be found by computing discrete logarithms, since we also
have $\lambda^l = \gamma$ and $\lambda^k = \delta$. Hence we obtain a
solution to \eqref{stab_eqn1} from the solution to \eqref{stab_eqn3}.
If $\abs{a}$ is a proper divisor of $q-1$, then it might happen that
$c \notin \gen{a}$ or $d \notin \gen{a}$, but by Proposition
\ref{sz_totient_prop} we know that this is unlikely.

Thus the overall algorithm is as in Algorithm \ref{alg:stab_element}.
We prove that the algorithm is correct in Section
\ref{section:trick_correctness}.

\begin{figure}[hb]
\begin{codebox} 
\refstepcounter{algorithm}
\label{alg:stab_element}
\Procname{\kw{Algorithm} \ref{alg:stab_element}: $\proc{FindMappingElement}(X, P, Q)$}
\li \kw{Input:} Generating set $X$ for $G = \Sz(q)$ and points $P \neq Q \in \OV$.
\li \kw{Output:} A random element $g$ of $G$, as an $\SLP$ in $X$, such that $Pg = Q$.
\zi \Comment{Assumes the existence of a function \textsc{SolveEquation} that solves \eqref{stab_eqn3}.}
\zi \Comment{Also assumes that \textsc{DiscreteLog} returns a positive integer if a}
\zi \Comment{discrete logarithm exists, and $0$ otherwise.}
\zi \Repeat
\li \Comment{Find random element $a$ of pseudo-order $q - 1$}
\zi \Repeat
\li  $a := \proc{Random}(G)$
\li \Until{$\abs{a} \mid q - 1$}
\li $(M^{\prime}(\lambda), x) := \proc{Diagonalise}(a)$ 
\li \Comment{Now $M^{\prime}(\lambda)^x = a$}
\zi \Repeat
\li $h := \proc{Random}(G)$ 
\li $\id{flag} := \proc{SolveEquation}(h^{x^{-1}}, P x^{-1}, Q x^{-1})$ \label{alg:stab_element_solve_poly}
\li \Until{$\id{flag}$} \label{alg:stab_element_solve_poly_test}
\li Let $(\gamma, \delta)$ be a solution to \eqref{stab_eqn3}.
\li $l := \proc{DiscreteLog}(\lambda, \gamma)$ 
\li $k := \proc{DiscreteLog}(\lambda, \delta)$ 
\li \Until{$k > 0 \kw{ and } l > 0$} \label{alg:stab_element_discrete_log}
\li \Return{$a^l  h  a^k$} \label{alg:stab_element_final_element} 
\end{codebox}
\end{figure}

\subsubsection{Solving equation \eqref{stab_eqn3}} \label{solve_magic_poly} We
will now show how to obtain the solutions of \eqref{stab_eqn3}. It
might happen that there are no solutions, in which case the method
described here will detect this and return with failure.

By letting $P^{\prime} = (q_1 : q_2 :
q_3 : q_4)$, $Q^{\prime} = (r_1 : r_2 : r_3 : r_4)$ and $g = [g_{i,j}]$, we can write out \eqref{stab_eqn3}
and obtain
\begin{equation} \label{alpha_beta_eqns}
\begin{split}
(q_1 g_{1,1} \alpha^{t + 1} + q_2 g_{2,1} \alpha + q_3 g_{3,1} \alpha^{-1} + q_4 g_{4,1} \alpha^{-t-1}) \beta^{t + 1} &= C r_1 \\
(q_1 g_{1,2} \alpha^{t + 1} + q_2 g_{2,2} \alpha + q_3 g_{3,2} \alpha^{-1} + q_4 g_{4,2} \alpha^{-t-1}) \beta &= C r_2 \\
(q_1 g_{1,3} \alpha^{t + 1} + q_2 g_{2,3} \alpha + q_3 g_{3,3} \alpha^{-1} + q_4 g_{4,3} \alpha^{-t-1}) \beta^{-1} &= C r_3 \\
(q_1 g_{1,4} \alpha^{t + 1} + q_2 g_{2,4} \alpha + q_3 g_{3,4} \alpha^{-1} + q_4 g_{4,4} \alpha^{-t-1}) \beta^{-t - 1} &= C r_4
\end{split}
\end{equation}
for some constant $C \in \F_q$. Henceforth, we assume that $r_i \neq
0$ for $i = 1, \dotsc, 4$, since this is the difficult case, and also
very likely when $q$ is large, as can be seen from Proposition
\ref{prop_trick_general_case}. A method similar to the one described
in this section will solve \eqref{stab_eqn3} when some $r_i = 0$ and
Algorithm \ref{alg:stab_element} does not assume that all $r_i
\neq 0$.

For convenience, we denote the expressions in the parentheses at
the left hand sides of \eqref{alpha_beta_eqns} as $K, L, M$ and $N$
respectively. Since $C = L \beta r_2^{-1}$ we obtain three equations
\begin{equation} \label{alpha_beta_eqn1}
\begin{split} 
K \beta^t &= r_1 r_2^{-1} L \\
M \beta^{-2} &= r_3 r_2^{-1} L \\
N \beta^{-t-2} &= r_4 r_2^{-1} L 
\end{split}
\end{equation}
and in particular $\beta$ is a function of $\alpha$, since
\begin{equation} \label{beta_from_alpha}
\beta = \sqrt{L^{-1} M r_3^{-1} r_2}.
\end{equation}
If we eliminate $\beta$ and $\beta^t$ by using the first two equations into the third in \eqref{alpha_beta_eqn1}, we obtain
\begin{equation} \label{alpha_eqn1}
NK r_2 r_3 = r_1 r_4 ML
\end{equation}
and by raising the first equation to the $t$-th power and substituting into the second, we obtain
\begin{equation} \label{alpha_eqn2}
r_1 r_3^{t / 2} L^{1 + t/2} = r_2^{1 + t/2} M^{t/2} K.
\end{equation}
Also, $C = M \beta^{-1} r_3^{-1}$ and if we proceed similarly, we obtain two more equations
\begin{align}
N^t L r_3^{t + 1} &= M^{t + 1} r_2 r_4^t \label{alpha_eqn3} \\
N L^{t/2} r_3^{1 + t/2} &= M^{1 + t/2} r_4 r_2^{t/2}. \label{alpha_eqn4}
\end{align}
Now \eqref{alpha_eqn1}, \eqref{alpha_eqn2},
\eqref{alpha_eqn3} and \eqref{alpha_eqn4} are equations in $\alpha$
only, and by multiplying them by suitable powers of $\alpha$, they can
be turned into polynomial equations such that $\alpha$ only occurs to
the powers $ti$ for $i = 1, \dotsc, 4$ and to lower powers that are
independent of $t$. The suitable powers of $\alpha$ are $2t + 2$, $t +
t/2 + 2$, $2t + 3$ and $2t + t/2 + 2$, respectively.

Thus we obtain the following four equations.
\begin{equation} \label{alpha_poly_eqns}
\begin{split}
\alpha^{4t} c_1 + \alpha^{3t} c_2 + \alpha^{2t} c_3 + \alpha^t c_4 & = d_1 \\
\alpha^{4t} c_5 + \alpha^{3t} c_6 + \alpha^{2t} c_7 + \alpha^t c_8 & = d_2 \\
\alpha^{4t} c_9 + \alpha^{3t} c_{10} + \alpha^{2t} c_{11} + \alpha^t c_{12} & = d_3 \\
\alpha^{4t} c_{13} + \alpha^{3t} c_{14} + \alpha^{2t} c_{15} + \alpha^t c_{16} & = d_4 \\
\end{split}
\end{equation}
The $c_i$ and $d_j$ are polynomials in $\alpha$ with degree
independent of $t$, for $i = 1, \dotsc, 16$ and $j = 1, \dotsc, 4$
respectively, so \eqref{alpha_poly_eqns} can be considered a linear
system in the variables $\alpha^{nt}$ for $n = 1,\dotsc,4$, with
coefficients $c_i$ and $d_j$.  Now the aim is to obtain a single
polynomial in $\alpha$ of bounded degree. For this we need the
following conjecture.

\begin{conj} \label{conjecture_correctness} For every $P^{\prime} = P
  x^{-1}, Q^{\prime} = Q x^{-1}, g = h^{x^{-1}}$ where $P, Q \in \OV$,
  $h \in G$ and $x \in \GL(4, q)$, if we regard
  \eqref{alpha_poly_eqns} as simultaneous linear equations in the
  variables $\alpha^{nt}$ for $n = 1,\dotsc,4$, over the polynomial
  ring $\F_q[\alpha]$, then it has non-zero determinant.
\end{conj}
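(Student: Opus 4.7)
The plan is to show that the determinant $D(\alpha)$ of the $4 \times 4$ coefficient matrix of \eqref{alpha_poly_eqns} is a non-zero element of $\F_q[\alpha]$ for every admissible choice of $P, Q, h, x$. My first observation would be that $D(\alpha)$ has degree bounded by an absolute constant independent of $q$ and $t$: each of the Laurent polynomials $K, L, M, N$ has support $\set{t+1, 1, -1, -t-1}$, so after multiplication by the stated powers of $\alpha$ in \eqref{alpha_eqn1}--\eqref{alpha_eqn4} the coefficients $c_i, d_j$ become ordinary polynomials in $\alpha$ of bounded degree, and hence so does $D(\alpha)$. Proving $D \not\equiv 0$ therefore reduces to exhibiting a single non-vanishing coefficient of $D$.

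The second step is to compute the extremal coefficients of $D(\alpha)$ symbolically. The top-degree coefficient of $D$ depends only on the $\alpha^{t+1}$ terms of $K, L, M, N$, and hence only on the entries $q_1, r_1$ of $P', Q'$ together with the first row of $g$; the bottom-degree coefficient similarly only involves $q_4, r_4$ and the last row of $g$. Explicit expansion produces polynomial expressions in these entries of bounded total degree. To show at least one such coefficient is non-zero under the constraints $P \in \OV$, $Q \in \OV$, $h \in \Sz(q)$, I would exploit the fact that the constraints are preserved by the action of $\mathcal{H}$ on both sides: left-multiplying $x$ by $M^\prime(\mu)$ merely rescales the coordinates of $P'$ and $Q'$ by fixed powers of $\mu$. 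Together with the 2-transitivity of $\Sz(q)$ on $\OV$, this allows me to normalise $P'$ to a representative in the $\mathcal{H}$-orbit of a fixed point of $\OV^{x^{-1}}$, dramatically reducing the number of free parameters.

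The main obstacle will be to control the Suzuki constraint $h \in \Sz(q)$ on $g = h^{x^{-1}}$. This condition is awkward to work with polynomially because $\Sz(q)$ is defined as the fixed points of the Frobenius-twisted automorphism $\Psi$ of $\Sp(4, q)$, so it is not cut out by $\F_q$-polynomial equations in the usual sense. A clean way around this is to use the explicit parametrisation arising from the double-coset decomposition $\Sz(q) = \mathcal{F}\mathcal{H} \cup \mathcal{F}\mathcal{H} T \mathcal{F}\mathcal{H}$ implicit in Theorem \ref{thm_suzuki_props}, together with the matrix identities \eqref{sz_matrix_id1}--\eqref{sz_matrix_id2}, which express a generic element of $\Sz(q)$ in terms of parameters $a_1, b_1, a_2, b_2 \in \F_q$ and $\lambda_1, \lambda_2 \in \F_q^\times$. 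Substituting into the extremal coefficient of $D(\alpha)$ produces a polynomial identity in these parameters together with the ovoid parameters of $P, Q$, whose non-vanishing can in principle be checked by a finite symbolic computation, since the total degree remains bounded absolutely as $q$ varies. Carrying out this verification uniformly in $m$ is precisely where the gap lies, and explains why the author has left the statement as a conjecture backed by extensive empirical verification of the implementation.
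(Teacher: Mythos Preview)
The paper offers no proof of this statement: it is explicitly labelled a Conjecture, and Section~\ref{section:intro_conj} of the thesis makes clear that such statements are supported only by extensive computational testing of the implementation, not by argument. There is therefore nothing to compare your proposal against; you are attempting what the author deliberately left open, and your final paragraph correctly diagnoses this.

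As a plan of attack your outline is reasonable, and you are right that $D(\alpha)$ has degree bounded absolutely (the paper itself bounds the resulting univariate polynomial by degree $60$). Two cautions about the approach. First, the derivation of \eqref{alpha_poly_eqns} is carried out under the standing hypothesis $r_i \neq 0$ for all $i$, stated just before \eqref{alpha_beta_eqn1}; the conjecture should be read subject to that, and your extremal-coefficient analysis must respect it. Second, your assertion that the top-degree coefficient of $D$ depends only on the $\alpha^{t+1}$ terms of $K, L, M, N$ is not obviously correct: the $c_i$ arise after multiplying \eqref{alpha_eqn1}--\eqref{alpha_eqn4} by powers of $\alpha$ that themselves involve $t$ and $t/2$, and then regrouping by powers of $\alpha^t$, so the bookkeeping of which entries of $g$, $P'$, $Q'$ feed into which coefficient of $D$ is more delicate than a single leading-term extraction. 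Your normalisation via the $\mathcal{H}$-action and $2$-transitivity is sound in spirit, but after it you still carry the parameters of $h$ via the Bruhat-type decomposition together with the residual freedom in $x$, and pushing the resulting polynomial identity through uniformly in $m$ is, as you say, exactly the unresolved step.
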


In other words, the determinant of the coefficients $c_i$ is not the
zero polynomial. 

\begin{lem} \label{lemma_magic_poly} Assume Conjecture
  \ref{conjecture_correctness}. Given $P^{\prime}, Q^{\prime}$ and $g$ as in Conjecture
  \ref{conjecture_correctness}, there exists a univariate polynomial
  $f(\alpha) \in \F_q[\alpha]$ of degree at most $60$, such that for every $(\gamma,
  \delta) \in \F_q^{\times} \times \F_q^{\times}$ that is a solution for $(\alpha, \beta)$ in \eqref{stab_eqn3}, we have $f(\gamma) = 0$.
\end{lem}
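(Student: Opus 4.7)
The plan is to use Cramer's rule on the linear system \eqref{alpha_poly_eqns} to extract a single univariate polynomial identity in $\alpha$. I regard \eqref{alpha_poly_eqns} as a $4 \times 4$ linear system over $\F_q[\alpha]$ whose unknowns are the four quantities $Y_i := \alpha^{it}$ for $i = 1, 2, 3, 4$, with coefficient matrix $C(\alpha) = (c_{ij}(\alpha))_{i,j=1}^{4}$ (so that row $j$ records the coefficients of equation $j$) and right-hand side $(d_1, \ldots, d_4)^T$. By Conjecture \ref{conjecture_correctness}, $A_0(\alpha) := \det C(\alpha) \in \F_q[\alpha]$ is non-zero, and Cramer's rule produces polynomials $A_1, \ldots, A_4 \in \F_q[\alpha]$, where $A_i$ is the determinant of the matrix obtained from $C$ by replacing its $i$-th column with $(d_1, \ldots, d_4)^T$; these satisfy $A_0(\alpha) Y_i = A_i(\alpha)$ whenever \eqref{alpha_poly_eqns} holds.

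Specialising to any solution $(\gamma, \delta) \in \F_q^\times \times \F_q^\times$ of \eqref{stab_eqn3}, the identities $A_0(\gamma) \gamma^t = A_1(\gamma)$ and $A_0(\gamma) \gamma^{2t} = A_2(\gamma)$ combine via $\gamma^{2t} = (\gamma^t)^2$ to give $A_0(\gamma) A_2(\gamma) = A_1(\gamma)^2$. I therefore set
\[ f(\alpha) := A_1(\alpha)^2 - A_0(\alpha) A_2(\alpha) \in \F_q[\alpha], \]
so that $f(\gamma) = 0$ for every such $\gamma$, as required. (Note that the argument is unaffected if $A_0(\gamma) = 0$, since the identity $A_0(\gamma) \gamma^t = A_1(\gamma)$ then forces $A_1(\gamma) = 0$ as well.)

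It remains to verify $\deg f \leq 60$. For each of the four equations \eqref{alpha_eqn1}--\eqref{alpha_eqn4}, I multiply by the prescribed power of $\alpha$ and expand using the characteristic-$2$ Frobenius identity, together with the relation $\alpha^{t^2} = \alpha^{2q} = \alpha^2$ valid on $\F_q^\times$ (since $t^2 = 2q$). This reduces each equation to the shape $\sum_{i=1}^{4} c_{ij}(\alpha)\, \alpha^{it} = d_j(\alpha)$ with $c_{ij}, d_j$ ordinary polynomials of $t$-independent degree. A direct row-by-row tally (essentially enumerating which monomials $\alpha^{e}$ arise and grouping them into cosets modulo $t$) bounds the maximum entry degree in each row of $C$ by a small constant; the Leibniz formula then yields $\deg A_0, \deg A_1, \deg A_2$ bounded by the sum of these row maxima, and the triangle inequality gives $\deg f \leq \max(2 \deg A_1,\, \deg A_0 + \deg A_2) \leq 60$.

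The main technical obstacle is this final bookkeeping: equations \eqref{alpha_eqn2} and \eqref{alpha_eqn4} involve the half-powers $L^{t/2}$ and $M^{t/2}$, whose naive expansions feature exponents of size $\Theta(t^2)$, and one must apply $\alpha^q = \alpha$ to collapse these back to the controlled form $\sum c_{ij}(\alpha) \alpha^{it}$ with $\deg c_{ij}$ uniformly bounded. Once this simplification is set up carefully for all four rows in turn, the bound of $60$ follows from a routine comparison of row-maximum degrees.
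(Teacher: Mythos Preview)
Your approach is essentially the paper's: both treat \eqref{alpha_poly_eqns} as a linear system in the unknowns $\alpha^{it}$ over $\F_q[\alpha]$, apply Cramer's rule to produce the determinants $A_i$, and then exploit a relation among the $Y_i=\alpha^{it}$ coming from $Y_i=Y_1^i$. The only difference is which relation is used: the paper substitutes $\alpha^{nt}=(A_1/A_0)^n$ for $n=1,\dots,4$ back into one of the four equations and takes the numerator (dominated by an $A_1^4$-type term, whence the bound $60$), whereas you use only $Y_2=Y_1^2$ to set $f=A_1^2-A_0A_2$; with the paper's stated bound $\deg A_i\le 15$ (asserted for $A_1$ and obtained by the same row-by-row count for $A_0,A_2$) your variant in fact gives $\deg f\le 30$.
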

\begin{proof}
  So far in this section we have shown that if we can solve \eqref{alpha_poly_eqns} we can also solve \eqref{stab_eqn3}. From the four equations of \eqref{alpha_poly_eqns} we can
  eliminate $\alpha^t$. We can solve for $\alpha^{4t}$ from the fourth
  equation, and substitute into the third,
  thus obtaining a rational expression with no occurrence of
  $\alpha^{4t}$. Continuing this way and substituting into the other
  equations, we obtain an expression for $\alpha^t$ in terms of the
  $c_i$ and the $d_i$ only. This can be substituted into any of the
  equations of \eqref{alpha_poly_eqns}, where $\alpha^{nt}$ for $n =
  1, \dotsc, 4$ is obtained by powering up the expression for
  $\alpha^t$. Thus we obtain a rational expression $f_1(\alpha)$ of degree
  independent of $t$. We now take $f(\alpha)$ to be the numerator of
  $f_1$.

  In other words, we think of the $\alpha^{nt}$ as independent
  variables and of \eqref{alpha_poly_eqns} as a linear system in
  these variables, with coefficients in $\F_q[\alpha]$. By Conjecture
  \ref{conjecture_correctness} we can solve this linear system.
  
  Two possible problems can occur: $f$ is identically zero or some of
  the denominators of the expressions for $\alpha^{nt}$, $n = 1,
  \dotsc, 4$ turn out to be $0$. However, Conjecture
  \ref{conjecture_correctness} rules out these possibilities. By
  Cramer's rule, the expression for $\alpha^t$ is a rational
  expression where the numerator is a determinant, so it consists of
  sums of products of $c_i$ and $d_j$. Each product consists of three
  $c_i$ and one $d_j$. By considering the calculations leading up to
  \eqref{alpha_poly_eqns}, it is clear that each of the products has
  degree at most $15$. Therefore the expression for $\alpha^{4t}$ and
  hence also $f(\alpha)$ has degree at most $60$.

We have only done elementary algebra to obtain $f(\alpha)$
from \eqref{alpha_poly_eqns}, and it is clear that
\eqref{alpha_poly_eqns} was obtained from \eqref{alpha_beta_eqns} by
elementary means only. Hence all solutions $(\gamma, \delta)$ to
\eqref{alpha_beta_eqns} must also satisfy $f(\gamma) = 0$, although
there may not be any such solutions, and $f(\alpha)$ may also have
other zeros.
\end{proof}

\begin{cl} \label{lemma_trick_time} Assume Conjecture
  \ref{conjecture_correctness}. There exists a Las Vegas algorithm
  that, given $P^{\prime}, Q^{\prime}$ and $g$ as in Conjecture
  \ref{conjecture_correctness}, finds all $(\gamma, \delta) \in
  \F_q^{\times} \times \F_q^{\times}$ that are solutions of
  \eqref{stab_eqn3}. The algorithm has expected time complexity $\OR{\log{q}}$
  field operations.
\end{cl}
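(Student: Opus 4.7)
The plan is to reduce the problem directly to univariate root-finding by invoking Lemma~\ref{lemma_magic_poly}. Assuming Conjecture~\ref{conjecture_correctness}, first I would construct the polynomial $f(\alpha) \in \F_q[\alpha]$ of degree at most $60$ whose roots contain every first coordinate of a solution of \eqref{stab_eqn3}. By the proof of Lemma~\ref{lemma_magic_poly}, $f$ is obtained from \eqref{alpha_poly_eqns} by Cramer's rule applied to the linear system in the ``variables'' $\alpha^{nt}$ ($n=1,\dots,4$), followed by elimination of $\alpha^t$ from the resulting expressions; since the degree bound is an absolute constant and the coefficients of \eqref{alpha_poly_eqns} are computed from a constant number of arithmetic operations on the entries of $P^{\prime}, Q^{\prime}, g$, the construction of $f$ uses $\OR{1}$ field operations.

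Next I would apply Theorem~\ref{thm_solve_univariate_polys} to find the set $R \subseteq \F_q$ of roots of $f$ lying in $\F_q$. Since $\deg f \leqslant 60$ is constant, the expected cost of this step is $\OR{\log q}$ field operations, and $\abs{R} \leqslant 60$. For each candidate $\gamma \in R \cap \F_q^{\times}$, I would compute the corresponding candidate $\delta$ via \eqref{beta_from_alpha}, namely $\delta = \sqrt{L(\gamma)^{-1} M(\gamma) r_3^{-1} r_2}$. In characteristic $2$ the squaring map is a bijection on $\F_q$, so the square root is just $x \mapsto x^{2^{2m}}$ and can be computed by repeated squaring in $\OR{\log q}$ field operations.

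Finally, for each candidate pair $(\gamma, \delta) \in \F_q^{\times} \times \F_q^{\times}$ I would verify directly whether it satisfies \eqref{stab_eqn3} by substituting into the four equations \eqref{alpha_beta_eqns}; this verification step uses $\OR{1}$ field operations per candidate and at most $60$ candidates need to be checked. Since every solution of \eqref{stab_eqn3} is caught in this enumeration (by Lemma~\ref{lemma_magic_poly}), and every returned pair is verified explicitly, the algorithm is correct. The only probabilistic component is the root-finding step, which is Las Vegas, so the overall algorithm is Las Vegas with expected time complexity $\OR{\log q}$ field operations. The only real subtlety is confirming that the $\OR{\log q}$ root-finding dominates the cost of the other steps, all of which are either $\OR{1}$ or $\OR{\log q}$; no step introduces a hidden dependence on $q$ beyond logarithmic, because the degree and the number of candidates are absolute constants.
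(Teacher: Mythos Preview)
Your proposal is correct and follows essentially the same approach as the paper: construct the bounded-degree polynomial $f$ from Lemma~\ref{lemma_magic_poly}, find its roots in $\F_q$ via Theorem~\ref{thm_solve_univariate_polys}, recover $\delta$ from each root using \eqref{beta_from_alpha}, and verify each candidate pair against \eqref{alpha_beta_eqns}. Your write-up is slightly more explicit about the cost of the square-root step in characteristic~$2$, but otherwise the argument and the complexity analysis coincide with the paper's.
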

\begin{proof}
Let $f(\alpha)$ be the polynomial constructed in Lemma
\ref{lemma_magic_poly}.  To find all solutions to \eqref{stab_eqn3},
we find the zeros $\gamma$ of $f(\alpha)$, compute the
corresponding $\delta$ for each zero $\gamma$ using
\eqref{beta_from_alpha}, and check which pairs $(\gamma, \delta)$
satisfy \eqref{alpha_beta_eqns}. These pairs must be all solutions of
\eqref{stab_eqn3}.

The only work needed is simple matrix arithmetic and finding the roots of
a polynomial of bounded degree over $\F_q$. Hence the expected time complexity is
$\OR{\log{q}}$ field operations. The algorithm is Las Vegas, since
by Theorem \ref{thm_solve_univariate_polys} the algorithm for finding
the roots of $f(\alpha)$ is Las Vegas, with this expected time complexity.
\end{proof}

By following the procedure outlined in Lemma \ref{lemma_magic_poly}, it is straightforward to obtain an expression for $f(\alpha)$, where
the coefficients are expressions in the entries of $g$, $P^{\prime}$
and $Q^{\prime}$, but we will not display it here, since it would take up too much space.

\subsubsection{Correctness and complexity}
\label{section:trick_correctness} There are
two issues when considering the correctness of Algorithm
\ref{alg:stab_element}. Using the notation in the algorithm, we have
to show that \eqref{stab_eqn3} has a solution with high probability,
and that the integers $k$ and $l$ are positive with high probability.

The algorithm in Corollary \ref{lemma_trick_time} tries to find an
element in the double coset $\mathcal{H} g \mathcal{H}$, where $g =
h^{x^{-1}}$, and we will see that this succeeds with high probability
when $g \notin \Norm_G(\mathcal{H})$, which is very likely.

If the element $a$ has order precisely $q - 1$, then from the
discussion at the beginning of Section
\ref{section:sz_stabiliser_elements}, we know that the integers $k$ and
$l$ will be positive. By Proposition \ref{sz_totient_prop} we know that it is likely that $a$ has order precisely $q - 1$ rather than just a divisor
of $q - 1$.



\begin{lem} \label{lemma_trick_correctness}
Assume Conjecture \ref{conjecture_correctness}. Let $G = \Sz(q)$ and let $P \in \OV$ and $a, h \in G$ be given, such that $\abs{a} = q - 1$. Let $Q \in \OV$ be uniformly random. If $h \notin \Norm_G(\gen{a})$, then
\begin{equation}
\frac{(q - 1)^2}{(q^2 + 1) \deg{f}}  \leqslant \Pr{Q \in P\gen{a} h \gen{a}} \leqslant \frac{(q - 1)^2}{q^2 + 1}
\end{equation}
where $f(\alpha)$ is the polynomial constructed in Lemma \ref{lemma_magic_poly}. If instead $h \in \Norm_G(\gen{a})$ then
\begin{equation}
\Pr{Q \in P\gen{a} h \gen{a}} = \frac{(q - 1) (q^2 - 1) + 2}{(q^2 + 1)^2}.
\end{equation}
\end{lem}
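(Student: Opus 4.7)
The plan is to translate $\Pr[Q \in P \gen{a} h \gen{a}]$ into the counting problem $|P \gen{a} h \gen{a}|/(q^2+1)$, since $Q$ is uniform in $\OV$, and to evaluate the numerator in each of the two cases. Write $H = \gen{a}$; by Proposition \ref{pr_sz_subgroups_conj} and Theorem \ref{thm_suzuki_props}, $H$ is conjugate to $\mathcal{H}$, so $H$ fixes exactly two points of $\OV$ and acts freely on the remaining $q^2 - 1$ points.

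In Case 1, $h \notin \Norm_G(H)$, Proposition \ref{lemma_double_coset} gives $|HhH| = (q-1)^2$ and moreover the product map $(u,v) \mapsto uhv$ from $H \times H$ onto $HhH$ is a bijection. Consider $\varphi : H \times H \to \OV$, $\varphi(u,v) = Puhv$; its image is $P \cdot HhH$, and
\[ \sum_{Q \in P \cdot HhH} |\varphi^{-1}(Q)| = |H \times H| = (q-1)^2. \]
The upper bound $|P \cdot HhH| \leqslant (q-1)^2$ is immediate, yielding the upper bound on the probability. For the lower bound I will argue that each fibre satisfies $|\varphi^{-1}(Q)| \leqslant \deg f$: under the diagonalisation $M'(\lambda)^x = a$ of Section \ref{section:sz_stabiliser_elements}, each $(u, v) \in \varphi^{-1}(Q)$ corresponds bijectively to a pair $(\gamma, \delta) \in \F_q^{\times} \times \F_q^{\times}$ solving \eqref{stab_eqn3} via $u = M'(\gamma)^x$ and $v = M'(\delta)^x$. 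By Lemma \ref{lemma_magic_poly} the coordinate $\gamma$ must be a root of $f$, and then \eqref{beta_from_alpha} determines $\delta$ uniquely (square roots in characteristic $2$ are unique). Hence $|P \cdot HhH| \geqslant (q-1)^2/\deg f$, delivering the lower bound.

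In Case 2, $h \in \Norm_G(H) \cong \Dih_{2(q-1)}$, so $HhH = Hh$ has size $q - 1$. Since $h$ normalises $H$, the point $Ph$ is $H$-fixed if and only if $P$ is, so $|P \cdot Hh| = |PH|$ which equals $1$ when $P$ is one of the two $H$-fixed points in $\OV$ and $q - 1$ otherwise. Averaging $|P \cdot Hh|$ over $P$ uniform in $\OV$ gives $(2 \cdot 1 + (q^2 - 1)(q - 1))/(q^2 + 1)$, and a further division by $q^2 + 1$ for the uniform choice of $Q$ produces the stated $((q-1)(q^2-1) + 2)/(q^2+1)^2$; note that the stated Case 2 probability is thus most naturally understood as an average over uniform $P \in \OV$ as well as over $Q$, which is how it feeds into the analysis of Algorithm \ref{alg:stab_element}.

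The main obstacle is the fibre bound in Case 1. Its validity rests entirely on Lemma \ref{lemma_magic_poly} and the observation that $\beta$ is determined by $\alpha$ through \eqref{beta_from_alpha}; given those two inputs, the rest of the argument is a routine counting exercise inside the permutation action of $H$ on $\OV$, together with the averaging bookkeeping for Case 2.
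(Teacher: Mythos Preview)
Your proof is correct and follows essentially the same approach as the paper: in Case~1 both you and the paper bound the orbit size $|P\cdot HhH|$ via the double-coset size $(q-1)^2$ from Proposition~\ref{lemma_double_coset} and the fibre bound $\deg f$ from Lemma~\ref{lemma_magic_poly}, and your explicit remark that $\delta$ is uniquely determined by $\gamma$ via \eqref{beta_from_alpha} (characteristic-$2$ square roots) makes the fibre bound slightly cleaner than the paper's appeal to that lemma.

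The only minor difference is in Case~2: the paper obtains the stated formula by averaging over the random cyclic subgroup $\gen{a}$ (using that exactly $|\OV|-1$ of the $\binom{|\OV|}{2}$ conjugates of $\mathcal{H}$ fix a given point), whereas you average over $P$. Both computations yield the same value, and your observation that the Case~2 formula cannot hold for \emph{fixed} $P$, $a$, $h$ with only $Q$ random---it must be read as an additional average---is correct and worth noting; the paper's proof implicitly does the same without flagging it.
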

\begin{proof}
If $h \notin \Norm_G(\gen{a})$ then by Lemma \ref{lemma_double_coset}, $\abs{\gen{a} h \gen{a}} = (q -
1)^2$, and hence $\abs{ P\gen{a} h
\gen{a}} \leqslant (q - 1)^2$. 

On the other hand, for every $Q \in \OV$ we have 
\begin{equation} 
\abs{\set{(k_1, k_2) \mid k_1, k_2 \in \gen{a}, \, P k_1 h k_2 = Q}} \leqslant \deg{f}
\end{equation}
since this is the equation we consider in Section
\ref{solve_magic_poly}, and from Lemma \ref{lemma_magic_poly} we know
that all solutions must be roots of $f$. Thus $\abs{ P\gen{a} h
  \gen{a}} \geqslant \abs{\gen{a} h \gen{a}} / \deg{f}$. Since $Q$ is
uniformly random from $\OV$, and $\abs{\OV} = q^2 + 1$, the
result follows.

If $h \in \Norm_G(\gen{a})$ then $\gen{a} h
\gen{a} = h\gen{a}$, and $\abs{P h\gen{a}} = \abs{\gen{a}}$ if $\gen{a}$
does not fix $Ph$. By Proposition \ref{pr_sz_subgroups_conj}, the number
of cyclic subgroups of order $q - 1$ is $\binom{\abs{\OV}}{2}$ and
$\abs{\OV} - 1$ such subgroups fix $Ph$. Moreover, if $\gen{a}$ fixes $Ph$
then $Ph \gen{a} = \set{Ph}$. Thus
\begin{multline}
\Pr{Q \in P \gen{a} h \gen{a}} = \Pr{Q \in P h \gen{a}} \Pr{P ha \neq Ph} + \\
+ \Pr{Q = Ph} \Pr{Ph a = Ph} = \frac{\abs{P h\gen{a}}}{\abs{\OV}} \left(1 - \frac{\abs{\OV} - 1}{\binom{\abs{\OV}}{2}}\right) + \frac{1}{\abs{\OV}} \frac{\abs{\OV} - 1}{\binom{\abs{\OV}}{2}} 
\end{multline}
and the result follows.
\end{proof}

\begin{thm} \label{trick_alg} Assume Conjecture
  \ref{conjecture_correctness} and an
  oracle for the discrete logarithm problem in $\F_q$. Algorithm
  \ref{alg:stab_element} is a Las Vegas algorithm with expected time
  complexity $\OR{(\xi + \chi_D(q) + \log(q) \log\log(q))\log\log(q)}$ field operations. The length of the returned $\SLP$ is $\OR{\log \log (q)}$.
\end{thm}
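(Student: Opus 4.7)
The plan is to verify correctness from the construction and then bound the expected running time by analysing the outer and inner \texttt{repeat} loops separately. For correctness, when the outer loop terminates with positive $k$ and $l$ we have $\lambda^l = \gamma$ and $\lambda^k = \delta$, so $a^l = M^{\prime}(\gamma)^x$ and $a^k = M^{\prime}(\delta)^x$. Substituting these into \eqref{stab_eqn3} and conjugating by $x$ gives $P a^l h a^k = Q$, so every returned value is correct. The procedure is Las Vegas provided each sub-routine (random element generation from Section~\ref{section:random_elements}, order computation via \cite{crlg95}, root finding by Theorem~\ref{thm_solve_univariate_polys}, and the discrete logarithm oracle) is Las Vegas, which they are.

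For the time complexity I work outwards in three layers. \emph{(i)} By Propositions~\ref{pr_sz_subgroups_conj} and \ref{element_proportions}, a constant fraction of $G$ consists of elements whose order divides $q-1$, so the innermost search for $a$ terminates after $\OR{1}$ expected random draws, each costing $\OR{\xi}$ for the selection plus $\OR{\log(q) \log\log(q)}$ for the pseudo-order via \cite{crlg95} (with $d = 4$); diagonalisation is $\OR{1}$. \emph{(ii)} With $a$ fixed, Lemma~\ref{lemma_trick_correctness} combined with Lemma~\ref{lemma_magic_poly} shows that, for $h \notin \Norm_G(\gen{a})$, the proportion of $Q \in \OV$ lying in $P \gen{a} h \gen{a}$ is $\Omega(1/\deg f) = \Omega(1)$ since $\deg f \leqslant 60$. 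A standard Fubini argument, together with the homogeneity of the $G$-action on $\OV$, transfers this to the statement that for the fixed $P, Q$ of the algorithm the proportion of $h \in G$ for which \textsc{SolveEquation} returns a valid solution is also $\Omega(1)$. Hence the inner \texttt{repeat} over $h$ terminates in $\OR{1}$ expected iterations, each costing $\OR{\xi}$ for the random $h$ plus $\OR{\log(q)}$ for \textsc{SolveEquation} by Corollary~\ref{lemma_trick_time}. \emph{(iii)} The outer loop succeeds whenever $|a| = q - 1$ exactly: in that case $\gen{a}$ is the image of $\mathcal{H}$ under conjugation by $x$ and hence contains both $M^{\prime}(\gamma)^x$ and $M^{\prime}(\delta)^x$, so the discrete logarithms $k, l$ lie in $[1, q-1]$ and are positive. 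By Proposition~\ref{sz_totient_prop} this event has probability at least $1/(12 \log\log q)$, so the expected number of outer iterations is $\OR{\log\log q}$. The discrete logarithm step contributes $\OR{\chi_D(q)}$ per outer iteration.

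Summing yields the stated expected cost $\OR{(\xi + \log(q)\log\log(q) + \chi_D(q)) \log\log(q)}$. For the SLP length, the total number of random-element selections performed during one run is $\OR{\log\log(q)}$, and by the convention of Section~\ref{section:random_elements} the SLP of each such selection has length $\OR{\log\log(q)}$; the final element is assembled as two powers and one product on top of these, adding only $\OR{1}$, so the output SLP has length $\OR{\log\log(q)}$. The main subtlety is the transfer in~\emph{(ii)}: Lemma~\ref{lemma_trick_correctness} fixes $h$ and averages over $Q$, whereas the algorithm fixes $P, Q$ and draws $h$. I would resolve this by the Fubini interchange above together with the observation that, by the double transitivity of $\Sz(q)$ on $\OV$ from Theorem~\ref{thm_suzuki_props}, the number of $h \in G$ with $Q \in P \gen{a} h \gen{a}$ depends only on whether $Q = P$ or $Q \neq P$, so the averaged lower bound is attained for each fixed $Q \neq P$.
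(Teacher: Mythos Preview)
Your overall structure matches the paper's: correctness from the construction, then a layer-by-layer time analysis, with the $\OR{\log\log q}$ factor coming from the probability that $\lvert a\rvert=q-1$ exactly. The correctness argument, the $\SLP$-length bound, and layers~(i) and~(iii) are fine and agree with the paper.

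The gap is in layer~(ii), exactly at the double-transitivity step you flag as subtle. With $a$ \emph{fixed}, it is not true that $\lvert\{h\in G:Q\in P\gen{a}h\gen{a}\}\rvert$ is the same for all $Q\neq P$: once $\gen a$ is pinned down, the available symmetry is only $\Norm_G(\gen a)$, not all of $G$, because moving $(P,Q)$ by some $g\in G$ simultaneously conjugates $\gen a$. Concretely, let $P$ lie in a regular $\gen a$-orbit and let $Q$ be one of the two fixed points of $\gen a$. Then $G_{P\to Q}\gen a=G_{P\to Q}$ (since $\gen a\leqslant G_Q$), so $\gen a\,G_{P\to Q}\,\gen a=\gen a\,G_{P\to Q}$ has size $(q-1)\,q^2(q-1)$, giving
\[
\Pr_h\bigl[Q\in P\gen a\,h\,\gen a\bigr]=\frac{(q-1)}{q^2+1}=\Theta(1/q),
\]
not $\Omega(1)$. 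So your transferred lower bound fails for these $Q$, and the inner loop is not $\OR{1}$ for this particular $a$.

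The repair is to bring the randomness of $a$ into the symmetry argument. Since $\gen a$ is uniform over the conjugates of $\mathcal H$, simultaneous conjugation by any $g\in G$ preserves the joint law of $(\gen a,h)$; now double transitivity really does make $\Pr_{a,h}[Q\in P\gen a\,h\,\gen a]$ depend only on whether $P=Q$, and your Fubini computation gives the $\Omega(1)$ bound. One must then also observe that the expected inner-loop cost is $E_a[1/\Pr_h(\cdot\mid a)]$ rather than $1/\Pr_{a,h}(\cdot)$, so the tail of ``bad $a$'' (such as the fixed-point case above, which has probability $\OR{1/q^2}$ over $a$ and costs $\OR{q}$ inner iterations) needs to be controlled separately. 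The paper's own proof simply quotes Lemma~\ref{lemma_trick_correctness} without addressing this transfer, so you are already being more careful than the source; you just need to let $a$ vary in the symmetry step.
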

\begin{proof}
  We use the notation from the algorithm. Let $g = h^{x^{-1}}$, $H =
  \mathcal{H}^x$, $P^{\prime} = P x^{-1}$ and $Q^{\prime} = Q x^{-1}$.
  Corollary \ref{lemma_trick_time} implies that line
  \ref{alg:stab_element_solve_poly_test} will succeed if $Q^{\prime} \in
  P^{\prime} \mathcal{H} g \mathcal{H}$. If $\abs{a} = q - 1$, then $H
  = \gen{a}$, and the previous condition is equivalent to $Q \in P
  \gen{a} h \gen{a}$. Moreover, if $\abs{a} = q - 1$ then line
  \ref{alg:stab_element_discrete_log} will always succeed. 

Let $s$ be the probability that the return statement is reached. Then $s$ satisfies the following inequality.
\begin{equation}
\begin{split}
s &\geqslant \Pr{\abs{a} = q - 1} (\Pr{h \in \Norm_G(\gen{a})} \Pr{Q \in P \gen{a} h \gen{a} \mid h \in \Norm_G(\gen{a})} + \\
&+\Pr{h \notin \Norm_G(\gen{a})} \Pr{Q \in P \gen{a} h \gen{a} \mid h \notin \Norm_G(\gen{a})}).
\end{split}
\end{equation}
Since $h$ is uniformly random, using Theorem \ref{sz_maximal_subgroups} we obtain
\begin{equation}
\Pr{h \in \Norm_G(\gen{a})} = \frac{2 (q - 1)}{\abs{G}} = \frac{2}{q^2 (q^2 + 1)}.
\end{equation}
From Proposition \ref{sz_totient_prop} and Lemma \ref{lemma_trick_correctness} we obtain
\begin{multline}
s \geqslant \frac{\phi(q - 1)}{2 (q - 1)} \\
\left[ \frac{(q - 1)^2}{(q^2 + 1) \deg{f}} - \frac{2}{q^2 (q^2 + 1)}\frac{(q - 1)^2}{(q^2 + 1)} + \frac{2}{q^2 (q^2 + 1)} \frac{2 + (q - 1) (q^2 - 1)}{(q^2 + 1)^2} \right] = \\
= \frac{\phi(q - 1)}{2 (q - 1) \deg{f}} + \OR{1/q}.
\end{multline}

By Proposition \ref{sz_totient_prop}, the expected number of
iterations of the outer repeat statement is $\OR{\log \log(q)}$. The expected number of random selections to find $h$ is $\OR{1}$. By
Theorem \ref{thm_solve_univariate_polys}, diagonalising a matrix uses
expected $\OR{\log q}$ field operations, since it involves finding the
eigenvalues, \emph{i.e.} finding the roots of a polynomial of constant
degree over $\F_q$. Clearly, the expected time complexity for finding
$a$ is $\OR{\xi + \log (q)\log{\log{(q)}}}$ field operations.

From Corollary \ref{lemma_trick_time}, it follows that line \ref{alg:stab_element_solve_poly} uses $\OR{\log q}$ field
operations. We conclude that Algorithm
\ref{alg:stab_element} uses expected $\OR{(\xi + \chi_D(q) + \log (q)\log{\log{(q)}}) \log{\log{(q)}}}$ field operations.

Each call to Algorithm \ref{alg:stab_element} uses independent random
elements, so the double cosets
under consideration are uniformly random and independent. Therefore
the elements returned by Algorithm \ref{alg:stab_element} must be
uniformly random. The returned $\SLP$ is composed of $\SLP$s of $a$ and $h$, both of which have length $\OR{\log\log(q)}$ because of the number of iterations of the loops. 
\end{proof}

\begin{cl} \label{thm_stab_elt} Assume Conjecture
  \ref{conjecture_correctness} and an oracle for the discrete logarithm problem in $\F_q$. There exists a Las Vegas algorithm that, given $\gen{X} \leqslant \GL(4, q)$ such that $G = \gen{X} = \Sz(q)$
  and $P \in \OV$, finds a uniformly random $g \in G_P$, expressed as an $\SLP$ in $X$. The algorithm has expected time complexity $\OR{(\xi + \chi_D(q) + \log(q) \log\log(q))\log\log(q)}$ field
  operations. The length of the returned $\SLP$ is $\OR{\log\log(q)}$.
\end{cl}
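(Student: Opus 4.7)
The plan is to reduce this corollary to the element-finding routine of Theorem \ref{trick_alg} by an orbit--coset trick: sample a point $Q$ uniformly from the $G$-orbit of $P$ while recording an element that moves $P$ to $Q$, then use Algorithm \ref{alg:stab_element} to produce an independent element doing the same, and quotient them.

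Concretely, first I would sample a uniformly random $g_1 \in \gen{X}$ via the random element oracle, recording it as an $\SLP$ in $X$, and set $Q := Pg_1$. By the double transitivity of $G$ on $\OV$ (Theorem \ref{thm_suzuki_props}), $Q = P$ occurs only with probability $1/(q^2+1)$; in that case I would restart. Otherwise I would invoke Algorithm \ref{alg:stab_element} with input $(X, P, Q)$ to obtain $g_2$ with $Pg_2 = Q$, again as an $\SLP$ in $X$, and return $g_1 g_2^{-1}$. This lies in $G_P$ by construction.

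The one delicate point is uniformity of the output. Conditionally on $Q$, the element $g_1$ is uniform on the right coset $G_P g_1 = \set{h \in G : Ph = Q}$, and Theorem \ref{trick_alg} asserts that $g_2$, built from fresh random choices and hence independent of $g_1$, is likewise uniform on this same coset. Fixing any $g_0$ with $Pg_0 = Q$, I would write $g_1 = h_1 g_0$ and $g_2 = h_2 g_0$ with $h_1, h_2 \in G_P$ independent and uniform; then $g_1 g_2^{-1} = h_1 h_2^{-1}$ is uniform on $G_P$, as required.

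The complexity analysis is routine: sampling $g_1$ costs $\OR{\xi}$ field operations and yields a constant-length $\SLP$, while the call to Algorithm \ref{alg:stab_element} contributes the stated expected $\OR{(\xi + \chi_D(q) + \log(q)\log\log(q))\log\log(q)}$ field operations and produces an $\SLP$ for $g_2$ of length $\OR{\log\log(q)}$. Concatenating gives a final $\SLP$ of the same asymptotic length. The expected constant number of restarts is absorbed into the constants, and the Las Vegas property transfers directly from Algorithm \ref{alg:stab_element}. The hardest aspect is not the complexity but the uniformity claim, yet this falls out cleanly once one observes that both $g_1$ and $g_2$ live in, and are uniformly distributed on, the same right coset of $G_P$.
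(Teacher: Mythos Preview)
Your proposal is correct and follows essentially the same approach as the paper: pick a random $g_1$, set $Q=Pg_1$, use Algorithm~\ref{alg:stab_element} to get a second element moving $P$ to $Q$, and multiply. Your uniformity argument via the coset decomposition is a more explicit version of the paper's one-line observation that the second element is chosen independently of $g_1$; note that you in fact only need $h_1$ uniform and $h_2$ independent (not necessarily uniform) for $h_1h_2^{-1}$ to be uniform on $G_P$, so the appeal to uniformity of $g_2$ is stronger than required.
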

\begin{proof}
We compute $g$ as follows.
\begin{enumerate}
\item Find random $x \in G$. Let $Q = Px$ and repeat until $P \neq Q$.
\item Use Algorithm \ref{alg:stab_element} to find $y \in G$ such that $Qy = P$.
\item Now $g = x y \in G_P$.
\end{enumerate}
We see from Algorithm \ref{alg:stab_element} that the choice of $y$
does not depend on $x$. Hence $g$ is uniformly random, since $x$ is
uniformly random. Therefore this is a Las Vegas algorithm. The
probability that $P = Q$ is $1/\abs{\OV}$, so the dominating term in
the complexity is the call to Algorithm \ref{alg:stab_element}, with
expected time complexity given by Theorem \ref{trick_alg}.

The element $g$ will be expressed as an $\SLP$ in $X$, since $x$ is
random and elements from Algorithm \ref{alg:stab_element} are
expressed as $\SLP$s. Clearly the length of the $\SLP$ is the same as the length of the $\SLP$s from Algorithm \ref{alg:stab_element}.
\end{proof}

\begin{rem} \label{sz_rem_stab_elt}
In fact, the algorithm of Corollary \ref{thm_stab_elt} works in any conjugate of $\Sz(q)$, since in Algorithm \ref{alg:stab_element} the diagonalisation always moves into the standard copy.
\end{rem}

\subsection{Constructive membership testing}
\label{section:sz_constructive_membership}

We will now give an algorithm for constructive membership testing in
$\Sz(q)$. Given a set of generators $X$, such that $G = \gen{X} =
\Sz(q)$, and given $g \in G$, we want to express $g$ as an $\SLP$ in
$X$. The matrix degree is constant here, so we set $\xi = \xi(4)$.
Membership testing is straightforward, using the first steps from the
algorithm in Theorem \ref{thm_sz_standard_recognition}, and will not
be considered here.

\subsubsection{Preprocessing}
The algorithm for constructive membership testing has a preprocessing
step and a main step. The preprocessing step consists of finding
\lq \lq standard generators'' for $\O2(G_{P_{\infty}}) = \mathcal{F}$ and
$\O2(G_{P_0})$. In the case of $\O2(G_{P_{\infty}})$ the standard generators are defined as matrices $\set{S(a_i, x_i)}_{i = 1}^{n} \cup \set{S(0, b_i)}_{i =
1}^{n}$ for some unspecified $x_i \in \F_q$, such that $\set{a_1,
\dotsc, a_n}$ and $\set{b_1, \dotsc, b_n}$ form vector space bases of
$\F_q$ over $\F_2$ (so $n = \log_2{q} = 2m + 1$).

\begin{lem} \label{sz_row_operations}
There exist algorithms for the following row reductions.
\begin{enumerate}
\item Given $g = M^{\prime}(\lambda) S(a, b) \in G_{P_{\infty}}$, find $h \in \O2(G_{P_{\infty}})$ expressed in the standard generators, such that $gh = M^{\prime}(\lambda)$.
\item Given $g = S(a, b) M^{\prime}(\lambda) \in G_{P_{\infty}}$, find $h \in \O2(G_{P_{\infty}})$ expressed in the standard generators, such that $hg = M^{\prime}(\lambda)$.
\item Given $P_{\infty} \neq P \in \OV$, find $g \in \O2(G_{P_{\infty}})$ expressed in the standard generators, such that $Pg = P_0$.
\end{enumerate}
Analogous algorithms exist for $G_{P_0}$. If the standard generators are expressed as $\SLP$s of length $\OR{n}$, the elements returned will have length $\OR{n \log(q)}$. The time complexity of the algorithms is $\OR{\log(q)^3}$ field operations. 
\end{lem}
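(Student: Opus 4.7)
Both parts (1) and (2) reduce by a direct computation to writing a given $S(a,b)^{-1} = S(a, b + a^{t+1})$ (by \eqref{sz_matrix_id1}) as an $\SLP$ in the standard generators: in part (1), $g h = M'(\lambda)$ with $g = M'(\lambda) S(a,b)$ forces $h = S(a,b)^{-1}$ after cancelling $M'(\lambda)$ on the left, and part (2) is identical after cancelling on the right. Part (3) reduces to the same task: if $P \neq P_\infty$ then by \eqref{sz_ovoid_def} its last coordinate is nonzero, so after normalisation $P = (a_0 b_0 + a_0^{t+2} + b_0^t : b_0 : a_0 : 1)$ with $a_0 = p_3 p_4^{-1}$, $b_0 = p_2 p_4^{-1}$. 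Multiplying out the bottom row of $S(a_0, b_0)$ and comparing with the parametrisation of $\OV$ confirms $P_0 S(a_0, b_0)^{-1} = P$, so $g = S(a_0, b_0)$ sends $P$ to $P_0$.

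To express a given $S(c, d) \in \mathcal{F}$ as an $\SLP$ in the standard generators $\{S(a_i, x_i)\} \cup \{S(0, b_i)\}$, first decompose $c = \sum_{i=1}^n \epsilon_i a_i$ over $\F_2$ by solving an $n \times n$ linear system, costing $\OR{n^3}$ field operations via Gaussian elimination. By repeated application of \eqref{sz_matrix_id1}, the matrix product $\prod_i S(a_i, x_i)^{\epsilon_i}$, taken in any fixed order, evaluates to some $S(c, e)$, and is obtained from $\OR{n}$ multiplications of $4 \times 4$ matrices, i.e.\ $\OR{n}$ field operations; the value of $e \in \F_q$ is read off directly from the resulting matrix. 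Since $S(0,\cdot) \subseteq \Zent(\mathcal{F})$ and $S(0, b_{i_1}) S(0, b_{i_2}) = S(0, b_{i_1} + b_{i_2})$ by \eqref{sz_matrix_id1}, a second $\F_2$-decomposition $d + e = \sum_j \delta_j b_j$ (characteristic $2$) and the product $\prod_j S(0, b_j)^{\delta_j}$ yield $S(0, d + e)$. A final application of \eqref{sz_matrix_id1} gives $S(c, e) \cdot S(0, d + e) = S(c, d)$, as required.

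The algorithms for $G_{P_0}$ are entirely analogous: conjugation by $T$ interchanges $P_\infty$ and $P_0$ and hence $G_{P_\infty}$ and $G_{P_0}$, so the same procedure applies with the analogous standard generators. The dominant cost is the two $\F_2$-linear-algebra steps at $\OR{n^3} = \OR{\log(q)^3}$ field operations, and the returned element is a product of $\OR{\log(q)}$ standard generators, so if each such generator is given as an $\SLP$ of length $\OR{n}$, the resulting $\SLP$ has length $\OR{n \log(q)}$. The only delicate point (rather than a genuine obstacle) is to keep track of the second-coordinate error $e$ introduced by the cross-terms $a_i^t a_j$ in \eqref{sz_matrix_id1}; this is handled automatically by performing the matrix products numerically rather than tracking them symbolically, and then correcting via the centre $\Zent(\mathcal{F})$.
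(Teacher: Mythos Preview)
Your proof is correct and follows essentially the same approach as the paper. The paper phrases the reductions slightly differently---reading the field elements $a$ and $b'$ directly from matrix entries such as $g_{2,1}/g_{2,2}$ and $g'_{3,1}/g'_{3,3}$ rather than first isolating the $S(a,b)$ factor---but the underlying computation (two $\F_2$-linear systems of size $n$, two products of $\OR{n}$ standard generators, with the second product correcting the cross-term error in the centre) is identical.
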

\begin{proof} 
The algorithms are as follows.

\begin{enumerate}
\item \begin{enumerate}
\item Solve a linear system of size $\log(q)$ to construct the linear
  combination $a = g_{2, 1} / g_{2, 2} = \sum_{i = 1}^{2m + 1}
  \alpha_i a_i$ with $\alpha_i \in \F_2$. Let $h^{\prime} = \prod_{i =
    1}^{2m + 1} S(a_i, x_i)^{\alpha_i}$ and $g^{\prime} = g
  h^{\prime}$, so that $g^{\prime} = M^{\prime}(\lambda) S(0,
  b^{\prime})$ for some $b^{\prime} \in \F_q$.
\item Solve a linear system of size $\log(q)$ to construct the linear
  combination $b^{\prime} = g^{\prime}_{3, 1} / g^{\prime}_{3, 3} =
  \sum_{i = 1}^{2m + 1} \beta_i b_i$ with $\beta_i \in \F_2$.  Let
  $h^{\prime \prime} = \prod_{i = 1}^{2m + 1} S(0, b_i)^{\beta_i}$ and
  $g^{\prime \prime} = g^{\prime} h^{\prime\prime}$, so that
  $g^{\prime\prime} = M^{\prime}(\lambda)$.
\item Now $h = h^{\prime} h^{\prime\prime}$.
\end{enumerate}
\item Analogous to the previous case.
\item 
\begin{enumerate}
\item Normalise $P$ so that $P = (ab + a^{t + 2} + b^t : b : a : 1)$ for some $a,b \in \F_q$. Solve a linear
  system of size $\log(q)$ to construct the linear combination $a =
  \sum_{i = 1}^{2m + 1} \alpha_i a_i$ with $\alpha_i \in \F_2$.
  Let $h^{\prime} = \prod_{i = 1}^{2m + 1} S(a_i, x_i)^{\alpha_i}$.
\item Solve a linear system of size $\log(q)$ to construct the linear
  combination $b + h^{\prime}_{3, 1} = 
  \sum_{i = 1}^{2m + 1} \beta_i b_i$ with $\beta_i \in \F_2$. Let
  $h^{\prime \prime} = \prod_{i = 1}^{2m + 1} S(0, b_i)^{\beta_i}$
\item Now $g = h^{\prime} h^{\prime\prime}$ maps $P$ to $P_0$.
\end{enumerate}
\end{enumerate}
Clearly the dominating term in the time complexity is the solving of
the linear systems, which requires $\OR{\log(q)^3}$ field operations.
The elements returned are constructed using $\OR{\log(q)}$
multiplications, hence the length of the $\SLP$ follows.
\end{proof}

\begin{thm} \label{sz_thm_pre_step}
Assume Conjecture \ref{conjecture_correctness} and an oracle for the discrete logarithm problem in
  $\F_q$. The preprocessing step is a Las Vegas algorithm that finds
  standard generators for $\O2(G_{P_{\infty}})$ and $\O2(G_{P_0})$, as $\SLP$s in $X$ of length $\OR{\log\log(q)^2}$. It
  has expected time complexity $\OR{(\xi + \chi_D(q) + \log(q)
  \log\log(q))(\log\log(q))^2}$ field operations.

\end{thm}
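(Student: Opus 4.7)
The plan is to use Corollary \ref{thm_stab_elt} as a subroutine to draw random elements of $G_{P_{\infty}}$, isolate one torus-type element $t$ of order exactly $q - 1$ together with one element $f \in \mathcal{F}$ of order $4$, and then obtain all $2n = 2\log_2 q$ standard generators as the conjugates $f^{t^i}$ and $(f^2)^{t^i}$ for $0 \leqslant i < n$. The same construction run with $P_0$ in place of $P_{\infty}$ produces the standard generators for $\O2(G_{P_0})$.

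First, draw random elements of $G_{P_{\infty}}$ via Corollary \ref{thm_stab_elt} and compute pseudo-orders via Proposition \ref{pr_element_powering} until an element $t$ of order $q - 1$ is found; by Proposition \ref{sz_totient_prop}, the expected number of draws is $O(\log\log q)$. Since $G_{P_{\infty}} = \mathcal{F}\mathcal{H}$ is a Frobenius group with kernel $\mathcal{F}$ of order $q^2$ coprime to $q - 1$ (Theorem \ref{thm_suzuki_props}), every element of $\mathcal{F}\mathcal{H}$ of order $q - 1$ is $\mathcal{F}$-conjugate to an element of $\mathcal{H}$, so $t$ maps to a generator of $\mathcal{F}\mathcal{H}/\mathcal{F} \cong \mathcal{H}$, i.e.\ to some $M'(\lambda)$ with $\lambda$ primitive in $\F_q$. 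A constant expected number of additional draws and a single commutator produce $f = [g_1, g_2] \in G_{P_{\infty}}' \leqslant \mathcal{F}$, which has order $4$ with probability $1 - 1/(q-1)$ by Proposition \ref{sz_prop_frobenius}. An order-$4$ element of $\mathcal{F}$ lies outside $\Zent(\mathcal{F})$ (by Theorem \ref{thm_suzuki_props}, since $\Zent(\mathcal{F}) = \{S(0,b)\}$ has exponent $2$), so $f = S(a, b)$ with $a \neq 0$, and by \eqref{sz_matrix_id1} in characteristic $2$ we have $z := f^2 = S(0, a^{t+1}) \in \Zent(\mathcal{F}) \setminus \{1\}$.

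For $i = 0, \dotsc, n - 1$, define $f_i := f^{t^i}$ and $z_i := z^{t^i}$. Since $t$ normalises $\mathcal{F}$ and induces on $\mathcal{F}/\Zent(\mathcal{F})$ the same action as $M'(\lambda)$, identities \eqref{sz_matrix_id1}--\eqref{sz_matrix_id2} give $f_i = S(\lambda^{it} a, \ast)$ and $z_i = S(0, \lambda^{i(t+2)} a^{t+1})$. Because $\{1, \mu, \dotsc, \mu^{n-1}\}$ is an $\F_2$-basis of $\F_q$ for any primitive $\mu \in \F_q^\times$, it suffices to verify that both $\lambda^t$ and $\lambda^{t+2}$ are primitive: $\gcd(t, q-1) = \gcd(2^{m+1}, q-1) = 1$ because $q-1$ is odd, and $\gcd(t+2, q-1) = \gcd(2^m + 1, q-1)$, which equals $1$ because $2^m \equiv -1 \pmod{2^m + 1}$ forces $q - 1 = 2^{2m+1} - 1 \equiv 2 \cdot (-1)^2 - 1 = 1 \pmod{2^m + 1}$. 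Hence the $a$-components of $\{f_i\}$ and the $b$-components of $\{z_i\}$ are each $\F_2$-bases of $\F_q$, so the $f_i$ and $z_i$ satisfy the definition of standard generators.

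Each $f_i$ and $z_i$ is obtained from the $\SLP$s for $f$ and $t$ in $X$ (both of length $O(\log\log q)$ by Corollary \ref{thm_stab_elt}) by one power followed by one conjugation instruction, so all $2n$ standard generators are returned as $\SLP$s in $X$ within the stated length bound. The dominant cost is the $O(\log\log q)$ expected calls to Corollary \ref{thm_stab_elt} used to find $t$, each at $O((\xi + \chi_D(q) + \log q\log\log q)\log\log q)$ field operations, matching the stated complexity. The identical procedure at $P_0$ yields the standard generators of $\O2(G_{P_0})$ within the same bound. The only nontrivial point is the number-theoretic verification that $\lambda^{t+2}$ is primitive; once this is in hand, the $\F_2$-spanning property of the collected generators is automatic from the Frobenius structure.
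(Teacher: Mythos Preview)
Your approach is essentially the same as the paper's: use Corollary \ref{thm_stab_elt} to sample $G_{P_\infty}$, isolate a torus-type element and an order-$4$ commutator in $\mathcal{F}$, then conjugate by powers of the torus element and invoke \eqref{sz_matrix_id2} to obtain the two $\F_2$-bases. Your explicit verification that $\gcd(t+2,q-1)=1$ is a nice touch that the paper leaves implicit.

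There is one gap worth flagging. You ask for an element $t$ of order \emph{exactly} $q-1$, but without factoring $q-1$ you cannot certify this; Proposition \ref{pr_element_powering} only tests divisibility by a \emph{given} prime power, and the pseudo-order machinery of Section \ref{section:matrix_orders} may return a proper multiple of $\abs{t}$. The paper avoids this by imposing the weaker, efficiently checkable condition that the diagonal part $M'(\lambda)$ of the stabiliser element has $\lambda$ not lying in a proper subfield of $\F_q$ (tested via $\lambda^{2^d-1}\neq 1$ for each proper divisor $d\mid 2m+1$). That condition already gives $\F_2(\lambda)=\F_q$, and since your gcd computations show $\abs{\lambda^t}=\abs{\lambda^{t+2}}=\abs{\lambda}$, it is all you need for $\{1,\lambda^t,\ldots,\lambda^{t(n-1)}\}$ and $\{1,\lambda^{t+2},\ldots,\lambda^{(t+2)(n-1)}\}$ to be $\F_2$-bases. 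So your argument goes through unchanged once you replace ``order $q-1$'' by ``$\lambda$ not in a proper subfield''; alternatively you could verify linear independence of the resulting $a_i$ and $b_i$ directly to preserve the Las Vegas guarantee. A minor secondary point: the $\SLP$s for $t$ and $f$ have length $\OR{(\log\log q)^2}$ rather than $\OR{\log\log q}$, because the random-element oracle's $\SLP$ lengths accumulate over the $\OR{\log\log q}$ calls needed to locate $t$ (cf.\ Section \ref{section:random_elements}); this is exactly the bound stated in the theorem.
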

\begin{proof}
The preprocessing step proceeds as follows.
\begin{enumerate}
\item Find random elements $a_1 \in G_{P_{\infty}}$ and $b_1
  \in G_{P_0}$ using the algorithm from Corollary \ref{thm_stab_elt}. Repeat until $a_1$ can be diagonalised to $M^{\prime}(\lambda) \in
G$, where $\lambda \in \F_q^{\times}$ and $\lambda$ 
does not lie in a proper subfield of $\F_q$. Do similarly for $b_1$. 
\item Find random elements $a_2 \in G_{P_{\infty}}$ and $b_2
  \in G_{P_0}$ using the algorithm from Corollary \ref{thm_stab_elt}.
  Let $c_1 = [a_1, a_2]$, $c_2 = [b_1,
  b_2]$. Repeat until $\abs{c_1} = \abs{c_2} = 4$.

\item Let $Y_{\infty} = \set{c_1, a_1}$ and $Y_0 = \set{c_2, b_1}$. As standard generators for $\O2(G_{P_{\infty}})$ we now take 
\begin{equation} \label{standard_gens}
L = \bigcup_{i = 1}^{2m + 1} \set{c_1^{d_1^i}, (c_1^2)^{d_1^i}}
\end{equation}
and similarly we obtain $U$ for $\O2(G_{P_0})$. 
\end{enumerate}

It follows from \eqref{sz_matrix_id1} and \eqref{sz_matrix_id2} that
\eqref{standard_gens} provides the standard generators for
$G_{P_{\infty}}$. These are expressed as $\SLP$s in $X$, since this is
true for the elements returned from the algorithm described in Corollary
\ref{thm_stab_elt}. Hence the algorithm is Las Vegas.

By Corollary \ref{thm_stab_elt}, the expected time to find $a_1$ and
$b_1$ is $\OR{(\xi + \chi_D(q) + \log(q) \log\log(q))\log\log(q)}$, and these are uniformly
distributed independent random elements. The elements of order
dividing $q - 1$ can be diagonalised as required. By Theorem \ref{thm_suzuki_props}, the proportion of elements of order $q - 1$ in
$G_{P_{\infty}}$ and $G_{P_0}$ is $\phi(q - 1) / (q - 1)$. Hence the expected time for the first step is $\OR{(\xi + \chi_D(q) + \log(q) \log\log(q))(\log\log(q))^2}$ field operations.

Similarly, by Proposition \ref{sz_prop_frobenius} the expected time for
the second step is $\OR{(\xi + \chi_D(q) + \log(q) \log\log(q))(\log\log(q))^2}$ field
operations.



By the remark preceding the theorem, $L$ determines two sets of
field elements $\set{a_1, \dotsc, a_{2m+1}}$ and $\set{b_1, \dotsc,
b_{2m+1}}$. In this case each $a_i = a \lambda^i$ and $b_i = b
\lambda^{i(t + 1)}$, for some fixed $a,b \in \F_q^{\times}$, where $\lambda$
is as in the algorithm. Since $\lambda$ does not lie in a proper subfield, these sets form vector space bases of $\F_q$ over $\F_2$.

To determine if $a_1$ or $b_1$ diagonalise to some $M^{\prime}(\lambda)$ it is sufficient to consider the eigenvalues on the diagonal, since both $a_1$ and $b_1$ are triangular. To determine if $\lambda$ lies
in a proper subfield, it is sufficient to determine if $\abs{\lambda} \mid
2^n - 1$, for some proper divisor $n$ of $2m + 1$. Hence the
dominating term in the complexity is the first step.
\end{proof}

\subsubsection{Main algorithm}
Now we
consider the algorithm that expresses $g$ as an $\SLP$ in
$X$. It is given formally as Algorithm \ref{alg:sz_main_alg}.

\begin{figure}[ht]
\begin{codebox} 
\refstepcounter{algorithm}
\label{alg:sz_main_alg}
\Procname{\kw{Algorithm} \ref{alg:sz_main_alg}: $\proc{ElementToSLP}(L, U, g)$}
\li \kw{Input:} Standard generators $L$ for $G_{P_{\infty}}$ and $U$ for $G_{P_0}$. Matrix $g \in \gen{X} = G$.
\li \kw{Output:} An $\SLP$ for $g$ in $X$.
\zi \Repeat
\li  $r := \kw{Random}(G)$ 
\li \Until $gr$ has an eigenspace $Q \in \OV$ \label{main_alg_find_point}
\li Find $z_1 \in G_{P_{\infty}}$ using $L$ such that $Qz_1 = P_0$. \label{main_alg_row_op1} 
\li \Comment{ Now $(gr)^{z_1} \in G_{P_0}$.}
\li Find $z_2 \in G_{P_0}$ using $U$ such that $(gr)^{z_1} z_2 = M^{\prime}(\lambda)$ for some $\lambda \in \F_q^{\times}$. \label{main_alg_row_op2} 
\li \Comment{ Express diagonal matrix as $\SLP$}
\li $x := \Tr(M^{\prime}(\lambda))$ 
\li Find $h = [S(0, (x^t)^{1 / 4}), S(0, 1)^T]$ using $L \cup U$. \label{main_alg_row_op3} 
\li \Comment{ Now $\Tr(h) = x$.}
\li Let $P_1, P_2 \in \OV$ be the fixed points of $h$. 
\li Find $a \in G_{P_{\infty}}$ using $L$ such that $P_1 a = P_0$. \label{main_alg_row_op4} 
\li Find $b \in G_{P_0}$ using $U$ such that $(P_2 a)b = P_{\infty}$. \label{main_alg_row_op5} 
\li \Comment{ Now $h^{ab} \in G_{P_{\infty}} \cap G_{P_0} = \mathcal{H}$, so $h^{ab} \in \set{M^{\prime}(\lambda)^{\pm 1}}$.}
\li \If{$h^{ab} = M^{\prime}(\lambda)$}
\zi \Then
\li     Let $w$ be an $\SLP$ for $(h^{ab} z_2^{-1})^{z_1^{-1}} r^{-1}$. \label{main_alg_get_slp1} 
\li      \Return{$w$} 
\zi \Else
\li     Let $w$ be an $\SLP$ for $((h^{ab})^{-1} z_2^{-1})^{z_1^{-1}} r^{-1}$. \label{main_alg_get_slp2} 
\li     \Return{$w$} 
    \End
\zi \kw{end}
\end{codebox}
\end{figure}

\begin{thm} \label{thm_element_to_slp}
Algorithm \ref{alg:sz_main_alg} is a Las Vegas algorithm with expected
time complexity $\OR{\xi + \log(q)^3}$ field operations. The length of
the $\SLP$ is $\OR{\log(q)(\log\log(q))^2}$.
\end{thm}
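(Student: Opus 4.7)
The plan is to verify correctness line by line and then aggregate the time-complexity and SLP-length estimates from the component subroutines invoked by the algorithm.

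First I would analyze the outer loop terminating at line~\ref{main_alg_find_point}. Since $r$ is uniformly random, so is $gr$, and by Proposition~\ref{sz_totient_prop} the probability that $gr$ fixes a point of $\OV$ is at least $1/2$. Thus the expected number of iterations is $\OR{1}$; each iteration uses $\xi$ field operations for the random selection and $\OR{1}$ operations to test for a rational fixed point (found via the characteristic polynomial, which has constant degree). Setting $g^{\prime} = gr$, since $g^{\prime}$ fixes $Q$ and $z_1 \in G_{P_{\infty}}$ sends $Q$ to $P_0$, one checks $(g^{\prime})^{z_1}$ fixes $P_0$. Then Lemma~\ref{sz_row_operations}(3), used at line~\ref{main_alg_row_op1}, and an analogue for $G_{P_0}$ at line~\ref{main_alg_row_op2}, each produce elements using $\OR{\log(q)^3}$ field operations, whose SLPs have length $\OR{\log(q)(\log\log(q))^2}$, since by Theorem~\ref{sz_thm_pre_step} the standard generators themselves have SLP length $\OR{(\log\log(q))^2}$.

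The heart of the algorithm is constructively expressing the diagonal matrix $M^{\prime}(\lambda)$. The key claim is that the commutator $h = [S(0,(x^t)^{1/4}), S(0,1)^T]$ defined at line~\ref{main_alg_row_op3} has odd order and $\Tr(h) = x = \Tr(M^{\prime}(\lambda))$; this should follow from a direct computation using \eqref{sz_matrix_id1}--\eqref{sz_matrix_id2}. Proposition~\ref{sz_conjugacy_classes} then forces $h$ to be $G$-conjugate to $M^{\prime}(\lambda)$. Its two fixed points $P_1, P_2$ are computed from its $1$-eigenspaces in $\OR{\log(q))}$ operations. Applying Lemma~\ref{sz_row_operations}(3) and its analogue at lines~\ref{main_alg_row_op4}--\ref{main_alg_row_op5} produces $a, b$ with $P_1(ab) = P_0$ and $P_2(ab) = P_{\infty}$, so $h^{ab}$ fixes both $P_{\infty}$ and $P_0$ and thus lies in $\mathcal{H}$. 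Since $\Norm_G(\mathcal{H})/\mathcal{H} \cong \Cent_2$ acts on $\mathcal{H}$ by inversion (Theorem~\ref{sz_maximal_subgroups}), the two $\mathcal{H}$-elements with trace $x$ that are $G$-conjugate to $M^{\prime}(\lambda)$ are precisely $M^{\prime}(\lambda)^{\pm 1}$, which the conditional handles. Backing out through $z_1, z_2$ and $r$ then yields an SLP for $g$ whose evaluation equals $g$ by construction.

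Summing up: the time is dominated by the four row reductions from Lemma~\ref{sz_row_operations}, contributing $\OR{\log(q)^3}$ field operations in total, plus $\OR{\xi}$ from the expected $\OR{1}$ random selections and $\OR{\log(q))}$ for the eigenvalue extraction. The SLP length is dominated by $z_1, z_2, a, b$, each of length $\OR{\log(q)(\log\log(q))^2}$, and the algorithm composes a constant number of such pieces. Since every random step succeeds with probability bounded below by a constant and the deterministic steps are correct, the overall algorithm is Las Vegas. The main obstacle is the trace-matching argument: one must verify (a) that the explicit commutator formula really produces an element of odd order with the prescribed trace, and (b) that within $\mathcal{H}$ the only elements of that trace $G$-conjugate to $M^{\prime}(\lambda)$ are $M^{\prime}(\lambda)^{\pm 1}$. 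Part (a) is direct matrix algebra using \eqref{sz_matrix_id1}--\eqref{sz_matrix_id2}, and part (b) reduces, via Proposition~\ref{sz_conjugacy_classes}, to the structure of $\Norm_G(\mathcal{H})$ given in Theorem~\ref{sz_maximal_subgroups}.
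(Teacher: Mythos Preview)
Your proposal is correct and follows essentially the same line-by-line verification as the paper's proof, invoking Proposition~\ref{sz_totient_prop}, Lemma~\ref{sz_row_operations}, Proposition~\ref{sz_conjugacy_classes}, and Theorem~\ref{sz_thm_pre_step} in the same roles. One small correction: testing for a rational fixed point via the characteristic polynomial costs $\OR{\log q}$ field operations (root-finding over $\F_q$, cf.\ Theorem~\ref{thm_solve_univariate_polys}), not $\OR{1}$, but this is absorbed into the $\OR{\log(q)^3}$ term and does not affect the stated complexity.
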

\begin{proof}
First observe that since $r$ is randomly chosen we obtain it as an $\SLP$. On line \ref{main_alg_find_point}
we check if $gr$ fixes a point, and from Proposition
\ref{sz_totient_prop} we see that the probability that the test succeeds is at least $1/2$.

The elements found at lines \ref{main_alg_row_op1} and
\ref{main_alg_row_op2} are constructed using Lemma \ref{sz_row_operations}, so we can
obtain them as $\SLP$s.

The element $h$ found at line \ref{main_alg_row_op3} clearly has trace
$x$, and it is constructed using Lemma \ref{sz_row_operations}, so we obtain
it as an $\SLP$. From Lemma
\ref{sz_conjugacy_classes} we know that $h$ is conjugate to
$M^{\prime}(\lambda)$ and therefore must fix $2$ points of
$\OV$. Hence lines
\ref{main_alg_row_op4} and \ref{main_alg_row_op5} make
sense, and the elements found are constructed using Lemma \ref{sz_row_operations} and therefore we obtain them as $\SLP$s. 

The only elements in $\mathcal{H}$ that are conjugate to $h$ are
$M^{\prime}(\lambda)^{\pm 1}$, so clearly $h^{ab}$ must be one of them.

Finally, the elements that make up $w$ were found as $\SLP$s, and it is clear that if we evaluate $w$ we obtain
$g$. Hence the algorithm is Las Vegas.

From Lemma \ref{sz_row_operations} it follows that lines
\ref{main_alg_row_op1}, \ref{main_alg_row_op2},
\ref{main_alg_row_op3}, \ref{main_alg_row_op4} and
\ref{main_alg_row_op5} use $\OR{\log(q)^3}$ field operations.

Finding the fixed points of $h$, and performing the check at line
\ref{main_alg_find_point} only amounts to considering eigenspaces,
which uses $\OR{\log{q}}$ field operations. Thus the expected time complexity of
the algorithm is $\OR{\xi + \log{q}^3}$ field operations.

The $\SLP$s of the standard generators have length $\OR{\log\log(q)^2}$.
Because of the row operations, $w$ will have
length $\OR{\log(q)(\log\log(q))^2}$.

\end{proof}

\subsection{Conjugates of the standard copy}
\label{section:conjugating_element} 

Now we assume that we are given $G \leqslant \GL(4, q)$ such that $G$
is a conjugate of $\Sz(q)$, and we turn to the problem of finding some
$g \in \GL(4, q)$ such that $G^g = \Sz(q)$, thus obtaining an
isomorphism to the standard copy. The matrix degree is constant here,
so we set $\xi = \xi(4)$

\begin{lem} \label{lem_find_ovoid_point} There exists a Las Vegas
  algorithm that, given $\gen{X} \leqslant \GL(4, q)$ such that $\gen{X}^h =
  \Sz(q)$ for some $h \in \GL(4, q)$, finds a point $P \in
  \OV^{h^{-1}} = \set{Q h^{-1} \mid Q \in \OV}$. The algorithm has expected time complexity 
\[ \OR{(\xi + \log(q)\log\log(q))\log\log(q)}\] field operations.
\end{lem}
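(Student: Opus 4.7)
The plan is to exploit the eigenvalue pattern of $M'(\lambda)$: a matrix conjugate in $\GL(4, q)$ to $M'(\lambda)$ has eigenvalues $\lambda^{t+1}, \lambda, \lambda^{-1}, \lambda^{-t-1}$, and among the four associated $1$-dimensional eigenspaces exactly the two corresponding to $\lambda^{\pm(t+1)}$ map to the ovoid points $P_\infty$ and $P_0$ under the diagonalising change of basis. Once an element of $\gen{X}$ of order precisely $q - 1$ is found and diagonalised, separating the ``ovoid'' eigenvalues from the ``non-ovoid'' ones reduces to a short arithmetic computation.

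First I would repeatedly sample random elements $g \in \gen{X}$ and use Proposition \ref{pr_element_powering} to filter out those with $\abs{g} = q - 1$. Since orders are invariant under conjugation by $h$, Proposition \ref{sz_totient_prop} applies equally to $\gen{X}$ and guarantees that each trial succeeds with probability at least $1/(12 \log\log q)$, so an expected $\OR{\log\log q}$ trials suffice; each trial costs $\OR{\xi}$ for the random element and $\OR{\log(q)\log\log(q)}$ for the pseudo-order, which jointly give the announced $\OR{(\xi + \log(q)\log\log(q))\log\log(q)}$ time bound.

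Writing $\gen{X} = h \Sz(q) h^{-1}$ (from $\gen{X}^h = \Sz(q)$), each such $g$ has the form $g = h s h^{-1}$ with $s \in \Sz(q)$ of order $q - 1$, and by Proposition \ref{pr_sz_subgroups_conj} $s$ is $\Sz(q)$-conjugate to $M'(\lambda)$ for some $\lambda \in \F_q^{\times}$ of order $q - 1$; thus $g = z^{-1} M'(\lambda) z$ with $z = y h^{-1}$ for some $y \in \Sz(q)$. I would then factor the degree-$4$ characteristic polynomial of $g$ via Theorem \ref{thm_solve_univariate_polys}, recover the four distinct eigenvalues $\mu_1, \dotsc, \mu_4 \in \F_q^{\times}$ and corresponding left eigenvectors $v_1, \dotsc, v_4$, all in $\OR{\log q}$ field operations. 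A direct calculation then shows that the left eigenvector of $g$ for $\lambda^{t+1}$ (respectively $\lambda^{-t-1}$) is a scalar multiple of $(P_\infty y) h^{-1}$ (respectively $(P_0 y) h^{-1}$), which lies in $\OV h^{-1} = \OV^{h^{-1}}$ because $y \in \Sz(q)$ permutes $\OV$; the eigenvectors for $\lambda^{\pm 1}$ are scalar multiples of $(e_2 y) h^{-1}$ and $(e_3 y) h^{-1}$, and since $y$ also permutes the complement of $\OV$ and $e_2, e_3 \notin \OV$, neither of these lies in $\OV^{h^{-1}}$.

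To pick out the ovoid eigenvalues I use the following test: both $\lambda^{t+1}$ and $\lambda^{-t-1}$ already lie in the eigenvalue set, whereas $(\lambda^{\pm(t+1)})^{t+1} = \lambda^{\pm(2t+3)}$ does not, a quick reduction modulo $q - 1$ showing that $2t + 3 \not\equiv \pm 1, \pm(t+1) \pmod{q - 1}$ for every admissible $m \geq 1$ (the handful of small $q$ being verified by direct inspection). I would therefore compute $\mu_i^{t+1}$ for each $i$ using repeated squaring, in $\OR{\log q}$ field operations per eigenvalue, mark as ``non-ovoid'' those $\mu_i$ whose $(t+1)$-th power lies in the eigenvalue set, and return an eigenvector $v_j$ corresponding to one of the two unmarked eigenvalues. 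The main obstacle is simply confirming the correctness of this discriminator, which reduces to elementary arithmetic modulo $q - 1$; with that in hand the procedure is Las Vegas, since its only probabilistic ingredient beyond random sampling is the bounded-degree root-finding of Theorem \ref{thm_solve_univariate_polys}.
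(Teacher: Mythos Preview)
Your proof is correct and follows essentially the same approach as the paper: find a random element of order (dividing) $q-1$, diagonalise, and read off an ovoid point from the eigenspace attached to $\lambda^{\pm(t+1)}$; you additionally spell out the $(t+1)$\nobreakdash-th power test for distinguishing the ovoid eigenvalues, a detail the paper's proof leaves implicit. The only slip is the reference for filtering elements of order $q-1$: this is done via the pseudo-order computation of \cite{crlg95} (as in the paper), not via Proposition~\ref{pr_element_powering}, which only tests divisibility by a single specified prime power.
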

\begin{proof}
  Clearly $\OV^{h^{-1}}$ is the set on which $\gen{X}$ acts doubly
  transitively. For a matrix $M^{\prime}(\lambda) \in \Sz(q)$ we see
  that the eigenspaces corresponding to the eigenvalues $\lambda^{\pm
    (t + 1)}$ will be in $\OV$. Moreover, every element of order
  dividing $q-1$, in every conjugate $G$ of $\Sz(q)$, will have
  eigenvalues of the form $\mu^{\pm (t + 1)}$, $\mu^{\pm 1}$ for some
  $\mu \in \F_q^{\times}$, and the eigenspaces corresponding to $\mu^{\pm (t +
    1)}$ will lie in the set on which $G$ acts doubly transitively.

  Hence to find a point $P \in \OV^{h^{-1}}$ it is sufficient to find a
  random $g \in \gen{X}$, of order dividing $q - 1$. We compute the pseudo-order using expected $\OR{\log(q) \log\log(q)}$ field operations, and by 
  Proposition \ref{sz_totient_prop}, the expected time to find the element is $\OR{(\xi  + \log(q) \log\log(q)) \log\log(q)}$ field operations. We then find the eigenspaces of $g$.

  Clearly this is a Las Vegas algorithm with the stated time complexity.
\end{proof}

\begin{lem} \label{lem_diagonal_conj}
  There exists a Las Vegas algorithm that, given $\gen{X} \leqslant \GL(4, q)$ such that $\gen{X}^d = \Sz(q)$ where
  $d = \diag(d_1, d_2, d_3, d_4) \in \GL(4, q)$, finds a diagonal matrix $e
  \in \GL(4, q)$ such that $\gen{X}^e = \Sz(q)$, using expected
\[ \OR{(\xi + \log(q)\log\log(q))\log\log(q) + \abs{X}}\]
 field operations.
\end{lem}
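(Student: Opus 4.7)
The plan is to recover $e$ in two stages. First I match the preserved symplectic forms to find a diagonal $e^{(0)}$ placing $\gen{X}^{e^{(0)}}$ inside $\Sp(4, q)$; then I correct $e^{(0)}$ by a single diagonal twist to land inside $\Sz(q)$ itself rather than in some other $\Sp(4, q)$-conjugate of it.

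For the first stage, apply the MeatAxe (Section \ref{section:meataxe}) to find the non-degenerate symplectic form $J'$ preserved by $\gen{X}$; it is unique up to a scalar since $\gen{X}$ is absolutely irreducible. Because $\gen{X}^d = \Sz(q)$ preserves the standard form $J$ of \eqref{standard_symplectic_form}, we have $J' = \mu \, d J d^T$ for some $\mu \in \F_q^{\times}$. As $d$ is diagonal and $J$ is supported on the anti-diagonal, so is $J'$, with $J'_{1,4} = \mu d_1 d_4$ and $J'_{2,3} = \mu d_2 d_3$. Setting $e^{(0)} := \diag(1, 1, J'_{2, 3}{}^{-1}, J'_{1, 4}{}^{-1})$ gives $e^{(0)} J' (e^{(0)})^T = J$, so $\gen{X}^{e^{(0)}} \leqslant \Sp(4, q)$.

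For the second stage, put $f := d^{-1} e^{(0)}$. The form matching forces $f_1 f_4 = f_2 f_3$, so $f$ lies in the diagonal symplectic similitude subgroup of order $(q - 1)^3$. A direct calculation with \eqref{sz_matrix_id1} and \eqref{sz_matrix_id2} shows that $f$ additionally normalises $\Sz(q)$ if and only if $\varphi(f) := f_1^t f_3 / f_2^{t + 1} = 1$, and that $\varphi$ is a surjective homomorphism from the diagonal similitude subgroup onto $\F_q^{\times}$ with kernel $\Norm_D(\Sz(q))$. To compute $\varphi(f)$ intrinsically, note that $\mathcal{H} \leqslant \gen{X}$ automatically, since $\mathcal{H}$ is diagonal and commutes with $d$; using an element of order $q - 1$ in $\gen{X}$ (obtainable by Proposition \ref{sz_totient_prop}) together with its eigenspaces in the ovoid of $\gen{X}^{e^{(0)}}$, produce by a short linear algebra reduction inside $\Sp(4, q)$ a lower-triangular element $y$ of the conjugated Sylow $2$-subgroup $\mathcal{F}^f$. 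By construction $y$ is a diagonal conjugate of some $S(a, b)$ with $a \neq 0$, and comparison of the $(2, 1)$ and $(3, 2)$ entries via \eqref{sz_matrix_id1} gives $\varphi(f) = y_{2, 1}^t / y_{3, 2}$ directly.

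Finally, solve $h_1^t = \varphi(f)$ for $h_1 \in \F_q^{\times}$; since $t = 2^{m+1}$ and $q - 1$ is odd, $\gcd(t, q - 1) = 1$ and $h_1$ is obtained by a single exponentiation of cost $\OR{\log (q)}$. Taking $e := e^{(0)} \cdot \diag(h_1, 1, 1, h_1^{-1})^{-1}$, multiplicativity of $\varphi$ yields $\varphi(d^{-1} e) = 1$, so $d^{-1} e \in \Norm_D(\Sz(q))$ and $\gen{X}^e = \Sz(q)$. The only randomised components are the MeatAxe and the search for the witness element, which together match the claimed expected cost of $\OR{(\xi + \log(q) \log\log(q))\log\log(q) + \abs{X}}$ field operations, and the algorithm is Las Vegas. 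The main obstacle is the second stage: distinguishing the correct diagonal representative of the coset $d \cdot \Norm_D(\Sz(q))$ among the $q - 1$ possibilities inside the larger coset $d \cdot (D \cap \text{similitudes})$, without direct access to $d$. The key observation is that the single $\F_q^{\times}$-valued twist $\varphi(f)$ can be read off two entries of any lower-triangular Sylow-$2$ element of $\gen{X}^{e^{(0)}}$, reducing the correction to one $t$-th root extraction.
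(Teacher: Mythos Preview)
Your route is genuinely different from the paper's, and the invariant $\varphi(f)=f_1^t f_3/f_2^{t+1}$ is a nice observation: it is indeed a homomorphism on the diagonal symplectic similitudes, it vanishes exactly on $\mathcal{H}\cdot\F_q^\times$, and the identity $y_{2,1}^t/y_{3,2}=\varphi(f)$ for $y=S(a,b)^f$ with $a\neq 0$ is correct. The paper instead never looks for a group element of $\mathcal{F}^f$ at all; it finds two ovoid points $P,Q\in\OV^{d^{-1}}$ via Lemma~\ref{lem_find_ovoid_point} and imposes $Pe,Qe\in\OV$. After the substitution $x=e_2^{\,t}$, $y=e_3^{\,t+2}$ this becomes a $2\times 2$ \emph{linear} system in $x,y$ (equation~\eqref{conjugate_final_eq2}), whose non-singularity is controlled by Proposition~\ref{prop_trick_general_case}; one then recovers $e_2,e_3$ by a single exponentiation each. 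That approach needs only eigenspaces of a random order-$(q-1)$ element, which really is linear algebra.

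Your argument, by contrast, has a genuine gap at the point where you ``produce by a short linear algebra reduction inside $\Sp(4,q)$ a lower-triangular element $y$ of $\mathcal{F}^f$''. An element of order $q-1$ and its two eigenspaces give you two ovoid points, not an element of a prescribed Sylow $2$-subgroup. Obtaining some $y\in\mathcal{F}^f\subset\gen{X}^{e^{(0)}}$ means producing a $2$-element stabilising $P_\infty$; by the discussion in Section~\ref{section:algorithm_overview} this is not available by random search in polynomial time, and nothing in your sketch explains how linear algebra alone manufactures such a $y$. You could salvage the plan by invoking the (SLP-free) stabiliser routine underlying Corollary~\ref{thm_stab_elt}/Theorem~\ref{sz_thm_pre_step} to get a random element of $G_{P_\infty}$ and then a commutator landing in $\mathcal{F}^f$; that keeps the stated complexity, but it is Algorithm~\ref{alg:stab_element} work, not ``a short linear algebra reduction''. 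There is also a minor sign/direction slip: with your $e^{(0)}=\diag(1,1,J'_{2,3}{}^{-1},J'_{1,4}{}^{-1})$ and $e^{(0)}J'(e^{(0)})^T=J$ one gets $\gen{X}^{(e^{(0)})^{-1}}\leqslant\Sp(4,q)$, not $\gen{X}^{e^{(0)}}$; this is easily repaired but should be fixed before the $\varphi$-computation.
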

\begin{proof}
Let $G = \gen{X}$. Since $G^d = \Sz(q)$, $G$ must preserve the symplectic form
\begin{equation}
K = d J d = \begin{bmatrix}
0 & 0 & 0 & d_1 d_4 \\
0 & 0 & d_2 d_3 & 0 \\
0 & d_2 d_3 & 0 & 0 \\
d_1 d_4 & 0 & 0 & 0 
\end{bmatrix}
\end{equation}
where $J$ is given by \eqref{standard_symplectic_form}. Using the
MeatAxe, we can find this form, which is determined up to a scalar
multiple. Hence the diagonal matrix $e = \diag(e_1, e_2, e_3, e_4)$,
that we want to find, is also determined up to a scalar multiple (and
up to multiplication by a diagonal matrix in $\Sz(q)$).

Since $e$ must take $J$ to $K$, we must have
$K_{1, 4}= d_1 d_4 = e_1 e_4$ and $K_{2, 4} = d_2 d_3 = e_2 e_3$. Because $e$ is determined up to a scalar multiple, we can choose
$e_4 = 1$ and $e_1 = K_{1, 4}$. Hence it only remains to determine
$e_2$ and $e_3$.

To conjugate $G$ into $\Sz(q)$, we must have $Pe \in \OV$ for every
$P \in \OV^{d^{-1}}$, which is the set on which $G$ acts doubly
transitively. By Lemma \ref{lem_find_ovoid_point}, we can find $P = (p_1 : p_2 : p_3 : 1) \in \OV^{d^{-1}}$, and the
condition $P e = (p_1 K_{1, 4} : p_2 e_2 : p_3 e_3 : 1) \in \OV$ is
given by \eqref{sz_ovoid_def} and amounts to
\begin{equation} \label{conjugate_final_eq1}
p_2 p_3 K_{2, 3} + (p_2 e_2)^t + (p_3 e_3)^{t + 2} - p_1 K_{1, 4} = 0
\end{equation}
which is a polynomial equation in the two variables $e_2$ and
$e_3$. 

Notice that we can consider $e_2^{t}$ to be the
variable, instead of $e_2$, since if $x = e_2^{t}$, then $e_2 =
\sqrt{x^t}$. Similarly, we can let $e_3^{t + 2}$ be the variable
instead of $e_3$, since if $y = e_3^{t + 2}$ then $e_3 = y^{1 - t/2}$.
Thus instead of \eqref{conjugate_final_eq1} we obtain a linear equation
\begin{equation} \label{conjugate_final_eq2}
p_2^t x + p_3^{t + 2} y = p_1 K_{1, 4} - p_2 p_3 K_{2, 3}
\end{equation}
in the variables $x, y$. Thus the complete algorithm for finding $e$
proceeds as follows.
\begin{enumerate}
\item Find the form $K$ that is preserved by $G$, using the MeatAxe.
\item Find $P \in \OV^{d^{-1}}$ using Lemma \ref{lem_find_ovoid_point}. 
\item Let $P = (p_1 : p_2 : p_3 : p_4)$ and find $Q = (q_1 : q_2 : q_3
  : q_4)$ using Lemma \ref{lem_find_ovoid_point} until the following
  linear system in the variables $x$ and $y$ is non-singular. 
\begin{equation}
\begin{split}
p_2^t x + p_3^{t + 2} y &= p_1 K_{1, 4} - p_2 p_3 K_{2, 3} \\
q_2^t x + q_3^{t + 2} y &= q_1 K_{1, 4} - q_2 q_3 K_{2, 3}
\end{split}
\end{equation}
By Proposition
  \ref{prop_trick_general_case}, the probability of finding such a $Q$ is $1 - \OR{1/\sqrt{q}}$.
\item Let $(\alpha, \beta)$ be a solution to the linear system. The diagonal matrix $e = \diag(K_{1, 4}, \sqrt{\alpha^t}, \beta^{1 - t/2}, 1)$ now satisfies $G^e = \Sz(q)$.
\end{enumerate}
By Lemma \ref{lem_find_ovoid_point} and Section \ref{section:meataxe}, this
is a Las Vegas algorithm that uses expected $\OR{(\xi + \log(q)\log\log(q))\log\log(q) + \abs{X}}$ field operations.
\end{proof}

\begin{lem} \label{lem_conjugate_to_digaonal}
  There exists a Las Vegas algorithm that, given subsets $X$, $Y_P$
  and $Y_Q$ of $\GL(4, q)$ such that $\O2(G_P) < \gen{Y_P} \leqslant
  G_P$ and $\O2(G_Q) < \gen{Y_Q} \leqslant G_Q$, respectively, where
  $\gen{X} = G$, $G^h = \Sz(q)$ for some $h \in \GL(4, q)$ and $P \neq Q
  \in \OV^{h^{-1}}$, finds $k \in \GL(4, q)$ such that $(G^k)^d =
  \Sz(q)$ for some diagonal matrix $d \in \GL(4, q)$.  The algorithm
  has expected time complexity $\OR{\abs{X}}$ field operations.
\end{lem}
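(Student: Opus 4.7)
The plan is to read the two Borel flags off from $\gen{Y_P}$ and $\gen{Y_Q}$, and then build $k^{-1}$ from a basis adapted to these flags.

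First, I would apply the MeatAxe to $\gen{Y_P}$ to compute its invariant subspaces in $V = \F_q^4$. Direct inspection of the matrices $S(a, b)$ shows that $\mathcal{F}$ preserves the unique complete flag $\gen{e_1} < \gen{e_1, e_2} < \gen{e_1, e_2, e_3}$; adjoining the diagonal subgroup $\mathcal{H}$ does not enlarge or shrink this set of invariant subspaces. Conjugating by $h^{-1}$, the group $G_P = (\mathcal{FH})^{h^{-1}}$ therefore preserves a unique maximal flag $V_1 < V_2 < V_3$ with $V_1 = P$, and this is equally the unique maximal flag preserved by every intermediate subgroup between $\O2(G_P)$ and $G_P$. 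Hence the invariant flag of $\gen{Y_P}$ is exactly $V_1 < V_2 < V_3$. Applying the same procedure to $\gen{Y_Q}$ yields $W_1 < W_2 < W_3$ with $W_1 = Q$.

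Next, I would choose nonzero $p \in P$, $q \in Q$ and compute nonzero $u \in V_2 \cap W_3$ and $v \in V_3 \cap W_2$. By the double transitivity in Theorem \ref{thm_suzuki_props}, the pair of flags $(V_\ast, W_\ast)$ is simultaneously conjugate in $\GL(4, q)$ to the standard/opposite pair $(\gen{e_1, \dotsc, e_i}, \gen{e_{5-j}, \dotsc, e_4})$, so both intersections are one-dimensional and $\{p, u, v, q\}$ is a basis of $V$. Let $k^{-1}$ be the matrix with rows $p, u, v, q$. Then conjugation by $k$ sends $P, Q$ to $P_{\infty}, P_0$ and the two flags to the standard pair, so $(G^k)_{P_{\infty}}$ is lower triangular, $(G^k)_{P_0}$ is upper triangular, and their intersection $T$ is a cyclic subgroup of order $q - 1$ consisting of diagonal matrices. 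The $\mathcal{H}$-weights on the standard flag from Section \ref{section:suzuki_theory} force a generator of $T$ to be $\diag(\mu^{t+1}, \mu, \mu^{-1}, \mu^{-t-1})$ for some primitive $\mu$, so in fact $T = \mathcal{H}$.

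Finally, writing $G^k = \Sz(q)^c$ for some $c \in \GL(4, q)$, the fact that both groups contain $\mathcal{H}$ as the stabiliser of $(P_{\infty}, P_0)$, together with the fact that both of their $P_{\infty}$-stabilisers preserve the standard flag, forces $c$ to lie in the subgroup of $\Norm_{\GL(4, q)}(\mathcal{H})$ fixing the standard flag, which is precisely the diagonal subgroup. Hence $c$ may be taken diagonal, giving $(G^k)^d = \Sz(q)$ with $d = c^{-1}$. The time complexity is $\OR{\abs{X}}$ field operations: the MeatAxe on the constant-size generating sets $Y_P, Y_Q$ in the fixed dimension $4$, together with the linear algebra for the intersections, takes $\OR{1}$, and copying $X$ to form the output accounts for the $\abs{X}$ term; the algorithm is Las Vegas because the MeatAxe is. The main obstacle is the last normaliser argument, pinning $c$ to the diagonal subgroup rather than some monomial element.
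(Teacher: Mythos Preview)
Your construction of $k$ --- MeatAxe on $\gen{Y_P}$, $\gen{Y_Q}$ to obtain the two flags, then rows of $k^{-1}$ taken from $V_1$, $V_2\cap W_3$, $V_3\cap W_2$, $W_1$ --- is exactly the paper's algorithm. The divergence is only in how you justify that a diagonal $d$ exists with $(G^k)^d=\Sz(q)$.

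The paper's argument is shorter and avoids your detour through $T=\mathcal{H}$ and normalisers. It exhibits a single element $z=hg'$ with $g'\in\Sz(q)$ (from double transitivity, sending $Ph,Qh$ to $P_\infty,P_0$) which simultaneously satisfies $G^z=\Sz(q)$ and sends the pair of flags $(V_\ast,W_\ast)$ to the standard/opposite pair. Then both $k^{-1}$ and $z^{-1}$ have their $i$th row in $V_i\cap W_{5-i}$, so the change of basis between them is lower triangular (from the $V_\ast$ side) and upper triangular (from the $W_\ast$ side), hence diagonal; writing $z=kd$ gives $(G^k)^d=G^z=\Sz(q)$.

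Your normaliser argument, by contrast, has a gap. You write $G^k=\Sz(q)^c$ and then assert that $c$ normalises $\mathcal{H}$ and preserves the standard flag, but nothing forces an \emph{arbitrary} such $c$ to fix $(P_\infty,P_0)$; without that, $\mathcal{H}^c$ is the stabiliser in $G^k$ of $(P_\infty c,P_0 c)$, not of $(P_\infty,P_0)$. The fix is to first pre-multiply $c$ by an element of $\Sz(q)$ (again double transitivity) so that it does fix $(P_\infty,P_0)$; then $c$ sends the unique $\mathcal{FH}$-invariant flag to the unique $(G^k)_{P_\infty}$-invariant flag, both of which are the standard flag, and likewise for the opposite, so $c$ is diagonal. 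Note that once you do this, the intermediate claim $T=\mathcal{H}$ is a \emph{consequence} rather than an input, so your weight argument (whose ordering step is not fully justified anyway) is unnecessary.
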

\begin{proof}

  Notice that the natural module $V = \F_q^4$ of $\mathcal{F}
  \mathcal{H}$ is uniserial with four non-zero submodules, namely $V_i
  = \set{(v_1, v_2, v_3, v_4) \in \F_q^4 \mid v_j = 0, j > i}$ for $i
  = 1, \dotsc, 4$. Hence the same is true for $\gen{Y_P}$ and
  $\gen{Y_Q}$ (but the submodules will be different) since they lie in
  conjugates of $\mathcal{F} \mathcal{H}$.

  Now the algorithm proceeds as follows.
  
\begin{enumerate}
\item Let $V = \F_q^4$ be the natural module for $\gen{Y_P}$ and
  $\gen{Y_Q}$. Find composition series $V = V^P_4 > V^P_3
  > V^P_2 > V^P_1$ and $V = V^Q_4 > V^Q_3 >
  V^Q_2 > V^Q_1$ using the MeatAxe.

\item Let $U_1 = V_1^P$, $U_2 = V_2^P \cap V_3^Q$, $U_3 = V_3^P \cap V_2^Q$ and $U_4 = V_1^Q$. For each $i = 1, \dotsc, 4$, choose $u_i \in U_i$.

\item Now let $k$ be the matrix such that $k^{-1}$ has $u_i$ as row $i$, for $i = 1, \dotsc, 4$.
\end{enumerate}

We now motivate the second step of the algorithm. 


One can choose a basis that exhibits the series $\set{V_i^P}$, in
other words, such that the matrices acting on the module are lower
triangular with respect to this basis. Similarly one can choose a basis
that exhibits the series $\set{V_i^Q}$.

On the other hand, since $P \neq Q$, there exists $g^{\prime} \in
\Sz(q)$ such that $P h g^{\prime} = P_{\infty}$ and $Q h g^{\prime} =
P_0$. If we let $z = hg^{\prime}$, then $\gen{Y_P}^z$ and
$\gen{Y_Q}^z$ consist of lower and upper triangular matrices,
respectively. Thus, the rows of $z^{-1}$ form a basis of $V$ that
exhibits the series $\set{V_i^P}$ and the series $\set{V_i^Q}$ in
reversed order.

With respect to this basis, it is clear that $\dim V^P_2 \cap V^Q_3 =
1$, $\dim V^P_3 \cap V^Q_2 = 1$ and that all the $U_i$ are distinct.
Hence the basis chosen in the algorithm exhibits the series
$\set{V_i^P}$, and it exhibits the series $\set{V_i^Q}$ in reverse
order. Therefore the chosen $k$ satisfies that $\gen{Y_P}^k$ is lower
triangular and $\gen{Y_Q}^k$ is upper triangular. The former implies
that $k z^{-1}$ is a lower triangular matrix, and the latter
that it is an upper triangular matrix, and hence it must diagonal.

Thus the matrix $k$ found in the algorithm satisfies $z = kd$ for
some diagonal matrix $d \in \GL(4, q)$. Since $\Sz(q) = G^h = G^z =
(G^k)^d$, the algorithm returns a correct result, and it is Las Vegas
because the MeatAxe is Las Vegas. Clearly the expected time complexity is the
same as the MeatAxe, so the algorithm uses $\OR{\abs{X}}$ field
operations.
\end{proof}

\begin{thm} \label{thm_conj_problem}
  Assume Conjecture \ref{conjecture_correctness}. There exists a Las
  Vegas algorithm that, given a conjugate $\gen{X}$ of $\Sz(q)$, finds $g \in \GL(4,
  q)$ such that $\gen{X}^g = \Sz(q)$. The algorithm has expected time complexity
  $\OR{(\xi + \log(q)\log\log(q))(\log\log(q))^2 + \abs{X}}$ field operations.
\end{thm}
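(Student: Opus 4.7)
The plan is to chain together Lemmas \ref{lem_find_ovoid_point}, \ref{lem_conjugate_to_digaonal} and \ref{lem_diagonal_conj}. Write $\gen{X}^h = \Sz(q)$ for some unknown $h \in \GL(4, q)$; the goal is to recover such an $h$ (up to the stabiliser of the standard copy).

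First I would invoke Lemma \ref{lem_find_ovoid_point} to obtain two distinct points $P, Q \in \OV^{h^{-1}}$, the set on which $\gen{X}$ acts doubly transitively. In fact a single invocation already suffices, since the lemma produces an element $a$ of order dividing $q - 1$ whose two relevant eigenspaces both lie in $\OV^{h^{-1}}$; if they happen to coincide projectively (which is impossible when $\abs{a} = q - 1$ by the structure of $\mathcal{H}$) one simply resamples.

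Second, to feed Lemma \ref{lem_conjugate_to_digaonal} I must manufacture generating sets $Y_P$ and $Y_Q$ satisfying $\O2(G_P) < \gen{Y_P} \leqslant G_P$ and $\O2(G_Q) < \gen{Y_Q} \leqslant G_Q$. For this I would use the algorithm of Corollary \ref{thm_stab_elt}, which by Remark \ref{sz_rem_stab_elt} operates inside any conjugate of $\Sz(q)$, to produce uniformly random elements of the two stabilisers; combining a few such elements with commutators of an order-$(q - 1)$ element exploits the Frobenius structure $G_P \cong \mathcal{F} \mathcal{H}$ (via Proposition \ref{sz_prop_frobenius}) to obtain elements both inside and outside $\O2(G_P)$, and the required proper containment can be verified locally by checking the module structure induced by $\gen{Y_P}$ on $\F_q^4$.

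Finally, Lemma \ref{lem_conjugate_to_digaonal} returns some $k \in \GL(4, q)$ with $(\gen{X}^k)^d = \Sz(q)$ for a diagonal $d$, and Lemma \ref{lem_diagonal_conj} applied to $\gen{X}^k$ produces an explicit diagonal matrix $e$ with $(\gen{X}^k)^e = \Sz(q)$. Then $g = ke$ is the desired conjugating element. Las Vegas-ness is inherited from each ingredient, and summing the expected complexities of the three lemmas and the bounded expected number of auxiliary calls to Lemma \ref{lem_find_ovoid_point} and Corollary \ref{thm_stab_elt} yields the stated bound. The main obstacle is the middle step: checking efficiently that $\gen{Y_P}$ really does contain $\O2(G_P)$ rather than merely intersecting it, without incurring a $\log q$ factor from the rank of the special $2$-group $\mathcal{F}$. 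This is where the $(\log\log(q))^2$ factor in the complexity arises and where most of the bookkeeping in the full proof resides.
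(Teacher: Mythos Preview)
Your proposal is essentially the paper's proof: find two points via Lemma \ref{lem_find_ovoid_point}, build $Y_P,Y_Q$ via Corollary \ref{thm_stab_elt} (valid in conjugates by Remark \ref{sz_rem_stab_elt}) together with commutators as in Proposition \ref{sz_prop_frobenius}, then apply Lemmas \ref{lem_conjugate_to_digaonal} and \ref{lem_diagonal_conj} and set $g=ke$. The paper phrases step two more tersely as ``the first three steps of the algorithm from the proof of Theorem \ref{sz_thm_pre_step}'', which is exactly the construction you describe.

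Two small corrections. First, your diagnosis of the ``main obstacle'' is off: one does not \emph{verify} that $\gen{Y_P}\supset\O2(G_P)$; it is automatic once $c_1=[a_1,a_2]$ has order $4$ and $a_1$ diagonalises to $M'(\lambda)$ with $\lambda$ not in a proper subfield, since then the conjugates $c_1^{a_1^i}$ already span $\mathcal{F}$ by \eqref{sz_matrix_id2}. The $(\log\log q)^2$ factor arises instead from a nested random search: each call to Corollary \ref{thm_stab_elt} costs $\OR{\log\log q}$ iterations, and by Theorem \ref{thm_suzuki_props} one expects $\OR{\log\log q}$ such calls before hitting an $a_1$ of exact order $q-1$. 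Second, you should note (as the paper does) that because the elements of $G_P$ are not needed as $\SLP$s here, the discrete-log step inside Algorithm \ref{alg:stab_element} can be skipped: one returns the matrix $M'(\gamma)^x\,h\,M'(\delta)^x$ directly rather than expressing it as $a^l h a^k$. This is why the stated complexity omits $\chi_D(q)$, which your direct citation of Corollary \ref{thm_stab_elt} would otherwise introduce.
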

\begin{proof}
  Let $G = \gen{X}$. By Remark \ref{sz_rem_stab_elt}, we can use
  Corollary \ref{thm_stab_elt} in $G$, and hence we can find
  generators for a stabiliser of a point in $G$, using the algorithm
  described in Theorem \ref{sz_thm_pre_step}. In this case we do not
  need the elements as $\SLP$s, so a discrete log oracle is not
  necessary.

\begin{enumerate}
\item Find points $P, Q \in \OV^{h^{-1}}$ using Lemma
  \ref{lem_find_ovoid_point}. Repeat until $P \neq Q$.
\item Find generating sets $Y_P$ and $Y_Q$ such that $\O2(G_P) <
  \gen{Y_P} \leqslant G_P$ and $\O2(G_Q) < \gen{Y_Q} \leqslant G_Q$
  using the first three steps of the algorithm from the proof of Theorem
  \ref{sz_thm_pre_step}.
\item Find $k \in \GL(4, q)$ such that $(G^k)^d = \Sz(q)$ for some diagonal matrix $d \in \GL(4, q)$, using Lemma \ref{lem_conjugate_to_digaonal}.
\item Find a diagonal matrix $e$ using Lemma \ref{lem_diagonal_conj}.
\item Now $g = ke$ satisfies that $G^g = \Sz(q)$.
\end{enumerate}

Be Lemma \ref{lem_find_ovoid_point}, \ref{lem_conjugate_to_digaonal} and \ref{lem_diagonal_conj}, and the proof of Theorem \ref{sz_thm_pre_step}, this is a Las Vegas algorithm with expected time complexity as stated.
\end{proof}

\subsection{Tensor decomposition}
\label{section:sz_tensor_decompose}

Now assume that
$G \leqslant \GL(d, q)$ where $G \cong \Sz(q)$, $d > 4$ and $q = 2^{2m +
  1}$ for some $m > 0$. Then $\Aut{\F_q} = \gen{\psi}$, where $\psi$
is the Frobenius automorphism. Let $W$ be the given module of $G$ of dimension $d$ and let $V$ be the natural
module of $\Sz(q)$ of dimension $4$. From Section \ref{section:algorithm_overview} and Section \ref{section:sz_indecomposables} we know that
\begin{equation} \label{irreducible_module_form}
W \cong V^{\psi^{i_0}} \otimes V^{\psi^{i_1}} \otimes \dotsm \otimes
V^{\psi^{i_{n - 1}}}
\end{equation}
for some integers $0 \leqslant i_0 < i_1 < \dotsb < i_{n - 1}
\leqslant 2m$. In fact, we may assume that $i_0 = 0$ and clearly
$d = \dim W = (\dim V)^n = 4^n$. 
As described in Section \ref{section:algorithm_overview}, we now want to tensor
decompose $W$ to obtain an effective isomorphism from $W$ to $V$.

\subsubsection{The main algorithm} \label{section:sz_tensor_main_alg}

We now describe our main algorithm that finds a tensor decomposition
of $W$ when $q$ is large. It is sufficient to find a flat in $W$. For
$k = 0, \dotsc, n -1$, let $H_k \leqslant \GL(4, q)$ be the image of
the representation corresponding to $V^{\psi^{i_k}}$, and let $\rho_k
: G \to H_k$ be an isomorphism. Our goal is then to find $\rho_k$
effectively for some $k$.

We begin with an overview of the method. Our approach for finding a
flat in $W$ is to consider eigenspaces of an element of $G$ of
order dividing $q - 1$. By Proposition \ref{sz_totient_prop} we know that such
elements are easy to find by random search.

Let $g \in G$ where $\abs{g} = q - 1$, and let $t = 2^{m + 1}$.
By Proposition \ref{pr_sz_subgroups_conj} we know that for $k = 0, \dotsc,
n - 1$, $\rho_k(g)$ has four distinct eigenvalues $\lambda_k^{\pm 1}$
and $\lambda_k^{\pm (t + 1)}$ for some $\lambda_k \in \F_q^{\times}$.
Also, the eigenspaces of $\rho_k(g)$ have dimension $1$. Our method
for finding a flat in $W$ is to construct a line as a suitable sum of
eigenspaces of $g$.

Let $E$ be the multiset of eigenvalues of $g$, so that $\abs{E} = d$ and every element of $E$ has the form
\begin{equation} \label{eigenvalue_form}
\lambda_0^{j_0} \lambda_1^{j_1} \dotsm \lambda_{n - 1}^{j_{n - 1}}
\end{equation}
where each $\lambda_k \in \F_q^{\times}$ and each $j_k \in \set{\pm 1,
  \pm (t + 1)}$. A set $E^{\prime} \subseteq \F_q^{\times}$ that satisfies
\begin{itemize}
\item $\abs{E^{\prime}} = n$
\item $\lambda_k \in E^{\prime}$ or $\lambda_k^{-1} \in E^{\prime}$ for each $k = 0, \dotsc, n - 1$
\end{itemize}
is a set of \emph{base values} for $E$. Clearly $E$ is easily calculated from $E^{\prime}$.

Moreover $\lambda_k = \lambda_0^{2^{i_k}}$, and since $\abs{g} = q -
1$ we must have $\abs{\lambda_k} = q - 1$ for $k = 0, \dotsc, n - 1$.
For every $0 \leqslant k < l \leqslant n - 1$ we have $0 <
2^{i_k} \pm 2^{i_l} < q - 1$ and therefore $\lambda_k \neq
\lambda_l^{\mp 1}$.

First we try to find a set of base values for $E$. 
\begin{conj} \label{lem1_basevalues}
Assume $m > n$. Let $S = \set{\sqrt{e / f} : e, f \in E \mid e \neq f}$ and 
\begin{multline}
P = \{x \in S \mid \forall \, e \in E \; \exists \, y \in \set{x, x^{-1}, x^{t + 1}, x^{-t-1}} : \\
\set{e y x, e y x^{-1}, e y x^{t + 1}, e y x^{-t-1}} \subseteq E\}.
\end{multline}
Then the following hold:
\begin{itemize}
\item $P$ contains a set of base values for $E$. 
\item If the twists do not have a subsequence $i_r < i_r + 1 < i_r + 2$, then $\abs{P} = 2n$ and hence $P$ consists of the base values and their inverses.
\end{itemize}
\end{conj}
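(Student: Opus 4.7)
The plan is to first parameterize $E$, then establish $\{\lambda_k^{\pm 1}\}_{k=0}^{n-1} \subseteq P$, and finally analyze the constraints imposed by membership in $P$ to show equality under the hypothesis on the twists.

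For the setup, parameterize $E$ by tuples $(j_0,\dotsc,j_{n-1}) \in T^n$ with $T = \{\pm 1, \pm(t+1)\}$ via $e = \prod_{k=0}^{n-1}\lambda_k^{j_k}$. Since $\lambda_k = \lambda_0^{2^{i_k}}$ and $\abs{\lambda_0} = q-1$, the hypothesis $m > n$ guarantees that the $2n$ exponents $\pm 2^{i_k}$ are pairwise distinct modulo $q-1$, so the elements $\lambda_k^{\pm 1}$ are distinct.

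First I would verify that $\lambda_k \in P$ (and by symmetry $\lambda_k^{-1}\in P$) for every $k$. Membership in $S$ is immediate: taking $e,f\in E$ that agree in every coordinate except the $k$-th, with $j_k=1$ and $j_k'=-1$, yields $e/f=\lambda_k^2$. To verify the defining condition of $P$ for $x = \lambda_k$, given any $e=\prod_l \lambda_l^{j_l}\in E$, set $y=\lambda_k^{-j_k}$. Since $-j_k\in T$, this $y$ lies in $\{x,x^{-1},x^{t+1},x^{-t-1}\}$. Then for each $c\in T$ the product $eyx^c=\prod_{l\neq k}\lambda_l^{j_l}\cdot\lambda_k^{c}$ is obtained from $e$ by replacing the $k$-th coordinate with $c$, and hence lies in $E$. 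A symmetric argument with $y=\lambda_k^{j_k}$ handles $x=\lambda_k^{-1}$. This gives $\{\lambda_k^{\pm 1}\}_k\subseteq P$ and proves the first bullet.

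For the second bullet I would consider an arbitrary $x\in P\cap S$, write $x=\lambda_0^A$ with $A\in\Z/(q-1)\Z$, and reformulate $x\in P$ as the following additive constraint: for every $J=\sum_k j_k 2^{i_k}$ with $(j_k)\in T^n$ there exists $b\in T$ such that all four values $J+(b+c)A$, $c\in T$, lie in $\Sigma:=\{\sum_k j_k' 2^{i_k}:(j_k')\in T^n\}$. Probing with the extremal tuple $(t+1,\dotsc,t+1)$, for which $J$ attains the maximum of $\Sigma$, forces $(b+c)A$ to act on $(j_k)$ essentially as a single-coordinate replacement, because any shift must keep all four values bounded by $\max\Sigma$; a case analysis on which coordinate is affected then pins $A$ down to $\pm 2^{i_k}$ for some $k$, so $x=\lambda_k^{\pm 1}$.

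The main obstacle will be ruling out compound shifts $A=\epsilon 2^{i_r}+\epsilon' 2^{i_{r+1}}+\dotsb$ that mix more than one of the $2^{i_k}$. The relation $(t+1)+(t+1)=2(t+1)=2t+2$ is the only source of a carry when adding digits from $T$, and this carry spans two adjacent binary positions; accommodating it while keeping all four shifted exponents in $\Sigma$ would require a third adjacent position $i_r+2$ also to be a twist, so that the carry can be absorbed there. The hypothesis that no three consecutive twists occur therefore eliminates every compound $A$ and forces $|P|=2n$, as required.
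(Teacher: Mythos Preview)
The paper does not prove this statement: it is explicitly labelled a \emph{Conjecture} and, as the author explains in Section~1.2.10, it is one of several results that were left unproven and verified only experimentally. So there is no proof in the paper for you to be compared against.

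On the merits of your attempt: your argument for the first bullet is essentially correct. Given $x=\lambda_k$ and any eigenvalue $e$ with a chosen representation $(j_0,\dotsc,j_{n-1})$, taking $y=\lambda_k^{-j_k}$ indeed lands in $\{x^{\pm 1},x^{\pm(t+1)}\}$, and $ey\lambda_k^{c}$ replaces the $k$-th coordinate by $c$, hence lies in $E$. One small point you gloss over is that $E$ is a multiset and the map $(j_0,\dotsc,j_{n-1})\mapsto\prod_k\lambda_k^{j_k}$ need not be injective; but since you only need \emph{some} representation of $e$ to produce a valid $y$, this does not affect the argument.

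Your treatment of the second bullet, however, is a sketch rather than a proof. Two concrete gaps: first, when you pass to the additive picture and invoke ``$\max\Sigma$'' you are implicitly working over $\Z$ rather than $\Z/(q-1)\Z$; you would need to argue that for $m>n$ all the exponents involved are small enough that no wraparound occurs, and this interacts with the fact that $t=2^{m+1}$ is itself close to $\sqrt{q}$. Second, and more seriously, the phrase ``a case analysis \ldots\ pins $A$ down to $\pm 2^{i_k}$'' is exactly where the work lies, and you have not done it. The paper's remark after the conjecture shows that products $\prod_{i=0}^{j}\lambda_{r+i}$ lie in $P$ whenever the twists contain a run of consecutive integers; your heuristic about the carry $(t+1)+(t+1)=2t+2$ needing a third adjacent slot gestures at why runs of length three are the threshold, but you have neither shown that this is the \emph{only} mechanism producing extra elements of $P$, nor ruled out other values of $A$ (for instance combinations mixing $\pm 1$ and $\pm(t+1)$ across two adjacent twists when only two, not three, are consecutive). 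Until that case analysis is actually carried out, the second bullet remains open --- which is consistent with the paper's decision to leave it as a conjecture.
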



If the twists have a subsequence of the form in the Conjecture, or more generally a subsequence of length $l$, then
$\prod_{i = 0}^j \lambda_{r + i} \in P$ for every $j = 0, \dotsc, l -
1$. Hence we need more conditions to extract the base values.

\begin{conj} \label{lem2_basevalues}
  Let $P$ be as in Lemma \ref{lem1_basevalues}. Define $P^{\prime}
  \subseteq P$ to be those $x \in P$ for which there exists $0 =
  \alpha_0 < \alpha_1 < \dotsb < \alpha_n \leqslant 2m$ such that
  $x^{\alpha_i} \in P$ for every $i = 0, \dotsc, n$. Then
  $\abs{P^{\prime}} = 2n$ and hence $P^{\prime}$ consists of the base values and their inverses.
\end{conj}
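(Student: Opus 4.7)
The plan is to completely characterise $P$, then show that the Frobenius-orbit condition in the definition of $P'$ cuts $P$ down exactly to the $2n$ base values. By Lemma~\ref{lem1_basevalues}, $P$ already contains the base values $\lambda_k^{\pm 1}$ for $k = 0,\dotsc,n-1$, so it suffices to analyse the ``extra'' elements. A careful refinement of the argument behind Lemma~\ref{lem1_basevalues} should show that these extras are precisely the \emph{sliding products}
\[
y_{r,l',\epsilon} \;=\; \Bigl(\prod_{j=0}^{l'-1}\lambda_{r+j}\Bigr)^{\epsilon}, \qquad \epsilon \in \{\pm 1\},
\]
where $i_r, i_{r+1}, \dotsc, i_{r+l'-1}$ form a block of $l'\geq 2$ consecutive integers inside the twist sequence. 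Indeed, the membership condition defining $P$ forces any element of $P$ to act as a ``translation'' of the eigenvalue multiset $E$ that is compatible with the factorisation \eqref{eigenvalue_form}, and only sliding products within consecutive runs of twists survive.

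I would then verify that every base value lies in $P'$. For $x = \lambda_0^{\pm 1}$, the Galois conjugate $x^{2^{i_k}}$ equals $\lambda_k^{\pm 1}\in P$, so the indices $\alpha_k = i_k$ (which satisfy $0 = \alpha_0 < \alpha_1 < \dotsb \leqslant 2m$) witness membership in $P'$. For a general base value $\lambda_s^{\pm 1}$, since the Frobenius $\psi$ permutes $\{\lambda_0, \lambda_1, \dotsc, \lambda_{n-1}\}$ cyclically among its orbit of length dividing $2m+1$, raising $\lambda_s$ to $2^{j}$ cycles through the remaining base values whenever $i_s + j \equiv i_k \pmod{2m+1}$, again furnishing the required $n$ exponents.

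The key step is ruling out every extra sliding product $y = y_{r,l',\epsilon}$ with $l' \geq 2$. Its Frobenius conjugate $y^{2^{\alpha}}$ is itself a sliding product over the shifted block $i_r+\alpha, \dotsc, i_{r+l'-1}+\alpha$ (reduced modulo $2m+1$). For $y^{2^{\alpha}}\in P$, the shifted block must either coincide (as an $l'$-tuple of residues) with another consecutive block inside the twist sequence, or degenerate to a base value, which is impossible since $\lambda_0$ has order $q-1$ and the $\lambda_k$ are multiplicatively independent apart from the Galois relations $\lambda_k = \lambda_0^{2^{i_k}}$. One then bounds the number of admissible shifts $\alpha$: the twist sequence has length $n$, so at most $n-l'+1 < n$ shifts can realign $y$ into a sliding product compatible with $P$. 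Hence $y\notin P'$.

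The main obstacle is the third step: producing a clean combinatorial argument that the shifted blocks arising from Frobenius action on a nontrivial sliding product cannot hit $P$ on $n$ distinct exponents. This requires using the specific arithmetic of $\F_q^{\times}$ (cyclic of odd order $q-1 = 2^{2m+1}-1$) to rule out ``accidental'' coincidences between sliding products across different blocks, and careful bookkeeping when the twist sequence contains several consecutive runs. Once the sliding-product description of $P$ and the multiplicative independence of the $\lambda_k$ are firmly established, the counting argument should be routine.
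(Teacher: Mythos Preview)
The paper does not prove this statement: it is explicitly a \emph{conjecture}, and the thesis remarks in Section~\ref{section:intro_conj} that such results ``have been left unproven, either because they have been too hard to prove or have been from an area of mathematics outside the scope of this thesis''. There is therefore no proof in the paper to compare your proposal against.

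As for the proposal itself, it is a plan rather than a proof, and you are candid that the decisive step is missing. A few concrete issues are worth flagging. First, your argument rests on Conjecture~\ref{lem1_basevalues} as if it were established; it is not, so even the claimed containment of the base values in $P$ is not proved. Second, your asserted ``complete characterisation'' of $P$ as base values together with sliding products over consecutive runs is exactly what the paper hints at in the paragraph following Conjecture~\ref{lem1_basevalues}, but neither you nor the paper proves that nothing else lies in $P$; without this, the counting argument in your third step has no foundation. Third, there is an indexing mismatch you gloss over: the definition of $P'$ asks for $n+1$ exponents $\alpha_0,\dotsc,\alpha_n$, yet your witness for $\lambda_0^{\pm 1}$ supplies only the $n$ twists $i_0,\dotsc,i_{n-1}$. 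Either the statement has an off-by-one slip or an additional conjugate must be produced, and the latter is not obvious. Finally, the ``multiplicative independence'' you invoke for the $\lambda_k$ is precisely the kind of arithmetic accident that needs ruling out, and doing so is the heart of the difficulty --- your sketch names it but does not resolve it.
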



Let $S_i$ denote the sum of eigenspaces of $g$
corresponding to the eigenvalues $\lambda_0^{j_0} \dotsm \lambda_i
\dotsm \lambda_{n - 1}^{j_{n - 1}}$ and $\lambda_0^{j_0} \dotsm
\lambda_i^{-1} \dotsm \lambda_{n - 1}^{j_{n - 1}}$, where each $j_k$
ranges over $\set{\pm 1, \pm (t + 1)}$. 

\begin{lem} \label{lem_line_subspace}
If $\dim S_k = 2 \cdot 4^{n - 1}$ for some $0 \leqslant k \leqslant n - 1$, then $S_k$ is a line in $W$.
\end{lem}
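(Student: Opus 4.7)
The plan is to identify $S_k$ with the flat
\[
T_k = V^{\psi^{i_0}} \otimes \dotsb \otimes V^{\psi^{i_{k-1}}} \otimes A_k \otimes V^{\psi^{i_{k+1}}} \otimes \dotsb \otimes V^{\psi^{i_{n-1}}},
\]
where $A_k \leqslant V^{\psi^{i_k}}$ is the $2$-dimensional subspace spanned by eigenvectors of $\rho_k(g)$ with eigenvalues $\lambda_k$ and $\lambda_k^{-1}$. Since $A_k$ is a proper subspace of a $4$-dimensional factor, $T_k$ has the form required of a flat, of dimension $2 \cdot 4^{n-1}$.

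First I would record that for each $k$ the four eigenvalues $\lambda_k^{\pm 1}$, $\lambda_k^{\pm(t+1)}$ are pairwise distinct (already observed in the discussion preceding the lemma, using $\abs{\lambda_k} = q-1$ and $0 < (t+1) \pm 1 < q-1$), so each of the four eigenspaces of $\rho_k(g)$ on $V^{\psi^{i_k}}$ is one-dimensional. Via the isomorphism $W \cong V^{\psi^{i_0}} \otimes \dotsb \otimes V^{\psi^{i_{n-1}}}$, this yields a basis of $W$ consisting of pure tensors of eigenvectors, and each such basis vector is an eigenvector of $g$ whose eigenvalue is the product of the factor eigenvalues. Indexing these ``simple'' eigenspaces by tuples $(\mu_0, \dotsc, \mu_{n-1})$ with $\mu_j \in \set{\lambda_j^{\pm 1}, \lambda_j^{\pm(t+1)}}$, one sees that $T_k$ is exactly the sum of the $2 \cdot 4^{n-1}$ simple eigenspaces whose $k$-th coordinate lies in $\set{\lambda_k, \lambda_k^{-1}}$.

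Next I would show $T_k \subseteq S_k$. Each basis eigenvector in $T_k$ is a $g$-eigenvector whose eigenvalue has the shape $\lambda_0^{j_0} \dotsm \lambda_k^{\pm 1} \dotsm \lambda_{n-1}^{j_{n-1}}$, and hence lies in the $g$-eigenspace for that eigenvalue; by definition this eigenspace is a summand of $S_k$. The reverse inclusion is the only subtle point: a priori $S_k$ could strictly contain $T_k$ because an eigenvalue $\nu$ appearing in the defining list of $S_k$ might also arise from some simple tuple with $\mu_k \notin \set{\lambda_k, \lambda_k^{-1}}$, in which case the whole $\nu$-eigenspace, including that extra simple eigenvector, sits inside $S_k$. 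However, the dimension hypothesis $\dim S_k = 2 \cdot 4^{n-1} = \dim T_k$ together with $T_k \subseteq S_k$ forces $S_k = T_k$.

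Since $S_k = T_k$ is of the form $A_k \otimes (\text{rest of the tensor product})$ with $A_k$ a proper $2$-dimensional subspace of the $4$-dimensional factor $V^{\psi^{i_k}}$, it is a flat corresponding to a projective line in $\PS(V^{\psi^{i_k}})$, i.e.\ a line in $W$ in the sense required by the tensor decomposition machinery of Section \ref{section:tensor_decomposition}. The only mildly delicate step is the eigenvalue--coincidence issue just described, and it is dispatched for free by the dimension assumption, so no further analysis of how the $\lambda_k$ interact is needed here.
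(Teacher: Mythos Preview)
Your proof is correct and follows essentially the same approach as the paper: construct the flat $T_k$ (the paper calls it $L$) as the tensor product with a $2$-dimensional subspace $A_k$ in the $k$-th slot, verify $T_k \subseteq S_k$ by the eigenvector computation on pure tensors, and use the dimension hypothesis to force equality. Your added remarks on eigenvalue distinctness and the potential failure of the reverse inclusion are helpful commentary but do not alter the argument.
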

\begin{proof}
  For each $i = 0, \dotsc, n - 1$, let $e_{j_i}$ be an eigenvector of
  $\rho_i(g)$ for the eigenvalue $\lambda_i^{j_i}$, where $j_i \in
  \set{\pm 1, \pm (t + 1)}$. Observe that $W$ contains the
  following subspace.
\[ L = \gen{e_{j_0} \otimes \dotsm \otimes e_{j_{n-1}} \mid j_i \in 
\set{\pm 1, \pm (t + 1)}, i \neq k, j_k \in \set{\pm 1}} \]
Clearly, $L$ is of the form $V^{\psi^{i_0}} \otimes \dotsm \otimes V^{\psi^{i_{k - 1}}} \otimes A \otimes V^{\psi^{i_{k + 1}}} \otimes \dotsm \otimes V^{\psi^{i_{n - 1}}}$ where $\dim A = 2$, so $L$ is a line in $W$ of dimension $2 \cdot 4^{n - 1}$.

If $v = e_{j_0} \otimes \dotsm \otimes e_{j_{n - 1}} \in L$, then
\[
\begin{split}
vg & = e_{j_0} \rho_0(g) \otimes \dotsm \otimes e_{j_{n - 1}} \rho_{n - 1}(g) = \lambda_0^{j_0} e_{j_0} \otimes \dotsm \otimes \lambda_{n - 1}^{j_{n - 1}} e_{j_{n - 1}} = \\
 & = \lambda_0^{j_0} \dotsm \lambda_{n - 1}^{j_{n - 1}} v
\end{split} 
\]
and hence $v \in S_k$. Therefore $L \leqslant S_k$, so if $\dim S_k = 2 \cdot 4^{n - 1}$ then $L = S_k$ and thus $S_k$ is a line in $W$.

\end{proof}

The success probability of the algorithm for finding a flat relies on the following Conjecture.

\begin{conj} \label{cl_flatdim} Let $d = 4^n$ with $n > 1$ be fixed. If $q = 2^{2m +
    1}$ and $m \geqslant n + 1$, then for every absolutely irreducible $G \leqslant \GL(d, q)$ with $G \cong \Sz(q)$ and every $g \in G$ with
  $\abs{g} = q - 1$, we have $\dim S_i = 2 \cdot 4^{n - 1}$ for some $0
  \leqslant i \leqslant n - 1$.
\end{conj}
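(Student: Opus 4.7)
The plan is to translate the claim into a Diophantine question about signed sums of powers of $2$ modulo $q - 1$, and then to show that ``bad collisions'' of eigenvalues cannot simultaneously occur at every index.

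By Lemma \ref{lem_line_subspace}, $S_i$ already contains the subspace $L$ of dimension $2 \cdot 4^{n-1}$, so $\dim S_i \geqslant 2 \cdot 4^{n-1}$ always. Decomposing $W$ into $g$-eigenspaces, the excess $\dim S_i - 2 \cdot 4^{n-1}$ counts exactly the \emph{bad collisions at $i$}: unordered pairs of distinct tuples $J, J' \in \set{\pm 1, \pm(t+1)}^n$ with $\prod_k \lambda_k^{j_k} = \prod_k \lambda_k^{j_k'}$ such that $\set{j_i, j_i'}$ meets both $\set{\pm 1}$ and $\set{\pm(t+1)}$. It therefore suffices to produce an index $i$ at which no bad collision occurs. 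Since $\lambda_k = \lambda_0^{2^{i_k}}$ and $\abs{\lambda_0} = q - 1$, the collision condition becomes
\begin{equation} \label{eqn_flatdim_diophantine}
  \sum_{k=0}^{n-1} c_k\, 2^{i_k} \equiv 0 \pmod{q - 1},
\end{equation}
with $c_k := j_k - j_k' \in C := \set{0, \pm 2, \pm t, \pm(t+2), \pm(2t+2)}$, and the collision is bad at $i$ precisely when $c_i \in \set{\pm t, \pm(t+2)}$.

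Writing $c_k = 2\alpha_k + \beta_k t$ with $\alpha_k, \beta_k$ small integers satisfying the constraints imposed by $C$, the left-hand side of \eqref{eqn_flatdim_diophantine} becomes $2A + tB$ with $A := \sum_k \alpha_k 2^{i_k}$ and $B := \sum_k \beta_k 2^{i_k}$ bounded signed combinations of the distinct powers $2^{i_k}$, $0 \leqslant i_k \leqslant 2m$. Crude size estimates give $\abs{2A + tB} \leqslant 2^{3m+3}$, so \eqref{eqn_flatdim_diophantine} is equivalent to an integer equation $2A + tB = r(q - 1)$ with $\abs{r} \leqslant 2^{m+2}$. Reducing modulo $t = 2^{m+1}$ and then analysing $2$-adic valuations of the individual terms $\alpha_k 2^{i_k}$ and $\beta_k 2^{m+1+i_k}$, the hypothesis $m \geqslant n + 1$ provides enough separation between the $n$ powers $2^{i_k}$ to force $r$ into a small set and to read off each pair $(\alpha_k, \beta_k)$ almost uniquely from the base-$2$ digits of $r(q-1)$. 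A short case analysis then reduces bad collisions to a handful of \emph{primitive} relations on the twist sequence $(i_0, \dotsc, i_{n-1})$, and a direct inspection of $C$ shows that no such primitive relation can have $c_k \in \set{\pm t, \pm(t+2)}$ simultaneously at every coordinate; equivalently, the set of indices $i$ at which a bad collision occurs is a proper subset of $\set{0, \dotsc, n-1}$, which finishes the proof.

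The main obstacle will be the classification step: the equation $2A + tB = r(q-1)$ admits many naive candidate solutions, and only a careful book-keeping of carries in the binary expansion, heavily using $m \geqslant n+1$, rules them out. Once that classification is in hand, the final verification that no primitive relation is bad at every coordinate reduces to an elementary finite check on the coefficient set $C$.
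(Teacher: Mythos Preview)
The paper does \emph{not} prove this statement; it is stated as a Conjecture and merely assumed in Theorem~\ref{thm_main_alg}. So there is no paper proof to compare against, and you are attempting to settle an open claim.

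That said, your argument has a genuine logical gap at the very end. You assert that ``no such primitive relation can have $c_k \in \set{\pm t, \pm(t+2)}$ simultaneously at every coordinate; \emph{equivalently}, the set of indices $i$ at which a bad collision occurs is a proper subset of $\set{0, \dotsc, n-1}$''. These two statements are \emph{not} equivalent. The first says that no \emph{single} collision is bad at every index; the second says that some index is free of \emph{all} bad collisions. What you actually need is the second: the conjecture fails only if for \emph{every} $i$ there exists \emph{some} collision bad at $i$, and distinct indices may well be witnessed by distinct collisions. For instance, with $n=2$ one could in principle have one relation with $c_0 \in \set{\pm t, \pm(t+2)}$, $c_1 \in \set{0,\pm 2,\pm(2t+2)}$ and a second relation with the roles of the indices swapped; neither is bad everywhere, yet together they cover both indices. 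Ruling this out requires controlling the \emph{union} of bad-index sets over all collisions, which your final ``elementary finite check on $C$'' does not address.

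Separately, the classification step is only sketched. The claim that $m \geqslant n+1$ gives enough $2$-adic separation to read off each $(\alpha_k,\beta_k)$ ``almost uniquely'' from the base-$2$ digits of $r(q-1)$ needs real care: the coefficients $\alpha_k \in \set{0,\pm 1}$ and $\beta_k \in \set{0,\pm 1,\pm 2}$ can produce carries and borrows across neighbouring powers $2^{i_k}$, and the twist exponents $i_k$ are arbitrary in $\set{0,\dotsc,2m}$, not necessarily well spaced. The reduction to a ``handful of primitive relations'' is plausible but is the heart of the matter and is not carried out. Even granting that reduction, you would then need the stronger covering argument above, not merely the per-relation check you describe.
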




The algorithm for finding a
flat is shown as Algorithm \ref{alg:tensor_decompose}.

\begin{figure}[ht]
\begin{codebox} 
\refstepcounter{algorithm}
\label{alg:tensor_decompose}
\Procname{\kw{Algorithm} \ref{alg:tensor_decompose}: $\proc{TensorDecomposeSz}(X)$}
\li \kw{Input}: Generating set $X$ for $G \cong \Sz(q)$ with natural module
  $W$, where
\zi $\dim W = 4^n$, $n > 1$, $q = 2^{2m+1}$ with $m > n$ and $W$ is absolutely
\zi  irreducible and over $\F_q$. 
\li \kw{Output}: A line $S$ in $W$.
\zi \Comment{\proc{FindBaseValues} is given by Conjectures \ref{lem1_basevalues} and \ref{lem2_basevalues}.}

\zi \Repeat
\zi \Comment{Find random element $g$ of pseudo-order $q - 1$}
\zi \Repeat
\li    $g := \proc{Random}(G)$
\li \Until{$\abs{g} \mid q - 1$ \label{main_alg_g_test}}
\li $E := \proc{Eigenvalues}(g)$ 
\li $N := \proc{FindBaseValues}(E)$ 
\li \Until{$\abs{N} = n$ \label{main_alg_N_test1}}
\li    \For{$i := 0$ \To $n - 1$}
\zi    \Do
\zi       \Comment{Let $N = \set{\lambda_0, \dotsc, \lambda_{n - 1}}$}
\li       $E_i := \set{ \lambda_0^{j_0} \dotsm \lambda_i \dotsm
  \lambda_{n - 1}^{j_{n - 1}} \mid j_k \in \set{\pm 1, \pm (t + 1)}}$ 
\zi $\cup \set{ \lambda_0^{j_0} \dotsm \lambda_i^{-1} \dotsm \lambda_{n - 1}^{j_{n - 1}} \mid j_k \in \set{\pm 1, \pm (t + 1)}}$ 
\li       $S_i := \sum_{e \in E_i} \proc{Eigenspace}(g, e)$ 
\li       \If{$\dim S_i = 2 \cdot 4^{n - 1} $ \label{main_alg_S_test}}
\zi       \Then
\li           \Return{$S_i$ \label{main_alg_success_return}}

          \End
\zi       \kw{end}
    \End
\zi \kw{end}
\end{codebox}
\end{figure}

\begin{thm} \label{thm_main_alg}
Assume Conjectures \ref{lem1_basevalues}, \ref{lem2_basevalues} and \ref{cl_flatdim}. Algorithm \ref{alg:tensor_decompose} is a Las Vegas algorithm. 
The algorithm has expected time complexity 
\begin{equation*}
\OR{(\xi(d) + d^3 \log(q) \log{\log(q^d)})\log{\log(q)} }
\end{equation*}
field operations.
\end{thm}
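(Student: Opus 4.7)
The plan is to separate correctness from complexity. For correctness, observe that Algorithm \ref{alg:tensor_decompose} returns only at line \ref{main_alg_success_return}, and only when $\dim S_i = 2 \cdot 4^{n-1}$; by Lemma \ref{lem_line_subspace} any such $S_i$ is then a flat in $W$, so every non-failing output is a correct flat. Termination with probability one then follows from Conjecture \ref{cl_flatdim}: whenever the outer \textbf{repeat} succeeds in producing a $g$ of order exactly $q-1$ together with a correctly identified set of base values, one of the indices $i$ must satisfy $\dim S_i = 2 \cdot 4^{n-1}$, and the inner \textbf{for} loop exits. Under the input hypothesis $m > n$, Conjectures \ref{lem1_basevalues} and \ref{lem2_basevalues} guarantee that $\proc{FindBaseValues}(E)$ returns a set $N$ of cardinality exactly $n$ whenever $|g| = q-1$, so the test on line \ref{main_alg_N_test1} is passed. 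Thus the procedure is Las Vegas.

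For the expected running time, the dominant contribution comes from the innermost search for a $g$ of pseudo-order dividing $q-1$. By Proposition \ref{sz_totient_prop} the proportion of elements in $G$ of order exactly $q-1$ is at least $1/(12\log\log(q))$, so the expected number of random draws needed until the outer loop's test succeeds is $O(\log\log(q))$. Each draw costs $\xi(d)$ field operations for the random element oracle plus $O(d^3 \log(q) \log\log(q^d))$ field operations for the pseudo-order computation of Section \ref{section:matrix_orders}. The product of these quantities gives the advertised bound
\[
O\bigl((\xi(d) + d^3 \log(q) \log\log(q^d))\log\log(q)\bigr).
\]

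It remains to verify that the remaining work fits inside this bound. Once $g$ is fixed, its eigenvalues are obtained by factoring its characteristic polynomial (degree $d$) over $\F_q$, which by Theorem \ref{thm_solve_univariate_polys} costs $O(d(\log d)^2 \log\log(d) \log(dq))$ expected field operations. The subroutine $\proc{FindBaseValues}$ manipulates sets of size $O(d^2)$ of eigenvalues and square roots of their ratios and can clearly be implemented in $\mathrm{poly}(d,\log q)$ field operations. Because $n = \log_4 d$, the inner \textbf{for} loop performs $O(\log d)$ iterations; since $g$ is semisimple with all eigenvalues in $\F_q$, the eigenspace decomposition is computed once by $O(d^3)$ linear algebra and each sum $S_i$ is then extracted from the precomputed decomposition. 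All these costs are dominated by the $d^3\log(q)\log\log(q^d)$ term already in the bound.

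The main obstacle in a fully rigorous write-up is the interaction between the two loops: strictly speaking the inner loop merely guarantees $|g| \mid q-1$, while Conjectures \ref{lem1_basevalues}--\ref{cl_flatdim} only apply when $|g|=q-1$. However, among elements with $|g| \mid q-1$ a constant-bounded-below proportion have full order $q-1$ (up to the $\log\log q$ factor of Proposition \ref{sz_totient_prop}), so the outer \textbf{repeat} is executed $O(\log\log(q))$ times in expectation, and its overhead is absorbed by the single $\log\log(q)$ factor already present in the stated complexity.
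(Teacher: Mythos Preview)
Your proposal is correct and follows essentially the same approach as the paper's proof: correctness via Lemma \ref{lem_line_subspace} at the return line, termination and the \(O(\log\log q)\) factor from Proposition \ref{sz_totient_prop} and the conjectures, and the per-iteration cost dominated by the pseudo-order computation. You are in fact slightly more explicit than the paper in two places: you cite Lemma \ref{lem_line_subspace} for the Las Vegas property (the paper just asserts the returned subspace is correct), and your final paragraph discussing the interaction between the two loops makes explicit what the paper handles in one sentence (``If $|g|$ is a proper divisor of $q-1$ then these lines might still succeed, and the probability that $|g|=q-1$ is high'').
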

\begin{proof}
  The expected number of iterations in the initial loop is $\OR{1}$.
  Hence the expected time for the loop is $\OR{\xi(d) + d^3 \log(q)
    \log{\log(q^d)}}$ field operations. If $\abs{g} = q - 1$ then line
  \ref{main_alg_N_test1} will succeed, and Conjecture \ref{cl_flatdim}
  asserts that line \ref{main_alg_S_test} will succeed for some $i$.
  If $\abs{g}$ is a proper divisor of $q - 1$ then these lines might
  still succeed, and the probability that $\abs{g} = q - 1$ is high.
  By Proposition \ref{sz_totient_prop}, the expected number of
  iterations of the outer repeat statement is $\OR{\log\log(q)}$.

  If line \ref{main_alg_success_return} is reached, then the algorithm
  returns a correct result and hence it is Las Vegas.

  To find the eigenvalues of $g$, we calculate the characteristic
  polynomial of $g$ using $\OR{d^3}$ field operations, and find its
  roots using Theorem \ref{thm_solve_univariate_polys}. By Conjectures \ref{lem1_basevalues} and
  \ref{lem2_basevalues}, the rest of the algorithm uses $\OR{d^2 n
    \log(q)}$ field operations. Thus the theorem follows.
\end{proof}

\subsubsection{Small field approach} \label{section:sz_tensor_small_field}

When $q$ is small, the feasibility of Algorithm
\ref{alg:tensor_decompose} is not guaranteed. In that case the
approach is to find standard generators of $G$ using permutation group
techniques, then enumerate all tensor products of the form
\eqref{irreducible_module_form} and for each one we determine if it is
isomorphic to $W$.

Since $q$ is polynomial in $d$, this will turn out to be an efficient
algorithm which is given as Algorithm \ref{alg:small_field_approach}.
It finds a permutation representation of $G \cong \Sz(q)$, which is
done using the following result.

\begin{lem} \label{thm_suzuki_perm_rep}
  There exists a Las Vegas algorithm that, given $\gen{X} \leqslant \GL(d,
  q)$ such that $q = 2^{2m + 1}$ with $m > 0$ and $\gen{X} \cong
  \Sz(q)$, finds an effective injective homomorphism $\Pi : \gen{X} \to
  \Sym{O}$ where $\abs{O} = q^2 + 1$. The algorithm has expected time complexity $\OR{q^2(\xi(d) + \abs{X} d^2 + d^3) + d^4}$ field operations.
  \end{lem}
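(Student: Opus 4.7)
Let $G = \gen{X}$. The plan is to construct $\Pi$ as the natural doubly transitive action of $G$ on a set of size $q^2+1$, realised as the $G$-orbit of a specific $1$-dimensional subspace of the given module $W = \F_q^d$ under the projective (right) action. The strategy rests on the defining-characteristic representation theory referenced in Section \ref{section:sz_indecomposables}: every absolutely irreducible $\F_q G$-module is a Galois-twisted tensor product of natural $\Sz(q)$-modules, and any Sylow $2$-subgroup acts on such a module with a unique fixed $1$-space (the highest-weight line, which is preserved by Galois twists and by tensor products).

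First I would find an involution $j \in G$ by random search. By Theorem \ref{thm_suzuki_props} and Proposition \ref{pr_sz_subgroups_conj}, the proportion of involutions in $\Sz(q)$ is $1/q^2$, so $\OR{q^2}$ trials suffice in expectation, each costing $\OR{\xi(d)}$ for a random element plus $\OR{d^3}$ to square and test. I would then run the Bray algorithm of Section \ref{section:inv_centraliser} to obtain generators of $\Cent_G(j)$, which by Theorem \ref{thm_suzuki_props} equals the Sylow $2$-subgroup $F$ containing $j$: it has order $q^2$ and is generated by $\OR{\log q}$ elements. Then I would compute the unique common fixed $1$-space $P$ of $F$ on $W$ as $\bigcap_{f} \set{v \in W : v(f - I_d) = 0}$, intersecting over a generating set of $F$, at total cost $\OR{d^3 \log q}$.

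Next I would enumerate the orbit $O = P^G$ by a standard orbit algorithm: apply each $x \in X$ to every stored point, canonicalise projectively (scale so that the first non-zero coordinate is $1$), and use a hash table for membership testing. Since $P$ is the highest-weight line, $G_P = \Norm_G(F)$, which is conjugate to $\mathcal{F}\mathcal{H}$ and by Theorem \ref{sz_maximal_subgroups} has index $q^2 + 1$; hence $\abs{O} = q^2 + 1$. The orbit enumeration costs $\OR{q^2 \abs{X} d^2}$ for matrix-vector products plus $\OR{q^2 d}$ for canonicalisation and hashing. Effectiveness follows: given any $g \in G$, $\Pi(g)$ is recovered by applying $g$ to each of the $q^2+1$ stored representatives and looking up the canonicalised image, in $\OR{q^2 d^2}$ field operations per evaluation.

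Summing these costs yields the claimed $\OR{q^2(\xi(d) + \abs{X} d^2 + d^3) + d^4}$, where the $d^4$ term absorbs one-time linear-algebra setup. The algorithm is Las Vegas because both the Bray algorithm and the pseudo-order routine of \cite{crlg95} are. The main obstacle to making this rigorous is the representation-theoretic claim that $F$ has a unique fixed $1$-space in every absolutely irreducible $\F_q G$-module, and that its stabiliser is precisely $\Norm_G(F)$: both follow from the highest-weight analysis underlying Section \ref{section:sz_indecomposables} and \cite{MR1901354}, but one must verify that the tensor-product structure preserves uniqueness across Galois twists.
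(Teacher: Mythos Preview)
Your approach is plausible and reaches the stated complexity, but it differs substantially from the paper's. The paper does not look for an involution or invoke Bray. Instead it picks a random $g$ with $\abs{g}\mid q-1$ (expected $\OR{\log\log q}$ selections), then repeatedly picks random $x$ and sets $h=g^{x}$ until $[g,h]\neq1$ and $[g,h]^{4}=1$. Since every non-trivial $2$-element of $\Sz(q)$ lies in a unique Sylow $2$-subgroup of exponent $4$, this test succeeds exactly when $g$ and $h$ share an ovoid fixed point, and it holds with probability $\Theta(1/q^{2})$---this is where the $q^{2}$ factor enters. One then runs the MeatAxe on $W|_{\gen{g,h}}$: because $\gen{g,h}$ sits inside a conjugate of $\mathcal{F}\mathcal{H}$ and is therefore simultaneously triangularisable in every Galois-twisted tensor power of $V$, all composition factors are $1$-dimensional, and the MeatAxe (cost $\OR{d^{4}}$, whence that term in the bound) returns a line $P$ whose $G$-orbit has size $q^{2}+1$.

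Compared with the paper, your route trades the MeatAxe for a direct null-space computation and the clean $[g,h]^{4}=1$ test for the Bray machinery. The paper only needs the soft fact that a subgroup of $\mathcal{F}\mathcal{H}$ is triangularisable in $W$, which is immediate from the tensor structure; you need the genuinely harder statement that the Sylow $2$-subgroup has a \emph{unique} fixed line in $W$---and note that $(V_{1}\otimes V_{2})^{F}$ strictly contains $V_{1}^{F}\otimes V_{2}^{F}$ in general for unipotent $F$, so this really does require the highest-weight argument you flag. You also do not specify how to certify that enough Bray elements have been collected, nor address that Bray's output need not be uniform in $\Cent_{G}(j)$; a workable fix is to keep drawing Bray elements until their common fixed space in $W$ drops to dimension~$1$. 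Conversely, your approach pins down $P$ unambiguously as the highest-weight line, whereas the paper's MeatAxe could in principle return a common eigenline of $g,h$ that is not an ovoid point---a possibility the paper does not explicitly rule out, though it is rare and detectable from the orbit size.
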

\begin{proof}
  By Theorem \ref{thm_suzuki_props}, $\Sz(q)$ acts doubly
  transitively on a set of size $q^2 + 1$. Hence $G = \gen{X}$ also
  acts doubly transitively on $O$, where $\abs{O} = q^2 + 1$, and we
  can find the permutation representation of $G$ if we can find a
  point $P \in O$. The set $O$ is a set of projective points of
  $\F_q^d$, and the algorithm proceeds as follows.
\begin{enumerate}
\item Choose random $g \in G$. Repeat until $\abs{g} \mid q - 1$.
\item Choose random $x \in G$ and let $h = g^x$. Repeat until $[g, h]^4 = 1$ and $[g, h] \neq 1$.
\item Find a
  composition series for the module $M$ of $\gen{g, h}$ and let $P \subseteq M$ be the submodule of
  dimension $1$ in the series.
\item Find the orbit $O = P^G$ and compute the permutation group $S \leqslant \Sym{O}$ of $G$ on $O$, together with an effective isomorphism $\Pi : G \to S$.
\end{enumerate}

By Proposition \ref{pr_sz_subgroups_conj}, elements in $G$ of order
dividing $q - 1$ fix two points of $O$, and hence $\gen{g, h}
\leqslant G_P$ for some $P \in O$ if and only if $g$ and $h$ have a
common fixed point. All composition factors of $M$ have dimension $1$, so a composition series
of $M$ must contain a submodule $P$ of dimension $1$. This submodule
is a fixed point for $\gen{g, h}$ and its orbit must have size $q^2 +
1$, since $\abs{G} =
q^2 (q^2 + 1) (q - 1)$ and $\abs{G_P} = q^2 (q - 1)$. It follows that
$P \in O$.

All elements of $G$ of even order lie in the derived group of a
stabiliser of some point, which is also a Sylow $2$-subgroup of $G$,
and the exponent of this subgroup is $4$. Hence $[g, h]^4 = 1$ if and
only if $\gen{g, h}$ lie in a stabiliser of some point, if and only
if $g$ and $h$ have a common fixed point.

To find the orbit $O = P^G$ we can compute a Schreier tree on the
generators $X$, with
$P$ as root, using $\OR{\abs{X} \abs{O} d^2}$ field operations. Then
$\Pi(g)$ can be computed for any $g \in \gen{X}$ using $\OR{\abs{O}
  d^2}$ field operations, by computing the permutation on $O$
induced by $g$. Hence $\Pi$ is effective, and its image $S$ is found by computing
the image of each element of $X$. Therefore the algorithm is correct
and it is clearly Las Vegas.

We find $g$ using expected $\OR{(\xi(d) + d^3 \log(q) \log{\log(q^d)})
\log {\log(q)}}$ field operations and we find $h$ using expected
$\OR{(\xi(d) + d^3)q^2}$ field operations. Then $P$ is found using the MeatAxe, in expected
$\OR{d^4}$ field operations. Thus the result follows.
\end{proof}

\begin{pr} \label{conj:sz_perm_iso}
Let $G = \gen{X} \leqslant \Sym{\OV}$ such that $G \cong \Sz(q) = H$. There exists a Las Vegas algorithm that finds $x, h, z \in G$ as $\SLP$s in $X$ such that the map
\begin{equation}
\begin{split}
x &\mapsto S(1, 0) \\
h &\mapsto M^{\prime}(\lambda) \\
z &\mapsto T
\end{split}
\end{equation}
is an isomorphism. Its time complexity is $\OR{q^3 \log(q)^5}$. The length of the returned $\SLP$s are $\OR{q}$.
\end{pr}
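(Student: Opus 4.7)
The plan is to treat $G$ as a permutation group on $\OV$ (of degree $q^2 + 1$) and combine random search with the structural properties of $\Sz(q)$ from Theorem \ref{thm_suzuki_props} to locate the three standard generators as $\SLP$s in $X$.

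First, I would compute a base and strong generating set for $G$ acting on $\OV$. By Theorem \ref{thm_suzuki_props}, no nontrivial element of $\Sz(q)$ fixes more than two points, so a base of length three suffices. Running Schreier--Sims together with product replacement on a permutation group of degree $q^2 + 1$ yields strong generators as $\SLP$s in $X$, and supports efficient membership testing and sifting in subsequent phases. Next, I would find $h \in G$ of order $q - 1$ by random sampling; by Proposition \ref{sz_totient_prop} this succeeds in expected $\OR{\log\log q}$ trials, where the pseudo-order of a permutation on $q^2 + 1$ points is computed in $\OR{q^2 \log q}$ time via cycle decomposition. By Theorem \ref{thm_suzuki_props}, such $h$ fixes exactly two points $P, Q \in \OV$.

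To construct the involution $z$, I would use the BSGS to sift for elements $g_1 \in G$ with $P g_1 = Q$ and $g_2 \in G_Q$ with $(Q g_1) g_2 = P$; the product $z := g_1 g_2$ swaps $P$ and $Q$, and since $G_{\{P, Q\}} = \Norm_G(\gen{h}) \cong \Dih_{2(q - 1)}$ is dihedral, $z$ is automatically an involution inverting $h$. Finally, I would find $x$ by sampling random elements of $G_P$ (obtained from the stabiliser chain) and keeping one of order $4$; by Proposition \ref{sz_trace0_order4}, the proportion of order-$4$ elements in $G_P \cong \mathcal{F}\mathcal{H}$ is $\Theta(1/q)$, so $\OR{q}$ trials suffice, contributing the dominant $\OR{q^3 \log q}$ field operations and the $\OR{q}$ $\SLP$ length.

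It remains to verify that $(x, h, z) \mapsto (S(1, 0), M^{\prime}(\lambda), T)$ extends to an isomorphism. Since all involutions of $\Sz(q)$ are conjugate, all cyclic subgroups of order $q - 1$ are conjugate (Proposition \ref{pr_sz_subgroups_conj}), and all order-$4$ elements of $\mathcal{F}$ lie in a single $\mathcal{F}\mathcal{H}$-orbit (by \eqref{sz_matrix_id1}--\eqref{sz_matrix_id2}), some automorphism of $\Sz(q)$ carries our triple to $(S(1, 0), M^{\prime}(\lambda^{\prime}), T)$ for some primitive $\lambda^{\prime} \in \F_q$. To match the specified $\lambda$, I would replace $h$ by an appropriate Galois conjugate $h^k$: since the outer automorphism group of $\Sz(q)$ is $\Gal(\F_q / \F_2)$ of order $\log_2 q = 2m + 1$, there are at most $\log_2 q$ candidates, each verified by checking a bounded presentation of $\Sz(q)$ (for example that of \cite{bray_sz_presentation}) using BSGS-based membership testing.

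The main obstacle is this final calibration step: distinguishing the Galois conjugates of $h$ and matching $x$ to the correct representative of the $\mathcal{H}$-orbit of $S(1, 0)$, rather than merely producing a triple that generates a copy of $\Sz(q)$. This is handled by a bounded number of presentation checks, each in time polynomial in $q$, giving the overall bound $\OR{q^3 \log(q)^5}$ after absorbing the logarithmic factors coming from Schreier--Sims preprocessing, sifting through the stabiliser chain, and repeated order computations on permutations of degree $q^2 + 1$.
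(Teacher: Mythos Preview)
The paper does not argue this statement at all; its proof is a one-line citation of \cite[Theorem~1]{sz_blackbox_gens}. Your reconstruction, however, has a genuine gap in the isomorphism step.

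The error is the assertion that ``some automorphism of $\Sz(q)$ carries our triple to $(S(1,0), M'(\lambda'), T)$''. The three separate transitivity facts you quote do not combine into a single automorphism acting on the whole triple. Concretely: after conjugating so that $\gen{h} = \mathcal{H}$ and then conjugating the reflection $z \in \Norm_G(\mathcal{H}) \cong \Dih_{2(q-1)}$ to $T$ by an element of $\mathcal{H}$, you have $h = M'(\lambda')$ for some primitive $\lambda'$, $z = T$, and $x = S(a,b)$ for some $a \neq 0$. But the centraliser of the pair $(M'(\lambda'), T)$ in $\Aut(\Sz(q))$ is trivial: $\Cent_G(M'(\lambda')) = \mathcal{H}$, no nontrivial element of $\mathcal{H}$ commutes with $T$ since $T$ inverts $\mathcal{H}$, and a nontrivial field automorphism sends $M'(\lambda')$ to $M'(\lambda'^{2^i}) \neq M'(\lambda')$. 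So there is no remaining freedom to move $S(a,b)$ to $S(1,0)$, and the map you write down is an isomorphism only for one specific $x$ out of the $q^2 - q$ order-$4$ elements of $G_P$. Cycling through the $2m+1$ Galois conjugates of $h$ adjusts $\lambda$ but does nothing for $x$; your presentation check will therefore fail with probability $1 - \OR{1/q^2}$ on every pass, and neither the stated running time nor the $\SLP$ length $\OR{q}$ survives the resulting search. (Incidentally, the auxiliary claim that the order-$4$ elements of $\mathcal{F}$ form a single $\mathcal{FH}$-orbit is also false --- there are two, since an element of order $4$ in $\Sz(q)$ is not conjugate to its inverse --- but that is not the essential problem.)

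What is missing is a mechanism that pins $x$ down relative to $h$ and $z$ via an explicit relation, rather than merely by order and incidence data; this is precisely what the algorithm in the cited reference provides.
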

\begin{proof}
Follows from \cite[Theorem 1]{sz_blackbox_gens}.
\end{proof}

\begin{figure}
\begin{codebox} 
\refstepcounter{algorithm}
\label{alg:small_field_approach}
\Procname{\kw{Algorithm} \ref{alg:small_field_approach}: $\proc{SmallFieldTensorDecompose}(X)$}
\li \kw{Input}: Generating set $X$ for $G \cong \Sz(q)$ with natural
module $W$, where 
\zi $\dim W = 4^n$, $n > 1$, $q = 2^{2m + 1}$, $m > 0$ and $W$ is absolutely
\zi  irreducible and over $\F_q$
\li \kw{Output}: A change of basis matrix $c$ which exhibits $W$ as \eqref{irreducible_module_form}. 
\zi \Comment{Find permutation representation, \emph{i.e.} permutation group and}
\zi  \Comment{corresponding isomorphism}
\li $(\pi, P_G) := \proc{SuzukiPermRep}(G)$ 
\li $x,h,z := \proc{StandardGens}(P_G)$ 
\li Evaluate the $\SLP$s of $x,h,z$ on $G$ to obtain the set $Y$.
\li $H := \gen{Y}$
\li $T := \set{(i_1, \dotsc, i_n) \mid 0 \leqslant i_1 < \dotsb < i_n \leqslant 2m}$ 
\zi \Comment{Let $V$ be the natural module of $H$}
\li \For{$(i_1, \dotsc, i_n) \in T$}
\zi \Do
\li   $U := V^{\psi^{i_1}} \otimes \dotsm \otimes V^{\psi^{i_n}}$ 
\zi   \Comment{Find isomorphism between modules}
\li   $(\id{flag}, c) := \proc{ModuleIsomorphism}(U, W)$ 
\li   \If{$\id{flag} = \const{true}$}
\zi   \Then
\li      \Return{$c$}
      \End 
\zi   \kw{end}
    \End 
\zi \kw{end}
\end{codebox}
\end{figure}

\begin{thm} \label{sz_small_field_tensor}
Assume Conjecture \ref{conj:sz_perm_iso}. Algorithm \ref{alg:small_field_approach} is a Las Vegas algorithm with expected time complexity 
\[ \OR{q^2(\xi(d) + \abs{X} d^2 + d^3 \log\log(q) + q^2 \log(q)^3) + d^3 (\abs{X} \binom{2m}{n - 1} + d)}\] field operations.
\end{thm}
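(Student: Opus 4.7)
The plan is to verify correctness and then to aggregate the time complexity contributions of the four stages of Algorithm~\ref{alg:small_field_approach}.

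For correctness I would appeal to Steinberg's tensor product theorem together with Section~\ref{section:sz_indecomposables}, which jointly imply that $W$ is isomorphic as a $G$-module to $V^{\psi^{i_1}} \otimes \dotsm \otimes V^{\psi^{i_n}}$ for some strictly increasing tuple in $\set{0, 1, \dotsc, 2m}$, where $V$ is the natural module of the standard copy $H = \gen{Y}$ reconstructed in the first three lines of the algorithm by Lemma~\ref{thm_suzuki_perm_rep} and Proposition~\ref{conj:sz_perm_iso}. The loop enumerates exactly this family, and for each candidate invokes the MeatAxe module isomorphism subroutine of Section~\ref{section:meataxe}, which is Las Vegas and returns a change of basis precisely when the two modules are isomorphic. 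Hence the algorithm terminates with a correct $c$ and is itself Las Vegas; moreover, by the uniqueness in the tensor product theorem up to an overall Frobenius shift, one may fix $i_1 = 0$ and so restrict to the $\binom{2m}{n - 1}$ remaining tuples.

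For complexity I would aggregate four contributions. The call to \proc{SuzukiPermRep} contributes $\OR{q^2(\xi(d) + \abs{X} d^2 + d^3 \log\log(q)) + d^4}$ field operations by Lemma~\ref{thm_suzuki_perm_rep}, the $\log\log(q)$ factor coming from the pseudo-order subcomputation hidden there. The call to \proc{StandardGens} on the resulting permutation group of degree $q^2 + 1$ contributes $\OR{q^4 \log(q)^3}$ field operations, obtained from the operation count in Proposition~\ref{conj:sz_perm_iso} after charging $\OR{q^2}$ field operations per permutation multiplication in degree $q^2 + 1$. The three SLPs returned have length $\OR{q}$ by that proposition, so their evaluation on the matrix generating set $X$ to produce $Y$ costs $\OR{q d^3}$ field operations, which is absorbed by the other terms.

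The main loop is the most transparent part. Each iteration assembles the three $d \times d$ generators of $U$ as Kronecker products of $n$ twisted $4 \times 4$ matrices at negligible cost $\OR{d^2 n}$ per generator, and then invokes the MeatAxe module isomorphism test at $\OR{\abs{X} d^3}$ field operations by Section~\ref{section:meataxe}. With at most $\binom{2m}{n - 1}$ iterations this contributes $\OR{\abs{X} d^3 \binom{2m}{n - 1}}$, and adding the four contributions and retaining only the dominant terms yields the stated bound. The main obstacle will be bookkeeping rather than any new mathematical idea: one has to justify the charge of $\OR{q^2}$ per permutation operation in the \proc{StandardGens} analysis, and one must verify the Frobenius-shift normalisation that collapses the literal loop count of $\binom{2m + 1}{n}$ down to the $\binom{2m}{n - 1}$ claimed in the theorem.
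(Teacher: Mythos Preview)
Your overall plan matches the paper's: correctness via Steinberg plus the MeatAxe isomorphism test being Las Vegas, and complexity obtained by summing the permutation-representation construction, the standard-generators call, the $\SLP$ evaluation, and the loop of $\binom{2m}{n-1}$ module-isomorphism tests.

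Two bookkeeping points diverge from the paper, and one of them is an actual arithmetic slip on your side. First, the paper attributes the $q^2 d^3 \log\log(q)$ term to the $\SLP$-evaluation step, asserting that the $\SLP$s of $x,h,z$ have length $\OR{q^2 \log\log(q)}$; it does not come from Lemma~\ref{thm_suzuki_perm_rep}, whose stated complexity carries no $\log\log(q)$ factor on the $d^3$ term, so your appeal to a ``hidden'' pseudo-order cost there is not supported by the lemma as stated. Second, your derivation of the $q^4\log(q)^3$ contribution from \proc{StandardGens} does not work arithmetically: Proposition~\ref{conj:sz_perm_iso} gives $\OR{q^3\log(q)^5}$, and charging $\OR{q^2}$ field operations per permutation operation yields $\OR{q^5\log(q)^5}$, not $\OR{q^4\log(q)^3}$. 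The paper's own proof simply invokes Proposition~\ref{conj:sz_perm_iso} and Lemma~\ref{thm_suzuki_perm_rep} without isolating that term, so there is an apparent mismatch between the auxiliary results as stated and the claimed bound; you will not be able to reconcile it purely from the lemmas as written, and your attempt to do so has produced the wrong exponent.
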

\begin{proof}
  The permutation representation $\pi$ can be found using Lemma
  \ref{thm_suzuki_perm_rep}, and the elements $x,h,z$ are found using
  Conjecture \ref{conj:sz_perm_iso}. Testing if modules are
  isomorphic can be done using the MeatAxe.
  
  If the algorithm returns an element $c$ then the change of basis
  determined by $c$ exhibits $W$ as a tensor product, so the algorithm
  is Las Vegas.
  
  The lengths of the $\SLP$s of $x,h,z$ is $\OR{q^2 \log\log(q)}$, so
  we need $\OR{d^3 q^2 \log\log(q)}$ field operations to obtain $Y$.
  The set $T$ has size $\binom{2m}{n - 1}$. Module isomorphism testing
  uses $\OR{\abs{X} d^3}$ field operations. Hence by Conjecture
  \ref{conj:sz_perm_iso} and Theorem \ref{thm_suzuki_perm_rep} the
  time complexity of the algorithm is as stated.
\end{proof}

\subsection{Constructive recognition}

Finally, we can now state and prove our main theorem.

\begin{thm} \label{cl_sz_constructive_recognition} Assume the Suzuki Conjectures and an
  oracle for the discrete logarithm problem in $\F_q$. There exists a
  Las Vegas algorithm that, given $\gen{X} \leqslant \GL(d, q)$ satisfying the assumptions in Section \ref{section:algorithm_overview}, with
  $q = 2^{2m + 1}$, $m > 0$ and $\gen{X} \cong \Sz(q)$, finds an effective
  isomorphism $\varphi : \gen{X} \to \Sz(q)$ and performs preprocessing for constructive membership testing. The algorithm has expected
  time complexity $\OR{\xi(d) (d^2 + (\log\log(q))^2) + d^5 \log\log(d) + 
    d^4 \abs{X} + d^3 \log(q) \log \log(q) ( \log(d) + \log\log(q)) + \log(q) (\log \log(q))^3 + \chi_D(q) (\log \log(q))^2}$ field operations.

The inverse of $\varphi$ is also effective. Each image of $\varphi$ can be computed using $\OR{d^3}$ field operations,
and each pre-image using expected 
\[ \OR{\xi(d) + \log(q)^3 + d^3 \log(q)(\log\log(q))^2}\] field
operations.
\end{thm}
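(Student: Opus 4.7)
The plan is to assemble the three components developed in Sections \ref{section:sz_tensor_decompose}, \ref{section:conjugating_element} and \ref{section:sz_constructive_membership}, following the outline at the start of Section \ref{section:suzuki_main_results}. Write $d = 4^n$. If $n > 1$, first apply either Algorithm \ref{alg:tensor_decompose} (when $m > n$, by Theorem \ref{thm_main_alg}) or Algorithm \ref{alg:small_field_approach} (when $q$ is small, by Theorem \ref{sz_small_field_tensor}) to produce a change of basis exhibiting $W$ as a tensor product of the form \eqref{irreducible_module_form}. Reading off one tensor factor via the change of basis yields an effective isomorphism $\theta : \gen{X} \to G_4$ onto some $G_4 \leqslant \GL(4, q)$; when $n = 1$ we simply set $\theta$ to the identity. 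Since the only absolutely irreducible tensor-indecomposable representation of $\Sz(q)$ in defining characteristic is the natural module (Section \ref{section:sz_indecomposables}), $G_4$ is a conjugate of the standard copy $\Sz(q)$ in $\GL(4, q)$.

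Next, Theorem \ref{thm_conj_problem} applied to $G_4$ produces $h \in \GL(4, q)$ with $G_4^h = \Sz(q)$, and setting $\varphi(g) = \theta(g)^h$ yields the required isomorphism $\varphi : \gen{X} \to \Sz(q)$; each image $\varphi(g)$ costs one conjugation in $\GL(d, q)$ plus the reading off of a $4 \times 4$ block, hence $\OR{d^3}$ field operations. Finally, Theorem \ref{sz_thm_pre_step} finds standard generators for $\O2(G_{P_{\infty}})$ and $\O2(G_{P_0})$ as $\SLP$s in a generating set of $\Sz(q)$; these are transported back along $\varphi^{-1}$ by evaluating the $\SLP$s on the corresponding preimages in $\gen{X}$, completing the preprocessing required by Algorithm \ref{alg:sz_main_alg}.

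Effectiveness of $\varphi^{-1}$ then follows by applying Algorithm \ref{alg:sz_main_alg} (Theorem \ref{thm_element_to_slp}) to any given $s \in \Sz(q)$: it produces an $\SLP$ of length $\OR{\log(q) (\log\log q)^2}$ expressing $s$ in the standard generators, which, once evaluated on their preimages in $\gen{X}$ at $\OR{d^3}$ field operations per group operation, yields $\varphi^{-1}(s)$ at the stated cost. The overall procedure is Las Vegas as a composition of Las Vegas algorithms, and summing the expected costs of Theorems \ref{thm_main_alg}, \ref{sz_small_field_tensor}, \ref{thm_conj_problem} and \ref{sz_thm_pre_step} produces the claimed bound. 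The main obstacle is the complexity bookkeeping: the tensor decomposition runs in dimension $d$, the conjugation and preprocessing run in dimension $4$, and evaluations in $\gen{X}$ inflate each group operation by a $\OR{d^3}$ factor relative to the same operation in $\Sz(q)$. Tracking these factors alongside the $\sigma_0$, $\log\log$ and $\chi_D(q)$ contributions is what generates each summand (the $d^5 \log\log d$, $d^4 \abs{X}$, $\xi(d)(\log\log q)^2$, $\chi_D(q)(\log\log q)^2$, and mixed $d^3 \log(q) \log\log(q)$ terms) in the final bound.
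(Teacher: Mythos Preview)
Your proposal is correct and follows essentially the same route as the paper's proof: tensor decompose (via Algorithm \ref{alg:tensor_decompose} or \ref{alg:small_field_approach} according to whether $m > n$), project to a $4$-dimensional factor, conjugate into the standard copy via Theorem \ref{thm_conj_problem}, and invert $\varphi$ through Algorithm \ref{alg:sz_main_alg}. Two small points the paper makes explicit that you elide: Algorithm \ref{alg:tensor_decompose} only returns a flat, so one must still invoke the general tensor decomposition algorithm of Section \ref{section:tensor_decomposition} to obtain the change of basis $x$; and the projected $4 \times 4$ matrices may fail to have determinant $1$ (since $x$ lies only in $\Norm_{\GL(d,q)}(G)$), which the paper fixes by scaling via the unique $4$th root in $\F_q$.
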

\begin{proof}
Let $V$ be the module of $G = \gen{X}$. From Section \ref{section:sz_indecomposables} we know that $d = 4^n$ where $n \geqslant 1$. The algorithm proceeds as follows:
\begin{enumerate}
\item If $d = 4$ then use Theorem \ref{thm_conj_problem} to obtain $y \in \GL(4, q)$ such that $G^y = \Sz(q)$, and hence an effective isomorphism $\varphi : G \to \Sz(q)$ defined by $g \mapsto g^y$.
\item If $m > n$, use Algorithm \ref{alg:tensor_decompose} to find a flat $L \leqslant V$. Then use the tensor decomposition algorithm described in Section \ref{section:tensor_decomposition} with $L$, to obtain $x \in \GL(d, q)$ such
  the change of basis determined by $x$ exhibits $V$ as a tensor
  product $U \otimes W$, with $\dim U = 4$. Let $G_U$ and $G_W$ be the
  images of the corresponding representations.
\item If instead $m \leqslant n$ then use Algorithm \ref{alg:small_field_approach} to find $x$.
\item Define $\rho_U : G_{U \otimes W} \to G_U$ as $g_u \otimes g_w \mapsto g_u$ and let $Y = \set{\rho_U(g^x) \mid g \in X}$. Then $\gen{Y}$ is conjugate to $\Sz(q)$.
\item Use Theorem \ref{thm_conj_problem} to get $y \in \GL(4, q)$ such that $\gen{Y}^y = \Sz(q)$. 
\item An effective isomorphism $\varphi : G \to \Sz(q)$ is given by $g \mapsto \rho_U(g^x)^y$.
\end{enumerate}
The map $\rho_U$ is straightforward to compute, since given $g \in
\GL(d, q)$ it only involves dividing $g$ into submatrices of degree
$4^{n - 1}$, checking that they are scalar multiples of each other and
returning the $4 \times 4$ matrix consisting of these scalars. Since
$x$ might not lie in $G$, but only in $\Norm_{\GL(d, q)}(G) \cong G
{:} \F_q$, the result of $\rho_U$ might not have determinant $1$.
However, since every element of $\F_q$ has a unique $4$th root, we can
easily scale the matrix to have determinant $1$. Hence
by Theorems \ref{thm_main_alg}, \ref{sz_small_field_tensor} and
\ref{thm_conj_problem}, the algorithm is Las Vegas and any image of
$\varphi$ can be computed using $\OR{d^3}$ field operations.

In the case where we use Algorithm \ref{alg:small_field_approach} we
have $m \leqslant n$, hence $\binom{2m}{n-1} < d$ and $q \leqslant
d$. The expected time complexity to find $x$ in this case is
$\OR{\xi(d) d^2 + d^4 (\abs{X} + d \log \log(d))}$ field operations.

By Theorem \ref{thm_main_alg}, finding $L$ uses $\OR{(\xi(d) + d^3 \log(q) \log{\log(q^d)})\log{\log(q)} }$ field operations. From Section \ref{section:tensor_decomposition},
finding $x$ uses $\OR{d^3 \log(q)}$ field operations when a flat $L$
is given. By Theorem \ref{thm_conj_problem}, finding $y$ uses
expected $\OR{(\xi + \chi_D(q) + \log(q)\log\log(q))(\log\log(q))^2 + \abs{Y} }$ field operations, given a random
element oracle for $\gen{Y}$ that finds a random element using
$\OR{\xi}$ field operations. In this case we can construct random
elements for $\gen{Y}$ using the random element oracle for $\gen{X}$,
and then we will find them in $\OR{\xi(d)}$ field operations.

Hence the expected time complexity is as stated. Finally,
$\varphi^{-1}(g)$ is computed by first using Algorithm
\ref{alg:sz_main_alg} to obtain an $\SLP$ of $g$ and then evaluating
it on $X$. The necessary precomputations in Theorem
\ref{sz_thm_pre_step} have already been made during the application of
Theorem \ref{thm_conj_problem}, and hence it follows from Theorem
\ref{thm_element_to_slp} that the time complexity for computing the
pre-image of $g$ is as stated.
\end{proof}

\section{Small Ree groups}
Here we will use the notation from Section \ref{section:ree_theory}.
We will refer to Conjectures \ref{conj:ree_conjugacy}, \ref{conj:tensor_decomposition_dim7},
\ref{conj:tensor_decomposition_mixed_dim},
\ref{conj:tensor_decomposition_dim_27}, \ref{conj:ree_perm_iso} and \ref{lem:symmetric_square} simultaneously as the \emph{small Ree Conjectures}. We now
give an overview of the algorithm for constructive recognition and
constructive membership testing. It will be formally proved as Theorem
\ref{cl_ree_constructive_recognition}.

\begin{enumerate}
\item Given a group $G \cong \Ree(q)$, satisfying the assumptions in
  Section \ref{section:algorithm_overview}, we know from Section
  \ref{section:ree_indecomposables} that the module of $G$ is
  isomorphic to a tensor product of twisted copies of either the natural
  module of $G$ or its $27$-dimensional module. Hence we first tensor decompose this module. This is
  described in Section \ref{section:ree_tensor_decompose}.

\item The resulting group has degree $7$ or $27$. In the latter case we need to decompose it into degree $7$. This is described in Section \ref{section:decompose_symsquare}.

\item Now we have a group of degree $7$, so it is a conjugate of
  the standard copy. We therefore find a conjugating element. This is
  described in Section \ref{section:ree_conjugacy}.

\item Finally we are in $\Ree(q)$. Now we can perform preprocessing for constructive membership testing and other problems we want to solve. This is described in Section \ref{section:ree_constructive_membership}.

\end{enumerate}

Given a discrete logarithm oracle, the whole process has time
complexity slightly worse than $\OR{d^6 + \log(q)^3}$ field operations,
assuming that $G$ is given by a bounded number of generators.

\subsection{Recognition} \label{section:small_ree_recognition}

We now consider the question of non-constructive recognition of
$\Ree(q)$, so we want to find an algorithm that, given $\gen{X} \leqslant
\GL(d, q)$, decides whether or not $\gen{X} \cong \Ree(q)$. We will only
consider this problem for the standard copy, \emph{i.e.} we will only answer the question whether or not $\gen{X} = \Ree(q)$.

\begin{thm} \label{thm_ree_standard_recognition}
  There exists a Las Vegas algorithm that, given $\gen{X} \leqslant \GL(7,
  q)$, decides whether or not $\gen{X} = \Ree(q)$. The algorithm has expected time complexity $\OR{\sigma_0(\log(q))(\abs{X} + \log(q))}$ field
  operations.
\end{thm}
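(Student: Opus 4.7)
The plan is to mirror the structure of Theorem~\ref{thm_sz_standard_recognition}, exploiting that the list of maximal subgroups of $\Ree(q)$ in Proposition~\ref{ree_maximal_subgroup_list} is extremely well-behaved: every maximal subgroup is \emph{either} reducible on the natural $7$-dimensional module \emph{or} conjugate to $\Ree(s)$ for some proper subfield $\F_s < \F_q$. Thus, in contrast to the Suzuki case, no metabelian commutator trick will be required.

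Concretely, the algorithm I propose performs three tests and returns \texttt{true} if and only if all three pass:
\begin{enumerate}
\item For each $x \in X$, verify that $x \in \Ree(q)$. Using the alternative definition from Section~\ref{section:alt_definition}, this amounts to checking that $x$ preserves the bilinear form $J$ of \eqref{standard_bilinear_form} and is a fixed point of the exceptional outer automorphism $\Psi$ of $\G2(q)$. Each check uses $\OR{1}$ field operations since $d = 7$ is constant, so the step costs $\OR{\abs{X}}$ field operations in total.
\item Use the MeatAxe (Section~\ref{section:meataxe}) to verify that the natural module is absolutely irreducible. This has expected cost $\OR{\abs{X}}$ field operations with $d = 7$.
\item Use the algorithm of Section~\ref{section:smallerfield} to verify that $\gen{X}$ cannot be written over a proper subfield modulo scalars, at expected cost $\OR{\sigma_0(\log(q))(\abs{X} + \log(q))}$ field operations.
\end{enumerate}

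For correctness, if all three tests pass then $\gen{X} \leqslant \Ree(q)$ by Step~1, and $\gen{X}$ is not contained in any maximal subgroup: by Proposition~\ref{ree_maximal_subgroup_list}, every maximal subgroup of $\Ree(q)$ other than the subfield subgroups $\Ree(s)$ is reducible on the natural module, and these are ruled out by Step~2; the subfield maximals $\Ree(s)$ are written over $\F_s$ (possibly after a conjugation), so they are ruled out by Step~3. Hence $\gen{X} = \Ree(q)$. Conversely, if $\gen{X} = \Ree(q)$ then all three tests will pass, since $\Ree(q)$ is absolutely irreducible on its natural module and is not writable over any proper subfield. The MeatAxe and the subfield algorithm are both Las Vegas, so the overall algorithm is Las Vegas; the dominant term in the complexity is Step~3, giving $\OR{\sigma_0(\log(q))(\abs{X} + \log(q))}$ field operations as required.

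The only potentially subtle point — and the closest thing to an obstacle — is the justification that absolute irreducibility suffices to exclude the non-subfield maximals, but this is precisely the content of the final clause of Proposition~\ref{ree_maximal_subgroup_list}, which was established earlier by inspection of the fixed point space of the cyclic normalisers (using Lemma~\ref{lem_1eigenvalue}) together with the evident reducibility of the parabolic and the involution centraliser (via Proposition~\ref{pr_inv_centraliser_split}). Given that result, the proof reduces to packaging the three algorithmic subroutines and summing their complexities.
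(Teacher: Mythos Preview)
Your overall strategy is exactly the paper's: verify $X \subseteq \Ree(q)$ elementwise, then rule out containment in a maximal subgroup by Proposition~\ref{ree_maximal_subgroup_list} via irreducibility and the subfield test. The complexity bookkeeping and the appeal to the maximal subgroup list are fine.

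The gap is in your Step~1. Checking that $x$ preserves the form $J$ only places $x$ in $\mathrm{O}(7,q)$ (or $\SO(7,q)$ with a determinant check), not in $G_2(q)$. The exceptional automorphism $\Psi$ of Section~\ref{section:alt_definition} is an automorphism of $G_2(q)$, and its fixed-point set inside $G_2(q)$ is $\Ree(q)$; but the concrete recipe for $\Psi$ (take a submatrix of the exterior square and apply the field automorphism $x \mapsto x^{3^m}$) can be evaluated on any matrix, and there is no reason that an element of $\SO(7,q)\setminus G_2(q)$ satisfying $\Psi(g)=g$ must lie in $\Ree(q)$. The paper therefore inserts an intermediate check: verify that $g \in G_2(q)$ by testing that $g$ preserves the Cayley algebra multiplication on the natural module (using a precomputed generator of $\Hom_G(M\otimes M, M)$). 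Only after this does the fixed-point test for $\Psi$ certify $g \in \Ree(q)$. Add this $G_2(q)$ membership check to your Step~1 and the argument goes through; it costs $\OR{1}$ field operations per generator, so the stated complexity is unaffected.
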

\begin{proof}
Let $G = \Ree(q)$, with natural module $M$. The algorithm proceeds as
follows:
\begin{enumerate}
\item Determine if $X \subseteq G$ and return \texttt{false} if
  not. All the following steps must succeed in order to conclude that a given $g
  \in X$ also lies in $G$.
\begin{enumerate}
\item Determine if $g \in \SO(7, q)$, which is true if $\det g = 1$
  and if $g J g^T = J$, where $J$ is given by
  \eqref{standard_bilinear_form} and where $g^T$ denotes the transpose
  of $g$.
\item Determine if $g \in G_2(q)$, which is true if $g$ preserves the
  algebra multiplication $\cdot$ of $M$. The multiplication table can easily be precomputed using the fact that if $v, w \in M$ then $v \cdot w = f(v \otimes w)$, where $f$ is a generator of
  $\Hom_G(M \otimes M, M)$ (which has dimension $1$).
\item Determine if $g$ is a fixed point of the exceptional outer
  automorphism of $G_2(q)$, mentioned in Section
  \ref{section:alt_definition}. Computing the automorphism amounts to
  extracting a submatrix of the exterior square of $g$ and then
  replacing each matrix entry $x$ by $x^{3^m}$.
\end{enumerate}

\item If $\gen{X}$ is not a proper subgroup of $G$, or
  equivalently if $\gen{X}$ is not contained in a maximal subgroup,
  return \texttt{true}. Otherwise return \texttt{false}. By
  Proposition \ref{ree_maximal_subgroup_list}, it is
sufficient to determine if $\gen{X}$ cannot be written over a smaller
field and if $\gen{X}$ is irreducible. This can be done using the
  algorithms described in Sections \ref{section:meataxe} and \ref{section:smallerfield}.
\end{enumerate}

Since the matrix degree is constant, the complexity of the first step
of the algorithm is $\OR{1}$ field operations. For the same reason,
the expected time of the algorithms in Sections \ref{section:meataxe} and \ref{section:smallerfield} is
$\OR{\sigma_0(\log(q))(\abs{X} + \log(q))}$ field operations. Hence our recognition
algorithm has expected time as stated,
and it is Las Vegas since the MeatAxe is Las Vegas.
\end{proof}

\subsection{Finding an element of a stabiliser} \label{section:stabiliser_elements}

Let $G = \Ree(q) = \gen{X}$. In this section the matrix degree is constant, so we set $\xi = \xi(7)$. The algorithm for the constructive
membership problem needs to find independent random elements of $G_P$
for a given point $P$. This is straightforward if, for any pair of points $P,Q
\in \OV$, we can find $g \in G$ as an $\SLP$ in $X$ such that $Pg = Q$.


The general idea is to find an involution $j \in G$
by random search, and then compute $\Cent_G(j) \cong \gen{j} \times
\PSL(2, q)$ using the \emph{Bray algorithm} described in Section \ref{section:inv_centraliser}. The given module restricted to the centraliser splits
up as in Proposition \ref{pr_inv_centraliser_split}, and the points
$P,Q \in \OV$ restrict to points in the $3$-dimensional submodule. If the restrictions satisfy certain conditions, we can
then find an element $g \in \Cent_G(j)$ that maps these
restricted points to each other, and we obtain $g$ as an $\SLP$
in the generators of $\Cent_G(j)$ using Theorem \ref{thm_psl_recognition}. It
turns out that with high probability, we can then multiply $g$ by
an element that fixes the restriction of $P$ so that $g$
also maps $P$ to $Q$. A discrete logarithm oracle is
needed in that step. Since the Bray algorithm produces generators for the
centraliser as $\SLP$s in $X$, we obtain $g$ as an $\SLP$ in $X$.

If any of the steps fail, we can try again with another involution
$j$, so using this method we can map $P$ to $Q$ for any pair of points
$P,Q \in \OV$.

It should be noted that it is easy to find involutions using the
method described in Section
\ref{section:matrix_orders}, since by Corollary
\ref{cl_random_selections} it is easy to find elements of even order
by random search.

\subsubsection{The involution centraliser}
To use the Bray algorithm we need to provide an algorithm that
determines if the whole centraliser has been generated. Since we know
what the structure of the centraliser should be, this poses no
problem. If we have the whole centraliser, the derived group should be
$\PSL(2, q)$, and by Proposition \ref{psl_generation}, with high
probability it is sufficient to compute two random elements of the
derived group. Random elements of the derived group can be found as
described in Section \ref{section:random_elements}.



We can therefore find the involution centraliser $\Cent_G(j) \leqslant G$ and
$\Cent_G(j)^{\prime} \cong \PSL(2, q)$.

\begin{lem} \label{lem_inv_centraliser_maps}
  There exists a Las Vegas algorithm that, given $\gen{Y} \leqslant G$ such
  that $\gen{Y} = \Cent_G(j)$ for some involution $j \in G$, finds
\begin{itemize}
\item the submodule $S_j \leqslant V_j$ described in Proposition
  \ref{pr_inv_centraliser_split},
\item an effective $\gen{Y}$-module homomorphism $\varphi_V :
  V_j \to S_j$, 
\item the induced map $\varphi_{\OV} : \PS(V_j) \to \PS(S_j)$,
\item the corresponding map $\varphi_G$ from the $7$-dimensional
  representation of $\Cent_G(j)$ to the $3$-dimensional
  representation. 
\end{itemize}
The maps can be computed using $\OR{1}$ field
  operations. The algorithm has expected time complexity $\OR{\abs{Y}}$ field operations.
\end{lem}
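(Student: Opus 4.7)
The plan is to exploit Proposition \ref{pr_inv_centraliser_split} directly: the decomposition $V_j \cong S_j \oplus T_j$ is precisely the decomposition of $V_j$ into the $+1$ and $-1$ eigenspaces of the involution $j$, because $j$ acts trivially on $S_j$ and $\Cent_G(j) \leqslant \gen{j} \times \PSL(2,q)$ preserves both summands. Hence the first step is to compute the $1$-eigenspace of $j$ as a $3$-dimensional subspace of $V_j$ and verify it has dimension $3$. Since $j$ is a $7 \times 7$ matrix and $\dim V_j = 7$ is constant, this is pure linear algebra costing $\OR{1}$ field operations. I would similarly compute $T_j$ as the $(-1)$-eigenspace of $j$. (As an alternative route, one can apply the MeatAxe to $V_j\vert_{\gen{Y}}$ and locate the unique $3$-dimensional composition factor on which $j$ acts trivially; this costs $\OR{\abs{Y}}$ field operations and doubles as a consistency check.)

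Next I would build an explicit change of basis matrix $c \in \GL(7,q)$ whose first three rows form a basis of $S_j$ and whose last four rows form a basis of $T_j$. By Proposition \ref{pr_inv_centraliser_split} both subspaces are $\gen{Y}$-invariant, so $g^c$ is block-diagonal with a $3 \times 3$ and a $4 \times 4$ block for every $g \in \gen{Y}$. This matrix $c$ is the only object that needs to be stored, and constructing it costs $\OR{1}$ field operations.

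Now the three maps fall out. Define $\varphi_V : V_j \to S_j$ by $v \mapsto $ the first three coordinates of $vc^{-1}$, re-expressed in the original basis of $S_j$; equivalently, this is the projection onto $S_j$ along $T_j$. It is $\gen{Y}$-equivariant because both summands are $\gen{Y}$-invariant, and each evaluation uses a bounded number of field operations. The projective map $\varphi_{\OV} : \PS(V_j) \to \PS(S_j)$ is obtained by projectivising $\varphi_V$ (with the caveat that it is only defined on points whose $S_j$-component is non-zero; the points of $\OV$ we will care about are points fixed by $j$, which lie in $S_j$, so this issue does not arise in the intended application). Finally, $\varphi_G : \gen{Y} \to \GL(S_j)$ is defined by $g \mapsto$ the upper-left $3 \times 3$ block of $g^c$; this is a homomorphism because the block decomposition is preserved and each evaluation uses $\OR{1}$ field operations.

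There is essentially no obstacle once Proposition \ref{pr_inv_centraliser_split} is in hand: every step is linear algebra in a fixed dimension, and correctness is immediate from the fact that the eigenspaces of $j$ are the two summands. The only mild subtlety is ensuring, if desired, that the algorithm verifies its own output, which is what motivates folding in a single MeatAxe call on $V_j\vert_{\gen{Y}}$ to confirm that $S_j$ is indeed $\gen{Y}$-invariant and irreducible; this call dominates and gives the stated $\OR{\abs{Y}}$ expected time complexity, with each of the computed maps then evaluable in $\OR{1}$ field operations thereafter.
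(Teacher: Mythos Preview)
Your proposal is correct and aligns with the paper's proof, which simply says this is a straightforward MeatAxe application where the maps are a change of basis followed by a projection. Your additional observation that $S_j$ and $T_j$ can be read off directly as the $\pm 1$-eigenspaces of $j$ (rather than by chopping $V_j\vert_{\gen{Y}}$) is a valid shortcut implicit in the proof of Proposition~\ref{pr_inv_centraliser_split}, though strictly speaking the lemma's hypothesis only hands you $Y$, not $j$; in the surrounding algorithm $j$ is of course already known, and in any case your fallback MeatAxe call recovers the paper's argument verbatim and accounts for the stated $\OR{\abs{Y}}$ cost.
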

\begin{proof}
This is a straightforward application of the MeatAxe, so the fact that
the algorithm is Las Vegas and has the stated time complexity follows
from Section \ref{section:meataxe}. The maps consist of a change of
basis followed by a projection to a subspace, and so the Lemma follows.
\end{proof}

\begin{lem} \label{lem_psl_maps} Use the notation of Lemma \ref{lem_inv_centraliser_maps}. There exists a Las Vegas algorithm that, given $H = \gen{Y} =
  \varphi_G(\Cent_G(j)^{\prime})$ for an involution $j \in G$, finds
  effective isomorphisms $\rho_G : \gen{Y} \to \PSL(2, q)$, $\pi_3 : \PSL(2, q)
  \to \gen{Y}$ and $\pi_7 : \gen{Y} \to \Cent_G(j)^{\prime}$.

  The map $\pi_3$ is the symmetric square map of $\PSL(2, q)$; both
  $\varphi_G \circ \pi_7$ and $\pi_3 \circ \rho_G$ are the identity on
  $\gen{Y}$. The maps $\rho_G$ and $\pi_3$ can be computed using $\OR{1}$ field
  operations and $\pi_7$ can be computed using $\OR{\log(q)^3}$ field operations. The algorithm has expected time complexity $\OR{(\xi + \log(q) \log\log(q)) \log\log(q) + \abs{Y} + \chi_D(q)}$ field operations.
\end{lem}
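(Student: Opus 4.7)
The plan is to apply the constructive recognition algorithm for $\PSL(2,q)$ of Theorem \ref{thm_psl_recognition} to $\gen{Y}$. Since $\Cent_G(j)^\prime \cong \PSL(2,q)$ and $\varphi_G$ restricts to a group isomorphism from $\Cent_G(j)^\prime$ onto $\gen{Y}$ (the restriction to the irreducible submodule $S_j$ of Proposition \ref{pr_inv_centraliser_split} is faithful, since otherwise its kernel would be a proper non-trivial normal subgroup of $\PSL(2,q)$), we have $\gen{Y} \cong \PPSL(2,q)$. Applying Theorem \ref{thm_psl_recognition} with $d = 3$ supplies $\rho_G$ and performs the preprocessing needed for constructive membership testing; with $d$ constant, its complexity collapses to the bound claimed in the lemma.

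To build $\pi_3$, I would use that the (unique up to isomorphism) absolutely irreducible $3$-dimensional $\PSL(2,q)$-module in defining characteristic $3$ is the symmetric square $\mathcal{S}^2(V)$ of the natural module, given by an explicit polynomial map $\sigma \colon \PSL(2,q) \to \SL(3,q)$ computable in $\OR{1}$ field operations. Proposition \ref{pr_inv_centraliser_split} shows that $\gen{Y}$ acts irreducibly on a $3$-dimensional space, so its module is isomorphic to $\mathcal{S}^2(V)$ via some $c \in \GL(3,q)$ satisfying $\sigma(\rho_G(y))^c = y$ for every $y \in Y$. I would compute $c$ once, using the MeatAxe module-isomorphism routine of Section \ref{section:meataxe} applied to the two representations $Y$ and $\set{\sigma(\rho_G(y)) : y \in Y}$, resolving the scalar indeterminacy from Schur's lemma by matching a single non-scalar generator. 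Then $\pi_3(h) := \sigma(h)^c$ is a group isomorphism, satisfies $\pi_3 \circ \rho_G = \mathrm{id}_{\gen{Y}}$, and evaluates in $\OR{1}$ field operations.

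For $\pi_7$, I exploit that $Y$ arose as $\varphi_G(Y^{(7)})$ in Lemma \ref{lem_inv_centraliser_maps} for some lifted generating set $Y^{(7)} \subseteq \Cent_G(j)^\prime$ in the $7$-dimensional representation, so $Y^{(7)}$ is directly available. Given $g \in \gen{Y}$, I invoke the constructive membership step of Theorem \ref{thm_psl_recognition} to produce an $\SLP$ for $g$ in $Y$ of length $\OR{\log(q)\log\log(q)}$, and evaluate this $\SLP$ on $Y^{(7)}$. Since $\varphi_G$ is a group homomorphism, $\SLP$-evaluation commutes with it, so the resulting $7$-dimensional matrix lies in $\Cent_G(j)^\prime$ and satisfies $\varphi_G \circ \pi_7 = \mathrm{id}$. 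The per-image cost is dominated by the constructive membership step, which uses $\OR{\log(q)^3}$ field operations when $d = 3$. The main obstacle is pinning down the correct change-of-basis matrix $c$ in the construction of $\pi_3$ (not merely a conjugating matrix but one inducing the precise identity $\pi_3 \circ \rho_G = \mathrm{id}$); this is exactly where irreducibility together with Schur's lemma reduces the ambiguity to a single scalar that can be normalised against one element of $Y$.
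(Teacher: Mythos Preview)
Your proposal is correct and follows essentially the same approach as the paper: apply Theorem~\ref{thm_psl_recognition} to the $3$-dimensional group $\gen{Y}$ to obtain $\rho_G$, identify the $3$-dimensional module with the symmetric square of the natural $\PSL(2,q)$-module via a change of basis, and realise $\pi_7$ by writing elements as $\SLP$s in $Y$ and evaluating on the known $7$-dimensional preimages. The only cosmetic difference is that the paper obtains the change-of-basis matrix directly from the recognition algorithm of Theorem~\ref{thm_psl_recognition}, whereas you compute it separately with the MeatAxe module-isomorphism routine; both are valid and fit inside the stated complexity. One small remark: your worry about ``resolving the scalar indeterminacy'' in $c$ is unnecessary, since conjugation by a scalar matrix is trivial, so any MeatAxe-returned $c$ already gives $\pi_3(\rho_G(y)) = \sigma(\rho_G(y))^c = y$ on the nose.
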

\begin{proof}
  By Proposition \ref{pr_inv_centraliser_split}, the group $\gen{Y}$
  is an irreducible $3$-dimensional copy of $\PSL(2, q)$, so it must
  be a conjugate of the symmetric square of the natural
  representation. By using a change of basis from the algorithm in
  Theorem \ref{thm_psl_recognition}, we may assume that it is the
  symmetric square. Moreover, we can use Theorem
  \ref{thm_psl_recognition} to constructively recognise $\gen{Y}$ and
  obtain the map $\rho_G$. We can also solve the constructive
  membership problem in the standard copy, and by evaluating straight
  line programs we obtain the maps $\pi_3$ and $\pi_7$.

  It follows from Theorem \ref{thm_psl_recognition} that the expected time
  complexity is as stated.
\end{proof}

\subsubsection{Finding a mapping element}
We now consider the algorithm for finding elements that map one point
of $\OV$ to another. The notation from Lemma
\ref{lem_inv_centraliser_maps} and \ref{lem_psl_maps} will be used.

If we let $M = \gen{x} \oplus \gen{y}$ then we can identify $\PS(S_j)$ with
the space of quadratic forms in $x$ and $y$ modulo scalars, so that $S_j = \gen{x^2}
\oplus \gen{xy} \oplus \gen{y^2}$. Then $\varphi_G(\Cent_G(j)^{\prime})$ acts projectively on $\PS(S_j)$ and $\abs{\PS(S_j)} = \abs{\PS^2(\F_q)} = (q^3 - 1) / (q - 1) = q^2 + q + 1$. 
\begin{pr} \label{pr_ree_ovoid_split}
Use the notation from Lemma \ref{lem_inv_centraliser_maps} and \ref{lem_psl_maps}.
\begin{enumerate}
\item The number of points in $\OV$ that are contained in $\Ker(\varphi_V)$ is $q + 1$.
\item The map $\varphi_{\OV}$ restricted to $\OV$ is not
injective, and $\abs{\varphi_{\OV}(\OV)} \geqslant q^2$.
\end{enumerate}
\end{pr}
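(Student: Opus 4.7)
The plan is to exploit the explicit decomposition of $V_j$ from Proposition \ref{pr_inv_centraliser_split} together with the standard representative $j = h(-1) = \diag(-1, 1, -1, 1, -1, 1, -1)$; since all involutions of $G$ are conjugate by Proposition \ref{pr_involution_props}(3) and both statements are conjugation-invariant, it suffices to work in this model. In these coordinates, $S_j = \gen{e_2, e_4, e_6}$ and $T_j = \Ker(\varphi_V) = \gen{e_1, e_3, e_5, e_7}$.

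For part (1), I would observe that $\PS(T_j)$ consists entirely of projective points fixed by $j$, since $j$ acts as $-1$ on $T_j$. Conversely, the calculation in the proof of Proposition \ref{pr_involution_props}(4) showed that every one of the $q+1$ points of $\OV$ fixed by $j$ satisfies $p_2 = p_4 = p_6 = 0$, hence lies in $\PS(T_j)$. Therefore $\OV \cap \Ker(\varphi_V) = \OV \cap \PS(T_j)$ has exactly $q+1$ elements. The non-injectivity in part (2) is then immediate: every $P \in \OV$ not fixed by $j$ yields a distinct point $Pj \in \OV$ with $\varphi_{\OV}(Pj) = \varphi_{\OV}(P)$, because $j$ is trivial on $S_j$ and scalar on $T_j$.

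For the lower bound $\abs{\varphi_{\OV}(\OV)} \geqslant q^2$, I would combine equivariance with an explicit example. Since $\varphi_V$ is a $\Cent_G(j)$-module map, the image $\varphi_{\OV}(\OV)$ is $\Cent_G(j)$-invariant in $\PS(S_j)$. By Proposition \ref{pr_inv_centraliser_split} and the classification of restricted irreducible $\SL(2)$-modules in defining characteristic, $S_j$ is (up to a Galois twist) the symmetric square $\mathcal{S}^2$ of the natural $\PSL(2,q)$-module, and Frobenius twists preserve the projective orbit structure; hence the action on $\PS(S_j) \cong \PS^2(\F_q)$ has exactly two orbits, namely the Veronese conic $\mathcal{C}$ of $q+1$ points (cut out by $XZ = Y^2$ in suitable coordinates) and its complement of $q^2$ points. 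So $\abs{\varphi_{\OV}(\OV)} \in \set{0, q+1, q^2, q^2+q+1}$, and it suffices to rule out the first two possibilities. Taking $(a, b, c) = (1, 0, 0)$ in the parameterisation \eqref{ree_ovoid_def} gives the ovoid point $P = (1 : 1 : 0 : 0 : -1 : -1 : 1)$ with $\varphi_V(P) = (1 : 0 : -1) \in \PS(S_j)$; since $1 \cdot (-1) \neq 0^2$ in characteristic $3$, this image lies off $\mathcal{C}$, so its orbit of size $q^2$ is contained in $\varphi_{\OV}(\OV)$. The main technical obstacle is correctly identifying the orbit structure on $\PS(S_j)$, which requires matching $S_j$ to (a twist of) $\mathcal{S}^2$ of the natural module; once this identification is in place, the pigeon-hole step and the single explicit evaluation settle the bound.
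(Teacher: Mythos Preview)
Your treatment of part~(1) is correct and coincides with the paper's argument. For non-injectivity in part~(2) the paper simply uses the pigeonhole inequality $\abs{\OV} = q^3+1 > q^2+q+1 = \abs{\PS(S_j)}$, but your observation that $j$ itself identifies pairs $P \neq Pj$ with the same image is a perfectly good alternative.

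The gap is in the lower bound. Your claim that $\Cent_G(j)$ (equivalently $\PSL(2,q)$, since $j$ acts trivially on $S_j$) has exactly two orbits on $\PS(S_j)$ is false over $\F_q$ for odd $q$: there are \emph{three} orbits, corresponding to degenerate binary quadratic forms (the conic, size $q+1$), non-degenerate forms representing $0$ (size $q(q+1)/2$), and anisotropic forms (size $q(q-1)/2$). This is exactly what the paper records in Proposition~\ref{pr_psl_ovoid_action}; the two-orbit picture you invoke holds only over an algebraically closed field. Consequently your single test point $(1:0:-1)$, which (once the coordinate identification is pinned down) corresponds to the split form $x^2 - y^2$, only places the orbit of size $q(q+1)/2$ in the image, not all $q^2$ off-conic points. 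There is also a secondary sloppiness: you test $XZ \neq Y^2$ in the raw $(e_2,e_4,e_6)$ coordinates while only asserting the conic has that equation ``in suitable coordinates'', so even membership off the conic is not actually verified. Your approach can be salvaged by exhibiting a second ovoid point whose projection is anisotropic and by fixing the coordinate identification; the paper instead bypasses the orbit structure entirely, writing the projection explicitly as $(a^t : (ab)^t - c^t : -c - (bc)^t - a^{3t+2} - a^t b^{2t})$ and solving for $a,b,c$ to hit $q^2$ prescribed points of $\PS^2(\F_q)$ directly.
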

\begin{proof}
\begin{enumerate}
\item The map $\varphi_V$ is the projection onto $S_j$, so the kernel are
  those vectors that lie in $T_j$. From the proof of Proposition
  \ref{pr_inv_centraliser_split}, with respect to a suitable basis,
  $T_j$ is the $-1$-eigenspace of $h(-1)$. Hence by Proposition
  \ref{pr_involution_props}, $\abs{\OV \cap \PS(T_j)} = q + 1$.
\item Since $\abs{\OV} = q^3 + 1$ and $\abs{\PS(S_j)} = q^2 + q + 1$,
  it is clear that the map is not injective. In the above basis, the map
  $\varphi_V$ is defined by $(p_1, \dotsc, p_7) \mapsto (p_2,
  p_4, p_6)$. Hence if $P_{\infty} \neq P \in \OV$ then $\varphi_{\OV}(P) = (a^t : (ab)^t - c^t : -c - (bc)^t - a^{3t+2} - a^t b^{2t})$.

  If $a = c = 0$ we do not get a point in $\PS^2(\F_q)$ and if $a
  = 0$ and $c \neq 0$ we obtain $q$ points. Now let $a \neq 0$ and let
  $(x, y) \in \F_q^2$ such that $x^2 + y \neq 0$. Then $-x^2 - y$ is a
  square in $\F_q$ if and only if $(-x^2-y)^t$ is a square, so
  $(-x^2-y)^{1-t}$ is always a square. Hence, if $c = 0$, $b = x^{3t}$
  and $a = (-x^2-y)^{(1-t)/4}$, the image of $P$ is $(1 : x : y) \in
  \PS^2(\F_q)$. This gives $q^2 - q$ points.
\end{enumerate}
\end{proof}

\begin{pr} \label{pr_psl_ovoid_action}
Under the action of $H = \gen{Y} = \varphi_G(\Cent_G(j)^{\prime})$, the set $\PS(S_j)$ splits up into $3$ orbits.
\begin{enumerate}
\item The orbit containing $xy$, \emph{i.e.} the non-degenerate quadratic forms that represent $0$, which has size $q(q + 1) / 2$.
\item The orbit containing $x^2 + y^2$, \emph{i.e.} the non-degenerate quadratic forms that do not represent $0$, which has size $q(q - 1) / 2$.
\item The orbit containing $x^2$ (and $y^2$), \emph{i.e.} the degenerate quadratic forms, which has size $q + 1$.
\end{enumerate}
The pre-image in $\SL(2, q)$ of $\rho_G(\varphi_G(\Cent_G(j)^{\prime})_{xy})$ is dihedral of order $2(q - 1)$, generated by the matrices
\begin{flalign} \label{sl2_stab_gens}
\begin{bmatrix}
\alpha & 0 \\
0 & \alpha^{-1} 
\end{bmatrix} &
\begin{bmatrix}
0 & 1 \\
-1 & 0 
\end{bmatrix} 
\end{flalign}
where $\alpha$ is a primitive element of $\F_q$.
\end{pr}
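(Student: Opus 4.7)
The plan is to identify the action of $H \cong \PSL(2,q)$ on $\PS(S_j)$ with the projective action on the space of binary quadratic forms, via the symmetric square of the natural $2$-dimensional representation on $\langle x, y\rangle$. Under this identification $S_j = \langle x^2, xy, y^2\rangle$ and $\PS(S_j)$ is the $(q^2+q+1)$-point projective plane of nonzero forms up to scalar; an element $g = \begin{pmatrix} a & b \\ c & d \end{pmatrix} \in \SL(2,q)$ acts by the substitution $(x, y) \mapsto (ax+by, cx+dy)$.

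First I would introduce the discriminant invariant. For $Q = \alpha x^2 + \beta xy + \gamma y^2$ the discriminant $\Delta(Q) = \beta^2 - 4\alpha\gamma$ is meaningful in characteristic $3$ (since $4 = 1$) and satisfies $\Delta(g\cdot Q) = (\det g)^2 \Delta(Q) = \Delta(Q)$ for $g \in \SL(2,q)$, while rescaling $Q \mapsto \mu Q$ sends $\Delta$ to $\mu^2 \Delta$. Thus on $\PS(S_j)$ the class $\overline{\Delta}(Q) \in \{0\} \cup \F_q^\times/(\F_q^\times)^2$ is $H$-invariant and partitions $\PS(S_j)$ into three $H$-invariant subsets: degenerate forms ($\Delta = 0$), isotropic non-degenerate forms ($\Delta$ a nonzero square), and anisotropic non-degenerate forms ($\Delta$ a non-square). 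The form $xy$ has discriminant $1$ (a square) and $x^2+y^2$ has discriminant $-4 = -1$ in characteristic $3$; since $q = 3^{2m+1} \equiv 3 \pmod 4$, one has $(q-1)/2$ odd, so $-1$ is a non-square in $\F_q$. Hence $xy$ and $x^2+y^2$ lie in genuinely distinct classes.

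Next I would pin down transitivity and orbit sizes. Degenerate forms are squares of linear forms, parameterised by $\PS^1(\F_q)$, and $\PSL(2,q)$ acts (even $2$-)transitively on $\PS^1(\F_q)$, yielding one orbit of size $q+1$. Isotropic non-degenerate forms factor, up to scalar, as a product of two distinct linear forms, so are parameterised by unordered pairs of distinct points of $\PS^1(\F_q)$; two-transitivity gives one orbit of size $\binom{q+1}{2} = q(q+1)/2$. Subtracting from $q^2+q+1$ leaves $q(q-1)/2$ anisotropic points, and since these constitute a single $H$-invariant class distinct from the other two, they form a single orbit of size $q(q-1)/2$ (one can also confirm this directly by orbit-stabiliser, computing that the $\SL(2,q)$-stabiliser of $x^2+y^2$ is the non-split torus together with an involution swapping its two eigenlines, of order $2(q+1)$).

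Finally, for the stabiliser of $xy$: if $g \in \SL(2,q)$ fixes $\langle xy \rangle$, then $(ax+by)(cx+dy) = \lambda\, xy$, forcing $ac = 0$ and $bd = 0$. Combined with $\det g = 1$, the solutions split into the diagonal matrices $\mathrm{diag}(a, a^{-1})$ with $a \in \F_q^\times$ (with $\lambda = 1$), and the antidiagonal matrices $\bigl(\begin{smallmatrix} 0 & b \\ -b^{-1} & 0 \end{smallmatrix}\bigr)$ with $b \in \F_q^\times$ (with $\lambda = -1$), giving exactly $2(q-1)$ elements. These are generated by $\mathrm{diag}(\alpha, \alpha^{-1})$ (a primitive $\alpha$) and $\bigl(\begin{smallmatrix} 0 & 1 \\ -1 & 0 \end{smallmatrix}\bigr)$; a direct check shows conjugation by the latter inverts the diagonal torus, so the stabiliser has the stated dihedral structure of order $2(q-1)$. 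The main obstacle I anticipate is the precise transitivity argument on the anisotropic class: it is resolved cleanly by the counting argument once one knows that the discriminant modulo squares is the complete $H$-invariant, which is a standard fact about binary quadratic forms over a finite field and also drops out of the orbit-stabiliser count on $xy$.
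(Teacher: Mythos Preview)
Your proof is correct and takes a genuinely different route from the paper's.

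The paper argues by brute force: it writes out the symmetric square matrix of a generic $g\in\SL(2,q)$, applies it to each of $xy$, $x^2$, $x^2+y^2$, and checks pairwise that no $g$ can carry one to another (obtaining contradictions with $\det g=1$ in each case). It then computes the three stabilisers directly from the matrix equations and reads off the orbit sizes via orbit--stabiliser, finally verifying that the three orbit sizes sum to $q^2+q+1$.

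Your approach is more conceptual. You invoke the discriminant $\Delta$ as an $\SL(2,q)$-invariant (well-defined modulo squares on projective classes) to separate the three types in one stroke, including the observation that $q\equiv 3\pmod 4$ forces $-1$ to be a non-square so that $xy$ and $x^2+y^2$ genuinely lie in different discriminant classes. For transitivity you use geometry: degenerate forms are parametrised by $\PS^1(\F_q)$ and split non-degenerate forms by unordered pairs in $\PS^1(\F_q)$, so $2$-transitivity of $\PSL(2,q)$ on $\PS^1$ handles both; the anisotropic orbit then falls out either by subtraction plus the stabiliser count you sketch for $x^2+y^2$, or by the standard classification of binary quadratic forms. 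The final stabiliser computation for $xy$ is essentially identical in both proofs.

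What each buys: the paper's argument is self-contained and avoids any appeal to quadratic-form theory or the parity of $(q-1)/2$, at the cost of several explicit matrix manipulations. Your argument is shorter and explains \emph{why} there are exactly three orbits (they are the level sets of a natural invariant), and it generalises immediately to other fields; the small price is the side verification that $-1$ is a non-square, and the need to justify transitivity on the anisotropic class separately (your orbit--stabiliser remark on $x^2+y^2$ does this cleanly).
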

\begin{proof}
Let $g = \begin{bmatrix}
a & b \\
c & d
\end{bmatrix}$ be any element of $\PSL(2, q)$, so that the symmetric square $h = \mathcal{S}^2(g) = \pi_3(g) \in \varphi_G(\Cent_G(j)^{\prime})$. Notice that
\begin{equation}
h = \begin{bmatrix}
a^2 & -ab & b^2 \\
ac & ad + bc & bd \\
c^2 & -ad & d^2
\end{bmatrix}
\end{equation}
Let $P = (xy)h$, $Q = (x^2)h$ and $R = (x^2 + y^2)h$ be points in $\PS(S_j)$. Then $P = (ac) x^2 + (ad + bc) xy + (bd) y^2$, and the equation $P =
x^2$ implies that $b = 0$ or $d = 0$. If $b = 0$ then $d = 0$ or $a =
0$ which is impossible since $\det{g} = 1$. Similarly, we cannot have
$d = 0$, and hence $xy$ and $x^2$ are not in the same
orbit.

Similarly, let $Q = a^2 x^2 - (ab) xy + b^2 y^2$, and then the equation $Q = x^2 + y^2$ implies that $a = 0$ or $b = 0$, which is impossible. Hence
$x^2$ and $x^2 + y^2$ are not in the same orbit.

Finally, let $R = (a^2 + c^2) x^2 - (ab + cd) xy + (b^2 + d^2) y^2$, and the equation $R = xy$ implies that $a^2 + c^2 = 0$. Since $-1$ is not
a square in $\F_q$, we must have $a = c = 0$. But this is impossible
since $\det{g} = 1$. Hence $x^2 + y^2$ and $xy$ are not in the same
orbit.

To verify the orbit sizes, consider the stabilisers of the three
points. Clearly the equation $Q = x^2$ implies that $b = 0$, so the
stabiliser of $x^2$ consists of the (projections of the) lower
triangular matrices. There are $q - 1$ choices for $a$ and $q$
choices for $c$, so the stabiliser has size $q(q - 1) / 2$ modulo
scalars, and the index in $H$ is therefore $q + 1$.

Similarly, the equation $P = xy$ implies that $ac = bd = 0$. If $a = d
= 0$ we obtain a matrix of order $2$ and if $b = c = 0$ we obtain a
diagonal matrix. It follows that the stabiliser is dihedral of order
$q - 1$, and that the pre-image in $\SL(2, q)$ is as in \eqref{sl2_stab_gens}.

Finally, in a similar way we obtain that the stabiliser of $x^2 + y^2$
has order $q + 1$, and hence the three orbits make up the whole of
$\PS(S_j)$.
\end{proof}

The algorithm that maps one point to another is given as Algorithm \ref{alg:find_mapping_element}.

\begin{figure}
\begin{codebox}
\refstepcounter{algorithm}
\label{alg:find_mapping_element}
\Procname{\kw{Algorithm} \ref{alg:find_mapping_element}: $\proc{FindMappingElement}(X, \Cent_G(j), P, Q)$}
\li \kw{Input}: Generating set $X$ for $G = \Ree(q)$.
\zi Points $P \neq Q \in \OV$ such that both $\varphi_{\OV}(P)$ and $\varphi_{\OV}(Q)$ are non-degenerate and 
\zi represent $0$. Involution centraliser $\Cent_G(j)$ with the maps from 
\zi Lemma \ref{lem_inv_centraliser_maps} and \ref{lem_psl_maps}.
\li \kw{Output}: An element $h \in G$, written as an $\SLP$ in $X$, such that $Ph = Q$.
\li $P_3 := \varphi_{\OV}(P)$ 
\li $Q_3 := \varphi_{\OV}(Q)$ 
\li \If $\exists \, \text{upper triangular} \ g \in \PSL(2, q)$ such that $P_3 \pi_3(g) = Q_3$ \label{find_mapping_element_tria_map}
\zi \Then
\li     $R_3 := \varphi_{\OV}(P \pi_7(\pi_3(g)))$ 
\li     \Comment Now $R_3 = Q_3$
\li     Find $c \in \GL(3, q)$ such that $(xy) c = R_3$ \label{find_mapping_element_zero_map}
\li     Let $D$ be the image in $\PSL(2, q)$ of the diagonal matrix in \eqref{sl2_stab_gens} 
\li     $s := \pi_7(\pi_3(D)^c)$
\li     \Comment Now $\gen{s} \leqslant \varphi_G^{-1}(H_{R_3})$
\li     $\delta, z := \proc{Diagonalise}(s)$ 
\li     \Comment Now $\delta = s^z$
\li     \If $\exists \, \lambda \in \F_q^{\times}$ such that $(P \pi_7(\pi_3(g)) z) h(\lambda) = Q z$ \label{find_mapping_element_diag_map}
\zi     \Then
\li          $i := \proc{DiscreteLog}(\delta, h(\lambda))$ 
\li          \Comment Now $\delta^i = h(\lambda)$
\li          \Return $\pi_7(\pi_3(g)) s^i$ \label{find_mapping_element_ret}
        \End
\zi     \kw{end}
    \End
\zi \kw{end}
\li \Return \const{fail}
\end{codebox}
\end{figure}

\subsubsection{Finding a stabilising element} \label{section:find_stab_element}
The complete algorithm for finding a uniformly random element of $G_P$ is then as follows, given a generating set $X$ for $G$ and $P \in \OV$.
\begin{enumerate}
\item Find an involution $j \in G$.
\item Compute probable generators for $\Cent_G(j)$ using the Bray algorithm, and probable generators for $\Cent_G(j)^{\prime}$ by taking commutators of the generators of $\Cent_G(j)$.
\item Use the MeatAxe to verify that the module for
  $\Cent_G(j)^{\prime}$ splits up only as in Proposition
  \ref{pr_inv_centraliser_split}. Use Theorem \ref{thm_psl_naming} to verify that we have the whole of $\Cent_G(j)^{\prime}$. Return to the previous step if not.
\item Compute the maps $\varphi_{\OV}$ and $\varphi_G$ using Lemma \ref{lem_inv_centraliser_maps}. Return to the first step if $P$ lies in the kernel of $\varphi_V$, if $\varphi_{\OV}(P)$ is degenerate, or if it does not represent $0$.
\item Compute the maps from Lemma \ref{lem_psl_maps}.
\item Take random $g_1 \in \Cent_G(j)^{\prime}$ and let $Q = Pg_1$. Then $\varphi_{\OV}(Q) = \varphi_{\OV}(P) \varphi_G(g_1)$, so $Q$ is not in the kernel of $\varphi_V$, and $\varphi_{\OV}(Q)$ is non-degenerate and represents $0$. Repeat until $P \neq Q$.
\item Use Algorithm \ref{alg:find_mapping_element} to find $g_2 \in \Cent_G(j)^{\prime}$ such that $Q = P g_2$. Return to the previous step if it fails, and otherwise return $g_1 g_2^{-1}$.
\end{enumerate}

\subsubsection{Correctness and complexity}

\begin{lem} \label{lem_zero_forms} If $P \in \OV$ is uniformly random,
  such that $P \nsubseteq \Ker(\varphi_V)$, then $\varphi_{\OV}(P)$ is non-degenerate and represents $0$ with probability at least $1/2 + \OR{1/q}$.
\end{lem}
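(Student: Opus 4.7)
The plan is to exploit the $\Cent_G(j)^{\prime}$-equivariance of $\varphi_{\OV}$ and then reduce the problem to an explicit fibre count over a single chosen point of $\PS(S_j)$.

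First I would observe that $\varphi_V$ is the projection of $V = S_j \oplus T_j$ onto its first summand along its second, so it is a $\Cent_G(j)$-module homomorphism by Proposition \ref{pr_inv_centraliser_split}. Consequently the induced map $\varphi_{\OV}$ is equivariant with respect to the action of $\Cent_G(j)^{\prime}$ on $\OV \setminus (\Ker(\varphi_V) \cap \OV)$ and the action of $\varphi_G(\Cent_G(j)^{\prime}) \cong \PSL(2,q)$ on $\PS(S_j)$ classified in Proposition \ref{pr_psl_ovoid_action}. In particular the fibres of $\varphi_{\OV}$ over two points in the same orbit have equal cardinality, and by Proposition \ref{pr_ree_ovoid_split}(1) the ground set has cardinality $\abs{\OV} - (q + 1) = q^3 - q$. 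So it suffices to determine the fibre size over one representative of the orbit of non-degenerate forms representing zero.

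The one calculation to do is this fibre count, and it is the main obstacle (but a mild one). Pick the representative $xy \in \PS(S_j)$, which in the coordinates $(p_2, p_4, p_6)$ singled out by the projection onto $S_j$ (with respect to the basis of Proposition \ref{pr_inv_centraliser_split}) corresponds to $(0 : 1 : 0)$. Parametrising the non-$P_\infty$ points of $\OV$ by $(a, b, c) \in \F_q^3$ via \eqref{ree_ovoid_def} and reading off $\varphi_V$, the equation $\varphi_{\OV}(P) = (0 : 1 : 0)$ becomes
\[ a^t = 0, \qquad (ab)^t - c^t \neq 0, \qquad -c - (bc)^t - a^{3t+2} - a^t b^{2t} = 0. \]
The first condition forces $a = 0$, reducing the remaining two to $c \neq 0$ and $b^t = -c^{1-t}$; since the Frobenius map $x \mapsto x^t$ is a bijection of $\F_q$, for each of the $q - 1$ choices of $c \in \F_q^{\times}$ there is a unique $b \in \F_q$ satisfying this. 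So the fibre has exactly $q - 1$ points.

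By equivariance, every fibre over the orbit of $xy$ has the same size $q - 1$. The orbit has cardinality $q(q+1)/2$ by Proposition \ref{pr_psl_ovoid_action}, so the total preimage of the orbit of non-degenerate forms representing zero has cardinality $(q - 1) \cdot q(q+1)/2 = (q^3 - q)/2$, which is exactly one half of $\abs{\OV \setminus \Ker(\varphi_V)}$. Thus the conditional probability in question is exactly $1/2$, which meets the claimed bound $1/2 + \OR{1/q}$ with implied constant zero.
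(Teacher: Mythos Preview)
Your approach via $\Cent_G(j)'$-equivariance is correct and genuinely different from the paper's. The paper argues (somewhat loosely) that $\varphi_{\OV}(P)$ is approximately uniformly distributed among forms of the shape $x^2 + bxy + cy^2$ and then computes the probability that such a form has nonzero square discriminant. Your method sidesteps the distribution question entirely by using that fibres over a single $\PSL(2,q)$-orbit are equinumerous, and it produces the exact value $1/2$ rather than an estimate; this is a cleaner argument.

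There is, however, one step you assert without justification: that $(0:1:0)$ in the $(p_2,p_4,p_6)$ coordinates lies in the orbit of $xy$. The identification of $S_j$ with the symmetric square in Lemma~\ref{lem_psl_maps} involves a change of basis coming from constructive recognition of $\PSL(2,q)$, so there is no a priori reason it sends $e_4$ to $xy$. You can close this gap intrinsically: the restriction of the bilinear form $J$ to $S_j$ (in the $(p_2,p_4,p_6)$ basis, read off from \eqref{standard_bilinear_form}) has associated quadratic form $p_2 p_6 + p_4^2$; since $S_j$ is absolutely irreducible this must be a scalar multiple of the discriminant, so its zero locus is the conic of degenerate forms. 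The polar line $p_4 = 0$ of $(0:1:0)$ meets this conic in the two $\F_q$-points $(1:0:0)$ and $(0:0:1)$, which is exactly the criterion for $(0:1:0)$ to lie in the orbit of non-degenerate forms representing zero. Alternatively, a second fibre computation over $(0:0:1)$ yields an empty fibre, so that point lies in the degenerate orbit and $f_3 = 0$; the constraint $f_1 \cdot q(q+1)/2 + f_2 \cdot q(q-1)/2 = q^3 - q$ together with one of $f_1,f_2$ equal to $q-1$ then forces $f_1 = q-1$ (the alternative $f_2 = q-1$ gives $f_1 = (q-1)(q+3)/(q+1)$, which is not an integer for $q \geqslant 27$).
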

\begin{proof}
  Since $P$ is uniformly random and $\varphi_{\OV}$ was chosen
  independently of $P$, it follows that $\varphi_{\OV}(P)$ is
  uniformly random from $\varphi_{\OV}(\OV)$. From the proof of Proposition
  \ref{pr_ree_ovoid_split}, with probability $1 - \OR{1/q}$, $\varphi_{\OV}(P) = x^2 + b xy + c y^2$ where $(1 : b : c)$ is uniformly distributed in $\PS^2(\F_q)$ such that $b^2 + c \neq 0$.
  
  This represents $0$ if the
  discriminant $b^2 - c$ is a non-zero square in $\F_q$. This is not
  the case if $b^2 = c$, but since $b^2 + c \neq 0$, this implies $b = c = 0$. If $b^2 - c \neq 0$ then it is a square with probability
  $1/2$, so
\[\Pr{b^2 - c \in (\F_q^{\times})^2} = \frac{1}{2} (1 - \frac{1}{q^2 - q})\]
and the Lemma follows.
\end{proof}

\begin{lem} \label{lem_tri_map} If $P, Q \in \varphi_{\OV}(\OV)$ are
  uniformly random, such that $\varphi_{\OV}(P)$ and
  $\varphi_{\OV}(Q)$ represent $0$, then the
  probability that there exists an element $g \in \PSL(2, q)$, such
  that the pre-image of $g$ in $\SL(2, q)$ is upper triangular and $P
  \pi_3(g) = Q$, is at least $1/2 + \OR{1/q}$.
\end{lem}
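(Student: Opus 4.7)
The plan is to identify the set of non-degenerate quadratic forms in two variables representing $0$, taken modulo scalars, with the set of unordered pairs of distinct points of $\PS^1(\F_q)$ via the map sending a form to its pair of projective roots. Under this bijection $xy$ corresponds to $\set{[1:0], [0:1]}$, so by Proposition \ref{pr_psl_ovoid_action} the $\PSL(2,q)$-orbit $O$ of $xy$ is exactly this set of pairs, of size $q(q+1)/2$. The image in $\PSL(2,q)$ of the upper triangular subgroup of $\SL(2,q)$ is precisely the stabiliser $B$ of $[1:0]$, so there exists $g \in \PSL(2,q)$ with upper triangular pre-image and $P \pi_3(g) = Q$ if and only if $P$ and $Q$ lie in the same $B$-orbit on $O$.

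First I would determine the $B$-orbits on $O$. Since $B$ fixes $[1:0]$ it preserves both the subset $O_1 \subset O$ of pairs containing $[1:0]$ and its complement $O_2$. As a $B$-set, $O_1$ is identified with $\PS^1(\F_q) \setminus \set{[1:0]} \cong \F_q$; a direct computation shows the element of $B$ represented by $\begin{bmatrix}\alpha & \beta \\ 0 & \alpha^{-1}\end{bmatrix}$ acts on $x \in \F_q$ as $x \mapsto \alpha^2 x + \alpha\beta$, giving the affine group $\set{x \mapsto s x + c \mid s \in (\F_q^{\times})^2, c \in \F_q}$, which is transitive on $\F_q$, so $\abs{O_1} = q$. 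For $O_2$, given unordered pairs $\set{p_1, p_2}, \set{r_1, r_2} \subset \F_q$ of distinct elements, such an affine map sends one to the other if and only if $s \in \set{\pm (r_1 - r_2)/(p_1 - p_2)}$ is a non-zero square. Since $q = 3^{2m+1} \equiv 3 \pmod 4$, the element $-1 \in \F_q$ is a non-square, hence of the two sign choices (which differ by $-1$) exactly one is a square; thus $B$ is transitive on $O_2$ and $\abs{O_2} = q(q-1)/2$.

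Treating $P$ and $Q$ as uniform and independent on $O$ (in the algorithmic context this is justified because $\varphi_{\OV}(\OV)$ is a union of $\PSL(2,q)$-orbits on $\PS(S_j)$ and contains all of $O$, by Propositions \ref{pr_ree_ovoid_split} and \ref{pr_psl_ovoid_action}), the required probability equals the probability that $P$ and $Q$ lie in the same $B$-orbit:
\begin{equation*}
\left(\frac{\abs{O_1}}{\abs{O}}\right)^2 + \left(\frac{\abs{O_2}}{\abs{O}}\right)^2 = \left(\frac{2}{q+1}\right)^2 + \left(\frac{q-1}{q+1}\right)^2 = 1 - \OR{1/q},
\end{equation*}
which is certainly at least $1/2 + \OR{1/q}$.

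The crux of the argument is the transitivity of $B$ on $O_2$, which depends on the defining-characteristic $3$ hypothesis only through the arithmetic fact that $-1$ is a non-square in $\F_q$; without this the orbit $O_2$ would split into two halves of size $q(q-1)/4$ and the bound would degrade. Everything else is bookkeeping: matching up the quadratic-form and point-pair descriptions, computing the explicit Borel action on $\F_q$, and summing the two orbit contributions.
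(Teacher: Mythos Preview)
Your proof is correct and takes a genuinely different route from the paper's. The paper writes $P = x^2 + axy + by^2$ and $Q = x^2 + lxy + ny^2$, computes $\pi_3(g)$ for upper triangular $g = \begin{pmatrix} u & v \\ 0 & u^{-1}\end{pmatrix}$ explicitly, sets up the system $P\pi_3(g) = CQ$, eliminates $u,v,C$, and reduces to the condition that $(l^2-n)(a^2-b)$ be a non-zero square; it then treats $(1:a:b)$ and $(1:l:n)$ as uniform over (an affine patch of) $\PS^2(\F_q)$ to obtain $\tfrac12 + \OR{1/q}$. Your argument instead identifies $O$ with unordered pairs of distinct points of $\PS^1(\F_q)$ and analyses the Borel orbits directly, the key point being that $-1$ is a non-square since $q\equiv 3\pmod 4$. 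This is more conceptual and in fact yields the much stronger bound $1-\OR{1/q}$: once one conditions on $P,Q\in O$, both discriminants $a^2-b$ and $l^2-n$ are already non-zero squares, so the paper's square condition is automatic---as written, the paper's computation does not actually use the hypothesis that $P$ and $Q$ represent $0$, which is why it only reaches $\tfrac12$. Your justification that conditioning on $O$ yields the uniform distribution there (via the observation that $\varphi_{\OV}(\OV)$ is a union of $\PSL(2,q)$-orbits whose size forces $O\subset\varphi_{\OV}(\OV)$) is correct and makes explicit a point the paper leaves implicit.
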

\begin{proof}
Let $P = x^2 + a xy + b y^2$, $Q = x^2 + l xy + n y^2$ and
\begin{equation}
g = \begin{bmatrix}
u & v \\
0 & 1/u
\end{bmatrix}
\end{equation}
where $(1 : a : b)$ and $(1 : l : n)$ are uniformly distributed in $\PS^2(\F_q)$, $u, v \in \F_q$ and $u \neq 0$.

We want to determine $u, v$ such that $P \pi_3(g) = Q$. Note that
$g$ is the pre-image in $\SL(2, q)$ of an element in $\PSL(2, q)$ and
therefore $\pm u$ determine the same element of $\PSL(2, q)$. The map $\pi_3$ is the symmetric square map, so
\begin{equation}
\pi_3(g) = \mathcal{S}^2(g) = \begin{bmatrix}
u^2 & -uv & v^2 \\
0 & 1 & v/u \\
0 & 0 & 1/u^2
\end{bmatrix}
\end{equation}

This leads to the following equations:
\begin{align}
u^2 &= C \label{tri_map_eq1} \\
-uv + a &= C l \label{tri_map_eq2} \\
v^2 + a v u^{-1} + b u^{-2} &= C n \label{tri_map_eq3}
\end{align}
for some $C \in \F_q^{\times}$. We can solve for $u$ in
\eqref{tri_map_eq1} and for $v$ in \eqref{tri_map_eq2}, so that
\eqref{tri_map_eq3} becomes
\begin{equation}
C^2 (n - m^2) + a^2 - b = 0
\end{equation}
This quadratic equation has a solution if the discriminant $-(n - m^2) (a^2 - b) \in (\F_q^{\times})^2$. This does not happen if $n = m^2$ or $b = a^2$, which each happens with probability $q/(q^2 + q + 1)$. If the discriminant is non-zero then it is a square with probability $1/2$. Therefore, the
probability that we can find $g$ is
\[ \Pr{-(n - m^2) (a^2 - b) \in (\F_q^{\times})^2} = \frac{1}{2}(1 - \frac{q}{q^2 + q + 1})^2\]
This is $1/2 + \OR{1/q}$ and the Lemma follows.
\end{proof}

\begin{thm}  \label{thm_find_mapping_element_pr}
If Algorithm \ref{alg:find_mapping_element} returns an element $g$, then $P g = Q$. If $P$ and $Q$ are uniformly random, such that $\varphi_{\OV}(P)$ and $\varphi_{\OV}(Q)$ represent $0$, then the probability that Algorithm \ref{alg:find_mapping_element} finds such an element is at least $1/4 + \OR{1/q}$.
\end{thm}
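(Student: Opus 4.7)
The plan is to split the proof into a correctness part and a probability part.

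For correctness, suppose the algorithm returns $g = \pi_7(\pi_3(g_0)) s^i$ at line \ref{find_mapping_element_ret}, writing $g_0$ for the upper triangular element found at line \ref{find_mapping_element_tria_map}. I would first chase the definitions: by Lemma \ref{lem_psl_maps} the composition $\varphi_G \circ \pi_7$ is the identity on $\gen{Y}$, so $\varphi_G(\pi_7(\pi_3(g_0))) = \pi_3(g_0)$. Since $P_3 \pi_3(g_0) = Q_3$ by construction, we get $R_3 := \varphi_{\OV}(P \pi_7(\pi_3(g_0))) = Q_3$, justifying the comment after line $5$. Next, the conjugating $c$ moves $xy$ to $R_3 = Q_3$, so $\pi_3(D)^c$ stabilises $Q_3$ in the three-dimensional picture, whence $\varphi_G(s) = \pi_3(D)^c$ stabilises $Q_3$. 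Finally the key identity is $s^i = z\,\delta^i\,z^{-1} = z\,h(\lambda)\,z^{-1}$, so the test $(P \pi_7(\pi_3(g_0)) z)\,h(\lambda) = Qz$ at line \ref{find_mapping_element_diag_map} rearranges to $P \pi_7(\pi_3(g_0))\,s^i = Q$, i.e.\ $P g = Q$, as required.

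For the probability bound I would decompose the success event into the conjunction of events $A$ (line \ref{find_mapping_element_tria_map} succeeds) and $B$ (line \ref{find_mapping_element_diag_map} succeeds). By Lemma \ref{lem_tri_map}, $\Pr{A} \geqslant 1/2 + \OR{1/q}$, using the uniformity of $\varphi_{\OV}(P)$ and $\varphi_{\OV}(Q)$ among non-degenerate zero-representing forms. For $\Pr{B \mid A}$, after event $A$ both $Q$ and $P' := P \pi_7(\pi_3(g_0))$ lie in the fibre $F := \varphi_{\OV}^{-1}(Q_3)$ of size on the order of $q$; the condition $B$ says that some $h(\lambda) \in \gen{\delta}$ maps $(P')z$ to $Qz$, equivalently that $Q$ lies in the $\gen{s}$-orbit of $P'$ inside $F$.

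The main obstacle is therefore the second factor: I need to argue that the cyclic group $\gen{s}$ of order $(q-1)/2$, sitting inside the dihedral stabiliser $\varphi_G^{-1}(H_{Q_3})$ of order $q-1$ given by Proposition \ref{pr_psl_ovoid_action}, has orbits on $F$ of size comparable to $|F|/2$, and that $Q$ is essentially uniform in $F$ given the conditioning. For the first point I would use that the full dihedral stabiliser acts on the set of seven-dimensional lifts of $Q_3$ with trivial kernel (since $\varphi_G|_{\Cent_G(j)'}$ is injective by the simplicity of $\PSL(2,q)$ and the faithfulness of the symmetric-square representation), so all non-trivial orbits have the same size as orbits of the index-two cyclic subgroup $\gen{s}$ up to a factor of at most $2$; the involution outside $\gen{s}$ can only double an orbit, so $\gen{s}$ hits at least half of each dihedral orbit. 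For the uniformity of $Q$ in $F$ I would use that $Q$ was assumed uniform in the zero-representing stratum of $\OV$ independently of the random choices defining $P'$. Combining these gives $\Pr{B \mid A} \geqslant 1/2 + \OR{1/q}$, and multiplying yields the claimed bound $1/4 + \OR{1/q}$.
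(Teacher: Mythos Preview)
Your correctness argument and the decomposition into events $A$ (line~\ref{find_mapping_element_tria_map} succeeds) and $B$ (line~\ref{find_mapping_element_diag_map} succeeds), together with $\Pr{A} \geqslant 1/2 + \OR{1/q}$ via Lemma~\ref{lem_tri_map}, match the paper. The gap is in your handling of $\Pr{B \mid A}$.

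Your orbit argument shows only that each $\gen{s}$-orbit in the fibre $F = \varphi_{\OV}^{-1}(Q_3)$ is at least half the enclosing dihedral orbit; this is automatic for an index-$2$ subgroup. Combined with ``$Q$ uniform in $F$'', this yields $\Pr{B \mid A} \geqslant 1/2$ only if the dihedral orbit of $P' = P\,\pi_7(\pi_3(g_0))$ is all of $F$. You never establish this, and the trivial-kernel claim does not help: trivial kernel means no nontrivial element fixes \emph{every} point of $F$, not that the action is transitive or free. Without transitivity, $Q$ may lie in a different dihedral orbit from $P'$, in which case $B$ fails outright.

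The paper bypasses the fibre analysis by invoking how $Q$ is actually constructed in the calling algorithm (Section~\ref{section:find_stab_element}): there is a specific $h \in \Cent_G(j)'$ with $Ph = Q$, namely the random element $g_1$. Then $\pi_7(\pi_3(g_0))^{-1} h$ sends $P'$ to $Q$, and since its image under $\varphi_G$ fixes $Q_3$, it lies in the dihedral group $\varphi_G^{-1}(H_{Q_3})$ of order $q-1$. Line~\ref{find_mapping_element_diag_map} succeeds exactly when this element lies in the index-$2$ cyclic subgroup $\gen{s}$, giving probability $1/2$. The existence of this $h$ is precisely the missing piece in your approach: it places $P'$ and $Q$ in the same dihedral orbit for free, which your orbit-counting cannot supply on its own.
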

\begin{proof}
  By Proposition \ref{pr_psl_ovoid_action}, the point $R_3$ is in the
  same orbit as $xy$, so the element $c$ at line
  \ref{find_mapping_element_zero_map} can easily be found by
  diagonalising the form corresponding to $R_3$. Then $\pi_3(D)^c \in H_{R_3}$ is of
  order $(q - 1) / 2$. Hence $s$ also has order $(q - 1)/2$, and $s
  \in \varphi_G^{-1}(H_{R_3})$.
  
  By definition of $Q$, there exists $h
  \in \Cent_G(j)^{\prime}$ such that $Ph = Q$, and if we let $R = P \pi_7(g)$ then $R
  \pi_7(g)^{-1} h = Q$ and $\varphi_{\OV}(R) = R_3 = Q_3$. Hence
  $\varphi_G(\pi_7(g)^{-1} h) \in H_{Q_3}$, and therefore $\pi_7(g)^{-1} h
  \in \varphi_G^{-1}(H_{R_3})$.
  
  By Proposition \ref{pr_psl_ovoid_action}, $\varphi_G^{-1}(H_{R_3})$ is
  dihedral of order $q - 1$, and $s$ generates a subgroup of index
  $2$. Therefore $\Pr{\pi_7(g)^{-1} h \in \gen{s}} = 1/2$, which is
  the success probability of line \ref{find_mapping_element_diag_map}.
  
  It is straightforward to determine if $\lambda$ exists, since
  $h(\lambda)$ is diagonal. The success probability of line
  \ref{find_mapping_element_tria_map} is given by Lemma
  \ref{lem_tri_map}. Hence the success probability of the algorithm is
  as stated.
\end{proof}

\begin{thm} \label{thm_find_mapping_element_comp}
  Assume an oracle for the discrete logarithm problem in $\F_q$.
The time complexity of Algorithm \ref{alg:find_mapping_element} is $\OR{\log(q)^3 + \chi_D(q)}$ field operations. The length of the returned $\SLP$ is $\OR{\log(q) \log\log(q)}$.
\end{thm}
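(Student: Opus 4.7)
The plan is to walk through Algorithm \ref{alg:find_mapping_element} line by line, bounding the cost of each step using the results already available. The maps $\varphi_{\OV}$, $\rho_G$, and $\pi_3$ from Lemmas \ref{lem_inv_centraliser_maps} and \ref{lem_psl_maps} are each computable in $\OR{1}$ field operations, so applying them contributes nothing dominant. The test at line \ref{find_mapping_element_tria_map} is handled by the computation appearing in the proof of Lemma \ref{lem_tri_map}: after writing down the three equations in $u$, $v$, $C$, one eliminates $u$ and $v$ to obtain a single quadratic equation in $C$, which can be solved in $\OR{\log q}$ field operations by Theorem \ref{thm_solve_univariate_polys}. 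The diagonalisation of the quadratic form at line \ref{find_mapping_element_zero_map}, and the diagonalisation of the $7 \times 7$ matrix $s$, each reduce to finding roots of polynomials of constant degree, again $\OR{\log q}$ field operations.

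The two genuinely expensive operations are the applications of $\pi_7$ (one at $R_3 = \varphi_{\OV}(P \pi_7(\pi_3(g)))$ and one at $s = \pi_7(\pi_3(D)^c)$, and a final one when assembling the return value). By Lemma \ref{lem_psl_maps}, each such application costs $\OR{\log(q)^3}$ field operations. The discrete logarithm call contributes $\OR{\chi_D(q)}$ field operations. Testing whether a suitable $\lambda$ exists at line \ref{find_mapping_element_diag_map} is trivial because $h(\lambda)$ is diagonal: one reads off the candidate ratios and checks consistency in $\OR{1}$ operations. Powering $s$ to the integer $i$ of size $\OR{\log q}$ uses $\OR{\log q}$ $7 \times 7$ matrix multiplications, which is $\OR{\log q}$ field operations. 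Summing all contributions, the total time complexity is $\OR{\log(q)^3 + \chi_D(q))}$ field operations, as claimed.

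For the SLP length, the returned element $\pi_7(\pi_3(g)) \cdot s^i$ is a product of two images of $\pi_7$ together with an integer power. By Theorem \ref{thm_psl_recognition}, each SLP produced by constructive membership in $\PSL(2,q)$ has length $\OR{\log(q) \log\log(q)}$, so the SLP for $\pi_7(\pi_3(g))$ and for $s$ each have this length. The power $s^i$ adds a single power instruction to the SLP encoding $s$, which does not change the asymptotic length. Concatenating the two pieces and the power gives a final SLP of length $\OR{\log(q) \log\log(q)}$, matching the statement.

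No step is particularly hard; the only subtle point is being careful that the SLP framework absorbs the integer exponent $i$ in a single power node, so that the $\OR{\log q}$ cost of evaluating $s^i$ does not inflate the SLP length beyond $\OR{\log(q) \log\log(q)}$.
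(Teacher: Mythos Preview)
Your proposal is correct and follows essentially the same approach as the paper's proof, which is very terse: it simply notes that line~\ref{find_mapping_element_tria_map} reduces to a quadratic, that the applications of $\pi_7$ dominate at $\OR{\log(q)^3}$, and that the $\SLP$ length comes from Theorem~\ref{thm_psl_recognition}. Your version is more careful in accounting for each step (and your observation that the power node $s^i$ does not inflate the $\SLP$ length is a worthwhile detail the paper leaves implicit).
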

\begin{proof}
  By Lemma \ref{lem_tri_map}, line \ref{find_mapping_element_tria_map}
  involves solving a quadratic equation in $\F_q$, and hence uses
  $\OR{1}$ field operations. Evaluating the maps $\pi_3$ and $\pi_7$
  uses $\OR{\log(q)^3}$ field operations, and it is clear that the rest
  of the algorithm can be done using $\OR{\chi_D(q)}$ field operations.

  By Theorem \ref{thm_psl_recognition}, the length of the $\SLP$ from the
  constructive membership testing in $\PSL(2, q)$ is $\OR{\log(q) \log\log(q)}$,
  which is therefore also the length of the returned $\SLP$.
\end{proof}

\begin{cl} \label{cl_find_stab_element} Assume an oracle for the discrete logarithm problem in $\F_q$. There exists a Las Vegas algorithm
  that, given $\gen{X} \leqslant \GL(7, q)$ such that $G = \gen{X} =
  \Ree(q)$ and $P \in \OV$, computes a random element of $G_P$ as
  an $\SLP$ in $X$. 
The expected time complexity of the algorithm is
  $\OR{\xi  \log\log(q)  + \log(q)^3 + \chi_D(q)}$ field operations. The length of the returned
  $\SLP$ is $\OR{\log(q) \log\log(q)}$.
\end{cl}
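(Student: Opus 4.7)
The plan is to implement the seven-step procedure outlined in Section~\ref{section:find_stab_element}. First, I produce an involution $j \in G$: by Corollary~\ref{cl_random_selections} a single random selection yields an even-order element with constant probability, and Proposition~\ref{pr_element_powering} then extracts $j$ in $\OR{\log(q) \log\log(q)}$ field operations (the matrix degree is fixed at $7$). Next, I invoke the Bray algorithm on $j$, gathering probable generators $Y$ for $\Cent_G(j)$ together with their commutators for $\Cent_G(j)^{\prime}$; these are produced automatically as $\SLP$s in $X$. I verify via the MeatAxe that the restriction of the natural module splits as in Proposition~\ref{pr_inv_centraliser_split}, and via Theorem~\ref{thm_psl_naming} that $\gen{Y}^{\prime}$ is genuinely $\PSL(2,q)$; a failure at either test triggers a restart with a fresh involution.

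Given a valid centraliser, Lemma~\ref{lem_inv_centraliser_maps} supplies $\varphi_V, \varphi_{\OV}, \varphi_G$ and Lemma~\ref{lem_psl_maps} supplies $\rho_G, \pi_3, \pi_7$. I then test the input point $P$: if $P \subseteq \Ker(\varphi_V)$, or if $\varphi_{\OV}(P)$ is degenerate, or if it fails to represent $0$, I discard $j$ and restart. Since all involutions are $G$-conjugate by Proposition~\ref{pr_involution_props}, a uniformly random $j$ makes $\varphi_{\OV}(P)$ statistically equivalent to a uniformly random point in $\varphi_{\OV}(\OV)$; Proposition~\ref{pr_ree_ovoid_split} together with Lemma~\ref{lem_zero_forms} then lower-bounds the per-attempt success probability by $1/2 + \OR{1/q}$. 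Once $j$ passes, I draw a random $g_1 \in \Cent_G(j)^{\prime}$ via the random-element oracle (obtaining it as an $\SLP$ in $X$ through the Bray-produced $\SLP$s), set $Q = Pg_1$, resample if $Q = P$, and invoke Algorithm~\ref{alg:find_mapping_element} to obtain $g_2 \in \Cent_G(j)^{\prime}$ with $Pg_2 = Q$. By Proposition~\ref{pr_psl_ovoid_action}, $\varphi_{\OV}(Q) = \varphi_{\OV}(P)\varphi_G(g_1)$ lies in the same $\varphi_G(\Cent_G(j)^{\prime})$-orbit as $\varphi_{\OV}(P)$, so the hypotheses of Algorithm~\ref{alg:find_mapping_element} are met; Theorem~\ref{thm_find_mapping_element_pr} then gives success probability $1/4 + \OR{1/q}$ and Theorem~\ref{thm_find_mapping_element_comp} bounds the cost per call by $\OR{\log(q)^3 + \chi_D(q)}$ field operations with an output $\SLP$ of length $\OR{\log(q)\log\log(q)}$. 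Returning $g_1 g_2^{-1}$ yields an element of $G_P$.

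Summing costs yields the stated bound. Producing an acceptable involution and performing Bray setup cost $\OR{\xi + \log(q)\log\log(q)}$ per attempt with $\OR{1}$ expected attempts; the constructive recognition of $\gen{Y}^{\prime}$ via Lemma~\ref{lem_psl_maps} contributes $\OR{(\xi + \log(q)\log\log(q))\log\log(q) + \chi_D(q)}$; and Algorithm~\ref{alg:find_mapping_element} is called $\OR{1}$ expected times at $\OR{\log(q)^3 + \chi_D(q)}$ per call. The $\SLP$ from $\pi_7$ dominates the output length, giving $\OR{\log(q)\log\log(q)}$. Every restart is triggered by an explicitly detected failure, and each sub-algorithm is itself Las Vegas, so the whole procedure is Las Vegas with the claimed expected complexity.

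The principal obstacle will be decoupling the three probabilistic filters --- MeatAxe/$\PSL(2,q)$ verification of the Bray output, the position of $\varphi_{\OV}(P)$ in the orbit structure of Proposition~\ref{pr_psl_ovoid_action}, and the conditional success of Algorithm~\ref{alg:find_mapping_element} --- so that their composed success probability is bounded below by an absolute constant. The key step is exploiting the $G$-conjugacy of involutions to transfer the unavailable randomness of $j$ into controllable randomness of $\varphi_{\OV}(P)$ inside $\varphi_{\OV}(\OV)$, where Lemma~\ref{lem_zero_forms} and Proposition~\ref{pr_psl_ovoid_action} supply the uniform estimates needed to conclude that the restart loops terminate in $\OR{1}$ expected iterations.
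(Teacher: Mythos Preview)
Your proposal is correct and follows essentially the same approach as the paper: you implement the seven-step procedure of Section~\ref{section:find_stab_element}, invoke the same supporting results (Corollary~\ref{cl_random_selections}, Proposition~\ref{pr_element_powering}, Lemmas~\ref{lem_inv_centraliser_maps} and~\ref{lem_psl_maps}, Lemma~\ref{lem_zero_forms}, Theorems~\ref{thm_find_mapping_element_pr} and~\ref{thm_find_mapping_element_comp}), and assemble the complexity bound in the same way. The paper's proof is slightly terser and justifies the Las Vegas property simply by noting that one can check directly whether the returned element fixes $P$, whereas you argue via detectable failures in each sub-step; both are valid.
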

\begin{proof}
  The algorithm is given in Section \ref{section:find_stab_element}.
  
  An involution is found by finding a random element and then use
  Proposition \ref{pr_element_powering}. Hence by Corollary
  \ref{cl_random_selections}, the expected time to find an involution
  is $\OR{\xi + \log(q) \log\log(q)}$ field operations.

  As described in Section \ref{section:inv_centraliser}, the Bray algorithm will
  produce uniformly random elements of the centraliser. Hence as
  described in Section \ref{section:random_elements}, we can
  also obtain uniformly random elements of its derived group. By
  Proposition \ref{psl_generation}, two random
  elements will generate $\PSL(2, q)$ with high probability. This
  implies that the expected time to obtain probable generators for
  $\PSL(2, q)$ is $\OR{1}$ field operations.
  
  By Proposition \ref{pr_psl_ovoid_action}, the point $Q$ is equal to $P$ with probability $2/(q(q+1))$ and by
  Lemma \ref{lem_zero_forms} the point $P$ do not represent zero
  with probability $1/2$, so the expected time of the penultimate step
  is $\OR{1}$ field operations.

  Since the points $P, Q$ can be considered uniformly random and
  independent in Algorithm \ref{alg:find_mapping_element}, the element
  returned by that algorithm is uniformly random. Hence the element
  returned by the algorithm in Section \ref{section:find_stab_element}
  is uniformly random.
  
  The expected time complexity of the last step is given by Theorem
  \ref{thm_find_mapping_element_comp} and
  \ref{thm_find_mapping_element_pr}. It follows by the above and from
  Lemma \ref{lem_inv_centraliser_maps} and \ref{lem_psl_maps} and
  Corollary \ref{ree_totient_prop} that the expected time complexity
  of the algorithm in Section \ref{section:find_stab_element} is as
  stated.

  The algorithm is clearly Las Vegas, since it
  is straightforward to check that the element we compute really fixes
  the point $P$.
\end{proof}

\begin{rem}
  The elements returned by the algorithm in Corollary
  \ref{cl_find_stab_element} are not uniformly random from the
  \emph{whole} of $G_P$, but from $G_P \cap \Cent_G(j)$. Hence, to obtain generators for the whole stabiliser, it is necessary to execute the algorithm at least twice, with different choices of the involution $j$.
\end{rem}

\begin{rem} \label{ree_rem_stab_elt} The algorithm in Corollary
  \ref{cl_find_stab_element} works in any conjugate of $\Ree(q)$,
  since it does not assume that the matrices lie in the standard copy.
\end{rem}

\subsection{Constructive membership testing}
\label{section:ree_constructive_membership}

We now describe the constructive membership algorithm for our standard
copy $\Ree(q)$. The matrix degree is constant here, so we set $\xi =
\xi(7)$. Given a set of generators $X$, such that $G = \gen{X} =
\Ree(q)$, and given an element $g \in G$, we want to express $g$ as an
$\SLP$ in $X$. Membership testing is straightforward, using the first
step from the algorithm in Theorem \ref{thm_ree_standard_recognition},
and will not be considered here.

The general structure of the algorithm is the same as the algorithm for the
same problem in the Suzuki groups. It consists of a
preprocessing step and a main step.

\subsubsection{Preprocessing} \label{section:preprocessing_step}
The preprocessing step consists of finding
\lq \lq standard generators'' for $\mathrm{O}_3(G_{P_{\infty}}) = U(q)$ and
$\mathrm{O}_3(G_{P_0})$. In the case of
$\mathrm{O}_3(G_{P_{\infty}})$ the standard generators are defined as matrices 
\begin{equation}
\set{S(a_i, x_i, y_i)}_{i = 1}^{n} \cup \set{S(0, b_i, z_i)}_{i = 1}^{n} \cup \set{S(0, 0, c_i)}_{i = 1}^n
\end{equation}
for some unspecified $x_i, y_i, z_i \in \F_q$, such that $\set{a_1, \dotsc, a_n}$,
$\set{b_1, \dotsc, b_n}$, $\set{c_1, \dotsc, c_n}$ form vector space bases of $\F_q$ over $\F_3$
(so $n = \log_3{q} = 2m + 1$). 

\begin{lem} \label{ree_row_operations}
There exist algorithms for the following row reductions.
\begin{enumerate}
\item Given $g = h(\lambda) S(a, b, c) \in G_{P_{\infty}}$, find $h \in \mathrm{O}_3(G_{P_{\infty}})$ expressed in the standard generators, such that $gh = h(\lambda)$.
\item Given $g = S(a, b, c) h(\lambda) \in G_{P_{\infty}}$, find $h \in \mathrm{O}_3(G_{P_{\infty}})$ expressed in the standard generators, such that $hg = h(\lambda)$.
\item Given $P_{\infty} \neq P \in \OV$, find $g \in \mathrm{O}_3(G_{P_{\infty}})$ expressed in the standard generators, such that $Pg = P_0$.
\end{enumerate}
Analogous algorithms exist for $G_{P_0}$. If the standard generators are expressed as $\SLP$s of length $\OR{n}$, the elements returned will have length $\OR{n \log(q)}$. The time complexity of the algorithms is $\OR{\log(q)^3}$ field operations. 
\end{lem}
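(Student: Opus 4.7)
The plan is to mimic Lemma \ref{sz_row_operations}, exploiting the subnormal series
\[
U(q) \,\triangleright\, U(q)^{\prime} \,\triangleright\, \Zent(U(q)) \,\triangleright\, 1
\]
from Proposition \ref{sylow3_props}, where $U(q)^{\prime} = \{S(0,b,c)\}$ and $\Zent(U(q)) = \{S(0,0,c)\}$. By inspection of \eqref{ree_matrix_id1}, multiplication in each successive quotient is just addition in $\F_q$, an $\F_3$-vector space of dimension $n = \log_3 q$. The three blocks of standard generators $\{S(a_i,x_i,y_i)\}$, $\{S(0,b_i,z_i)\}$, $\{S(0,0,c_i)\}$ give $\F_3$-bases of these quotients, so clearing each level is a single $n \times n$ linear algebra problem over $\F_3$.

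For case (1), given $g = h(\lambda)S(a,b,c)$, first read $a,b,c$ from the entries of $g$: since $h(\lambda)$ is diagonal and $S(a,b,c)$ is explicit upper-triangular, $a$ and $b$ appear (up to a known scaling by entries of $h(\lambda)$ and up to $t$-th powers) in designated positions, and $x \mapsto x^t$ is an invertible $\F_3$-linear map on $\F_q$. Solve the $\F_3$-system $-a = \sum \alpha_i a_i$ and set $h_1 = \prod_i S(a_i,x_i,y_i)^{\alpha_i}$. By \eqref{ree_matrix_id1}, the first coordinate is additive under multiplication, so $gh_1 = h(\lambda)S(0,b',c')$ for some $b',c' \in \F_q$; the unknown $x_i,y_i$ and the lower-order correction terms contribute only to the $U(q)^{\prime}$-part and are absorbed into $b',c'$. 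Repeat modulo $\Zent(U(q))$: read $b'$, solve $-b' = \sum \beta_i b_i$, multiply by $h_2 = \prod S(0,b_i,z_i)^{\beta_i}$ to obtain $gh_1h_2 = h(\lambda)S(0,0,c'')$. Finally read $c''$, solve $-c'' = \sum \gamma_i c_i$, and set $h_3 = \prod S(0,0,c_i)^{\gamma_i}$, yielding $gh_1h_2h_3 = h(\lambda)$. Return $h = h_1h_2h_3$. Case (2) is symmetric (multiply on the left). Case (3) is analogous: parametrise $P$ as in \eqref{ree_ovoid_def}, extract $(a,b,c)$ by taking $t$-th roots of projective coordinates, and clear in the same three stages; the relations \eqref{sz_matrix_id1}--\eqref{sz_matrix_id2} analogue for $\Ree(q)$ guarantee that successive clearings preserve the work done at earlier levels.

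For complexity, each of the three linear systems has size $n = \log_3 q$ and is solved in $O(n^3) = O(\log(q)^3)$ field operations; reading the coordinates and taking $t$-th roots is dominated by this. Each $h_i$ is a product of at most $n$ standard generators with $\pm 1$ exponents, so has $\SLP$ length $O(n \cdot n) = O(n\log q)$ when the generators are themselves words of length $O(n)$, giving the stated bound.

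The only delicate point — and the main obstacle to writing this out in full — is the bookkeeping verifying that the corrections introduced by the non-abelian multiplication in \eqref{ree_matrix_id1} genuinely lie in the next term of the filtration at each stage, so that $gh_1 \in h(\lambda)U(q)^{\prime}$ and $gh_1h_2 \in h(\lambda)\Zent(U(q))$. This reduces to checking that in the formulas for the product $S(a_1,b_1,c_1)S(a_2,b_2,c_2)$, the first coordinate is $a_1+a_2$ (no cross term), and given $a_1 = 0$, the second coordinate is $b_1 + b_2$ (again no cross term) — both of which are immediate from \eqref{ree_matrix_id1}.
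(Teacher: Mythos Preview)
Your proposal is correct and follows essentially the same approach as the paper: exploit the filtration $U(q) \triangleright U(q)' \triangleright \Zent(U(q)) \triangleright 1$, read off the relevant $\F_q$-coordinate at each level from explicit matrix entries, solve an $n \times n$ linear system over $\F_3$, and multiply by the corresponding product of standard generators. The paper differs only in presentation --- it names the specific entries ($g_{2,1}/g_{2,2}$, $g'_{3,1}/g'_{3,3}$, $g''_{4,1}/g''_{4,4}$) and, for case~(3), writes down closed formulas $\alpha = -p_1^{3t}$, $\beta = (p_1\alpha + p_2)^{3t}$, $\gamma = (\cdots)^{3t}$ for the element $S(\alpha,\beta,\gamma)$ sending $P$ to $P_0$, then feeds this into case~(1) with $\lambda = 1$; your ``extract $(a,b,c)$ by $t$-th roots'' is the same operation since $(x^t)^{3t} = x$ in $\F_q$.
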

\begin{proof} 
The algorithms are as follows.

\begin{enumerate}
\item \begin{enumerate}
\item Solve a linear system of size $\log(q)$ to construct the linear
  combination $-a = -g_{2, 1} / g_{2, 2} = \sum_{i = 1}^{2m + 1}
  \alpha_i a_i$ with $\alpha_i \in \F_3$. Let $h^{\prime} = \prod_{i =
    1}^{2m + 1} S(a_i, x_i, y_i)^{\alpha_i}$ and $g^{\prime} = g
  h^{\prime}$, so that $g^{\prime} = h(\lambda) S(0,
  b^{\prime}, c^{\prime})$ for some $b^{\prime}, c^{\prime} \in \F_q$.
\item Solve a linear system of size $\log(q)$ to construct the linear
  combination $-b^{\prime} = -g^{\prime}_{3, 1} / g^{\prime}_{3, 3} =
  \sum_{i = 1}^{2m + 1} \beta_i b_i$ with $\beta_i \in \F_3$.  Let
  $h^{\prime \prime} = \prod_{i = 1}^{2m + 1} S(0, b_i, y_i)^{\beta_i}$ and
  $g^{\prime \prime} = g^{\prime} h^{\prime\prime}$, so that
  $g^{\prime\prime} = h^{\prime}(\lambda) S(0, 0, c^{\prime\prime})$ for some $c^{\prime\prime} \in \F_q$.
\item Solve a linear system of size $\log(q)$ to construct the linear
  combination $-c^{\prime\prime} = -g^{\prime\prime}_{4, 1} / g^{\prime\prime}_{4, 4} =
  \sum_{i = 1}^{2m + 1} \gamma_i c_i$ with $\gamma_i \in \F_3$.  Let
  $h^{\prime \prime\prime} = \prod_{i = 1}^{2m + 1} S(0, 0, z_i)^{\gamma_i}$ and
  $g^{\prime \prime \prime} = g^{\prime \prime} h^{\prime\prime \prime}$, so that
  $g^{\prime\prime \prime} = h^{\prime}(\lambda)$.

\item Now $h = h^{\prime} h^{\prime\prime} h^{\prime\prime\prime}$.
\end{enumerate}
\item Analogous to the previous case.
\item 
\begin{enumerate}
\item Normalise $P$ so that $P = (1 : p_1 : \dotsb : p_6)$. 
\item Let $\alpha = -p_1^{3t}$, $\beta = (p_1 \alpha + p_2)^{3t}$ and $\gamma = ((\alpha \beta)^t + p_1 (\alpha^{t + 1} + \beta^t) + p_2 \alpha^t + p_3)^{3t}$. Then $S(\alpha, \beta, \gamma)$ maps $P$ to $P_{\infty}$.
\item Use the algorithm above to find $h \in \mathrm{O}_3(G_{P_{\infty}})$ such that $S(\alpha, \beta, \gamma) h = 1$, and hence express $S(\alpha, \beta, \gamma)$ in the standard generators.
\end{enumerate}
\end{enumerate}
Clearly the dominating term in the time complexity is the solving of the linear systems, which requires $\OR{\log(q)^3}$ field operations. The elements returned are constructed using $\OR{\log(q)}$
multiplications, hence the length of the $\SLP$ follows.
\end{proof}


\begin{thm} \label{thm_pre_step} Given an oracle for the discrete logarithm problem in $\F_q$, the
  preprocessing step is a Las Vegas algorithm that finds standard
  generators for $\mathrm{O}_3(G_{P_{\infty}})$ and
  $\mathrm{O}_3(G_{P_0})$ as $\SLP$s in $X$ of length $\OR{\log(q) (\log\log(q))^2}$. It has expected time complexity $\OR{(\xi \log\log(q) + \log(q)^3 + \chi_D(q)) \log\log(q)}$ field operations. 
\end{thm}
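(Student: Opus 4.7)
The plan follows the template of Theorem \ref{sz_thm_pre_step} for the Suzuki case, but must be adapted to the three-step filtration
\[ U(q) \triangleright U(q)^{\prime} \triangleright \Zent(U(q)) \triangleright \gen{1} \]
of $U(q)$, each of whose sections is a copy of $\F_q$ regarded as an $\F_3$-vector space of dimension $2m+1$ (Proposition \ref{sylow3_props}). Consequently, we must produce three families of generators rather than two, and the matrix identities \eqref{ree_matrix_id1}--\eqref{ree_matrix_id2} give enough structural control to build all three from a single non-identity element together with its conjugates under an element of order $q-1$.

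First, apply Corollary \ref{cl_find_stab_element} repeatedly until elements $a_1 \in G_{P_{\infty}}$ and $b_1 \in G_{P_0}$ with $\abs{a_1} = \abs{b_1} = q - 1$ are found; by Lemma \ref{ree_totient_prop} this takes an expected $\OR{\log\log(q)}$ iterations, and then $\gen{a_1}, \gen{b_1}$ are $G$-conjugate to $H(q)$ by Proposition \ref{cyclic_subgroups_conjugate}, so we may proceed as if $a_1 = h(\mu)$ and $b_1 = h(\nu)$ with $\mu, \nu \in \F_q^{\times}$ primitive. Next, choose random $a_2 \in G_{P_{\infty}}$, $b_2 \in G_{P_0}$ and form $c_1 = [a_1, a_2] \in U(q)$, $c_2 = [b_1, b_2] \in U(q)^{\Upsilon}$; by Proposition \ref{ree_prop_frobenius} (applied to the generic pair $a_1, a_2$) we have $\abs{c_1} = \abs{c_2} = 9$ with probability $1 - 1/(q-1)$, so $c_1 = S(a, b, c)$ with $a \neq 0$. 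Set $e_1 := c_1^3$ and $f_1 := [c_1, c_1^{a_1}]$. A direct characteristic-$3$ calculation using \eqref{ree_matrix_id1} gives $e_1 = S(0, 0, -a^{3t+2}) \in \Zent(U(q)) \setminus \gen{1}$, and \eqref{ree_matrix_id2} combined with the commutator-conjugation identity yields $f_1 = S(0, b^{\prime}, c^{\prime})$ with $b^{\prime} = a^{3t+1} \mu^{3t-2} (1 - \mu^{(3t-2)(3t-1)})$; a short mod-$(q-1)$ computation shows $(3t-2)(3t-1) \equiv 5 - 9t \not\equiv 0$ for all $m \geq 1$, so $b^{\prime} \neq 0$ and $f_1 \in U(q)^{\prime} \setminus \Zent(U(q))$. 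Analogous elements $e_2, f_2$ are built from $b_1, c_2$.

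Declare the standard generators to be
\[ L = \bigcup_{i=0}^{2m}\{c_1^{a_1^i},\ f_1^{a_1^i},\ e_1^{a_1^i}\}, \qquad U = \bigcup_{i=0}^{2m}\{c_2^{b_1^i},\ f_2^{b_1^i},\ e_2^{b_1^i}\}. \]
By \eqref{ree_matrix_id2} the first, second, and third $\F_q$-slots across $i = 0, \dotsc, 2m$ are respectively $\{a\mu^{i(3t-2)}\}$, $\{b^{\prime}\mu^{i(1-3t)}\}$, and $\{-a^{3t+2}\mu^{-i}\}$. The main obstacle is to verify that each of these three subsets is an $\F_3$-basis of $\F_q$, equivalently that $\mu^{3t-2}$, $\mu^{1-3t}$, and $\mu^{-1}$ each have minimal polynomial of degree $2m+1$ over $\F_3$. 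For the first and third slots this reduces to $\gcd(3t-2, q-1) = 1$, which follows from the Euclidean computation $\gcd(3^{m+1} - 2, 3^{2m+1} - 1) = \gcd(2\cdot 3^m - 1, 3^m - 1) = 1$. For the middle slot $\gcd(1-3t, q-1) = 2$, but the multiplicative order $(q-1)/2$ of $\mu^{1-3t}$ fails to divide $3^k - 1$ for any $k \leq 2m$, since $3^k - 1 \leq 3^{2m} - 1 < (q-1)/2$ in that range, forcing the degree to equal $2m+1$.

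For complexity, each call to Corollary \ref{cl_find_stab_element} costs $\OR{\xi \log\log(q) + \log(q)^3 + \chi_D(q)}$ field operations, and we require $\OR{\log\log(q)}$ such calls by Lemma \ref{ree_totient_prop} plus $\OR{1}$ more by Proposition \ref{ree_prop_frobenius} in order to produce $a_1, b_1, c_1, c_2$ of the required orders; the derived $e_i, f_i$ are then built by a bounded number of further group operations. Assembling each of the $\OR{\log(q)}$ standard generators requires one square-and-multiply exponentiation of the form $a_1^i$ with $i \leq 2m$ together with one conjugation, and composed with the $\SLP$s of $c_1, f_1, e_1$ (of length $\OR{\log(q)\log\log(q)}$) this yields the claimed total $\SLP$ length $\OR{\log(q)(\log\log(q))^2}$. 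Correctness and the Las Vegas property follow because at the end we can verify directly that the produced matrices have the required triangular form and that their extracted coordinate vectors are $\F_3$-linearly independent.
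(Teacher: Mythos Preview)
Your proof is correct and follows the same overall scheme as the paper: obtain $a_1 \in G_{P_\infty}$ of (pseudo-)order $q-1$ via Corollary~\ref{cl_find_stab_element}, form $c_1 = [a_1,a_2]$ of order~$9$, and build the three layers of standard generators by conjugating under powers of $a_1$. The main difference is in the middle layer $U(q)'/\Zent(U(q))$. The paper takes \emph{all} pairwise commutators
\[
U_2 = \bigcup_{1\le i<j\le 2m+1}\{[c_1^{a_1^i},c_1^{a_1^j}]\},
\]
which is $\binom{2m+1}{2}$ elements, while you take a single commutator $f_1=[c_1,c_1^{a_1}]$ and its $2m{+}1$ conjugates $f_1^{a_1^i}$. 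Your version is tidier (it gives exactly $3(2m+1)$ generators, matching the definition preceding Lemma~\ref{ree_row_operations}), and your basis verification via the explicit computations $\gcd(3t-2,q-1)=1$ and $\gcd(3t-1,q-1)=2$ is more detailed than what the paper supplies.

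Two small remarks. First, testing $|a_1|=q-1$ exactly would need a factorisation of $q-1$; the paper instead checks only that the diagonal eigenvalue $\lambda$ lies in no proper subfield, which is cheap. Your argument actually goes through under this weaker hypothesis too (since $\gcd(3t-2,q-1)=1$ gives $\F_3(\lambda^{3t-2})=\F_3(\lambda)$, and for the middle slot one checks that $\lambda\notin\F_{3^d}$ forces $\lambda^2\notin\F_{3^d}$ because $3^d-1\equiv 2\pmod 4$ for odd~$d$), and in any case your final $\F_3$-linear-independence check secures the Las Vegas property regardless. Second, your appeal to Proposition~\ref{ree_prop_frobenius} is not literally applicable since $a_1$ is not uniformly random, but as $a_1$ has nontrivial image in $H(q)$ the success probability over random $a_2$ is in fact $1-1/q$, which is even better; the paper makes the same informal citation.
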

\begin{proof}
The preprocessing step proceeds as follows.
\begin{enumerate}
\item Find random elements $a_1 \in G_{P_{\infty}}$ and $b_1
  \in G_{P_0}$ using the algorithm from Corollary
  \ref{cl_find_stab_element}. Repeat until $a_1$ can be diagonalised to $h(\lambda) \in
G$, where $\lambda \in \F_q^{\times}$ and $\lambda$ 
does not lie in a proper subfield of $\F_q$. Do similarly for $b_1$. 
\item Find random elements $a_2 \in G_{P_{\infty}}$ and $b_2
  \in G_{P_0}$ using the algorithm from Corollary
  \ref{cl_find_stab_element}. Let $c_1 = [a_1, a_2]$, $c_2 = [b_1,
  b_2]$. Repeat until $\abs{c_1} = \abs{c_2} = 9$.

\item Let $Y_{\infty} = \set{c_1, a_1}$ and $Y_0 = \set{c_2, b_1}$. As standard generators for $\mathrm{O}_3(G_{P_{\infty}})$ we now take $U = U_1 \cup U_2$ where 
\begin{equation} \label{standard_gens1}
U_1 = \bigcup_{i = 1}^{2m + 1} \set{c_1^{a_1^i}, (c_1^3)^{a_1^i}}
\end{equation}
and
\begin{equation} \label{standard_gens2}
U_2 = \bigcup_{1 \leqslant i < j \leqslant 2m + 1} \set{[c_1^{a_1^i}, c_1^{a_1^j}]}
\end{equation}
Similarly we obtain $L$ for $\mathrm{O}_3(G_{P_0})$.
\end{enumerate}

It follows from \eqref{ree_matrix_id1} and \eqref{ree_matrix_id2} that
\eqref{standard_gens1} and \eqref{standard_gens2} provides the standard generators for
$G_{P_{\infty}}$. These are expressed as $\SLP$s in $X$, since this is
true for the elements returned from the algorithm described in Corollary
\ref{cl_find_stab_element}. Hence the algorithm is Las Vegas.

By Corollary \ref{cl_find_stab_element}, the expected time to find
$a_1$ and $b_1$ is $\OR{\xi \log\log(q) + \log(q)^3 + \chi_D(q)}$, and
these are uniformly distributed independent random elements. The
elements of order dividing $q - 1$ can be diagonalised as required.
By Proposition \ref{sylow3_props}, the proportion of elements of
order $q - 1$ in $G_{P_{\infty}}$ and $G_{P_0}$ is $\phi(q - 1) / (q - 1)$. Hence the expected time for the first step is
$\OR{(\xi \log\log(q) + \log(q)^3 + \chi_D(q)) \log\log(q)}$ field operations.

Similarly, by Proposition \ref{ree_prop_frobenius} the expected time for
the second step is \[\OR{(\xi \log\log(q) + \log(q)^3 + \chi_D(q)) \log\log(q)}\] field
operations.



By the remark preceding the Theorem, $L$ determines three sets of
field elements $\set{a_1, \dotsc, a_{2m+1}}$, $\set{b_1, \dotsc,
  b_{2m+1}}$ and $\set{c_1, \dotsc, c_{2m+1}}$. By \eqref{ree_matrix_id2},
in this case each $a_i = a \lambda^i$, $b_i = b \lambda^{i(t + 2)}$
and $c_i = c \lambda^{i(t + 3)}$, for some fixed $a,b,c \in
\F_q^{\times}$, where $\lambda$ is as in the algorithm. Since
$\lambda$ does not lie in a proper subfield, these sets form vector
space bases of $\F_q$ over $\F_3$.

To determine if $a_1$ or $b_1$ diagonalise to some $h(\lambda)$ it is sufficient to consider the eigenvalues on the diagonal, since both $a_1$ and $b_1$ are triangular. To determine if $\lambda$ lies
in a proper subfield, it is sufficient to determine if $\abs{\lambda} \mid
3^n - 1$, for some proper divisor $n$ of $2m + 1$. Hence the
dominating term in the complexity is the first step.
\end{proof}

\subsubsection{Main algorithm}
Given $g \in G$ we now show the procedure for expressing $g$ as an $\SLP$. It is given as Algorithm \ref{alg:ree_element_to_slp}.

\begin{figure}[ht]
\begin{codebox}
\refstepcounter{algorithm}
\label{alg:ree_element_to_slp}
\Procname{\kw{Algorithm} \ref{alg:ree_element_to_slp}: $\proc{ElementToSLP}(U, L, g)$}
\li \kw{Input}: Standard generators $U$ for $G_{P_{\infty}}$ and $L$ for $G_{P_0}$. Matrix $g \in \gen{X} = G$.
\li \kw{Output}: $\SLP$ for $g$ in $X$
\li \Repeat
\li     \Repeat
\li         $r := \proc{Random}(G)$ 
\li     \Until{$gr$ has an eigenspace $Q \in \OV$ and $P \neq Q$ \label{main_alg_find_point_ree}} 
\li     Find $z_1 \in G_{P_{\infty}}$ using $U$ such that $Qz_1 = P_0$. \label{main_alg_row_op1_ree} 
\li     \Comment Now $(gr)^{z_1} \in G_{P_0}$
\li     Find $z_2 \in G_{P_0}$ using $L$ such that $(gr)^{z_1} z_2 = h(\lambda)$ for some $\lambda \in \F_q^{\times}$ \label{main_alg_row_op2_ree} 
\li     $x := \Tr(h(\lambda))$ 
\li \Until $x - 1$ is a square in $\F_q^{\times}$ \label{main_alg_square_test}
\li     \Comment{Express diagonal matrix as $\SLP$}
\li Find $u = S(0, 0, \sqrt{(x - 1)^{3t}})S(0, 1, 0)^{\Upsilon}$ using $U \cup L$ \label{main_alg_row_op3_ree} 
\li \Comment{Now $\Tr(u) = x$}
\li Let $P_1, P_2 \in \OV$ be the fixed points of $u$
\li Find $a \in G_{P_{\infty}}$ using $U$ such that $P_1 a = P_0$ \label{main_alg_row_op4_ree} 
\li Find $b \in G_{P_0}$ using $L$ such that $(P_2 a)b = P_{\infty}$ \label{main_alg_row_op5_ree} 
\li \Comment{Now $u^{ab} \in G_{P_{\infty}} \cap G_{P_0} = H(q)$, so $u^{ab} \in \set{h(\lambda)^{\pm 1}}$}
\li \If $u^{ab} = h(\lambda)$ 
\zi \Then
\li     Let $w$ be the $\SLP$ for $(u^{ab} z_2^{-1})^{z_1^{-1}} r^{-1}$ \label{main_alg_get_slp1_ree} 
\li     \Return $W$
\zi \Else
\li     Let $w$ be the $\SLP$ for $((u^{ab})^{-1} z_2^{-1})^{z_1^{-1}} r^{-1}$ \label{main_alg_get_slp2_ree} 
\li     \Return $w$
    \End 
\zi \kw{end}
\end{codebox}
\end{figure}

\subsubsection{Correctness and complexity}

\begin{thm} \label{thm_element_to_slp_ree}
Algorithm \ref{alg:ree_element_to_slp} is correct, and is a Las Vegas algorithm.
\end{thm}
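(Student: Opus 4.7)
The plan is to follow the structure of the analogous Suzuki-group result (Theorem \ref{thm_element_to_slp}): process $g$ by randomly ``dropping'' it to the diagonal torus $H(q)$, then synthesise a matrix $u$ whose trace matches the resulting diagonal element, and finally conjugate $u$ back inside $H(q)$ via standard row reductions. Each step is either deterministic given its inputs, or succeeds with probability bounded below by a positive constant; consequently the outer \textbf{repeat} iterates $\OR{1}$ times in expectation, and every output piece is produced as an $\SLP$ in $X$.

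First I would check the loop at line \ref{main_alg_find_point_ree}: since $r$ is uniformly random, so is $gr$, and by Lemma \ref{ree_totient_prop} the probability that $gr$ fixes a point of $\OV$ is at least $1/2$; the fixed point $Q$ is extracted as a one-dimensional eigenspace, and the equality $P=Q$ is ruled out with probability $1-1/(q^3+1)$. The row-reduction primitives of Lemma \ref{ree_row_operations} then yield $z_1\in G_{P_\infty}$ and $z_2\in G_{P_0}$, both as $\SLP$s in the standard generators $U\cup L$, with $(gr)^{z_1}z_2\in G_{P_\infty}\cap G_{P_0}=H(q)$. This product is $h(\lambda)$ for some $\lambda\in\F_q^\times$. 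The final test in the outer loop asks whether $x-1=\Tr h(\lambda)-1$ is a non-zero square in $\F_q$; because $r$ was drawn independently of everything prior, $\lambda$ is near-uniformly distributed in $\F_q^\times$ over the randomness of $r$, and I would argue that the quantity $\Tr h(\lambda)-1$ hits non-zero squares with probability bounded away from $0$, giving $\OR{1}$ expected iterations of the outer loop.

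Once line \ref{main_alg_square_test} is passed, I would verify line \ref{main_alg_row_op3_ree} by direct calculation with the generator matrices of Section \ref{section:ree_theory}: writing $c=\sqrt{(x-1)^{3t}}$, the element $u=\gamma(c)\,\beta(1)^{\Upsilon}$ satisfies $\Tr u=x$ and has order prime to $3$. By Proposition \ref{ree_conjugacy_classes}, elements of order prime to $3$ with equal trace are conjugate in $G$, so $u$ is conjugate to $h(\lambda)$ and therefore generates a conjugate of $H(q)$; in particular it fixes exactly two points $P_1,P_2\in\OV$, recovered again as eigenspaces. Two more applications of Lemma \ref{ree_row_operations} produce $a\in G_{P_\infty}$ and $b\in G_{P_0}$, expressed as $\SLP$s, with $P_1ab=P_\infty$ and $P_2ab=P_0$, forcing $u^{ab}\in H(q)$. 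Since $\Norm_G(H(q))/H(q)\cong\Cent_2$ acts on $H(q)$ by inversion, the only elements of $H(q)$ that are $G$-conjugate to $h(\lambda)$ are $h(\lambda)^{\pm 1}$, and the algorithm checks explicitly which one $u^{ab}$ is. Every building block — $r$, $z_1$, $z_2$, $u$, $a$, $b$ — has been constructed as an $\SLP$ in $X$, so the returned word $w$ is a valid $\SLP$; unwinding $(u^{ab}z_2^{-1})^{z_1^{-1}}r^{-1}$ recovers $g$, and a final matrix equality check confirms the output, making the algorithm Las Vegas.

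The main obstacle is the square-value step on line \ref{main_alg_square_test}: I need a quantitative lower bound on $\Pr{\Tr h(\lambda)-1\in(\F_q^\times)^2}$ for $h(\lambda)$ produced from a random coset as above. The cleanest route is to exploit the independence of $r$ from all prior data to treat $\lambda$ as near-uniform in $\F_q^\times$, and then count squares among the values $\Tr h(\lambda)-1$, either via an elementary character sum or by an explicit parametrisation of the diagonal entries of $h(\lambda)$. Everything else reduces to standard row-reduction and trace bookkeeping, so this probability estimate is really where the work lies.
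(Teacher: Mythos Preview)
Your approach is essentially the same as the paper's: check that $r$ comes as an $\SLP$, use Lemma \ref{ree_row_operations} for each of the row reductions at lines \ref{main_alg_row_op1_ree}--\ref{main_alg_row_op5_ree}, invoke Proposition \ref{ree_conjugacy_classes} to see that $u$ is conjugate to $h(\lambda)$ and hence fixes two points of $\OV$, and observe that $u^{ab}\in H(q)$ forces $u^{ab}\in\{h(\lambda)^{\pm1}\}$, so the returned word evaluates to $g$.

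Where you diverge is in what you flag as ``the main obstacle''. The square test at line \ref{main_alg_square_test} is \emph{not} part of the present theorem in the paper: Theorem \ref{thm_element_to_slp_ree} only asserts correctness and the Las Vegas property (any returned $w$ evaluates to $g$), and the probability that the outer loop terminates is deferred to the complexity statement, Theorem \ref{thm_element_to_slp_complexity}. There the paper dispatches it in one line --- ``Half of the elements of $\F_q^{\times}$ are squares, and $x$ is uniformly random'' --- with no character sums and no explicit parametrisation of the diagonal. So the step you single out as the hard part is both misplaced (it belongs to the next theorem) and treated far more casually than you anticipate.
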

\begin{proof}
  First observe that since $r$ is randomly chosen, we obtain it
  as an $\SLP$. 

The elements found at lines \ref{main_alg_row_op1_ree} and
\ref{main_alg_row_op2_ree} can be computed using Lemma \ref{ree_row_operations}, so we can obtain them as $\SLP$s.

The element $u$ found at line \ref{main_alg_row_op3_ree} clearly has trace
$x$.
Because $u$ can be computed using Lemma \ref{ree_row_operations}, we
obtain it as an $\SLP$. From Proposition \ref{ree_conjugacy_classes}
we know that $u$ is conjugate to $h(\lambda)^{\pm 1}$ and therefore
must fix two points of $\OV$. Hence lines \ref{main_alg_row_op4_ree}
and \ref{main_alg_row_op5_ree} make sense, and the elements found can
again be computed using Lemma \ref{ree_row_operations}, so we obtain
them as $\SLP$s.

Finally, the elements that make up $w$ have
been found as $\SLP$s, and it is clear that if we evaluate $w$ we obtain
$g$. Hence the algorithm is Las Vegas and the Theorem follows.
\end{proof}

\begin{thm} \label{thm_element_to_slp_complexity}
Algorithm \ref{alg:ree_element_to_slp} has expected time complexity $\OR{\xi + \log(q)^3}$ field
operations and the length of the
returned $\SLP$ is $\OR{(\log(q) \log\log(q))^2}$.
\end{thm}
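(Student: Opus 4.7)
The plan is to bound the expected number of iterations of the two nested repeat loops and then multiply by the cost per iteration. For the inner loop at line \ref{main_alg_find_point_ree}, I would argue as follows: since $r$ is drawn independently from the random element oracle, the product $gr$ is also uniformly distributed in $G$, so Lemma \ref{ree_totient_prop} gives that $gr$ fixes a point of $\OV$ with probability at least $1/2$, and the event $P \neq Q$ only fails with probability $1/|\OV|$. So the inner loop runs $\OR{1}$ times in expectation. For the outer loop, conditional on success of the inner loop the element $\lambda$ produced at line \ref{main_alg_row_op2_ree} is essentially uniform in $\F_q^{\times}$ (since $r$ randomises over the stabiliser up to the fixed action of $g$), and hence the trace $x = \Tr(h(\lambda))$ is a generic function of $\lambda$; the proportion of $\lambda$ for which $x-1 \in (\F_q^{\times})^2$ is bounded below by a positive constant. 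Thus the outer loop also terminates in $\OR{1}$ expected iterations.

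Next, I would bound the cost of a single iteration. Drawing $r$ and multiplying costs $\OR{\xi}$ field operations. Checking whether $gr$ has an eigenspace in $\OV$ reduces to computing the characteristic polynomial of the constant-size matrix $gr$ and finding its roots, which by Theorem \ref{thm_solve_univariate_polys} costs $\OR{\log q}$ field operations. Each of the five row-reductions performed at lines \ref{main_alg_row_op1_ree}, \ref{main_alg_row_op2_ree}, \ref{main_alg_row_op3_ree}, \ref{main_alg_row_op4_ree} and \ref{main_alg_row_op5_ree} costs $\OR{\log(q)^3}$ field operations by Lemma \ref{ree_row_operations}. The trace, square test, square-root extraction and finding of the two fixed points of $u$ are all $\OR{\log q}$. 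Summing over the $\OR{1}$ expected iterations yields the claimed bound $\OR{\xi + \log(q)^3}$.

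For the SLP length I would trace through the construction of $w$. By Theorem \ref{thm_pre_step}, the standard generators in $U$ and $L$ are SLPs of length $\OR{\log(q)(\log\log q)^2}$ in $X$. Each element produced by a row-reduction via Lemma \ref{ree_row_operations} is a word of length $\OR{\log q}$ in the standard generators, so as an SLP in $X$ it has length $\OR{\log(q) \cdot \log(q)(\log\log q)^2} = \OR{(\log(q)\log\log(q))^2}$. The five elements $z_1, z_2, u, a, b$ all share this bound, while $r$ contributes length $\OR{1}$ (a single call to the random element oracle, cf.\ Section \ref{section:random_elements}). The final word $w$ is a bounded-depth composition of these SLPs, so its length is dominated by the row-reduction bound, namely $\OR{(\log(q)\log\log(q))^2}$.

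The main delicate point will be the probabilistic analysis of the square test in the outer loop: one needs a rigorous argument that, as $r$ varies over the coset $\{r : gr \in G_Q\}$, the induced distribution of $\lambda \in \F_q^{\times}$ is sufficiently close to uniform, so that the polynomial expression $\Tr(h(\lambda)) - 1$ hits a square with constant probability. A clean way around this is to note that it suffices to multiply $gr$ by a further random element of $G_Q$ (effectively randomising $\lambda$ within the torus) before testing the square condition; this re-randomisation adds only $\OR{1}$ expected iterations and $\OR{\log(q)^3}$ cost, which is absorbed into the claimed bounds. The rest of the proof is bookkeeping of the complexity accounts already established.
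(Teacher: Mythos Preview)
Your proposal is correct and follows essentially the same approach as the paper: bound the row-reduction steps by Lemma \ref{ree_row_operations}, bound the inner loop by the proportion of point-fixing elements, bound the outer loop by the square test, and compute the $\SLP$ length by composing the $\OR{\log q}$ row-reduction words with the $\OR{\log(q)(\log\log q)^2}$ standard-generator $\SLP$s from Theorem \ref{thm_pre_step}. The one place you are actually more careful than the paper is the justification of the outer loop: the paper simply asserts that $x$ is uniformly random and that half of $\F_q^{\times}$ consists of squares, whereas you correctly flag the uniformity of $\lambda$ as needing an argument and propose an explicit re-randomisation within the stabiliser; this is a genuine (if small) improvement in rigour over the paper's own proof, and costs nothing in the complexity accounting.
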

\begin{proof}
It follows immediately from Lemma \ref{ree_row_operations} that the lines
\ref{main_alg_row_op1_ree}, \ref{main_alg_row_op2_ree},
\ref{main_alg_row_op3_ree}, \ref{main_alg_row_op4_ree} and
\ref{main_alg_row_op5_ree} use $\OR{\log(q)^3}$ field operations.

From Corollary \ref{cl_random_selections}, the expected time to find $r$ is $\OR{\xi}$ field operations. Half of the elements of $\F_q^{\times}$ are squares, and $x$ is uniformly random, hence the expected time of the outer repeat statement is $\OR{\xi + \log(q)^3}$ field operations.

Finding the fixed points of $u$, and performing the check at line
\ref{main_alg_find_point_ree} only amounts to considering eigenvectors,
which is $\OR{\log{q}}$ field operations. Thus the expected time complexity of
the algorithm is $\OR{\xi + \log(q)^3}$ field operations.

From Theorem \ref{thm_pre_step} each standard generator $\SLP$ has length $\OR{\log(q)(\log\log(q))^2}$ and hence $w$ will have length $\OR{(\log(q)\log\log(q))^2}$.
\end{proof}

\subsection{Conjugates of the standard copy}
\label{section:ree_conjugacy}

Now assume that we are given a conjugate $G$ of $\Ree(q)$, and we turn to the problem of finding some $g \in\GL(7, q)$ such
that $G^g = \Ree(q)$, thus obtaining an algorithm that finds effective
isomorphisms from any conjugate of $\Ree(q)$ to the standard copy. The matrix degree is constant here, so we set $\xi = \xi(7)$.

\begin{lem} \label{lem_find_ovoid_point_ree} 
There exists a Las Vegas
  algorithm that, given $\gen{X} \leqslant \GL(7, q)$ such that $\gen{X}^h =
  \Ree(q)$ for some $h \in \GL(7, q)$, finds a point $P \in
  \OV^{h^{-1}} = \set{Q h^{-1} \mid Q \in \OV}$. The algorithm has expected time complexity 
\[\OR{(\xi + \log(q) \log \log(q)) \log \log(q)}\] field operations.
\end{lem}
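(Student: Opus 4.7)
The plan is to follow the Suzuki analogue (Lemma \ref{lem_find_ovoid_point}). In the standard copy $\Ree(q)$, the torus element $h(\lambda) \in H(q)$ is diagonal with entries $\lambda^t, \lambda^{1-t}, \lambda^{2t-1}, 1, \lambda^{1-2t}, \lambda^{t-1}, \lambda^{-t}$, and by Proposition \ref{ree_doubly_transitive_action} its two fixed points $P_0, P_{\infty} \in \OV$ are the first and last coordinate axes, hence the one-dimensional eigenspaces for the eigenvalues $\lambda^{t}$ and $\lambda^{-t}$. By Proposition \ref{cyclic_subgroups_conjugate}, every $g \in \gen{X}$ of order $q-1$ is conjugate in $\gen{X}$ to the preimage of some $h(\lambda)$ under the isomorphism $\gen{X} \to \Ree(q)$ induced by conjugation by $h$, so has the same multiset of eigenvalues, and two of its one-dimensional eigenspaces therefore lie in $\OV^{h^{-1}}$.

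The algorithm would draw random $g \in \gen{X}$ and compute $\abs{g}$ by the pseudo-order algorithm of \cite{crlg95} until $\abs{g} = q-1$. By Lemma \ref{ree_totient_prop} such elements have proportion at least $1/(12 \log \log q)$, so the expected number of trials is $\OR{\log \log q}$ and each trial costs $\OR{\xi + \log(q) \log\log(q)}$ field operations. Once $g$ is found, factor its characteristic polynomial over $\F_q$ using Theorem \ref{thm_solve_univariate_polys} to obtain the seven distinct eigenvalues in $\OR{\log q}$ field operations, and group the six non-trivial ones into three pairs of mutual inverses.

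To identify $\set{\lambda^{\pm t}}$, first discard the pair $\set{\lambda^{\pm(t-1)}}$, characterised by its elements having multiplicative order $(q-1)/2$ rather than $q-1$: the identity $\gcd(3^m - 1,\, 3^{2m+1} - 1) = 3^{\gcd(m,\, 2m+1)} - 1 = 2$ gives $\gcd(t-1, q-1) = 2$, whereas $\gcd(t, q-1) = \gcd(2t-1, q-1) = 1$ by short Euclidean computations. This leaves the two candidate pairs $\set{\lambda^{\pm t}}$ and $\set{\lambda^{\pm(2t-1)}}$, which are interchanged by a Weyl-type symmetry of the exponent set $E = \set{0, \pm t, \pm(t-1), \pm(2t-1)}$, so no invariant of the additive or multiplicative structure of the seven eigenvalues alone can separate them. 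The separation uses the $G_2(q)$-invariant Cayley algebra multiplication on the natural module, which $\gen{X}$ preserves and which is accessible via the one-dimensional space $\Hom_{\gen{X}}(V \otimes V, V)$ computed by the MeatAxe; a non-zero element of this $\Hom$ space, evaluated on pairs of eigenvectors from the two candidate pairs, yields a trilinear test that picks out $\set{\lambda^{\pm t}}$, after which any one-dimensional eigenspace of the surviving pair is returned.

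The algorithm is Las Vegas and its expected time is dominated by the random search and the pseudo-order computations, giving the claimed $\OR{(\xi + \log(q) \log\log(q)) \log\log(q)}$ field operations; the remaining steps involve only constant-degree matrices and contribute at most $\OR{\log q}$ field operations. The hard part I anticipate is establishing rigorously that the Cayley-algebra test separates the two remaining pairs uniformly for all $q = 3^{2m+1}$ with $m \geq 1$. A pragmatic alternative is to return any one-dimensional eigenspace of either candidate pair and let the downstream machinery of Sections \ref{section:stabiliser_elements}--\ref{section:ree_constructive_membership} detect a wrong choice and retry with the other pair; since exactly one of the two pairs is correct, this incurs at most one retry and preserves the asymptotic complexity.
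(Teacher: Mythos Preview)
Your overall strategy matches the paper's: find a random $g$ of (pseudo-)order $q-1$ and read off a point of $\OV^{h^{-1}}$ from its eigenspaces. You also correctly notice a detail the paper's proof passes over in silence, namely how to pick out the eigenspaces for $\mu^{\pm t}$ from among the seven eigenspaces of $g$.

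However, your central claim --- that ``no invariant of the additive or multiplicative structure of the seven eigenvalues alone can separate'' the pair $\{\mu^{\pm t}\}$ from $\{\mu^{\pm(2t-1)}\}$ --- is false. The exponent set $\{\pm t,\ \pm(t-1),\ \pm(2t-1)\}$ in $\Z/(q-1)\Z$ has no multiplicative stabiliser other than $\pm 1$. Indeed, if $k\cdot\{t,\,t-1,\,2t-1\}=\{t,\,t-1,\,2t-1\}$ up to signs, then in particular $kt\equiv \pm t,\ \pm(t-1)$ or $\pm(2t-1)$; using $3t^2\equiv 1\pmod{q-1}$ one checks each of the four nontrivial possibilities forces $k(t-1)$ outside the set (for instance $kt\equiv 2t-1$ gives $k\equiv 2-3t$ and then $k(t-1)\equiv 5t-3$, which for $m\geqslant 1$ equals none of $\pm t,\pm(t-1),\pm(2t-1)$). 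Hence the eigenvalue multiset determines $\mu$ up to inversion, and therefore determines the pair $\{\mu^t,\mu^{-t}\}$ outright.

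Concretely: for each non-identity eigenvalue $e$, set $\nu=e^{3t}$ and test whether $\{\nu^{\pm t},\nu^{\pm(t-1)},\nu^{\pm(2t-1)},1\}$ coincides with the eigenvalue set; this happens precisely for $e\in\{\mu^{\pm t}\}$, since $e=\mu^{\pm t}$ gives $\nu=\mu^{\pm 1}$ while $e=\mu^{\pm(t-1)}$ or $e=\mu^{\pm(2t-1)}$ gives $\nu=\mu^{\pm(1-3t)}$ or $\nu=\mu^{\pm(2-3t)}$, neither of which reproduces the set. This is $\OR{\log q}$ field operations and needs no extra structure. Your Cayley-algebra test and the try-both-and-let-downstream-detect fallback would both work, but they are more elaborate than necessary; the eigenvalues already carry enough information.
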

\begin{proof}
  Clearly $\OV^{h^{-1}}$ is the set on which $\gen{X}$ acts doubly
  transitively. For a matrix $h(\lambda) \in \Ree(q)$ we see
  that the eigenspaces corresponding to the eigenvalues $\lambda^{\pm
    t}$ will be in $\OV$. Moreover, every element of order
  dividing $q-1$, in every conjugate $G$ of $\Ree(q)$, will have
  eigenvalues of the form $\mu^{\pm t}, \mu^{\pm (1 - t)}, \mu^{\pm (2t - 1)}$,  for some
  $\mu \in \F_q^{\times}$, and the eigenspaces corresponding to $\mu^{\pm t}$ will lie in the set on which $G$ acts doubly transitively.

  Hence to find a point $P \in \OV^{h^{-1}}$ it is sufficient to find a
  random $g \in \gen{X}$ of order dividing $q - 1$. We compute the order using expected $\OR{\log(q) \log\log(q)}$ field operations, and by 
  Corollary \ref{cl_random_selections}, the expected number of iterations to find the element is $\OR{\log\log(q)}$ field operations. We then find the eigenspaces of $g$.

  Clearly this is a Las Vegas algorithm with the stated time complexity.
\end{proof}

\begin{conj} \label{conj:ree_conjugacy}
Let $P = (1 : p_2 : \dotsb : p_7) \in \OV^x$ and $Q = (1 : q_2 : \dotsb : q_7) \in \OV^x$ for some $x \in \GL(7, q)$. Then for all $a, b, c, s \in \F_q^{\times}$, the ideal in $\F_q[\alpha, \beta, \gamma, \delta]$ generated by
\begin{align}
& p_3^{3t} \beta \delta - p_2^{3t + 3} \gamma \alpha^3 \beta + p_2 s \alpha \beta + p_2^2 p_3 \alpha^2 \beta - p_5 c \; ; \\
& (p_4 s)^{3t} + (p_2 p_3)^{3t} \gamma \delta \alpha - p_3 p_4 s \alpha \beta + p_2 p_3^2 \alpha^2 \beta^2 - p_2^{6t} p_2^3 \gamma^2 \alpha^4 - p_6 b \; ;\\
\begin{split}
& -(p_4 s)^{3t} p_2 \alpha - p_2^{3t + 1} p_3^{3t} \gamma \delta \alpha - p_3^{3t + 1} \gamma \beta + p_2^{6t + 4} \gamma^2 \alpha^4 + \\
& (p_2 p_3)^2 \alpha^2 \beta^2 + p_2^{3t + 3} p_3 \gamma \alpha^3 \beta - p_7 a - (p_4 s)^2 
\end{split}
\end{align}
as well as the corresponding $3$ polynomials from $Q$, is zero-dimensional.
\end{conj}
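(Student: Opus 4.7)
The plan is to prove zero-dimensionality by successive elimination via resultants, following the template used for Suzuki groups in Conjecture \ref{conjecture_correctness} but adapted from two variables to four. The system has six polynomials in four unknowns $(\alpha, \beta, \gamma, \delta)$, hence is overdetermined; generically such a system cuts out a finite variety, and the task is to rule out degenerations along the parameter locus carved out by $P, Q \in \OV^x$.

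First I would eliminate $\delta$. The first polynomial from $P$ is linear in $\delta$ with leading coefficient $p_3^{3t}\beta$, and similarly for the first polynomial from $Q$; taking their $\delta$-resultant, together with $\delta$-resultants of other pairs, yields polynomials in $\F_q[\alpha, \beta, \gamma]$ lying in $I$ after multiplication by a power of $\beta$, with the locus $\beta = 0$ treated by direct substitution into the original six polynomials. Next I would eliminate $\gamma$ using pairwise resultants of the new polynomials, obtaining elements of $\F_q[\alpha, \beta]$, and finally eliminate $\beta$ (or $\alpha$) to isolate a univariate polynomial in each variable. By the standard criterion, the existence of nonzero elements of $I \cap \F_q[x_i]$ for each variable $x_i$ certifies zero-dimensionality.

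The main obstacle will be verifying that no intermediate resultant vanishes identically along the ovoid locus. Since $P$ and $Q$ are constrained by \eqref{ree_ovoid_def}, a naive genericity argument over $\F_q[p_2, \dotsc, p_7, q_2, \dotsc, q_7]$ is not available. I propose to parametrise $P = P^{\prime} x$ and $Q = Q^{\prime} x$ with $P^{\prime}, Q^{\prime} \in \OV$ via the three-parameter description in \eqref{ree_ovoid_def}, so that each intermediate resultant becomes an explicit polynomial in $\alpha, \beta, \gamma, \delta$ and in the six ovoid parameters together with $a, b, c, s$; nonvanishing then reduces to exhibiting, at each stage, a single monomial with nonzero coefficient in $\F_q$.

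A complementary geometric viewpoint may help motivate the calculation. If $I$ were positive-dimensional for some parameter choice, its variety would contain a one-parameter family of tuples $(\alpha, \beta, \gamma, \delta)$, which through the construction underlying Section \ref{section:ree_constructive_membership} would yield a one-parameter family of $\Ree(q)$-elements with a prescribed action on the pair $(P, Q)$. This conflicts with Proposition \ref{pr_involution_props}(1), which bounds the stabiliser of an ordered pair of ovoid points by the cyclic group of order $q - 1$, and the extra data $a, b, c, s$ should cut this down further to a finite set. I expect this structural picture to guide the resultant computation, with the degree-by-degree verification of nonvanishing being the main technical burden.
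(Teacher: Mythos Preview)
The paper does not prove this statement: it is explicitly labelled a \emph{Conjecture} and is listed among the ``small Ree Conjectures'' on which the main results are conditioned (see Section~\ref{section:intro_conj}). The paper's justification is purely experimental, coming from extensive testing of the implementation. So there is no proof in the paper to compare your proposal against, and any complete argument you give would go beyond what the paper claims.

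That said, your proposal has a genuine gap in the geometric half. The tuples $(\alpha,\beta,\gamma,\delta)$ are \emph{not} parametrising elements of $\Ree(q)$ acting on the pair $(P,Q)$. Looking at the proof of Lemma~\ref{lem_diagonal_conj_ree}, the intended meaning is $\alpha = e_2$, $\beta = e_3$, $\gamma = e_2^t$, $\delta = e_3^t$, where $e = \diag(1,e_2,e_3,s,K_{3,5}/e_3,K_{2,6}/e_2,K_{1,7})$ is a diagonal matrix in $\GL(7,q)$ sought to conjugate a given copy of $\Ree(q)$ to the standard one. The equations express the condition $Pe, Qe \in \OV$, not any stabiliser condition inside $\Ree(q)$. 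Thus Proposition~\ref{pr_involution_props}(1) on two-point stabilisers is not directly relevant, and a positive-dimensional component of $V(I)$ would not contradict it. Moreover, even a correct finiteness statement for the genuine two-variable problem in $(e_2,e_3)$ would not by itself force the four-variable ideal in $(\alpha,\beta,\gamma,\delta)$ to be zero-dimensional, since the latter drops the constraints $\gamma = \alpha^t$, $\delta = \beta^t$.

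Your elimination-by-resultants outline is the natural computational strategy, and it is exactly what one would implement; but as you yourself note, the whole content lies in showing the intermediate resultants do not vanish identically on the parameter locus, and you have not supplied that. Reducing to ``exhibit a single monomial with nonzero coefficient'' is a restatement, not a proof. This is presumably why the paper leaves the statement as a conjecture.
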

  

\begin{lem} \label{lem_diagonal_conj_ree}
  Assume Conjecture \ref{conj:ree_conjugacy}. There exists a Las Vegas algorithm that, given $\gen{X} \leqslant \GL(7, q)$ such that $\gen{X}^d = \Ree(q)$ where
  $d = \diag(d_1, d_2, d_3, d_4, d_5, d_6, d_7) \in \GL(7, q)$, finds a diagonal matrix $e
  \in \GL(7, q)$ such that $\gen{X}^e = \Ree(q)$. The expected time complexity is $\OR{\abs{X} + (\xi + \log (q) \log \log(q)) \log \log(q)}$ field operations.
\end{lem}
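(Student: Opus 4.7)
My plan is to follow the template of Lemma \ref{lem_diagonal_conj}, adapted from the symplectic form in dimension $4$ to the symmetric bilinear form $J$ of \eqref{standard_bilinear_form} in dimension $7$. Since $G = \gen{X}$ satisfies $G^d = \Ree(q) \leqslant \SO(7,q)$, the group $G$ preserves the bilinear form $K = dJd$ (the transpose of a diagonal matrix is itself). I would use the MeatAxe to find $K$, which is unique up to scalar because $G$ acts absolutely irreducibly; reading off entries gives $K_{1,7} = d_1 d_7$, $K_{2,6} = d_2 d_6$, $K_{3,5} = d_3 d_5$ and $K_{4,4} = -d_4^2$.

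Next I would exploit the fact that the desired $e$ is determined only up to a scalar and a right factor of $h(\lambda) \in H(q)$, a two-parameter freedom. Normalising by $e_7 = 1$ (absorbing the scalar) forces $e_1 = K_{1,7}$, $e_4 = \sqrt{-K_{4,4}}$ (with two sign choices), $e_6 = K_{2,6}/e_2$ and $e_5 = K_{3,5}/e_3$, so that only $e_2$ and $e_3$ remain free. To pin these down I would call Lemma \ref{lem_find_ovoid_point_ree} twice to obtain $P, Q \in \OV^{d^{-1}}$; the requirement $Pe \in \OV$, expanded using \eqref{ree_ovoid_def} and read off on the last three projective coordinates, becomes three polynomial equations in $e_2, e_3$ and their various $t$-powers. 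Introducing abbreviating variables $\alpha, \beta, \gamma, \delta$ for those $t$-powers so that every monomial in the expanded conditions has bounded total degree, the three equations from $P$ together with the three from $Q$ are exactly the six generators of the ideal appearing in Conjecture \ref{conj:ree_conjugacy}, with $a = K_{1,7}$, $b = K_{2,6}$, $c = K_{3,5}$ and $s = e_4$.

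By the Conjecture this ideal is zero-dimensional, so by Theorem \ref{thm_poly_eqns_many_vars} (equivalently, by straightforward elimination since the number of variables, the number of equations and all the degrees are constants independent of $q$) its variety is computable in $\OR{\log q}$ field operations. Each candidate $(\alpha, \beta, \gamma, \delta)$ produces a candidate pair $(e_2, e_3)$ after taking appropriate roots in $\F_q$, giving only a bounded number of candidate matrices $e$; I would verify each by conjugating a single generator of $G$ and checking the result against the criteria of Theorem \ref{thm_ree_standard_recognition}. The complexity decomposes as MeatAxe $\OR{\abs{X}}$, two ovoid points $\OR{(\xi + \log(q)\log\log(q))\log\log(q)}$ by Lemma \ref{lem_find_ovoid_point_ree}, and solving plus verification $\OR{\log q}$, matching the claim; every subroutine is Las Vegas, so the overall algorithm is too.

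The main obstacle I expect is the bookkeeping of the substitution: choosing the four abbreviations $\alpha, \beta, \gamma, \delta$ so that every power of $e_2$ and $e_3$ appearing in the ovoid conditions becomes a bounded-degree monomial while simultaneously reproducing exactly the polynomials of Conjecture \ref{conj:ree_conjugacy} term by term. A secondary but minor issue is recovering $(e_2, e_3)$ from the substituted variables when the exponents in the substitution are not coprime to $q-1$, which may inflate the candidate count by a small constant factor; this is harmless because the verification step filters the correct choice.
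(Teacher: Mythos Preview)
Your approach is essentially the paper's. Two small discrepancies: the paper normalises with $e_1 = 1$ rather than $e_7 = 1$, which is what makes the ovoid conditions on $Pe$ literally coincide with the polynomials of Conjecture~\ref{conj:ree_conjugacy} (and it first rescales $K$ by a non-square if $-K_{4,4}$ is a non-square, so that $e_4 = \sqrt{-K_{4,4}}$ exists); and instead of invoking Theorem~\ref{thm_poly_eqns_many_vars} on all four variables, the paper uses Proposition~\ref{ree_ovoid_points} to guarantee (after resampling $P,Q$ if necessary) that $\delta$ and then $\gamma$ can be eliminated linearly from the first equations of $P$ and $Q$, reducing to four polynomials in $\alpha,\beta$ and applying Theorem~\ref{thm_poly_eqns_2vars}.
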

\begin{proof}
Let $G = \gen{X}$. Since $G^d = \Ree(q)$, $G$ must preserve the symmetric bilinear form
\begin{equation}
K = d J d = \begin{bmatrix}
0 & 0 & 0 & 0 & 0 & 0 & d_1 d_7 \\
0 & 0 & 0 & 0 & 0 & d_2 d_6 & 0 \\
0 & 0 & 0 & 0 & d_3 d_5 & 0 & 0 \\
0 & 0 & 0 & -d_4^2 & 0 & 0 & 0 \\
0 & 0 & d_3 d_5 & 0 & 0 & 0 & 0 \\
0 & d_2 d_6 & 0 & 0 & 0 & 0 & 0 \\
d_1 d_7 & 0 & 0 & 0 & 0 & 0 & 0
\end{bmatrix}
\end{equation}
where $J$ is given by \eqref{standard_bilinear_form}. Using the MeatAxe, we can find this form, which is determined up
to a scalar multiple. In the case where $-K_{4,4}$ turns out to be a non-square in $\F_q$ we can therefore multiply $K$ with a non-square scalar matrix. The diagonal matrix $e = \diag(e_1, e_2, e_3, e_4, e_5, e_6, e_7)$
that we want to find is also determined up to a scalar multiple (and up to multiplication by a diagonal matrix in $\Ree(q)$).

Since $e$ must take $J$ to $K$, we must have
$K_{1, 7}= d_1 d_7 = e_1 e_7$, $K_{2, 6} = d_2 d_6 = e_2 e_6$, $K_{3, 5} = d_3 d_5 = e_3 e_5$ and $K_{4,4} = -d_4^2 = -e_4^2$. Because $e$ is determined up to a scalar multiple, we can choose
$e_1 = 1$, $e_7 = K_{1, 7}$ and $e_4 = s = \sqrt{-K_{4,4}}$. Furthermore, $e_5 = K_{3,5} / e_3$ and $e_6 = K_{2,6} / e_2$ so it only remains to determine
$e_2$ and $e_3$.

To conjugate $G$ into $\Ree(q)$, we must have $Pe \in \OV$ for every $P \in \OV^{d^{-1}}$, which is the set on which $G$ acts doubly
transitively. By Lemma \ref{lem_find_ovoid_point_ree}, we can find $P
= (1 : p_2 : p_3 : p_4 : p_5 : p_6 : p_7) \in \OV^{d^{-1}}$, and the
condition $P e = (1 : p_2 e_2 : p_3 e_3 : p_4 s : p_5 K_{3, 5} / e_3 :
p_6 K_{2,6} / e_2 : p_7 K_{1,7}) \in \OV$ is given by
\eqref{ree_ovoid_def} and
amounts to the polynomial equations given in Conjecture
\ref{conj:ree_conjugacy}, with $\alpha = e_2$, $\beta = e_3$, $\gamma
= e_2^t$, $\delta = e_3^t$, $K_{1, 7} = a$, $K_{2, 6} = b$, $K_{3, 5}
= c$.

By finding another random point $Q \in \OV^{d^{-1}}$ using Lemma
\ref{lem_find_ovoid_point_ree}, we obtain $6$ polynomials which we
label $1P$, $2P$, $3P$, $1Q$, $2Q$ and $3Q$. Conjecture
\ref{conj:ree_conjugacy} asserts that the resulting ideal is
zero-dimensional. 

Now it follows from Proposition \ref{ree_ovoid_points} that with high probability we have $p_3
\neq 0$ and $p_3^{3t} q_2^{3t+3} \neq q_3^{3t} p_2^{3t+3}$, so we
repeat until $P$ and $Q$ satisfy this. Then we can solve for $\delta$ from $1P$ and $\gamma$ from $1Q$, as
\begin{align}
\delta &= \frac{\alpha^3 \gamma \beta p_2^{3t + 3} - \alpha^2 \beta p_2^2 p_3 - \alpha \beta s p_2 p_4 + c p_5}{p_3^{3t} \beta} \\
\gamma &= \frac{\alpha^2 \beta p_3^{3t} q_2^2 q_3 - \alpha^2 \beta p_2^2 p_3 q_3^{3t} - \alpha \beta s p_2 p_4 q_3^{3t} + c p_5 q_3^{3t} + \alpha \beta s p_3^{3t} q_2 q_4 - c p_3^{3t} q_5}{\alpha^3 \beta (p_3^{3t} q_2^{3t + 3} - p_2^{3t + 3} q_3^{3t})}
\end{align}

and if we substitute these into the other equations we obtain $4$ polynomials in $\alpha$ and $\beta$, which generate a zero-dimensional ideal. The variety of this ideal can now be found using Theorem \ref{thm_poly_eqns_2vars}.

Hence we can find $e_2$ and $e_3$. The diagonal matrix \[e = \diag(1, e_2, e_3, s, 
K_{3, 5} e_3^{-1}, K_{2, 6} e_2^{-1}, K_{1, 7})\] now satisfies $G^e = \Ree(q)$.

By Lemma \ref{lem_find_ovoid_point_ree}, Lemma
\ref{conj:ree_conjugacy}, Theorem
\ref{thm_poly_eqns_2vars} and Section
\ref{section:meataxe}, this is a Las
Vegas algorithm with the stated time complexity.
\end{proof}

\begin{lem} \label{lem_conjugate_to_digaonal_ree}
  There exists a Las Vegas algorithm that, given subsets $X$, $Y_P$
  and $Y_Q$ of $\GL(7, q)$ such that $\mathrm{O}_3(G_P) < \gen{Y_P} \leqslant
  G_P$ and $\mathrm{O}_3(G_Q) < \gen{Y_Q} \leqslant G_Q$, respectively, where
  $\gen{X} = G$, $G^h = \Ree(q)$ for some $h \in \GL(7, q)$ and $P, Q
  \in \OV^{h^{-1}}$, finds $k \in \GL(7, q)$ such that $(G^k)^d =
  \Ree(q)$ for some diagonal matrix $d \in \GL(7, q)$.  The algorithm
  has expected time complexity $\OR{\abs{X}}$ field operations.
\end{lem}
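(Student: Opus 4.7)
The plan is to mimic the strategy of Lemma~\ref{lem_conjugate_to_digaonal}, now in dimension $7$. The point is that under the standard action of $\Ree(q)$ the stabiliser $U(q) H(q) = G_{P_\infty}$ consists of lower triangular matrices, so its natural module is uniserial with the unique composition series $0 < \gen{e_1} < \gen{e_1, e_2} < \dotsb < \gen{e_1, \ldots, e_7}$. Conjugating by an element that moves $P_\infty$ to $P_0$ (for example $\Upsilon$) reverses the basis, so the stabiliser of $P_0$ is uniserial with the flag in the opposite order. Since $\gen{Y_P}$ properly contains $\mathrm{O}_3(G_P) = U(q)^h$ and is contained in $G_P$, the natural module $V$ restricted to $\gen{Y_P}$ is still uniserial: every element of $\gen{Y_P}$ is (after conjugation by the unknown matrix $h$) lower triangular, and the seven weight spaces of a generator of the diagonal torus are pairwise non-isomorphic as $H(q)$-modules because the diagonal entries of $h(\lambda)$ for a primitive $\lambda$ are all distinct. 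The same applies to $\gen{Y_Q}$.

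The algorithm therefore proceeds as follows. Use the MeatAxe to compute the composition series
\[ 0 = V^P_0 < V^P_1 < V^P_2 < \dotsb < V^P_7 = V \quad \text{and} \quad 0 = V^Q_0 < V^Q_1 < \dotsb < V^Q_7 = V \]
for $\gen{Y_P}$ and $\gen{Y_Q}$ respectively. For each $k = 1, \dotsc, 7$ form
\[ U_k \;=\; V^P_k \cap V^Q_{8-k}, \]
choose a non-zero vector $u_k \in U_k$, and let $k \in \GL(7, q)$ be the matrix whose inverse has $u_k$ as its $k$-th row.

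To see that this works, take $g^{\prime} \in \Ree(q)$ with $P h g^{\prime} = P_\infty$ and $Q h g^{\prime} = P_0$, and let $z = h g^{\prime}$. Then $\gen{Y_P}^z$ consists of lower triangular matrices and $\gen{Y_Q}^z$ consists of upper triangular matrices; so the rows of $z^{-1}$ exhibit the flag $\set{V^P_k}$ in ascending order and the flag $\set{V^Q_k}$ in the reverse order. With respect to this basis one computes $\dim V^P_k \cap V^Q_{8-k} = 1$ for every $k$, and $U_k = \gen{e_k z^{-1}}$. Consequently the basis produced by the algorithm exhibits $\gen{Y_P}^k$ as lower triangular and $\gen{Y_Q}^k$ as upper triangular. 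Arguing as in Lemma~\ref{lem_conjugate_to_digaonal}, the matrix $k z^{-1}$ is simultaneously lower and upper triangular, hence diagonal, so $z = k d$ for some diagonal $d \in \GL(7, q)$ and $(G^k)^d = G^z = \Ree(q)$.

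The algorithm is Las Vegas because the MeatAxe is Las Vegas, and the dominant cost is the single pair of MeatAxe calls to build the composition series of two $7$-dimensional modules, giving expected complexity $\OR{\abs{X}}$ field operations as claimed. The main technical point is verifying the uniseriality of $V\vert_{\gen{Y_P}}$ and $V\vert_{\gen{Y_Q}}$, for which the key observation is that these subgroups contain a full unipotent radical conjugate and hence inherit the uniserial action of $U(q) H(q)$ on the natural module.
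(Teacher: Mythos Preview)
Your argument is correct and follows essentially the same route as the paper: compute the two uniserial flags via the MeatAxe, intersect $V^P_k$ with $V^Q_{8-k}$ to extract one-dimensional spaces, and assemble these into $k^{-1}$; the paper then defers the verification that $kz^{-1}$ is diagonal to the analogous Suzuki-group Lemma~\ref{lem_conjugate_to_digaonal}, exactly as you do. Your justification of uniseriality (distinct diagonal characters of $h(\lambda)$ together with the full unipotent radical) is slightly more explicit than the paper's bare assertion, but the overall strategy is identical.
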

\begin{proof}

  Notice that the natural module $V = \F_q^7$ of $U(q) H(q)$ is
  uniserial with seven non-zero submodules, namely 
\[ V_i
  = \set{(v_1, v_2, v_3, v_4, v_5, v_6, v_7) \in \F_q^7 \mid v_j = 0,
 j > i}\]
 for $i = 1, \dotsc, 7$. Hence the same is true for $\gen{Y_P}$ and
  $\gen{Y_Q}$ (but the submodules will be different) since they lie in
  conjugates of $U(q) H(q)$.

  Now the algorithm proceeds as follows.
  
\begin{enumerate}
\item Let $V = \F_q^7$ be the natural module for $\gen{Y_P}$ and
  $\gen{Y_Q}$. Find composition series $V = V^P_7 > V^P_6 > V^P_5 > V^P_4 > V^P_3 > V^P_2 > V^P_1$ and $V = V^Q_7 > V^Q_6 > V^Q_5 > V^Q_4 > V^Q_3 >
  V^Q_2 > V^Q_1$ using the MeatAxe.

\item Let $U_1 = V_1^P$, $U_2 = V_2^P \cap V_6^Q$, $U_3 = V_3^P \cap V_5^Q$, $U_4 = V_4^P \cap V_4^Q$, $U_5 = V_5^P \cap V_3^Q$, $U_6 = V_6^P \cap V_2^Q$ and $U_7 = V_1^Q$. For each $i = 1, \dotsc, 7$, choose $u_i \in U_i$.

\item Now let $k$ be the matrix such that $k^{-1}$ has $u_i$ as row $i$, for $i = 1, \dotsc, 7$.
\end{enumerate}

The motivation for the second step is analogous to the proof of Theorem \ref{lem_conjugate_to_digaonal}.




Thus the matrix $k$ found in the algorithm satisfies that $z = kd$ for
some diagonal matrix $d \in \GL(7, q)$. Since $\Ree(q) = G^h = G^z =
(G^k)^d$, the algorithm returns a correct result, and it is Las Vegas
because the MeatAxe is Las Vegas. Clearly it has the same time
complexity as the MeatAxe.
\end{proof}

\begin{thm} \label{cl_ree_conjugacy} Assume Conjecture
  \ref{conj:ree_conjugacy} and an oracle for the discrete logarithm
  problem in $\F_q$. There exists a Las Vegas algorithm that, given a
  conjugate $\gen{X}$ of $\Ree(q)$, finds $g \in \GL(7, q)$ such that
  $\gen{X}^g = \Ree(q)$. The algorithm has expected time complexity
  $\OR{(\xi \log \log(q) + \log(q)^3 + \chi_D(q))\log\log(q) +
    \abs{X}}$ field operations.
\end{thm}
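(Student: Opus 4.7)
The plan is to mirror the structure of the Suzuki analogue, Theorem \ref{thm_conj_problem}, using the Ree-specific building blocks assembled in the preceding lemmas. Write $G = \gen{X}$ and let $h \in \GL(7,q)$ be the unknown matrix with $G^h = \Ree(q)$. I would build the conjugating matrix in the factored form $g = ke$, where first $k$ brings $G$ into a conjugate $G^k$ which differs from $\Ree(q)$ only by a diagonal twist, and then $e$ is the diagonal correction bringing $G^k$ to $\Ree(q)$ itself.

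Concretely, I would proceed as follows. First, invoke Lemma \ref{lem_find_ovoid_point_ree} twice to obtain two distinct points $P, Q \in \OV^{h^{-1}}$ of the doubly transitive set on which $G$ acts. Next, by Remark \ref{ree_rem_stab_elt} the algorithm of Corollary \ref{cl_find_stab_element} functions inside any conjugate of $\Ree(q)$; running the first three steps of Theorem \ref{thm_pre_step} in $G$ with stabilised points $P$ and $Q$ produces generating sets $Y_P$ and $Y_Q$ satisfying $\mathrm{O}_3(G_P) < \gen{Y_P} \leqslant G_P$ and $\mathrm{O}_3(G_Q) < \gen{Y_Q} \leqslant G_Q$. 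Then Lemma \ref{lem_conjugate_to_digaonal_ree} applied to $X, Y_P, Y_Q$ yields $k \in \GL(7,q)$ with $(G^k)^d = \Ree(q)$ for some diagonal $d$. Finally, Lemma \ref{lem_diagonal_conj_ree} applied to $G^k$ returns a diagonal $e$ with $(G^k)^e = \Ree(q)$, and $g := ke$ is the required matrix.

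Correctness is inherited directly from the cited lemmas, and the Las Vegas nature follows because every subroutine (MeatAxe, random search for elements of order dividing $q-1$, resultant-based polynomial solving, and the stabiliser search in Corollary \ref{cl_find_stab_element}) is Las Vegas. For complexity, the point-finding costs $\OR{(\xi + \log(q)\log\log(q))\log\log(q)}$, the stabiliser preprocessing step dominates with $\OR{(\xi \log\log(q) + \log(q)^3 + \chi_D(q))\log\log(q)}$ (exactly the bound appearing in Theorem \ref{thm_pre_step}), the MeatAxe-style construction of $k$ in Lemma \ref{lem_conjugate_to_digaonal_ree} contributes $\OR{\abs{X}}$, and Lemma \ref{lem_diagonal_conj_ree} contributes $\OR{\abs{X} + (\xi + \log(q)\log\log(q))\log\log(q)}$. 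Summing gives the claimed bound.

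The main obstacle is not in the orchestration itself but in making sure that the diagonal correction in step four actually succeeds, which is precisely the content of Conjecture \ref{conj:ree_conjugacy}: one must know that the six polynomial equations obtained by insisting that the images of two random points of $\OV^{d^{-1}}$ under $e$ lie on $\OV$ cut out a zero-dimensional ideal, so that Theorem \ref{thm_poly_eqns_2vars} applies after eliminating the Frobenius-twisted variables. Note also that in contrast to the Suzuki case, no separate discrete logarithm is required for computing $k$ and $e$ themselves; the oracle only enters because it is needed inside Corollary \ref{cl_find_stab_element} when producing the stabiliser generators.
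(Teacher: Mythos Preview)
Your proposal is correct and follows essentially the same route as the paper's proof: find two points on the twisted ovoid, build point-stabiliser generating sets via the preprocessing from Theorem \ref{thm_pre_step}, apply Lemma \ref{lem_conjugate_to_digaonal_ree} to get $k$, then Lemma \ref{lem_diagonal_conj_ree} to get the diagonal $e$, and return $g = ke$. The only trivial discrepancy is that the paper invokes the first \emph{two} steps of Theorem \ref{thm_pre_step} rather than three (the third step builds the full standard generating set, which is unnecessary here since Lemma \ref{lem_conjugate_to_digaonal_ree} only needs $Y_P, Y_Q$ to contain $\mathrm{O}_3(G_P), \mathrm{O}_3(G_Q)$); this does not affect correctness or the stated complexity.
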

\begin{proof}
  Let $G = \gen{X}$. By Remark \ref{ree_rem_stab_elt}, we can use
  Corollary \ref{cl_find_stab_element} in $G$, so we can find
  generators for a stabiliser of a point in $G$, using the algorithm
  described in Theorem \ref{thm_pre_step}.

\begin{enumerate}
\item Find points $P, Q \in \OV^{h^{-1}}$ using Lemma
  \ref{lem_find_ovoid_point_ree}. Repeat until $P \neq Q$.
\item Find generating sets $Y_P$ and $Y_Q$ such that $\mathrm{O}_3(G_P) <
  \gen{Y_P} \leqslant G_P$ and $\mathrm{O}_3(G_Q) < \gen{Y_Q} \leqslant G_Q$
  using the first two steps of the algorithm from the proof of Theorem
  \ref{thm_pre_step}.
\item Find $k \in \GL(7, q)$ such that $(G^k)^d = \Ree(q)$ for some diagonal matrix $d \in \GL(7, q)$, using Lemma \ref{lem_conjugate_to_digaonal_ree}.
\item Find a diagonal matrix $e$ using Lemma \ref{lem_diagonal_conj_ree}.
\item Now $g = ke$ satisfies that $G^g = \Ree(q)$.
\end{enumerate}

Be Lemma \ref{lem_find_ovoid_point}, \ref{lem_conjugate_to_digaonal_ree} and \ref{lem_diagonal_conj_ree}, and the proof of Theorem \ref{thm_pre_step}, this is a Las Vegas algorithm with time complexity as stated.
\end{proof}

\subsection{Tensor decomposition}
\label{section:ree_tensor_decompose}

Now assume that
$G \leqslant \GL(d, q)$ where $G \cong \Ree(q)$, $d > 7$ and $q = 3^{2m +
  1}$ for some $m > 0$. Then $\Aut{\F_q} = \gen{\psi}$, where $\psi$
is the Frobenius automorphism. Let $W$ be the given module of $G$ and let $V$ be the natural
module of $\Ree(q)$, so that $\dim W = d$ and
$\dim V = 7$. From Section \ref{section:algorithm_overview} and Section \ref{section:ree_indecomposables} we know that
\begin{equation} \label{ree_irreducible_module_form}
W \cong M^{\psi^{i_0}} \otimes M^{\psi^{i_1}} \otimes \dotsm \otimes
M^{\psi^{i_{n - 1}}}
\end{equation}
for some integers $0 \leqslant i_0 < i_1 < \dotsb < i_{n - 1}
\leqslant 2m$, and where $M$ is either $V$ or the absolutely
irreducible $27$-dimensional submodule $S$ of the symmetric square
$\mathcal{S}^2(V)$. In fact, we may assume that $i_0 = 0$. As described
in Section
\ref{section:algorithm_overview}, we now want to tensor decompose $W$
to obtain an effective isomorphism from $W$ to $V$ or to $S$. In the
latter case we also have to decompose $S$ into $V$ to obtain an
isomorphism between $W$ and $V$. We consider this problem in Section
\ref{section:decompose_symsquare}.

\begin{pr} \label{pr_27_dim_split} Let $G \leqslant \GL(27, q)$ such
  that $G \cong \Ree(q)$, let $j \in G$ be an involution and let $H =
  \Cent_G(j)^{\prime} \cong \PSL(2, q)$. Then $S\vert_H \cong V_6
  \oplus V_9 \oplus V_{12}$ as an $H$-module, where $\dim V_i = i$.
  Moreover, $V_9$ is absolutely irreducible, $V_{12} \cong
  V_4.V_4^{\psi^l} . V_4$ and $V_6 \cong 1.V_4^{\psi^k}.1$, where $k \neq l$.
\end{pr}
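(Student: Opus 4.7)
The plan is to exhibit $S|_H$ as a quotient of $\mathcal{S}^2(V)|_H$ by the one-dimensional $G$-trivial submodule $T$ corresponding to the invariant quadratic form $Q^*$, and then to identify the three summands. First I would invoke Proposition \ref{pr_inv_centraliser_split} to write $V|_{\Cent_G(j)} = S_j \oplus T_j$ with $\dim S_j = 3$, $\dim T_j = 4$, $j$ acting as $+1$ on $S_j$ and $-1$ on $T_j$; since $H$ centralises $j$ and preserves this splitting, it is also an $H$-decomposition. Because $\chr \F_q = 3 \neq 2$, the symmetric square distributes and
\[ \mathcal{S}^2(V)|_H \;\cong\; \mathcal{S}^2(S_j) \,\oplus\, (S_j \otimes T_j) \,\oplus\, \mathcal{S}^2(T_j), \]
of dimensions $6 + 12 + 10 = 28$. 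A direct computation with the standard bilinear form $J$ shows that $Q^*|_{S_j}$ and $Q^*|_{T_j}$ are both non-degenerate, so $T$ sits diagonally inside $\mathcal{S}^2(S_j) \oplus \mathcal{S}^2(T_j)$ and meets the cross term $S_j \otimes T_j$ in zero, the latter being the $-1$-eigenspace of $j$ on $\mathcal{S}^2(V)$ while $T$ lies in the $+1$-eigenspace.

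Consequently $S|_H \cong (\mathcal{S}^2(S_j) \oplus \mathcal{S}^2(T_j))/T \,\oplus\, (S_j \otimes T_j)$, and I would identify the three pieces of the right-hand side with $V_6$, $V_9$, and $V_{12}$. For this I need the $H$-module types of the two summands $S_j, T_j$. Using the classification of irreducible $\PSL(2,q)$-modules in defining characteristic $3$ via Steinberg's tensor product theorem, and the fact from Proposition \ref{pr_inv_centraliser_split} that $S_j$ is absolutely irreducible of dimension $3$, we have $S_j \cong L(2)^{\psi^k}$ for some $k$. For $T_j$ I would use that it is faithful and self-dual (inheriting a non-degenerate form from $\beta^*|_{T_j}$), together with a dimension count of $4$-dimensional $\PSL(2,q)$-modules in characteristic $3$, to conclude $T_j \cong L(1)^{\psi^a} \otimes L(1)^{\psi^b}$ with $a \neq b$.

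Granted these identifications, the plethysm formula $\mathcal{S}^2(X \otimes Y) \cong \mathcal{S}^2 X \otimes \mathcal{S}^2 Y \,\oplus\, \wedge^2 X \otimes \wedge^2 Y$ together with $\mathcal{S}^2 L(1) = L(2)$ and $\wedge^2 L(1) = L(0)$ gives
\[ \mathcal{S}^2(T_j) \;\cong\; L(2)^{\psi^a} \otimes L(2)^{\psi^b} \,\oplus\, L(0), \]
and the first summand is absolutely irreducible of dimension $9$ by Steinberg: this is $V_9$. The char-$3$ composition series of $\mathcal{S}^2(L(2))$ has factors $L(0), L(4), L(0)$ (where $L(4) = L(1) \otimes L(1)^{\psi}$ has dimension $4$), so $\mathcal{S}^2(S_j)$ is uniserial with shape $1 \,.\, V_4^{\psi^k} \,.\, 1$, and a short diagram chase shows that collapsing the diagonal line $T$ preserves the $V_9$ summand of $\mathcal{S}^2(T_j)$ while projecting $\mathcal{S}^2(S_j) \oplus L(0)$ isomorphically onto $\mathcal{S}^2(S_j)$: this is $V_6$. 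Finally, $S_j \otimes T_j$ is analysed by the same machinery, using the char-$3$ decomposition of $L(2) \otimes L(1)$ (composition factors $L(3), L(1), L(1)$) and untwisting, to produce the uniserial filtration $V_4 \,.\, V_4^{\psi^l} \,.\, V_4$ for an appropriate twist $l \neq k$.

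The main obstacle I anticipate is pinning down the $H$-structure of $T_j$ and the precise twists $k, l, a, b$: abstractly $T_j$ is just a $4$-dimensional faithful self-dual $H$-module, and one needs additional information to exclude reducible possibilities (such as an extension of $L(2)$ by $L(0)$). The cleanest route is to fix $j = h(-1)$, for which $S_j$ and $T_j$ are explicit in the basis of Section \ref{section:ree_theory}, and then compute the action on these spaces of a generator of a maximal torus of $H$ (obtainable from $H(q)$ inside $\Cent_G(j)$), comparing eigenvalue multiplicities with the known torus character on $V$; this both forces the tensor-product shape of $T_j$ and fixes the twists. Everything else is formal manipulation with Steinberg tensor products in characteristic $3$.
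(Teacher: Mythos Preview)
Your approach is essentially the same as the paper's: both split $\mathcal{S}^2(V)|_H$ as $\mathcal{S}^2(S_j)\oplus\mathcal{S}^2(T_j)\oplus(S_j\otimes T_j)$, use the plethysm $\mathcal{S}^2(X\otimes Y)\cong(\mathcal{S}^2X\otimes\mathcal{S}^2Y)\oplus(\wedge^2X\otimes\wedge^2Y)$ to extract $V_9\oplus 1$ from $\mathcal{S}^2(T_j)$, and then read off the shapes of $V_6$ and $V_{12}$ from characteristic-$3$ composition series of $\mathcal{S}^2(L(2))$ and $L(2)\otimes L(1)\otimes L(1)^{\psi^n}$. The only differences are cosmetic: the paper simply asserts $S_j\cong\mathcal{S}^2(V_2)$ and $T_j\cong V_2\otimes V_2^{\psi^n}$ at the outset (the point you flag as the main obstacle), and it disposes of the trivial summand by writing $1\oplus S|_H\cong V_6\oplus(1\oplus V_9)\oplus V_{12}$ and implicitly cancelling, rather than tracking the diagonal embedding of the $G$-invariant line as you do.
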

\begin{proof}
  By Proposition \ref{pr_inv_centraliser_split}, $V\vert_H = V_3 \oplus V_4$ and hence
  $V_3 = \mathcal{S}^2(V_2)$ and $V_4 = V_2 \otimes V_2^{\psi^n}$
  where $V_2$ is the natural module of $\PSL(2, q)$ and $n > 0$.
\begin{equation}
1 \oplus S\vert_H = \mathcal{S}^2(V\vert_H) = \mathcal{S}^2(\mathcal{S}^2(V_2)) \oplus \mathcal{S}^2(V_2 \otimes V_2^{\psi^n}) \oplus (\mathcal{S}^2(V_2) \otimes V_2 \otimes V_2^{\psi^n})
\end{equation}

Now consider 
\begin{equation}
\begin{split}
&\mathcal{S}^2(V_2 \otimes V_2^{\psi^n}) \oplus \wedge^2(V_2 \otimes V_2^{\psi^n}) = (V_2 \otimes V_2^{\psi^n}) \otimes (V_2 \otimes V_2^{\psi^n}) = \\
&= (V_2 \otimes V_2) \otimes (V_2^{\psi^n} \otimes V_2^{\psi^n}) = (\mathcal{S}^2(V_2) \oplus 1) \otimes (\mathcal{S}^2(V_2^{\psi^n}) \oplus 1) = \\
&=1 \oplus \mathcal{S}^2(V_2) \oplus \mathcal{S}^2(V_2^{\psi^n}) \oplus (\mathcal{S}^2(V_2) \otimes \mathcal{S}^2(V_2^{\psi^n}))
\end{split}
\end{equation}
The final summand has dimension $9$ and is absolutely irreducible because it is of the form \eqref{ree_irreducible_module_form}. Hence
$\wedge^2(V_2 \otimes V_2^{\psi^n}) = \mathcal{S}^2(V_2) \oplus
\mathcal{S}^2(V_2^{\psi^n})$ and $\mathcal{S}^2(V_2 \otimes
V_2^{\psi^n}) = 1 \oplus V_9$ as required.

Furthermore, a direct calculation shows that
$\mathcal{S}^2(\mathcal{S}^2(V_2))$ has shape $1.1.1 \oplus \mathcal{S}^2(V_2)$ when restricted to $\F_3$, and over $\F_q$ the $\mathcal{S}^2(V_2)$ must fuse with the middle composition factor, otherwise the module would not be self-dual
($S$ is self-dual since $G$ preserves a bilinear form).

Similarly, $\mathcal{S}^2(V_2) \otimes V_2 \otimes V_2^{\psi^n}$ has
shape $1 . 1 . 1 \oplus (\mathcal{S}^2(V_2) .
\mathcal{S}^2(V_2^{\psi^n}) . \mathcal{S}^2(V_2))$ over $\F_3$. Over
$\F_q$ each $1$ fuses with a corresponding $\mathcal{S}^2(V_2)$ and
we obtain the structure of $V_{12}$. This is also proves that the
$4$-dimensional factor of $V_6$ is not isomorphic to any of the
factors of $V_{12}$.
\end{proof}

\begin{cl} \label{cl_homspace_dim}
Let $G \leqslant \GL(27, q)$ such that $G \cong \Ree(q)$, let $j \in
G$ be an involution and let $H = \Cent_G(j)^{\prime} \cong \PSL(2,
q)$. Then $\dim \Hom_H(S\vert_H, S\vert_H) = 5$.
\end{cl}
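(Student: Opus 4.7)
The plan is to decompose $\Hom_H(S\vert_H, S\vert_H)$ additively using the direct-sum decomposition from Proposition \ref{pr_27_dim_split} and then identify the endomorphism ring of each summand. Concretely,
\[ \dim \Hom_H(S\vert_H, S\vert_H) = \sum_{i,j \in \{6,9,12\}} \dim \Hom_H(V_i, V_j), \]
so the task reduces to nine small computations.

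For the off-diagonal terms, all six vanish. The module $V_9$ is absolutely irreducible of dimension $9$, while $V_6$ has dimension $6$, so any nonzero $V_9 \to V_6$ would be injective and $V_6 \to V_9$ would have $V_9$ as a quotient of a $6$-dimensional module; both are impossible. For $V_9$ and $V_{12}$ the composition factors of $V_{12}$ are copies of $V_4$ and $V_4^{\psi^l}$, none isomorphic to the simple module $V_9$, so neither direction admits a nonzero homomorphism. For $V_6$ and $V_{12}$, the composition factors of $V_6$ are $\{1, V_4^{\psi^k}\}$ while those of $V_{12}$ are $\{V_4, V_4^{\psi^l}\}$; the statement $k\neq l$ of Proposition \ref{pr_27_dim_split}, together with the fact that $V_4^{\psi^k}$ is not isomorphic to $V_4$ (established inside the proof of that proposition) and the trivial module is not isomorphic to any $V_4^{\psi^j}$, shows these two composition-factor sets are disjoint, hence $\Hom_H(V_6,V_{12}) = \Hom_H(V_{12},V_6) = 0$.

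For the diagonal terms, Schur's lemma over the algebraically closed enlargement of $\F_q$ gives $\dim\End_H(V_9) = 1$. For $V_6$ and $V_{12}$, both modules are uniserial of shape $A.B.A$ with $A \not\cong B$ simple (with $(A,B) = (1, V_4^{\psi^k})$ and $(V_4, V_4^{\psi^l})$ respectively). I claim that for any such uniserial module $W = A.B.A$ one has $\dim \End_H(W) = 2$: the identity provides one dimension, and the composition
\[ W \twoheadrightarrow W/(A.B) \cong A \;\longrightarrow\; A \cong \mathrm{soc}(W) \hookrightarrow W \]
provides a second, linearly independent, nilpotent endomorphism. To see that these span, let $\varphi \in \End_H(W)$. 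Since $W$ has a unique composition series, $\varphi$ preserves it; on each simple quotient $\varphi$ acts by a scalar ($c_1, c_2, c_3$ on the layers socle $A$, middle $B$, top $A$ respectively). The induced endomorphism $\bar\varphi$ of $W/\mathrm{soc}(W) = B.A$ is an endomorphism of a non-split extension of $A$ by $B$ with $\Hom(A,B) = \Hom(B,A) = 0$, and standard $\Ext^1$-compatibility forces $c_2 = c_3$. Subtracting $c_2\cdot\mathrm{id}$ leaves a map $\varphi'$ which is zero on $W/\mathrm{soc}(W)$, hence factors as $W \twoheadrightarrow W/(A.B) = A \to \mathrm{soc}(W) = A \hookrightarrow W$, contributing one further scalar parameter. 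So $\dim \End_H(W) = 2$.

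The anticipated main obstacle is the uniserial endomorphism-ring computation and, in particular, verifying that $W/\mathrm{soc}(W)$ is indeed the non-split extension $B.A$ rather than $A \oplus B$; this follows from uniseriality of $W$, but one must be careful that $V_6$ and $V_{12}$ really are uniserial rather than merely having the listed composition series, which is built into the $1.V_4^{\psi^k}.1$ and $V_4.V_4^{\psi^l}.V_4$ notation of Proposition \ref{pr_27_dim_split}. Combining everything,
\[ \dim\Hom_H(S\vert_H, S\vert_H) = \dim\End_H(V_6) + \dim\End_H(V_9) + \dim\End_H(V_{12}) = 2 + 1 + 2 = 5. \]
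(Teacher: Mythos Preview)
Your proof is correct and takes essentially the same approach as the paper: both use the decomposition $S\vert_H \cong V_6 \oplus V_9 \oplus V_{12}$ from Proposition~\ref{pr_27_dim_split} and compute the resulting $\Hom$-spaces. The paper decomposes only the source, writing $\Hom_H(S\vert_H,S\vert_H)=\bigoplus_i\Hom_H(V_i,S\vert_H)$ and arguing each summand has dimension $2,1,2$ rather tersely; you decompose both sides into nine pieces, explicitly verify the six cross-terms vanish by comparing composition factors, and give a cleaner general argument that a uniserial module of shape $A.B.A$ with $A\not\cong B$ has two-dimensional endomorphism ring. Your version is more carefully justified but the underlying computation is the same.
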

\begin{proof}
  By Proposition \ref{pr_27_dim_split}, \[ \Hom_H(S\vert_H, S\vert_H) =
  \Hom_H(V_6, S\vert_H) \oplus \Hom_H(V_9, S\vert_H) \oplus
  \Hom_H(V_{12}, S\vert_H).\] The middle summand has dimension $1$, by
  Schur's Lemma.
  
  A homomorphism $V_6 \to S\vert_H$ must map the $5$-dimensional
  submodule to itself or to $0$. The composition factor of dimension
  $1$ at the top can either be mapped to itself, or to the factor at
  the bottom. Hence $\dim \Hom_H(V_6, S\vert_H) = 2$.

  Similarly, $\dim \Hom_H(V_{12}, S\vert_H) = 2$ since the top factor can
  either be mapped to itself, or to the bottom factor. Thus the result
  follows.
\end{proof}

Given a module $W$ of the form \eqref{ree_irreducible_module_form}, we
now consider the problem of finding a flat. For $k = 0, \dotsc, n -1$,
let $H_k$ be the image of the representation corresponding to
$M^{\psi^{i_k}}$, so $H_k \leqslant \GL(7, q)$ or $H_k \leqslant
\GL(27, q)$, and let $\rho_k : G \to H_k$ be an isomorphism. Our goal
is then to find $\rho_k$ effectively for some $k$.

For $\lambda \in \F_q^{\times}$ denote $E_{\lambda} = \set{1,
  \lambda^{\pm t}, \lambda^{\pm (t - 1)}, \lambda^{\pm (2t - 1)}}$.
We need the following conjectures.

\begin{conj} \label{conj:tensor_decomposition_dim7} Let $\Ree(q) \cong G
  \leqslant \GL(d, q)$ have module $W$ of the form
  \eqref{ree_irreducible_module_form}, with $\dim W = d = 7^n$ for some $n >
  1$.

  Let $g \in G$ have order $q - 1$ and let $E$ be its multiset of eigenvalues. If $2m > n$
  then there exists $\lambda \in \F_q^{\times}$ such that $E_{\lambda}
  \subset E$, and the sum of the eigenspaces of $g$ corresponding to
  $E_{\lambda}$ has dimension $\dim V$.
\end{conj}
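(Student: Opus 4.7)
The plan is to exhibit a suitable $\lambda$ and the corresponding seven-dimensional subspace, then reduce the dimension claim to a statement about exponents. Since $\abs{g} = q-1$, Proposition \ref{cyclic_subgroups_conjugate} shows that $g$ is conjugate in $G$ to an element whose action on $V$ is $h(\lambda)$ for some $\lambda \in \F_q^\times$ of order $q-1$; the eigenvalues of $g$ on $V$ are then the distinct diagonal entries of $h(\lambda)$, i.e., exactly the elements of $E_\lambda$. Setting $\lambda_k := \lambda^{3^{i_k}}$, the Frobenius twist makes $g$ act on $V^{\psi^{i_k}}$ with eigenvalue set $E_{\lambda_k}$, so on $W$ the eigenvalues of $g$ are all products $\prod_{k=0}^{n-1}\mu_k$ with $\mu_k\in E_{\lambda_k}$, each realised by the tensor of the corresponding eigenvectors. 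Taking $\mu_0$ arbitrary in $E_{\lambda_0} = E_\lambda$ and $\mu_k=1$ for $k\ge 1$ at once yields $E_\lambda \subseteq E$ and produces eigenvectors inside the subspace
\[ L := V \otimes e_1^{(1)} \otimes \cdots \otimes e_1^{(n-1)}, \]
where $e_1^{(k)}$ is a $1$-eigenvector of $g$ on $V^{\psi^{i_k}}$. Thus $L$ has dimension $\dim V = 7$ and lies inside the sum $S$ of the $g$-eigenspaces on $W$ attached to $E_\lambda$; it remains to prove $S = L$.

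This equality holds if and only if every eigenvalue $\mu \in E_\lambda$ has multiplicity exactly one on $W$, equivalently: whenever $\mu_k \in E_{\lambda_k}$ satisfy $\prod_k \mu_k \in E_\lambda$, one has $\mu_k = 1$ for $k \ge 1$. Writing $\mu_k = \lambda^{3^{i_k} a_k}$ with $a_k \in A := \{0,\pm t, \pm(t-1), \pm(2t-1)\}$ and the target as $\lambda^c$ with $c \in A$, the claim becomes the Diophantine statement
\[ \sum_{k=0}^{n-1} 3^{i_k} a_k \equiv c \pmod{q-1} \implies a_0 = c \text{ and } a_k = 0 \text{ for } k \ge 1. \]
Because $t = 3^m$, each $a_k$ admits a decomposition $a_k = \varepsilon_k t + \delta_k$ with $\varepsilon_k \in \{0,\pm 1,\pm 2\}$ and $\delta_k \in \{0,\pm 1\}$, so $3^{i_k} a_k = \varepsilon_k 3^{i_k + m} + \delta_k 3^{i_k}$; the problem reduces to analysing a signed sum of powers of $3$ with bounded coefficients, modulo $q - 1 = 3^{2m+1} - 1$.

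To prove the Diophantine claim I proceed by induction on $n$ via successive peeling. Reducing modulo $3^{i_1}$ (valid since $\gcd(3, q-1)=1$) gives $a_0 \equiv c \pmod{3^{i_1}}$, and whenever $i_1 \ge m + 2$ the bound $|a_0 - c| < 4\cdot 3^m$ forces $a_0 = c$; dividing through by $3^{i_1}$ and rescaling the modulus reduces the problem to one with $n-1$ terms and shifted indices. The hypothesis $2m > n$ guarantees that the index set $\{0,\dotsc,2m\}$ is large enough, relative to $n$, for such reductions to succeed after at most a controlled number of consecutive bunched steps.

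The main obstacle is the tightly packed regime, where consecutive gaps $i_{k+1} - i_k$ can be as small as $1$; then the bands $[i_k, i_k + m]$ and $[i_{k+1}, i_{k+1} + m]$ overlap and carries propagate, breaking the naive reduction. Here the rigid $(\varepsilon_k, \delta_k)$-structure of $A$ is crucial: the signed sum $\sum_k (\varepsilon_k 3^{i_k+m} + \delta_k 3^{i_k})$ has balanced base-$3$ coefficients from $\{0, \pm 1, \pm 2\}$, and I expect that a size estimate combined with the relation $3^{2m+1} \equiv 1 \pmod{q-1}$ will force the integer sum to equal $c$ exactly rather than merely modulo $q - 1$. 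Uniqueness of such bounded-coefficient expansions, together with a careful case analysis of the overlap patterns permitted by $2m > n$, should then yield $\varepsilon_k = \delta_k = 0$ for $k \ge 1$ and $a_0 = c$, completing the proof.
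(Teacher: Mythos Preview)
The paper offers no proof of this statement: it is stated as a \emph{conjecture}, supported only by extensive computational evidence (see Section~\ref{section:intro_conj}). So there is nothing in the paper to compare your argument against; the question is simply whether your proposal is a proof.

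Your reduction is correct and valuable. Using Proposition~\ref{cyclic_subgroups_conjugate} to identify the eigenvalue set on each twisted factor, constructing the seven-dimensional subspace $L$, and reducing the dimension claim to the Diophantine assertion
\[
\sum_{k=0}^{n-1} 3^{i_k} a_k \equiv c \pmod{q-1},\quad a_k,c\in A \ \Longrightarrow\ a_0=c,\ a_k=0\ (k\ge 1)
\]
is exactly the right translation, and it isolates the combinatorial heart of the conjecture.

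However, the Diophantine argument does not go through as written. Your key step ``reducing modulo $3^{i_1}$ gives $a_0\equiv c\pmod{3^{i_1}}$'' is invalid: the congruence is modulo $q-1=3^{2m+1}-1$, and since $3^{2m+1}-1\equiv -1\pmod{3^{i_1}}$, an unknown multiple $j(q-1)$ contributes $-j$ after reduction, so you only get $a_0\equiv c-j\pmod{3^{i_1}}$. To eliminate $j$ you would need the congruence to be an equality of integers, but the crude size bound gives $\lvert\sum_k 3^{i_k}a_k\rvert$ of order $3^{3m}$ while $q-1\approx 3^{2m+1}$, so $\lvert j\rvert$ can be as large as roughly $3^{m}$. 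The inductive peeling therefore never gets started. Your final paragraph acknowledges as much: phrases like ``I expect that a size estimate\ldots will force'' and ``should then yield'' mark precisely the place where a genuine argument is missing. The tightly packed regime is not a detail to be cleaned up; it is the whole difficulty, and the hypothesis $2m>n$ is a very weak separation condition that does not by itself prevent carries from propagating across all positions. A proof, if one exists along these lines, would need a much more careful analysis of the constrained pairs $(\varepsilon_k,\delta_k)$ together with the wrap-around $3^{2m+1}\equiv 1$, and quite possibly a different choice of $\lambda$ (the conjecture only asks for existence, so one may try each $\lambda_k$ rather than committing to $\lambda_0$).
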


\begin{conj} \label{conj:tensor_decomposition_mixed_dim} Let $\Ree(q)
  \cong G \leqslant \GL(d, q)$ have module $W$ of the form
  \eqref{ree_irreducible_module_form} and $\dim W = d > 7$. Let $j \in G$
  be an involution.

  If $W$ has tensor factors both of dimension $7$ and $27$, then
  $W\vert_{\Cent_G(j)}$ has unique submodules $W_3$ and $W_4$ of dimensions $3$
  and $4$, respectively, such that $W_3 + W_4$ is a point of $W$ of
  dimension $7$.
\end{conj}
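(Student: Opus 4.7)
The plan is to restrict $W$ to $H = \Cent_G(j)$ and use the known module structure of $V\vert_H$ and $S\vert_H$ to identify the claimed submodules. For concreteness I will work in the setting where $W$ has a single $7$-dimensional tensor factor, writing $W = V_k \otimes W_S$ with $V_k = V^{\psi^{i_0}}$ and $W_S = \bigotimes_{j \in I_S} S^{\psi^{i_j}}$; the assumption $\abs{I_V} = 1$ appears to be implicit in the statement, since an $H$-invariant line is needed in every tensor factor of $W$ other than the chosen $V_k$ in order for an $H$-invariant $7$-dimensional point to exist, and $V^{\psi^i}\vert_H = V_3 \oplus V_4$ has no $H$-invariant line.

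First, by Proposition \ref{pr_inv_centraliser_split}, $V_k\vert_H = V_3 \oplus V_4$ with $V_3$ and $V_4$ absolutely irreducible as $\PSL(2,q)$-modules, pairwise non-isomorphic, and distinguished by the eigenvalue of $j$ on each summand. By Proposition \ref{pr_27_dim_split}, each $S^{\psi^{i_j}}\vert_H \cong V_6 \oplus V_9 \oplus V_{12}$ with the filtration structure described there; in particular, the unique $1$-dimensional $H$-submodule of each $S^{\psi^{i_j}}\vert_H$ is the socle of the corresponding $V_6$-summand, since $V_9$ is absolutely irreducible of dimension $9$ and $V_{12}$ has all composition factors of dimension $4$.

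Next, I would show that $W_S\vert_H$ has a unique $1$-dimensional $H$-submodule $L$, necessarily equal to the tensor product of the unique $1$-dimensional subs of the individual factors. Granting this, I define $W_3 = V_3 \otimes L$ and $W_4 = V_4 \otimes L$. Both are $H$-submodules of the correct dimensions, and $W_3 + W_4 = V_k \otimes L$, which is a $7$-dimensional point of the tensor decomposition $W = V_k \otimes W_S$. For uniqueness, I would observe that any $H$-invariant $7$-dimensional point of $W$ must be of the form $V_k \otimes L'$ for some $H$-invariant line $L' \subseteq W_S$ (since no $V$-factor contributes an $H$-invariant line), and then $L' = L$ by the unique-socle statement just described. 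Finally, $V_k \otimes L = (V_3 \otimes L) \oplus (V_4 \otimes L)$ admits a unique decomposition into a $3$-dimensional and a $4$-dimensional $H$-submodule because $V_3 \not\cong V_4$ as $\PSL(2,q)$-modules, forcing $W_3' = W_3$ and $W_4' = W_4$.

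The main obstacle is proving the uniqueness of the $1$-dimensional $H$-submodule $L \subseteq W_S\vert_H$. In characteristic zero this would follow trivially from semisimplicity, but in characteristic $3$ the non-split extensions occurring in $V_6$ and $V_{12}$ mean that extra $H$-fixed vectors could conceivably arise in the tensor product. The strategy is to compute $\dim (W_S\vert_H)^H$ by induction on $\abs{I_S}$ using the identification $(A \otimes B)^H = \Hom_H(A^*, B)$ together with self-duality of the summands from Proposition \ref{pr_27_dim_split}. The key computation is to show that for distinct twists $i \neq i'$ the $H$-modules $S^{\psi^i}\vert_H$ and $S^{\psi^{i'}}\vert_H$ share no non-trivial irreducible constituents (so no accidental $\Hom$-contributions appear), and that within a single $S^{\psi^i}\vert_H$ the only $H$-fixed vector is the socle of $V_6$. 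Both facts should follow from Steinberg's tensor product theorem applied to the constituents identified in the proof of Proposition \ref{pr_27_dim_split}, completing the induction.
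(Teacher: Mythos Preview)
This statement is labelled a \emph{Conjecture} in the paper, and the paper provides no proof whatsoever. It is explicitly listed among the ``small Ree Conjectures'' (see the preamble to the small Ree section and Section~\ref{section:intro_conj}), which the author describes as results that ``have been left unproven, either because they have been too hard to prove or have been from an area of mathematics outside the scope of this thesis.'' The only support offered is extensive computational testing. So there is no paper proof to compare your attempt against: you are trying to prove something the author deliberately left open.

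That said, your attempt has substantive problems even as a candidate proof. First, your restriction to a single $7$-dimensional tensor factor is not in the statement; the conjecture covers any $W$ with at least one factor of each dimension. You argue that with two or more $V$-factors no $H$-invariant $7$-dimensional point can exist, but this would mean the conjecture is \emph{false} in that regime rather than implicitly restricted, and you have not actually verified that tensor products of several $(V_3 \oplus V_4)$-summands (with distinct twists) never acquire an $H$-fixed line. Second, the uniqueness claim in the conjecture is most naturally read as ``a unique pair $(W_3,W_4)$ whose sum is a $7$-dimensional point,'' which is how the algorithm in Theorem~\ref{thm_tensor_decompose_general} uses it (it loops over all dimension-$3$ and dimension-$4$ composition factors); you are implicitly proving a stronger statement about unique submodules of those dimensions, which may well be false. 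Third, you yourself identify the genuine obstacle --- showing $\dim (W_S\vert_H)^H = 1$ in characteristic~$3$ with the non-split structure of $V_6$ and $V_{12}$ --- and your inductive plan via $\Hom_H$ is only a sketch: you would need to control $\Hom_H$ between all pairs of indecomposable summands of the $S^{\psi^{i_j}}\vert_H$ across different twists, and the non-split extensions make this more delicate than a constituent comparison alone.
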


\begin{conj} \label{conj:tensor_decomposition_dim_27} Let $\Ree(q)
  \cong G \leqslant \GL(d, q)$ have module $W$ of the form
  \eqref{ree_irreducible_module_form} and $\dim W = 27^n$ for some $n > 1$. Let $j \in G$ be an involution and let $H = \Cent_G(j)^{\prime}$.
  
  If $2m > n$ then $\dim \Hom_H(S^{\psi^{i_k}}\vert_H, W\vert_H) = 5$,
  for some $0 \leqslant k \leqslant n - 1$.
\end{conj}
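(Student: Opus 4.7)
The plan is to compute $\dim \Hom_H(S^{\psi^{i_k}}\vert_H, W\vert_H)$ directly by decomposing both modules using Proposition \ref{pr_27_dim_split} and the Steinberg tensor product theorem for $\PSL(2, q)$, and to show equality with the known value $5$ from the $n = 1$ case of Corollary \ref{cl_homspace_dim}. First I would choose $k$ so that $i_k$ is extremal within $\{i_0, \dots, i_{n-1}\}$; such a $k$ exists since there are at most $n$ values $i_l$ in the range $\{0, 1, \dots, 2m\}$ of size $2m + 1 > n$. Restricting to $H$ gives
\[
W\vert_H \cong \bigotimes_{l = 0}^{n - 1} S^{\psi^{i_l}}\vert_H \cong \bigotimes_{l = 0}^{n - 1} \bigl( V_6^{(l)} \oplus V_9^{(l)} \oplus V_{12}^{(l)} \bigr),
\]
where the superscript $(l)$ records the Frobenius twist by $\psi^{i_l}$ of the components from Proposition \ref{pr_27_dim_split}. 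Distributing presents $W\vert_H$ as a direct sum of $3^n$ submodules $T_\sigma = \bigotimes_l V_{\sigma_l}^{(l)}$ for $\sigma \in \{6, 9, 12\}^n$, and $S^{\psi^{i_k}}\vert_H$ decomposes analogously, so the Hom space splits as a double sum of terms $\Hom_H(V_\tau^{(k)}, T_\sigma)$ with $\tau \in \{6, 9, 12\}$.

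Next I would argue that only summands with $\sigma_k = \tau$ and $\sigma_l = 6$ for all $l \neq k$ contribute. By Proposition \ref{pr_27_dim_split} the trivial $H$-module appears as a composition factor only in $V_6$ (at both the head and the socle), while $V_9^{(l)}$ is absolutely irreducible of dimension $9$ and $V_{12}^{(l)}$ has composition factors $V_4^{\psi^{\ast}}$ only. By the Steinberg tensor product theorem, an absolutely irreducible $H$-module factors uniquely as a tensor product of Frobenius-twisted restricted irreducibles; applied to each $V_{\sigma_l}^{(l)}$, the hypothesis $2m > n$ ensures that the twists arising across different tensor positions $l$ remain distinct, so the trivial module appears as a composition factor of $\bigotimes_{l \neq k} V_{\sigma_l}^{(l)}$ exactly when every $\sigma_l = 6$. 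Any nonzero class in $\Hom_H(V_\tau^{(k)}, V_\tau^{(k)} \otimes A)$ forces $A = \bigotimes_{l \neq k} V_{\sigma_l}^{(l)}$ to have a trivial submodule in its socle, which again forces $\sigma_l = 6$ for all $l \neq k$.

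For the surviving summands, writing $A = \bigotimes_{l \neq k} V_6^{(l)}$, the socle of $A$ is the tensor product of the socles of the uniserial factors $V_6^{(l)} = 1\,.\,V_4^{\psi^{\ast}}\,.\,1$, which is one-dimensional and trivial. Exploiting the uniserial structure and the Steinberg-distinctness of the middle composition factors $V_4^{\psi^{\ast}}$ across different $l$, I would deduce that $\dim \Hom_H(V_\tau^{(k)}, V_\tau^{(k)} \otimes A) = \dim \Hom_H(V_\tau^{(k)}, V_\tau^{(k)})$, and summing over $\tau \in \{6, 9, 12\}$ recovers the count $2 + 1 + 2 = 5$ already obtained in the proof of Corollary \ref{cl_homspace_dim}.

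The main obstacle will be rigorously verifying that the non-semisimple uniserial factors $V_6^{(l)}$ and $V_{12}^{(l)}$ do not produce spurious Hom classes via nontrivial extensions between Frobenius-twisted composition factors. Making the socle argument above airtight requires vanishing statements for extensions between Steinberg-distinct twisted modules occupying different tensor positions, and the bound $2m > n$ is precisely what provides sufficient separation of the twists in $\Aut(\F_q) \cong \Z / (2m + 1)\Z$ to guarantee such vanishing without accidental collapse.
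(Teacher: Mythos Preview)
The paper does not prove this statement. It is stated as a \emph{Conjecture} and is explicitly one of the results the author leaves open (see Section~\ref{section:intro_conj}); it is used only as a hypothesis in Theorem~\ref{thm_tensor_decompose_dim27}. So there is no proof in the paper to compare your proposal against.

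As for the proposal itself, the overall strategy is reasonable, but you have not closed the gap you yourself identify. Two specific points. First, the reduction to $\sigma_l = 6$ for $l \neq k$ is argued at the level of composition factors, but $\Hom$ is not controlled by composition factors alone: a nonzero homomorphism $V_\tau^{(k)} \to V_\tau^{(k)} \otimes A$ need not factor through the trivial socle of $A$, and conversely extensions between twisted factors in different tensor positions can create maps that your composition-factor count does not see. The step ``any nonzero class \ldots forces $A$ to have a trivial submodule in its socle'' is asserted, not proved, and is exactly where the non-semisimplicity of $V_6$ and $V_{12}$ bites. Second, the claimed equality $\dim \Hom_H(V_\tau^{(k)}, V_\tau^{(k)} \otimes A) = \dim \Hom_H(V_\tau^{(k)}, V_\tau^{(k)})$ for $A = \bigotimes_{l \neq k} V_6^{(l)}$ requires control of $\mathrm{Ext}^1$ between the relevant twisted modules, and the inequality $2m > n$ by itself does not obviously deliver the needed separation: the internal twist parameter appearing in Proposition~\ref{pr_27_dim_split} (the $n$ in $V_4 = V_2 \otimes V_2^{\psi^n}$) interacts with the external twists $\psi^{i_l}$, so ``Steinberg-distinctness'' across tensor positions is not automatic. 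Until those vanishing statements are established, the argument remains a plausible outline rather than a proof --- which is consistent with the paper treating the statement as a conjecture.
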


\begin{thm} \label{thm_tensor_decompose_dim7}
Assume Conjecture \ref{conj:tensor_decomposition_dim7}. There exists a Las Vegas algorithm that, given $\gen{X} \leqslant \GL(d, q)$, where $q =
  3^{2m + 1}$, $d = 7^n$, $n > 1$, $2m > n$ and $\gen{X} \cong \Ree(q)$, with module $W$ of the form
  \eqref{ree_irreducible_module_form}, finds a point of $W$. The algorithm has expected time complexity
\[\OR{(\xi(d) + d^3 \log(q) \log{\log(q^d)})\log\log(q) + d^4}\] field operations.
\end{thm}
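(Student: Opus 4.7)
The plan is to mimic the structure of Algorithm \ref{alg:tensor_decompose} and its analysis in Theorem \ref{thm_main_alg}, with the r\^ole of base values played by the set $E_\lambda$. Concretely I would: (i) repeatedly pick a random $g \in \gen{X}$ until its pseudo-order divides $q-1$; (ii) compute the multiset $E$ of eigenvalues of $g$ on $W$, together with the corresponding eigenspaces; (iii) enumerate candidate values of $\lambda \in \F_q^\times$ and, for each, test whether $E_\lambda \subseteq E$ and whether the sum $\sum_{\mu \in E_\lambda} \ker(g - \mu I)$ has dimension $7$; (iv) return that sum as the desired point. Conjecture \ref{conj:tensor_decomposition_dim7} guarantees that whenever $|g| = q-1$, some $\lambda$ passes both tests, so the algorithm terminates.

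For the complexity, finding $g$ uses expected $O(\log\log q)$ random selections by Corollary \ref{cl_random_selections}, each costing $O(\xi(d) + d^3 \log(q)\log\log(q^d))$ field operations for the random element together with the pseudo-order computation of Section \ref{section:matrix_orders}. Once $g$ is fixed, the characteristic polynomial is obtained in $O(d^3)$ field operations and its (at most $d$) roots in $\F_q$ in $O(d(\log d)^2 \log\log(d) \log(dq))$ operations by Theorem \ref{thm_solve_univariate_polys}; the eigenspaces of all the eigenvalues then cost $O(d^4)$ field operations by Gaussian elimination. Summing gives the stated bound
\[ \OR{(\xi(d) + d^3 \log(q) \log\log(q^d)) \log\log(q) + d^4}. \]

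Enumerating candidates for $\lambda$ is easy because the seven exponents $0, \pm t, \pm(t-1), \pm(2t-1)$ appearing in $E_\lambda$ are coprime to $q-1$ in all but controllable cases (here $t = 3^m$, so $\gcd(t, q-1) = 1$, and the $\pm(t-1), \pm(2t-1)$ cases can be handled by extracting the appropriate roots in $\F_q^\times$ via the extended Euclidean algorithm on exponents). For each nonzero $\mu \in E$ we therefore produce a bounded number of candidate $\lambda$'s, giving $O(d)$ candidates in total; each candidate admits an $O(d)$ set-membership test of $E_\lambda \subseteq E$ and an $O(d^2)$ dimension check, which is absorbed by the $O(d^4)$ term above.

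The main obstacle, as in the Suzuki analogue, is not the running-time bookkeeping but rather justifying that the $7$-dimensional subspace returned is genuinely a \emph{point} of $W$ in the sense of Section \ref{section:tensor_decomposition}. The argument parallels Lemma \ref{lem_line_subspace}: under the isomorphism $W \cong V^{\psi^{i_0}} \otimes \dotsb \otimes V^{\psi^{i_{n-1}}}$, the element $g$ acts as $\rho_0(g) \otimes \dotsb \otimes \rho_{n-1}(g)$, each $\rho_k(g)$ having eigenvalues $E_{\lambda_k}$ with $\lambda_k = \lambda_0^{3^{i_k}}$. The eigenvalues of $g$ on $W$ are products $\prod_k \mu_k$ with $\mu_k \in E_{\lambda_k}$, so fixing an eigenvector in each factor except the $k$-th exhibits a $7$-dimensional tensor factor $V^{\psi^{i_k}}$ inside the sum $\sum_{\mu \in E_{\lambda_k}} \ker(g - \mu I)$. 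The content of Conjecture \ref{conj:tensor_decomposition_dim7} is exactly that for some $k$ this sum has dimension equal to that tensor factor (no \lq\lq collisions'' enlarge it), and in that case the sum must coincide with $V^{\psi^{i_k}}$ tensored with a line in each other factor, which is a point by definition. Feeding this point into the algorithm of \cite{tensorprodalg} (Section \ref{section:tensor_decomposition}) then exhibits the tensor decomposition, and the Las Vegas nature follows because the final subspace can be cheaply verified to be a point.
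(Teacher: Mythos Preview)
Your proposal is correct and follows essentially the same approach as the paper: find $g$ of pseudo-order dividing $q-1$, compute its eigenvalues on $W$, enumerate candidate $\lambda$'s, and test whether the sum of eigenspaces over $E_\lambda$ has dimension $7$, with the point-ness argument being the direct analogue of Lemma \ref{lem_line_subspace}. One minor inaccuracy: the exponents $t-1$ and $2t-1$ are \emph{not} coprime to $q-1$ (each shares a factor $2$ with $q-1$), but this is harmless since inverting the single exponent $t$ already suffices---the paper simply raises each $\mu \in E$ to the power $3t$ (using $t \cdot 3t = q \equiv 1 \pmod{q-1}$) to recover the candidate $\lambda$, which is slightly cleaner than your extended-Euclidean detour.
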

\begin{proof}
  Let $G = \gen{X}$. By Corollary \ref{cl_random_selections}, we can easily find $g \in G$
  such that $\abs{g} \mid q - 1$. Our approach is to construct a point
  as a suitable sum of eigenspaces of $g$. We know that for $k = 0, \dotsc, n - 1$, $\rho_k(g)$ has
  $7$ eigenvalues $\lambda_k^{\pm t}$, $\lambda_k^{\pm (t - 1)}$,
  $\lambda_k^{\pm (2t - 1)}$ and $1$ for some $\lambda_k \in
  \F_q^{\times}$. Let $E$ be the multiset of eigenvalues of $g$. Each
  eigenvalue has the form
\begin{equation} \label{ree_eigenvalue_form}
\lambda_0^{j_0} \lambda_1^{j_1} \dotsm \lambda_{n - 1}^{j_{n - 1}}
\end{equation}
where each $\lambda_k \in \F_q^{\times}$ and each $j_k \in \set{\pm t,
  \pm (t - 1), \pm (2t - 1), 1}$. We can easily compute $E$.

Because each $\lambda_k^{j_k}$ may be $1$, for each $k = 0, \dotsc, n - 1$ we have
$E_{\lambda_k} \subset E$. We can determine which $\lambda \in E$ can be
one of the $\lambda_k$, since if $\lambda = \lambda_k$ for some $k$,
then $E_{\lambda^{3t}} \subset E$.

Thus we can obtain a list, with length between $n$ and $d$, of subsets
$E_{\lambda}$ of $E$. Now Conjecture
\ref{conj:tensor_decomposition_dim7} asserts that there is some $\mu
\in \F_q^{\times}$ such that $E_{\mu} \subset E$, and such that the sum
of the eigenspaces corresponding to $E_{\mu}$ has dimension $7$, and
by its construction it must therefore be a point of $W$. Since $\mu^t
\in E$, the set $E_{\mu}$ will be on our list, and we can easily find
the point.

The algorithm is Las Vegas, since we can easily calculate the
dimensions of the subspaces. The expected number of random selections
for finding $g$ is $\OR{\log\log(q)}$, and we can find its order using
expected $\OR{d^3 \log(q) \log{\log(q^d)}}$ field operations. We find
the characteristic polynomial using $\OR{d^3}$ field operations and
then find the eigenvalues using expected $\OR{d(\log{d})^2
  \log{\log(d)}\log{(dq)}}$ field operations. Finally we use the
Algorithm from Section \ref{section:tensor_decomposition} to verify
that we have a point, using $\OR{d^3 \log(q)}$ field operations. The rest of the algorithm is linear algebra, and hence
the expected time complexity is as stated.
\end{proof}

\begin{thm} \label{thm_tensor_decompose_general}
Assume Conjecture \ref{conj:tensor_decomposition_mixed_dim}. There exists a Las Vegas algorithm that, given $\gen{X} \leqslant \GL(d, q)$, where $q = 3^{2m + 1}$, $7 \mid d$, $27 \mid d$ and $\gen{X} \cong \Ree(q)$, with module $W$ of the form
  \eqref{ree_irreducible_module_form}, finds a point of $W$. The algorithm has expected time complexity
\[\OR{\xi(d) + d^3 \log(q) (\log{\log(q^d)} + \log(d)^2)}\] field operations.
\end{thm}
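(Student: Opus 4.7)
The plan is to reduce the problem to finding an involution centraliser and then locating, inside the restricted module, the unique low-dimensional submodules guaranteed by Conjecture \ref{conj:tensor_decomposition_mixed_dim}. Let $G = \gen{X}$ and let $W$ be its given module, which we may write as a tensor product of twisted copies of $V$ and $S$; because both $7 \mid d$ and $27 \mid d$, at least one factor of each type occurs. The key structural input is Conjecture \ref{conj:tensor_decomposition_mixed_dim}, which asserts that for any involution $j \in G$, $W\vert_{\Cent_G(j)}$ has a unique submodule $W_3$ of dimension $3$ and a unique submodule $W_4$ of dimension $4$, and that $W_3 + W_4$ is a point of $W$.

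The algorithm proceeds as follows. First, select random elements of $G$ until one of even order is found; by Corollary \ref{cl_random_selections} this requires $O(1)$ selections in expectation, and raising to the appropriate power via Proposition \ref{pr_element_powering} produces an involution $j \in G$ using $O(\xi(d) + d^3 \log(q) \log\log(q^d))$ field operations. Second, compute a generating set $Y$ for $\Cent_G(j)$ using the Bray algorithm of Section \ref{section:inv_centraliser}; since $\Cent_G(j) \cong \gen{j} \times \PSL(2, q)$ is a known structure, we use the $\PSL(2, q)$ recognition of Theorem \ref{thm_psl_naming} as a termination test for the Bray algorithm, guaranteeing that the expected number of random elements needed is bounded.

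Third, apply the MeatAxe and its submodule-lattice extensions from Section \ref{section:meataxe} to $W\vert_{\gen{Y}}$ to locate the unique submodule of dimension $3$ and the unique submodule of dimension $4$; under the uniqueness assertion of Conjecture \ref{conj:tensor_decomposition_mixed_dim} these are distinguished among the composition series obtained. This takes $O(|Y| d^3)$ field operations from a single MeatAxe run, augmented by at most $O(\log d)$ further splits to isolate the required submodules. Finally, set $L = W_3 + W_4$ and feed it as a proposed flat into the verification half of the tensor-decomposition algorithm of Section \ref{section:tensor_decomposition}, which uses $O(d^3 \log q)$ field operations and either confirms that $L$ is a point or reports failure, ensuring the Las Vegas property.

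Putting these costs together yields the claimed bound $O(\xi(d) + d^3 \log(q)(\log\log(q^d) + \log(d)^2))$, where the $\log(d)^2$ term absorbs the submodule-isolation work in the MeatAxe phase. The main obstacle is the third step: while Conjecture \ref{conj:tensor_decomposition_mixed_dim} asserts uniqueness of $W_3$ and $W_4$, the centraliser module $W\vert_{\gen{Y}}$ typically has many submodules overall, and one must show that the dimension-$3$ and dimension-$4$ constituents can be extracted by the MeatAxe within the stated complexity rather than by a brute enumeration of composition series; this is where the $\log(d)^2$ overhead enters, and it is the place where the proof requires the most care.
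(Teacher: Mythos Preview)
Your overall structure matches the paper: find an involution $j$, generate $\Cent_G(j)$ via Bray with Theorem \ref{thm_psl_naming} as the stopping test, restrict $W$ to the centraliser, extract $W_3$ and $W_4$, and verify $W_3 + W_4$ is a point via the tensor-decomposition routine.

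The gap is in your third step, and you have correctly flagged it as the obstacle but not resolved it. The MeatAxe gives you a composition series, not directly ``the unique submodule of dimension $3$''. The paper does not attempt to isolate $W_3$ and $W_4$ by navigating the submodule lattice. Instead it computes the composition factors of $W\vert_{\Cent_G(j)}$, and then for each pair consisting of a $3$-dimensional factor and a $4$-dimensional factor it computes module homomorphisms from those factors into $W\vert_{\Cent_G(j)}$ and takes the sum of their images; this sum is the candidate $W_3 + W_4$, and one runs the point-verification on it. The number of such pairs is $\OR{\log(d)^2}$ (there are $\OR{\log d}$ tensor factors, hence $\OR{\log d}$ relevant composition factors of each dimension), and for each pair the verification costs $\OR{d^3 \log q}$. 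That is the true source of the $\log(d)^2$ term in the complexity, not any internal overhead of the MeatAxe as you suggest. Once you replace your vague ``$\OR{\log d}$ further splits'' with this Hom-and-test loop over pairs of factors, the argument and the complexity bound go through exactly as stated.
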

\begin{proof}
  Let $G = \gen{X}$. Similarly as in Corollary
  \ref{cl_find_stab_element}, we find an involution $j \in G$ and
  probable generators for $\Cent_G(j)^{\prime}$. We can then use
  Theorem \ref{thm_psl_naming} to verify that we have got the whole
  centraliser.

  Using the MeatAxe, we find the
  composition factors of $W\vert_{\Cent_G(j)}$. For each pair of factors
  of dimensions $3$ and $4$ we compute module homomorphisms into
  $W\vert_{\Cent_G(j)}$ and then find the sum of their images.
  
  Conjecture \ref{conj:tensor_decomposition_mixed_dim} asserts that
  this will produce a point of $W$. We can easily calculate the
  dimensions of the submodules and use the tensor decomposition
  algorithm to verify that we do obtain a point, so the algorithm is
  Las Vegas.

  The expected time complexity for finding $j$ is $\OR{\xi(d) + d^3
    \log(q) \log{\log(q^d)}}$ field operations. From the proof of
  Corollary \ref{cl_find_stab_element} we see that we can find
  probable generators for $\Cent_G(j)$ and verify that we have the
  whole centraliser using expected $\OR{\xi(d) + d^3 \log(q)
    \log{\log(q^d)}}$ field operations, if we let $\varepsilon =
  \log\log(q)$ in Theorem \ref{thm_psl_naming}. The MeatAxe uses
  expected $\OR{d^3}$ field operations in this case, since the number
  of generators for the centraliser is constant. Then we consider
  $\OR{\log(d)^2}$ pairs of submodules, and for each one we use the tensor
  decomposition algorithm to determine if we have a point, using
  $\OR{d^3 \log(q)}$ field operations. Hence the expected time
  complexity is as stated.
\end{proof}

\begin{thm} \label{thm_tensor_decompose_dim27}
Assume Conjecture \ref{conj:tensor_decomposition_dim_27} and an oracle for the discrete logarithm problem. There exists a Las Vegas algorithm that, given $\gen{X} \leqslant \GL(d, q)$, where $q = 3^{2m + 1}$, $d = 27^n$, $n > 1$, $2m > n$ and $\gen{X} \cong \Ree(q)$, with module $W$ of the form
  \eqref{ree_irreducible_module_form}, finds a point of $W$. The algorithm has expected time complexity
\[ \OR{(\xi(d) + d^3 \log(q) \log \log(q^d)) \log \log(q) + \log(q)^3 + d^5 \sigma_0(d) \abs{X} + d \chi_D(q) + \xi(d) d} \]
field operations.
\end{thm}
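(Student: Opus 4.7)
The plan is to adapt the strategy of Theorem \ref{thm_tensor_decompose_general} to this setting, replacing the $7$-dimensional natural module by the $27$-dimensional module $S$, and using Proposition \ref{pr_27_dim_split} and Corollary \ref{cl_homspace_dim} in place of Proposition \ref{pr_inv_centraliser_split}. The overall shape is: pass to an involution centraliser where the relevant module structure is rich enough to pick out one tensor factor via a homomorphism-space dimension count.

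First I would find an involution $j \in \gen{X}$ by random search combined with Proposition \ref{pr_element_powering}; by Corollary \ref{cl_random_selections} this succeeds in $\OR{1}$ expected trials, each costing $\OR{\xi(d) + d^3 \log(q)\log\log(q^d)}$ field operations. Next, run the Bray algorithm to obtain probable generators for $\Cent_G(j)$, take commutators to obtain probable generators for $H = \Cent_G(j)'$, and use Theorem \ref{thm_psl_naming} (with $\varepsilon = 1/\log\log(q)$, say) to verify that $H \cong \PSL(2,q)$; retry with a new $j$ otherwise. Then invoke Theorem \ref{thm_psl_recognition} on $H$ to obtain an effective isomorphism $\rho : H \to \PSL(2,q)$ together with preprocessing for constructive membership testing in $H$. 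This is where the discrete logarithm oracle, the $d^5 \sigma_0(d)\abs{X}$, $d\chi_D(q)$ and $\xi(d) d$ terms of the stated complexity all enter.

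Next, for each $k = 0,\dotsc, 2m$, construct the standard $27$-dimensional twisted module $S^{\psi^k}$ of the standard $\PSL(2,q)$, and transport it via $\rho$ to an $H$-module (this uses the $\SLP$s produced by Theorem \ref{thm_psl_recognition}, contributing the $\log(q)^3$ term through the evaluation cost of the preprocessed $\SLP$s). For each $k$, use the MeatAxe (in its homomorphism-space flavour from Section \ref{section:meataxe}) to compute $\dim \Hom_H(S^{\psi^k}\vert_H, W\vert_H)$. By Conjecture \ref{conj:tensor_decomposition_dim_27}, for some index $k$ this dimension equals $5$, matching the value given by Corollary \ref{cl_homspace_dim} for the single-factor case; this singles out the distinguished twist. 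For such a $k$, pick a non-zero $\phi \in \Hom_H(S^{\psi^{i_k}}\vert_H, W\vert_H)$ and take its image, a $27$-dimensional subspace $P \leqslant W$. Feed $P$ into the tensor decomposition algorithm of Section \ref{section:tensor_decomposition}, which verifies whether $P$ is a flat and, if so, returns the sought point.

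The main obstacle is conceptual rather than algorithmic: we must argue that the image of $\phi$ is indeed a point of $W$ for the correct twist, and not merely an $H$-submodule that fails to extend to a tensor factor of the $G$-action. This is precisely what Conjecture \ref{conj:tensor_decomposition_dim_27} guarantees by dimension counting, matching the endomorphism algebra of $S\vert_H$, and the tensor decomposition algorithm provides Las Vegas verification of the flat property before we return. The algorithm is Las Vegas since each ingredient --- the MeatAxe, the Bray algorithm, Theorems \ref{thm_psl_naming} and \ref{thm_psl_recognition}, and the tensor decomposition algorithm --- is Las Vegas, and wrong outputs are detected in the final verification step. Aggregating: the involution step contributes $\OR{(\xi(d) + d^3 \log(q)\log\log(q^d))\log\log(q)}$; the recognition of $H$ contributes the $d^5 \sigma_0(d)\abs{X} + d\chi_D(q) + \xi(d) d$ terms; iterating over $\OR{\log(q)}$ twists and running the MeatAxe for each contributes at most $\OR{d^3 \log(q)}$, dominated by the other terms together with the $\log(q)^3$ overhead from $\SLP$ evaluation in $H$, giving the complexity asserted.
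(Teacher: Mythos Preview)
Your approach is essentially the one in the paper, but two points need tightening.

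First, the phrase ``the standard $27$-dimensional twisted module $S^{\psi^k}$ of the standard $\PSL(2,q)$'' is not quite right: $S$ is a $\Ree(q)$-module, not a $\PSL(2,q)$-module. To form $S^{\psi^k}\vert_{\PSL(2,q)}$ you must fix a specific embedding of $\PSL(2,q)$ into a standard copy of $\Ree(q)$ acting on $S$, and then align this copy with your $H$ via the abstract $\PSL(2,q)$. Concretely, the paper takes the standard $27$-dimensional representation $R$ of $\Ree(q)$, finds a second involution $j' \in R$, sets $K = \Cent_R(j')'$, and constructively recognises $K$ to obtain $\pi_2 : \PSL(2,q) \to K$. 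With standard generators for both $H$ and $K$ one can then compute $\Hom_{\PSL(2,q)}(S^{\psi^i}\vert_K, W\vert_H)$. Your ``transport via $\rho$'' hides this second recognition step; without it you have no concrete $H$-action on $S^{\psi^k}$.

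Second, picking ``a non-zero $\phi$'' is not sufficient. By Corollary \ref{cl_homspace_dim} the $5$-dimensional space $\Hom_H(S\vert_H, S\vert_H)$ contains non-injective maps (e.g.\ projection onto the $V_9$ summand of Proposition \ref{pr_27_dim_split}), whose images have dimension strictly less than $27$ and are not points. You must choose $\phi$ with $\dim\Ker\phi = 0$, as the paper does, so that the image has dimension $27$. The correctness argument is then: $W\vert_H$ contains a submodule $U'_k \cong S^{\psi^{i_k}}$ which \emph{is} a point; $\Hom_H(S^{\psi^{i_k}}\vert_K, U'_k)$ has dimension $5$ by Corollary \ref{cl_homspace_dim} and sits inside the full $\Hom$-space, so when the latter also has dimension $5$ they coincide, forcing $\Imm\phi \subseteq U'_k$, hence $\Imm\phi = U'_k$ when $\phi$ is injective.
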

\begin{proof}
  Let $G = \gen{X}$. Similarly as in Corollary
  \ref{cl_find_stab_element}, we find an involution $j \in G$ and
  probable generators for $H = \Cent_G(j)^{\prime} \cong \PSL(2, q)$.
  We can then use Theorem \ref{thm_psl_naming} to verify that we have
  got the whole centraliser.  

  Using the MeatAxe, we find the
  composition factors of $W\vert_H$. Let $S_1$ be the group
  corresponding to a non-trivial composition factor. Using
  Theorem \ref{thm_psl_recognition} we constructively recognise $S_1$ as $\PSL(2,
  q)$ and obtain an effective isomorphism $\pi_1 : \PSL(2, q) \to H$.
  
  Now let $R$ be the image of the representation corresponding to $S$,
  so $R \leqslant \GL(27, q)$. Again we find an involution $j^{\prime}
  \in R$ and probable generators for $K = \Cent_R(j^{\prime})^{\prime}
  \cong \PSL(2, q)$. As above, we chop the module $S\vert_K$ with the
  MeatAxe, constructively recognise one of its non-trivial factors and
  obtain an effective isomorphism $\pi_2 : \PSL(2, q) \to K$.

  Note that both $\pi_1$ and $\pi_2$ have effective inverses. Hence we
  can obtain standard generators for $H$ and $K$. For each $i = 0,
  \dotsc, 2m$, do the following:
\begin{enumerate}
\item Find $M = \Hom_{\PSL(2, q)}(S^{\psi^i}\vert_K, W\vert_H)$ using the standard generators.
\item If $\dim M = 5$, then find random $f \in M$ such that $\dim
  \Ker{f} = 0$. Use the algorithm in Section
  \ref{section:tensor_decomposition} to determine if $U = \Imm(f)$ is
  a point.
\end{enumerate}

From Proposition \ref{pr_27_dim_split} we know that $S\vert_K$ has a
submodule of dimension $1$. This implies that that $W\vert_H$ has a
submodule $U^{\prime}_k = \gen{v_1} \otimes \dotsm \otimes \gen{v_{k - 1}} \otimes S^{\psi^{i_k}} \otimes \gen{v_{k + 1}} \otimes \dotsm
\otimes \gen{v_{n-1}} \cong S^{\psi^{i_k}}$, for any $k = 0, \dotsc, n - 1$, and some $v_1, \dotsc, v_{n-1}$ (depending on $k$).
Moreover, $\Hom_{\PSL(2, q)}(S^{\psi^{i_k}}\vert_K, W\vert_H) \geqslant
\Hom_{\PSL(2, q)}(S^{\psi^{i_k}}\vert_K, U^{\prime}_k)$, and by Corollary
\ref{cl_homspace_dim}, the latter has dimension $5$.

But by Conjecture \ref{conj:tensor_decomposition_dim_27}, for some $i$
the former also has dimension $5$, and hence these vector spaces are
equal. Therefore, for some $i = i_k$, the subspace $U$ found in the algorithm must be equal to $U^{\prime}_k$, and hence it is a point.


The expected time complexity for finding the involutions is
\[ \OR{\xi(d) + d^3 \log(q) \log{\log(q^d)}}\] field operations.  From
the proof of Corollary \ref{cl_find_stab_element} we see that we can
find probable generators for $\Cent_G(j)$ and verify that we have the
whole centraliser using expected $\OR{\xi(d) + d^3 \log(q)
  \log{\log(q^d)}}$ field operations, if we let $\varepsilon =
\log\log(q)$ in Theorem \ref{thm_psl_naming}. The MeatAxe uses
expected $\OR{d^3}$ field operations in this case since the number of
generators are constant. In the loop, we require $\OR{d^3 \log(q)}$
field operations to verify that $U$ is a point. Hence the expected
time complexity follows from Theorem \ref{thm_psl_recognition}.
\end{proof}

Conjectures \ref{conj:tensor_decomposition_dim7} and
\ref{conj:tensor_decomposition_dim_27} do not apply when $2m \leqslant
n$, so in this case we need another algorithm. Then $q \in \OR{d}$ so
we are content with an algorithm that has time complexity polynomial
in $q$. The approach is not to use tensor decomposition, since in this
case we have no efficient method of finding a flat. Instead we find standard generators of $G$ using permutation group
techniques, then enumerate all tensor products of the form
\eqref{ree_irreducible_module_form}, and for each one we determine if
it is isomorphic to $W$.

\begin{lem} \label{lem_ree_perm_rep}
  There exists a Las Vegas algorithm that, given $\gen{X} \leqslant \GL(d,
  q)$ such that $q = 3^{2m + 1}$ with $m > 0$ and $\gen{X} \cong
  \Ree(q)$, finds an effective injective homomorphism $\Pi : \gen{X} \to
  \Sym{O}$ where $\abs{O} = q^3 + 1$. The algorithm has expected time complexity $\OR{q^3(\xi(d) + \abs{X} d^2 + d^3) + d^4}$ field operations.
  \end{lem}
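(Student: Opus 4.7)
The plan is to follow essentially the strategy of Lemma \ref{thm_suzuki_perm_rep} for the Suzuki groups, adapted to the structural data about $\Ree(q)$ collected in Section \ref{section:ree_theory}. By Proposition \ref{ree_doubly_transitive_action}, $G = \gen{X}$ acts faithfully and doubly transitively on $\OV$, a set of size $q^3 + 1$ realised as projective points of the natural module $V \cong \F_q^d$. It therefore suffices to exhibit a single $P \in \OV$, as a one-dimensional $G$-invariant subspace of $V$, and then to compute its $G$-orbit and the induced permutation representation via a Schreier tree.

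First I would search for $g \in G$ with $\abs{g} \mid q - 1$: by Corollary \ref{cl_random_selections} this requires expected $\OR{\log\log q}$ random selections, each costing $\OR{\xi(d) + d^3 \log(q) \log\log(q^d)}$ field operations. By Proposition \ref{cyclic_subgroups_conjugate}, any such non-trivial $g$ lies in a conjugate of $H(q)$ and therefore fixes exactly two points of $\OV$. Next I would choose random $x \in G$, set $h = g^x$, and filter by the test $[g, h] \neq 1$ and $[g, h]^9 = 1$. The motivation for the exponent $9$ is that by Proposition \ref{sylow3_props} the point stabiliser $G_P$ is the Frobenius group $U(q)H(q)$ with kernel $U(q)$, so $[G_P, G_P] \leqslant U(q)$, and $U(q)$ has exponent $9$; hence whenever $\gen{g, h} \leqslant G_P$ the test is automatically satisfied. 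A short counting argument on the pair $\{P_1 x, P_2 x\}$ shows that $h$ shares a fixed point with $g$ with probability $\Theta(1/q^3)$, explaining the leading $q^3$ factor in the stated complexity; each iteration costs $\OR{\xi(d) + d^3}$ field operations.

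Once a suitable pair $(g, h)$ has been obtained, I would run the MeatAxe on $V$ restricted to $\gen{g, h}$ to find a one-dimensional submodule $P$, at the stated cost of $\OR{d^4}$ field operations. Finally I would compute the orbit $O = P^G$ using a Schreier tree rooted at $P$ on the generators $X$, costing $\OR{\abs{X} \abs{O} d^2}$ field operations; $\Pi(g)$ for any $g$ is then read off as the permutation induced on $O$ in $\OR{\abs{O} d^2}$ field operations, so $\Pi$ is effective. If $\abs{O} = q^3 + 1$, the orbit must equal the unique $G$-orbit of projective points of that size, namely (a conjugate of) $\OV$; if not, I discard and retry.

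The main obstacle is the justification of the filtering test and the identification of $P$ with a genuine element of $\OV$. The forward direction (common fixed point implies $[g, h]^9 = 1$) is straightforward from the Frobenius structure of $G_P$ and Proposition \ref{sylow3_props}(5). The converse requires more care: one uses that Sylow $3$-subgroups intersect trivially (Proposition \ref{sylow3_props}(2)), so a non-trivial $[g, h]$ of order dividing $9$ lies in a unique conjugate $U(q)^v$ with a unique fixed line $P \in \OV$, which is then the candidate submodule produced by the MeatAxe. The orbit size check $\abs{O} = q^3 + 1$ removes any residual ambiguity, since by Proposition \ref{ree_maximal_subgroup_list} no proper subgroup of $G$ other than (conjugates of) $U(q) H(q)$ has index $q^3 + 1$. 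Adding up the costs of random selection, the $\OR{q^3}$-many iterations of the filter, the MeatAxe, and the Schreier tree computation yields the stated complexity $\OR{q^3(\xi(d) + \abs{X} d^2 + d^3) + d^4}$; the algorithm is Las Vegas because both the filter and the final orbit size can be checked deterministically.
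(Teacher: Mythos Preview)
Your proposal is correct and follows essentially the same approach as the paper: find $g$ of order dividing $q-1$, search for a random conjugate $h=g^x$ with $[g,h]^9=1$ and $[g,h]\neq 1$, use the MeatAxe to locate a one-dimensional submodule, and then build the Schreier tree on its orbit. Your explicit orbit-size check and your estimate $\Theta(1/q^3)$ for the success probability of the filter are in fact slightly more careful than the paper, which quotes $q^2$ in that intermediate step (evidently carried over from the Suzuki analogue, Lemma~\ref{thm_suzuki_perm_rep}) even though the final complexity is stated with the correct $q^3$.
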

\begin{proof}
  By Proposition \ref{ree_doubly_transitive_action}, $\Ree(q)$ acts doubly
  transitively on a set of size $q^3 + 1$. Hence $G = \gen{X}$ also
  acts doubly transitively on $O$, where $\abs{O} = q^3 + 1$, and we
  can find the permutation representation of $G$ if we can find a
  point $P \in O$. The set $O$ is a set of projective points of
  $\F_q^d$, and the algorithm proceeds as follows.
\begin{enumerate}
\item Choose random $g \in G$. Repeat until $\abs{g} \mid q - 1$.
\item Choose random $x \in G$ and let $h = g^x$. Repeat until $[g, h]^9 = 1$ and $[g, h] \neq 1$.
\item Find a
  composition series for the module $M$ of $\gen{g, h}$ and let $P \subseteq M$ be the submodule of
  dimension $1$ in the series.
\item Find the orbit $O = P^G$ and compute the permutation group $S \leqslant \Sym{O}$ of $G$ on $O$, together with an effective isomorphism $\Pi : G \to S$.
\end{enumerate}

By Proposition \ref{cyclic_subgroups_conjugate}, elements in $G$ of order
dividing $q - 1$ fix two points of $O$, and hence $\gen{g, h}
\leqslant G_P$ for some $P \in O$ if and only if $g$ and $h$ have a
common fixed point. All composition factors of $M$ have dimension $1$, so a composition series
of $M$ must contain a submodule $P$ of dimension $1$. This submodule
is a fixed point for $\gen{g, h}$, and its orbit must have size $q^3 +
1$, since $\abs{G} =
q^3 (q^3 + 1) (q - 1)$ and $\abs{G_P} = q^3 (q - 1)$. It follows that
$P \in O$.

All elements of $G$ of order a power of $3$ lie in the derived group of a
stabiliser of some point, which is also a Sylow $3$-subgroup of $G$,
and the exponent of this subgroup is $9$. Hence $[g, h]^9 = 1$ if and
only if $\gen{g, h}$ lie in a stabiliser of some point, if and only
if $g$ and $h$ have a common fixed point.

To find the orbit $O = P^G$ we can compute a Schreier tree on the
generators in $X$ with $P$ as root, using $\OR{\abs{X} \abs{O} d^2}$
field operations. Then $\Pi(g)$ can be computed for any $g \in
\gen{X}$ using $\OR{\abs{O} d^2}$ field operations, by computing the
permutation on $O$ induced by $g$. Hence $\Pi$ is effective, and its
image $S$ is found by computing the image of each element of $X$.
Therefore the algorithm is correct and it is clearly Las Vegas.

We find $g$ using expected $\OR{(\xi(d) + d^3 \log(q) \log{\log(q^d)})
\log {\log(q)}}$ field operations and we find $h$ using expected
$\OR{(\xi(d) + d^3)q^2}$ field operations. Then $P$ is found using the MeatAxe, in expected
$\OR{d^4}$ field operations. Thus the result follows.
\end{proof}

\begin{conj} \label{conj:ree_perm_iso}
Let $G = \gen{X} \leqslant \Sym{\OV}$ such that $G \cong \Ree(q) = H$. There exists a Las Vegas algorithm that finds $x, h, z \in G$ as $\SLP$s in $X$ such that the map
\begin{align}
x &\mapsto S(1, 0, 0) \\
h &\mapsto h(\lambda) \\
z &\mapsto \Upsilon
\end{align}
is an isomorphism. Its time complexity is $\OR{q^5 (\log(q))^4}$ field operations. The length of the returned $\SLP$s are $\OR{q^3 \log \log(q)}$.
\end{conj}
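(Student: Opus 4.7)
The plan is a direct permutation-group computation on the degree $q^3+1$ action of $G$ on $\OV$. I would produce the three required elements one at a time and then verify, by checking a short presentation of $\Ree(q)$, that they extend to an isomorphism with the standard copy.

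First, via a Schreier--Sims style orbit-and-sift on $\OV$ (whose Schreier tree rooted at $P_\infty$ has size $\OR{q^3}$ and yields coset-representative $\SLP$s of length $\OR{q^3}$), I obtain $\SLP$ generators for the two-point stabiliser $G_{P_\infty,P_0}$. By Proposition~\ref{sylow3_props} this stabiliser is cyclic of order $q-1$, and Lemma~\ref{ree_totient_prop} together with Proposition~\ref{pr_element_powering} yield a generator $h$ after $\OR{\log\log q}$ random selections and pseudo-order tests. For $z$, Proposition~\ref{pr_involution_props} says all involutions of $G$ are conjugate, and Lemma~\ref{ree_totient_prop} gives probability exceeding $1/4$ that a random element has even order, so an involution $j$ is produced cheaply by powering; double transitivity of $G$ on $\OV$ then lets me conjugate an interchanged pair of $j$ onto $(P_\infty, P_0)$. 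For $x$, I search inside $G_{P_\infty}$ for elements of pseudo-order $9$ (density $1-1/q$ in $U(q)H(q)$ by Proposition~\ref{sylow3_props}) and, after suitable conjugation by a power of $h$, produce a candidate mapping to some $S(a,0,0)$.

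The hard part will be certifying that the candidate triple $(x,h,z)$ genuinely extends to an isomorphism onto the standard copy rather than merely exhibiting the correct individual invariants. The remedy is to check a short defining presentation of $\Ree(q)$ on the triple together with the condition $\gen{x,h,z}=G$, resampling $x$ on failure. Since $\Aut(\Ree(q))\cong\Ree(q){:}\gen{\psi}$ acts transitively on the set of triples with the prescribed combinatorial invariants, a constant expected number of attempts suffices, and the residual Galois ambiguity is absorbed by choosing the primitive element $\lambda$ defining the target standard copy accordingly.

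For complexity, a single permutation operation on $\OV$ costs $\OR{q^3}$ elementary steps; the orbit-and-sift phase uses $\OR{q^3 \log\abs{G}} = \OR{q^4 \log q}$ group operations, and $\OR{\log\log q}$ pseudo-order and presentation checks at $\OR{q^3 \log q \log\log q}$ each yield the stated $\OR{q^5 (\log q)^4}$ bound. The $\SLP$ length $\OR{q^3 \log\log q}$ arises as the Schreier-tree depth $\OR{q^3}$ multiplied by the $\OR{\log\log q}$-length $\SLP$s produced per random selection.
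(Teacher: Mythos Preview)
The paper does not prove this statement: it is explicitly a Conjecture, and the only justification offered is the Remark immediately following it (``There exists an implementation of the above mentioned algorithm, and the Conjecture is [that] it always produces a correct result and has the stated complexity''). Compare the Suzuki analogue, Proposition~\ref{conj:sz_perm_iso}, which \emph{is} proved by citing \cite{sz_blackbox_gens}; no such reference exists for the Ree case, which is why the thesis leaves it open. So there is no paper proof to match; your proposal is an attempt to supply what the author did not.

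As an outline your plan is reasonable, but there are genuine gaps.

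The most serious is the construction of $x$. You pick an order-$9$ element of $G_{P_\infty}$ and claim that ``after suitable conjugation by a power of $h$'' you obtain a candidate mapping to some $S(a,0,0)$. But a generic order-$9$ element maps to $S(a,b,c)$ with $a\neq 0$ and $b,c$ arbitrary, and by \eqref{ree_matrix_id2} one has $S(a,b,c)^{h(\lambda)}=S(\lambda^{3t-2}a,\lambda^{1-3t}b,\lambda^{-1}c)$, so conjugation by powers of $h$ never zeroes $b$ or $c$. In the permutation representation you have no access to the field coordinates $(a,b,c)$, so you have given no mechanism for reaching the one-parameter family $S(a,0,0)$, let alone the specific element $S(1,0,0)$. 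Your fallback of resampling until a presentation check passes then requires, in the best case, $\Theta(q)$ attempts rather than a bounded number, and your assertion that $\Aut(\Ree(q))$ acts transitively on ``triples with the prescribed combinatorial invariants'' is neither made precise nor proved. Relatedly, the standard copy fixes $\lambda$ as the primitive element with the defining minimal polynomial, so the ``residual Galois ambiguity'' cannot be absorbed by changing $\lambda$; it must be resolved on the $G$ side.

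Two further issues: you invoke a ``short defining presentation of $\Ree(q)$'' without citation, and none is provided in the thesis; this is a nontrivial ingredient. And the complexity arithmetic is inconsistent: $q^3\log\lvert G\rvert=\OR{q^3\log q}$, not $\OR{q^4\log q}$, and in any case $\OR{q^4\log q}$ permutation multiplications at $\OR{q^3}$ each gives $\OR{q^7\log q}$, not the claimed $\OR{q^5(\log q)^4}$. A correct accounting would have to distinguish point-image lookups from full permutation products.
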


\begin{rem}
  There exists an implementation of the above mentioned algorithm, and
  the Conjecture is then it always produces a correct result and has
  the stated complexity.
\end{rem}

\begin{thm} \label{small_field_tensor_decompose}
  Assume Conjecture \ref{conj:ree_perm_iso}. There exists a Las Vegas algorithm that, given $\gen{X}
  \leqslant \GL(d, q)$, where $q = 3^{2m + 1}$, $n > 1$ and $d = 7^n$ or $d = 27^n$ and $\gen{X} \cong \Ree(q)$, with module $W$ of the form
  \eqref{ree_irreducible_module_form}, finds a tensor decomposition of $W$. The algorithm has time complexity
$\OR{q^3(\xi(d) + \abs{X} d^2 + d^3 \log\log(q) + q^2 (\log(q))^4) + d^3 (\abs{X} \binom{2m}{n-1} + d)}$ field operations.
\end{thm}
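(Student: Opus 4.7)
The plan is to adapt Algorithm \ref{alg:small_field_approach} and the argument of Theorem \ref{sz_small_field_tensor} from the Suzuki to the Ree setting. First, I would build an effective injective homomorphism $\pi : G \to P_G \leqslant \Sym{O}$ with $\abs{O} = q^3 + 1$ via Lemma \ref{lem_ree_perm_rep}. Then I would run the algorithm of Conjecture \ref{conj:ree_perm_iso} on $P_G$ to obtain $\SLP$s $w_x, w_h, w_z$ in the generators of $P_G$ whose evaluations correspond to the standard generators $S(1,0,0)$, $h(\lambda)$, $\Upsilon$ of $\Ree(q)$. Evaluating these $\SLP$s on $X$ yields a set $Y = \set{y_1, y_2, y_3} \subseteq G$ which, under the composition of $\pi$ with the isomorphism of Conjecture \ref{conj:ree_perm_iso}, is identified with the standard generators of the standard copy $\Ree(q)$.

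By Section \ref{section:ree_indecomposables} and the hypothesis that $d = 7^n$ or $d = 27^n$ with $n > 1$, the given module $W$ of $G$ is isomorphic, via $Y$, to a tensor product of the form \eqref{ree_irreducible_module_form} in which every factor is $M^{\psi^{i_k}}$ for a fixed $M$: namely $M = V$ (the $7$-dimensional natural module of $\Ree(q)$) when $d = 7^n$, and $M = S$ (the $27$-dimensional module from Section \ref{section:ree_indecomposables}) when $d = 27^n$. Fixing $i_0 = 0$, I would enumerate the set $T$ of all tuples $0 < i_1 < \dotsb < i_{n-1} \leqslant 2m$, and for each one build the tensor product $U = M \otimes M^{\psi^{i_1}} \otimes \dotsm \otimes M^{\psi^{i_{n-1}}}$ as a $\gen{Y}$-module (using the identification above to define the $Y$-action on each factor), then run the MeatAxe module isomorphism test between $U$ and $W$. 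Exactly one element of $T$ yields $U \cong W$, and the conjugating matrix returned by the MeatAxe is the desired change of basis. The algorithm is Las Vegas because every subroutine (Lemma \ref{lem_ree_perm_rep}, Conjecture \ref{conj:ree_perm_iso}, and the MeatAxe) is Las Vegas.

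For the complexity, Lemma \ref{lem_ree_perm_rep} contributes $\OR{q^3(\xi(d) + \abs{X} d^2 + d^3) + d^4}$ field operations for the permutation representation; Conjecture \ref{conj:ree_perm_iso} contributes $\OR{q^5 (\log q)^4}$ for producing the three $\SLP$s, each of length $\OR{q^3 \log\log(q)}$; evaluating each $\SLP$ on $X$ inside $\GL(d, q)$ costs $\OR{q^3 d^3 \log\log(q)}$ field operations. Finally, $\abs{T} = \binom{2m}{n-1}$, and each MeatAxe-based module isomorphism test uses $\OR{\abs{X} d^3}$ field operations, contributing $\OR{\abs{X} d^3 \binom{2m}{n-1}}$ in total. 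Summing these terms gives exactly the stated bound.

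The main (non-routine) ingredient is the black-box standard-generator construction of Conjecture \ref{conj:ree_perm_iso}; once this is granted, everything else is a direct transcription of the Suzuki argument. The only genuinely new point to verify is that there is a unique twist sequence for which $U \cong W$, which follows at once from the Steinberg tensor product theorem applied to the classification in Section \ref{section:ree_indecomposables}, together with the fact that distinct Galois twists of the natural $\Ree(q)$-module are non-isomorphic.
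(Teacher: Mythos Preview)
Your proposal is correct and follows essentially the same approach as the paper: construct the permutation representation via Lemma~\ref{lem_ree_perm_rep}, obtain standard generators as $\SLP$s via Conjecture~\ref{conj:ree_perm_iso}, evaluate them on $X$, then enumerate the $\binom{2m}{n-1}$ twist tuples and test each candidate tensor product against $W$ with the MeatAxe. Your complexity breakdown matches the paper's term by term.
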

\begin{proof}
Let $G = \gen{X}$. The algorithm proceeds as follows:
\begin{enumerate}
\item Find permutation representation $\pi : G \to G_S \leqslant \Sym{q^3 + 1}$ using Lemma \ref{lem_ree_perm_rep}.
\item Find standard generators $x,h,z \in G$ using Conjecture \ref{conj:ree_perm_iso}. Evaluate them on $G$ to obtain a generating set $Y$.
\item Let $H = \gen{Y}$ and let $V$ be the module of $H$. If $3 \mid d$ then replace $V$ with $S$.
\item Construct each module of dimension $d$ of the form \eqref{ree_irreducible_module_form} using $V$ as base. For each one test if it is isomorphic to $W$, using the MeatAxe.
\item Return the change of basis from the successful isomorphism test.
\end{enumerate}
  
The returned change of basis exhibits $W$ as a tensor product, so by
  Lemma \ref{lem_ree_perm_rep} the algorithm is Las Vegas.
  
The lengths of the $\SLP$s of $x,h,z$ is $\OR{q^3 \log\log(q)}$, so
we need $\OR{d^3 q^3 \log\log(q)}$ field operations to obtain $Y$.
The number of modules of dimension $d$ of the form
  \eqref{ree_irreducible_module_form} using $V$ as base is $\binom{2m}{n-1}$.
  Module isomorphism testing uses $\OR{\abs{X} d^3}$ field
  operations. Hence by Conjecture \ref{conj:ree_perm_iso} and Lemma \ref{lem_ree_perm_rep}
  the time complexity of the algorithm is as stated.
\end{proof}

\subsection{Symmetric square decomposition}
\label{section:decompose_symsquare}
The two basic irreducible modules of $\Ree(q)$ are the natural module
$V$ of dimension $7$, and an irreducible submodule $S$ of the symmetric
square $\mathcal{S}^2(V)$. The symmetric square itself is not irreducible,
since $\Ree(q)$ preserves a quadratic form, and $\mathcal{S}^2(V)$ therefore has
a submodule of dimension $1$. The complement of this has dimension
$27$ and is the irreducible module $S$.

\begin{conj} \label{lem:symmetric_square}
The exterior square of $S$ has a submodule isomorphic to a twisted version of $V$.
\end{conj}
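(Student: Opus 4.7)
The plan is to decompose $\wedge^2(S)$ as a $\Ree(q)$-module by combining weight analysis with an explicit construction on the generators given in Section \ref{section:ree_theory}. Recall that $\dim \wedge^2(S) = \binom{27}{2} = 351$ and $S$ embeds canonically in $\mathcal{S}^2(V)$, so $\wedge^2(S)$ embeds in $\wedge^2(\mathcal{S}^2(V))$ modulo the trivial direct summand. This embedding reduces the problem to identifying, inside a concretely constructed module, a constituent of dimension $7$ that is $\Ree(q)$-equivariant.

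First I would compute the weights of $V$ with respect to the split torus $H(q)$ using $h(\lambda)$: they are $\{\lambda^{\pm t}, \lambda^{\pm(t-1)}, \lambda^{\pm(2t-1)}, 1\}$. Taking products (with multiplicity) of pairs of these weights and subtracting the trivial weight yields the $27$ weights of $S$. Next I would compute the weights of $\wedge^2(S)$ as sums of pairs of distinct weights of $S$, and verify that the multiset of weights of $V^{\psi^i}$, for some $0 \leqslant i \leqslant 2m$, appears as a submultiset of the weights of $\wedge^2(S)$; by the structure of the weights of $V$, the Frobenius twist index $i$ can be read off from which power of $\lambda$ occurs. This gives a candidate constituent and pins down its twist.

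To upgrade the weight match to an actual submodule, I would construct an explicit $\Ree(q)$-equivariant map $V^{\psi^i} \to \wedge^2(S)$ by writing down a highest-weight vector: a vector in $\wedge^2(S)$ annihilated by the positive unipotent radical $U(q)^{+}$ and scaled correctly by $H(q)$. Since the actions of $S(a,b,c)$ and $h(\lambda)$ on $S$ can be computed symbolically from their action on $\mathcal{S}^2(V)$, checking the $U(q)^{+}$-fixed condition reduces to a finite collection of polynomial identities in the entries of $\alpha(x),\beta(x),\gamma(x)$ acting diagonally on $S \otimes S$ and projected to $\wedge^2(S)$. Having exhibited a highest-weight vector, its $\Ree(q)$-orbit generates the desired submodule, and absolute irreducibility of $V$ forces the submodule to be isomorphic to $V^{\psi^i}$.

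The main obstacle is verifying that a genuine submodule exists rather than just a weight match: the representation theory of $\Ree(q)$ in defining characteristic $3$ allows for non-split extensions (as witnessed by the structure of $V_{12}$ and $V_6$ in Proposition \ref{pr_27_dim_split}), so the desired summand could in principle occur only as a subquotient. Ruling this out requires showing that the constructed highest-weight vector really does generate a $7$-dimensional subspace and that this subspace is closed under the action of $\Ree(q)$, which in turn can be checked directly on the standard generators $\set{S(1,0,0), h(\lambda), \Upsilon}$ from Section \ref{section:ree_theory}; the relations \eqref{ree_matrix_id1}--\eqref{ree_matrix_id2} suffice to reduce the verification to a bounded number of matrix identities over $\F_q$, giving the result uniformly in $q$ once checked for a generic symbolic $\lambda$.
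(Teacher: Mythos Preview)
The paper does not prove this statement: it is explicitly labelled as a \emph{Conjecture} (one of the ``small Ree Conjectures'' listed at the start of the small Ree section and discussed in Section~\ref{section:intro_conj}), and the author states plainly that such conjectures have been left unproven, with confidence in their truth resting on extensive computational testing rather than on a written argument. There is therefore no proof in the paper to compare your proposal against.

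As for the proposal itself: what you have written is a plan, not a proof. The weight analysis under the one-dimensional torus $H(q)$ will indeed pick out candidate constituents, and the strategy of exhibiting an explicit $U(q)$-fixed vector of the correct weight and then checking closure under the standard generators is a legitimate route to a computer-assisted verification. But you have not carried any of it out: you have not identified the twist $i$, not written down the candidate highest-weight vector in $\wedge^2(S)$, and not verified the closure conditions. You also correctly flag the real difficulty---that in characteristic~$3$ a weight match only guarantees a composition factor, not a submodule---but you do not resolve it; you merely assert that checking on the three standard generators ``suffices'', which is true but is precisely the work that remains to be done. In short, your outline is plausible and consistent with how the author would likely have approached a verification, but as it stands it supplies no more than the paper does.
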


\begin{thm} \label{thm_symsquare_decompose}
  Assume Conjecture \ref{lem:symmetric_square}. There exists a Las
  Vegas algorithm that, given $\gen{X} \leqslant \GL(27, q)$ with module $W$
  such that $W$ is isomorphic to a twisted version of $S$, finds an
  effective isomorphism from $\gen{X}$ to $\Ree(q)^g$ for some $g \in \GL(7, q)$. The algorithm has expected
  time complexity $\OR{\abs{X}}$ field operations.
\end{thm}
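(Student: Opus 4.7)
The plan is to exploit Conjecture \ref{lem:symmetric_square}: since $W$ is a twisted version of $S$, its exterior square $\wedge^2(W)$ (of dimension $\binom{27}{2} = 351$, constant) contains a submodule isomorphic to a twisted version of the natural module $V$. Extracting this submodule will give us the desired $7$-dimensional faithful representation of $\gen{X}$, from which the effective isomorphism follows.

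First I would construct $\wedge^2(W)$ explicitly as a $\gen{X}$-module by forming $\wedge^2(x)$ for each $x \in X$; since the ambient dimensions are constant this costs $\OR{\abs{X}}$ field operations. Next I would apply the MeatAxe (in its submodule-finding form, as described in Section \ref{section:meataxe}) to $\wedge^2(W)$ in order to locate a $7$-dimensional submodule $U$ together with a change-of-basis matrix $c$ exhibiting it as a direct summand (or at least as a submodule on which the action is completely reducible to a $7$-dimensional block). By Conjecture \ref{lem:symmetric_square} such a $U$ exists, and it is isomorphic to a Galois twist $V^{\psi^i}$ of the natural module. In this constant-dimension setting the MeatAxe runs in $\OR{\abs{X}}$ field operations and is Las Vegas, so correctness and the complexity bound both follow.

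The effective isomorphism $\varphi : \gen{X} \to \GL(7, q)$ is then defined by sending $h \in \gen{X}$ to the appropriate $7 \times 7$ block of $c \,\wedge^2(h)\, c^{-1}$. Computing any image $\varphi(h)$ uses $\OR{1}$ field operations, so $\varphi$ is effective. It is a homomorphism by construction; it is injective because $\gen{X} \cong \Ree(q)$ is simple and $\varphi$ is non-trivial. The image is a faithful $7$-dimensional representation of $\Ree(q)$ in defining characteristic, and by Section \ref{section:ree_indecomposables} it must be a Galois twist of the natural representation. Such a twist, being defined over $\F_q$ and preserving (up to a Frobenius twist of its entries) the same bilinear form up to a scalar, is conjugate in $\GL(7, q)$ to $\Ree(q)$, so the image equals $\Ree(q)^g$ for some $g \in \GL(7, q)$, as required.

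The one real obstacle is Conjecture \ref{lem:symmetric_square} itself; everything else is a direct invocation of the MeatAxe and elementary linear algebra in constant dimension. A secondary, purely bookkeeping issue is isolating the correct $7$-dimensional constituent of $\wedge^2(W)$ when other small constituents are present, but this can be handled either by enumerating the (boundedly many) candidate submodules and checking which one affords a faithful action of the right isomorphism type (for example by a single random trace check against $\Ree(q)$), or by a one-off inspection of the module structure of $\wedge^2(S)$ carried out inside the standard copy.
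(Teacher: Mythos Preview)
Your proposal is correct and follows essentially the same route as the paper: form $\wedge^2(W)$ (dimension $351$), apply the MeatAxe to extract the $7$-dimensional constituent guaranteed by Conjecture~\ref{lem:symmetric_square}, and read off the effective isomorphism via the resulting change of basis. Your write-up is in fact slightly more careful than the paper's in justifying injectivity and why the image is a conjugate of $\Ree(q)$.
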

\begin{proof}
  Using Conjecture \ref{lem:symmetric_square}, this is just an application of the
  MeatAxe. We construct the
  exterior square $\wedge^2(W)$ of $W$, which has dimension $351$, and
  find a composition series of this module using the MeatAxe. By the Conjecture, the
  natural module of dimension $7$ will be one of the composition
  factors and the MeatAxe will provide an effective isomorphism to this
  factor, in the form of a change of basis $A \in GL(27, q)$ of $W$ that exhibits the
  action on the composition factors.

  This induces an isomorphism $\varphi : \gen{X} \to H$, where $H$ is
  conjugate to $\Ree(q)$. For $g \in \gen{X}$, $\varphi(g)$ is computed
  by taking a submatrix of $g^A$ of degree $7$. Clearly $\varphi$ can be
  computed using $\OR{1}$ field operations.

  Since the MeatAxe is Las Vegas and has expected time complexity
  $\OR{\abs{X}}$, the result follows.
\end{proof}

\subsection{Constructive recognition}

Finally, we can now state and prove our main theorem.

\begin{thm} \label{cl_ree_constructive_recognition} 
Assume the small Ree Conjectures, and an oracle for the discrete
  logarithm problem in $\F_q$. There exists a Las Vegas algorithm
  that, given $\gen{X} \leqslant \GL(d, q)$ satisfying the assumptions in Section \ref{section:algorithm_overview}, with $q = 3^{2m + 1}$, $m > 0$ and $\gen{X} \cong \Ree(q)$,
  finds an effective isomorphism $\varphi : \gen{X} \to \Ree(q)$ and performs preprocessing for constructive membership testing. The
  algorithm has expected time complexity 
$\OR{\xi(d) (d^3 + (\log\log(q))^2) + d^6 \log\log(d) + d^5 \sigma_0(d) \abs{X} + 
d^3 \log(q) \log\log(q) \log\log(q^d) + \log(q)^3 \log\log(q) + \chi_D(q) (d + \log\log(q))}$ field operations.

Each image of $\varphi$ can be computed in $\OR{d^3}$ field
operations, and each pre-image in expected $\OR{\xi(d) + \log(q)^3 + d^3 (\log(q)
  \log\log(q))^2}$ field operations.

\end{thm}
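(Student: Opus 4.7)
The plan is to mirror the structure of Theorem~\ref{cl_sz_constructive_recognition}, reducing to the standard copy by first tensor decomposing, then stripping off twisted factors, and finally applying the conjugacy algorithm in dimension~$7$. By Section~\ref{section:ree_indecomposables} the given module $V$ factors as a tensor product of Galois twists of the $7$-dimensional natural module and the $27$-dimensional module $S$, so $d = 7^a \cdot 27^b$ for some $a, b \geqslant 0$. The algorithm would branch on $d$: if $d = 7$ we apply Theorem~\ref{cl_ree_conjugacy} directly to obtain $y \in \GL(7, q)$ with $\gen{X}^y = \Ree(q)$ and set $\varphi(g) = g^y$; if $d = 27$ we invoke Theorem~\ref{thm_symsquare_decompose} to obtain an effective homomorphism into dimension~$7$ and then apply Theorem~\ref{cl_ree_conjugacy}; otherwise we tensor decompose first.

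For the tensor decomposition step, one inspects $d$ to determine which of the four cases applies. If $d = 7^n$ with $n > 1$ and $2m > n$, we use Theorem~\ref{thm_tensor_decompose_dim7}; if $d = 27^n$ with $n > 1$ and $2m > n$, we use Theorem~\ref{thm_tensor_decompose_dim27}; if both $7$ and $27$ divide $d$ and $V$ has mixed factors, we use Theorem~\ref{thm_tensor_decompose_general}; and in the remaining \emph{small field} case $2m \leqslant n$, where $q$ is polynomial in $d$, we use Theorem~\ref{small_field_tensor_decompose}. In each case we obtain either a point $L \leqslant V$ (a flat), and then the algorithm of Section~\ref{section:tensor_decomposition} produces a change of basis $x \in \GL(d, q)$ exhibiting $V \cong U \otimes W$ with $\dim U \in \set{7, 27}$, or we directly obtain such an $x$ (small field case).

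Having decomposed, we define the projection $\rho_U : g_u \otimes g_w \mapsto g_u$ on the tensor component of smaller dimension and form $Y = \set{\rho_U(g^x) \mid g \in X}$, which generates a group conjugate to $\Ree(q)$ acting on either a $7$- or $27$-dimensional module. As in the Suzuki case, since $x$ may only lie in the normaliser $\gen{X} . \F_q$, a scalar adjustment is needed to force determinant~$1$; in characteristic $3$ with our dimensions this is again a routine scaling (unique cube/odd roots). If $\dim U = 27$ we invoke Theorem~\ref{thm_symsquare_decompose} to drop to dimension~$7$, and then apply Theorem~\ref{cl_ree_conjugacy} to $\gen{Y}$ to obtain $y$ with $\gen{Y}^y = \Ree(q)$. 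The composite effective isomorphism is $\varphi : g \mapsto \rho_U(g^x)^y$ (with the additional symmetric square step folded in if needed), which clearly can be evaluated in $\OR{d^3}$ field operations since each ingredient only requires matrix arithmetic on $d \times d$ matrices.

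The main obstacle is the complexity accounting: the dominant contributions come from Theorem~\ref{thm_tensor_decompose_dim27} (which itself invokes $\PSL(2, q)$ constructive recognition, contributing the $d^5 \sigma_0(d) \abs{X}$ and $d \chi_D(q)$ terms), from Theorem~\ref{small_field_tensor_decompose} when $2m \leqslant n$ (which contributes $d^3 \abs{X} \binom{2m}{n-1} \leqslant d^4 \abs{X}$ and $q^5 \in \OR{d^6}$ style terms), and from Theorem~\ref{cl_ree_conjugacy} (contributing the $\log(q)^3 \log\log(q)$ and $\chi_D(q) \log\log(q)$ terms, via the preprocessing of Theorem~\ref{thm_pre_step}). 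Observing that a random element oracle for $\gen{Y}$ can be simulated in $\OR{\xi(d)}$ field operations using the oracle for $\gen{X}$ (we evaluate the projection of a random element), one sums the stated complexities. Finally, for $\varphi^{-1}$: the preprocessing of standard generators for $\O_3(\Ree(q)_{P_\infty})$ and $\O_3(\Ree(q)_{P_0})$ carried out inside Theorem~\ref{cl_ree_conjugacy} already sets up the data needed to invoke Algorithm~\ref{alg:ree_element_to_slp}, and by Theorem~\ref{thm_element_to_slp_complexity} each pre-image is obtained as an $\SLP$ evaluated on $X$ in the stated time, completing the proof.
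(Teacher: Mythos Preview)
Your proposal is correct and follows essentially the same approach as the paper: branch on the shape of $d$, invoke the appropriate tensor-decomposition theorem (or the small-field fallback) to peel off a factor of dimension $7$ or $27$, apply Theorem~\ref{thm_symsquare_decompose} if necessary to drop from $27$ to $7$, and finish with Theorem~\ref{cl_ree_conjugacy}, with $\varphi^{-1}$ handled via Algorithm~\ref{alg:ree_element_to_slp}. The paper's proof differs only in presentation details (it writes the composite as $g \mapsto \theta(\rho_A(g^x))^y$ and notes explicitly that the unique $7$th root in $\F_q$ is what fixes the scalar ambiguity).
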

\begin{proof}

Let $W$ be the module of $G = \gen{X}$. The algorithm proceeds as follows:
\begin{enumerate}
\item If $d = 7$ then use Theorem \ref{cl_ree_conjugacy} to obtain $y
  \in \GL(7, q)$ such that $G^y = \Ree(q)$, and hence an
  effective isomorphism $\varphi : G \to \Ree(q)$ defined by $g \mapsto g^y$.
\item If $3 \nmid d$ then $d = 7^n$ for some $n > 1$. If $2m > n$ then use Theorem \ref{thm_tensor_decompose_dim7} to find a flat $L \leqslant M$. If $3 \mid d$ but $d$ is not a proper power of $27$ then use Theorem \ref{thm_tensor_decompose_general} to find such an $L$. Otherwise $d = 27^n$ for some $n > 1$. If $2m > n$, then use Theorem \ref{thm_tensor_decompose_dim27} to find a flat $L \leqslant M$.
\item Use the tensor decomposition algorithm described in Section \ref{section:tensor_decomposition} with $L$, to obtain $x \in \GL(d, q)$ such
  the change of basis determined by $x$ exhibits $W$ as a tensor
  product $A \otimes B$, with $\dim A = 7$ or $\dim A = 27$. If $d = 7^n$ or $d = 27^n$ and $2m \leqslant n$ then use Theorem
  \ref{small_field_tensor_decompose} to find $x$. Let $G_A$ and $G_B$
  be the images of the corresponding representations.
\item Define $\rho_A : G_{A \otimes B} \to G_A$ as $g_a \otimes g_b
  \mapsto g_a$ and let $Y = \set{\rho_A(g^x) \mid g \in X}$. If $\dim
  A = 27$ then 
let $\theta$ be the effective isomorphism from Theorem \ref{thm_symsquare_decompose}, otherwise let $\theta$ be the identity map.

\item Let $Z = \set{\theta(x) \mid x \in Y}$. Then $\gen{Z}$ is conjugate to $\Ree(q)$. Use Theorem \ref{cl_ree_conjugacy} to obtain $y \in \GL(7, q)$ such that $\gen{Z}^y = \Ree(q)$. 
\item An effective isomorphism $\varphi : G \to \Ree(q)$ is given by $g \mapsto \theta(\rho_A(g^x))^y$.
\end{enumerate}
The map $\rho_A$ is straightforward to compute, since given $g \in
\GL(d, q)$ it only involves dividing $g$ into submatrices of degree $d
/ 7$ or $d / 27$, checking that they are scalar multiples of each
other and returning the $7 \times 7$ or $27 \times 27$ matrix consisting
of these scalars. Since $x$ might not lie in $G$, but only in
$\Norm_{\GL(d, q)}(G) \cong G {:} \F_q$, the result of $\rho_A$ might
not have determinant $1$.  However, since every element of $\F_q$ has
a unique $7$th root, we can easily scale the matrix to have
determinant $1$. Hence by Theorem \ref{thm_tensor_decompose_dim7},
Theorem \ref{thm_tensor_decompose_general}, Theorem
\ref{thm_tensor_decompose_dim27}, Section
\ref{section:tensor_decomposition}, Theorem
\ref{thm_symsquare_decompose} and Theorem \ref{cl_ree_conjugacy}, the
algorithm is Las Vegas, and $\varphi$ can be computed using $\OR{d^3}$
field operations.

In the case where we use Theorem \ref{small_field_tensor_decompose} we
have $2m \leqslant n$ and hence $q < 3d$. We see that $\binom{2m}{n-1}
\leq n$, and the time complexity of the algorithm to find $x$, in
Theorem \ref{small_field_tensor_decompose}, simplifies to $\OR{d^3
  (\xi(d) + \abs{X} d^2 + d^3 \log\log(d))}$.

In the other cases, finding $L$ uses $\OR{(\xi(d) + d^3 \log(q) \log
  \log(q^d)) \log \log(q) + d^5 \sigma_0(d) \abs{X} + d
  \chi_D(q) + \xi(d) d}$ field operations. From Section
\ref{section:tensor_decomposition}, finding $x$ uses $\OR{d^3
  \log(q)}$ field operations when a flat $L$ is given.

From Theorem \ref{thm_symsquare_decompose} finding $\theta$ uses
$\OR{\abs{Y}}$ field operations, and from Theorem
\ref{cl_ree_conjugacy}, finding $y$ uses $\OR{(\xi(d) \log{\log(q)} + \log(q)^3 + \chi_D(q))\log \log(q) + \abs{Z}}$ field
operations. Hence the expected time complexity is as stated. Finally,
$\varphi^{-1}(g)$ is computed by first using Algorithm
\ref{alg:ree_element_to_slp} to obtain an $\SLP$ of $g$ and then
evaluating it on $X$. The necessary precomputations in Theorem
\ref{thm_pre_step} have already been made during the application of
Theorem \ref{cl_ree_conjugacy}, and hence it follows from Theorem
\ref{thm_element_to_slp_complexity} that the time complexity for
computing the pre-image of $g$ is as stated.
\end{proof}

\section{Big Ree groups}

Here we will use the notation from Section
\ref{section:big_ree_theory}. We
will refer to Conjectures \ref{pr_cent_invol}, \ref{pr_cent_dihedral_trick},
\ref{pr_centraliser_facts}, \ref{pr_invol_o2_facts}, 
\ref{pr_sz26_facts}, \ref{pr_sz_sz_facts1}, \ref{mult_implies_even_order2},
\ref{pr_bigree_goodelts} and \ref{conj:bigree_trick} simultaneously as the \emph{Big Ree Conjectures}.

With the Big Ree groups, we will only deal with the natural
representation, the last case in Section
\ref{section:algorithm_overview}. We will not attempt any tensor
decomposition, or decomposition of the tensor indecomposables listed
in Section \ref{section:bigree_indecomposables}. This can be partly
justified by the fact that at the present time the only
representation, other than the one of dimension $26$, that it is
within practical limits in the MGRP, is the one of dimension $246$. We
have some evidence that it is feasible to decompose this representation
into the natural representation using the technique known as
\emph{condensation}, see \cite[Section $7.4.5$]{hcgt} and
\cite{lux_condensation}.

The main constructive recognition theorem is Theorem \ref{thm_bigree_constructive_recognition}.

\subsection{Recognition} \label{section:bigree_recognition}

We now consider the question of non-constructive recognition of
$\LargeRee(q)$, so we want to find an algorithm that, given a set $\gen{X} \leqslant \GL(26, q)$, decides whether or not $\gen{X} = \LargeRee(q)$. 

\begin{thm} \label{thm_nonexplicit_recognition}
  There exists a Las Vegas algorithm that, given $\gen{X} \leqslant \GL(26,
  q)$, decides whether or not $\gen{X} = \LargeRee(q)$. The algorithm has expected time complexity $\OR{\sigma_0(\log(q))(\abs{X} + \log(q))}$ field
  operations.
\end{thm}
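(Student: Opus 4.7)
The plan is to proceed in direct analogy with Theorem \ref{thm_ree_standard_recognition} for the small Ree groups, exploiting the definition of $\LargeRee(q)$ as the fixed points of the automorphism $\Psi$ of $\mathrm{F}_4(q)$, together with Proposition \ref{pr_reducible_maximals}, which tells us that every maximal subgroup of $\LargeRee(q)$ other than a subfield subgroup acts reducibly on the natural module.

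First I would verify that $X \subseteq \LargeRee(q)$, by checking each generator $g \in X$ in stages. Since $\LargeRee(q) < \mathrm{F}_4(q) < \mathrm{O}^{-}(26, q) < \SL(26, q)$, I test successively that $\det g = 1$, that $g$ preserves the quadratic form $Q^{*}$ (equivalently, the associated bilinear form $\beta^{*}$ with the transvection condition on the middle component), that $g$ preserves the multiplication of the Albert algebra (precomputing a generator of the one-dimensional $\Hom_G(M \otimes M, M)$ suffices, as in the small Ree case), and finally that $\Psi(g) = g$ where $\Psi$ is computed by the explicit recipe of \cite{elementary_ree}. Each of these checks, for constant matrix degree $26$, uses $\OR{1}$ field operations per generator, so this stage uses $\OR{\abs{X}}$ field operations.

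Next I would certify that $\gen{X}$ is not contained in a maximal subgroup of $\LargeRee(q)$. By Proposition \ref{pr_reducible_maximals}, the only maximals acting irreducibly on the natural $26$-dimensional module are the subfield subgroups $\LargeRee(s)$. Hence it suffices to apply the MeatAxe (Section \ref{section:meataxe}) to decide absolute irreducibility of the natural module, returning \texttt{false} if the module is reducible, and then apply the subfield algorithm (Section \ref{section:smallerfield}) to decide whether $\gen{X}$ can be written over a proper subfield, returning \texttt{false} in that case. If both tests pass, return \texttt{true}.

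Since the matrix degree is constant, the MeatAxe uses expected $\OR{\abs{X}}$ field operations and the smaller-field test uses expected $\OR{\sigma_0(\log(q))(\abs{X} + \log(q))}$ field operations, giving the claimed overall complexity. Both subroutines are Las Vegas, and the membership-in-$\LargeRee(q)$ test is deterministic, so the combined algorithm is Las Vegas. There is no serious obstacle here: the only mild subtlety is ensuring that Proposition \ref{pr_reducible_maximals} really does cover all maximal subgroups other than the subfield ones, but this is exactly what that proposition asserts via the classification of maximals in \cite{malle_bigree}, so the reduction to irreducibility plus the subfield test is complete.
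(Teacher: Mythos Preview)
Your proposal is correct and follows essentially the same approach as the paper's proof: first verify membership of each generator in $\LargeRee(q)$ via the chain $\mathrm{O}^{-}(26,q) \supset \mathrm{F}_4(q) \supset \LargeRee(q)$ (form preservation, Albert algebra multiplication, fixed by $\Psi$), then use Proposition~\ref{pr_reducible_maximals} to reduce properness to an irreducibility test plus the subfield test. The only cosmetic difference is that the paper checks the Jordan algebra multiplication directly from the table in \cite{elementary_ree} rather than via $\Hom_G(M\otimes M, M)$.
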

\begin{proof}
Let $G = \LargeRee(q)$, with natural module $M$. The algorithm proceeds as
follows:
\begin{enumerate}
\item Determine if $X \subseteq G$ and return \texttt{false} if
  not. All the following steps must succeed in order to conclude that a given $g
  \in X$ also lies in $G$.
\begin{enumerate}
\item Determine if $g \in \mathrm{O}^{-}(26, q)$, which is true if $\det g = 1$
  and if $g Q g^T = Q$, where $Q$ is the matrix corresponding to the quadratic form $Q^{*}$ and where $g^T$ denotes the transpose of $g$.
\item Determine if $g \in \mathrm{F}_4(q)$, which is true if $g$ preserves the
  exceptional Jordan algebra multiplication. This is easy using the multiplication table given in \cite{elementary_ree}.
\item Determine if $g$ is a fixed point of the automorphism of $\mathrm{F}_4(q)$ which defines $\LargeRee(q)$. By \cite{elementary_ree}, computing the automorphism amounts to taking a
  submatrix of the exterior square of $g$ and then replacing each matrix
  entry $x$ by $x^{2^m}$.
\end{enumerate}

\item If $\gen{X}$ is not a proper subgroup of $G$, or
  equivalently if $\gen{X}$ is not contained in a maximal subgroup,
  return \texttt{true}. Otherwise return \texttt{false}. By Proposition \ref{pr_reducible_maximals}, it is
sufficient to determine if $\gen{X}$ cannot be written over a smaller
field and if $\gen{X}$ is irreducible. This can be done using the Las Vegas algorithms from Sections \ref{section:meataxe} and \ref{section:smallerfield}.
\end{enumerate}

Since the matrix degree is constant, the complexity of the first step of the algorithm is $\OR{1}$
field operations. For the same reason, the complexity of the algorithms from Sections \ref{section:meataxe} and \ref{section:smallerfield} is $\OR{\sigma_0(\log(q))( \abs{X} + \log(q))}$ field operations.
Hence the expected time complexity is as stated.
\end{proof}

\subsection{Finding elements of even order}
\label{section:even_order_elements} In constructive recognition and membership testing of $\LargeRee(q)$, the essential
problem is to find elements of even order, as $\SLP$s in the given
generators. Let $G = \LargeRee(q) = \gen{X}$. We begin with an overview of the method. The matrix degree is constant here, so we set $\xi = \xi(26)$.

Choose random $a \in G$ of order $q - 1$, by choosing a
random element of order $(q - 1)(q + t + 1)$ and powering up. By
Proposition \ref{pr_random_torus_elt} it is easy to find such
elements, and by Proposition \ref{pr_diagonal_elt}, we can diagonalise
$a$ and obtain $c \in \GL(26, q)$ such that $a^c = \delta = h(\lambda, \mu)$
for some $\lambda, \mu \in \F_q^{\times}$.

Now choose random $b \in G$. Let $B = b^c$ and let $A(u, v)$ be a
diagonal matrix of the same form as $h(\lambda, \mu)$, where $\lambda$
and $\mu$ are replaced by indeterminates $u$ and $v$, so $A(u, v)$ is
a matrix over the function field $\F_q(u, v)$. 

For any $r, s \in \F_q^{\times}$, such that $r^t = s$, the matrix $(A(r, s) B)^{c^{-1}} \in
\gen{a} b$. Hence by Conjecture \ref{mult_implies_even_order2} if we can
find $r, s$ such that $r^t = s$ and $A(r, s) B$ has the eigenvalue $1$ with
multiplicity $6$, then with high probability $A(r, s) B$ will have even order.

\begin{pr} \label{conj:bigree_trick}
Assume Conjecture \ref{conj_bigree_badcosets}. For every $b \in G \setminus \Norm_G(\gen{a})$ the $6$ lowest coefficients $f_1, \dotsc, f_6 \in \F_q[u, v]$ of the characteristic polynomial of $A(u, v) B - I_{26}$ generate a zero-dimensional ideal.
\end{pr}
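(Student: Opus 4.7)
The plan is to translate the algebraic claim into a geometric one about the variety $V(I) \subseteq \bar{\F}_q^2$, where $I = \gen{f_1, \dotsc, f_6}$, and then contradict Conjecture \ref{conj_bigree_badcosets} by forcing too many torus points onto a positive-dimensional component of $V(I)$. First I would identify $V(I)$ geometrically: it is exactly the locus of $(u, v) \in \bar{\F}_q^2$ for which $A(u, v) B - I_{26}$ has rank at most $20$, equivalently for which $A(u, v) B$ has $1$ as an eigenvalue of algebraic multiplicity at least $6$; this is because the $f_i$ are, up to sign, the coefficients of degrees $0, 1, \dotsc, 5$ in $\det(\lambda I_{26} - (A(u, v) B - I_{26}))$, whose simultaneous vanishing is equivalent to $\lambda = 0$ being a root of multiplicity at least $6$. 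The $q - 1$ torus points $T = \set{(\lambda^k, \lambda^{kt}) : 1 \leqslant k \leqslant q - 1}$ all lie on the irreducible curve $\Gamma = V(v - u^t)$, and under the identification $(r, s) \leftrightarrow c^{-1} A(r, s) c \cdot b$ they parametrize the coset $\gen{a} b$ bijectively. Hence Conjecture \ref{conj_bigree_badcosets} translates into the bound $\abs{T \cap V(I)} \leqslant c$ for the absolute constant $c$ from that conjecture.

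Now suppose for contradiction that $V(I)$ is positive-dimensional; then it contains an irreducible $1$-dimensional component $C$. Since the entries of $A(u, v)$ are Laurent monomials in $u, v$ whose exponents lie in the fixed finite set dictated by Proposition \ref{pr_diagonal_elt}, each $f_i$ has degree bounded by an absolute constant $D$ after clearing denominators by a suitable monomial, and therefore $\deg C \leqslant D$. If $C = \Gamma$, then $T \subseteq C \subseteq V(I)$ forces $\abs{T \cap V(I)} = q - 1 > c$ once $q$ exceeds an absolute bound, giving the desired contradiction immediately.

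The main obstacle is the remaining case $C \neq \Gamma$: here Bezout only yields $\abs{C \cap \Gamma} \leqslant D \cdot t \in \OR{\sqrt{q}}$, which is a priori compatible with the conjecture. Closing this case requires additional input that I would seek in one of two directions. One option is to argue directly from the explicit shape of $A(u, v)$ together with a resultant or Sylvester-matrix computation that the $f_i$ cannot share a common irreducible factor in $\bar{\F}_q[u, v]$ other than a power of $v - u^t$, since any such factor would force a non-generic alignment of the eigenvalues of $A(u, v) B$ that is not realisable off $\Gamma$. The other, more group-theoretic, option is to exploit the $\Norm_G(\gen{a}) \cong \Dih_{2(q-1)} \times \Sz(q)$-action on the coset space: the Weyl involution exchanges $a$ with $a^{-1}$ and thus intertwines the variables through $(u, v) \mapsto (u^{-1}, v^{-1})$, while the commuting $\Sz(q)$-factor supplies a $q^3$-sized family of conjugates of $b$ outside $\Norm_G(\gen{a})$, each with its own associated ideal. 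A spurious component $C$ would then propagate across these conjugate cosets and, aggregated into a single variant $b'$, produce more than $c$ torus points in the corresponding $V(I')$, again contradicting the conjecture applied to $b'$.
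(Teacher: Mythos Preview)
Your analysis is considerably more careful than the paper's own argument, which is two sentences: since $b \notin \Norm_G(\gen{a})$, Conjecture~\ref{conj_bigree_badcosets} bounds the number of solutions, hence the system is zero-dimensional. You have correctly put your finger on why this implication is not immediate: the conjecture only bounds $\abs{T \cap V(I)}$ for the $q-1$ torus points $T \subset \Gamma = V(v - u^t)$, whereas zero-dimensionality concerns all of $\bar{\F}_q^{\,2}$. A positive-dimensional component $C \neq \Gamma$ could in principle meet $T$ in at most $c$ points and still be present; the paper simply does not address this.

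That said, your proposal does not close the gap either. Both routes you sketch for the case $C \neq \Gamma$ are speculative. The first would require an explicit structural argument that the $f_i$ share no common factor off $\Gamma$, which you do not carry out. The second tries to propagate a spurious component across the $\Sz(q)$-conjugates of $b$ and then \emph{aggregate} the resulting torus hits into a single coset $\gen{a} b'$, but there is no mechanism offered for why the component should persist under conjugation, nor for how hits in distinct cosets could be collected into one. So you have diagnosed a genuine subtlety the paper glosses over, but not resolved it. It is worth noting that the paper itself seems aware of this: despite the \texttt{Proposition} label, the result carries the tag \texttt{conj:bigree\_trick} and is explicitly listed among the \emph{Big Ree Conjectures}, so the two-line ``proof'' is best read as a heuristic reduction rather than a complete argument.
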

\begin{proof}
  Since $b$ does not normalise $\gen{a}$, by Conjecture
  \ref{conj_bigree_badcosets}, there must be a bounded number of
  solutions. Hence the system must be zero-dimensional.
\end{proof}

Finally we can solve the discrete logarithm problem and find an integer
$k$ such that $\delta^k = A(r, s)$. Then $\delta^k B$ has even order, and
therefore also $a^k b$ has even order. Since $a$ and $b$ are
random, we obtain an element of even order as an $\SLP$ in $X$. The algorithm for finding elements of even order is given formally as Algorithm \ref{alg:find_even_order_elt}.

\begin{lem} \label{lem_interpolation} Assume Conjecture \ref{conj_bigree_badcosets}. There exists a Las Vegas
  algorithm that, given the matrices $A(u, v)$ and $B$, finds $r, s
  \in \F_q^{\times}$ such that $r^t = s$ and $A(r, s) B$ has $1$ as an
  eigenvalue of multiplicity at least $6$. The algorithm has
  expected time complexity $\OR{\log q}$ field operations.
\end{lem}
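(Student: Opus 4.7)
The plan is to recast the multiplicity condition as a two-variable polynomial system over $\F_q$ and then invoke Theorem~\ref{thm_poly_eqns_2vars}. First I would form $f_1, \dotsc, f_6 \in \F_q[u, v]$ as the six lowest coefficients (in $\alpha$) of the characteristic polynomial $\chi(\alpha) = \det(\alpha I_{26} - (A(u, v) B - I_{26}))$. Having $1$ as an eigenvalue of multiplicity at least $6$ for $A(r, s) B$ is equivalent to $\alpha^6 \mid \chi(\alpha)$ after the specialisation $(u, v) \mapsto (r, s)$, which is precisely $f_1(r, s) = \dotsb = f_6(r, s) = 0$. Because $A(u, v)$ is diagonal with each diagonal entry a Laurent monomial in $u, v$ of bounded exponents (cf.~Proposition~\ref{pr_diagonal_elt}), the entries of $A(u, v) B - I_{26}$ are Laurent polynomials of bounded $(u, v)$-bidegree. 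After clearing the fixed denominators in $u$ and $v$, each $f_i$ becomes an ordinary polynomial whose bidegree is bounded by an absolute constant, and the entire list $f_1, \dotsc, f_6$ can be expanded symbolically in $\OR{1}$ field operations.

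By Proposition~\ref{conj:bigree_trick}, the ideal $I = \gen{f_1, \dotsc, f_6} \trianglelefteqslant \F_q[u, v]$ is zero-dimensional, so Theorem~\ref{thm_poly_eqns_2vars} supplies a Las Vegas algorithm that computes the affine variety $V(I) \subset \F_q^2$ in expected $\OR{\log q}$ field operations, since $k = 6$ and the relevant bidegrees are all $\OR{1}$; in particular $V(I)$ has cardinality bounded by an absolute constant.

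Finally, for each candidate $(r, s) \in V(I)$ I would verify the constraint $s = r^t$ by repeated squaring, at a cost of $\OR{\log t} = \OR{\log q}$ field operations per candidate. If a valid pair is found it is returned; otherwise the algorithm reports failure, in which case the outer loop of the main procedure selects a fresh $b$. Summing the steps yields expected complexity $\OR{\log q}$ field operations, and the algorithm inherits the Las Vegas property from Theorem~\ref{thm_poly_eqns_2vars}. The essential technical input is Proposition~\ref{conj:bigree_trick}: without zero-dimensionality one cannot appeal to the two-variable solver, and the naive alternative of substituting $v = u^t$ into $f_1, \dotsc, f_6$ and solving univariately would produce polynomials of degree $\OR{t} = \OR{\sqrt{q}}$, well outside the $\OR{\log q}$ target.
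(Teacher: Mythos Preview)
Your proposal is correct and follows the same overall strategy as the paper: reduce the multiplicity-$6$ condition to the vanishing of the six lowest coefficients of the (shifted) characteristic polynomial, invoke Proposition~\ref{conj:bigree_trick} for zero-dimensionality, and solve via Theorem~\ref{thm_poly_eqns_2vars}. The one genuine difference is in how the bivariate polynomials $f_1,\dotsc,f_6$ are obtained. You compute them by direct symbolic expansion, arguing (correctly) that because the matrix size is $26$ and the bidegrees of all intermediate Laurent polynomials are bounded by absolute constants, the whole calculation costs $\OR{1}$ field operations. The paper instead \emph{interpolates}: it evaluates the characteristic polynomial of $A(e_k,f_k)B$ numerically at $273=(2\cdot 10+1)(2\cdot 6+1)$ sample pairs $(e_k,f_k)\in\F_q^2$ and recovers the monomial coefficients of each $a_i$ by solving a fixed-size linear system, the bounds $-10\le z_{ij}\le 10$ and $-6\le w_{ij}\le 6$ coming from summing the positive $u$- and $v$-exponents on the diagonal of $h(\lambda,\mu)$. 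Both routes cost $\OR{1}$ field operations here; the interpolation is what gives the lemma its label and is the implementation-friendly choice, since it avoids symbolic bivariate arithmetic entirely in favour of repeated numerical characteristic-polynomial computations over $\F_q$. Your explicit check of $s=r^t$ by repeated squaring and the explicit failure branch are details the paper leaves implicit.
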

\begin{proof}
  If we can find the characteristic polynomial $f(x) \in \F_q(u,
  v)[x]$ of $A(u, v) B$, then the condition we want to impose is that
  $1$ should be a root of multiplicity $6$, or equivalently that $y^5$
  should divide $g(y) = f(y + 1)$.

  Hence we obtain $6$ polynomial equations in $u$ and $v$ of bounded
  degree. By Proposition \ref{conj:bigree_trick}, we can use Theorem \ref{thm_poly_eqns_2vars} to find the possible values for $r$ and $s$. 

  Thus it only remains to find $f(x)$, which has the form
\begin{equation}
f(x) = a_n x^n + \dotsb + a_1 x + a_0
\end{equation}
where $a_i \in \F_q(u, v)$ and $0 \leqslant i \leqslant n \leqslant 26$. Recall that $A(u,
v)$ is diagonal of the same form as $h(\lambda, \mu)$. This implies
that in an echelon form of $A(u, v) B$, the diagonal has the form
$A(u, v)D$ for some diagonal matrix $D \in \GL(26, q)$. We obtain
$f(x)$ by multiplying these diagonal elements, and since the sum of
the positive powers of $u$ on the diagonal is $10$, and the sum of the
positive powers of $v$ on the diagonal is $6$, each $a_i$ has the form
\begin{equation}
a_i = \sum_j c_{ij} u^{z_{ij}} v^{y_{ij}}
\end{equation}
where each $c_{ij} \in \F_q$, $-10 \leqslant z_{ij} \leqslant 10$ and
$-6 \leqslant w_{ij} \leqslant 6$.

Because of these bounds on the exponents $z_{ij}$ and $w_{ij}$, we can
find the coefficients $c_{ij}$, and hence the coefficients $a_i$ and
$f(x)$, using interpolation. Each $c_{ij}$ is uniquely determined by
at most $(2 \cdot 10 + 1)(2 \cdot 6 + 1) = 273$ values of $u,v$ and
the corresponding value of $a_i$.

Therefore, choose $273$ random pairs $(e_k, f_k) \in \F_q \times
\F_q$. For each pair, calculate the characteristic polynomial of
$A(e_k, f_k) B$, thus obtaining the corresponding values of the
coefficients $a_i$. Finally perform the interpolation by solving $n$
linear systems with $273$ equations and variables.

It is clear that the algorithm is Las Vegas, and the dominating term
in the time complexity is the root finding of univariate polynomials
 of bounded degree over $\F_q$. 
\end{proof}

\begin{figure}[hb]
\begin{codebox}
\refstepcounter{algorithm}
\label{alg:find_even_order_elt}
\Procname{\kw{Algorithm} \ref{alg:find_even_order_elt}: $\proc{FindEvenOrderElement}(X)$}
\li \kw{Input}: $\gen{X} \leqslant \GL(26, q)$ such that $\gen{X} \cong \LargeRee(q)$.
\li \kw{Output}: An element of $\gen{X}$ of even order, expressed as an $\SLP$ in $X$.
\li \Comment{\proc{FindElementInCoset} is given by Lemma \ref{lem_interpolation}}
\li \Repeat
\li         \Repeat
\li             $h := \proc{Random}(\gen{X})$
\li         \Until $\abs{h} \mid (q - 1)(q + t + 1)$
\li         $a := h^{q + t + 1}$
\li         $\delta, c := \proc{Diagonalise}(a)$
\li         \Comment{Now $a^c = \delta = h(\lambda, \mu)$ where $\lambda, \mu \in \F_q^{\times}$ and $\mu = \lambda^t$}
\li     \Repeat
\li         \Repeat
\li         \Repeat
\li         $b := \proc{Random}(\gen{X})$
\li         \Until $b \notin \Norm_G(\gen{a})$ \label{alg:test_bad_coset}
\li         $(\id{flag}, r, s) := \proc{FindElementInCoset}(b^c)$
\li         \Until $\id{flag}$ \label{alg:find_elt_in_coset}
\li         \Comment{Now $r^t = s$ and $A(r, s) b^c$ has $1$ as a
  $6$-fold eigenvalue}
\li     \Until $\abs{A(r, s) b^c}$ is even \label{alg:even_order_test}
\li     $k := \proc{DiscreteLog}(\delta_{2, 2}, r)$
\li \Until $k > 0$ \label{alg:even_order_discrete_log}
\li \Return $a^k b$
\end{codebox}
\end{figure}

\begin{lem} \label{lem_even_order_cosets} Assume Conjectures \ref{pr_bigree_goodelts} and \ref{conj_bigree_badcosets}. Let $a \in G$ be such that
  $\abs{a} = q - 1$ and $a$ is conjugate to some $h(\lambda, \mu)$
  with $\lambda^t = \mu \in \F_q^{\times}$. The proportion of $b \in
  G \setminus \Norm_G(\gen{a})$, such that $\gen{a} b$ contains an element with $1$ as an
  eigenvalue of multiplicity $6$, is bounded below by a constant $c_4
  > 0$.
\end{lem}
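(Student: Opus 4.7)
The plan is a double-counting argument for the set $\mathcal{E} \subseteq G$ of elements having $1$ as an eigenvalue of multiplicity (at least) $6$.

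First I would count $\mathcal{E}$ globally. By Conjecture \ref{pr_bigree_goodelts}, $|\mathcal{E}|/|G| = c_3$ with $c_3$ of order $1/q$; reading that Conjecture as supplying also a matching lower bound, there is an absolute $c' > 0$ with $|\mathcal{E}| \geq c' |G|/q$. By Conjecture \ref{conj_bigree_badcosets}, $\Norm_G(\gen{a}) \cong \Dih_{2(q-1)} \times \Sz(q)$, so $|\Norm_G(\gen{a})| = 2(q-1) \cdot q^2 (q^2 + 1)(q-1)$, which is $\OR{q^6}$. Since $|G|$ is of order $q^{26}$, the quantity $|\Norm_G(\gen{a})|$ is negligible compared with $|G|/q \sim q^{25}$, so for $q$ above some absolute threshold one has $|\mathcal{E} \setminus \Norm_G(\gen{a})| \geq (c'/2) |G|/q$, the finitely many small values of $q$ being absorbed into the final constant.

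Next I would distribute $\mathcal{E} \setminus \Norm_G(\gen{a})$ among the cosets of $\gen{a}$. The cosets $\gen{a}b$ for $b \in G \setminus \Norm_G(\gen{a})$ partition $G \setminus \Norm_G(\gen{a})$ into $M = (|G| - |\Norm_G(\gen{a})|)/(q - 1)$ pieces, each of size $q - 1$, and by Conjecture \ref{conj_bigree_badcosets} each such coset contains at most the absolute constant $c$ elements of $\mathcal{E}$. If $N$ is the number of these cosets that meet $\mathcal{E}$, then $Nc \geq |\mathcal{E} \setminus \Norm_G(\gen{a})| \geq (c'/2)|G|/q$, while $M \leq |G|/(q-1)$, so $N/M \geq c'(q-1)/(2cq)$, which is bounded below by a positive absolute constant.

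Finally, because the property ``$\gen{a}b$ meets $\mathcal{E}$'' depends only on the coset of $b$ modulo $\gen{a}$ and all such cosets have the same size $q - 1$, the proportion of $b \in G \setminus \Norm_G(\gen{a})$ with the stated property equals $N/M$, giving the desired constant $c_4 > 0$. The only delicate point is the interpretation of Conjecture \ref{pr_bigree_goodelts}: the argument requires not only the upper bound $c_3 \in \OR{1/q}$ but also a matching lower bound of order $1/q$ on the proportion of good elements, which is the natural (and, in context, intended) reading; once this is granted, the double count is forced.
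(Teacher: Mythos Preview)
Your proposal is correct and follows essentially the same double-counting argument as the paper: bound the total number of good elements below using Conjecture~\ref{pr_bigree_goodelts}, bound the number per coset above (the paper cites the bounded number of candidates from the algorithm in Lemma~\ref{lem_interpolation}, whereas you invoke Conjecture~\ref{conj_bigree_badcosets} directly, which is cleaner given the stated hypotheses), and divide. You are in fact slightly more careful than the paper, which does not explicitly subtract off the contribution from $\Norm_G(\gen{a})$ and which, like you, tacitly reads $c_3 \in \OR{1/q}$ as a two-sided estimate.
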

\begin{proof}
  Given such a coset $\gen{a} b$, from the proof of Lemma
  \ref{lem_interpolation} we see that our algorithm constructs a
  bounded number $d_1$ of candidates of elements of the required type
  in the coset.

  Let $c$ be the number of cosets containing an element of the
  required type. By Conjecture \ref{pr_bigree_goodelts}, the total
  number of elements of the required type is $c_3 \abs{G}$. Hence $c$
  is minimised if all the $c$ cosets contain $d_1$ such elements, in
  which case $c d_1 = c_3 \abs{G}$. Thus $c \geqslant c_3 \abs{G} /
  d_1$ and the proportion of cosets is $c (q - 1) / \abs{G} \geqslant
  c_3 (q - 1) / d_1$, which is bounded below by a constant $c_4 > 0$
  since $c_3 \in \OR{1/q}$.
\end{proof}

\begin{thm} \label{thm_find_even_order_elt}
Assume Conjectures
  \ref{conj_bigree_badcosets}, \ref{pr_bigree_goodelts} and \ref{mult_implies_even_order2}, and an oracle for the discrete logarithm
  problem in $\F_q$. Algorithm \ref{alg:find_even_order_elt} is a Las
  Vegas algorithm with expected time complexity $\OR{(\xi + \log(q)\log\log(q)  + \chi_D(q))
    \log\log(q)}$ field operations. The length of the returned $\SLP$ is $\OR{\log \log(q)}$.
\end{thm}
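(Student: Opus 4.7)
The plan is to separately verify correctness, analyse the running time, and bound the $\SLP$ length, invoking the Conjectures and prior results stated above.

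\textbf{Correctness.} Suppose the algorithm reaches the return statement. By construction, $a^c = \delta = h(\lambda, \mu)$, so conjugating the returned element by $c$ gives $(a^k b)^c = \delta^k b^c$; since $\delta^k = A(r, s)$ by the discrete-logarithm step (which requires $k > 0$), this equals $A(r, s) b^c$. The inner loop at line \ref{alg:even_order_test} guarantees that $A(r, s) b^c$ has even order, and conjugation preserves order, so $a^k b$ has even order. Because $a, b$ are themselves obtained as $\SLP$s in $X$ and only exponentiation and multiplication are used, the output is an $\SLP$ in $X$. Hence the algorithm is Las Vegas.

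\textbf{Complexity.} By Proposition \ref{pr_random_torus_elt}, the expected number of random selections needed to obtain $h$ with $\abs{h} = (q - 1)(q + t + 1)$ is $\OR{\log \log (q)}$; pseudo-order computation per selection costs $\OR{\log(q)\log\log(q)}$, giving $\OR{(\xi + \log(q) \log\log(q))\log\log(q)}$ for the inner loop that finds $h$. Once $\abs{h} = (q - 1)(q + t + 1)$, Proposition \ref{pr_diagonal_elt} asserts that $a = h^{q+t+1}$ has order exactly $q - 1$ and is conjugate to some $h(\lambda, \mu)$ with $\mu = \lambda^t$; the required diagonalising matrix $c$ is found by a univariate root computation via Theorem \ref{thm_solve_univariate_polys}, costing $\OR{\log(q)}$ field operations. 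Since $\abs{\Norm_G(\gen{a})}$ is negligible compared to $\abs{G}$, the test at line \ref{alg:test_bad_coset} fails only with probability $\OR{1/q^{20}}$, so we may assume it succeeds on the first try. By Lemma \ref{lem_even_order_cosets}, the proportion of cosets $\gen{a}b$ containing an element with $1$ as a $6$-fold eigenvalue is bounded below by the constant $c_4$; combined with Lemma \ref{lem_interpolation}, which finds (if one exists) a witness $(r, s)$ with $r^t = s$ using $\OR{\log(q)}$ field operations, the expected cost of the loop terminating at line \ref{alg:find_elt_in_coset} is $\OR{\xi + \log(q)}$. By Conjecture \ref{mult_implies_even_order2}, the element $A(r, s) b^c$ actually has even order with probability bounded below by the constant $c_2$, so the loop at line \ref{alg:even_order_test} runs $\OR{1}$ times in expectation. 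Finally, since $\abs{a} = q - 1$ we have $r \in \gen{\delta_{2, 2}}$, so the discrete-logarithm call at cost $\OR{\chi_D(q)}$ returns $k > 0$ and the outermost loop is executed $\OR{1}$ times in expectation. Adding up the dominant contributions yields the claimed bound $\OR{(\xi + \log(q) \log\log(q) + \chi_D(q)) \log \log (q)}$.

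\textbf{SLP length.} The element $h$ is produced by the random element oracle, so its $\SLP$ has length $\OR{n}$ where $n$ is the total number of random elements selected; by the complexity analysis this is $\OR{\log \log (q)}$. Forming $a = h^{q + t + 1}$, the power $a^k$, a single multiplication by $b$ (itself an $\SLP$ of length $\OR{\log\log(q)}$), and conjugations are all single $\SLP$ operations, so the returned $\SLP$ has length $\OR{\log \log (q)}$.

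\textbf{Main obstacle.} The only genuinely delicate points are the invocations of the Conjectures: Conjecture \ref{mult_implies_even_order2} (positive proportion of good-eigenvalue elements in the coset have even order), Conjecture \ref{pr_bigree_goodelts} (density of elements with $1$ as a $6$-fold eigenvalue), and Conjecture \ref{conj_bigree_badcosets} (bounded intersection with bad cosets), which together make Lemma \ref{lem_even_order_cosets} and Proposition \ref{conj:bigree_trick} available. Everything else is a routine application of random element estimates and the interpolation procedure of Lemma \ref{lem_interpolation}.
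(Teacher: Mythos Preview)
Your proof is correct and follows essentially the same approach as the paper's: verify correctness directly, invoke Proposition~\ref{pr_random_torus_elt} for the cost of finding $h$, Proposition~\ref{pr_diagonal_elt} for the diagonalisation, Lemma~\ref{lem_even_order_cosets} (hence Lemma~\ref{lem_interpolation}) for the inner coset search, Conjecture~\ref{mult_implies_even_order2} for the even-order test, and observe that the discrete-log step succeeds whenever $\abs{a}=q-1$. Your write-up is simply more detailed than the paper's terse version. One minor imprecision worth noting (which the paper shares): the algorithm only tests that the pseudo-order of $h$ \emph{divides} $(q-1)(q+t+1)$, so the outermost loop is not literally $\OR{1}$ iterations but rather ``as many accepted $h$'s as needed until one has exact order''; since the total number of random selections is $\OR{\log\log(q)}$ either way, the stated complexity is unaffected.
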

\begin{proof}
  By Proposition \ref{pr_diagonal_elt}, $a$ is conjugate to some
  $h(\lambda, \mu)$ and by Proposition \ref{pr_random_torus_elt}, we
  can find $h$ using expected $\OR{\xi \log\log(q)}$ field operations.
  The test at line \ref{alg:test_bad_coset} is easy since $b$ either
  centralises or inverts $a$. Furthermore, by Lemma
  \ref{lem_even_order_cosets}, the test at line
  \ref{alg:find_elt_in_coset} will succeed with high probability and
  by Conjecture \ref{mult_implies_even_order2}, the test at line
  \ref{alg:even_order_test} will succeed with high probability. The
  test at line \ref{alg:even_order_discrete_log} can only fail if
  $\abs{a}$ is a proper divisor of $q - 1$, which happens with low
  probability.

  Hence by Lemma \ref{lem_interpolation}, the algorithm is Las Vegas
  and the time complexity is as stated. Clearly, the length of the $\SLP$ of
  the returned element is the same as the length of the $\SLP$ of $h$.
\end{proof}

\begin{rem} \label{rem_find_even_order_elt}
  If we are given $g \in \gen{X} \cong \LargeRee(q)$, then a trivial
  modification of Algorithm \ref{alg:find_even_order_elt} finds an
  element of $\gen{X}$, of even order, of the form $hg$ for some $h \in
  \gen{X}$. If we also have an $\SLP$ of $g$ in $X$, then we will
  obtain $hg$ as $\SLP$, otherwise we will only obtain an $\SLP$ for $h$.
\end{rem}

\begin{pr} \label{pr_trick_involution_class}
Assume Conjecture \ref{mult_implies_even_order2}. With probability $1 - \OR{1/q}$, the element returned by Algorithm \ref{alg:find_even_order_elt} powers up to an involution of class $2A$.
\end{pr}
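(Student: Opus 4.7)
\emph{Plan.} The returned element $g$ satisfies two properties ensured by the algorithm: $1$ is an eigenvalue of multiplicity $6$ (line \ref{alg:find_elt_in_coset}), and $g$ has even order (line \ref{alg:even_order_test}). My aim is to show that, conditional on these, $g$ powers up to an involution of class $2A$ with probability $1 - \OR{1/q}$.

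First I would identify the ``canonical'' source of such elements. By Proposition~\ref{bigree_suzuki2_gens}, the maximal subgroup $\Sz(q) \wr \Cent_2$ of $\LargeRee(q)$ contains a subgroup $\Sz(q) \times \Sz(q)$ whose first factor $S \cong \Sz(q)$ contains $\varrho$, a representative of class $2A$ (Proposition~\ref{pr_invol_facts}). By Conjecture~\ref{pr_sz_sz_facts1}(1), elements of order $4(q-1)$ in $\Sz(q) \times \Sz(q)$ have $1$ as an eigenvalue of multiplicity $6$, and the proportion of such elements in $G$ is $1/(2q)$. A direct computation gives $(x,y)^{2(q-1)} = (x^2, 1)$ for such an element (since $q-1$ is odd and $\lvert x\rvert = 4$), which is a non-trivial involution in the first factor $S$, hence of class $2A$ in $\LargeRee(q)$.

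Next I would apply Conjecture~\ref{mult_implies_even_order2}, which guarantees that a proportion $\geqslant c_2 > 0$ of $b \in G$ yield cosets $\gen{a}b$ whose mult-$6$ elements are all even-order and $2A$-powering. Meanwhile Conjecture~\ref{pr_bigree_goodelts} caps the total proportion of mult-$6$ elements by $c_3 \in \OR{1/q}$, and Conjecture~\ref{conj_bigree_badcosets} (via Proposition~\ref{conj:bigree_trick}) caps the number of mult-$6$ elements per coset (for $b \notin \Norm_G(\gen{a})$) by an absolute constant. Hence the number of cosets contributing \emph{any} mult-$6$ even-order element \emph{not} powering to $2A$ is at most the residual not already consumed by the $2A$-good cosets; translated to a per-iteration probability, this is $\OR{1/q}$.

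Finally the rejection-sampling conclusion: the algorithm keeps iterating until it finds a coset producing a mult-$6$ even-order element. Conditioning on producing an output, the probability that the chosen coset lies in the good set of Conjecture~\ref{mult_implies_even_order2} is at least $c_2 / (c_2 + \OR{1/q}) = 1 - \OR{1/q}$, and the returned element therefore powers up to class $2A$ with this probability.

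\emph{Main obstacle.} The delicate point is sharpening the bad-coset estimate from the trivial $\OR{1}$ to $\OR{1/q}$: one must argue that almost every mult-$6$ even-order element in $G$ comes from the $\Sz(q) \times \Sz(q)$ construction described above, so that the non-$2A$ ``contamination'' is $\OR{|G|/q^2}$ rather than merely $\OR{|G|/q}$. This requires using the structural dichotomy in Proposition~\ref{pr_invol_facts} (two involution classes with distinct centralizer orders) together with the interplay of Conjectures~\ref{pr_bigree_goodelts} and \ref{mult_implies_even_order2}, rather than either conjecture alone.
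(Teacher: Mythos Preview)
The paper's own proof is a single sentence: ``Follows immediately from Conjecture~\ref{mult_implies_even_order2}.'' Nothing more. Your proposal is far more elaborate than what the paper attempts.

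You are right to notice a tension: Conjecture~\ref{mult_implies_even_order2} as literally stated only guarantees that a proportion $\geqslant c_2 > 0$ of $b$ yield cosets whose multiplicity-$6$ elements are all even-order and $2A$-powering, whereas the proposition claims probability $1 - \OR{1/q}$. The paper does not bridge this gap; it simply cites the conjecture and moves on. In the spirit of Section~\ref{section:intro_conj}, the author evidently intends the conjecture to be read as carrying whatever quantitative strength the proposition needs, rather than merely the constant lower bound written down. Your attempt to actually derive the $1 - \OR{1/q}$ bound by combining Conjectures~\ref{pr_sz_sz_facts1}, \ref{pr_bigree_goodelts}, \ref{conj_bigree_badcosets} with~\ref{mult_implies_even_order2} via a rejection-sampling argument is a sensible line of attack, but, as you yourself flag in your ``main obstacle,'' it does not close the gap: to get the bad-coset contribution down from $\OR{1}$ to $\OR{1/q}$ you would need that the non-$2A$ contamination among mult-$6$ even-order elements is $\OR{\abs{G}/q^2}$, and none of the stated conjectures gives this. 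So your argument, while more honest and more structurally informative than the paper's, stalls at exactly the point you identify; the paper simply declines to engage with it.
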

\begin{proof}
Follows immediately from Conjecture \ref{mult_implies_even_order2}.
\end{proof}

\subsection{Constructive membership testing} 
\label{section:bigree_constructive_membership}

The overall method we use for constructive membership testing in $\LargeRee(q)$ is the \emph{Ryba algorithm} described in Section \ref{section:inv_centraliser}.

Since we know that there are only two conjugacy classes of involutions
in $\LargeRee(q)$, and since we know the structure of their centralisers, we can
improve upon the basic Ryba algorithm. When solving constructive
membership testing in the centralisers, instead of applying the Ryba
algorithm recursively, we can do it in a more direct way using Theorem
\ref{thm_psl_recognition} and the algorithms for constructive membership testing in the Suzuki group, described in Section \ref{section:suzuki_main_results}. Involutions of class
$2A$ can be found using Algorithm \ref{alg:find_even_order_elt}, and
Conjecture \ref{pr_invol_o2_facts} give us a method for finding
involutions of class $2B$ using random search. The Ryba algorithm needs to find
involutions of both classes, since it needs to find two involutions
whose product has even order.

As a preprocessing step to Ryba, we can therefore find an involution
of each class and compute their centralisers. In each call to Ryba we
then conjugate the involutions that we find to one of these two
involutions, which removes the necessity of computing involution
centralisers at each call.

\subsubsection{The involution centralisers}

We use the Bray algorithm to find generating sets for the involution centralisers. This algorithm is described in Section \ref{section:inv_centraliser}.

The following results show how to precompute generators and how to
solve the constructive membership problem for a centraliser of an
involution of class $2A$, using our Suzuki group algorithms to constructively
recognise $\Sz(q)$. Analogous results hold for the centraliser of an
involution of class $2B$, using Theorem \ref{thm_psl_recognition} to
constructively recognise $\SL(2, q)$.

\begin{lem} \label{lem_centraliser_comp_series}
  Assume Conjecture \ref{pr_centraliser_facts}, and use its notation. There exists a Las Vegas algorithm that, given $\gen{Y} \leqslant G \leqslant \GL(26, q)$ such that $G \cong \LargeRee(q)$, $S \leqslant \gen{Y} \leqslant \Cent_G(j)$ for an involution $j \in G$ of class $2A$ and $S \cong \Sz(q)$,
  finds a composition series for the natural module $M$ of $\gen{Y}$
  such that the composition factors are ordered as $1, S_4, 1, S_4, 1, S_4^{\psi^t},
  1, S_4, 1, S_4, 1$, and finds the corresponding filtration of $\O2(\gen{Y})$. The algorithm has time complexity $\OR{\abs{Y}}$ field operations.
\end{lem}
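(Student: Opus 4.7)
My plan is to exhibit the desired composition series by combining the MeatAxe (Section \ref{section:meataxe}) with the structural information provided by Conjecture \ref{pr_centraliser_facts}. The conjecture already pins down the dimensions of all submodules of $M\vert_{\gen{Y}}$, which composition factors occur with which multiplicities, and which submodule dimensions are unique. So the strategy is essentially to read the desired series off the submodule lattice, using the MeatAxe as the computational engine and the conjecture as the certificate of correctness.

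First I would run the MeatAxe to decompose $M\vert_{\gen{Y}}$ and locate the unique submodules $U_d$ of dimensions $d \in \set{1,5,6,11,15,20,21,25}$ guaranteed by the conjecture. Their mutual inclusions are forced: if $d < d'$ are two such dimensions and $U_d \not\leqslant U_{d'}$, then $U_d \cap U_{d'}$ and $U_d + U_{d'}$ would produce two submodules of the same dimension $d$ (or $d'$), contradicting uniqueness. This yields the partial flag
\[
0 < U_1 < U_5 < U_6 < U_{11} < U_{15} < U_{20} < U_{21} < U_{25} < M,
\]
which already has the correct dimensions at all but three positions and the correct isomorphism types for all but one quotient.

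To insert the intermediate submodules of dimensions $10$ and $16$, I would proceed canonically rather than enumerating the $q+1$ possibilities. The quotient $U_{11}/U_6$ has dimension $5$ with composition factors $(S_4,1)$; its unique $4$-dimensional submodule, located by the MeatAxe, pulls back to a submodule $W_{10}$ with $U_6 < W_{10} < U_{11}$ and $W_{10}/U_6 \cong S_4$, $U_{11}/W_{10}$ trivial. The same construction applied to $U_{20}/U_{15}$ yields $W_{16}$ with $W_{16}/U_{15}$ trivial and $U_{20}/W_{16} \cong S_4$. To confirm that the 4-dimensional factor $U_{15}/U_{11}$ is the twisted module $S_4^{\psi^t}$ (and not an untwisted $S_4$), I would use the MeatAxe's module isomorphism tester against the natural $S$-module; Conjecture \ref{pr_centraliser_facts} guarantees that the twisted factor occurs exactly once, so this test will confirm the ordering.

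Finally, to produce the filtration of $P = \O2(\gen{Y})$, I would invoke Conjecture \ref{pr_invol_o2_facts} and compute $P_2 = \Phi(P) = [P,P]$ directly from commutators of the given generators, $P_3 = \Zent(P) = \Zent(\Cent_G(j))$, and recover $P_1$ as the preimage in $P$ of the unique one-dimensional $\F_q \gen{Y}$-submodule of $P/P_2$ (paired with the corresponding piece of the module series on $M$ through the known Chevalley structure). The main obstacle is this last identification: one must argue that $P_1$ is canonically recoverable from the module data on $P/P_2$ without random search, since $P/P_2$ has the same composition structure as a piece of $M\vert_{\gen{Y}}$ and so the MeatAxe suffices. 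All MeatAxe calls operate on modules of fixed dimension at most $26$, and every subsidiary computation uses only $\abs{Y}$ generators, giving the required $\OR{\abs{Y}}$ bound and, since the MeatAxe is Las Vegas, a Las Vegas algorithm overall.
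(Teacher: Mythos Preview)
Your overall approach is sound but differs from the paper's in its central device. Rather than trying to locate all the unique-dimension submodules simultaneously, the paper peels the series off from both ends using the bilinear form preserved by $G$. Concretely: compute $\Hom_{\gen{Y}}(M,F_1)$ for a trivial factor $F_1$, whose kernel (by uniqueness of the dimension-$25$ submodule) is $M_{25}$; then set $M_1 = M_{25}^{\perp}$. Next compute $\Hom_{\gen{Y}}(M_{25},F_4)$ for the appropriate $4$-dimensional factor $F_4$, take the kernel to obtain $M_{21}$, and set $M_5 = M_{21}^{\perp}$. Iterating inwards produces the entire series with no need for a separate chain argument or special handling of the non-unique dimensions; the orthogonality pairs top and bottom automatically.

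Two points in your version are loose. First, ``run the MeatAxe and locate the unique submodules $U_d$'' is not quite an algorithm: the MeatAxe returns \emph{one} composition series, and that series may pass through $U_9$ rather than $U_6$ (and through $U_{17}$ rather than $U_{20}$), so you still have to say how $U_6$ and $U_{20}$ are recovered. Your chain argument does not supply this, and in fact its logic is not valid as stated, since $U_d \cap U_{d'}$ need not land at a dimension where uniqueness applies (for instance $U_6$ and $U_9$ are incomparable, with $U_6 \cap U_9 = U_5$). Second, for the filtration of $\O2(\gen{Y})$ you invoke Conjecture~\ref{pr_invol_o2_facts}, which the lemma does not assume and which is unnecessary: once the composition series is in hand and the basis is changed so that $\gen{Y}$ is block-triangular, the filtration of the unipotent radical is read off directly from the block structure, as the paper notes.
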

\begin{proof}
By Proposition \ref{pr_centraliser_facts} the composition factors are as stated, and we just have to order them correctly. 
\begin{enumerate}
\item Find a composition factor of $F_1$ of $M$, such that $\dim F_1 = 1$, and find $H_1 = \Hom_{\gen{Y}}(M, F_1)$. By Conjecture \ref{pr_centraliser_facts}, $M$ has a unique $1$-dimensional submodule, so $\dim H_1 = 1$.
\item Let $M_{25} = \Ker \alpha_1$ where $\gen{\alpha_1} = H_1$. Then $\dim M_{25} = 25$.
\item Let $M_1 = M_{25}^{\perp} \cap M_{25}$ be the orthogonal complement under the bilinear form preserved by $G$, so that $\dim M_1 = 1$.
\item Find a composition factor of $F_4$ of $M_{25}$, such that $\dim F_4 = 4$, and find $H_4 = \Hom_{\gen{Y}}(M_{25}, F_4)$. By Conjecture \ref{pr_centraliser_facts}, $M_{25}$ has a unique $4$-dimensional submodule, so $\dim H_4 = 1$ for one of its $5$ composition factors of dimension $4$.
\item Let $M_{21} = \Ker \alpha_4$ where $\gen{\alpha_4} = H_4$. Then $\dim M_{21} = 21$.
\item Let $M_5 = M_{21}^{\perp} \cap M_{21}$ be the orthogonal complement under the bilinear form preserved by $G$, so that $\dim M_5 = 5$.
\item Now we have four proper submodules $M_1, M_5, M_{21}, M_{25}$ in the composition series that we want to find, and we can obtain the other submodules by continuing in the same way inside $M_{21}$.
\end{enumerate}
The filtration is determined by the composition factors, and
immediately found. Clearly the time complexity is the same as the
MeatAxe, which is $\OR{\abs{Y} }$ field operations.
\end{proof}

\begin{lem} \label{lem_centraliser_generators}
  Assume the Suzuki Conjectures, Conjectures \ref{pr_cent_invol}, \ref{pr_cent_dihedral_trick}, \ref{pr_centraliser_facts} and \ref{pr_invol_o2_facts}, and an oracle for the discrete
  logarithm problem in $\F_q$. There exists a Monte Carlo algorithm with no false positives that,
  given $\gen{Y} \leqslant G \leqslant \GL(26, q)$ such that $G \cong \LargeRee(q)$, $S \leqslant \gen{Y} \leqslant
  \Cent_G(j)$ and $\Zent(\Cent_G(j)) \leqslant \gen{Y}$, where $j \in G$ is an involution of class $2A$ and $S \cong
  \Sz(q)$:
\begin{itemize}
\item decides whether or not $\gen{Y} = \Cent_G(j)$,
\item finds effective homomorphisms $\varphi : \gen{Y} \to \Sz(q)$ and $\pi : \Sz(q) \to \gen{Y}$,
\item finds $u \in G$ such that $\abs{u} \mid q - 1$, $\Cent_G(j) < \gen{Y, u}$ and
  $\gen{Y, u}$ is contained in a maximal parabolic in $G$,
\item finds $\gen{Z} \leqslant G$ such that $\gen{Z} = \O2(\Cent_G(j))$, and $\abs{Z} \in \OR{\log(q)}$.
\end{itemize}
The algorithm has expected time complexity 
\[\OR{\abs{Y} + \log(q)^3 + (\xi + \chi_D(q))
  (\log\log(q))^2}\]
field operations.
\end{lem}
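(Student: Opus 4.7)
The plan is to leverage the module filtration from Lemma \ref{lem_centraliser_comp_series}, which both reveals the $[q^{10}] {:} \Sz(q)$ structure of $\Cent_G(j)$ inside the natural representation and gives explicit coordinates for the layers $P_0 > P_1 > P_2 > P_3 > P_4 = 1$ of Conjecture \ref{pr_invol_o2_facts}. First I would run Lemma \ref{lem_centraliser_comp_series} on $Y$, obtaining a composition series and change-of-basis matrix in $\OR{\abs{Y}}$ field operations. Projecting onto any $4$-dimensional composition factor $F$ defines a homomorphism $\pi_F : \gen{Y} \to \GL(4, q)$ whose kernel is exactly $\O2(\gen{Y})$ by Conjecture \ref{pr_centraliser_facts}. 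Applying Theorem \ref{cl_sz_constructive_recognition} to the image $\pi_F(\gen{Y})$ produces an effective isomorphism $\psi$ to the standard $\Sz(q)$; setting $\varphi = \psi \circ \pi_F$ gives the forward isomorphism, and composing $\psi^{-1}$ with evaluation on $Y$ of the $\SLP$s returned by Theorem \ref{cl_sz_constructive_recognition} gives $\pi$. This step dominates the cost at $\OR{\log(q)^3 + (\xi + \chi_D(q))(\log \log(q))^2}$.

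To decide $\gen{Y} = \Cent_G(j)$, I read $\abs{\O2(\gen{Y})}$ off the trivial composition factors in the filtration and accept precisely when $\abs{\O2(\gen{Y})} \cdot \abs{\Sz(q)} = \abs{\Cent_G(j)}$; since Suzuki recognition is the only randomised ingredient that can produce a positive output, the test is one-sided Monte Carlo. For $u$, I sample random $g \in G$ of order $(q-1)(q+t+1)$ using Proposition \ref{pr_random_torus_elt}, set $u = g^{q+t+1}$, and accept when $j^u$ lies in $\Zent(\Cent_G(j)) = P_3$ and $\gen{Y, u}$ still preserves the flag produced by Lemma \ref{lem_centraliser_comp_series}. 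Here $\Zent(\Cent_G(j))$ is accessible because the hypothesis $\Zent(\Cent_G(j)) \leqslant \gen{Y}$ identifies $P_3$ as the bottom trivial layer of the $\O2(\gen{Y})$-filtration. Conjecture \ref{pr_cent_dihedral_trick} guarantees that a constant fraction of candidates pass, and flag-preservation certifies containment in the $2A$-maximal parabolic of Proposition \ref{bigree_parabolic_gens}.

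For $Z$, Conjecture \ref{pr_invol_o2_facts}(6)-(7) tells me that elements of $P_0 \setminus P_1$ have order $4$ with square in class $2B$, and elements of $P_1 \setminus P_2$ have order $4$ with square in class $2A$. Random search inside $\gen{Y}$ with these order-and-class filters produces representatives $z_0$ and $z_1$. Because $P_0/P_1$ and $P_2/P_3$ are natural $\Sz(q)$-modules while $P_1/P_2$ and $P_3/P_4$ are $\Sz$-trivial but carry a regular $u$-action as scalar multiplication by a primitive root, I collect the conjugates of $z_0$ under the standard $\Sz$-generators obtained via $\pi$ together with the conjugates $z_1^{u^i}$ for $0 \leqslant i < \log_2(q)$, then augment with the squares and commutators forced by the nilpotency-class-$2$ structure of $P$. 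This yields $\gen{Z} = \O2(\Cent_G(j))$ with $\abs{Z} \in \OR{\log(q)}$, all elements being $\SLP$s in the original $Y$ (and in $u$, itself an $\SLP$ in the $G$-generators).

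The hard part is keeping $\abs{Z}$ within $\OR{\log(q)}$: the $\Sz$-trivial layers $P_1/P_2$ and $P_3/P_4$ each have $\F_2$-dimension $\log_2(q)$, and only a $u$-orbit, not an $\Sz$-orbit, can spread a single representative to an $\F_2$-spanning set, which is why $u$ must be computed before $Z$ and why Conjecture \ref{pr_cent_dihedral_trick} enters the complexity bound. Proving that this spread really occurs reduces to verifying that $\gen{u}$ acts on $P_1/P_2$ and $P_3/P_4$ with regular orbits of length $q-1$, a property built into the $\Sz(q) \times \Cent_{q-1}$ structure of the maximal parabolic in Proposition \ref{bigree_parabolic_gens} and inherited from the fixed-point-free $\Cent_{q-1}$-action on $\O2(H)$ stated in Proposition \ref{pr_invol_facts}.
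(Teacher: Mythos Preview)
Your construction of $\varphi$ and $\pi$ via Lemma \ref{lem_centraliser_comp_series} followed by Suzuki recognition matches the paper. The remaining steps, however, contain real gaps.

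\textbf{Deciding $\gen{Y} = \Cent_G(j)$.} You cannot read $\abs{\O2(\gen{Y})}$ off the module composition series: by Conjecture \ref{pr_centraliser_facts} the constituents of $M\vert_H$ are the same for \emph{every} $H$ with $S \leqslant H \leqslant \Cent_G(j)$, so the filtration of the $26$-dimensional module is identical whether $\O2(\gen{Y})$ has order $q$ or $q^{10}$. The paper instead produces random elements of $\O2(\gen{Y})$ as $y\,(\pi\circ\varphi)(y)^{-1}$ for random $y\in\gen{Y}$, projects four of them to the top layer $N \cong P_0/P_1$, and accepts exactly when these four images are $\F_q$-linearly independent. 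That is the one-sided test.

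\textbf{Finding $u$.} Sampling a random $g\in G$ of order $(q-1)(q+t+1)$ and hoping $u=g^{q+t+1}$ sends $j$ into $\Zent(\Cent_G(j))$ fails: the condition $j^u\in P_3$ picks out a coset of $\Cent_G(j)$ of index $\Theta(q^{11})$ in $G$, so the success probability is exponentially small, and Conjecture \ref{pr_cent_dihedral_trick} does not help since it concerns the proportion \emph{among} elements already conjugating $j$ to a fixed $j'\in P_3$. The paper instead first finds such a $j'$ inside $\gen{Y}$ (via Conjecture \ref{pr_cent_invol}), then uses the dihedral trick to obtain $h$ with $j^h=j'$, and finally sets $u = h\,(\pi\circ\varphi)(h)^{-1}$; this correction forces $u$ to commute with $S$, which is what puts $\gen{Y,u}$ in the parabolic.

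\textbf{Generating $Z$.} Conjugating a single $z_0$ by the three standard Suzuki generators yields at most four elements of $P_0/P_1$, but $P_0/P_1$ has $\F_2$-dimension $4\log_2 q$, so this cannot span. The paper takes the four $\F_q$-independent elements $x_1,\dots,x_4$ from the membership test, adjoins a fifth element $x_5$ hitting $P_1/P_2$, and then spreads \emph{all five} by the powers $u^0,\dots,u^{2m+1}$ (and their squares); it is the $u$-action, not the Suzuki action, that supplies the $\F_q$-to-$\F_2$ descent on every layer.
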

\begin{proof}
The algorithm proceeds as follows:
\begin{enumerate}
\item Find a composition series of the natural module of $\gen{Y}$, as
  in Lemma \ref{lem_centraliser_comp_series}. By projecting to the
  middle composition factor we obtain an effective surjective
  homomorphism $\varphi_1 : \gen{Y} \to S_4$ where $S_4 \leqslant
  \GL(4, q)$ and $S_4 \cong \Sz(q)$. Also obtain an effective
  surjective homomorphism $\rho : \O2(\gen{Y}) \to N$, where $N$ is the
  first non-zero block in the filtration of $\O2(\gen{Y})$
  (\emph{i.e.} $N$ is a vector space). By Conjecture
  \ref{pr_invol_o2_facts}, $\dim N = 4$.

\item Use Theorem \ref{cl_sz_constructive_recognition} to
  constructively recognise $\varphi_1(\gen{Y})$ and obtain an effective
  injective homomorphism $\pi : \Sz(q) \to \gen{Y}$, and an effective
  isomorphism $\varphi_2 : S_4 \to \Sz(q)$. Now $\varphi = \varphi_2 \circ
  \varphi_1$.

\item Find random $g \in \gen{Y}$ such that $\abs{g} = 2l$. By
  Proposition \ref{pr_invol_facts}, the proportion of such elements is
  high. Repeat until $g^l = j^{\prime}$ is of class $2A$ and
  $j^{\prime} \neq j$, which by Conjecture \ref{pr_cent_invol}
  happens with high probability.

\item Using the dihedral trick, find $h \in G$ such that $j^h = j^{\prime}$, $\abs{h} \mid q - 1$ and $\gen{Y, h}$ is reducible. By Conjecture \ref{pr_cent_dihedral_trick}, these elements are easy to find. 

\item Let $u = h ((\pi \circ \varphi)(h))^{-1}$, so that $u$ commutes
  with $S$.  Now $\gen{\Cent_G(j), u}$ is contained in a maximal
  parabolic, and $\Cent_G(j)$ is a proper subgroup, since $u \notin
  \Cent_G(j)$, but $\gen{Y, u}$ is reducible and hence a proper
  subgroup of $G$.

\item Diagonalise $u$ to obtain $\varsigma(a, b)$ for some $a, b \in \F_q^{\times}$. Repeat the two previous steps (find another $h$) if $a$ or $b$ lie in a proper subfield of $\F_q$. The probability that this happens is low, since $\abs{h} = q - 1$ with high probability.

\item Find random $y_1, \dotsc, y_4 \in \gen{Y}$, and let $x_i = y_i
  ((\pi \circ \varphi)(y_i))^{-1}$ for $i = 1, \dotsc, 4$. Then $x_1,
  \dotsc, x_4$ are random elements of $\O2(\gen{Y})$. Return
  \texttt{false} if $\rho(x_1), \dotsc, \rho(x_4)$ are not linearly
  independent elements of $N$, since then with high probability
  $\gen{Y} < \Cent_G(j)$. Clearly, if the elements are linearly
  independent, then $\gen{Y} = \Cent_G(j)$ so the algorithm has no
  false positives.

\item Find random $x \in \gen{Y}$ such that $\abs{x} = 4k$. By Conjecture \ref{pr_cent_invol}, with high probability $x_5 = x^k \in \O2(\gen{Y})$ and $x_5^2 \in \Zent(\gen{Y})$. Repeat until this is true.

\item Finally let
\[ Z = \bigcup_{i = 0}^{2m + 1} \bigcup_{j = 1}^5 \set{x_j^{u^i}, (x_j^2)^{u^i}} \]
\end{enumerate}

Clearly, the dominating term in the running time is Theorem \ref{cl_sz_constructive_recognition} and the computation of $\pi$, so the expected time complexity is as stated.
\end{proof}

\begin{lem} \label{lem_bigree_find_centraliser}
  Assume the Suzuki Conjectures, Conjectures \ref{pr_cent_dihedral_trick}, \ref{pr_centraliser_facts} and \ref{pr_invol_o2_facts} and an oracle for the discrete logarithm problem in $\F_q$. There
  exists a Las Vegas algorithm that, given $G = \gen{X} = \LargeRee(q)$
  and an involution $j \in \gen{X}$ of class $2A$, as an $\SLP$ in $X$ of length $\OR{n}$,
\begin{itemize}
\item finds $\gen{Y} \leqslant G$ such that $\gen{Y} = \Cent_G(j)$,
\item finds effective inverse isomorphisms $\varphi : \gen{Y} \to \Sz(q)$ and $\pi : \Sz(q) \to \gen{Y}$,
\item finds $u \in G$ such that $\abs{u} \mid q - 1$, $\Cent_G(j) < \gen{Y, u}$ and $\gen{Y, u}$ is contained in a maximal parabolic in $G$,
\item finds $\gen{Z} \leqslant G$ such that $\gen{Z} = \O2(\Cent_G(j))$, and $\abs{Z} \in \OR{\log(q)}$.
\end{itemize}
The elements $Y, Z, u$ are found as $\SLP$s in $X$ of length $\OR{n}$. The algorithm has expected time complexity $\OR{\abs{Y} + \log(q)^3 + (\xi + \chi_D(q)) (\log\log(q))^2}$ field operations.
\end{lem}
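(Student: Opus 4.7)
The plan is to reduce this lemma to Lemma \ref{lem_centraliser_generators} by supplying a suitable generating set $Y$ for a subgroup lying between the copy of $\Sz(q)$ guaranteed by Proposition \ref{pr_invol_facts} and $\Cent_G(j)$, together with $\Zent(\Cent_G(j))$. The main tool for producing $Y$ is the Bray algorithm described in Section \ref{section:inv_centraliser}, which, applied to $G = \gen{X}$ and the involution $j$, yields random elements of $\Cent_G(j)$ expressed as $\SLP$s in $X$ of length $\OR{n}$, since the length of each Bray output is a small constant factor times the length of the $\SLP$ for $j$.

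First I would run Bray to produce a short list $Y_0$ of random centraliser elements and form $\gen{Y_0}$. Since $\Cent_G(j) \cong [q^{10}] {:} \Sz(q)$ by Proposition \ref{pr_invol_facts}, and by Proposition \ref{sz_2_generation} two random elements generate $\Sz(q)$ with probability $1 - \OR{\sigma_0(\log q)/q^2}$, a bounded number of Bray elements suffices (in expectation) for the image in $\Sz(q)$ to contain the full Suzuki subgroup. To force $\Zent(\Cent_G(j)) \leqslant \gen{Y_0}$, I would also include the involution $j$ itself, together with one or two extra Bray elements that can be shown, by Conjecture \ref{pr_cent_invol}, to supply central $2$-power pieces (this is also naturally taken care of inside Lemma \ref{lem_centraliser_generators} once the test for the rank of $N$ succeeds). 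I would then feed $Y = Y_0$ into the algorithm of Lemma \ref{lem_centraliser_generators}. Since that algorithm is Monte Carlo with no false positives, if it returns \texttt{false} I would simply append further Bray elements to $Y$ and repeat; each round succeeds with constant probability once the image in $\Sz(q)$ is the whole Suzuki subgroup, so the expected number of rounds is $\OR{1}$.

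Once Lemma \ref{lem_centraliser_generators} returns \texttt{true}, I obtain effective maps $\varphi : \gen{Y} \to \Sz(q)$ and $\pi : \Sz(q) \to \gen{Y}$ (inverse isomorphisms by construction, since $\gen{Y} = \Cent_G(j)$ in that case), the element $u$ with $\abs{u} \mid q-1$ lying in a maximal parabolic beyond $\Cent_G(j)$, and a generating set $Z$ for $\O2(\Cent_G(j))$ with $\abs{Z} \in \OR{\log(q)}$. Every element produced by Bray or by Lemma \ref{lem_centraliser_generators} is built as an $\SLP$ in $X$ whose length is a bounded multiple of $n$ (the Bray outputs of length $\OR{n}$ are combined only by a bounded number of group operations), so $Y$, $Z$ and $u$ each have $\SLP$ length $\OR{n}$ as required.

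The complexity follows by adding the cost of the preprocessing to that of Lemma \ref{lem_centraliser_generators}. Each Bray sample costs $\OR{\xi}$ field operations and we take only $\OR{1}$ of them in expectation. Verifying generation of $\Sz(q)$ through the composition series construction of Lemma \ref{lem_centraliser_comp_series} costs $\OR{\abs{Y}}$ operations, and the dominant term is the constructive recognition of $\Sz(q)$ inside $\gen{Y}$ together with the discrete-logarithm use in Lemma \ref{lem_centraliser_generators}, giving the stated bound $\OR{\abs{Y} + \log(q)^3 + (\xi + \chi_D(q))(\log\log(q))^2}$. The main obstacle, and the reason that the result depends on the Big Ree Conjectures, is controlling the Bray outputs sufficiently to guarantee both that $\gen{Y}$ eventually contains a full $\Sz(q)$ subgroup and that $\Zent(\Cent_G(j))$ is generated; this is precisely where Conjectures \ref{pr_cent_invol} and \ref{pr_centraliser_facts} are used to certify, in Las Vegas fashion, that the Monte-Carlo test in Lemma \ref{lem_centraliser_generators} will succeed after $\OR{1}$ extensions of $Y$.
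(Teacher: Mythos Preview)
Your proposal is essentially correct and follows the same outline as the paper: use the Bray algorithm to obtain probable generators $Y$ for $\Cent_G(j)$, then invoke Lemma \ref{lem_centraliser_generators} to certify $\gen{Y}=\Cent_G(j)$ and extract $\varphi,\pi,u,Z$, iterating on failure.

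The one place the paper is more careful is in checking the \emph{hypotheses} of Lemma \ref{lem_centraliser_generators} before invoking it. That lemma requires $S\leqslant\gen{Y}$ with $S\cong\Sz(q)$ and $\Zent(\Cent_G(j))\leqslant\gen{Y}$; it is only Monte Carlo with no false positives once those preconditions hold. The paper therefore inserts an explicit intermediate step: split the module of $\gen{Y}$ with the MeatAxe, verify it decomposes as in Conjecture \ref{pr_centraliser_facts}, and apply Theorem \ref{thm_sz_conj_recognition} to confirm that the groups on the $4$-dimensional factors really are conjugates of $\Sz(q)$. Only then does it conclude (via Proposition \ref{pr_invol_facts}) that $\Zent(\Cent_G(j))\leqslant\gen{Y}$ and hand off to Lemma \ref{lem_centraliser_generators}. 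Your version instead adjoins $j$ and appeals to Conjecture \ref{pr_cent_invol} to get central pieces; this is plausible but less clean, and note that $\Zent(\Cent_G(j))$ has order $q$, so adjoining $j$ alone is not enough. Otherwise the argument, the $\SLP$-length bookkeeping, and the complexity analysis match.
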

\begin{proof}
The algorithm proceeds as follows:
\begin{enumerate}
\item Use the Bray algorithm to find probable generators $Y$ for $\Cent_G(j)$. 
\item Use the MeatAxe to split up the module of $\gen{Y}$ and verify that it splits up as in Conjecture \ref{pr_centraliser_facts}. Use Theorem \ref{thm_sz_conj_recognition} to verify that the groups acting on the $4$-dimensional submodules are Suzuki groups. Return to the first step if not. It then follows from Proposition \ref{pr_invol_facts} that $\Zent(\Cent_G(j)) \leqslant \gen{Y}$.
\item Use Lemma \ref{lem_centraliser_generators} to determine if $\gen{Y} = \Cent_G(j)$. Return to the first step if not. Since the algorithm of Lemma \ref{lem_centraliser_generators} has no false positives, this is Las Vegas. 
\end{enumerate}
By Proposition \ref{pr_invol_facts}, $\OR{1}$ elements is sufficient
to generate $\Cent_G(j)$ with high probability, so the expected time
complexity is as stated, and the elements of $Y$ will be found as
$\SLP$s of the same length as $j$.

From Lemma \ref{lem_centraliser_generators} we also obtain $u$, $Z$,
$\varphi$ and $\pi$ as needed. We see from its proof that $u$ and $Z$ will be found as $\SLP$s of the same length as $j$. 
\end{proof}

\begin{lem} \label{lem_centraliser_membership}
  There exists a Las Vegas algorithm that, given 
\begin{itemize}
\item $\gen{Y}, \gen{Z} \leqslant G = \gen{X} = \LargeRee(q)$ such that $\gen{Y} = \Cent_G(j)$ and $\gen{Z} = \O2(\gen{Y})$ where $j \in G$ is an involution of class $2A$ and $Y, Z$ are given as $\SLP$s in $X$ of length $\OR{n}$,
\item an effective surjective homomorphism $\varphi : \gen{Y} \to \Sz(q)$,
\item an effective injective homomorphism $\pi : \Sz(q) \to \gen{Y}$,
\item $g \in \GL(26, q)$, 
\end{itemize}
decides whether or not $g \in \gen{Y}$ and if so
  returns an $\SLP$ of $g$ in $X$ of length $\OR{n(\log(q)(\log\log(q))^2 + \abs{Z})}$. The algorithm has expected time complexity
  $\OR{\xi + \log(q)^3 + \abs{Z}}$ field operations.
\end{lem}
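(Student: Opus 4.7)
The plan is to reduce constructive membership in $\gen{Y} = \Cent_G(j)$ to membership in the Suzuki quotient $\gen{Y}/\O2(\gen{Y}) \cong \Sz(q)$ and then in the remaining $2$-group $\O2(\gen{Y}) = \gen{Z}$, exploiting the filtration of Conjecture \ref{pr_invol_o2_facts}.

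First I verify the easy necessary condition $[g,j] = 1$; if it fails, return \texttt{false}. Next I use the effective homomorphism $\varphi$ to compute $s = \varphi(g) \in \Sz(q)$, and apply the Suzuki constructive membership algorithm of Theorem \ref{thm_element_to_slp} to obtain an $\SLP$ $w_1$ for $s$ of length $\OR{\log(q)(\log\log(q))^2}$ in standard generators of $\Sz(q)$. Evaluating $w_1$ via $\pi$ on the corresponding standard generators in $\gen{Y}$, which are themselves given as $\SLP$s in $Y$ and hence in $X$ of length $\OR{n}$, produces an element $y \in \gen{Y}$ with $\varphi(y) = s$, together with an $\SLP$ for $y$ in $X$ of length $\OR{n\log(q)(\log\log(q))^2}$.

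Form $h = gy^{-1}$. If $g \in \gen{Y}$ then $\varphi(h) = 1$, so $h \in \ker\varphi = \O2(\gen{Y}) = \gen{Z}$; I check this by recomputing $\varphi(h)$, and return \texttt{false} if non-trivial. Otherwise it remains to express $h$ as an $\SLP$ of length $\OR{\abs{Z}}$ in $Z$. For this, I work with the exponent-$2$ central series $P = \O2(\gen{Y}) \triangleright P_2 \triangleright 1$ provided by Conjecture \ref{pr_invol_o2_facts}, in which both $P/P_2$ and $P_2$ are elementary abelian of order $q^5$, hence $\F_2$-vector spaces of dimension $5\log_2(q)$. By construction in Lemma \ref{lem_centraliser_generators}, the set $Z$ consists of the conjugates $x_j^{u^i}$ for $j=1,\ldots,5$, $i=0,\ldots,2m+1$ together with the squares $(x_j^2)^{u^i}$: the images of the first family span $P/P_2$ (this is precisely the spanning criterion verified in Lemma \ref{lem_centraliser_generators}, propagated through the $u$-orbit to fill out the $\F_q$-structure), and the images of the second span $P_2$. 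I therefore solve a linear system over $\F_2$ of dimension $\OR{\log q}$ to write the image $hP_2 \in P/P_2$ as a sum of such generators, form the corresponding product $w_2$, replace $h$ by $h w_2^{-1} \in P_2$, and then solve a second $\F_2$-linear system of the same size in $P_2$ to produce a product $w_3$ with $w_3 = h w_2^{-1}$. Combining, $h = w_2 w_3$ is an $\SLP$ of length $\OR{\abs{Z}}$ in $Z$, which lifted through the $X$-$\SLP$s for $Z$ of length $\OR{n}$ yields an $\SLP$ for $g = hy$ in $X$ of length $\OR{n(\log(q)(\log\log(q))^2 + \abs{Z})}$.

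The running time is dominated by the Suzuki membership call, which costs $\OR{\log(q)^3}$ by Theorem \ref{thm_element_to_slp}, by the two $\F_2$ linear solves of size $\OR{\log q}$, which together also contribute $\OR{\log(q)^3}$, and by the final evaluation of the $\SLP$ on the matrix generators in $Z$, which costs $\OR{\abs{Z}}$ field operations in the fixed dimension $26$; with the constant cost of evaluating $\varphi$ and one random-oracle call absorbed in $\xi$, this totals $\OR{\xi + \log(q)^3 + \abs{Z}}$, as stated. The main obstacle is to handle the non-abelian structure of $P$ cleanly: because $P$ has nilpotency class $2$, a naive vector-space reduction is not valid globally, but restricted to each of the layers $P/P_2$ and $P_2$ the group genuinely is an $\F_2$-space, so the linear-algebra reduction is sound within a layer and the non-commutativity only intervenes when combining layers, where it is absorbed by the fact that any multiplicative correction already lies in the next deeper layer. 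The secondary subtlety is guaranteeing that the $u$-conjugates in $Z$ span the relevant $\F_2$-spaces, which follows from the choice of $u$ (acting with order dividing $q-1$ on these quotients and thus generating the $\F_q$-structure from its $\F_2$-basis) together with the linear-independence check already performed in Lemma \ref{lem_centraliser_generators}.
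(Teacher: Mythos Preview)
Your approach is essentially the same as the paper's: reduce to the Suzuki quotient via $\varphi$ and $\pi$, then handle the residue $h = g\,(\pi\circ\varphi)(g)^{-1}$ inside $\O2(\gen{Y}) = \gen{Z}$. The paper phrases the second step tersely as ``row reduction of $h$ using the elements of $Z$'', whereas you unpack it as two $\F_2$-linear solves across the exponent-$2$ central series $P \triangleright P_2 \triangleright 1$ of Conjecture~\ref{pr_invol_o2_facts}; these are the same computation, since after the change of basis of Lemma~\ref{lem_centraliser_comp_series} the elements of $\gen{Z}$ are block-unitriangular and eliminating successive off-diagonal blocks is exactly your layer-by-layer reduction. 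Your added checks ($[g,j]=1$ and $\varphi(h)=1$) are harmless refinements; the paper folds the second into the failure case of the row reduction. The complexity accounting matches.
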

\begin{proof}
Note that $\varphi$ consists of a change of basis followed by a projection to a submatrix, and hence can be applied to any element of $\GL(26, q)$ using $\OR{1}$ field operations.
\begin{enumerate}
\item Use Algorithm \ref{alg:sz_main_alg} to express $\varphi(g)$ in the generators
  of $\Sz(q)$, or return \texttt{false} if $\varphi(g) \notin \Sz(q)$.
  Hence we obtain an $\SLP$ for $(\pi \circ \varphi)(g)$ in $Y$, of length $\OR{\log(q)(\log\log(q))^2}$.
\item Now $h = g((\pi \circ \varphi)(g))^{-1} \in \gen{Z}$. Using the
  elements of $Z$, we can apply row reduction to $h$, and hence obtain
  an $\SLP$ for $h$ in $Z$ of length $\OR{\abs{Z}}$. Return
  \texttt{false} if $h$ is not reduced to the identity matrix using
  $Z$.
\item Since $Y$ and $Z$ are $\SLP$s in $X$, in time $\OR{\log(q) (\log\log(q))^2}$ we obtain an $\SLP$ for $g$ in $X$, of the specified length.
\end{enumerate}
The expected time complexity then follows from Theorem \ref{thm_element_to_slp}.
\end{proof}

We are now ready to state our modified Ryba algorithm, which assumes that the precomputations given by the above results have been done.

\begin{figure}[ht]
\begin{codebox}
\refstepcounter{algorithm}
\label{alg:ryba_alg}
\Procname{\kw{Algorithm} \ref{alg:ryba_alg}: $\proc{Ryba}(X, g, X_A, X_B)$}
\li \kw{Input}: $\gen{X} \leqslant \GL(26, q)$ such that $\gen{X} = \LargeRee(q)$, $g \in \GL(26, q)$. 
\zi Involution centralisers $\gen{X_A}, \gen{X_B} \leqslant \gen{X}$ for involutions $j_A, j_B \in \gen{X}$ 
\zi of class $2A$ and $2B$, respectively. Effective surjective homomorphisms 
\zi $\varphi_A : \gen{X_A} \to \Sz(q)$, $\varphi_B : \gen{X_B} \to \SL(2, q)$. Effective injective homomorphisms 
\zi $\pi_A : \Sz(q) \to \gen{X_A}$, $\pi_B : \SL(2, q) \to \gen{X_B}$ and $Z_A \subseteq \gen{X_A}$, $Z_B \subseteq \gen{X_B}$ 
\zi such that $\gen{Z_A} = \O2(\gen{X_A})$ and $\gen{Z_B} = \O2(\gen{X_B})$.
\li \kw{Output}: If $g \in \gen{X}$, \const{true} and an $\SLP$ of $g$ in $X$, otherwise \const{false}.
\zi \Comment{\proc{FindEvenOrderElement} is given by Remark \ref{rem_find_even_order_elt}}
\li Use Theorem \ref{thm_nonexplicit_recognition} to determine if $g \in \gen{X}$ and return \const{false} if not.
\li \Repeat
\li     $h := \proc{FindEvenOrderElement}(X, g)$
\li     Let $w_{h}$ be the $\SLP$ returned for $h$.
\li     Let $z$ be an involution obtained from $h$ by powering up.
\li \Until $z$ is of class $2A$.
\li Find random involution $x \in \gen{Z_A}$ of class $2B$.
\li Let $y$ be an involution obtained from $xz$ by powering up.
\li Find $c \in \gen{X}$ as $\SLP$ in $X$, such that $x^c = j_B$.
\li Let $w_y$ be an $\SLP$ for $y^c$ in $X$ 
\li \If $y$ is of class $2A$
\zi \Then
\li     Find $c \in \gen{X}$ as $\SLP$ in $X$, such that $y^c = j_A$. Let $Y := X_A$.
\zi \Else
\li     Find $c \in \gen{X}$ as $\SLP$ in $X$, such that $y^c = j_B$. Let $Y := X_B$.
    \End
\zi \kw{end}
\li Let $w_z$ be an $\SLP$ for $z^c$ in $X$. 
\li Find $c \in \gen{X}$ as $\SLP$ in $X$, such that $z^c = j_A$.
\li Let $w_{hg}$ be an $\SLP$ for $h^c$ in $X$. 
\li Let $w_g := w_h^{-1} w_{hg}$ be an $\SLP$ for $g$ in $X$.
\li \Return \const{true}, $w_g$
\end{codebox}
\end{figure}

\begin{thm}
Assume Conjectures \ref{pr_invol_o2_facts}, \ref{mult_implies_even_order2}, \ref{pr_bigree_goodelts} and \ref{conj:bigree_trick}, and an oracle for the discrete
logarithm problem in $\F_q$. Algorithm \ref{alg:ryba_alg} is a Las
Vegas algorithm with expected time complexity $\OR{(\xi + \chi_D(q)) \log\log(q) + \log(q)^3 + \abs{Z_A} + \abs{Z_B}}$ field
operations. The length of the returned $\SLP$ is $\OR{n(\log(q)(\log\log(q))^2 + \abs{Z_A} + \abs{Z_B})}$ where $n$ is the length of the $\SLP$s for $X_A, X_B, Z_A, Z_B$ in $X$.
\end{thm}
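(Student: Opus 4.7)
The plan is to verify correctness by tracking each \SLP{} constructed in the algorithm, then bound the expected number of iterations in each probabilistic step via the established results on element and involution distributions, and finally assemble the time and \SLP{}-length estimates additively.

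First, I would handle correctness. Membership $g \in \gen{X}$ is certified (or rejected) by Theorem \ref{thm_nonexplicit_recognition}. Inside the main loop, by Remark \ref{rem_find_even_order_elt} the procedure \proc{FindEvenOrderElement} yields an element $h$ of $\gen{X}$ of even order of the form $h = h_1 g$, together with an \SLP{} $w_h$ in $X$ for the prefix $h_1$; powering up produces an involution $z$. By Proposition \ref{pr_trick_involution_class}, $z$ is of class $2A$ with probability $1 - \OR{1/q}$, so $\OR{1}$ expected iterations are needed. The random involution $x$ of class $2B$ is produced inside $\gen{Z_A}$ using Conjecture \ref{pr_invol_o2_facts}(6), which identifies $2B$-involutions as squares of order-$4$ pre-images of non-identity elements of $P_0 / P_1$, so $\OR{1}$ expected trials suffice, yielding an \SLP{} of length $\OR{\abs{Z_A}}$. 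Powering up $xz$ gives an involution $y$ (using Proposition \ref{pr_invol_facts} to guarantee even order with bounded-below probability), and its class is determined from its centraliser structure or by a conjugacy test with $j_A$.

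Next, I would use Corollary \ref{bigree_dihedral_trick} a constant number of times: to conjugate $x$ to $j_B$, $y$ to $j_A$ or $j_B$, and $z$ to $j_A$. Each such conjugating element is produced as an \SLP{} in $X$ of length $\OR{n}$ in expected $\OR{\xi}$ field operations. After conjugation, $y^c$ and $z^c$ lie in $\gen{X_A}$ or $\gen{X_B}$, as does the conjugate of the even-order element $h$, so Lemma \ref{lem_centraliser_membership} converts each of them into an \SLP{} in $X$ of length $\OR{n(\log(q)(\log\log(q))^2 + \abs{Z})}$ in expected $\OR{\xi + \log(q)^3 + \abs{Z}}$ field operations. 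At the end, the relation $g = h_1^{-1} \cdot (h_1 g) = h_1^{-1} h$ yields the \SLP{} $w_g = w_h^{-1} w_{hg}$; since $w_h$ is an \SLP{} in $X$ for $h_1$ and $w_{hg}$ is an \SLP{} in $X$ for the even-order element $h_1 g$ (assembled from the centraliser-membership calls and the conjugating \SLP{}s), the concatenation evaluates to $g$. The algorithm returns only when every sub-step has succeeded, so it is Las Vegas.

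For the time complexity I would add up: $\OR{(\xi + \chi_D(q)) \log\log(q) + \log(q)^3}$ for the even-order search by Theorem \ref{thm_find_even_order_elt}; $\OR{\xi + \abs{Z_A}}$ for the $2B$-involution search; $\OR{\xi}$ for each of the constantly many dihedral-trick calls; and $\OR{\xi + \log(q)^3 + \abs{Z_A} + \abs{Z_B}}$ for the centraliser-membership steps. The non-explicit recognition step of Theorem \ref{thm_nonexplicit_recognition} contributes $\OR{\sigma_0(\log q)(\abs{X} + \log q)}$, which is absorbed. Summing yields the stated bound. The \SLP{}-length estimate follows because each of the constantly many \SLP{}s fed into Lemma \ref{lem_centraliser_membership} has length $\OR{n}$ and the lemma multiplies this by an $\OR{\log(q)(\log\log(q))^2 + \abs{Z}}$ factor; the final concatenation and inversion preserve this order of magnitude.

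The main obstacle will be bookkeeping the \SLP{}s consistently across the two asymmetries in the algorithm: the \SLP{} for the even-order element $h_1 g$ must be reconstructed from $w_h$ (an \SLP{} for the prefix $h_1$ only) together with the centraliser-membership \SLP{} for the conjugate of $h_1 g$; and the branch on the class of $y$ changes which centraliser is used in the middle of the procedure. One must verify that in each branch the chain of conjugations $x \to j_B$, $y \to \{j_A, j_B\}$, $z \to j_A$ is mutually consistent (so that the images really land in the asserted centralisers) and that the length and complexity bounds carry through uniformly. Once this bookkeeping is set up, the remaining estimates are immediate consequences of the cited results.
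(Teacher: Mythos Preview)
Your proposal is correct and follows essentially the same approach as the paper: both track the Ryba chain $x \to y \to z \to h$ of centralising elements, use the dihedral trick to conjugate each into the precomputed centralisers, and invoke Lemma \ref{lem_centraliser_membership} at each stage, assembling the final $\SLP$ as $w_h^{-1} w_{hg}$. One small sharpening: since $x$ has class $2B$ and $z$ has class $2A$, Proposition \ref{pr_invol_facts} guarantees that $\abs{xz}$ is \emph{always} even (not merely with bounded-below probability), and the paper determines the class of $y$ directly from its rank (Jordan form) rather than via a conjugacy test; also, the reason $y^c$, $z^c$, $h^c$ land in the asserted centralisers is precisely that $\gen{x,z}$ is dihedral with central involution $y$, so $y$ centralises $x$, $z$ centralises $y$, and $h$ centralises $z$ --- this is the point you flag as the main bookkeeping obstacle, and it is indeed the crux.
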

\begin{proof}
By Theorem \ref{thm_find_even_order_elt}, the length of $w_{h}$ is $\OR{\log \log(q)}$. By Remark \ref{rem_find_even_order_elt}, $h = h_1 g$ and $w_h$ is an $\SLP$ for $h_1$.

Then $z$ is found using Proposition \ref{pr_element_powering}, and by
Proposition \ref{pr_trick_involution_class} it is of class $2A$ with
high probability. By Proposition \ref{pr_invol_facts}, the class can be determined by computing the Jordan form.

By Conjecture \ref{pr_invol_o2_facts}, $x$ will
have class $2B$ with high probability, and then $xz$ has even order by
Proposition \ref{pr_invol_facts}. Again we use Proposition \ref{pr_element_powering} to find $y$. 

Using the dihedral trick, we find $c$. Note that $c$ will be found as an $\SLP$ of length $\OR{n}$, since we have an $\SLP$ for $x$, and we can assume that the $\SLP$ for $j_B$ has length $\OR{n}$. Now $\gen{x, z}$ is
dihedral with central involution $y$, so $y^c \in \gen{X_B}$. Using the $\SL(2,q)$ version of Lemma \ref{lem_centraliser_membership}, we find $w_y$ using $\OR{\xi +
  \log(q)^3 + \abs{Z_B}}$ field operations, and $w_y$ has length $\OR{n
  (\log(q)(\log\log(q))^2 + \abs{Z_B})}$.

The next $c$ is again found using the dihedral trick, and comes as
an $\SLP$ of the same length as $w_y$. Then $z^c \in \gen{Y}$
since $y$ is central in $\gen{x, z}$. Hence we again use Lemma
\ref{lem_centraliser_membership} (or its $\SL(2,q)$ version) to obtain $w_z$, with the same length as $w_y$ (or with $Z_B$ replaced by $Z_A$).

Finally, $h$ clearly centralises $z$, and we now have an $\SLP$ for $z$,
so we obtain another $c$ as $\SLP$ in $X$, and use Lemma
\ref{lem_centraliser_membership} to obtain an $\SLP$ for $h^c$. Hence
we obtain $w_{gh}$, which is an $\SLP$ for $h$, and finally an $\SLP$ $w_g$
for $g$. Since $w_h$ has length $\OR{\log\log(q)}$, the length of $w_g$ is as specified.

The expected time complexity follows from Theorem \ref{thm_find_even_order_elt}, Lemma \ref{lem_centraliser_membership} and Proposition \ref{pr_element_powering}.
\end{proof}

\subsection{Conjugates of the standard copy}

We now consider the situation where we are given $\gen{X} \leqslant \GL(26,
q)$, such that $\gen{X} \cong \LargeRee(q)$, so that $\gen{X}$ is a
conjugate of $\LargeRee(q)$, and the problem is to find $g \in \GL(26,
q)$, such that $\gen{X}^g = \LargeRee(q)$.

\begin{lem} \label{lem_sz_compement}
  Assume Conjectures \ref{pr_centraliser_facts}, \ref{pr_invol_o2_facts} and \ref{pr_sz26_facts}. There exists a Las Vegas
  algorithm that, given
\begin{itemize}
\item $G = \gen{X} \leqslant \GL(26, q)$ such that $\gen{X} \cong \LargeRee(q)$, 
\item $\gen{Y} \leqslant \gen{X}$ such that $\gen{Y} = \Cent_G(j)$ for some involution $j \in G$ of class $2A$,
\item $g \in \gen{X}$ such that $\abs{g} \mid q - 1$, $\gen{g} \cap \gen{Y} = \gen{1}$ and $P =
  \gen{Y, g}$ is contained in a maximal parabolic in $\gen{X}$, 
\end{itemize}
finds $\gen{Z} \leqslant \gen{Y, g}$ such that $\gen{Z} = \Cent_P(g) \cong \Sz(q) \times \Cent_{q - 1}$ and $\gen{W} \leqslant \gen{Z}$ such that $\gen{W} \cong \Sz(q)$.

The expected time complexity is $\OR{\sigma_0(\log(q)) \log(q)}$ field operations. If $Y$ and $g$ are given as $\SLP$s in $X$ of length $\OR{n}$, then $Z$ and $W$ will be returned as $\SLP$s in $X$, also of length $\OR{n}$.
\end{lem}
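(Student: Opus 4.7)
\emph{Plan.} The structural core comes from Proposition \ref{pr_invol_facts}: since $j$ is of class $2A$ and $P$ lies in a maximal parabolic, $P \leqslant \Norm_G(O) \cong [q^{10}]{:}(\Sz(q) \times \Cent_{q-1})$, where $O := \O2(\gen{Y}) = \O2(P)$, and the $\Cent_{q-1}$ direct factor acts fixed-point-freely on $O$. Because $\gen{g} \cap \gen{Y} = \gen{1}$ with $\abs{g}$ dividing the odd number $q - 1$, the image $\bar g$ of $g$ in $P/O$ is a non-trivial element of that $\Cent_{q-1}$; the fixed-point-free action then forces $\Cent_O(g) = \gen{1}$, and since $\bar g$ lies in the abelian direct summand of the Levi, it is centralised by the whole Levi complement. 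Hence $\Cent_P(g) \cong \Sz(q) \times \Cent_{q-1}$ is a Levi complement to $O$ in $P$ containing $g$, and the task reduces to producing generators for this complement inside $\gen{Y, g}$.

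Lemma \ref{lem_centraliser_generators} provides (as part of the recognition of $\gen{Y}$ that is in any case needed by the calling algorithm) an effective injective homomorphism $\pi \colon \Sz(q) \to \gen{Y}$ whose image $\gen{W_0}$ is a Levi complement to $O$ inside $\gen{Y}$ and therefore maps isomorphically onto $\Sz(q) \times \set{1} \leqslant P/O$; in particular $\alpha^g \equiv \alpha \mod O$ for every $\alpha \in \gen{W_0}$. I would then pick a bounded generating set $\alpha_1, \dotsc, \alpha_r \in \gen{W_0}$ of elements of odd order (orders dividing $q - 1$ or $q \pm t + 1$), located in expected $\OR{\log\log q}$ trials by Proposition \ref{sz_totient_prop} together with Proposition \ref{pr_element_powering}; Proposition \ref{sz_2_generation} ensures that two random elements of the correct orders generate $\gen{W_0}$ with high probability, and generation is confirmed non-constructively using Theorem \ref{thm_sz_standard_recognition}.

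For each $\alpha_i$ the subgroup $\gen{O, \alpha_i} \cong O {:} \Cent_{\abs{\alpha_i}}$ is precisely of the shape demanded by Corollary \ref{cl_the_formula} (the Formula), and because $\alpha_i^g \equiv \alpha_i \mod O$ I would invoke that corollary inside $P$ with $a = \alpha_i$ and $h = g$. A first call computes $c_{i,1} = (\alpha_i^g \alpha_i)^{(\abs{\alpha_i} - 1)/2} \in O$ and yields an adjusted element $\alpha_i^{(1)} \in \alpha_i O$ that centralises $g$ modulo $\Phi(O) = P_2$; the equality $\Phi(P_0) = P_0' = P_2$ and the fact that $P_2$ is elementary abelian both come from Conjecture \ref{pr_invol_o2_facts}. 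A second Formula call inside the elementary abelian $P_2$ then produces $c_{i,2}$ and a final $\tilde\alpha_i \in \alpha_i O$ that centralises $g$ exactly. Each call adds a bounded number of products and one power to the $\SLP$, so the produced $\SLP$s have length $\OR{n}$, and each call costs $\OR{\log q}$ matrix multiplications.

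Setting $\gen{W} := \gen{\tilde\alpha_1, \dotsc, \tilde\alpha_r}$ and $\gen{Z} := \gen{W, g}$ completes the construction: each $\tilde\alpha_i \equiv \alpha_i \mod O$, so the $\tilde\alpha_i$ project onto the $\Sz(q)$ factor of $P/O$, and as $\gen{W} \leqslant \Cent_P(g)$ intersects $O$ trivially, the projection is an isomorphism $\gen{W} \cong \Sz(q)$. When $\abs{g} = q - 1$ this immediately gives $\gen{Z} = \Cent_P(g)$; if $\abs{g}$ is a proper divisor of $q - 1$, a single further Formula application to a $\Cent_{q-1}$-preimage in $\gen{X}$ of a generator of the $\Cent_{q-1}$ factor (found by an eigenvalue computation as in Proposition \ref{pr_diagonal_elt}) fills out the remaining generator. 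The algorithm is Las Vegas with easily verifiable output, and its expected cost $\OR{\sigma_0(\log q) \log q}$ is dominated by the Formula's $\OR{\log q}$ matrix multiplications together with the pseudo-order computations of Proposition \ref{pr_element_powering} (whose factorisation overhead contributes the $\sigma_0(\log q)$ factor). The main obstacle is the claim that exactly two Formula iterations suffice for exact centralisation, which rests entirely on the length-$2$ exponent-$2$ central series provided by Conjecture \ref{pr_invol_o2_facts}.
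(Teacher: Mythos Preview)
Your overall plan is the paper's plan: exploit that $\bar g$ is central in $P/O$ so that the Formula (Corollary \ref{cl_the_formula}) can be iterated twice---once modulo $\Phi(O)=P_2$ and once more using $\Phi(P_2)=1$ from Conjecture \ref{pr_invol_o2_facts}---to land exactly in $\Cent_P(g)$. The paper does this with \emph{random} $a_0\in P$ in the role of $h$ and $a=g$, obtains two random elements $c_1,c_2\in\Cent_P(g)$, sets $Z=\{g,c_1,c_2\}$, and then takes $W$ to be probable generators of $\gen{Z}'$, verifying $\gen{W}\cong\Sz(q)$ via the $4$-dimensional constituents (Conjecture \ref{pr_sz26_facts}) and Theorem \ref{thm_sz_conj_recognition}. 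Your variant of starting from a fixed complement $\gen{W_0}=\pi(\Sz(q))$ and translating its generators is a reasonable alternative once the role assignment is corrected.

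The genuine gap is that you apply the Formula with the roles reversed. With $a=\alpha_i$ and $h=g$, Corollary \ref{cl_the_formula} (or Lemma \ref{lem_the_formula} applied to $a=\alpha_i$, $b=\alpha_i^{\,g}$ inside $\gen{O,\alpha_i}$) yields $\alpha_i^{\,g}\equiv \alpha_i^{\,c_{i,1}}\bmod\Phi(O)$, i.e.\ the element $g\,c_{i,1}^{-1}$ centralises $\alpha_i$ modulo $\Phi(O)$; it does \emph{not} produce an $\alpha_i^{(1)}\in\alpha_i O$ centralising $g$. Concretely your claim $c_{i,1}\in O$ is false: in $P/O$ one has $\overline{c_{i,1}}=(\bar\alpha_i\bar\alpha_i)^k=\bar\alpha_i^{\,-1}$, so $c_{i,1}\in\alpha_i^{-1}O$. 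The fix is to swap the roles, taking $a=g$ (the odd-order element whose centraliser we want) and $h=\alpha_i$; then $c=(g^{\alpha_i}g)^k$ with $2k+1=\abs{g}$, and $c^{-1}\alpha_i$ centralises $g$ modulo $\Phi(O)$. Note however that $\overline{c^{-1}\alpha_i}=\bar g\,\bar\alpha_i$, not $\bar\alpha_i$, so after the second iteration the adjusted elements project onto $\bar g\cdot\Sz(q)$ rather than onto $\Sz(q)$ itself; you then recover $\gen{W}\cong\Sz(q)$ as the derived subgroup of $\gen{Z}$, exactly as the paper does, rather than directly from the $\tilde\alpha_i$. With this correction your argument coincides with the paper's, the only residual difference being your use of structured starting elements $\alpha_i$ (which presupposes the map $\pi$ from Lemma \ref{lem_centraliser_generators}, not part of the present lemma's stated input though available in the calling context) in place of the paper's random $a_0\in P$.
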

\begin{proof}
  By Conjecture \ref{pr_centraliser_facts} we have $\gen{Y} \cong
  [q^{10}] {:} \Sz(q)$. Since $\gen{g} \cap \gen{Y} = \gen{1}$, it
  follows from Proposition \ref{pr_invol_facts} that $\gen{g}$ lies in
  the cyclic group $\Cent_{q - 1}$ on top of $P$. Then $g$ acts fixed-point freely on $\O2(P)$ and hence $\Cent_P(g) \cong
  \Sz(q) \times \Cent_{q - 1}$ and $\Cent_P(g)^{\prime} \cong \Sz(q)$.

The algorithm proceeds as follows:
\begin{enumerate}
\item Choose random $a_0 \in P$ and use Corollary \ref{cl_the_formula} to find $b_1$, such that $a_1 = b_1^{-1} a_0$ centralises $g$ modulo $\Phi(\O2(P))$. 
\item Use Corollary \ref{cl_the_formula} to find $b_2$, such that $c_1
  = b_2^{-1} a_1$ centralises $g$ modulo $\Phi(\Phi(\O2(P)))$. By
  Conjecture \ref{pr_invol_o2_facts}, $\Phi(\Phi(\O2(P))) =
  \gen{1}$, so $c_1 \in \Cent_P(g)$. Similarly find $c_2 \in
  \Cent_P(g)$.
\item Now $Z = \set{g, c_1, c_2}$ satisfies $\gen{Z} \leqslant
  \Cent_P(g)$, so find probable generators $W$ for $\gen{Z}^{\prime}$. Clearly, $\gen{Z} = \Cent_P(g)$ if and only if $\gen{W} \cong \Sz(q)$. Use the MeatAxe to split up the module for $\gen{W}$ and verify
  that it has the structure given by Conjecture \ref{pr_sz26_facts}.
  Return to the first step if not. 

\item From the $4$-dimensional
  submodules, we obtain an image $W_4$ of $W$ in $\GL(4, q)$. Use
  Theorem \ref{thm_sz_conj_recognition} to determine if $\gen{W_4}
  \cong \Sz(q)$. Return to the first step if not.
\end{enumerate}

By Proposition \ref{sz_2_generation}, two random elements generate
$\Sz(q)$ with high probability, so the probability that
$\gen{Z}^{\prime} \cong \Sz(q)$ is also high. Hence by Theorem \ref{thm_sz_conj_recognition}, the expected time complexity is as stated.
\end{proof}

\begin{lem} \label{lem_find_sz_wr_2} 
Assume the Suzuki Conjectures, the Big Ree Conjectures and an oracle for the discrete logarithm
  problem in $\F_q$. There exists a Las Vegas algorithm that, given
  $\gen{X} \leqslant \GL(26, q)$ such that $\gen{X} \cong
  \LargeRee(q)$, finds $\gen{Y} \leqslant \gen{X}$ and $g \in \gen{X}$
  such that $\gen{Y} \cong \Sz(q)$ and $\gen{Y, g} \cong \Sz(q) \wr
  \Cent_2$. The elements of $Y$ are expressed as $\SLP$s in $X$ of
  length $\OR{\log\log(q)}$; $g$ is expressed as an $\SLP$ in $X$ of length
  $\OR{(\log\log(q))^2}$. The algorithm has expected time complexity
  $\OR{\abs{X} + \log(q)^3 + (\xi + \chi_D(q)) (\log\log(q))^2}$ field operations.
\end{lem}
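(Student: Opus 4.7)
The plan is to first produce one copy of $\Sz(q)$ inside $\gen{X}$ as the top Levi factor of the centraliser of a class $2A$ involution, and then to find an involution $g \in \gen{X}$ that conjugates this copy to a second commuting copy of $\Sz(q)$, so that $\gen{Y, g} \cong \Sz(q) \wr \Cent_2$.

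First I would find an involution $j \in \gen{X}$ of class $2A$ as an $\SLP$ in $X$ of length $\OR{\log\log(q)}$: call Algorithm \ref{alg:find_even_order_elt} to obtain an element of even order together with its $\SLP$, power up via Proposition \ref{pr_element_powering}, and verify the class via the Jordan form, which is $2A$ with probability $1 - \OR{1/q}$ by Propositions \ref{pr_invol_facts} and \ref{pr_trick_involution_class}. Applying Lemma \ref{lem_bigree_find_centraliser} to $(X, j)$ then yields generators $Y_0$ for $\Cent_{\gen{X}}(j) \cong [q^{10}] {:} \Sz(q)$ as short $\SLP$s, together with the effective inverse isomorphisms $\varphi : \gen{Y_0} \to \Sz(q)$ and $\pi : \Sz(q) \to \gen{Y_0}$. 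Setting $Y = \pi(\{S(1,0), M^{\prime}(\lambda), T\})$, we have $\gen{Y} = \pi(\Sz(q)) \cong \Sz(q)$; the elements of $Y$ are obtained as short products of centraliser generators, giving $\SLP$s in $X$ of length $\OR{\log\log(q)}$.

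The harder task is producing the swap $g$ with $\SLP$ of length $\OR{(\log\log(q))^2}$. Motivated by Proposition \ref{bigree_suzuki2_gens}, in which the second Suzuki factor of the standard copy is obtained as $S^h$ for a class $2A$ involution $h$, I would seek a class $2A$ involution $g \in \gen{X}$ such that $Y^g$ commutes elementwise with $Y$. To construct it, pick an involution $\iota \in \gen{Y}$ (the image under $\pi$ of a standard Suzuki involution), compute $\Cent_{\gen{X}}(\iota)$ via a second application of Lemma \ref{lem_bigree_find_centraliser}, extract the top Suzuki factor $S_2$ of this centraliser, and pick an involution $\iota^{\prime} \in S_2$. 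The dihedral trick (Corollary \ref{bigree_dihedral_trick}, resting on Theorem \ref{dihedral_trick}) then produces a class $2A$ involution $g$ with $\iota^g = \iota^{\prime}$. By Conjecture \ref{pr_sz_sz_facts1}, Proposition \ref{pr_sz_sz_facts2}, and the maximality statement in Proposition \ref{pr_reducible_maximals}, such a $g$ normalises $\gen{Y} \times S_2$ and swaps the two factors, yielding $\gen{Y, g} \cong \Sz(q) \wr \Cent_2$. Since the dihedral trick preserves $\SLP$-length up to a constant factor, composition across the stages gives $g$ with $\SLP$-length $\OR{(\log\log(q))^2}$.

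The main obstacle is ensuring that the constructed $g$ really swaps $\gen{Y}$ with $S_2$, rather than merely normalising $\gen{Y}$ or lying in some other subgroup commuting with $\gen{Y}$; this requires careful use of the structure of $2A$-involution centralisers and a verification step in which one tests $[y_1^g, y_2] = 1$ for all $y_1, y_2 \in Y$ and applies Theorem \ref{thm_sz_conj_recognition} to both $\gen{Y}$ and $\gen{Y^g}$, rerunning the search for $g$ if the test fails. Summing the costs of the contributing subroutines — Theorem \ref{thm_find_even_order_elt} for the two involutions, two applications of Lemma \ref{lem_bigree_find_centraliser}, and the dihedral trick — yields exactly the claimed complexity $\OR{\abs{X} + \log(q)^3 + (\xi + \chi_D(q))(\log\log(q))^2}$.
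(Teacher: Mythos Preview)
Your construction has a genuine gap in the production of $g$, and the $\SLP$-length claim for $Y$ is also incorrect.

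\textbf{The set $Y$.} Taking $Y=\pi(\{S(1,0),M'(\lambda),T\})$ does not give $\SLP$s of length $\OR{\log\log(q)}$. The map $\pi$ is computed by constructive membership in $\Sz(q)$ (via Theorem \ref{thm_element_to_slp}), which outputs $\SLP$s of length $\OR{\log(q)(\log\log(q))^2}$ in the centraliser generators; composing with their $\OR{\log\log(q)}$-length $\SLP$s in $X$ yields length $\OR{\log(q)(\log\log(q))^3}$. The paper avoids this by using Lemma \ref{lem_sz_compement}: two applications of the Formula (Corollary \ref{cl_the_formula}) push random elements of the parabolic into $\Cent_P(y_1)'\cong\Sz(q)$ using only a bounded number of group operations, so the $\SLP$ length stays $\OR{\log\log(q)}$. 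This also arranges that $\gen{Y}$ commutes with the specific order-$(q-1)$ element $y_1$, which is essential for the next step.

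\textbf{The swap $g$.} Your argument that $S_2$ (the Suzuki top of $\Cent_{\gen X}(\iota)$) commutes with $\gen Y$ is unsupported. Since $\iota\in\gen Y\cong\Sz(q)$ is not central in $\gen Y$, we have $\gen Y\not\leqslant\Cent_{\gen X}(\iota)$, and there is no structural link between $S_2$ and $\gen Y$. Even granting commutation, a dihedral-trick element with $\iota^g=\iota'$ is just an arbitrary conjugator of two $2A$ involutions; nothing forces it to normalise $\gen Y\times S_2$, and Conjecture \ref{pr_sz_sz_facts1}, Proposition \ref{pr_sz_sz_facts2} and Proposition \ref{pr_reducible_maximals} say nothing about this. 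Your retry loop therefore has no lower bound on its success probability.

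The paper's route is quite different. Having $\gen Y$ commute with both $j$ and $y_1$, it picks $a,y_2\in\gen Y$ with $\abs{a}=4$, $\abs{y_2}\mid q-1$ and $\gen{a,y_2}$ inside a maximal parabolic of $\gen Y$ (via Theorem \ref{sz_thm_pre_step}, whence the $\OR{(\log\log(q))^2}$ length). The dihedral trick gives $h_1$ with $j^{h_1}=a^2$; then a discrete logarithm aligns $y_3=y_1^{h_1}$ with a power of $y_2$ modulo $\O2(\Cent_G(a^2))$, and two applications of the Formula produce $h_2\in\Cent_G(a^2)$ with $\gen{y_3}^{h_2^{-1}}=\gen{y_2}$. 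Then $g=h_1h_2$ sends $\gen{j,y_1}$ to $\gen{a^2,y_2}\leqslant\gen Y$. Since $\gen Y$ centralises $\gen{j,y_1}$, which sits in a maximal parabolic of $\gen Y^{g^{-1}}$, the two Suzuki copies commute and $\gen{Y,g}\cong\Sz(q)\wr\Cent_2$.
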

\begin{proof}
  The idea is to first find one copy of $\Sz(q)$ by finding a
  centraliser of an involution of class $2A$, which has structure
  $[q^{10}] {:} \Sz(q)$, then use the Formula to find the $\Sz(q)$
  inside this. Next we find a maximal parabolic inside this $\Sz(q)$
  and use the dihedral trick and the Formula to conjugate it back to
  our involution centraliser. The conjugating element together with
  the $\Sz(q)$ will generate a copy of $\Sz(q) \wr \Cent_2$.

The algorithm proceeds as follows:
\begin{enumerate}
\item Use Algorithm \ref{alg:find_even_order_elt} to find an element
  of even order and then use Proposition \ref{pr_element_powering} to
  find an involution $j \in \gen{X}$. Repeat until $j$ is of class
  $2A$, which by Proposition \ref{pr_trick_involution_class} happens
  with high probability.
\item Use Lemma \ref{lem_bigree_find_centraliser} to find $\gen{C}
  \leqslant \gen{X}$ and $y_1 \in \gen{X}$ such that $\gen{C} =
  \Cent_G(j)$, $\abs{y_1} \mid q - 1$, $y_1 \notin \Cent_G(j)$ and
  $\gen{C, y_1}$ is contained in a maximal parabolic in $\gen{X}$.
\item Use Lemma \ref{lem_sz_compement} to find $\gen{Y} \leqslant \gen{C, y_1}$ such that $\gen{Y} \cong \Sz(q)$ and $\gen{Y}$ commutes with $y_1$.

\item Use the MeatAxe to split up the module of $\gen{Y}$. By
  Conjecture \ref{pr_sz26_facts} we obtain $4$-dimensional
  submodules, and hence a homomorphism $\rho : \gen{Y}
  \to \GL(4, q)$.

\item Use Theorem \ref{section:suzuki_main_results} to find an effective
  isomorphism $\varphi : \rho(\gen{Y}) \to \Sz(q)$.

\item Use the first steps of the proof of Theorem
  \ref{sz_thm_pre_step} to find $a^{\prime}, y_2^{\prime} \in
  \rho(\gen{Y})$, as $\SLP$s in the generators of $\rho(\gen{Y})$, such that $\abs{a^{\prime}} = 4$, $\abs{y_2^{\prime}}
  \mid q - 1$ and $\gen{a^{\prime}, y_2^{\prime}}$ is contained in a
  maximal parabolic in $\rho(\gen{Y})$. Evaluate the $\SLP$s on $Y$ to obtain $a, y_2 \in \gen{Y}$ with similar properties.

\item Using the dihedral trick, find $h_1 \in \gen{X}$ such that
  $j^{h_1} = a^2$ and let $y_3 = y_1^{h_1}$. Since $\gen{a, y_2}$ is a
  proper subgroup of $\gen{Y}$, it follows that $\gen{\Cent_G(a^2),
    y_2}$ is a proper subgroup of $G$, and hence it is contained in a maximal
  parabolic. Clearly $\gen{\Cent_G(a^2), y_3}$ is also contained in the same maximal
  parabolic. We know from the structure of $\gen{a, y_2}$ that $y_2 \notin \Cent_G(a^2)$, and since $y_1 \notin \Cent_G(j)$, it follows that $y_3 \notin
  \Cent_G(a^2)$, so $\gen{y_2}$ and $\gen{y_3}$ both lie in a group of shape $[q^{10}] : (q - 1)$ and hence are conjugate modulo $\O2(\Cent_G(a^2))$. Now we want to conjugate $\gen{y_3}$ to $\gen{y_2}$ while fixing $a^2$.
\item Diagonalise $y_2$ and $y_3$ to obtain $\varsigma(a_2, b_2)$ and $\varsigma(a_3, b_3)$. Use the discrete logarithm
  oracle to find an integer $k \in \Z$ such that $a_2^k = a_3$. If no
  such $k$ exists, then find another pair of $a, y_2$, but this can
  only happen if $\abs{y_2}$ is a proper divisor of $q - 1$. 

\item Notice that $y_3$ also can diagonalise to $\varsigma(a_3^{-1},
  b_3)$, so now \[y_2^k \equiv y_3 \mod \O2(\Cent_G(a^2))\] or $y_2^k \equiv y_3^{-1} \mod \O2(\Cent_G(a^2))$. In the latter case, invert $y_3$.
\item Use Lemma
  \ref{lem_the_formula} to find $c_1 \in \gen{y_2^k, y_3}$
  such that \[(y_2^k)^{c_1} \equiv y_3 \mod \Phi(\O2(\Cent_G(a^2)))\]
  and then $c_2 \in \gen{(y_2^k)^{c_1}, y_3}$ such that \[(y_2^k)^{c_1 c_2} \equiv y_3 \mod \Phi(\Phi(\O2(\Cent_G(a^2)))).\] By
  Conjecture \ref{pr_invol_o2_facts}, $\Phi(\Phi(\O2(\Cent_G(a^2)))) =
  \gen{1}$, so $\gen{y_2}^{c_1 c_2} = \gen{y_3}$.
\item Now $h_2 = (c_1 c_2)^{-1} \in \Cent_G(a^2)$ since by Lemma
  \ref{lem_the_formula}, both $c_1$ and $c_2$ centralise $a^2$.
  Clearly, $h_2$ conjugates $\gen{y_3}$ to $\gen{y_2}$. Hence $g = h_1
  h_2$ conjugates $\gen{j, y_1}$ to $\gen{a^2, y_2}$, and by
  construction $\gen{a^2, y_2} \leqslant \gen{Y}$ commutes with both
  $j$ and $y_1$.  Since $\gen{j, y_1}$ is contained in a maximal
  parabolic of another copy of $\Sz(q)$, it follows that $\gen{Y}$
  commutes with this copy. Thus $\gen{Y, g} \cong \Sz(q) \wr \Cent_2$.
\end{enumerate}

By Theorem \ref{thm_find_even_order_elt}, the length of the $\SLP$ for
$j$ is $\OR{\log\log(q)}$. Then by Lemma
\ref{lem_bigree_find_centraliser}, the $\SLP$s of $C$ and $y_1$ will
also have length $\OR{\log\log(q)}$. By Lemma \ref{lem_sz_compement},
the $\SLP$s for $Y$ will have length $\OR{\log \log(q)}$. By Theorem \ref{sz_thm_pre_step}, the $\SLP$s for $a$ and $y_2$ will have length $\OR{\log\log(q)^2}$, and hence $h_1$ and $h_2$ will also have this length.

The expected time complexity follows from Lemma
\ref{lem_bigree_find_centraliser}.
\end{proof}

\begin{lem} \label{lem_final_conjugation}
  Assume Conjecture \ref{pr_sz_sz_facts1}. There exists a Las Vegas algorithm that, given $\gen{X}, \gen{Y}, \gen{Z} \leqslant \GL(26, q)$ such
  that $\gen{X} \cong \LargeRee(q) = \gen{Y}$, $\gen{Z} \cong \Sz(q)
  \times \Sz(q)$ and $\gen{Z} \leqslant \gen{X} \cap \gen{Y}$, finds
  $g \in \GL(26, q)$ such that $\gen{X}^g = \gen{Y}$. The algorithm has expected time complexity $\OR{\abs{X} + \abs{Y} + \log(q)}$ field operations.
\end{lem}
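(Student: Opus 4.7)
The strategy is to reduce the problem to a bounded-size search by exploiting the rigidity of the embedding $\gen{Z}\hookrightarrow\GL(26,q)$. Any conjugator $g$ with $\gen{X}^g=\gen{Y}$ maps $\gen{Z}$ to another $\Sz(q)\times\Sz(q)$-subgroup of $\gen{Y}$; by the maximal subgroup classification \cite{malle_bigree}, all such subgroups are $\gen{Y}$-conjugate, so replacing $g$ by $gy$ for a suitable $y\in\gen{Y}$ we may assume $g\in N=N_{\GL(26,q)}(\gen{Z})$. After rescaling forms so that $\gen{X}$ and $\gen{Y}$ preserve the same quadratic form (possible since both are conjugate to $\LargeRee(q)$ in $\GL(26,q)$), we may further restrict to $g\in N\cap\mathrm{O}^{-}(26,q)$.

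The second key observation is that $N\cap\mathrm{O}^{-}(26,q)$ contains $\gen{Z}$ with very small index. By Proposition \ref{pr_sz_sz_facts2}, the form-preserving centraliser $C_{\mathrm{O}^{-}(26,q)}(\gen{Z})$ modulo $\gen{Z}$ has order $2$, and the only outer automorphism of $\gen{Z}$ realised inside $\mathrm{O}^{-}(26,q)$ is the swap of the two $\Sz(q)$-factors, which is present by Proposition \ref{bigree_suzuki2_gens}. Thus $[N\cap\mathrm{O}^{-}(26,q):\gen{Z}]\leqslant 4$, and explicit coset representatives are available: the transvection on $M_{10}$ built in the proof of Proposition \ref{pr_sz_sz_facts2}, and the wreath-product swap from Proposition \ref{bigree_suzuki2_gens}.

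The algorithm then proceeds in two phases. First, apply the MeatAxe to $\gen{Z}$ to obtain the decomposition $V=M_{16}\oplus M_{10}$ of Conjecture \ref{pr_sz_sz_facts1}, returning a change of basis $c\in\GL(26,q)$ realising it, and apply the MeatAxe form-finding routine to $\gen{X}^{c}$ and $\gen{Y}^{c}$ to find their invariant quadratic forms (each unique up to scalar). Using the $\Cent_{q-1}\times\Cent_{q-1}$ factor of $\mathrm{Aut}(V\vert_{\gen{Z}})$ from Conjecture \ref{pr_sz_sz_facts1}, rescale block-wise so that both coincide with $Q^{\ast}$. Second, for each of the at most four coset representatives $n$, check whether $n$ conjugates each generator of $\gen{X}^{c}$ into $\gen{Y}^{c}$; membership in $\gen{Y}$ reduces to recognising $\LargeRee(q)$ via Theorem \ref{thm_nonexplicit_recognition}. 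Return $g=cnc^{-1}$ for the successful $n$, composed with the element of $\gen{Y}$ discarded in the reduction.

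The main obstacle is Phase 1: the two independent scalings on $M_{16}$ and $M_{10}$ must be chosen coherently so that the rescaled forms of $\gen{X}^{c}$ and $\gen{Y}^{c}$ agree exactly, rather than merely up to independent scalars on each block. Pinning this down uses that the quadratic form on the $16$-dimensional part is of plus type (since $M_{16}\cong S_1\otimes S_2$ is a tensor of natural Suzuki modules preserving orthogonal forms of plus type) while that on $M_{10}$ is forced of minus type, so the ratio of the two scalars is determined by insisting that the total form equal $Q^{\ast}$, up to the $\Cent_2$ of Proposition \ref{pr_sz_sz_facts2}. Once the forms match, the rest is bounded arithmetic, and the MeatAxe calls on $\gen{X}$ and $\gen{Y}$ dominate the running time, giving expected complexity $\OR{\abs{X}+\abs{Y}+\log(q)}$ field operations.
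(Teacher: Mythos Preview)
Your approach differs substantially from the paper's, and while the overarching idea---reducing to a bounded search inside the normaliser of $\gen{Z}$---is reasonable, the execution has several gaps.

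The paper's proof is more direct. Since some conjugating $g$ can be taken to centralise $\gen{Z}$, one parametrises the full $3$-dimensional endomorphism ring $\EndR_{\gen{Z}}(M)$ as $h(x_1,x_2,x_3)=\sum_i x_i e_i$ with indeterminates $x_1,x_2,x_3$, finds via the MeatAxe the quadratic forms $Q_X,Q_Y$ preserved by $\gen{X},\gen{Y}$, and imposes the form-transformation condition $h\,Q_X\,h=Q_Y$. This yields a system of quadratic equations in three variables; three independent ones are extracted by linear algebra on the coefficient vectors, and the resulting zero-dimensional ideal has variety of size $2$ by Proposition~\ref{pr_sz_sz_facts2}. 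The two candidates are then tested with Theorem~\ref{thm_nonexplicit_recognition}. No alignment to $Q^{\ast}$, no swap element, and no coset representatives are required.

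The specific gaps in your proposal are the following.

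\textbf{The swap is not available.} Proposition~\ref{bigree_suzuki2_gens} furnishes a swap element only in the \emph{standard} copy of $\LargeRee(q)$; here $\gen{X}$ and $\gen{Y}$ are arbitrary conjugates, so that element is not at hand. You could circumvent this by reducing further to the centraliser rather than the normaliser (the swap is realised inside $N_{\gen{Y}}(\gen{Z})$, so composing with it lands you in the centraliser), leaving only the identity and the transvection as candidates---but then you have reproduced the paper's search space by a longer route.

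\textbf{Block-wise rescaling does not align the forms on $M_{10}$.} The summand $M_{10}$ is not absolutely irreducible; by the proof of Proposition~\ref{pr_sz_sz_facts2}, $\EndR_{\gen{Z}}(M_{10})$ is $2$-dimensional with a nilpotent generator. Consequently the $\gen{Z}$-invariant quadratic forms on $M_{10}$ are not related by scalars alone, and a block scalar will not in general carry $Q_X\vert_{M_{10}}$ to $Q^{\ast}\vert_{M_{10}}$. To align them you must solve for both the scalar and the nilpotent coefficient---which is exactly the polynomial system the paper solves directly for the pair $(Q_X,Q_Y)$, without the detour through a third form $Q^{\ast}$.

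\textbf{The ``element of $\gen{Y}$ discarded in the reduction'' is a red herring.} If $g=zn$ with $z\in\gen{Z}\leqslant\gen{X}$ and $n$ a coset representative, then already $\gen{X}^{n}=\gen{X}^{zn}=\gen{Y}$; no auxiliary element of $\gen{Y}$ need be recovered or composed at the end.
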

\begin{proof}
  Let $M$ be the module of $\gen{Z}$. Observe that $g$ must centralise
  $\gen{Z}$, so $g \in \Cent_{\GL(26, q)}(\gen{Z}) = \Aut(M) \subseteq
  \EndR_{\gen{Z}}(M)$. By Conjecture \ref{pr_sz_sz_facts1}, the endomorphism
  ring of $M$ has dimension $3$. The algorithm proceeds as follows:
\begin{enumerate}
\item Use the MeatAxe to find $e_1, e_2, e_3 \in \GL(26, q)$ such that $\EndR_{\gen{Z}}(M) = \oplus_{i = 1}^3 \gen{e_i}$.
\item Let $x_1, x_2, x_3$ be indeterminates and let
\[h(x_1, x_2, x_3) = \sum_{i = 1}^3 x_i e_i \in \Mat_{26}(\F_q[x_1, x_2, x_3]). \]
\item Use the MeatAxe to find matrices $Q_X, Q_Y$ corresponding to the quadratic forms preserved by $\gen{X}$ and $\gen{Y}$.
\item A necessary condition on $h(x_1, x_2, x_3)$ for it to conjugate $\gen{X}$ to $\gen{Y}$ is the following equation:
\begin{equation}
h(x_1, x_2, x_3) Q_X h(x_1, x_2, x_3) = Q_Y
\end{equation}
which determines $676$ quadratic equations in $x_1, x_2, x_3$.
\item Hence we obtain $P \subseteq \F_q[x_1, x_2, x_3]$ where each element of $P$ has degree $2$ and $\abs{P} \leqslant 676$. Every $f \in P$ has $7$ coefficients, so we obtain an additive group homomorphism $\rho : \gen{P} \to \F_q^7$. 
\item Now $\dim \rho(P) = 3$, so let $b_1, b_2, b_3$ be a basis of $\rho(P)$ and let $f_i = \rho^{-1}(b_i) \in P$ for $i = 1, \dotsc, 3$.
\item By Proposition \ref{pr_sz_sz_facts2}, the variety of the ideal $I = \gen{f_1, f_2, f_3} \trianglelefteq \F_q[x_1, x_2, x_3]$ has size $2$. Find this variety using Theorem \ref{thm_poly_eqns_many_vars}.
\item Let $h_1, h_2$ be the corresponding elements of
  $\EndR_{\gen{Z}}(M)$. Clearly, one of them must also lie in
  $\Aut(M)$ and conjugate $\gen{X}$ to $\gen{Y}$, since our $g$ exists
  and satisfies the necessary conditions which led to $h_1$ and $h_2$.
  Use Theorem \ref{thm_nonexplicit_recognition} to determine which
  $h_i$ satisfies $\gen{X}^{h_i} = \gen{Y}$.
\end{enumerate}

Clearly, this is a Las Vegas algorithm and the expected time complexity follows from Theorem \ref{thm_poly_eqns_many_vars}.
\end{proof}

\begin{thm} \label{thm_bigree_conj_problem}
  Assume the Suzuki Conjectures, the Big Ree Conjectures, and an oracle for the discrete logarithm problem in $\F_q$. There
  exists a Las Vegas algorithm that, given $\gen{X}, \gen{Y} \leqslant
  \GL(26, q)$ such that $\gen{X} \cong \LargeRee(q) = \gen{Y}$, finds
  $g \in \GL(26, q)$ such that $\gen{X}^g = \gen{Y}$. The algorithm
  has expected time complexity $\OR{\abs{X} + \log(q)^3 + (\xi + \chi_D(q))
    (\log\log(q))^2}$ field operations.
\end{thm}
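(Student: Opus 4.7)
The plan is to reduce to Lemma \ref{lem_final_conjugation} by producing an auxiliary matrix that places copies of $\Sz(q)\times\Sz(q)$ from $\gen{X}$ and $\gen{Y}$ into the same subgroup of $\GL(26,q)$. Concretely, I would proceed as follows.

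First, apply Lemma \ref{lem_find_sz_wr_2} to $\gen{X}$ to obtain generating sets $Y_X$ and $g_X$ with $\gen{Y_X}\cong\Sz(q)$ and $\gen{Y_X, Y_X^{g_X}}\cong\Sz(q)\times\Sz(q)$, and analogously apply it to $\gen{Y}$ to obtain $Y_Y, g_Y$ with $\gen{Y_Y, Y_Y^{g_Y}}\cong\Sz(q)\times\Sz(q)$. This is the dominant cost and accounts for the $\log(q)^3 + (\xi+\chi_D(q))(\log\log(q))^2$ term. Use the MeatAxe on each factor $\gen{Y_X}$ and $\gen{Y_Y}$ to recover its irreducible $4$-dimensional constituent, invoking Conjecture \ref{pr_sz26_facts} to verify the submodule structure $M_{16}\oplus M_{10}$ with $M_{16}\cong S_1\otimes S_2$; this furnishes representations $\rho_X^{(1)},\rho_X^{(2)}$ for the two Suzuki factors in $\gen{X}$ and similarly $\rho_Y^{(1)},\rho_Y^{(2)}$ for $\gen{Y}$.

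Next, use Theorem \ref{cl_sz_constructive_recognition} on each of $\rho_X^{(i)}(\gen{Y_X}\times\gen{Y_X}^{g_X})$ and $\rho_Y^{(i)}(\gen{Y_Y}\times\gen{Y_Y}^{g_Y})$ to obtain effective isomorphisms onto the standard copy of $\Sz(q)$. Composing these with Proposition \ref{bigree_suzuki2_gens}, which prescribes a fixed embedding of $\Sz(q)\wr C_2$ in the standard $\LargeRee(q)$, one can construct a matrix $h\in\GL(26,q)$ that simultaneously conjugates both Suzuki factors in $\gen{X}$ to the corresponding factors inside the standard $\Sz(q)\times\Sz(q)$, and similarly a matrix $k\in\GL(26,q)$ for $\gen{Y}$. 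Letting $h_0=hk^{-1}$ and $\gen{Z}=\gen{Y_Y,Y_Y^{g_Y}}$, we then have $\gen{Z}\leqslant\gen{X}^{h_0}\cap\gen{Y}$.

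Finally, apply Lemma \ref{lem_final_conjugation} with input $\gen{X}^{h_0}$, $\gen{Y}$, and $\gen{Z}$ to obtain $g_0\in\GL(26,q)$ with $(\gen{X}^{h_0})^{g_0}=\gen{Y}$, and return $g=h_0g_0$. Correctness is immediate from the cited results, and it is Las Vegas because all ingredients are Las Vegas (constructive recognition of $\Sz(q)$, MeatAxe, Lemma \ref{lem_find_sz_wr_2}, Lemma \ref{lem_final_conjugation}).

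The main obstacle is the construction of $h$ and $k$: one must show that from pairs of effective isomorphisms onto the standard $\Sz(q)$ one can actually assemble a $26$-dimensional conjugating matrix. This uses the tensor-and-direct-sum structure $M_{16}\oplus M_{10}\cong (S_1\otimes S_2)\oplus(1.(S_3\oplus S_4).1)$ from Conjecture \ref{pr_sz_sz_facts1}: after fixing bases that realise this decomposition, the individual Suzuki conjugating matrices in $\GL(4,q)$ lift canonically to the Kronecker product acting on $M_{16}$ and to the block on $M_{10}$, giving $h$ up to the finite ambiguity $\Aut(M\vert_S)\cap\mathrm{O}^{-}(26,q)\cong C_2$ of Proposition \ref{pr_sz_sz_facts2}, which is resolved by a single call to Theorem \ref{thm_nonexplicit_recognition}. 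The resulting cost beyond Lemma \ref{lem_find_sz_wr_2} is the MeatAxe on each factor, bounded by $\OR{\abs{X}}$, plus the invocation of Lemma \ref{lem_final_conjugation}, giving the claimed complexity.
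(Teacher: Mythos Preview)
Your overall plan matches the paper's: use Lemma \ref{lem_find_sz_wr_2} in both $\gen{X}$ and $\gen{Y}$ to locate $\Sz(q)\times\Sz(q)$ subgroups, constructively recognise each Suzuki factor in dimension $4$, align the two copies by some $h_1\in\GL(26,q)$, and finish with Lemma \ref{lem_final_conjugation}. The difference is in how $h_1$ is produced.

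The paper's alignment step is simpler than yours and avoids the obstacle you flag. After constructively recognising each factor, one evaluates the resulting $\SLP$s for the \emph{standard generators} of $\Sz(q)$ back in dimension $26$, obtaining ordered generating sets $R_1,R_2$ for the two factors inside $\gen{X}$ and $R_3,R_4$ for those inside $\gen{Y}$. Now $\gen{R_1,R_2}$ and $\gen{R_3,R_4}$ are two $26$-dimensional representations of $\Sz(q)\times\Sz(q)$ given by \emph{matching} ordered generating sets, and the underlying modules are isomorphic (all copies of $\Sz(q)\times\Sz(q)$ in $\LargeRee(q)$ are conjugate). A single MeatAxe module-isomorphism call then yields $h_1$ directly; neither Proposition \ref{bigree_suzuki2_gens} nor the internal structure of $M_{10}$ is needed.

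Your proposed construction of $h$ by lifting the $4$-dimensional conjugating matrices has a gap at $M_{10}$. The Kronecker-product lift on $M_{16}\cong S_1\otimes S_2$ is fine, but $M_{10}$ has shape $1.(S_3\oplus S_4).1$, a \emph{non-split} extension. The $4$-dimensional conjugators control only the action on the composition factors, not the extension data, so there is no ``canonical'' lift to $\GL(10,q)$; matching the extensions is itself a module-isomorphism problem, at which point one may as well solve it once in dimension $26$ as the paper does. The $\Cent_2$ ambiguity of Proposition \ref{pr_sz_sz_facts2} that you invoke is a separate issue---it describes $\Aut(M\vert_S)\cap\mathrm{O}^{-}(26,q)$ and is already what drives Lemma \ref{lem_final_conjugation}---and does not supply the missing lift.
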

\begin{proof}
The algorithm proceeds as follows:
\begin{enumerate}
\item Use Lemma \ref{lem_find_sz_wr_2} to find $S_1 \subseteq \gen{X}$
  and $c_1 \in \gen{X}$ such that $\gen{S_1} \cong \Sz(q)$ and
  $\gen{S_1, c_1} \cong \Sz(q) \wr \Cent_2$. Let $\gen{S_2} = \gen{S_1}^{c_1}$ so that
  $\gen{S_1, S_2} \cong \Sz(q) \times \Sz(q)$.
\item Similarly find $S_3, S_4 \subseteq \gen{Y}$ such that $\gen{S_3, S_4} \cong \Sz(q) \times \Sz(q)$.
\item Use the MeatAxe to split up the modules of each $\gen{S_i}$. By
  Proposition \ref{pr_sz26_facts}, we obtain $4$-dimensional
  submodules, and hence surjective homomorphisms $\rho_i : \gen{S_i}
  \to \GL(4, q)$ for $i = 1, \dotsc, 4$.

\item Use Theorem \ref{section:suzuki_main_results} to find effective
  isomorphisms $\varphi_i : \rho_i(\gen{S_i}) \to \Sz(q)$ for $i = 1,
  \dotsc, 4$. If $S$ is the standard generating set for $\Sz(q)$, we
  then obtain standard generating sets $R_i$ for $\gen{S_i}$ by
  obtaining $\SLP$s for $S$ in the generators of $\rho_i(\gen{S_i})$
  and then evaluating these on $S_i$. 

\item Let $M_1$ be the module for $\gen{R_1, R_2}$ and let $M_2$ be
  the module for $\gen{R_3, R_4}$. Now $M_1 \cong M_2$, and all $R_i$
  are equal, so we can use the MeatAxe to find a change of basis
  matrix $h_1 \in \GL(26, q)$ between $M_1$ and $M_2$.
\item Then $\gen{S_1, S_2}^{h_1} = \gen{S_3, S_4}$ and hence $\gen{S_3,
    S_4} \leqslant \gen{X}^{h_1} \cap \gen{Y}$. Use Lemma
  \ref{lem_final_conjugation} to find $h_2 \in \GL(26, q)$ such that
  $\gen{X}^{h_1 h_2} = \gen{Y}$. Hence $g = h_1 h_2$.
\end{enumerate}

Clearly, this is a Las Vegas algorithm and the expected time complexity follows from Lemma \ref{lem_find_sz_wr_2}.
\end{proof}

\subsection{Constructive recognition}

Finally, we can now state and prove our main theorem.

\begin{thm} \label{thm_bigree_constructive_recognition} Assume the Suzuki Conjectures, the Big Ree Conjectues, and an oracle for the discrete logarithm
  problem in $\F_q$. There exists a Las Vegas algorithm that, given
  $\gen{X} \leqslant \GL(26, q)$ satisfying the assumptions in Section
  \ref{section:algorithm_overview}, with $q = 2^{2m + 1}$, $m > 0$ and
  $\gen{X} \cong \LargeRee(q)$, finds an effective isomorphism
  $\varphi : \gen{X} \to \LargeRee(q)$. The algorithm has expected time
  complexity $\OR{(\xi + \chi_D(q))(\log\log(q))^2 + \abs{X} + \log(q)^3}$ field
  operations.

  The inverse of $\varphi$ is also effective. Each image and pre-image of $\varphi$ can be computed using
  $\OR{1}$ field operations.
\end{thm}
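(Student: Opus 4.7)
The plan is to reduce the theorem essentially to the conjugacy result of Theorem \ref{thm_bigree_conj_problem}, since for the Big Ree groups we restrict to the natural representation from the outset. Given $\gen{X} \leqslant \GL(26, q)$ with $\gen{X} \cong \LargeRee(q)$, the input already lies in dimension $26$, so by Section \ref{section:bigree_indecomposables} it must be conjugate in $\GL(26, q)$ to the standard copy. Applying Theorem \ref{thm_bigree_conj_problem} with one argument being $\gen{X}$ and the other being the standard copy $\LargeRee(q)$, we obtain $g \in \GL(26, q)$ as a matrix such that $\gen{X}^g = \LargeRee(q)$. Define $\varphi(h) = h^g$; since $g$ is a fixed matrix of constant dimension $26$, each image $\varphi(h)$ is computed by a single conjugation, costing $\OR{1}$ field operations as claimed.

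For the inverse, I would carry out the preprocessing required by Algorithm \ref{alg:ryba_alg}. Using Algorithm \ref{alg:find_even_order_elt} together with Proposition \ref{pr_element_powering} and Proposition \ref{pr_trick_involution_class}, I find an involution $j_A \in \gen{X}$ of class $2A$. Then Lemma \ref{lem_bigree_find_centraliser} provides generators $X_A$ for $\Cent_{\gen{X}}(j_A)$, the effective inverse homomorphisms $\varphi_A, \pi_A$, and the subgroup $\O2(\Cent_{\gen{X}}(j_A)) = \gen{Z_A}$. An involution $j_B$ of class $2B$ is obtained as in the proof of Algorithm \ref{alg:ryba_alg} by multiplying $j_A$ with a random involution of class $2A$ from $\gen{Z_A}$ and powering up an even-order product (Proposition \ref{pr_invol_facts}), after which the analogue of Lemma \ref{lem_bigree_find_centraliser} for class $2B$ — using Theorem \ref{thm_psl_recognition} in place of Theorem \ref{cl_sz_constructive_recognition} — produces $X_B, Z_B, \varphi_B, \pi_B$.

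With this data, the inverse map $\varphi^{-1}$ is computed as follows: given $h \in \LargeRee(q)$, form $h^{g^{-1}} \in \gen{X}$ and invoke Algorithm \ref{alg:ryba_alg} on this element to produce an $\SLP$ in $X$, which is then evaluated to give the desired pre-image in $\gen{X}$. Membership in $\gen{X}$ is decided by Theorem \ref{thm_nonexplicit_recognition}. Again, because the matrix degree is fixed at $26$, each such evaluation runs in $\OR{1}$ field operations, matching the stated bound.

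Collecting complexities, the dominant contributions are: Theorem \ref{thm_bigree_conj_problem} contributes $\OR{\abs{X} + \log(q)^3 + (\xi + \chi_D(q))(\log\log(q))^2}$; the preprocessing involutions cost $\OR{(\xi + \chi_D(q) + \log(q) \log\log(q)) \log\log(q)}$ by Theorem \ref{thm_find_even_order_elt}; and Lemma \ref{lem_bigree_find_centraliser} (together with its $\SL(2,q)$ analogue) contributes $\OR{\abs{Y} + \log(q)^3 + (\xi + \chi_D(q))(\log\log(q))^2}$, which is absorbed into the stated bound. The Las Vegas property follows because each ingredient (Theorem \ref{thm_bigree_conj_problem}, Lemma \ref{lem_bigree_find_centraliser}, Algorithm \ref{alg:ryba_alg}, Theorem \ref{thm_nonexplicit_recognition}) is Las Vegas and correctness of $\varphi$ reduces to the verified identity $\gen{X}^g = \LargeRee(q)$. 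The only genuinely delicate point in the write-up will be bookkeeping the $\SLP$ lengths and random-element oracle costs through the composition of Theorem \ref{thm_bigree_conj_problem} and Algorithm \ref{alg:ryba_alg}, but no new mathematical obstacle arises beyond those already handled in the lemmas cited.
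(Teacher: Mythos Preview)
Your forward direction is exactly what the paper does: apply Theorem \ref{thm_bigree_conj_problem} to obtain $g$ with $\gen{X}^g = \LargeRee(q)$ and set $\varphi(h) = h^g$.

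The gap is in your treatment of $\varphi^{-1}$. Since $\varphi$ is literally conjugation by a fixed $26\times 26$ matrix $g$, its inverse is conjugation by $g^{-1}$: for $h \in \LargeRee(q)$ one simply returns $h^{g^{-1}}$, at cost $\OR{1}$ field operations. You in fact write down $h^{g^{-1}}$ yourself, but then feed it into Algorithm \ref{alg:ryba_alg} to obtain an $\SLP$ and evaluate that $\SLP$ --- a round trip that returns the element you already have. More importantly, your justification that this round trip costs $\OR{1}$ ``because the matrix degree is fixed at $26$'' is incorrect: the Ryba call has expected cost $\OR{(\xi + \chi_D(q))\log\log(q) + \log(q)^3 + \abs{Z_A} + \abs{Z_B}}$, which depends on $q$, not merely on the dimension. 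So your argument does not establish the claimed $\OR{1}$ bound for pre-images.

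Note also that, unlike Theorems \ref{cl_sz_constructive_recognition} and \ref{cl_ree_constructive_recognition}, the present statement does \emph{not} include ``performs preprocessing for constructive membership testing'' and asserts $\OR{1}$ for pre-images. This is the signal that the inverse here is meant to be the trivial conjugation, not an $\SLP$-based procedure. In the Suzuki and small Ree cases the isomorphism passes through tensor decomposition (the projection $\rho_U$ is not readily invertible), which is why the $\SLP$ route is needed there; here $\varphi$ is a bare change of basis, so no such machinery is required. Your extra preprocessing for Ryba is harmless for the overall complexity bound, but it is not part of what the theorem asserts or needs.
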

\begin{proof}
  Use Theorem \ref{thm_bigree_conj_problem} to obtain $c \in \GL(26,
  q)$ such that $\gen{X}^c = \LargeRee(q)$. An an effective
  isomorphism $\varphi : \gen{X} \to \LargeRee(q)$ is then defined by
  $g \mapsto g^c$, which clearly can be computed in $\OR{1}$ field operations. The expected time complexity follows from Theorem
  \ref{thm_bigree_conj_problem}. 
\end{proof}

\chapter{Sylow subgroups}
\label{chapter:sylow_subgroups}

We will now describe algorithms for finding and conjugating Sylow subgroups of the exceptional groups under consideration. Hence we consider the following problems:
\begin{enumerate}
\item Given $\gen{X} \leqslant \GL(d, q)$, such that $\gen{X} \cong G$
  for one of our exceptional groups $G$, and given a prime number $p
  \mid \abs{G}$, find $\gen{Y} \leqslant \gen{X}$ such that $\gen{Y}$
  is a Sylow $p$-subgroup of $\gen{X}$.
\item Given $\gen{X} \leqslant \GL(d, q)$, such that $\gen{X} \cong G$ for some of our exceptional groups $G$, and given a prime number $p \mid \abs{G}$ and $\gen{Y}, \gen{Z} \leqslant \gen{X}$ such that both $\gen{Y}$ and $\gen{Z}$ are Sylow $p$-subgroups of $\gen{X}$, find $c \in \gen{X}$ such that $\gen{Y}^c = \gen{Z}$.
\end{enumerate}

The second problem is the difficult one, and often there are some
primes that are especially difficult. We will refer to these problems
as the \lq\lq Sylow subgroup problems'' for a certain prime $p$. The
first problem is referred to as \lq\lq Sylow generation'' and the
second as \lq\lq Sylow conjugation''.

\section{Suzuki groups}
\label{section:sylow_suzuki}

We now consider the Sylow subgroup problems for the Suzuki groups. We
will use the notation from Section \ref{section:suzuki_theory}, and we
will make heavy use of the fact that we can use Theorem
\ref{cl_sz_constructive_recognition} to constructively recognise the
Suzuki groups. Hence we assume that $G$ satisfies the assumptions in
Section \ref{section:algorithm_overview}, so $\Sz(q) \cong G \leqslant
\GL(d, q)$.

By Theorem \ref{thm_suzuki_props} and Proposition \ref{pr_sz_subgroups_conj}, $\abs{G} = q^2
(q^2 + 1) (q - 1)$ and all three factors are pairwise relatively prime. Hence we
obtain three cases for a Sylow $p$-subgroup $S$ of $G$.

\begin{enumerate}
\item $p$ divides $q^2$, so $p = 2$. Then $S$ is conjugate to $\mathcal{F}$ and hence $S$ fixes a unique
  point $P_S$ of $\OV$, which is easily found using the MeatAxe.

\item $p$ divides $q - 1$. Then $S$ is cyclic and conjugate
  to a subgroup of $\mathcal{H}$. Hence $S$ fixes two distinct points
  $P, Q \in \OV$, and these points are easily found using the MeatAxe.

\item $p$ divides $q^2 + 1$. Then $S$ is cyclic and
  conjugate to a subgroup of $U_1$ or $U_2$, and $S$ has no fixed
  points. This is the difficult case.
\end{enumerate}

\begin{thm} \label{thm:sz_sylow2}
  Assume the Suzuki Conjectures and an oracle for the discrete logarithm problem in $\F_q$. There exist Las
  Vegas algorithms that solve the Sylow subgroup problems for $p = 2$
  in $\Sz(q) \cong G \leqslant \GL(d, q)$. Once constructive recognition has been performed, the
  expected time complexity of the Sylow generation is $\OR{d^3\log(q) (\log\log(q))^2}$ field operations, and $\OR{d^3
    (\abs{Y} + \abs{Z} + \log(q)(\log\log(q))^2) + \log(q)^3}$ field operations for the Sylow
  conjugation.
\end{thm}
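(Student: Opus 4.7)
The key observation is that for $p=2$ a Sylow $p$-subgroup of $G=\gen{X}$ is conjugate to $\mathcal{F}$, and by Theorem~\ref{thm_suzuki_props} it is the full stabiliser of a unique point of the (appropriate image of the) ovoid $\OV$. Hence Sylow generation is already essentially solved by the preprocessing inside constructive recognition, and Sylow conjugation reduces to mapping one ovoid point to another via the row-reduction machinery of Lemma~\ref{sz_row_operations}.

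For Sylow generation, once Theorem~\ref{cl_sz_constructive_recognition} has been applied, the preprocessing step of Theorem~\ref{sz_thm_pre_step} has already produced the standard generators of $\O2(G_{P_{\infty}}) = \mathcal{F}$ as $\SLP$s in $X$, each of length $\OR{(\log\log(q))^2}$, and there are $\OR{\log(q)}$ of them. Evaluating these $\SLP$s on $X$ gives a generating set $Y\subseteq \gen{X}$ whose image under $\varphi$ is $\mathcal{F}$, so $\gen{Y}$ is a Sylow $2$-subgroup of $\gen{X}$. The evaluations cost $\OR{d^3 \log(q)(\log\log(q))^2}$ field operations.

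For Sylow conjugation the plan is:
\begin{enumerate}
\item Apply $\varphi$ to each element of $Y$ and $Z$, costing $\OR{d^3(\abs{Y}+\abs{Z})}$ field operations, to obtain generating sets of two Sylow $2$-subgroups of the standard copy $\Sz(q)$.
\item Inside $\F_q^4$ locate the unique common $1$-dimensional fixed subspace of $\varphi(\gen{Y})$ and of $\varphi(\gen{Z})$; by Theorem~\ref{thm_suzuki_props} these are the points $P,Q\in\OV$ fixed by $\varphi(\gen{Y})$ and $\varphi(\gen{Z})$, respectively. In dimension $4$ this is a constant-time linear algebra computation per generator, so the cost is absorbed into the previous step.
\item In the standard copy, build an element $c'\in\Sz(q)$ with $Pc'=Q$ as follows. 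If necessary, use $T$ to move $P_{\infty}$ out of the way so that neither $P$ nor $Q$ equals $P_{\infty}$; then by the third part of Lemma~\ref{sz_row_operations} find $g_P\in\O2(G_{P_{\infty}})$ with $P g_P = P_0$ and $g_Q\in\O2(G_{P_{\infty}})$ with $Q g_Q = P_0$, so that $c' = g_P g_Q^{-1}$ (composed with $T$'s as appropriate) sends $P$ to $Q$. The cost is $\OR{\log(q)^3}$ field operations, and $c'$ is obtained as an $\SLP$ of length $\OR{\log(q)}$ in the standard generators of $\mathcal{F}$, $\mathcal{F}^T$ and in $T$. Since each standard generator is itself an $\SLP$ in $X$ of length $\OR{(\log\log(q))^2}$, the composed $\SLP$ for $c'$ in $X$ has length $\OR{\log(q)(\log\log(q))^2}$.
\item Evaluate this $\SLP$ on $X$ to obtain $c\in\gen{X}$; this costs $\OR{d^3 \log(q)(\log\log(q))^2}$ field operations.
\end{enumerate}
Since each Sylow $2$-subgroup of $\gen{X}$ is the unique Sylow $2$-subgroup of the stabiliser of its fixed ovoid point, and $c$ sends the fixed point of $\gen{Y}$ to the fixed point of $\gen{Z}$, we conclude $\gen{Y}^c=\gen{Z}$.

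Neither step is a serious obstacle: everything is packaged by Theorem~\ref{cl_sz_constructive_recognition}, Theorem~\ref{sz_thm_pre_step} and Lemma~\ref{sz_row_operations}. The only technical care needed is the small case analysis ensuring $P,Q\neq P_{\infty}$ before invoking the row reduction, and in bookkeeping $\SLP$ lengths through the composition ``generator-of-$\mathcal{F}$ as $\SLP$ in $X$''; both are routine. Summing the four contributions in the conjugation step yields the stated complexity $\OR{d^3(\abs{Y}+\abs{Z}+\log(q)(\log\log(q))^2)+\log(q)^3}$, and the Las Vegas property is inherited from Theorem~\ref{cl_sz_constructive_recognition} and the MeatAxe-based subroutines.
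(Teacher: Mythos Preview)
Your proposal is correct and follows essentially the same approach as the paper: reduce to the standard copy via the effective isomorphism, exploit that a Sylow $2$-subgroup is the unique point stabiliser in $\OV$, and use the row-reduction machinery of Lemma~\ref{sz_row_operations} together with the standard generators from Theorem~\ref{sz_thm_pre_step} to produce the conjugating element. The only cosmetic differences are that the paper conjugates $L$ by a random $h$ to produce a \emph{random} Sylow $2$-subgroup (you just return $\mathcal{F}$ itself), and that the paper routes both fixed points to $P_{\infty}$ using $U$ rather than to $P_0$ using $L$; in the degenerate case the paper obtains the $\SLP$ for $T$ explicitly via Algorithm~\ref{alg:sz_main_alg}, which is what your phrase ``composed with $T$'s as appropriate'' must unpack to.
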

\begin{proof}
  Let $H = \Sz(q)$. Using the effective isomorphism, it is sufficient
  to solve the problems in the standard copy. 

  The constructive
  recognition uses Theorem \ref{sz_thm_pre_step} to find sets $L$ and
  $U$ of \lq \lq standard generators'' for $H_{P_{\infty}}$ and
  $H_{P_0}$, respectively.

  A generating set for a random Sylow $2$-subgroup $S$ of $H$ can
  therefore be computed by taking a random $h \in H$, and as
  generating set for $S$ take $L^h$. To obtain a
  Sylow $2$-subgroup $R$ of $G$, note that we already have the $\OR{\log(q)}$ generators
  $L$ and $h$ as $\SLP$s of length $\OR{(\log\log(q))^2}$, so we can
  evaluate them on $X$. Hence the expected time complexity is as stated.
  
  Given two Sylow $2$-subgroups of $G$, we use the effective
  isomorphism to map them to $H$ using $\OR{d^3 (\abs{Y} + \abs{Z})}$
  field operations. We can use the MeatAxe to find the points $P_Y,
  P_Z \in \OV$ that are fixed by the subgroups. Then use Lemma \ref{sz_row_operations} with $U$ to find $a \in H_{P_0}$ such
  that $P_Y a = P_{\infty}$ and $b \in H_{P_0}$ such that $P_Z b =
  P_{\infty}$. Then $a b^{-1}$ conjugates one Sylow subgroup to the
  other, and we already have this element as an $\SLP$ of length
  $\OR{\log(q) (\log\log(q))^2}$. 

  In the case where $P_Y = P_{\infty}$ and $P_Z = P_0$, we know that
  $T$ maps $P_Y$ to $P_Z$. Then use Algorithm \ref{alg:sz_main_alg} to
  obtain an $\SLP$ for $T$ of length $\OR{\log(q)(\log\log(q))^2}$.

  Hence we can evaluate it on $X$ and obtain $c \in G$ that conjugates
  $\gen{Y}$ to $\gen{Z}$. Thus the expected time complexity follows
  from Lemma \ref{sz_row_operations} and Theorem
  \ref{thm_element_to_slp}.
\end{proof}

\begin{thm} \label{thm:sz_sylow_easy_cyclic}
  Assume the Suzuki Conjectures and an oracle for the discrete logarithm problem in $\F_q$. There exist Las
  Vegas algorithms that solve the Sylow subgroup problems for $p \mid q - 1$
  in $\Sz(q) \cong G \leqslant \GL(d, q)$. Once constructive recognition has been performed, the
  expected time complexity of the Sylow generation is $\OR{(\xi(d) + \log(q) \log\log(q) + d^3)\log\log(q)}$ field operations, and $\OR{d^3
    (\abs{Y} + \abs{Z} + \log(q)(\log\log(q))^2) + \log(q)^3}$ field operations for the Sylow
  conjugation.
\end{thm}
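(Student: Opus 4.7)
The plan is to reduce both problems to the standard copy $H = \Sz(q)$ via the effective isomorphism $\varphi \colon \gen{X} \to H$ already in hand from constructive recognition (Theorem \ref{cl_sz_constructive_recognition}), whose inverse is also effective. In particular, once we produce the required Sylow subgroup or conjugating element inside $H$ as an $\SLP$ in $\varphi(X)$, evaluating that $\SLP$ on $X$ yields the answer in $\gen{X}$; because $H$ has constant matrix degree $4$, all work inside $H$ is cheap.

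For Sylow generation I will draw random $g \in \gen{X}$ from the oracle at cost $\OR{\xi(d)}$ per element (obtaining $g$ as an $\SLP$ in $X$), apply $\varphi$ at cost $\OR{d^3}$, and compute the order of $\varphi(g)$ in $H$ in $\OR{\log(q)\log\log(q)}$ field operations using Section \ref{section:matrix_orders} with constant matrix degree. By Proposition \ref{sz_totient_prop}, $\varphi(g)$ has order $q-1$ after $\OR{\log\log(q)}$ expected trials. Writing $q-1 = p^e s$ with $\gcd(p,s) = 1$ (note $p$ is necessarily odd since $q$ is even), Proposition \ref{pr_element_powering} extracts from $g$ its unique power $g^s$ of order $p^e$; by Proposition \ref{pr_sz_subgroups_conj} any cyclic subgroup of order dividing $q-1$ lies in a unique conjugate of $\mathcal{H}$, so $\langle g^s\rangle$ is a Sylow $p$-subgroup of $\gen{X}$. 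Pulling back through the $\SLP$ yields the generator in $X$, and the costs add up to $\OR{(\xi(d)+\log(q)\log\log(q)+d^3)\log\log(q)}$ as claimed.

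For Sylow conjugation I first apply $\varphi$ to $\gen{Y}$ and $\gen{Z}$ at cost $\OR{d^3(\abs{Y}+\abs{Z})}$, obtaining Sylow $p$-subgroups $\gen{Y'}, \gen{Z'}$ of $H$. By Proposition \ref{pr_sz_subgroups_conj} each is contained in a unique conjugate of the cyclic group $\mathcal{H}$, and the non-identity elements of such a conjugate fix exactly the two points of $\OV$ stabilised by that conjugate; thus each of $\gen{Y'}, \gen{Z'}$ has a unique unordered pair of fixed points on $\OV$, which the MeatAxe (Section \ref{section:meataxe}) recovers in degree four at cost $\OR{\abs{Y}+\abs{Z}}$. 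Call the pairs $\{P_Y, Q_Y\}$ and $\{P_Z, Q_Z\}$. I then use the third statement of Lemma \ref{sz_row_operations} (together, if necessary, with a single $\SLP$ for $T$ obtained from Algorithm \ref{alg:sz_main_alg} to swap the roles of $P_\infty$ and $P_0$) to build elements $t_Y, t_Z \in H$, expressed in the standard generators $L, U$, sending $\{P_Y, Q_Y\}$ and $\{P_Z, Q_Z\}$ respectively onto $\{P_\infty, P_0\}$ as sets. Because $T$ normalises $\mathcal{H}$, the set-stabiliser of $\{P_\infty, P_0\}$ in $H$ is $\Norm_H(\mathcal{H}) \cong \Dih_{2(q-1)}$, and its unique Sylow $p$-subgroup is the Sylow $p$-subgroup of $\mathcal{H}$ itself (as $p$ is odd); hence $c := t_Y t_Z^{-1}$ conjugates $\gen{Y'}$ onto $\gen{Z'}$. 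The resulting $\SLP$ for $c$ has length $\OR{\log(q)(\log\log(q))^2}$ by Lemma \ref{sz_row_operations} and Theorem \ref{thm_element_to_slp}, so evaluating it on $X$ costs $\OR{d^3 \log(q)(\log\log(q))^2}$; adding the $\OR{\log(q)^3}$ from the row reductions in $H$ matches the stated bound.

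The main obstacle is purely bookkeeping: handling the ordering of the fixed-point pairs and the degenerate positions in which one of $P_Y, Q_Y, P_Z, Q_Z$ already coincides with $P_\infty$ or $P_0$, so that the row-reduction is carried out in the opposite unipotent block. The swap $P_\infty \leftrightarrow P_0$ is produced by one call to Algorithm \ref{alg:sz_main_alg} returning an $\SLP$ for $T$, which is absorbed in the $\log(q)^3$ term; once this case split is in place, correctness of the conjugator follows from the observation about $\Norm_H(\mathcal{H})$ above, and the Las Vegas property is inherited from Theorem \ref{cl_sz_constructive_recognition}, the MeatAxe, and Algorithm \ref{alg:sz_main_alg}.
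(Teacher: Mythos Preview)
Your proposal is correct and follows essentially the same approach as the paper: random search for an element of order $q-1$ followed by powering for Sylow generation, and for conjugation, mapping each Sylow subgroup's pair of fixed points in $\OV$ onto $\{P_\infty,P_0\}$ via the row reductions of Lemma \ref{sz_row_operations}. The only cosmetic differences are bookkeeping: the paper avoids invoking $T$ by first ordering the points so that $P_Y\neq P_0$ and $P_Z\neq P_0$, then mapping the \emph{ordered} pair $(P_Y,Q_Y)$ to $(P_\infty,P_0)$ using $U$ then $L$ (and similarly for $Z$), so that both images land in the cyclic group $\mathcal{H}$ itself; your route via the set-stabiliser $\Norm_H(\mathcal{H})\cong \Dih_{2(q-1)}$ and its unique odd Sylow subgroup is an equally valid (and arguably more explicit) justification of why $t_Y t_Z^{-1}$ does the job.
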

\begin{proof}
  Let $H = \Sz(q)$. In this case the Sylow generation is easy, since
  we can determine the highest power $e$ of $p$ such that $p^e \mid q
  - 1$, find a random element of pseudo-order $q - 1$, use Proposition
  \ref{pr_element_powering} to obtain an element of order $p^e$, then
  evaluate its $\SLP$ on $X$. By Proposition \ref{sz_totient_prop},
  the expected number of random selections, and hence the length of
  the $\SLP$, is $\OR{\log \log q}$, and we need $\OR{\log(q)
    \log\log(q)}$ field operations to find the order. Then we evaluate
  the $\SLP$ on $X$ using $\OR{d^3 \log \log(q)}$ field operations, so
  the expected time complexity is as stated.

  For the Sylow conjugation, recall that the constructive recognition
  uses Theorem \ref{sz_thm_pre_step} to find sets $L$ and $U$ of \lq \lq
  standard generators'' for $H_{P_{\infty}}$ and $H_{P_0}$,
  respectively.

  Given two Sylow $p$-subgroups of $G$, we use the effective
  isomorphism to map them to $H$ using $\OR{d^3 (\abs{Y} + \abs{Z})}$
  field operations. Let $H_Y, H_Z \leqslant H$ be the resulting
  subgroups. Using the MeatAxe, we can find $P_Y \neq Q_Y \in \OV$
  that are fixed by $H_Y$ and $P_Z \neq Q_Z \in \OV$ that are fixed by
  $H_Z$. Order the points so that $P_Y \neq P_0$ and $P_Z \neq P_0$.

  Use Lemma \ref{sz_row_operations} with $U$ to find $a_1 \in H_{P_0}$
  such that $P_Y a_1 = P_{\infty}$. Then use $L$ to find $a_2 \in
  H_{P_{\infty}}$ such that $Q_Y a_1 a_2 = P_0$. Similarly we find
  $b_1, b_2 \in H$ such that $P_Z b_1 b_2 = P_{\infty}$ and $Q_Z b_1
  b_2 = P_0$. Then $a_1 a_2 (b_1 b_2)^{-1}$ conjugates one Sylow
  subgroup to the other, and we already have this element as an $\SLP$
  of length $\OR{\log(q) (\log\log(q))^2}$. Hence we can evaluate it on $X$ and obtain
  $c \in G$ that conjugates $\gen{Y}$ to $\gen{Z}$. Thus the expected
  time complexity is as stated.

\end{proof}

\begin{lem} \label{lem_find_irreducible_conjs}
There exists a Las Vegas algorithm that, given $g, h \in \Sz(q)$ with
$\abs{g} = \abs{h}$ both dividing $q \pm t + 1$, finds $c \in \gen{g}$
such that $c$ is conjugate to $h$ in $\Sz(q)$. The expected time
complexity is $\OR{\log q}$ field operations.
\end{lem}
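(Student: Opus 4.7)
The plan reduces the problem to a discrete logarithm in the cyclic group of eigenvalues. Since $q = 2^{2m+1}$ is even, $q \pm t + 1$ is odd, so $n := \abs{g} = \abs{h}$ is odd. By Proposition \ref{sz_conjugacy_classes}, two elements of odd order in $\Sz(q)$ are conjugate if and only if they have the same trace, and in particular it suffices that they share a characteristic polynomial. By Theorem \ref{thm_suzuki_props}(8), the subgroups $\gen{g}$ and $\gen{h}$ lie in conjugates of the cyclic Hall subgroups $U_1$ or $U_2$, which act irreducibly on the natural module $\F_q^4$; hence $f_g := \text{charpoly}(g)$ and $f_h := \text{charpoly}(h)$ are irreducible of degree $4$ over $\F_q$.

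The algorithm then proceeds as follows. First compute $f_g$ and $f_h$ using $\OR{1}$ field operations. Represent $\F_{q^4} \cong \F_q[x]/(f_g)$, and let $\lambda := \bar{x}$, so $\lambda$ is an eigenvalue of $g$ of order exactly $n$. Factor $f_h$ over $\F_{q^4}$ to obtain any root $\mu \in \F_{q^4}^{\times}$; by Theorem \ref{thm_solve_univariate_polys} applied to a degree-$4$ polynomial, this requires $\OR{\log q}$ expected field operations. Since $n \mid q^2 + 1$, both $\lambda$ and $\mu$ lie in the unique cyclic subgroup of $\F_{q^4}^{\times}$ of order $n$, so one solves the discrete logarithm $\lambda^k = \mu$ in that subgroup and returns $c := g^k$. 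Correctness is immediate: $\abs{\mu} = n$ forces $\gcd(k,n)=1$, so $g^k$ still acts irreducibly, and its eigenvalues form the Galois orbit of $\lambda^k = \mu$ over $\F_q$, which coincides with the eigenvalue set of $h$; thus $f_{g^k} = f_h$ and $g^k$ is conjugate to $h$ by the criterion above.

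The main obstacle is the discrete logarithm step. All other operations---computing characteristic polynomials of a fixed-size matrix, factoring a degree-$4$ polynomial via Theorem \ref{thm_solve_univariate_polys}, and the final matrix power $g^k$---fit comfortably within the claimed $\OR{\log q}$ bound. The discrete logarithm lives in a subgroup of $\F_{q^4}^{\times}$ of order up to $q + t + 1$, and meeting the stated complexity requires an efficient discrete-log procedure for $\F_{q^4}^{\times}$, in the same spirit as the $\F_q$-oracle assumed throughout the thesis; this is the step that would need to be justified carefully in the full proof.
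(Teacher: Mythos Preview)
Your setup is essentially the same as the paper's---build $\F_{q^4}\cong\F_q[x]/(f_g)$, find a root $\mu$ of $f_h$ there, and use Proposition~\ref{sz_conjugacy_classes} to certify conjugacy---but you miss the key shortcut that makes the complexity bound hold. When you factor $f_h$ over $\F_q[x]/(f_g)$, the root $\mu$ is handed to you as a polynomial $p(\lambda)$ of degree at most $3$ in $\lambda=\bar{x}$. The paper simply sets $c := p(g)$: under the ring isomorphism $\F_q[x]/(f_g)\to \F_q(\gen{g})\subseteq \Mat_4(\F_q)$ sending $\lambda\mapsto g$, the element $c$ corresponds to $\mu$, hence has minimal polynomial $f_h$, hence the same trace and order as $h$. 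This costs $\OR{1}$ field operations once $\mu$ is known, and no discrete logarithm is ever needed.

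Your discrete-log step to recover an exponent $k$ with $\lambda^k=\mu$ is therefore unnecessary, and it is precisely the step that breaks the $\OR{\log q}$ claim. You correctly flag it as the obstacle; the resolution is not to justify it but to eliminate it. One further point: with $c=p(g)$ you no longer get $c\in\gen{g}$ for free from an exponent. The paper instead observes that $\gen{g}$ and $\gen{c}$ are both subgroups of $\F_q(\gen{g})^{\times}\cong\F_{q^4}^{\times}$ of the same order $n$, and a cyclic group has a unique subgroup of each order, so $\gen{c}=\gen{g}$.
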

\begin{proof}
The algorithm proceeds as follows:
\begin{enumerate}
\item Find the minimal polynomial $f_1$ of $g$. By Theorem
  \ref{thm_suzuki_props}, $g$ acts irreducibly on $\F_q^4$,
  so $f_1$ is irreducible. Let $F = \F_q[x] / \gen{f_1}$, so that $F$ is
  the splitting field of $f_1$.  Clearly $F \cong \F_{q^4}$ and
  $F^{\times} = \gen{\alpha}$ where $\alpha$ is a root of
  $f_1$. Moreover, $x \mapsto g$ defines an isomorphism $F \to
  \F_q(\gen{g})$, where the latter is the subfield of $\Mat_4(\F_q)$
  generated by $g$.
\item Find the minimal polynomial $f_2$ of $h$. Then $F$ is also the
  splitting field of $f_2$, and if $\beta \in F$ is a root of $f_2$,
  then $\beta$ is expressed as a polynomial $f_3$ in $\alpha$, with
  coefficients in $\F_q$. Similarly, $x \mapsto h$ defines an isomorphism
  $F \to \F_q(\gen{h})$.
\item 
  Now $f_3$ defines an isomorphism $\F_q(\gen{h}) \to \F_q(\gen{g})$
  as $h \mapsto f_3(g)$, because $h$ and $f_3(g)$ have the same
  minimal polynomial.
  Hence if we let $c = f_3(g)$, then $c$ has the same eigenvalues as
  $h$, so $c$ and $h$ are conjugate in $\GL(4, q)$. Then $\abs{c} = \abs{h}$ and $\Tr(c) =
  \Tr(h)$, so by Proposition \ref{sz_conjugacy_classes}, $c$ is also
  conjugate to $h$ in $\Sz(q)$. Moreover, both $\gen{g}$ and $\gen{c}$
  are subgroups of $\F_q(\gen{g})^{\times}$, but since $\abs{c} =
  \abs{h} = \abs{g}$, they must be the same. Thus $c \in \gen{g}$.
\end{enumerate}

By \cite{MR1350753}, the minimal polynomial is found using $\OR{1}$
field operations. Hence by Theorem \ref{thm_solve_univariate_polys}, the expected time complexity is as stated.
\end{proof}

The conjugation algorithm described in the following result is essentially due to Mark Stather and Scott Murray.

\begin{thm} \label{thm:sz_sylow_hard_cyclic}
  Assume the Suzuki Conjectures and an oracle for the discrete logarithm problem in $\F_q$. There exist Las
  Vegas algorithms that solve the Sylow subgroup problems for $p \mid q^2 + 1$
  in $\Sz(q) \cong G \leqslant \GL(d, q)$. Once constructive recognition has been performed, the
  expected time complexity of the Sylow generation is $\OR{(\xi(d) + \log(q) \log\log(q) + d^3)\log\log(q)}$ field operations, and $\OR{\xi(d) + d^3
    (\abs{Y} + \abs{Z} + \log(q) (\log\log(q))^2) + \log(q)^3}$ field operations for the Sylow
  conjugation.
\end{thm}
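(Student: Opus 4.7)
The plan is to treat the Sylow generation and Sylow conjugation separately, with the main work going into the conjugation step, since the generation is a direct consequence of earlier machinery.

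For Sylow generation, I would use the fact that exactly one of $q+t+1$ or $q-t+1$ is divisible by $p^e$, where $p^e$ is the $p$-part of $q^2+1$. Repeatedly sample random elements of $\Sz(q)$ (in the standard copy, via the effective isomorphism) and retain those whose pseudo-order equals that specific divisor; Proposition~\ref{sz_totient_prop} bounds the expected number of random selections by $\OR{\log\log q}$, and each test costs $\OR{\log(q)\log\log(q))}$ field operations by Proposition~\ref{pr_element_powering}. Powering up gives a Sylow generator, presented as an $\SLP$ of length $\OR{\log\log q}$, which we then evaluate on $X$ at cost $\OR{d^3\log\log q}$. This matches the generation bound.

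For the conjugation, first transfer $Y$ and $Z$ into subgroups $Y^*, Z^* \leqslant H = \Sz(q)$ using the effective isomorphism, at cost $\OR{d^3(\abs{Y}+\abs{Z})}$, and pick cyclic generators $g$ of $Y^*$ and $h$ of $Z^*$, each of order $p^e$. Since $\gcd(q^2+1,q^2-1)=1$ by Theorem~\ref{thm_suzuki_props}(2), every non-identity element of $\gen{g}\cup\gen{h}$ has irreducible minimal polynomial over $\F_q$. Apply Lemma~\ref{lem_find_irreducible_conjs} to produce $c\in\gen{g}$ that is $\Sz(q)$-conjugate to $h$. A MeatAxe module-isomorphism test then yields some $z_0\in\GL(4,q)$ with $c^{z_0}=h$. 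The main obstacle is to replace $z_0$ by a conjugator lying in $\Sz(q)$: the set of all $\GL(4,q)$-conjugators of $c$ to $h$ is the coset $\F_q[c]^\times\cdot z_0\cong\F_{q^4}^\times\cdot z_0$, and the $\Sz(q)$-conjugators form a coset of the cyclic subgroup of order $q\pm t+1$ inside it. Using the description from Section~\ref{section:sz_alt_def} of $\Sz(q)$ as the fixed points of the exceptional automorphism $\Psi$ of $\Sp(4,q)$, and observing that $\Psi$ fixes both $c$ and $h$, we have $\Psi(z_0)z_0^{-1}=u'\in\F_q[c]^\times$, so the condition $\Psi(uz_0)=uz_0$ becomes a Hilbert-$90$-type equation $\Psi(u)u^{-1}=u'$ in the cyclic extension $\F_{q^4}/\F_q$. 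This is solved explicitly in $\OR{\log q}$ field operations using the discrete-logarithm oracle (the $\F_q$-oracle suffices after writing $\F_{q^4}=\F_q[c]$ in the basis $1,c,c^2,c^3$). The resulting $z=uz_0$ satisfies $\gen{g}^z=\gen{c}^z=\gen{h}$.

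To conclude, feed $z$ through the constructive membership algorithm (Algorithm~\ref{alg:sz_main_alg}) to obtain an $\SLP$ in the standard generators of length $\OR{\log(q)(\log\log q)^2}$, and evaluate that $\SLP$ on $X$ at cost $\OR{d^3\log(q)(\log\log q)^2}$ to get the desired conjugator in $G$. The $\log(q)^3$ term in the stated complexity is precisely the cost of the row reductions of Lemma~\ref{sz_row_operations} inside Algorithm~\ref{alg:sz_main_alg}, while $\xi(d)$ covers the random-element samples used implicitly by that algorithm and by the MeatAxe. The hardest step conceptually is the Hilbert~$90$ adjustment, since naive random search in the very thin subgroup $\Sz(q)\cap\F_q[c]^\times z_0$ inside $\F_{q^4}^\times z_0$ would be exponentially slow; the elegance of the algorithm lies in converting this to a single discrete-log computation.
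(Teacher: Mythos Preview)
Your Sylow generation argument matches the paper's and is fine.

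For the conjugation, your overall strategy---map to the standard copy, use Lemma~\ref{lem_find_irreducible_conjs} to align conjugacy classes, find a $\GL(4,q)$-conjugator, then correct it into $\Sz(q)$---is exactly the paper's. But your correction step has a genuine gap: you write ``$\Psi(z_0)z_0^{-1}=u'\in\F_q[c]^\times$'' and then solve a single Hilbert-90 equation. The problem is that $\Psi$ is an automorphism of $\Sp(4,q)$, not of $\GL(4,q)$ (see Section~\ref{section:sz_alt_def}); the exterior-square-plus-Frobenius recipe makes sense symbolically on any matrix, but it is not multiplicative outside $\Sp(4,q)$, so from $c^{z_0}=h$ and $\Psi(c)=c$, $\Psi(h)=h$ you \emph{cannot} deduce that $\Psi(z_0)$ also conjugates $c$ to $h$, and hence $\Psi(z_0)z_0^{-1}$ need not lie in $\Cent_{\GL(4,q)}(c)$ at all. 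Your Hilbert-90 equation is therefore not well-posed.

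The paper repairs exactly this by inserting an intermediate step: first correct $g_1$ (your $z_0$) into $\Sp(4,q)$ using the order-$2$ automorphism $a\mapsto J a^{-T} J^{-1}$ of $\Cent_{\GL(4,q)}(h_y)\cong\F_{q^4}^\times$, which yields a genuine norm equation over $\F_{q^2}$; only then, with $g_2g_1\in\Sp(4,q)$, does $\Psi$ apply, and the second correction into $\Sz(q)$ becomes a discrete-log computation in the cyclic group of order $q^2+1$ inside $\F_{q^4}^\times$. Your one-step shortcut collapses precisely because it skips the symplectic adjustment that makes $\Psi$ usable. A secondary point: your remark that ``the $\F_q$-oracle suffices after writing $\F_{q^4}=\F_q[c]$'' does not hold---rewriting the field does not reduce a discrete log in a cyclic group of order $q^2+1$ to one in $\F_q^\times$; the paper simply invokes the discrete-log oracle in this larger cyclic group.
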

\begin{proof}
  Let $H = \Sz(q)$. The Sylow generation is analogous to the case in Theorem \ref{thm:sz_sylow_easy_cyclic}, since by Proposition \ref{sz_totient_prop}, we can easily find elements of pseudo-order $q \pm t + 1$.
  
  Given two Sylow $p$-subgroups
  of $G$, we use the effective isomorphism to map them to $H$, using
  $\OR{d^3 (\abs{Y} + \abs{Z})}$ field operations. The resulting
  generating sets must contain elements $h_y, h_z \in H$ of order $p$,
  since the Sylow subgroups are cyclic. Let $J$ be as in
  \eqref{standard_symplectic_form}, so that $H$ preserves the symplectic
  form $J$, and let $\Psi$ be as in Section \ref{section:sz_alt_def}:
  the automorphism of $\Sp(4, q)$ whose set of fixed points is $\Sz(q)$.

\begin{enumerate}
\item Use Lemma \ref{lem_find_irreducible_conjs} to replace $h_y$. Henceforth assume that $h_y$ and $h_z$ are conjugate in $H$.

\item Find $g_1 \in \GL(4, q)$ such that $h_y^{g_1} = h_z$. This can be
  done by a similarity test, or computation of Jordan forms, using
  \cite{steel_canonical_forms}. The next step is to find a matrix $g_2$ such that $g_2 g_1 \in \Sp(4, q)$, and $g_2 g_1$ also conjugates $h_y$ to $h_z$.

\item Let $A = \Cent_{\GL(4, q)}(h_y)$ (the automorphism group of the
  module of $\gen{h_y}$). Since $\gen{h_y}$ is irreducible, by Schur's Lemma $A \cong \F^{\times}_{q^4}$. Such an
  isomorphism $\theta : A \to \F_{q^4}$, and its inverse, can be found
  using the MeatAxe.

\item Now define an automorphism $\varphi$ of $A$ as $\varphi(a) = J a^T
  J^{-1}$. Then $\varphi$ has order $2$ and $A \cong \F^{\times}_{q^4}$. Recall that $\F_{q^4}$ has a unique automorphism of order $2$ ($k \mapsto
  k^{q^2}$), which must be $\theta \circ \varphi \circ \theta^{-1}$.

\item Let $t_1 = J g_1^{-T} J^{-1} g_1^{-1}$ and observe that $t_1 \in A$.
  We want to find $g_2 \in A$ such that $g_2 g_1 J (g_2 g_1)^T = J$,
  which is equivalent to $\varphi(g_2) g_2 = t_1$. Using $\theta$, this is
  a norm equation in $\F_{q^4}$ over $\F_{q^2}$. In other words, we consider
  $\theta(g_2)^{q^2 + 1} = \theta(t_1)$, which is solved for example
  using \cite[Lemma 2.2]{maximal_subs}.

\item Hence $g_2 g_1$ lies in $\Sp(4, q)$, and $g_2$ fixes $h_y$, so
  $g_2 g_1$ conjugates $h_y$ to $h_z$. The next step is to find a
  matrix $g_3$, such that $g_3 g_2 g_1 \in \Sz(q)$, and such that $g_3 g_2 g_1$
  also conjugates $h_y$ to $h_z$. Hence we want $g_3 \in \Sp(4, q)$
  and $\Psi(g_3 g_2 g_1) = g_3 g_2 g_1$.

\item Find $w \in \F^{\times}_{q^4}$ of order $q^2 + 1$, by taking the
  $q^2 - 1$ power of a primitive element. Then $\varphi(\theta^{-1}(w))
  \theta^{-1}(w) = 1$, which implies that $ \theta^{-1}(w) J
  \theta^{-1}(w)^T = J$, and hence $\theta^{-1}(w) \in \Sp(4, q)$.
  Similarly, every element of $\gen{w}$ gives rise to matrices in
  $\Sp(4, q)$. We therefore want to find an integer $i$, such that $w^i
  = g_3$.

\item Moreover, we want 
\begin{equation}
\begin{split} 
\Psi(\theta^{-1}(w^i) g_2 g_1) &= \theta^{-1}(w^i) g_2 g_1 \Leftrightarrow \\
\Psi(\theta^{-1}(w))^i \Psi(g_2 g_1) &= \theta^{-1}(w)^i g_2 g_1 \Leftrightarrow \\
\Psi(\theta^{-1}(w))^i \theta^{-1}(w)^{-i} &= g_2 g_1 \Psi(g_2 g_1)^{-1} \Leftrightarrow \\
\theta(\Psi(\theta^{-1}(w)))^i w^{-i} &= \theta(g_2 g_1 \Psi(g_2 g_1)^{-1})
\end{split}
\end{equation}
so if we let $t_2 = \theta(g_2 g_1 \Psi(g_2 g_1)^{-1})$, we want to find an integer $i$ such that $\theta(\Psi(\theta^{-1}(w)))^i w^{-i} = t_2$.

\item Use the discrete log oracle to find $k$ such that $\theta(\Psi(\theta^{-1}(w))) = w^k$. Since $g_2 g_1 \in \Sp(4, q)$ it follows that
  $t_2 \in \gen{w}$. Use the discrete log oracle to find $n
  \in \Z$ such that $w^n = t_2$. Our equation turns into $(k - 1) i \equiv n \pmod{q^2 + 1}$, which we solve to find $i$.
\end{enumerate}

By \cite[Lemma 2.2]{maximal_subs}, this whole process uses expected
$\OR{\log q}$ field operations. Finally we use the effective isomorphism to map the conjugating element back to $G$. Hence the time complexity is as stated.
\end{proof}

\section{Small Ree groups}
\label{section:sylow_smallree}

We now consider the Sylow subgroup problems for the small Ree groups.
We will use the notation from Section \ref{section:ree_theory}, and we will make heavy use of the
fact that we can use Theorem \ref{cl_ree_constructive_recognition} to
constructively recognise the small Ree groups. Hence we assume that $G$ satisfies the assumptions in Section \ref{section:algorithm_overview}, so $\Ree(q) \cong G \leqslant \GL(d, q)$.

By Proposition \ref{sylow3_props}, we obtain $4$ cases for a Sylow $p$-subgroup $S$ of $G$.

\begin{enumerate}
\item $p = 2$, so that by \cite[Chapter $11$, Theorem $13.2$]{huppertIII}, $S$ is elementary abelian of order $8$ and $[\Norm_G(S) : S] = 21$.
\item $p$ divides $q^3$, so $p = 3$. Then $S$ is conjugate to $U(q)$ and hence $S$ fixes a unique
  point $P_S$ of $\OV$, which is easily found using the MeatAxe.

\item $p$ divides $q - 1$ and $p > 2$. Then $S$ is cyclic and conjugate
  to a subgroup of $H(q)$. Hence $S$ fixes two distinct points
  $P, Q \in \OV$, and these points are easily found using the MeatAxe.

\item $p$ divides $q^3 + 1$ and $p > 2$. Then $S$ is cyclic and
  conjugate to a subgroup of $A_0$, $A_1$ or $A_2$ from Proposition \ref{ree_maximal_subgroup_list}. In this case, we have only solved the Sylow generation problem.
\end{enumerate}

\begin{thm} \label{thm:ree_sylow3}
  Assume the small Ree Conjectures and an oracle for the discrete logarithm problem in $\F_q$. There exist Las
  Vegas algorithms that solve the Sylow subgroup problems for $p = 3$
  in $\Ree(q) \cong G \leqslant \GL(d, q)$. Once constructive recognition has been performed, the
  expected time complexity of the Sylow generation is $\OR{d^3(\log(q) \log\log(q))^2}$ field operations, and $\OR{d^3
    (\abs{Y} + \abs{Z} + (\log(q) \log\log(q))^2) + \log(q)^3}$ field operations for the Sylow
  conjugation.
\end{thm}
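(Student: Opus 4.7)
The plan is to reduce both problems to the standard copy $H = \Ree(q)$ via the effective isomorphism provided by Theorem \ref{cl_ree_constructive_recognition}, in direct parallel with the Suzuki Sylow $2$ case (Theorem \ref{thm:sz_sylow2}). The key observation is that by Proposition \ref{sylow3_props}, a Sylow $3$-subgroup of $H$ is conjugate to $U(q) = \mathrm{O}_3(H_{P_{\infty}})$ and fixes a unique point of $\OV$, so the situation is entirely analogous to the Suzuki case in which a Sylow $2$-subgroup is conjugate to $\mathcal{F}$. Moreover, the preprocessing step of Theorem \ref{thm_pre_step} has already produced standard generators $U$ for $U(q)$ and $L$ for $\mathrm{O}_3(H_{P_0})$, written as $\SLP$s in $X$ of length $\OR{\log(q) (\log\log(q))^2}$.

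For Sylow generation, take a random $h \in H$ and use $U^h$ as a generating set of the Sylow $3$-subgroup $U(q)^h$. Since $\abs{U} \in \OR{\log(q)}$, this produces $\OR{\log(q)}$ $\SLP$s in $X$, each of length $\OR{\log(q) (\log\log(q))^2}$; evaluating them on $X$ gives the stated complexity $\OR{d^3 (\log(q) \log\log(q))^2}$.

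For Sylow conjugation, first apply the effective isomorphism to map $\gen{Y}$ and $\gen{Z}$ into $H$, at cost $\OR{d^3 (\abs{Y} + \abs{Z})}$. Using the MeatAxe in the constant dimension $7$, locate the unique points $P_Y, P_Z \in \OV$ fixed by the images of $\gen{Y}$ and $\gen{Z}$ respectively. After ensuring (by swapping labels if needed) that $P_Y, P_Z \neq P_0$, apply Lemma \ref{ree_row_operations} part $3$ in its $G_{P_0}$ form (using $L$) to find $a, b \in H_{P_0}$ with $P_Y a = P_{\infty}$ and $P_Z b = P_{\infty}$; then $c = a b^{-1}$ maps $P_Y$ to $P_Z$, and hence conjugates the image of $\gen{Y}$ to the image of $\gen{Z}$, since the Sylow $3$-subgroup fixing a given point of $\OV$ is unique. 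The exceptional case $\set{P_Y, P_Z} = \set{P_{\infty}, P_0}$ is handled by producing an $\SLP$ in $X$ for $\Upsilon$ via a single call to Algorithm \ref{alg:ree_element_to_slp}.

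By Lemma \ref{ree_row_operations}, the conjugating element is obtained as an $\SLP$ in $U \cup L$ of length $\OR{\log(q)}$, and hence as an $\SLP$ in $X$ of length $\OR{(\log(q) \log\log(q))^2}$. The row reduction arithmetic requires $\OR{\log(q)^3}$ field operations, and the final evaluation of the $\SLP$ in $G$ costs $\OR{d^3 (\log(q) \log\log(q))^2}$ field operations; combined with the initial cost of mapping the generators of $\gen{Y}$ and $\gen{Z}$ into $H$, this gives the claimed bound. No substantive obstacle is anticipated: the argument is essentially bookkeeping of $\SLP$ lengths together with the observation that the generic row reduction of Lemma \ref{ree_row_operations} handles all configurations of fixed points except the one exceptional swap, which is dispatched by the single application of Algorithm \ref{alg:ree_element_to_slp}.
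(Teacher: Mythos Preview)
Your proposal is correct and follows essentially the same approach as the paper: conjugate the precomputed standard generators by a random element for Sylow generation, and for conjugation map both subgroups to the standard copy, locate their unique fixed points with the MeatAxe, use Lemma \ref{ree_row_operations} to carry each fixed point to a common base point, and dispatch the single exceptional configuration with an $\SLP$ for $\Upsilon$ obtained via Algorithm \ref{alg:ree_element_to_slp}. The only cosmetic difference is that your labelling of $U$ and $L$ follows Theorem \ref{thm_pre_step}, whereas the paper's proof of this theorem swaps the roles; the content is unchanged.
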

\begin{proof}
  Let $H = \Ree(q)$. The constructive recognition uses Theorem
  \ref{thm_pre_step} to find
  sets $L$ and $U$ of \lq \lq standard generators'' for $H_{P_{\infty}}$ and
  $H_{P_0}$, respectively.

  A generating set for a random Sylow $3$-subgroup $S$ of $H$ can
  therefore be computed by finding a random $h \in H$, and as
  generating set for $S$ take $\set{m^g \mid m \in L}$. To obtain a
  Sylow subgroup $R$ of $G$, note that we already have the
  $\OR{\log(q)}$ generators of $L$ and $h$ as $\SLP$s of length
  $\OR{\log(q)(\log\log(q)^2)}$, so we can evaluate them on $X$. Hence
  the expected time complexity is as stated.
  
  Given two Sylow $3$-subgroups of $G$, we use the effective
  isomorphism to map them to $H$ using $\OR{d^3 (\abs{Y} + \abs{Z})}$
  field operations. We can use the MeatAxe to find the points $P_Y,
  P_Z \in \OV$ that are fixed by the subgroups. Then use Lemma \ref{ree_row_operations} with $U$ to find $a \in H$, such
  that $P_Y a = P_{\infty}$, and $b \in H$, such that $P_Z b =
  P_{\infty}$. Then $a b^{-1}$ conjugates one Sylow subgroup to the
  other, and we already have this element as an $\SLP$ of length
  $\OR{(\log(q) \log\log(q))^2}$. 

  In the case where $P_Y = P_{\infty}$ and $P_Z = P_0$, we know that
  $\Upsilon$ maps $P_Y$ to $P_Z$. Then use Algorithm \ref{alg:ree_element_to_slp} to obtain an $\SLP$ for $\Upsilon$ of length $\OR{(\log(q)\log\log(q))^2}$.

  Hence we can evaluate it on $X$ and obtain $c \in G$ that conjugates
  $\gen{Y}$ to $\gen{Z}$. Thus the expected time complexity follows
  from Lemma \ref{ree_row_operations} and Theorem \ref{thm_element_to_slp_complexity}.
\end{proof}

\begin{thm} \label{thm:ree_sylow_easy_cyclic}
  Assume the small Ree Conjectures and an oracle for the discrete logarithm problem in $\F_q$. There exist Las
  Vegas algorithms that solve the Sylow subgroup problems for $p \mid q - 1$, $p > 2$,
  in $\Ree(q) \cong G \leqslant \GL(d, q)$. Once constructive recognition has been performed, the
  expected time complexity of the Sylow generation is $\OR{(\xi(d) + \log(q) \log\log(q) + d^3)\log\log(q)}$ field operations, and $\OR{d^3
    (\abs{Y} + \abs{Z} + (\log(q) \log\log(q))^2) + \log(q)^3}$ field operations for the Sylow
  conjugation.
\end{thm}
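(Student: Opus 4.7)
The approach is a direct adaptation of Theorem~\ref{thm:sz_sylow_easy_cyclic} to the small Ree setting, using the constructive recognition apparatus of Theorem~\ref{cl_ree_constructive_recognition} to reduce to the standard copy $H = \Ree(q)$.

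For Sylow generation, let $e$ be the largest integer with $p^e \mid q-1$. By Proposition~\ref{sylow3_props}, a Sylow $p$-subgroup is cyclic and conjugate to a subgroup of $H(q)$, so it is enough to exhibit one element of order $p^e$. First find a random $g \in G$ of pseudo-order dividing $q - 1$: by Corollary~\ref{cl_random_selections} the expected number of random selections is $\OR{\log\log(q)}$, with each selection costing $\OR{\xi(d) + d^3 \log(q) \log\log(q)}$ to produce and test. Then apply Proposition~\ref{pr_element_powering} to extract the $p^e$-part of $g$, which yields a generator of a Sylow $p$-subgroup as an $\SLP$ in $X$ of length $\OR{\log\log(q)}$; evaluating this $\SLP$ on $X$ contributes $\OR{d^3 \log\log(q)}$ field operations. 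Summing gives the claimed bound.

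For Sylow conjugation, apply the effective isomorphism $\varphi: G \to H$ from Theorem~\ref{cl_ree_constructive_recognition} to each generator of $\gen{Y}$ and $\gen{Z}$, at a cost of $\OR{d^3(\abs{Y} + \abs{Z})}$ field operations, producing cyclic subgroups $H_Y, H_Z \leqslant H$ of $p$-power order. Since any subgroup of $H$ of order dividing $q - 1$ lies in a conjugate of $H(q)$, by Proposition~\ref{cyclic_subgroups_conjugate} each of $H_Y, H_Z$ fixes exactly two points of $\OV$, say $P_Y, Q_Y$ and $P_Z, Q_Z$. These points are eigenspaces of any generator and can be extracted using the MeatAxe. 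After reordering so that neither $P_Y$ nor $P_Z$ equals $P_0$, apply Lemma~\ref{ree_row_operations} (first its point-mapping statement, then its triangular row-reduction statement inside the new stabiliser) to produce $a \in H$ with $P_Y a = P_\infty$ and $Q_Y a = P_0$, and similarly $b \in H$ with $P_Z b = P_\infty$, $Q_Z b = P_0$. Then $c_0 = a b^{-1} \in H$ sends the pair $\{P_Y, Q_Y\}$ to $\{P_Z, Q_Z\}$, so $H_Y^{c_0}$ is a cyclic subgroup of $H$ fixing the same two points as $H_Z$; since cyclic subgroups of $H(q)$ of a given order are unique, $H_Y^{c_0} = H_Z$. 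Each application of Lemma~\ref{ree_row_operations} yields an $\SLP$ in the standard generators of $H_{P_\infty}$ and $H_{P_0}$ of length $\OR{\log(q)}$, which combine into an $\SLP$ for $c_0$ of length $\OR{(\log(q)\log\log(q))^2}$ once the precomputed standard generators from Theorem~\ref{thm_pre_step} are substituted. Pulling back through $\varphi^{-1}$ by evaluating this $\SLP$ on $X$ produces a conjugating element in $G$ at a cost of $\OR{d^3 (\log(q)\log\log(q))^2}$ field operations.

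Putting these pieces together gives Las Vegas algorithms, since every subcomponent (random selection, Proposition~\ref{pr_element_powering}, the MeatAxe, Lemma~\ref{ree_row_operations}, $\varphi$ and $\varphi^{-1}$) is Las Vegas, and the claimed complexity bounds follow by summing the contributions above. The only mildly delicate point is bookkeeping the $\SLP$ lengths through $\varphi^{-1}$; this is where the $\log(q)^3$ term in the conjugation bound enters, coming from the pre-image evaluations in Theorem~\ref{cl_ree_constructive_recognition}. No new ideas beyond those already used in the Suzuki analogue are required.
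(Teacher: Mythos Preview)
Your proof is correct and follows essentially the same route as the paper: random search plus Proposition~\ref{pr_element_powering} for generation, and mapping to the standard copy then using Lemma~\ref{ree_row_operations} to align the pair of fixed points for conjugation. One minor quibble: the $\log(q)^3$ term in the conjugation bound actually arises from the row-reduction cost in Lemma~\ref{ree_row_operations} itself, not from any pre-image evaluation via $\varphi^{-1}$ (since the conjugating element is already an $\SLP$, you just evaluate it on $X$).
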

\begin{proof}
  Let $H = \Ree(q)$. In this case the Sylow generation is easy, since we can determine the highest power $e$ of $p$ such that $p^e \mid q - 1$, find a random element of pseudo-order $q - 1$, use Proposition \ref{pr_element_powering} to obtain an element of order $p$, then
  evaluate its $\SLP$ on $X$. By Proposition \ref{ree_totient_prop}, the expected number of random
  selections, and hence the length of the $\SLP$ is $\OR{\log \log
    q}$, and we need $\OR{\log(q) \log\log(q)}$ field operations to
  find the order. Then we evaluate the $\SLP$ on $X$ using $\OR{d^3 \log
    \log(q)}$ field operations, so the expected time complexity is as
  stated.

  For the Sylow conjugation, recall that the constructive recognition
  uses Theorem \ref{thm_pre_step} to find sets $L$ and $U$ of \lq \lq
  standard generators'' for $H_{P_{\infty}}$ and $H_{P_0}$,
  respectively.

  Given two Sylow $p$-subgroups of $G$, we use the effective
  isomorphism to map them to $H$ using $\OR{d^3 (\abs{Y} + \abs{Z})}$
  field operations. Let $H_Y, H_Z \leqslant H$ be the resulting subgroups. Using the MeatAxe, we can
  find $P_Y, Q_Y \in \OV$ that are fixed by $H_Y$ and $P_Z, Q_Z
  \in \OV$ that are fixed by $H_Z$. Order the points so that $P_Y \neq P_0$ and $P_Z \neq P_0$.

  Use Lemma \ref{ree_row_operations} with $U$ to find $a_1 \in H_{P_0}$,
  such that $P_Y a_1 = P_{\infty}$. Then use $L$ to find $a_2 \in
  H_{P_{\infty}}$, such that $Q_Y a_1 a_2 = P_0$. Similarly we find
  $b_1, b_2 \in H$, such that $P_Z b_1 b_2 = P_{\infty}$ and $Q_Z b_1
  b_2 = P_0$. Then $a_1 a_2 (b_1 b_2)^{-1}$ conjugates one Sylow
  subgroup to the other, and we already have this element as an $\SLP$
  of length $\OR{(\log(q) \log\log(q))^2}$. Hence we can evaluate it on $X$ and obtain
  $c \in G$ that conjugates $\gen{Y}$ to $\gen{Z}$. Thus the expected
  time complexity is as stated.
\end{proof}

\begin{thm} \label{thm:ree_sylow_hard_cyclic}
  Assume the small Ree Conjectures and an oracle for the discrete logarithm problem in $\F_q$. There exists a Las
  Vegas algorithm that solves the Sylow generation problem for $p \mid q^3 + 1$, $p > 2$,
  in $\Ree(q) \cong G \leqslant \GL(d, q)$. Once constructive recognition has been performed, the
  expected time complexity of the Sylow generation is $\OR{(\xi(d) + \log(q) \log\log(q) + d^3)\log\log(q)}$ field operations.
\end{thm}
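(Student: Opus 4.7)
The plan is to mirror the proof of Theorem \ref{thm:ree_sylow_easy_cyclic}, exploiting the fact that a Sylow $p$-subgroup of $G$ for any odd prime $p \mid q^3 + 1$ is again cyclic; the essential step is to identify its target order and then to produce a generator by random search for an element of a suitable pseudo-order.

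I would begin by factorising $q^3 + 1 = (q+1)(q+1-3t)(q+1+3t)$ (using $q = 3t^2$), and verifying that any odd prime $p \mid q^3 + 1$ divides exactly one of $N_0 = (q+1)/2$, $N_1 = q+1-3t$, $N_2 = q+1+3t$. Pairwise coprimality follows from the observations that $3 \nmid q+1$, that $q+1 \equiv 1 \pmod{t}$, and that $q+1 \pm 3t$ are odd. By Proposition \ref{cyclic_subgroups_conjugate} and Proposition \ref{ree_maximal_subgroup_list}, $G$ contains cyclic subgroups of each of the orders $N_0, N_1, N_2$, so every Sylow $p$-subgroup $S$ of $G$ is contained in one of these cyclic subgroups; hence $\abs{S} = p^e$, where $p^e$ is the exact power of $p$ dividing the unique $N_i$ with $p \mid N_i$. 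Both $e$ and the target order $N = N_i$ can be read off directly from $q$ and $p$.

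The algorithm then draws random elements $g \in \gen{X}$ from the random element oracle (each delivered as an $\SLP$ in $X$ of length $\OR{\log\log q}$) and computes their pseudo-orders by the method of Section \ref{section:matrix_orders}, retaining $g$ when $\abs{g}$ divides $N$. By Lemma \ref{ree_totient_prop} the proportion of such elements in $G$ is at least a constant multiple of $1/\log\log q$, so the expected number of trials is $\OR{\log\log q}$. Once a suitable $g$ is found, Proposition \ref{pr_element_powering} produces the power of $g$ of order exactly $p^e$ as an $\SLP$ in $X$, which is evaluated on $X$ to yield a generator of a Sylow $p$-subgroup of $\gen{X}$. Correctness is immediate, since the returned element has the correct order and lies in a cyclic subgroup of $G$, and the algorithm is Las Vegas because the order can be verified. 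Each iteration costs $\OR{\xi(d) + \log(q)\log\log(q) + d^3}$ field operations, the expected number of iterations is $\OR{\log\log q}$, and the final $\SLP$ evaluation on $X$ contributes $\OR{d^3 \log\log q}$, which is absorbed into the product; this matches the stated bound. There is no real obstacle here: the argument is essentially a transcription of the $p \mid q-1$ case, with the only substantive change being that the relevant cyclic order is $N_i$ rather than $q-1$.
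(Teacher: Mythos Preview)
Your proposal is correct and follows essentially the same approach as the paper: find a random element of pseudo-order $(q+1)/2$ or $q \pm 3t + 1$ (whichever is divisible by $p$), apply Proposition~\ref{pr_element_powering} to extract the $p$-part, and evaluate the resulting $\SLP$ on $X$, with the iteration count governed by Lemma~\ref{ree_totient_prop}. Your treatment is more detailed than the paper's (you spell out the factorisation $q^3+1 = (q+1)(q+1-3t)(q+1+3t)$ and the pairwise coprimality, which the paper leaves implicit), but the algorithm and the complexity accounting are the same.
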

\begin{proof}
  Let $H = \Ree(q)$. The Sylow generation is easy, since
  we can find an element of pseudo-order $q \pm 3t + 1$ or $(q + 1) / 2$, use Proposition \ref{pr_element_powering} to obtain an element of order $p$, then
  evaluate its $\SLP$ on $X$. By Proposition \ref{ree_totient_prop}, the expected number of random
  selections, and hence the length of the $\SLP$ is $\OR{\log \log
    q}$, and we need $\OR{\log(q) \log\log(q)}$ field operations to
  find the order. Then we evaluate the $\SLP$ on $X$ using $\OR{d^3 \log
    \log(q)}$ field operations, so the expected time complexity is as
  stated.
\end{proof}

This result is due to Mark Stather and is the same as \cite[Lemma $4.35$]{statherthesis}.

\begin{lem}\label{index2sylow}
Let $G$ be a group and let $k \in \Z$ be such that $\abs{G} = 2^k n$ with $n$ odd. Let $P \leqslant G$ have order $2^{k-1}$. Let
$\gen{P,x}$ and $\gen{P,y}$ be Sylow $2$-subgroups of
$G$. Then $\abs{xy} = 2^t$ for some $t \in \Z$ if and only if $\gen{P,x} = \gen{P,y}$. Moreover if $\abs{xy}= 2^t (2s + 1)$, then $\gen{P,x}^{(yx)^s}=\gen{P,y}$
\end{lem}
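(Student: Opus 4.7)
My plan is to pass to the quotient by $P$ and apply the dihedral trick of Proposition \ref{dihedral_trick_basis} there. First observe that $P$ has index $2$ in both Sylow $2$-subgroups $\langle P,x\rangle$ and $\langle P,y\rangle$, so $P$ is normal in each, and hence $P$ is normalised by $M := \langle P, x, y\rangle$, giving $P \trianglelefteqslant M$. Write $\bar{\phantom{x}}$ for images in $M/P$. Since $x,y \notin P$, the images $\bar x, \bar y$ are non-trivial and satisfy $\bar x^2 = \bar y^2 = 1$, so $M/P = \langle \bar x, \bar y\rangle$ is either $C_2$ (if $\bar x = \bar y$, equivalently $\langle P,x\rangle = \langle P,y\rangle$) or dihedral of order $2m$ with $m = |\bar x \bar y| \geqslant 2$.

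The easy implication is immediate: if $\langle P,x\rangle = \langle P,y\rangle$, then $xy$ lies in a $2$-group, so $|xy|$ is a power of $2$. The heart of the argument is to show that conversely, $m$ must be odd; this is where the hypothesis on Sylow orders enters. Since $\langle P,x\rangle \leqslant M$ is a $2$-subgroup of order $2^k$ and $M \leqslant G$ has $2$-part at most $2^k$, this is actually a Sylow $2$-subgroup of $M$, so $|M|_2 = 2^k$. But $|M| = |P| \cdot |M/P| = 2^{k-1}\cdot 2m = 2^k m$, forcing $m$ to be odd. Combining this with $|xy| = m \cdot |(xy)^m|$ and the fact that $(xy)^m \in P$ is a $2$-element, the odd part of $|xy|$ equals $m$ exactly. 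In particular, if $|xy| = 2^t$ then $m = 1$, i.e.\ $\bar x = \bar y$, proving the non-trivial direction of the equivalence.

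For the conjugation statement, write $|xy| = 2^t(2s+1)$. The case $s = 0$ is covered by the equivalence just proved (then $(yx)^s = 1$). Otherwise $m = 2s+1 \geqslant 3$, so $M/P$ is dihedral of order $2(2s+1)$ generated by the involutions $\bar x, \bar y$ with $|\bar y\bar x| = 2s+1$. Proposition \ref{dihedral_trick_basis} then gives $\bar x^{(\bar y \bar x)^s} = \bar y$ in $M/P$, i.e.\ $x^{(yx)^s} \in Py$. Since $P \trianglelefteqslant M$ also yields $P^{(yx)^s} = P$, we conclude $\langle P, x\rangle^{(yx)^s} = \langle P, x^{(yx)^s}\rangle \leqslant \langle P,y\rangle$, and equality follows from both groups being Sylow $2$-subgroups of the same order $2^k$.

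The only step requiring any care is the Sylow-order computation showing $m$ is odd; everything else is essentially formal once one passes to $M/P$ and invokes the dihedral trick. I do not expect a genuine obstacle.
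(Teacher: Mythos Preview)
Your proof is correct and follows essentially the same approach as the paper. The paper phrases the argument via the subgroup $H = \langle P, xy\rangle$ of index $2$ in $\langle P, x, y\rangle$ (noting that $P$ is Sylow in $H$, so $xy \in N_H(P)$ has $2$-power order iff $xy \in P$), whereas you pass directly to the quotient $M/P$ and identify it as dihedral of odd degree; these are two presentations of the same idea, and both finish with the dihedral trick of Proposition~\ref{dihedral_trick_basis} applied to the images of $x$ and $y$.
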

\begin{proof}
Let $H = \gen{P,xy}$. Then $H$ is a subgroup of $\gen{P, x, y}$ of index $2$, that contains $P$, but does not contain $x$ or $y$.
Since $P$ has index $2$ in both $\gen{P,x}$ and $\gen{P, y}$ it follows that $xy \in \Norm_H(P)$. But $P$ is a Sylow $2$-subgroup of $H$ so,
\[ \abs{xy}=2^k \Leftrightarrow xy \in P \Leftrightarrow \gen{P,x}= \gen{P,y}\]
The second statement is an application of Proposition \ref{dihedral_trick_basis}, modulo $H$.
\end{proof}

\begin{thm} \label{thm:ree_sylow2_gen}
  Assume the small Ree Conjectures and an oracle for the discrete logarithm problem in $\F_q$. There exists a Las
  Vegas algorithm that solves the Sylow generation problem for $p = 2$
  in $\Ree(q) \cong G \leqslant \GL(d, q)$. Once constructive recognition has been performed, the
  expected time complexity is $\OR{(\xi(d) + d^3 \log(q))\log\log(q) + \log(q)^3 + \chi_D(q)}$ field operations.
\end{thm}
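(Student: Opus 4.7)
By \cite[Chapter 11, Theorem 13.2]{huppertIII} (cited above), a Sylow $2$-subgroup $S$ of $\Ree(q)$ is elementary abelian of order $8$, and by Proposition \ref{pr_involution_props} every involution centraliser has the form $\Cent_G(j) \cong \langle j \rangle \times \PSL(2, q)$, of order $q(q^2 - 1)$. Since $q = 3^{2m + 1} \equiv 3 \pmod 8$, the $2$-part of $q(q^2 - 1)$ is exactly $8$, so the Sylow $2$-subgroup of the $\PSL(2, q)$-factor has order $4$ and hence is a Klein four-group $V$. Consequently every Sylow $2$-subgroup of $G$ containing $j$ splits as $\langle j \rangle \times V$, and the construction reduces to exhibiting $V$ as SLPs.

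My plan is the following. First, find an involution $j \in \gen{X}$ as an SLP in $X$: by Lemma \ref{ree_totient_prop} the density of elements of even order is bounded below by a positive constant, so random search yields such an element in expected $\OR{1}$ trials, and Proposition \ref{pr_element_powering} extracts the involution and an SLP for it. Second, use the Bray algorithm together with the MeatAxe verification and Theorem \ref{thm_psl_naming} (exactly as inside the proof of Corollary \ref{cl_find_stab_element}) to obtain, as SLPs in $X$, a generating set for $H := \Cent_G(j)'$, which is isomorphic to $\PSL(2, q)$ and acts on the $3$-dimensional submodule $S_j$ of Proposition \ref{pr_inv_centraliser_split} as the symmetric square of its natural module. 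Third, apply Theorem \ref{thm_psl_recognition} to obtain an effective isomorphism $\varphi : H \to \PSL(2, q)$ together with its effective inverse. Fourth, write down two commuting involutions $v_1, v_2 \in \PSL(2, q)$ generating a Klein four-subgroup directly inside the standard copy (a Sylow $2$-subgroup of $\PSL(2, q)$ with $q \equiv 3 \pmod 4$ sits inside the normaliser of a non-split torus of order $q+1$, from which two generators can be read off in $\OR{1}$ time); pull them back through the effective inverse of $\varphi$ to obtain elements of $H$ as SLPs in the generators of $H$, and therefore as SLPs in $X$. The triple $\{j,\, \varphi^{-1}(v_1),\, \varphi^{-1}(v_2)\}$ then generates a Sylow $2$-subgroup of $\gen{X}$.

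Correctness is immediate from the structural decomposition $S = \langle j \rangle \times V$, and the algorithm is Las Vegas because each of its ingredients is (Proposition \ref{pr_element_powering}, the MeatAxe, Theorem \ref{thm_psl_naming}, and Theorem \ref{thm_psl_recognition}). The principal obstacle is bookkeeping of the complexity rather than any genuine mathematical difficulty: locating $j$ and the generators of $\Cent_G(j)$ costs $\OR{(\xi(d) + d^3 \log(q))\log\log(q)}$ expected field operations, precisely as in Corollary \ref{cl_find_stab_element}; constructively recognising the $3$-dimensional copy of $\PSL(2, q)$ and evaluating the pull-back SLPs (of length $\OR{\log(q)\log\log(q)}$) in the ambient $d$-dimensional representation contributes $\OR{\log(q)^3 + \chi_D(q) + d^3 \log(q)\log\log(q)}$ by Theorem \ref{thm_psl_recognition}. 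Summing these two contributions gives the stated bound $\OR{(\xi(d) + d^3 \log(q))\log\log(q) + \log(q)^3 + \chi_D(q)}$.
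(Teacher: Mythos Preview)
Your argument is correct and follows the same overall strategy as the paper: find an involution $j$, compute $\Cent_G(j)'\cong\PSL(2,q)$ via Bray, constructively recognise this $\PSL(2,q)$, and assemble a Sylow $2$-subgroup as $\langle j\rangle\times V_4$. The difference lies only in how the Klein four-group is produced.

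You write down two commuting involutions $v_1,v_2$ in the standard copy of $\PSL(2,q)$ and pull both back via the effective inverse. The paper instead pulls back only a single explicit involution $j_2=\pi_7(\pi_3(j))$ with $j=\begin{pmatrix}0&1\\-1&0\end{pmatrix}$, and then obtains the third involution $j_3$ by a second application of the Bray algorithm, this time inside $\Cent_{\Cent_G(j_1)'}(j_2)\cong \Dih_{q+1}$, where half the elements are involutions distinct from $j_2$. What the paper's route buys is that $j_3$ comes as an $\SLP$ of length $\OR{1}$ rather than $\OR{\log(q)\log\log(q)}$, and it sidesteps the need to solve $a^2+b^2=-1$ in $\F_q$ to exhibit the second generator of $Q_8$ in $\SL(2,q)$; your route is conceptually cleaner and avoids the second Bray call. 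One small correction: solving $a^2+b^2=-1$ is expected $\OR{\log q}$ field operations (random $a$, test whether $-1-a^2$ is a square, extract the root), not $\OR{1}$ as you wrote, but this is absorbed in the stated bound. Also, be explicit that Theorem~\ref{thm_psl_recognition} is applied to the $3$-dimensional image $\varphi_G(H)$ on $S_j$ (as in Lemma~\ref{lem_psl_maps}), not directly to $H$ in dimension $d$, since the latter is not absolutely irreducible.
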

\begin{proof}
  Let $H = \Ree(q)$. We want to find three
  commuting involutions in $H$. Using the first three steps of the
  algorithm in Section
  \ref{section:find_stab_element}, we find an involution $j_1 \in H$
  and $\Cent_H(j_1)^{\prime} \cong \PSL(2, q)$. Using the notation of
  that algorithm, we can let the second involution $j_2 \in
  \Cent_H(j_1)^{\prime}$ be $\pi_7(\pi_3(j))$ where $j$ is the second
  matrix in \eqref{sl2_stab_gens}.

  We then want to find the third involution in the centraliser of
  $j_2$ in $\Cent_H(j_1)^{\prime}$. In our case this centraliser has structure
  $(\Cent_2 \times (\Cent_2 {:} A_0)$. Hence its proportion of elements of
  even order is $3/4$, and $1/2$ of its elements are involutions other
  than $j_2$. Using the Bray algorithm we can therefore compute random
  elements of this centraliser until we find such an involution $j_3$.

  Clearly $j_1, j_2, j_3$ will all commute. As in the proof of
  Corollary \ref{cl_find_stab_element}, the expected time to find
  $j_1$, constructively recognise $\Cent_H(j_1)^{\prime}$ and find
  $j_2$ is $\OR{\xi \log\log(q) + + \log(q)^3 + \chi_D(q)}$ field
  operations. By the above, the expected time to find $j_3$ is
  $\OR{1}$ field operations.  The involutions will be found as
  $\SLP$s, where $j_1$ and $j_3$ have length $\OR{1}$, because the
  generators of $\Cent_H(j_1)^{\prime}$ are $\SLP$s of length
  $\OR{1}$. By Lemma \ref{lem_psl_maps} and Theorem
  \ref{thm_psl_recognition}, $j_2$ has length $\OR{\log(q)
    \log\log(q)}$. Thus we can evaluate them on $X$ using $\OR{d^3
    \log(q) \log\log(q)}$ field operations, and the expected time
  complexity is as stated.
\end{proof}

\begin{thm} \label{thm:ree_sylow2_conj} Assume the small Ree Conjectures and an
  oracle for the discrete logarithm problem in $\F_q$. There exists a
  Las Vegas algorithm that solves the Sylow conjugation problem for $p
  = 2$ in $\Ree(q) \cong G \leqslant \GL(d, q)$. Once constructive
  recognition has been performed, the expected time complexity is
  $\OR{\xi(d) + d^3 ((\log(q) \log\log(q))^2 + \abs{Y} +
    \abs{Z}) + \log(q)^3}$ field operations.
\end{thm}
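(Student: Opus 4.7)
The plan is to move the whole problem into the standard copy $H = \Ree(q)$ via the effective isomorphism of Theorem \ref{cl_ree_constructive_recognition}, reduce Sylow $2$-conjugation to conjugation inside an involution centraliser, and exploit the fact that $\Cent_H(j)^{\prime} \cong \PSL(2,q)$ can be handled by Theorem \ref{thm_psl_recognition}. Since a Sylow $2$-subgroup of $\Ree(q)$ is elementary abelian of order $8$, every non-identity element of $\gen{Y}$ or $\gen{Z}$ is an involution, and we do not need random search to produce involutions in these subgroups.

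First, apply the effective isomorphism to map $\gen{Y}$ and $\gen{Z}$ into $H$; this step costs $\OR{d^3(\abs{Y}+\abs{Z})}$ field operations and produces generating sets for Sylow $2$-subgroups $T_1,T_2\leqslant H$, with the $\SLP$s in $X$ retained so that any element subsequently produced in $H$ can be pulled back. Next, pick involutions $y\in T_1$ and $z\in T_2$ and apply the dihedral trick (Theorem \ref{dihedral_trick}, using Proposition \ref{ree_dihedral_trick}) to obtain $c_1\in H$, as an $\SLP$ of length $\OR{\log\log(q)}$, with $y^{c_1}=z$. Replacing $T_1$ by $T_1^{c_1}$, the two Sylow $2$-subgroups now share the involution $z$ and are therefore contained in $C=\Cent_H(z)$, since they are abelian.

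Compute generators for $C\cong\gen{z}\times\PSL(2,q)$ by the Bray algorithm, exactly as in the proof of Corollary \ref{cl_find_stab_element}, and constructively recognise its derived subgroup using Theorem \ref{thm_psl_recognition}, obtaining an effective isomorphism $\varphi$ with $\PSL(2,q)$. Under the projection $C\to C/\gen{z}\cong\PSL(2,q)$, the images $\overline{T_1^{c_1}}$ and $\overline{T_2}$ are Klein four subgroups of $\PSL(2,q)$. Since $q\equiv 3\pmod 8$, all Klein four subgroups of $\PSL(2,q)$ are conjugate: pick involutions in each image, apply the dihedral trick once more inside $\PSL(2,q)$ (the required constructive membership and $\SLP$ construction is supplied by Theorem \ref{thm_psl_recognition}), and then handle the residual case of two Klein four groups sharing a common involution by Lemma \ref{index2sylow}, which requires only computing the order of a product of two involutions. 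This produces an $\SLP$ $w$ of length $\OR{\log(q)\log\log(q)}$ conjugating $\overline{T_1^{c_1}}$ to $\overline{T_2}$.

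Pull $w$ back through $\varphi^{-1}$ and through the $\SLP$s for $C$ in $X$ to produce an element $c_2\in H$ with $T_1^{c_1 c_2}=T_2$, as an $\SLP$ in $X$ of length $\OR{(\log(q)\log\log(q))^2}$ by Theorem \ref{thm_element_to_slp_complexity}. Evaluating this $\SLP$ on $X$ gives the required $c\in G$ with $\gen{Y}^c=\gen{Z}$, using $\OR{d^3(\log(q)\log\log(q))^2}$ field operations; Las Vegas verification is a direct conjugation check. The main obstacle is simply the bookkeeping: ensuring that every intermediate element is produced as a short $\SLP$ in the original generators and that the centraliser and $\PSL(2,q)$ recognition steps, which contribute the $\log(q)^3$ term, reuse the standard copy throughout so the explicit $\xi(d)$ cost is incurred only by the single dihedral trick step and by the Bray algorithm's random selections.
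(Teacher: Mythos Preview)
Your approach is essentially the same as the paper's: map to the standard copy, use the dihedral trick to align one involution, pass to the centraliser $\Cent_H(z)\cong\gen{z}\times\PSL(2,q)$, and then conjugate the resulting Klein four groups inside $\PSL(2,q)$ via a second dihedral trick followed by Lemma \ref{index2sylow}.

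Two small differences are worth noting. First, the paper does not invoke Theorem \ref{thm_psl_recognition} to constructively recognise $C'$; it applies the dihedral trick directly inside $C_1'$ (citing that the trick succeeds in $\PSL(2,q)$), which avoids the extra machinery. Second, your ``projection $C\to C/\gen{z}$'' hides a step the paper makes explicit: for each involution $u$ in $T_1^{c_1}$ or $T_2$ (other than $z$), one must determine whether $u$ or $zu$ lies in $C'$, since the dihedral trick and Lemma \ref{index2sylow} are carried out in $C'$, not in the quotient. The paper does this by taking a random $g\in C'$ and checking whether $ug$ has odd order (which forces $u\in C'$). This is a minor gap in your write-up rather than in the strategy; the membership test in $C'$ provided by your constructive recognition would also resolve it.
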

\begin{proof}
  Let $H = \Ree(q)$. Given two Sylow $2$-subgroups of $G$, we use the
  effective isomorphism to map them to $H$, using $\OR{d^3 (\abs{Y} +
    \abs{Z})}$ field operations. The resulting generating sets are $P =
  \set{y_1, y_2, y_3}$ and $S = \set{z_1, z_2, z_3}$,
  where both the $y_i$ and $z_i$ are commuting involutions. We may assume that $\gen{Y} \neq \gen{Z}$ so that $P \neq S$.

The algorithm proceeds as follows:
\begin{enumerate}
\item By Proposition \ref{ree_dihedral_trick}, we can use the dihedral trick in $H$ and hence find $c_1 \in H$ such that $y_1^{c_1} = z_1$. We then want to conjugate $y_2$ to $z_2$ while fixing $z_1$.

\item Using the first steps of the algorithm in Section \ref{section:find_stab_element},
  we find $C_1 = \Cent_H(z_1) \cong \gen{z_1} \times \PSL(2, q)$, and use Theorem \ref{thm_psl_naming} to determine when we have the whole of $C_1$. Observe that $y_i^{c_1}, z_i \in C_1$ for all $i$.

\item Choose random $g \in C_1^{\prime}$. If $z_2 g$ has odd order,
  then $z_2 \in C_1^{\prime}$. Conversely, if $z_2 \in
  C_1^{\prime}$ then $z_2 g$ has odd order with probability
  $\OR{1}$. Similarly, if $z_1 z_2 g$ has odd order, then $z_1
  z_2 \in C_1^{\prime}$, and the probability is the same.
  Repeat until either $z_2$ or $z_1 z_2$ has been proved to lie in
  $C_1^{\prime}$ and replace $z_2$ with this element. Do the
  same procedure with $y_2^{c_1}, y_3^{c_1}, z_3$.

\item Now $y_2^{c_1}, z_2, y_3^{c_1}, z_3 \in C_1^{\prime} \cong \PSL(2,
  q)$, and by \cite[Theorem $13$]{parkerwilson06}, the dihedral trick works in $\PSL(2, q)$. Hence find $c_2 \in C_1^{\prime}$ such that $y_2^{c_1 c_2} = z_2$.

\item Let $\abs{y_3^{c_1 c_2} z_3} = 2^t s$, where $s$ is odd. If $s = 1$ then let $c_3 = 1$ and otherwise let $c_3 = (y_3^{c_1 c_2} z_3)^{(s - 1) / 2}$. By Lemma \ref{index2sylow}, $P^{c_1 c_2 c_3} = S$.
\end{enumerate}

Finally, we use the effective isomorphism to map $c_1 c_2 c_3$ back to
$G$. As in the proof of Corollary \ref{cl_find_stab_element},
$C_1$ is found using expected $\OR{\xi(d) + \log(q) \log\log(q)}$
field operations, if we let $\varepsilon = \log\log(q)$ in Theorem \ref{thm_psl_naming}. The expected time complexity of the effective isomorphism follows from Theorem \ref{cl_ree_constructive_recognition}.
\end{proof}

\section{Big Ree groups}

We now consider the Sylow subgroup problems for the Big Ree groups. We
will use the notation from Section \ref{section:big_ree_theory}, and
we will make heavy use of the fact that we can use Theorem
\ref{thm_bigree_constructive_recognition} to constructively recognise
the Big Ree groups. However, we can only do this in the natural
representation, and hence we will only consider the Sylow subgroup
problems in the natural representation. Hence we assume that $\LargeRee(q) \cong G \leqslant \GL(26, q)$.

It follows from \cite{MR1700483} that if $g \in G$ then $\abs{g}$ is even, or divides any of the numbers $\set{q + 1, q - 1, q \pm t + 1, q^2 \pm 1, q^2 - q + 1, q^2 \pm tq + q \pm t + 1}$. Hence we
obtain several cases for a Sylow $p$-subgroup $S$ of $G$.

\begin{enumerate}
\item $p = 2$. Then $S$ has order $q^{12}$ and lies in $\Cent_G(j)$
  for some involution $j$ of class $2A$. It consists of
  $\O2(\Cent_G(j))$ extended by a Sylow $2$-subgroup in a Suzuki
  group contained in the centraliser.

\item $p$ divides $o \in \set{q - 1, q \pm t + 1}$. Then $S$ has
  structure $\Cent_p \times \Cent_p$. If $G \geqslant H \cong \Sz(q) \times
  \Sz(q)$, then $S$ is contained in $H$ and consists of Sylow
  $p$-subgroups from each Suzuki factor.

\item $p$ divides $q^2 - q + 1$ or $q^2 \pm tq + q \pm t + 1$. Then
  $S$ is cyclic of order $p$, and hence these Sylow subgroups are trivial to find. We do not consider this case.

\item $p$ divides $q + 1$. We do not consider this case.
\end{enumerate}

\begin{thm} \label{thm:bigree_sylow2} Assume the Suzuki Conjectures, the Big Ree Conjectures and an
  oracle for the discrete logarithm problem in $\F_q$. There exists a Las
  Vegas algorithm thats solve the Sylow generation problem for $p = 2$
  in $\LargeRee(q) \cong \gen{X} \leqslant \GL(26, q)$. Once constructive
  recognition has been performed, the expected time complexity is $\OR{\log(q) (\log\log(q))^2}$ field operations.
\end{thm}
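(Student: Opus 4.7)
The plan is to reduce to the standard copy $\LargeRee(q)$ via the effective isomorphism $\varphi$ from Theorem \ref{thm_bigree_constructive_recognition}, where we can write down a Sylow $2$-subgroup using the explicit matrix descriptions established in Chapter \ref{chapter:group_def_theory}. Since $\varphi$ and $\varphi^{-1}$ each cost $\OR{1}$ field operations per element (being conjugation by a fixed matrix), we may assume without loss that $\gen{X} = \LargeRee(q)$ and at the end conjugate back.

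In the standard copy, take the $2A$-involution $j = \varrho$. By Proposition \ref{bigree_parabolic_gens}, we have explicit generators for the maximal parabolic $P_{2A} = [q^{10}] {:} (\Sz(q) \times \Cent_{q-1})$ of $j$, and by Proposition \ref{pr_invol_facts} the centraliser $\Cent_G(j) = [q^{10}] {:} \Sz(q)$ is contained in $P_{2A}$. Since $\abs{\Cent_G(j)} = q^{12} \cdot \abs{\Sz(q)}/(q^2(q^2+1)(q-1))$ has the full $2$-part of $\abs{G}$, a Sylow $2$-subgroup of $G$ is a Sylow $2$-subgroup of $\Cent_G(j)$, of order $q^{12} = q^{10} \cdot q^2$, built as the semidirect product of $\O2(\Cent_G(j)) = [q^{10}]$ with a Sylow $2$-subgroup of the $\Sz(q)$ complement.

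I would produce this subgroup in two pieces. First, from the explicit generators of $P_{2A}$ in Proposition \ref{bigree_parabolic_gens}, separate out generators for $\O2(\Cent_G(j))$: the unipotent elements $\zeta, z^{\kappa\varrho\kappa}, \varrho, \nu$ visibly lie in the $2$-radical, and together with their conjugates by the $\Sz(q)$-part traverse the filtration of Conjecture \ref{pr_invol_o2_facts}; a constant-size set suffices. Second, use Proposition \ref{bigree_suzuki2_gens} to obtain an explicit embedding $S \cong \Sz(q) \hookrightarrow \Cent_G(j)$ that is a complement to $\O2(\Cent_G(j))$; then apply Theorem \ref{thm:sz_sylow2} to $S$ to obtain a Sylow $2$-subgroup $Q$ of $S$. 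The union of generators of $\O2(\Cent_G(j))$ and of $Q$ generates a subgroup of order $q^{12}$, which is a Sylow $2$-subgroup of $G$. Pulling back through $\varphi^{-1}$ yields the required subgroup of $\gen{X}$.

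For the complexity, once constructive recognition is done the parabolic generators and the Suzuki embedding are read off in $\OR{1}$ field operations (dimension $26$ is constant). The dominant cost is the Sylow generation inside $S \cong \Sz(q)$, which by Theorem \ref{thm:sz_sylow2} costs $\OR{\log(q)(\log\log(q))^2}$ field operations (its own constructive recognition of the Suzuki copy being subsumed in the recognition hypothesis, since $S$ is in the standard form). Pulling back through $\varphi^{-1}$ adds only $\OR{1}$ per generator. The main obstacle is verifying that the generators extracted from $P_{2A}$ genuinely span $\O2(\Cent_G(j))$ rather than a proper subgroup, which I would address by appealing to Conjecture \ref{pr_invol_o2_facts} to check that their images generate each factor of the composition series for $\O2(\Cent_G(j))$ as $\F_q S$-modules.
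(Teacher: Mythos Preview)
Your decomposition of a Sylow $2$-subgroup as $\O2(\Cent_G(j))$ extended by a Sylow $2$-subgroup of a Suzuki complement is exactly right, and this is also what the paper does. The difference is in how the two pieces are obtained.

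The paper's route is much shorter: it simply observes that these two pieces were \emph{already computed} as byproducts of the Big Ree constructive recognition. Tracing through Theorem \ref{thm_bigree_conj_problem}, Lemma \ref{lem_find_sz_wr_2} and Lemma \ref{lem_bigree_find_centraliser}, one sees that during recognition we found an involution $j$ of class $2A$, a generating set $Z$ with $\gen{Z} = \O2(\Cent_G(j))$, and a subgroup $\gen{Y} \cong \Sz(q)$ that is already constructively recognised, all as $\SLP$s in $X$. So the paper just applies Theorem \ref{thm:sz_sylow2} to $\gen{Y}$ and takes $\gen{Z, W}$; nothing further is needed.

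Your route via the standard copy is workable in principle but has two concrete gaps. First, your identification of the $\O2$-generators is wrong: by Proposition \ref{bigree_suzuki2_gens}, the elements $\varrho$ and $z^{\kappa\varrho\kappa}$ are generators of the Suzuki complement $S$, not of the $2$-radical, so they do not ``visibly lie in'' $\O2(\Cent_G(j))$. You would still have to produce actual generators for $[q^{10}]$, and your appeal to Conjecture \ref{pr_invol_o2_facts} is not enough on its own; this is exactly the work that Lemma \ref{lem_centraliser_generators} does and which the paper reuses. Second, your claim that Suzuki recognition is ``subsumed \ldots since $S$ is in the standard form'' is incorrect: $S$ sits in $\GL(26,q)$, not in $\GL(4,q)$, so Theorem \ref{thm:sz_sylow2} requires a prior call to Theorem \ref{cl_sz_constructive_recognition} for this copy. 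That call is indeed absorbed in the paper's approach, but only because the recognition of $\gen{X}$ already performed it on a Suzuki subgroup of $\gen{X}$; it is not free for an arbitrary $26$-dimensional Suzuki copy you name after the fact.
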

\begin{proof}
  Let $G = \gen{X}$. We see from the proof of Theorem
  \ref{thm_bigree_conj_problem} that during the constructive
  recognition, we find $\Cent_G(j)$ for some involution $j \in G$ of
  class $2A$. We also find $\gen{Y}, \gen{Z} \leqslant \Cent_G(j)$ such
  that $\gen{Y} = \O2(\Cent_G(j))$ and $\gen{Z} \cong \Sz(q)$.
  Moreover, $\gen{Z}$ is constructively recognised, and $Y, Z$ are
  expressed as $\SLP$s in $X$ of length $\OR{\log\log(q)}$.

  Hence we can apply Theorem \ref{thm:sz_sylow2} and obtain a Sylow
  $2$-subgroup $\gen{W}$ of $\gen{Z}$ using $\OR{\log(q) (\log\log(q))^2}$ field
  operations. Now $\gen{Y, W}$ is a Sylow $2$-subgroup of $G$.
\end{proof}

\begin{thm} \label{thm:bigree_sylow_sz} Assume the Suzuki Conjectures, the Big Ree Conjectures and an
  oracle for the discrete logarithm problem in $\F_q$. There exist Las
  Vegas algorithms that solve the Sylow generation problems for $p
  \mid q - 1$ or $p \mid q \pm t + 1$ in $\LargeRee(q) \cong G
  \leqslant \GL(26, q)$. Once constructive recognition has been
  performed, the expected time complexity is $\OR{(\xi +
    \log(q)\log\log(q))\log\log(q)}$ field operations.
\end{thm}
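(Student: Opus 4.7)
The plan is to reduce to the Sylow generation algorithms for the Suzuki group, which have already been established in Theorem \ref{thm:sz_sylow_easy_cyclic} (for $p \mid q - 1$) and Theorem \ref{thm:sz_sylow_hard_cyclic} (for $p \mid q \pm t + 1$, noting that $q^2 + 1 = (q + t + 1)(q - t + 1)$, so $p$ in either case divides $q^2 + 1$). By the classification of orders of elements in $\LargeRee(q)$ given just before Theorem \ref{thm:bigree_sylow2}, and the discussion there, a Sylow $p$-subgroup $S$ for these primes has the form $\Cent_p \times \Cent_p$ and is contained in a subgroup $H \cong \Sz(q) \times \Sz(q)$, where $S$ is the product of a Sylow $p$-subgroup of each direct factor.

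The constructive recognition of Theorem \ref{thm_bigree_constructive_recognition} uses Theorem \ref{thm_bigree_conj_problem}, which in turn invokes Lemma \ref{lem_find_sz_wr_2} to produce generating sets $S_1, S_2 \subseteq G$ with $\gen{S_1, S_2} \cong \Sz(q) \times \Sz(q)$, and then constructively recognises each factor via Theorem \ref{cl_sz_constructive_recognition} (obtaining the effective isomorphisms $\varphi_i$ and $\pi_i$ in its proof). We therefore assume that these data are available with no further cost, and that for each $i \in \set{1, 2}$ the elements of $S_i$ are stored as $\SLP$s in $X$ of length $\OR{(\log \log(q))^2}$.

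First I would apply Theorem \ref{thm:sz_sylow_easy_cyclic} (respectively Theorem \ref{thm:sz_sylow_hard_cyclic}) inside the first Suzuki factor $\gen{S_1}$ to produce a generating set $Y_1$ for a Sylow $p$-subgroup of $\gen{S_1}$, as an $\SLP$ in $S_1$; by composition of $\SLP$s we obtain $Y_1$ as an $\SLP$ in $X$. Repeat the same procedure inside $\gen{S_2}$ to obtain $Y_2$. Since $\gen{S_1}$ and $\gen{S_2}$ commute elementwise in $G$, the subgroup $\gen{Y_1 \cup Y_2}$ has order equal to the product of the Sylow $p$-orders in the two factors; by Proposition \ref{pr_sz_subgroups_conj} this equals the full $p$-part of $\abs{H} = \abs{\Sz(q)}^2$, which by the discussion above is also the $p$-part of $\abs{G}$. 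Hence $Y_1 \cup Y_2$ generates a Sylow $p$-subgroup of $G$.

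The only point needing care is the complexity accounting. The calls to Theorems \ref{thm:sz_sylow_easy_cyclic} and \ref{thm:sz_sylow_hard_cyclic} each cost expected $\OR{(\xi + \log(q)\log\log(q))\log\log(q)}$ field operations, since the matrix degree inside each Suzuki factor is constant (namely $4$), and the subsequent evaluation of the resulting $\SLP$s in $X$ costs $\OR{d^3 \log\log(q)}$ with $d = 26$, which is absorbed into $\OR{\xi \log\log(q)}$. The dominant term is therefore as claimed, and the algorithm is Las Vegas because both of the Suzuki Sylow generation algorithms are Las Vegas. No novel obstacle arises; the main subtlety is bookkeeping to ensure that the generators produced in the two factors are returned as $\SLP$s in the original generating set $X$, which follows from the $\SLP$ tracking of Lemma \ref{lem_find_sz_wr_2} and Theorem \ref{cl_sz_constructive_recognition}.
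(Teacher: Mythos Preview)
Your proposal is correct and follows essentially the same approach as the paper: use the two commuting, constructively recognised copies of $\Sz(q)$ produced during constructive recognition (via Lemma \ref{lem_find_sz_wr_2} and Theorem \ref{thm_bigree_conj_problem}), apply the Suzuki Sylow generation theorems \ref{thm:sz_sylow_easy_cyclic} or \ref{thm:sz_sylow_hard_cyclic} in each factor, and take the union. Your justification that the result is a full Sylow $p$-subgroup of $G$ is slightly more explicit than the paper's, and your complexity bookkeeping differs only in minor details of how the $\SLP$ evaluation cost is accounted for.
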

\begin{proof}
  Let $G = \gen{X}$. We see from the proof of Theorem
  \ref{thm_bigree_conj_problem} that during the constructive
  recognition, we find $\gen{Y_1}, \gen{Y_2} \cong \Sz(q)$, and they commute, so $\gen{Y_1, Y_2} \cong \Sz(q) \times \Sz(q)$. Moreover,
  $\gen{Y_1}$ and $\gen{Y_2}$ are constructively recognised, and $Y_1, Y_2$ are expressed as $\SLP$s in $X$ of length
  $\OR{\log\log(q)^2}$. 

  Hence we can apply Theorem \ref{thm:sz_sylow_easy_cyclic} or
  \ref{thm:sz_sylow_hard_cyclic} and obtain Sylow $p$-subgroups of
  $\gen{Y_1}$ and $\gen{Y_2}$, using $\OR{(\xi +
    \log(q)\log\log(q))\log\log(q)}$ field operations. From the proof
  of the Theorems, we see that there will be a constant number of
  generators, which will be expressed as $\SLP$s in $\gen{Y_i}$ of
  length $\OR{\log\log(q)}$. Hence we can evaluate them on $X$ using $\OR{(\log\log(q))^3}$ field operations.
\end{proof}

%

\chapter{Maximal subgroups}

We will now describe algorithms for finding and conjugating maximal
subgroups of the exceptional groups under consideration. Hence we
consider the following problems:
\begin{enumerate}
\item Given $\gen{X} \leqslant \GL(d, q)$, such that $\gen{X} \cong G$
  for some of our exceptional groups $G$, find representatives
  $\gen{Y_1}, \dotsc, \gen{Y_n}$ of the conjugacy classes of (some or
  all of) the maximal subgroups of $G$.
\item Given $\gen{X} \leqslant \GL(d, q)$, such that $\gen{X} \cong G$
  for some of our exceptional groups $G$, and given $\gen{Y}, \gen{Z}
  \leqslant \gen{X}$ such that $\gen{Y}$ and $\gen{Z}$ are conjugate
  to a specified maximal subgroup of $\gen{X}$, find $c \in \gen{X}$ such that
  $\gen{Y}^c = \gen{Z}$.
\end{enumerate}

It will turn out that because of the results about Sylow subgroup
conjugation in Chapter \ref{chapter:sylow_subgroups}, the second
problem will most often be easy. The first problem is therefore the
difficult one. We will refer to these problems as the \lq\lq maximal
subgroup problems''. The first problem is referred to as \lq\lq
maximal subgroup generation'' and the second as \lq\lq maximal
subgroup conjugation''.

\section{Suzuki groups}
\label{section:maximal_suzuki}

We now consider the maximal subgroup problems for the Suzuki groups.
We will use the notation from Section \ref{section:suzuki_theory}, and we will make heavy use of the
fact that we can use Theorem \ref{cl_sz_constructive_recognition} to
constructively recognise the Suzuki groups. Hence we assume that $G$ satisfies the assumptions in
Section \ref{section:algorithm_overview}, so $\Sz(q) \cong G \leqslant
\GL(d, q)$.

The maximal subgroups are given by Theorem \ref{sz_maximal_subgroups}.

\begin{thm}
  Assume the Suzuki Conjectures, an oracle for the
  discrete logarithm problem in $\F_q$ and an oracle for the integer factorisation problem. There exist Las Vegas
  algorithms that solve the maximal subgroup conjugation in $\Sz(q)
  \cong G \leqslant \GL(d, q)$.
  Once constructive recognition has been performed, the expected time
  complexity is $\OR{\xi(d) \log(q) + \log(q)^3 + d^3 (\log(q) (\log\log(q))^2 + (\abs{Y} + \abs{Z}) \sigma_0(\log(q))) + d^2 \log(q) \sigma_0(\log(q)) + \chi_F(4, q)}$
  field operations.
\end{thm}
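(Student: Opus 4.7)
The plan is to reduce the problem to the standard copy $H = \Sz(q)$ via the effective isomorphism $\varphi$ obtained during constructive recognition (Theorem \ref{cl_sz_constructive_recognition}), and then to split the work according to the four classes of maximal subgroups in Theorem \ref{sz_maximal_subgroups}. Mapping each generator of $\gen{Y}$ and $\gen{Z}$ under $\varphi$ costs $\OR{d^3(\abs{Y}+\abs{Z})}$; any conjugating element $c \in H$ we construct will then be pulled back to $G$ by evaluating $\varphi^{-1}$, which by Theorem \ref{cl_sz_constructive_recognition} has the required cost. The type of maximal under consideration is assumed known (or can be read off cheaply from a few random elements and their pseudo-orders), so we dispatch to one of four subroutines.

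For the point stabiliser $\mathcal{F}\mathcal{H}$ and the dihedral normaliser $\Norm_H(\mathcal{H}) \cong \Dih_{2(q-1)}$, the conjugation is geometric: each conjugate of $\mathcal{F}\mathcal{H}$ is the stabiliser of a unique point of $\OV$, and each conjugate of $\Norm_H(\mathcal{H})$ is the set-wise stabiliser of a pair of points. The MeatAxe (applied to $\gen{\varphi(Y)}$ and $\gen{\varphi(Z)}$) produces these points as $1$-dimensional submodules (for the single fixed point), or in the dihedral case from the two eigenspaces of any non-trivial element of order dividing $q-1$. The row reduction of Lemma \ref{sz_row_operations}, using the standard generators $L$, $U$ of $\O2(H_{P_\infty})$, $\O2(H_{P_0})$ that were produced by the preprocessing of Theorem \ref{sz_thm_pre_step}, then produces in $\OR{\log(q)^3}$ field operations an element of $H$ mapping one fixed point (or pair of points) to the other; this is exactly the scheme used for Sylow $2$-conjugation in Theorem \ref{thm:sz_sylow2} and for $p\mid q-1$ Sylow conjugation in Theorem \ref{thm:sz_sylow_easy_cyclic}.

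For the normalisers $\mathcal{B}_i = \Norm_H(U_i)$ of the cyclic Hall subgroups of orders $q \mp t + 1$, there are no fixed points on $\OV$, so the geometric trick fails. Instead, each $\gen{\varphi(Y)}$ contains a characteristic cyclic subgroup (its unique subgroup of index $4$, generated by any element of order $q \mp t + 1$, which by Proposition \ref{sz_totient_prop} is easy to obtain by a few random selections); conjugating $\gen{Y}$ to $\gen{Z}$ is then identical to conjugating these cyclic Hall subgroups, which is solved by Theorem \ref{thm:sz_sylow_hard_cyclic} via the norm-equation/discrete-logarithm procedure over $\F_{q^4}$. This contributes the $\xi(d)\log(q)$ and $\log(q)^3$ summands in the stated complexity and is the main obstacle, since it is the only case in which no point-orbit data is available. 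Finally, for the subfield subgroups $\Sz(s)$ with $q = s^k$ a proper power, we apply the \textsc{SmallerField} algorithm of Section \ref{section:smallerfield} to $\gen{\varphi(Y)}$ and $\gen{\varphi(Z)}$; this uses the integer factorisation oracle (contributing $\chi_F(4,q)$) and expected $\OR{\sigma_0(\log(q))(d^3(\abs{Y}+\abs{Z}) + d^2\log(q))}$ field operations to produce change-of-basis matrices $c_Y, c_Z$ realising each as a subgroup of $\Sz(s)$. After this change of basis, both subgroups are equal to $\Sz(s)$ sitting inside $\Sz(q)$ in the same way, so $c_Y c_Z^{-1}$ is a conjugator in $\GL(4,q)$, and a multiplication by a suitable element of $\Norm_H(\Sz(s))$ (easily computed from its known index-$k$ structure) lands it in $H$.

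Collecting the contributions across the four cases and the pull-back through $\varphi^{-1}$ gives the stated bound: the $\chi_F(4,q)$ and $\sigma_0(\log(q))$ factors arise from the subfield case; the $\log(q)^3$ term from the row reductions and from the norm-equation solving in the $\mathcal{B}_i$ case; the $d^3(\abs{Y}+\abs{Z})$ term from transporting the generators through $\varphi$; and the $\xi(d)\log(q)$ term from the random-element searches needed in the irreducible cyclic case. Since every subroutine invoked is Las Vegas (MeatAxe, Lemma \ref{sz_row_operations}, Theorem \ref{thm:sz_sylow_hard_cyclic}, the \textsc{SmallerField} algorithm, and the effective isomorphism), the overall algorithm is Las Vegas.
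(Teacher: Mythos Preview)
Your overall plan matches the paper's, and the first three cases (point stabiliser, $\Norm_H(\mathcal{H})$, and the $\mathcal{B}_i$) are handled correctly and essentially as the paper does (the paper routes all three uniformly through the Sylow conjugation theorems by first passing to derived subgroups, but your more explicit geometric treatment of the first two is fine). One small misattribution: the integer factorisation oracle is not used by the \textsc{SmallerField} algorithm; it is needed in the cyclic cases to certify that a candidate generator of $U_i$ (or $\mathcal{H}$) has the \emph{precise} order $q \pm t + 1$ (or $q-1$), rather than a proper divisor.

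There is, however, a genuine gap in your subfield case. The \textsc{SmallerField} algorithm returns a change-of-basis matrix $c_Y$ such that $\gen{\varphi(Y)}^{c_Y} \leqslant \GL(4,\F_s)$, but it does \emph{not} guarantee that $\gen{\varphi(Y)}^{c_Y}$ is the standard copy $\Sz(s)$, only that it is some conjugate of $\Sz(s)$ inside $\GL(4,\F_s)$. So there is no reason for $\gen{\varphi(Y)}^{c_Y}$ and $\gen{\varphi(Z)}^{c_Z}$ to coincide, and $c_Y c_Z^{-1}$ need not conjugate one maximal to the other. The paper repairs this by invoking Theorem~\ref{thm_conj_problem} (over $\F_s$) to find $c_3 \in \GL(4,\F_s)$ with $(\gen{\varphi(Y)}^{c_1})^{c_3} = \gen{\varphi(Z)}^{c_2}$; then $c = c_1 c_3 c_2^{-1}$ genuinely conjugates $\gen{\varphi(Y)}$ to $\gen{\varphi(Z)}$.

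Your correction step ``multiply by a suitable element of $\Norm_H(\Sz(s))$'' is also not quite what is needed. The element $c$ just constructed lies in $\GL(4,q)$; one argues that it must lie in $\Norm_{\GL(4,q)}(H) \cong H {:} \F_q^{\times}$ (since $\gen{\varphi(Z)}$ is absolutely irreducible and its outer automorphisms are not realised in $\GL(4,q)$, any two conjugators differ by a scalar times an element of $\gen{\varphi(Z)}\leqslant H$). Hence $c = (\gamma I_4) g$ with $g \in H$, and $\gamma$ is recovered as the unique $4$th root of $\det c$; dividing by the scalar gives the required conjugator in $H$.
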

\begin{proof}
  Let $H = \Sz(q)$. In each case, we first use the effective
  isomorphism to map the subgroups to $H$ using $\OR{d^3 (\abs{Y} +
    \abs{Z})}$ field operations. Therefore we henceforth assume that
  $\gen{Y}, \gen{Z} \leqslant H$.
  
  Observe that all maximal subgroups, except the Suzuki groups over
  subfields, are the normalisers of corresponding cyclic subgroups or
  Sylow $2$-subgroups. The Sylow conjugation algorithms can conjugate
  these cyclic subgroups around, not only the Sylow subgroups that they
  contain. Moreover, the cyclic subgroups and Sylow $2$-subgroups are
  the derived groups of the corresponding maximal subgroups.

  Hence we can obtain probable generators for the cyclic subgroups or
  Sylow $2$-subgroups using $\OR{\xi(d) \log(q)}$ field operations, and we can verify that we have the whole of these subgroups as follows:
\begin{enumerate}
\item For $\mathcal{B}_i$, the generators of the derived group $U_i$
  must contain an element of order $q \pm t + 1$.
\item For $\Norm_H(\mathcal{H})$, the generators of the derived group
  $\mathcal{H}$ must contain an element of order $q - 1$.
\item For $\mathcal{F} \mathcal{H}$, we need not obtain the whole
  derived group $\mathcal{F}$. It is enough that we obtain a subgroup of
  the derived group that fixes a unique point of $\OV$, but this might require $\OR{\log(q)}$ generators.
\end{enumerate}

Note that in these cases we need the integer factorisation oracle to
find the precise order. When we have generators for the cyclic
subgroups, we use Theorem \ref{thm:sz_sylow2},
\ref{thm:sz_sylow_easy_cyclic} and \ref{thm:sz_sylow_hard_cyclic} to
find conjugating elements for the cyclic subgroups. These elements will
also conjugate the maximal subgroups, because they normalise the cyclic
subgroups.

Finally, consider the case when $\gen{Y}$ and $\gen{Z}$ are isomorphic
to a Suzuki group over $\F_s < \F_q$. In this case we first use the
algorithm in Section \ref{section:smallerfield}, to obtain $c_1, c_2
\in \GL(4, q)$ that conjugates $\gen{Y}$ and $\gen{Z}$ into $\GL(4,
s)$. Then we use Theorem \ref{thm_conj_problem} to find $c_3 \in
\GL(4, s)$ that conjugates the Suzuki groups to each other. Hence $c =
c_1 c_3 c_2^{-1}$ conjugates $\gen{Y}$ to $\gen{Z}$, and therefore it
normalises $H$. However, $c$ does not necessarily lie in $H$, but only
in $\Norm_{\GL(4, q)}(H) \cong H {:} \F_q$, since neither $c_1$ nor
$c_2$ are guaranteed to lie in $H$. Therefore $c = (\gamma I_4) g$
where $g \in H$ and $\gamma \in \F_q$. We can find $\gamma$ by
calculating the determinant and taking its (unique) $4$th root, so we
can divide by the scalar matrix, and we then end up with $g$, which also
conjugates $\gen{Y}$ to $\gen{Z}$.

Finally we use the effective isomorphism to map $g$ to $G$. The expected time complexity follows from Theorem
\ref{thm:sz_sylow2}, \ref{thm:sz_sylow_easy_cyclic}, \ref{thm:sz_sylow_hard_cyclic} and \ref{thm_conj_problem}.
\end{proof}

\begin{lem} \label{lem_sz_hard_maximals}
If $g, h \in G = \Sz(q)$ satisfy that $\abs{g} = 2$, $\abs{h} = 4$, $\abs{gh} = 4$ and $\abs{gh^2} = q \pm t + 1$, then $\gen{g, h} \cong \Cent_{q \pm t + 1} {:} \Cent_4$ and hence is a maximal subgroup of $G$.
\end{lem}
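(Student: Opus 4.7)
The plan is to set $u = gh^2$ and $j = h^2$, so that $u$ has order $n := q \pm t + 1$ and $j$ is an involution. Since $g = uj$, we have $\gen{g, h} = \gen{u, h}$, so it is enough to show that $h$ normalises $\gen{u}$ and then compare orders with $\mathcal{B}_i = \Norm_G(U_i)$ from Theorem \ref{sz_maximal_subgroups}.

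The easy normalising statement is $u^g = u^{-1}$: this follows from $g^2 = 1$ directly, since $g^{-1}(gj)g = jg = (gj)^{-1}$. Exactly the same calculation, using that $j$ is an involution and $j = h^2$ commutes with $g$ in the sense $juj = u^{-1}$, gives $u^{h^2} = u^j = u^{-1}$, and therefore $u^{h^3} = (u^h)^{-1}$. The main step is to extract from the condition $\abs{gh} = 4$ the fact that $u$ and $u^h$ commute. I would compute $(gh)^2$ directly: using $g = uj$ and $jh = hj = h^3$, one obtains
\[ (gh)^2 = uh^3 \cdot uh^3 = u \cdot u^h \cdot h^6 = u u^h j. \]
Squaring again and using $j u^h j = (u^h)^{-1}$ (which follows from $[j, h] = 1$ and $j u j = u^{-1}$) gives
\[ ((gh)^2)^2 = u u^h \cdot u^{-1} (u^h)^{-1} = [u, u^h], \]
and this equals $1$ because $(gh)^4 = 1$. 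Hence $u$ and $u^h$ commute.

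Now the Hall subgroup structure of Theorem \ref{thm_suzuki_props} finishes the job: since $u$ has order $n = q \pm t + 1$, it lies in a unique conjugate $U_i^y$ of $U_1$ or $U_2$, and $\Cent_G(u) = U_i^y = \gen{u}$. The commuting relation $[u, u^h] = 1$ therefore forces $u^h \in \gen{u}$, so $h \in \Norm_G(\gen{u}) = \mathcal{B}_i^y$. Combined with $g \in \Norm_G(\gen{u})$ from the previous paragraph, this gives $\gen{g, h} \leqslant \mathcal{B}_i^y$, which has order $4n$.

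For the reverse inequality, note that $\gen{g, h} \supseteq \gen{u, h}$, and since $n$ is odd (as $q$ and $t$ are both powers of $2$) we have $\gen{u} \cap \gen{h} = \gen{1}$; together with $h$ normalising $\gen{u}$, this yields $\abs{\gen{u, h}} = 4n$. Hence $\gen{g, h} = \mathcal{B}_i^y$, a maximal subgroup of $G$ by Theorem \ref{sz_maximal_subgroups}, and the extension splits by Schur--Zassenhaus (or simply because $\gen{u} \cap \gen{h} = \gen{1}$), giving $\gen{g, h} \cong \Cent_n {:} \Cent_4$ as required. The main obstacle was the commutator computation above; everything else is bookkeeping with the structure already recorded in Section \ref{section:suzuki_theory}.
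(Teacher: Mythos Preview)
Your proof is correct and takes a genuinely different route from the paper's. The paper's argument is a two-liner: the relations $g^2 = h^4 = (gh)^4 = 1$ exhibit $\gen{g,h}$ as a quotient of the abstract group $\gen{x,y \mid x^2, y^4, (xy)^4} \cong \Z^2 {:} \Cent_4$, which is soluble; hence $\gen{g,h}$ is a soluble subgroup of $\Sz(q)$ containing an element of order $q \pm t + 1$, and a glance at the list in Theorem \ref{sz_maximal_subgroups} forces it to be one of the $\mathcal{B}_i$. You instead bypass the presentation entirely and extract the key normalising relation $u^h \in \gen{u}$ by a direct commutator computation, feeding in only the centraliser fact $\Cent_G(u) = \gen{u}$ from Theorem \ref{thm_suzuki_props}. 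Your argument is longer but completely self-contained within the structural facts already recorded in Section \ref{section:suzuki_theory}, whereas the paper's version leans on recognising the presentation of $\Z^2 {:} \Cent_4$.

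One cosmetic remark: with the paper's convention $[a,b] = a^{-1}b^{-1}ab$, the quantity you compute, $u\,u^h\,u^{-1}(u^h)^{-1}$, is not literally $[u,u^h]$ but its inverse (equivalently $[u^h,u]^{-1}$); of course this is immaterial since you are only using that it is trivial.
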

\begin{proof}
  Clearly, $\gen{g, h}$ is an image in $G$ of the group $H = \gen{x, y
    \mid x^2, y^4, (xy)^4} \cong \Z^2 {:} \Cent_4$. Since $H$ is
  soluble and $gh^2$ has the specified order, $\gen{g, h}$ must
  be one of the $\mathcal{B}_i$ from Theorem \ref{sz_maximal_subgroups}.
\end{proof}


\begin{thm}
  Assume the Suzuki Conjectures, an oracle for the discrete logarithm
  problem in $\F_q$ and an oracle for the integer factorisation
  problem. There exist Las Vegas algorithms that solve the maximal
  subgroup generation in $\Sz(q) \cong G \leqslant \GL(d, q)$.  Once
  constructive recognition has been performed, the expected time
  complexity is $\OR{\xi(d) (\sigma_0(\log(q)) + \log\log(q)) + \chi_F(4, q) \log\log(q) + \sigma_0(\log(q)) (\log(q)^3 + d^3 \log(q) (\log\log(q))^2)}$
  field operations.
\end{thm}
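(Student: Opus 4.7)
The plan is to construct representatives for each of the four conjugacy classes of maximal subgroups listed in Theorem \ref{sz_maximal_subgroups}, working through the effective isomorphism $\varphi : G \to H = \Sz(q)$ produced by Theorem \ref{cl_sz_constructive_recognition}. For each class I will write down generators in the standard copy $H$ (or generate them by random search in $G$), express them as $\SLP$s in the standard generators of $H$, and then evaluate these $\SLP$s in $X$ via $\varphi^{-1}$.

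The first two cases are essentially bookkeeping. The point stabiliser $\mathcal{F}\mathcal{H}$ is handed to us by the preprocessing already performed in Theorem \ref{sz_thm_pre_step}, which returns standard generators of $\O2(H_{P_\infty})$ together with a diagonal element of order $q-1$ as $\SLP$s in $X$; together these generate a conjugate of $\mathcal{F}\mathcal{H}$. For $\Norm_G(\mathcal{H}) \cong \Dih_{2(q-1)}$ I would take the $\SLP$s for $M^{\prime}(\lambda)$ and $T$ from the standard generating set of $H$ and evaluate them in $X$; the resulting two elements generate the required dihedral group.

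The main obstacle is the third case, the normalisers $\mathcal{B}_i = \Norm_G(U_i)$, since these do not appear explicitly in the data produced by constructive recognition. I will use Lemma \ref{lem_sz_hard_maximals}: find by random search a pair $g, h \in G$ with $\abs{g} = 2$, $\abs{h} = 4$, $\abs{gh} = 4$, and $\abs{gh^2} = q \pm t + 1$, which forces $\gen{g, h}$ to be a conjugate of one of $\mathcal{B}_1$ or $\mathcal{B}_2$ (the sign in $q \pm t + 1$ telling us which class). Involutions and elements of order $4$ are obtained by finding a random element and then applying Proposition \ref{pr_element_powering}; by Proposition \ref{sz_trace0_order4}, most trace-zero elements already have order $4$. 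Verifying the order $q \pm t + 1$ exactly requires the integer factorisation oracle, contributing the $\chi_F(4, q) \log\log(q)$ term, while Proposition \ref{sz_totient_prop} bounds the number of random trials by $\OR{\log\log(q)}$, giving the $\xi(d) \log\log(q)$ term.

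For the fourth case, the subfield subgroups $\Sz(s)$ with $q = s^r$ for each prime $r \mid 2m+1$, I would for each such subfield write down generators of $\Sz(s) \leq \Sz(q)$ as explicit matrices in the standard copy, express them as $\SLP$s in the standard generators of $H$ via Algorithm \ref{alg:sz_main_alg} (costing $\OR{\log(q)^3}$ field operations per element and producing $\SLP$s of length $\OR{\log(q)(\log\log(q))^2}$ by Theorem \ref{thm_element_to_slp}), and then evaluate these $\SLP$s in $X$. There are $\OR{\sigma_0(\log(q))}$ such subfield classes, which accounts for the $\sigma_0(\log(q))(\log(q)^3 + d^3 \log(q)(\log\log(q))^2)$ and $\xi(d)\sigma_0(\log(q))$ terms in the stated complexity. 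Correctness is immediate from Theorem \ref{sz_maximal_subgroups} (which enumerates all classes) combined with Lemma \ref{lem_sz_hard_maximals}; the algorithm is Las Vegas since each non-trivial step (random search, order computation, constructive membership testing, MeatAxe) is Las Vegas.
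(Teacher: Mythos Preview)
Your treatment of the point stabiliser, of $\Norm_G(\mathcal{H})$, and of the subfield subgroups is fine and essentially matches the paper (the paper writes down the subfield generators directly as $\gen{T, S(1,0), M'(\alpha)^{(q-1)/(s-1)}}$, but either way one then pulls back via $\varphi^{-1}$, which is your Algorithm \ref{alg:sz_main_alg}).

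The gap is in your handling of $\mathcal{B}_1$ and $\mathcal{B}_2$. You propose to find the pair $(g,h)$ of Lemma \ref{lem_sz_hard_maximals} by random search: pick a random involution $g$, a random element $h$ of order $4$, and test whether $\abs{gh}=4$ and $\abs{gh^2}=q\pm t+1$. The problem is the condition $\abs{gh}=4$. By Proposition \ref{sz_conjugacy_classes} this forces $\Tr(gh)=0$, and by the proof of Proposition \ref{sz_trace0_order4} the elements of trace $0$ are exactly those of order dividing $4$, of which there are $q^4$ in a group of order $q^2(q^2+1)(q-1)$. For a random involution $g$ and random order-$4$ element $h$ there is no reason for $gh$ to be biased toward trace $0$, so this condition holds with probability $\OR{1/q}$, and your random search needs $\OR{q}$ iterations --- not polynomial. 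Your appeal to Proposition \ref{sz_totient_prop} only controls the proportion of elements of order $q\pm t+1$; it says nothing about the joint event.

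The paper avoids this by \emph{solving} rather than searching. It fixes $g = T$ and looks for $h$ of the form $S(a,b)$ (automatically of order $4$ when $a\neq 0$). The two remaining constraints become the trace equations
\[
\Tr(T\,S(a,b)) = 0,\qquad \Tr(T\,S(a,b)^2) = \lambda,
\]
where $\lambda$ is the trace of a random element of order $q\pm t+1$ (so that Proposition \ref{sz_conjugacy_classes} forces $\abs{T\,S(a,b)^2}=q\pm t+1$). The second equation determines $a$ outright, and the first then reduces to a quadratic in $b$, solvable in $\OR{\log q}$ field operations; one checks the solution and repeats with a new $\lambda$ if the quadratic has no root (probability $1/2$). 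This is what produces the stated complexity.
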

\begin{proof}
  Let $H = \Sz(q)$. Using the effective isomorphism, it is sufficient
  to obtain generators for the maximal subgroups in $H$.
  
  Let $\alpha \in \F_q$ be a primitive element. Clearly $\mathcal{F}
  \mathcal{H} = \gen{M^{\prime}(\alpha), S(1, 0)}$, and
  $\Norm_H(\mathcal{H}) = \gen{M^{\prime}(\alpha), T}$. For each $e >
  0$ such that $2e + 1 \mid 2m + 1$, we have $\F_s < \F_q$ where $s = 2^{2e
    + 1}$. Hence $s - 1 \mid q - 1$ and $\Sz(s) = \gen{T, S(1, 0),
    M^{\prime}(\alpha)^{(q - 1) / (s - 1)}}$.

  The difficult case is therefore $\mathcal{B}_1$ and $\mathcal{B}_2$.
  We want to use Lemma \ref{lem_sz_hard_maximals}, with $T$ and
  $S(a,b)$ playing the roles of $x$ and $y$. Hence we proceed as follows:
\begin{enumerate}
\item Choose random $g \in H$ of order $q \pm t + 1$. Note that
  we need the integer factorisation oracle, since we need the precise
  order of $g$. Let $\lambda = \Tr(g)$. 
\item Let $a,b$ be indeterminates and consider the equations $\Tr(T
    S(a, b)) = 0$ and $\Tr(T S(a,b)^2) = \lambda$. If we can find
  solutions for $a,b$, then by Proposition \ref{sz_trace0_order4},
  $\abs{T S(a,b)} = 4$ with high probability, and Proposition
  \ref{sz_conjugacy_classes} implies that $\abs{T S(a,b)^2} = q \pm t
  + 1$.
\item The second equation implies $a = \lambda^{t + 2}$, and
\begin{equation}
\begin{split}
\Tr(T S(a, b)) = &\; a^t + a^{t + 2} + ab + b^t = 0 \Leftrightarrow \\
& a^2 + a^{2t + 2} + a^t b^t + b^2 = 0 \Rightarrow \\
& b^2 + a^{t + 1} b + a^2 + a^{2t} = 0
\end{split}
\end{equation}
where the third equation is $a^t$ times the first added to the second. 
\item The quadratic equation has solutions $b_1 = a^{t+1}
  \sum_{i=1}^{m+1} a^{-2^i}$ and $b_2 = b_1 + a^{t + 1}$, which both
  give the value $a^{s+2}(1 + \sum_{i = 0}^{2m} a^{-2^i})$ of $\Tr(T S(a, b))$.
  Hence repeat with another $g$ if $\sum_{i = 0}^{2m} a^{-2^i} \neq 1$, which happens with probability $1/2$.
\item Lemma \ref{lem_sz_hard_maximals} now implies that
  $\gen{T, S(a, b)}$ is $\mathcal{B}_1$ or $\mathcal{B}_2$. 
\end{enumerate}

Finally, we see that we have $\OR{\sigma_0(\log(q))}$ generators, which we map
back to $G$ using the effective isomorphism. Hence the expected time
complexity is as stated.
\end{proof}

\section{Small Ree groups}
\label{section:maximal_ree}

We now consider the maximal subgroup problems for the small Ree groups.
We will use the notation from Section \ref{section:suzuki_theory}. We will make heavy use of the
fact that we can use Theorem \ref{cl_ree_constructive_recognition} to
constructively recognise the small Ree groups. Hence we assume that $G$ satisfies the assumptions in
Section \ref{section:algorithm_overview}, so $\Ree(q) \cong G \leqslant
\GL(d, q)$.

The maximal subgroups are given by Proposition \ref{ree_maximal_subgroup_list}.

\begin{thm}
  Assume the small Ree Conjectures and an oracle for the discrete
  logarithm problem in $\F_q$. There exist Las Vegas algorithms that
  solve the maximal subgroup conjugation in $\Ree(q) \cong G \leqslant
  \GL(d, q)$ for the point stabiliser, the involution centraliser and
  Ree groups over subfields. Once constructive recognition has been
  performed, the expected time complexity is $\OR{\xi(d) \log(q) +
    \log(q)^3 + \chi_D(q) \log\log(q) + d^3 ((\abs{Y} + \abs{Z}) \sigma_0(\log(q)) +
    (\log(q) \log\log(q))^2) + d^2 \log(q) \sigma_0(\log(q))}$ field operations.
\end{thm}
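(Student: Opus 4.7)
The plan is to mirror the Suzuki strategy from Section \ref{section:maximal_suzuki}: first transport the input subgroups to the standard copy $H = \Ree(q)$ via the effective isomorphism (costing $\OR{d^3(\abs{Y}+\abs{Z})}$ per application), handle each of the three maximal subgroup types in $H$, and finally pull the conjugating element back to $G$. Throughout we exploit the fact that the three types have a characteristic \emph{distinguishing invariant} that Las Vegas algorithms can extract.

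For the point stabilisers $\gen{Y}$ and $\gen{Z}$, conjugate to $U(q)H(q)$, the invariant is the unique fixed point $P_Y, P_Z \in \OV \subseteq \PS^6(\F_q)$, and it is the radical of the natural module of the subgroup. Find $P_Y$ and $P_Z$ with the MeatAxe, then apply Lemma \ref{ree_row_operations}(3) twice to obtain $a, b \in U(q)H(q)$ with $P_Y a = P_\infty = P_Z b$; then $a b^{-1}$ normalises the point stabiliser of $P_\infty$ and maps $P_Y$ to $P_Z$, hence conjugates $\gen{Y}$ to $\gen{Z}$. The element comes as an $\SLP$ of length $\OR{\log(q)(\log\log(q))^2}$ in the standard generators by Lemma \ref{ree_row_operations}, and evaluating it on $X$ yields the result in $G$.

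For involution centralisers $\Cent_G(j_Y)$ and $\Cent_G(j_Z)$, the invariant is the central involution. Extract $j_Y$ and $j_Z$ as the unique non-identity element of $\Zent(\gen{Y})$ and $\Zent(\gen{Z})$ respectively (computable as the element of order $2$ acting as $-1$ on the $4$-dimensional constituent appearing in Proposition \ref{pr_inv_centraliser_split}). All involutions of $\Ree(q)$ are conjugate by Proposition \ref{pr_involution_props}, and by Proposition \ref{ree_dihedral_trick} the dihedral trick (Theorem \ref{dihedral_trick}) applies with success probability bounded below by a constant. Thus a conjugating element $c$ with $j_Y^c = j_Z$ is found as a short $\SLP$ using expected $\OR{\xi(d) + d^3 \log(q)\log\log(q)}$ field operations; since $\Cent_G(j_Y)^c = \Cent_G(j_Z)$, this is the required conjugator.

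For Ree groups $\gen{Y}, \gen{Z} \cong \Ree(s)$ over a subfield $\F_s < \F_q$, use the algorithm of Section \ref{section:smallerfield} to find $c_1, c_2 \in \GL(7, q)$ conjugating $\gen{Y}, \gen{Z}$ into $\GL(7, s)$; each call costs $\OR{\sigma_0(\log(q)) (\abs{Y} d^3 + d^2 \log(q))}$ field operations. Then apply Theorem \ref{cl_ree_conjugacy} inside $\Ree(s)$ to obtain $c_3 \in \GL(7, s)$ conjugating $\gen{Y}^{c_1}$ to $\gen{Z}^{c_2}$. The candidate $c = c_1 c_3 c_2^{-1}$ normalises $H$ but need only lie in $\Norm_{\GL(7, q)}(H) \cong H{:}\F_q^\times$; since $7$ is coprime to $q$-th powers we can extract the scalar factor by taking a $7$th root of $\det(c)$ and rescale, landing in $H$. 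Finally map back to $G$ with the effective isomorphism.

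The main obstacle I expect is the final subfield case: the algorithm of Section \ref{section:smallerfield} produces a conjugating matrix only up to scalars and only up to the natural $\F_s$-embedding, so care is needed to show that the composite $c = c_1 c_3 c_2^{-1}$ can always be adjusted to lie in $H$ (not merely in its normaliser in $\GL(7, q)$), and to verify that $c_3$ produced by Theorem \ref{cl_ree_conjugacy} really reduces modulo the scalars to a conjugator of the original subfield subgroups. The remaining complexity bookkeeping is routine: the three cases each contribute at most the claimed bound, and the dominating terms come from the discrete logarithm oracle invocations inside Theorem \ref{cl_ree_conjugacy} and from the MeatAxe calls that reduce subgroup membership tests to module computations.
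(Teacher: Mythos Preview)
Your proposal is correct and follows the same three-case strategy as the paper. The two minor differences are worth noting.

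For the point stabiliser, the paper first produces random generators of the derived subgroup (accounting for the $\xi(d)\log(q)$ term) and then invokes the Sylow~$3$ conjugation of Theorem~\ref{thm:ree_sylow3}; but that theorem internally does exactly what you propose, namely find the fixed point with the MeatAxe and use Lemma~\ref{ree_row_operations}. Your route is thus a slight short-cut, though you should handle the degenerate case $P_Y=P_\infty$, $P_Z=P_0$ (or vice versa) by using $\Upsilon$, as in the proof of Theorem~\ref{thm:ree_sylow3}.

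For the involution centraliser, the paper finds the central involution by random search in $\gen{Y}$: a random element of even order powers up to $y$ with bounded probability, and one verifies $y$ is correct by checking that every element of $Y$ centralises it. Your module-theoretic extraction (write down the matrix acting as $+1$ on $S_j$ and $-1$ on $T_j$ from Proposition~\ref{pr_inv_centraliser_split}) is a valid deterministic alternative; both then feed into the dihedral trick, and the resulting conjugator is pulled back to $G$ via constructive membership in $H$.

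The subfield case is handled identically in both.
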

\begin{proof}
  Let $H = \Ree(q)$. In each case, we first use the effective
  isomorphism to map the subgroups to $H$ using $\OR{d^3 (\abs{Y} +
    \abs{Z})}$ field operations. Therefore we henceforth assume that
  $\gen{Y}, \gen{Z} \leqslant H$.
  
  Observe that the point stabiliser is the normaliser of a Sylow
  $3$-subgroup, which is the derived group of the point stabiliser. We
  can therefore obtain probable generators for the Sylow $3$-subgroup
  using $\OR{\xi(d) \log(q)}$ field operations. We only need enough
  generators so that they generate a subgroup of the derived group
  that fixes a unique point of $\OV$, and this we can easily verify
  using the MeatAxe. When we have generators for this subgroup, we use
  Theorem \ref{thm:ree_sylow3} to find a conjugating element. This
  element will also conjugate the maximal subgroup, because it
  normalises the Sylow subgroup.

For the involution centraliser, we choose random elements of
$\gen{Y}$. Since $\gen{Y} \cong \gen{y} \times \PSL(2, q)$ for some
involution $y$, with probability $\OR{1}$ we will obtain an element of
even order that powers up to $y$. We can check that we obtain $y$
since it is the unique involution that is centralised by $\gen{Y}$ (and therefore by $Y$). Hence we
can find the involutions $y$ and $z$ that are centralised by $\gen{Y}$
and $\gen{Z}$. By Proposition \ref{ree_dihedral_trick} we can use the dihedral trick to find
$c \in H$ that conjugates $y$ to $z$, using $\OR{\log(q) \log\log(q)}$ field operations. Since $\gen{Y}$ and $\gen{Z}$
centralise these, it follows that $\gen{Y}^c = \gen{Z}$.

Finally, consider the case when $\gen{Y}$ and $\gen{Z}$ are isomorphic
to a Ree group over $\F_s < \F_q$. In this case we first use the
algorithm in Section
\ref{section:smallerfield}, to obtain $c_1, c_2 \in \GL(7, q)$ that
conjugates $\gen{Y}$ and $\gen{Z}$ into $\GL(7, s)$. Then we use
Theorem \ref{cl_ree_conjugacy} to
find $c \in \GL(7, s)$ that conjugates the resulting Ree groups to each
other. Hence $c_1 c c_2^{-1}$ conjugates $\gen{Y}$ to $\gen{Z}$, and hence normalises $H$. However, it does not necessarily lie in $H$, but only in $\Norm_{\GL(7,
  q)}(H) \cong H {:} \F_q$, since neither $c_1$ nor $c_2$ has to lie in $H$. Therefore it is of the form $(\gamma I_7) g$, where $g \in
H$ and $\gamma \in \F_q$. We can find $\gamma$ by calculating the determinant and taking the (unique) $7$th root, so we can divide by the scalar matrix, and we then end up with $g$, that also conjugates $\gen{Y}$ to $\gen{Z}$.

Finally we use the effective isomorphism to map the conjugating element back to $G$. The expected time complexity follows from Theorem
\ref{thm:ree_sylow3}, \ref{cl_ree_conjugacy} and \ref{cl_ree_constructive_recognition}.
\end{proof}

\begin{lem} \label{lem_ree_hard_maximals}
If $g, h \in G = \Ree(q)$ satisfy that $\abs{g} = 2$, $\abs{h} = 3$, $\abs{gh} = 6$ and $\abs{[g,h]} = q \pm 3t + 1$ or $\abs{[g,h]} = (q + 1) / 2$, then $\gen{g, h} \cong \Cent_{q \pm 3t + 1} {:} \Cent_6$ or $\gen{g, h} \cong (\Cent_2 \times \Cent_2 \times \Cent_{(q + 1) / 4}) ){:} \Cent_6$ and hence is a maximal subgroup of $G$.
\end{lem}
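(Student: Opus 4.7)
I would follow the pattern of Lemma~\ref{lem_sz_hard_maximals}. The defining relations on $g, h, gh$ express $\gen{g, h}$ as a finite homomorphic image of the von Dyck triangle group
\[ \Delta = \gen{x, y \mid x^2, y^3, (xy)^6}, \]
which is the Euclidean wallpaper group of structure $\Z^2 {:} \Cent_6$. In particular $\gen{g, h}$ is metabelian (hence soluble), its abelianisation is a quotient of $\Cent_6$, and its derived subgroup is abelian of rank at most $2$ and is the normal closure of the image of $[x, y]$.

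With solubility in hand I would go through the maximal subgroups of $G = \Ree(q)$ listed in Proposition~\ref{ree_maximal_subgroup_list}. The involution centraliser contains $\PSL(2, q)$ and is not soluble. The point stabiliser $U(q) H(q)$ has every element of order dividing $9$ or $q - 1$, both of which are coprime to $q \pm 3t + 1$ and to $(q + 1)/2$ (using $q = 3^{2m + 1}$ together with $\gcd(q^3 + 1, q - 1) \mid 2$), so no conjugate of it contains the element $[g, h]$. Subfield subgroups $\Ree(s)$ with $q = s^r$, $r > 1$, are excluded by a short Zsygmondy-style divisibility check that neither $q \pm 3t + 1$ nor $(q + 1)/2$ divides $\abs{\Ree(s)} = s^3(s^3 + 1)(s - 1)$. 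Hence $\gen{g, h}$ is contained in a conjugate of $\Norm_G(A_i)$ for some $i \in \set{0, 1, 2}$, and the specified order of $[g, h]$ picks out which.

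It remains to show equality. In each case $\Norm_G(A_i)$ is an extension $N {.} \Cent_6$ with $N = A_i$ for $i \in \set{1, 2}$ and $N = \Cent_2 \times \Cent_2 \times A_0$ for $i = 0$. A direct check using $q = 3^{2m + 1}$ shows that the exponent of $N$ is coprime to $6$ in every case, so the image of $gh$ generates $\Norm_G(A_i) / N \cong \Cent_6$; combined with the previous paragraph this gives $\gen{g, h} / (\gen{g, h} \cap N) \cong \Cent_6$ and $\gen{g, h} \cdot N = \Norm_G(A_i)$. For $i \in \set{1, 2}$ the subgroup $\gen{[g, h]}$ already equals $N$ and equality follows by counting. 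The main obstacle is the case $i = 0$, where $\abs{[g, h]} = (q + 1)/2$ is only half of $\abs{N} = q + 1$; here I would examine the normal closure of $[g, h]$ in $\gen{g, h}$. Since the action of $\Cent_6 = \Norm_G(A_0)/N$ on $N / A_0 \cong \Cent_2 \times \Cent_2$ is faithful---otherwise the stated structure of $\Norm_G(A_0)$ in Proposition~\ref{ree_maximal_subgroup_list} would collapse---conjugation by the images of $g$ and $h$ sweeps out a full $\Cent_6$-orbit in $N / A_0$, so $\gen{[g, h]}^{\gen{g, h}} = N$ and $\gen{g, h} = \Norm_G(A_i)$ as required.
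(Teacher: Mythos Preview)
Your opening is exactly the paper's argument: the paper observes that $\gen{g,h}$ is a quotient of $\gen{x,y\mid x^2,y^3,(xy)^6}\cong\Z^2{:}\Cent_6$, hence soluble, and then asserts that the order of $[g,h]$ forces $\gen{g,h}$ to be one of the $\Norm_G(A_i)$. The paper gives no further detail, so your attempt to fill in the case analysis goes well beyond what the paper does. Unfortunately, several of the details you add are wrong.

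First, your reason for excluding the involution centraliser is not valid. Saying that $\Cent_G(j)\cong\gen{j}\times\PSL(2,q)$ ``is not soluble'' does not prevent a soluble group from sitting inside it, and in fact $\Cent_G(j)$ \emph{does} contain elements of order $(q+1)/2$. You must argue further: projecting to $\PSL(2,q)$, the image of $h$ has order $3$ (a unipotent, since $\chr=3$) while the image of $[g,h]$ has order $(q+1)/2$ or $(q+1)/4$ (a non-split torus element). No metabelian subgroup of $\PSL(2,q)$---these are cyclic, dihedral, $A_4$, or inside a Borel---contains both, because the Borel has no elements of order dividing $(q+1)/4>1$, the dihedrals contain no order-$3$ elements (as $3\nmid q\pm1$), and $A_4$ has exponent $6$. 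Only then is $\Cent_G(j)$ excluded.

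Second, in the $i=0$ case your claim that the exponent of $N=\Cent_2^2\times A_0$ is coprime to $6$ is false: $\abs{N}=q+1$ is even. You therefore cannot conclude directly that $ghN$ has order $6$. What works instead is: $hN$ has order $3$ since $\gcd(3,q+1)=1$, and $g\notin N$, since otherwise $g\in\Cent_2^2$ would force $[g,h]\in\Cent_2^2$ of order at most $2$, contradicting $\abs{[g,h]}=(q+1)/2$.

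Third, the action of $\Cent_6$ on $N/A_0\cong\Cent_2^2$ cannot be faithful, because $\Aut(\Cent_2^2)\cong S_3$ and $\Cent_6\not\cong S_3$. In fact the Sylow $2$-subgroup of $\Norm_G(A_0)$ is elementary abelian of order $8$, so the involution in $\Cent_6$ centralises $\Cent_2^2$; the action factors through $\Cent_3$. What you actually need (and what is true) is that this $\Cent_3$ acts nontrivially, hence cycles the three involutions of $\Cent_2^2$; then conjugating $[g,h]$ by $h$ already gives all of $N$.
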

\begin{proof}
  Clearly, $\gen{g, h}$ is an image in $G$ of the group $H = \gen{x, y
    \mid x^2, y^3, (xy)^6} \cong \Z^2 {:} \Cent_6$. Since $H$ is
  soluble and $[g,h]$ has the specified order, $\gen{g, h}$ must
  be one of the $\Norm_G(A_i)$ from Proposition \ref{ree_maximal_subgroup_list}.
\end{proof}

\begin{lem} \label{ree_hard_maximal_conj2} Let $G = \Ree(q)$. For
    each $k \in \set{q \pm 3t + 1, (q + 1) / 2}$, there exist $x, y \in G =
    \Ree(q)$ such that $\abs{x} = 2$, $\abs{y} = 3$, $\abs{xy} = 6$
    and $\abs{[x, y]} = k$.
\end{lem}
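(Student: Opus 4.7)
The plan is to construct $x$ and $y$ explicitly inside the maximal subgroup $M$ of $G$ that contains the corresponding cyclic subgroup of order $k$. By Proposition \ref{ree_maximal_subgroup_list}, for $k \in \{q - 3t + 1, q + 3t + 1\}$ we have $M \cong A_{k} \rtimes \langle c \rangle$ with $\langle c \rangle \cong \Cent_{6}$ acting faithfully on $A_{k}$ cyclic of order $k$; for $k = (q+1)/2$ we have $M \cong (V \times A_{0}) \rtimes \langle c \rangle$ with $V = \Cent_{2} \times \Cent_{2}$, $A_{0}$ cyclic of order $(q+1)/4$, and the cyclic subgroup $C$ of order $k$ in $M$ of the form $\langle z \rangle \times A_{0}$ for a non-identity $z \in V$. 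Write $\sigma := c^{3}$ (of order $2$, inverting the cyclic normal piece) and $\tau := c^{-2}$ (of order $3$).

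For $k = q \pm 3t + 1$, I would take $x = \sigma$ and $y = \tau a$ with $a$ a generator of $A_{k}$. Using $[\sigma, \tau] = 1$ in $\langle c \rangle$ together with $\sigma a^{-1} \sigma = a$, a short computation gives $[x,y] = a^{2}$ and $xy = ca$. Since $k$ is odd, $a^{2}$ has order $k$. Because $xy$ projects to the generator $c$ of $M / A_{k} \cong \Cent_{6}$, any $n$ with $(xy)^{n} = 1$ is a multiple of $6$; and $(xy)^{6} = c^{6} \cdot \bigl(\sum_{i=0}^{5} r^{-i}\bigr) a$ vanishes because $r^{3} \equiv -1 \pmod{k}$ forces $\sum_{i=0}^{5} r^{-i} \equiv 0 \pmod{k}$, where $r \in (\mathbb{Z}/k\mathbb{Z})^{\times}$ encodes the action of $c$ on $A_{k}$. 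Finally, $y^{3} = 1$ reduces to $(1 + r^{-2} + r^{-4}) a \equiv 0 \pmod{k}$, which is automatic provided $r$ is a primitive sixth root of unity modulo $k$, i.e.\ $r^{2} - r + 1 \equiv 0 \pmod{k}$, since $r^{4} + r^{2} + 1 = (r^{2} + r + 1)(r^{2} - r + 1)$. I would establish the arithmetic fact $r^{2} - r + 1 \equiv 0 \pmod{k}$ from the structural description of the anisotropic tori of $\Ree(q)$ in \cite{ward66,ree85}: the factorisation $\Phi_{6}(q) = q^{2} - q + 1 = (q - 3t + 1)(q + 3t + 1)$ reflects that $A_{1}$ and $A_{2}$ are $\Phi_{6}$-tori, on which the Weyl element acts as a primitive sixth root of unity.

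For $k = (q+1)/2$ the construction is analogous inside $M = (V \times A_{0}) \rtimes \langle c \rangle$. Here $\sigma$ inverts $A_{0}$ and centralises $V$, while $\tau$ cyclically permutes the three involutions of $V$ via a $3$-cycle $\pi$ and acts on $A_{0}$ as some $\psi$ of order $3$. I would take $x = \sigma v$ with $v \in V \setminus \{1\}$ chosen so that $v + \pi^{-1}(v) = z$, and $y = \tau a'$ with $a'$ a generator of $A_{0}$. The same arithmetic input, now applied to $\psi$, guarantees $\psi^{2} + \psi + 1 \equiv 0 \pmod{|A_{0}|}$ and hence $y^{3} = 1$, while $x$ is an involution because $\sigma$ inverts $A_{0}$ and $v$ is an involution centralised by $\sigma$. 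The semidirect-product calculation yields
\[
[x,y] = \bigl(v + \pi^{-1}(v),\, 2a'\bigr) = (z, 2a') \in V \times A_{0},
\]
and this element has order $\operatorname{lcm}(2, |A_{0}|) = (q+1)/2 = k$ since $|A_{0}|$ is odd and $2a'$ is again a generator of $A_{0}$. The identity $\abs{xy} = 6$ follows as in the first case. The main obstacle throughout is to verify the arithmetic identity forcing the primitive-sixth-root behaviour of the Weyl action on each torus, which reduces in each case to the Deligne--Lusztig description of the Ree tori.
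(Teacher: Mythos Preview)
Your approach is essentially the same as the paper's: work inside the relevant maximal subgroup $\Norm_G(A_i)\cong A_i{:}\Cent_6$ (respectively $(\Cent_2^2\times A_0){:}\Cent_6$), take $x$ from the order-$2$ part of the $\Cent_6$ and $y$ as the order-$3$ part times an element of the cyclic kernel, and read off the orders. The paper introduces a free parameter $i$ (setting $x=a^{-i}b^3$, $y=a^{i}[b^3,a^i]^{-1}b^{-2}$), checks $xy=b$, observes $[x,y]=a^{j}$ for some $j=j(i)$, and then argues one can choose $i$ with $\gcd(j,k)=1$; it dismisses the $(q+1)/2$ case as ``analogous.'' Your version is cleaner: you fix $x=c^3$, $y=c^{-2}a$ and get $[x,y]=a^{2}$ outright (odd $k$ makes this automatic), and you actually carry out the $(q+1)/2$ case, correctly producing the $V$-component $v+\pi^{-1}(v)=z$ needed to hit order $(q+1)/2$.

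One remark on what you call ``the main obstacle'': you do not need the Deligne--Lusztig description to get that the generator $r$ of the $\Cent_6$-action is a primitive sixth root of unity modulo every prime divisor of $k$. It is immediate from the fact, recorded in \cite{ward66, ree85}, that $\Cent_G(a)=A_i$ for every non-identity $a\in A_i$: this forces the top $\Cent_6$ to act fixed-point-freely on $A_i\setminus\{1\}$, hence faithfully on each Sylow subgroup of $A_i$, which is exactly the statement $r^2-r+1\equiv 0\pmod p$ for all $p\mid k$. The same argument handles the action on $A_0$. With that in hand, both your verification of $|y|=3$ and $|xy|=6$ and the paper's implicit use of the same identity go through.
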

\begin{proof}
  Consider the case $k = q \pm 3t + 1$. There exists $H \leqslant G$
  such that $H = \gen{a}\gen{b} \cong \Cent_k {:} \Cent_6$ with $\abs{a} = k$
  and $\abs{b} = 6$. Observe that $H^{\prime} = \gen{a}$. If $x = a^{-i}
  b^3$ and $y = a^i [b^3, a^i]^{-1} b^{-2}$ then $xy = b$, and $[x, y] = a^j$ for some $j$, depending on $i$. Clearly we can
  choose $i$ such that $\gcd(j, k) = 1$, and hence $\abs{[x, y]} = k$.

  The other case is analogous.
\end{proof}

\begin{lem} \label{lem_ree_hard_maximal}
Let $G = \Ree(q)$ and $\upsilon = h(-1) \Upsilon$. For each $k \in \set{q \pm 3t + 1, (q
  + 1) / 2}$, there exist $a, b \in \F_q$ such that $\abs{\Upsilon
  S(0, a, b)} = 6$ and $\abs{[\Upsilon, S(0, a, b)]} = k$ or
  $\abs{\upsilon S(0, a, b)} = 6$ and $\abs{[\upsilon, S(0, a, b)]} =
  k$.
\end{lem}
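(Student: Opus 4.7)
The plan is to reduce to Lemma \ref{ree_hard_maximal_conj2} and then normalise the resulting pair into the claimed shape through a sequence of conjugations.

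First I would record the basic facts that make the statement coherent. Inspection of the matrix gives $\Upsilon^2 = I_7$. For $\upsilon$, conjugation by $\Upsilon$ inverts each $h(\lambda) \in H(q)$ (a direct matrix calculation), so
\[ \upsilon^2 = h(-1) \Upsilon h(-1) \Upsilon = h(-1) \cdot h(-1)^{-1} = I_7. \]
Thus both $\Upsilon$ and $\upsilon$ are involutions of $G$, and any nonidentity $S(0, a, b) \in U(q)^{\prime}$ has order $3$ by Proposition \ref{sylow3_props}. So a pair $(x, S(0, a, b))$ with $x \in \set{\Upsilon, \upsilon}$ automatically meets the order hypotheses $\abs{x} = 2$, $\abs{S(0, a, b)} = 3$ of Lemma \ref{lem_ree_hard_maximals}.

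Next, Lemma \ref{ree_hard_maximal_conj2} supplies a pair $(x_0, y_0) \in G \times G$ with $\abs{x_0} = 2$, $\abs{y_0} = 3$, $\abs{x_0 y_0} = 6$ and $\abs{[x_0, y_0]} = k$. The element $y_0$ lies in a unique Sylow $3$-subgroup, by the trivial intersection property of distinct conjugates of $U(q)$ (Proposition \ref{sylow3_props}). A Sylow-conjugation sends this Sylow subgroup to $U(q)$, placing $y_0$ inside $U(q)^{\prime}$ as some $S(0, a, b)$. The conjugated involution $x_1 := x_0^g$ together with $S(0, a, b)$ retains all four order conditions.

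The crux is to arrange $x_1 \in \set{\Upsilon, \upsilon}$ while keeping the second entry of the form $S(0, a^{\prime}, b^{\prime}) \in U(q)^{\prime}$. For this I would work inside $\Norm_G(U(q)^{\prime})$, which equals $U(q) H(q)$: if $g$ normalises $U(q)^{\prime}$, then the Sylow $3$-subgroups $U(q)$ and $U(q)^g$ share the derived subgroup $U(q)^{\prime}$, and the trivial-intersection property forces $U(q) = U(q)^g$, hence $g \in \Norm_G(U(q)) = U(q)H(q)$ by Proposition \ref{sylow3_props}. By identity \eqref{ree_matrix_id2}, conjugation of $S(0, a, b)$ by $U(q)H(q)$ stays inside $U(q)^{\prime}$, so the shape of the order-$3$ partner is preserved. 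The key combinatorial point is that $q = 3^{2m + 1}$ satisfies $q \equiv 3 \pmod 8$, making $(q - 1)/2$ odd; therefore in $\Norm_G(H(q)) \cong \Dih_{2(q - 1)}$ the reflections split into exactly two $H(q)$-conjugacy classes, represented by $\Upsilon$ and $h(-1)\Upsilon = \upsilon$.

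The main obstacle is showing that the involution $x_1$ can always be moved, under $U(q)H(q)$-conjugation, into $\set{\Upsilon, \upsilon}$. The pair $(x_1, S(0, a, b))$ generates a maximal subgroup $\gen{x_1, S(0, a, b)} \cong \Norm_G(A_i)$ of Proposition \ref{ree_maximal_subgroup_list}; inside this maximal subgroup, the unique Sylow $3$-subgroup is $\gen{S(0, a, b)}$ (using $3 \nmid k$, which holds in all three cases) and $x_1$ is the cube of the order-$6$ complement generator. Tracking such maximal subgroups up to $G$-conjugation and restricting to those whose Sylow $3$-subgroup lies in $U(q)^{\prime}$ leaves exactly two $U(q)H(q)$-orbits of candidate involutions, matching the two representatives $\Upsilon$ and $\upsilon$. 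Choosing the correct representative and reading off the resulting $(a, b)$ then yields the required pair.
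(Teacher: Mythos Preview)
Your overall strategy coincides with the paper's: take the pair $(x_0,y_0)$ from Lemma~\ref{ree_hard_maximal_conj2}, conjugate so that the order-$3$ element lands in $U(q)'$, and then adjust inside $\Norm_G(U(q)') = U(q)H(q)$ to bring the involution to $\Upsilon$ or $\upsilon$. Your preliminary checks (that $\Upsilon,\upsilon$ are involutions, that $U(q)H(q)$ normalises $U(q)'$ and hence preserves the shape $S(0,\cdot,\cdot)$, and that $(q-1)/2$ is odd so the reflections in $\Dih_{2(q-1)}$ split into two $H(q)$-classes) are all correct and are used implicitly in the paper.

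The gap is your final paragraph. You correctly label it ``the main obstacle'' but do not resolve it: the sentence ``Tracking such maximal subgroups \ldots\ leaves exactly two $U(q)H(q)$-orbits of candidate involutions'' is an assertion, not a proof. No orbit count is performed and no conjugating element is produced; the structure of $\Norm_G(A_i)$ alone does not tell you how $x_1$ sits relative to $U(q)H(q)$ inside $G$. The paper fills this gap concretely via the action on $\OV$. First, $x_1$ cannot fix $P_\infty$, since otherwise $x_1 \in U(q)H(q)$ and $[x_1,S(0,a,b)] \in U(q)$ would have $3$-power order, contradicting $\abs{[x_1,S(0,a,b)]}=k$. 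So $P_\infty x_1 = R \neq P_\infty$; transitivity of $G_{P_\infty}$ on $\OV\setminus\{P_\infty\}$ gives $c_2 \in U(q)H(q)$ with $Rc_2=P_0$, whence $x_1^{c_2}$ swaps $P_\infty$ and $P_0$ and so lies in $H(q)\Upsilon$, say $x_1^{c_2}=h(\lambda)^i\Upsilon$. Conjugating by $c_3=h(\lambda)^k$ replaces $i$ by $i-2k$; solvability of $2k\equiv i \pmod{(q-1)/2}$ (using that $(q-1)/2$ is odd) then yields $x_1^{c_2c_3}\in\{\Upsilon,\upsilon\}$. This is precisely the two-orbit statement you wanted, obtained by exhibiting the conjugating element rather than asserting the orbit structure.
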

\begin{proof}
Let $(x, y)$ be as in Lemma \ref{ree_hard_maximal_conj2}. It is
sufficient to prove that there exists $a, b \in \F_q$ such that $(x, y)$
is conjugate to $(\Upsilon, S(0, a, b))$ or $(\upsilon, S(0, a, b))$.

Since $y$ has order $3$, it fixes a point $P$. Also, $G$ is doubly
transitive so there exists $c_1 \in G$ such that $P_{\infty} c_1 =
P$. Then $y^{c_1} = S(0, a, b)$ for some $a, b \in
\F_q$. Observe that $x^{c_1}$ does not fix $P_{\infty}$, since otherwise
$\abs{[x, y]} = 3$.

Now $P_{\infty} x^{c_1} = R$ for some point $R$, and $G_{P_{\infty}}$
acts transitively on the points other than $P_{\infty}$, so there
exists $c_2 \in G_{P_{\infty}}$ such that $R c_2 = P_0$. Then $x^{c_1
  c_2}$ interchanges $P_0$ and $P_{\infty}$, and so does $\Upsilon$.
Hence $x^{c_1 c_2} \Upsilon^{-1} \in \gen{h(\lambda)}$, so $x^{c_1
  c_2} = h(\lambda)^i \Upsilon$, for some $0 \leqslant i < q - 1$.

Let $k \equiv i / 2 \pmod{(q - 1)/2}$ such that $0 \leqslant k < q -
1$, and let $c_3 = h(\lambda)^k$. There are two possible
values for $k$, either $2k = i$ or $2k = i + (q - 1)/2$. In the former
case $x^{c_1 c_2 c_3} = \Upsilon$, and in the latter case $x^{c_1 c_2
  c_3} = h(\lambda)^{(q - 1) / 2} \Upsilon = \upsilon$.
\end{proof}

\begin{conj} \label{ree_hard_maximal_conj1}
Let $q = 3^{2m + 1}$ for some $m > 0$ and let $t = 3^m$. For every $a \in \F_q^{\times}$, the ideals in $\F_q[b_1, c_1, b_2, c_2]$ generated by the following systems are zero-dimensional:
\begin{equation}
\label{ree_hard_maximal_eqn1} \left\{ \begin{aligned}
& b_2^2 + b_1 b_2 + c_2^2 = 0 \\
& b_1^2 + b_2^3 b_1 + c_1^2 = 0 \\
& 1 - a - b_1^2 + b_2^4 + b_1^2 b_2^2 - c_1^2 - b_1 b_2 c_2^2 + c_2^4 - b_2^2 c_2^2 = 0 \\
& 1 - a^{3 t} - b_2^6 + b_1^4 + b_2^6 b_1^2 - c_2^6 - c_1^2 b_2^3 b_1 + c_1^4 - b_1^2 c_1^2 = 0
\end{aligned} \right. 
\end{equation}
\begin{equation}
\label{ree_hard_maximal_eqn2} \left\{ \begin{aligned}
& b_2^2 - b_1 b_2 - c_2^2 = 0 \\
& b_1^2 - b_2^3 b_1 - c_1^2 = 0 \\
& 1 - a - b_1^2 + b_2^4 + b_1^2 b_2^2 + c_1^2 - b_1 b_2 c_2^2 + c_2^4 + b_2^2 c_2^2 = 0 \\
& 1 - a^{3 t} - b_2^6 + b_1^4 + b_2^6 b_1^2 + c_2^6 - c_1^2 b_2^3 b_1 + c_1^4 + b_1^2 c_1^2 = 0
\end{aligned} \right. 
\end{equation}
\end{conj}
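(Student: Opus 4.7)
The plan is to show that each system is zero-dimensional by elimination, reducing to a bivariate problem whose zero-dimensionality is verified by a resultant computation.

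Observe first that in characteristic $3$ the first equation of \eqref{ree_hard_maximal_eqn1} is quadratic in $c_2$ with vanishing linear term, giving
\[
c_2^2 = -(b_2^2 + b_1 b_2),
\]
and the second equation similarly gives $c_1^2 = -(b_1^2 + b_2^3 b_1)$. Crucially, $c_1$ and $c_2$ appear in the third and fourth equations only to even powers ($c_1^2, c_2^2, c_2^4$), so substitution is immediate and yields two polynomials $g_1(b_1, b_2), g_2(b_1, b_2) \in \F_q[a][b_1, b_2]$ of bounded total degree (independent of $q$). The case \eqref{ree_hard_maximal_eqn2} is identical up to sign changes, so I treat \eqref{ree_hard_maximal_eqn1} and indicate the modifications.

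Next, I would show that the ideal $\langle g_1, g_2 \rangle \trianglelefteqslant \F_q[a][b_1, b_2]$ is zero-dimensional when regarded over the function field $\F_q(a)$, by computing the resultant $R(b_1) := \mathrm{Res}_{b_2}(g_1, g_2) \in \F_q[a][b_1]$ and verifying $R \neq 0$. Since $g_1, g_2$ have total degree bounded by a small constant, this is a finite symbolic calculation; I would carry it out and exhibit $R$ explicitly. Once $R(b_1) \neq 0$ is established as an element of $\F_q[a][b_1]$, the argument concludes in three short steps: for each $a \in \F_q^\times$, the polynomial $R(b_1) \in \F_q[b_1]$ is still nonzero (its leading coefficient in $b_1$ is a fixed polynomial in $a$, which I will check has no root in $\F_q^\times$ except possibly on a controllable locus handled by swapping the variable of elimination), confining $b_1$ to finitely many values. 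For each such $b_1$, the polynomial $g_1(b_1, b_2) \in \F_q[b_2]$ is nonzero and constrains $b_2$ to finitely many values. Finally, $c_1^2$ and $c_2^2$ are uniquely determined by the elimination formulas, yielding at most four values of $(c_1, c_2)$. Hence the variety is finite and the ideal is zero-dimensional.

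The main obstacle is the resultant computation in Step 2: one must check that $R(a, b_1)$ is not identically zero as a polynomial in two variables, and separately that its leading coefficient in $b_1$ does not vanish on all of $\F_q^\times$. Although the computation is bounded in size, it is not enlightening to write by hand, so in practice I would (i) perform it in a computer algebra system once, obtaining an explicit polynomial; (ii) cross-check by specializing $a$ to a small value (e.g.\ $q = 27$) and enumerating solutions directly to confirm the count matches $\deg R$; and (iii) handle any finite bad locus of $a$ (where the leading coefficient of $R$ in $b_1$ vanishes) by repeating the argument after eliminating $b_1$ instead of $b_2$, a symmetric computation that cannot fail on the same set of $a$-values.
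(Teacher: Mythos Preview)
The paper does not prove this statement at all: it is labelled as a \emph{Conjecture}, and the thesis is explicit (Section~1.2.10) that its conjectures are left unproven, supported only by extensive computational testing. So there is no paper proof to compare against, and any valid argument you supply goes strictly beyond what the paper does.

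Your elimination strategy is sound as far as it goes: the observation that $c_1,c_2$ occur only to even powers in the third and fourth equations is correct, and the reduction to a pair $g_1,g_2\in\F_q[b_1,b_2]$ of bounded degree is clean. But there is a genuine gap in your uniformity argument. You write that the resultant $R$ and its leading coefficient are ``fixed polynomials in $a$'', but this overlooks that the fourth equation contains $a^{3t}$ with $t=3^m$. Treating $a$ and $a':=a^{3t}$ as independent indeterminates, $R$ is indeed a fixed element of $\F_3[a,a'][b_1]$ that you can compute once; however, the conjecture requires $R(a,a^{3t},b_1)\not\equiv 0$ for \emph{every} $a\in\F_q^\times$ and \emph{every} $m>0$. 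Even if $R(a,a',b_1)$ is nonzero generically, the specialisation $a'=a^{3t}$ traces out a different curve in the $(a,a')$-plane for each $m$, and you have not argued that none of these curves falls into the vanishing locus of the leading coefficient of $R$. Your fallback of swapping the elimination variable faces the same issue. To close this, you would need either to show that the leading $b_1$-coefficient of $R$ is a nonzero \emph{constant} (best case), or to analyse its factorisation in $\F_3[a,a']$ and argue that no factor can vanish identically along $a'=a^{3t}$ for any $m$; neither step is carried out, and the CAS check you propose for a single $q$ does not establish the claim for all $q$.
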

  



\begin{thm}
  Assume the small Ree Conjectures, Conjecture \ref{ree_hard_maximal_conj1}, an oracle for the
  discrete logarithm problem in $\F_q$ and an oracle for the integer factorisation problem. There exist Las Vegas
  algorithms that solve the maximal subgroup generation in $\Ree(q)
  \cong G \leqslant \GL(d, q)$.
  Once constructive recognition has been performed, the expected time
  complexity is $\OR{\xi(d) (\sigma_0(\log(q)) + \log\log(q)) + \chi_F(7, q) \log\log(q) + \sigma_0(\log(q)) (d^3 (\log(q) \log\log(q))^2 + \log(q)^3)}$ field operations.
\end{thm}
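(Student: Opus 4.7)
My plan is to mirror the structure of the analogous Suzuki theorem: use the effective isomorphism from Theorem \ref{cl_ree_constructive_recognition} to reduce to the standard copy $H = \Ree(q)$, construct generators there, and then evaluate the resulting straight-line programs on $X$ to map back into $G$. The easy maximal subgroups are handled directly from the standard generators. Let $\lambda \in \F_q$ be primitive. Then $U(q)H(q) = \gen{S(1,0,0), h(\lambda)}$ gives the point stabiliser; for each $e > 0$ with $2e+1 \mid 2m+1$ and $s = 3^{2e+1}$, the group $\gen{S(1,0,0), h(\lambda)^{(q-1)/(s-1)}, \Upsilon}$ is $\Ree(s)$, giving the subfield subgroups (there are $\OR{\sigma_0(\log q)}$ of them). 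For the involution centraliser, it suffices to find any involution $j \in H$ by the standard trick of finding a random element of even order (Corollary \ref{cl_random_selections}) and powering up (Proposition \ref{pr_element_powering}); one representative centraliser suffices by Proposition \ref{pr_involution_props}, and generators of $\Cent_H(j) \cong \gen{j} \times \PSL(2,q)$ are produced as $\SLP$s via the Bray algorithm as in Section \ref{section:find_stab_element}.

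The substantive case is the normalisers $\Norm_H(A_i)$ for $i = 0, 1, 2$. Here I invoke Lemma \ref{lem_ree_hard_maximals}: it suffices to exhibit $g, h \in H$ with $\abs{g} = 2$, $\abs{h} = 3$, $\abs{gh} = 6$ and $\abs{[g,h]} = k$, where $k$ is the appropriate one of $(q+1)/2$, $q - 3t + 1$, $q + 3t + 1$. By Lemma \ref{lem_ree_hard_maximal} such a pair can always be taken with $g \in \{\Upsilon, \upsilon\}$ and $h = S(0, a, b)$. To find it, first select a random element of $H$ of the target order $k$ by random search (its proportion is $\OR{1/\log\log q}$ by Lemma \ref{ree_totient_prop}, with the integer factorisation oracle used to certify the precise order, contributing the $\chi_F(7,q)\log\log(q)$ term), and record its trace. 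Let $\alpha$ be this trace; by Proposition \ref{ree_conjugacy_classes} the conjugacy class of the target element is determined by $\alpha$. Writing out the equations $\Tr(g \cdot S(0,a,b)) = 0$ (forcing $\abs{g \cdot S(0,a,b)} = 6$ together with the order of $g$), $\Tr([g, S(0,a,b)]) = \alpha$, and the corresponding twisted-trace equation, and using the explicit matrix forms for $\Upsilon$, $\upsilon$ and $S(0,a,b)$, yields precisely the polynomial systems \eqref{ree_hard_maximal_eqn1} and \eqref{ree_hard_maximal_eqn2} of Conjecture \ref{ree_hard_maximal_conj1} (with $a = \alpha$).

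Conjecture \ref{ree_hard_maximal_conj1} asserts these ideals are zero-dimensional, so Theorem \ref{thm_poly_eqns_many_vars} finds their varieties in a bounded number of field operations. Each root candidate $(a,b) \in \F_q^2$ gives an element $S(0,a,b)$, and we test the pair $(g, S(0,a,b))$ by computing orders of $g$, $S(0,a,b)$, $g \cdot S(0,a,b)$ and $[g, S(0,a,b)]$; the positive identification is then guaranteed by Lemma \ref{lem_ree_hard_maximals}. If no candidate works (because the random element above yielded an $\alpha$ that produces no pair over $\F_q$, analogous to the $\sum a^{-2^i} = 1$ restriction in the Suzuki proof) we simply draw another random element of order $k$.

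For complexity, each maximal subgroup of the first three easy types is produced using $\OR{\xi(d) \log \log(q) + \log(q)^3}$ field operations by Theorems \ref{cl_ree_constructive_recognition}, \ref{cl_find_stab_element} and Proposition \ref{pr_element_powering}; the $\OR{\sigma_0(\log q)}$ subfield subgroups each need only the evaluation of a constant number of short $\SLP$s on $X$, contributing the $\sigma_0(\log(q)) \log(q)^3$ term when passing back through the tensor--decomposed representation. The hard maximals contribute $\OR{\xi(d) \sigma_0(\log(q)) + \chi_F(7,q) \log\log(q)}$ for the random searches of elements of order dividing $q^3+1$, plus bounded-degree polynomial root finding. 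Mapping the generators back to $G$ through the effective isomorphism costs $\OR{d^3 (\log(q) \log\log(q))^2}$ per generator by Theorem \ref{thm_element_to_slp_complexity}, and summing over the $\OR{\sigma_0(\log q)}$ classes yields the stated bound. The algorithm is Las Vegas: each constructed maximal subgroup can be verified by checking orders (for the cyclic normalisers), by the MeatAxe (for point stabiliser and subfield subgroups), or by Theorem \ref{thm_psl_naming} (for the involution centraliser). The main obstacle is the reliance on Conjecture \ref{ree_hard_maximal_conj1}, which supplies zero-dimensionality of the ideal and hence the applicability of Theorem \ref{thm_poly_eqns_many_vars}; proving it would reduce to a finite case analysis of the parametric polynomials displayed there.
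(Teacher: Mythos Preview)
Your approach is essentially the same as the paper's for the hard maximals $\Norm_H(A_i)$: pick a random element of the target order to obtain a target trace, set up trace equations for the pair $(\Upsilon \text{ or } \upsilon,\, S(0,a,b))$, invoke Conjecture \ref{ree_hard_maximal_conj1} for zero-dimensionality, and solve via Theorem \ref{thm_poly_eqns_many_vars}. Two points to correct.

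First, for the involution centraliser you take an unnecessary detour through random search and the Bray algorithm. Since you are already working in the standard copy $H$, the paper simply writes down explicit generators: $\Cent_H(h(-1)) = \gen{h(\alpha), \Upsilon, S(0,1,0)}$. This is both simpler and avoids introducing extra $\SLP$ length and randomness into the complexity accounting.

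Second, your trace condition for forcing $\abs{g\cdot S(0,a,b)}=6$ is wrong: elements of order $6$ in $\Ree(q)\leqslant\GL(7,q)$ have trace $1$, not $0$. The paper's system is
\[
\Tr(\Upsilon S(0,x,y))=1,\quad (\Tr(\Upsilon S(0,x,y)))^{3t}=1,\quad \Tr([\Upsilon,S(0,x,y)])=\Tr(g),\quad (\Tr([\Upsilon,S(0,x,y)]))^{3t}=(\Tr(g))^{3t},
\]
and the substitution $b_1=x$, $b_2=x^t$, $c_1=y$, $c_2=y^t$ then produces exactly \eqref{ree_hard_maximal_eqn1} (and \eqref{ree_hard_maximal_eqn2} for $\upsilon$). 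Note that both twisted equations are needed, not just one. The paper also does not introduce a redraw step: it appeals directly to Lemma \ref{lem_ree_hard_maximal} to assert that solutions exist.
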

\begin{proof}
  Let $H = \Ree(q)$. Using the effective isomorphism, it is sufficient
  to obtain generators for the maximal subgroups in $H$.
  
  Let $\alpha \in \F_q$ be a primitive element. Clearly $U(q) H(q) =
  \gen{h(\alpha), S(1, 0, 0)}$, and by following a procedure similar
  to \cite{elementary_ree}, we see that $\Cent_H(h(-1)) =
  \gen{h(\alpha), \Upsilon, S(0, 1, 0)}$. For each $e > 0$ such that $2e + 1
  \mid 2m + 1$, we have $\F_s < \F_q$ where $s = 3^{2e + 1}$. Hence $s
  - 1 \mid q - 1$ and $\Ree(s) = \gen{\Upsilon, S(1, 0, 0), h(\alpha)^{(q
      - 1) / (s - 1)}}$.

  The difficult cases are therefore the $\Norm_H(A_i)$.  In the light
  of Lemma \ref{lem_ree_hard_maximal}, we can proceed as
  follows:
\begin{enumerate}
\item Choose random $g \in H$ of order $q \pm 3t + 1$ or $(q + 1) /
  2$, corresponding to the order of $A_i$. By Corollary
  \ref{cl_random_selections} this is done using expected $\OR{(\xi(d)
  + \log(q) \log\log(q) + \chi_F(7, q)) \log\log(q)}$ field operations. Note that we need the integer factorisation oracle since we need the precise order in this case.

\item Introduce indeterminates $x, y$ and consider the equations
    $\Tr(\Upsilon S(0, x, y)) = 1$ and $\Tr([\Upsilon, S(0, x,
    y)]) = \Tr(g)$, or
    similarly with $\upsilon$ instead of $\Upsilon$. We want to find
    solutions $a,b$ for $x,y$ as in the Lemma. By Proposition
    \ref{ree_conjugacy_classes}, the trace determines the order in
    these cases, which leads us to consider these equations.
\item Elements of order $6$ have trace $1$. Hence we obtain equations in $x,y$:
\begin{equation}
\begin{split}
\Tr(\Upsilon S(0, x, y)) & = 1 \\
(\Tr(\Upsilon S(0, x, y))^{3t} & = 1 \\
\Tr{[\Upsilon, S(0, x, y)]} &= \Tr{g} \\
(\Tr{[\Upsilon, S(0, x, y)]})^{3t} &= (\Tr{g})^{3t} 
\end{split}
\end{equation}
By letting $b_1 = x$, $b_2 = x^t$, $c_1 = y$, $c_2 = y^t$, this is
precisely one of the systems in Conjecture
\ref{ree_hard_maximal_conj1}, and thus we can use Theorem
\ref{thm_poly_eqns_many_vars} to find
all solutions using $\OR{\log q}$ field operations. 
\item By Lemma \ref{lem_ree_hard_maximal}, there will be
solutions $a, b$. By Lemma
\ref{lem_ree_hard_maximals}, the resulting $S(0,
a, b)$ generates $\Norm_H(A_i)$ together with $\Upsilon$ or
$\upsilon$.
\end{enumerate}

Finally, we see that we have $\OR{\sigma_0(\log(q))}$ generators, which we map
back to $G$ using the effective isomorphism. Hence the expected time
complexity is as stated.
\end{proof}

\section{Big Ree groups}
\label{section:maximal_bigree}

We now consider the maximal subgroup problems for the Big Ree groups.
We will use the notation from Section \ref{section:big_ree_theory},
and we will make heavy use of the fact that we can use Theorem
\ref{thm_bigree_constructive_recognition} to constructively recognise
the Big Ree groups. However, we can only do this in the natural
representation, and hence we will only consider the maximal subgroup
problems in the natural representation.

The maximal subgroups are listed in \cite{malle_bigree}, but we will
only generate some of them.

\begin{thm}
  Assume Conjectures \ref{conjecture_correctness} and
  \ref{conj:bigree_trick}, and an oracle for the discrete logarithm
  problem in $\F_q$. There exist Las Vegas algorithms that, given
  $\LargeRee(q) \cong \gen{X} \leqslant \GL(26, q)$ finds $\gen{Y_1},
  \gen{Y_2}, \gen{Y_3}, \gen{Y_4} \leqslant \gen{X}$ such that $\gen{Y_1}$ and
  $\gen{Y_2}$ are the two maximal parabolics, $\gen{Y_3} \cong
  \Sz(q) \wr \Cent_2$ and $\gen{Y_4} \cong \Sp(4, q) {:} \Cent_2$. Once constructive recognition has
  been performed, the expected time complexity is $\OR{1}$ field
  operations.
\end{thm}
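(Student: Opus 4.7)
The plan is to exploit the constructive recognition already in hand together with the explicit generating sets given in Propositions \ref{bigree_parabolic_gens} and \ref{bigree_suzuki2_gens}. After running Theorem \ref{thm_bigree_constructive_recognition}, we have an effective isomorphism $\varphi : \gen{X} \to \LargeRee(q)$ whose inverse $\varphi^{-1}$ can be evaluated on any single element in $\OR{1}$ field operations. Since the generating sets we want to produce live in the standard copy, it suffices to identify them there and then pull them back through $\varphi^{-1}$.

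Concretely, first I would let $\lambda$ be a primitive element of $\F_q$ (already available from the recognition step) and write down, in the standard copy $\LargeRee(q) \leqslant \GL(26, q)$, the generating sets
\[ P_1 = \set{\zeta, z^{\kappa \varrho \kappa}, \varrho, \nu, \varsigma(1, \lambda), \varsigma(\lambda, 1)} \]
for a maximal parabolic of type $2A$, and
\[ P_2 = \set{\varsigma(1, \lambda), \varsigma(\lambda, 1), \kappa, \nu, \zeta, \zeta^{\kappa}, \zeta^{\kappa \varrho}, \zeta^{\kappa \varrho \kappa}, \nu^{\varrho}, \nu^{\varrho \kappa}, \nu^{\varrho \kappa \varrho}} \]
for a maximal parabolic of type $2B$, as supplied by Proposition \ref{bigree_parabolic_gens}. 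Similarly, Proposition \ref{bigree_suzuki2_gens} gives
\[ P_3 = \set{z, \varrho, \varrho^{\kappa}, \varsigma(1, \lambda), \varsigma(\lambda, 1)} \]
generating a copy of $\Sz(q) \wr \Cent_2$, and
\[ P_4 = \set{\nu, \varsigma(1, \lambda), \varsigma(\lambda, 1), \kappa, \varrho, \varrho \kappa \varrho} \]
generating a copy of $\Sp(4, q) {:} \Cent_2$. Each of these sets has bounded cardinality and each element is a word of bounded length in the matrices $\varrho, \kappa, z, \nu, \zeta, \varsigma(1, \lambda), \varsigma(\lambda, 1)$ that are already explicitly available.

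For $i = 1, 2, 3, 4$ we then take $Y_i = \set{\varphi^{-1}(g) : g \in P_i}$, so that $\gen{Y_i} = \varphi^{-1}(\gen{P_i})$ is the required maximal subgroup (or wreath/extension subgroup) of $\gen{X}$. Correctness is immediate from the two cited propositions together with the fact that $\varphi$ is an isomorphism onto the standard copy, so conjugacy classes, isomorphism types and maximality are all preserved. Since $\abs{P_i} \in \OR{1}$ and each evaluation of $\varphi^{-1}$ takes $\OR{1}$ field operations by Theorem \ref{thm_bigree_constructive_recognition}, the total cost is $\OR{1}$ field operations. There is no genuine obstacle here beyond bookkeeping; the only substantive work has already been absorbed into the constructive recognition step, which supplies both $\varphi^{-1}$ and a primitive element $\lambda$ needed to instantiate the matrices $\varsigma(1, \lambda)$ and $\varsigma(\lambda, 1)$.
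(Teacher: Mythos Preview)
Your proposal is correct and follows essentially the same approach as the paper: use the effective isomorphism $\varphi$ from Theorem \ref{thm_bigree_constructive_recognition}, take the explicit bounded-size generating sets for the four subgroups in the standard copy from Propositions \ref{bigree_parabolic_gens} and \ref{bigree_suzuki2_gens}, and apply $\varphi^{-1}$ to each generator at $\OR{1}$ cost per element. Your write-up is simply more explicit about the generating sets than the paper's proof, but the argument is identical.
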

\begin{proof}
  Let $H = \LargeRee(q)$, and let $\varphi : \gen{X} \to H$ be the
  effective isomorphism. Generators for the subgroups are given in
  Proposition \ref{bigree_parabolic_gens} and Proposition
  \ref{bigree_suzuki2_gens}. Since they all have constant size, and
  pre-images of $\varphi$ can be computed in $\OR{1}$ field
  operations, we obtain $Y_1, Y_2, Y_3, Y_4$ in $\OR{1}$ field
  operations.
\end{proof}

\chapter{Implementation and performance} \label{chapter:implementation}

All the algorithms that have been described have been implemented in
the computer algebra system $\MAGMA$. As we remarked in Section
\ref{section:intro_conj}, the implementation has been a major part of
the work and has heavily influenced the nature of the theoretical
results. The algorithms have been developed with the implementation in
mind from the start, and hence only algorithms that can be implemented
and executed on current hardware have been developed.

This chapter is concerned with the implementation, and we will provide
experimental evidence of the fact that the algorithms indeed are
efficient in practice. The evidence will be in the form of benchmark
results, tables and diagrams. This chapter is therefore not so much
about mathematics, but rather about software engineering or computer
science.

The implementations were developed during a time span of 2-3 years,
using $\MAGMA$ versions 2.11-5 and above. The benchmark results have
all been produced using version 2.13-12, Intel64 flavour, statically
linked.

The hardware used during the benchmark was a PC, with an Intel Xeon
CPU, clocked at $2.80$ GHz, and with $1$ GB of RAM. The operating
system was Debian GNU/Linux Sarge, with kernel version
2.6.8-12-em64t-p4-smp.

The implementations used the existing $\MAGMA$ implementations of the
algorithms described in Chapter \ref{chapter:intro}. These include implementations of the following:
\begin{itemize}
\item A discrete log algorithm, in particular Coppersmith's algorithm. The implementation is described in \cite{MR1934518}.
\item The product replacement algorithm.
\item The algorithm from Theorem \ref{thm_solve_univariate_polys}.
\item The Order algorithm, for calculating the
  order (or pseudo-order) of a matrix.
\item The black box naming algorithm from \cite{general_recognition}.
\item The algorithms from Theorems \ref{thm_psl_recognition} and \ref{thm_psl_naming}.
\item The three algorithms from Section \ref{section:aschbacher_algorithms}.
\end{itemize}

We used MATLAB and \cite{r_man} to produce the figures. In every case,
the benchmark of an algorithm was performed by running the algorithm a
number of times for each field with size $q$ lying in some specified
range. We then recorded the time $t$ (in seconds) taken by the
algorithm. However, to be able to compare the benchmark results with
our stated time complexities, we want to display not the time in
seconds, but the number of field operations. Moreover, the input size
is polynomial in $\log(q)$ and not $q$. Hence we first recorded the
time $t_k$ for $k$ multiplications in $\F_q$ and display $t / t_k$
against $\log(q)$. Of course, $k = 1$ is in principle enough, but we chose
$k = 10^6$ to achieve a scaling of the graph.

In $\MAGMA$, Zech logarithms are used for the finite field
arithmetic in $\F_q$ if $q \leqslant q_Z$ (for some $q_Z$, at present $q_Z \approx 2^{20}$), and for
larger fields $\MAGMA$ represents the field elements as polynomials
over the largest subfield that is smaller than $q_Z$, rather than over
the prime field. The reason is that the polynomials then have fewer terms,
and hence the field arithmetic is faster, than if the polynomials have
coefficients in the prime field. Since the subfield is small enough to
use Zech logarithms, arithmetic in the subfield is not much slower
than in the prime field.

Now consider a field of size $p^n$. If $n$ is prime, there are no
subfields except the prime field, and the field arithmetic will be
slow, but if $n$ has a divisor only slightly smaller than $q_Z$, then
the field arithmetic will be fast. Since it might happen that $n$ is
prime but $n+1$ is divisible by $q_Z$, we will get jumps in our
benchmark figures, unless we turn off all these optimisations in
$\MAGMA$. Therefore, this is what we do, and hence any jumps are the
result of group theoretical properties, the discrete log and
factorisation oracles, and the probabilistic nature of the algorithms.

All the non-constructive recognition algorithms that we have
presented, in Sections \ref{section:sz_recognition},
\ref{section:small_ree_recognition} and
\ref{section:bigree_recognition} are extremely fast, and in practice
constant time for the field sizes under consideration. Hence we do not
display any benchmarks of them.

\section{Suzuki groups}

In the cases of the Suzuki groups, the field size is always $q = 2^{2m
  + 1}$ for some $m > 0$. Hence we display the time against $m$. 



In Figure \ref{fig:sz_stab_benchmark} we show the benchmark of the
first two steps of the algorithm in Theorem \ref{sz_thm_pre_step},
where a stabiliser in $\Sz(q)$ of a point of $\OV$ is computed. For
each field size, we made $100$ runs of the algorithms, using random
generating sets and random points.

Notice that the time is very much dominated by the discrete logarithm
computations. The oscillations in the discrete log timings have number
theoretic reasons. When $m = 52$, the factorisation of $q - 1$
contains no prime with more than $6$ decimal digits, hence discrete
log is very fast. On the other hand, when $m = 64$, the factorisation
of $q - 1$ contains a prime with $26$ decimal digits.

\begin{figure}[ht]
\includegraphics[scale=0.7]{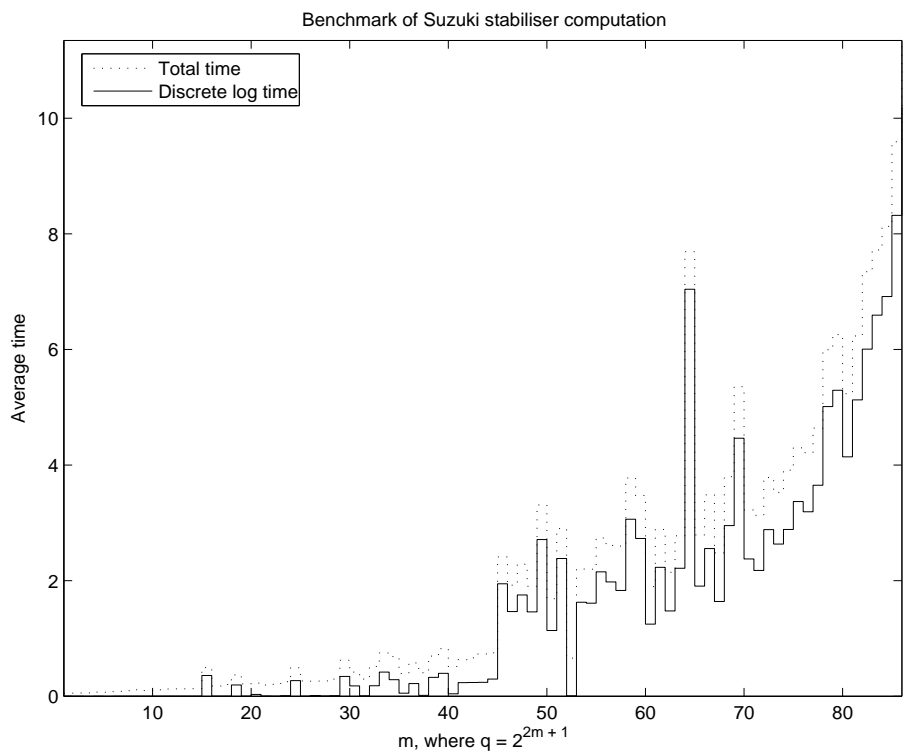}
\caption{Benchmark of Suzuki stabiliser computation}
\label{fig:sz_stab_benchmark}
\end{figure}

In Figure \ref{fig:sz_conj_benchmark} we show the benchmark of the
algorithm in Theorem \ref{thm_conj_problem}. For
each field size, we made $100$ runs of the algorithms, using random
generating sets of random conjugates of $\Sz(q)$.

The time complexity stated in the theorem suggests that the graph
should be slightly worse than linear. Figure
\ref{fig:sz_conj_benchmark} clearly supports this. The minor
oscillations can have at least two reasons. The algorithm is
randomised, and the core of the algorithm is to find an element of
order $q - 1$ by random search. The proportion of such elements is
$\phi(q - 1) / (2 (q - 1))$ which oscillates slightly when $q$
increases.

\begin{figure}[ht]
\includegraphics[scale=0.7]{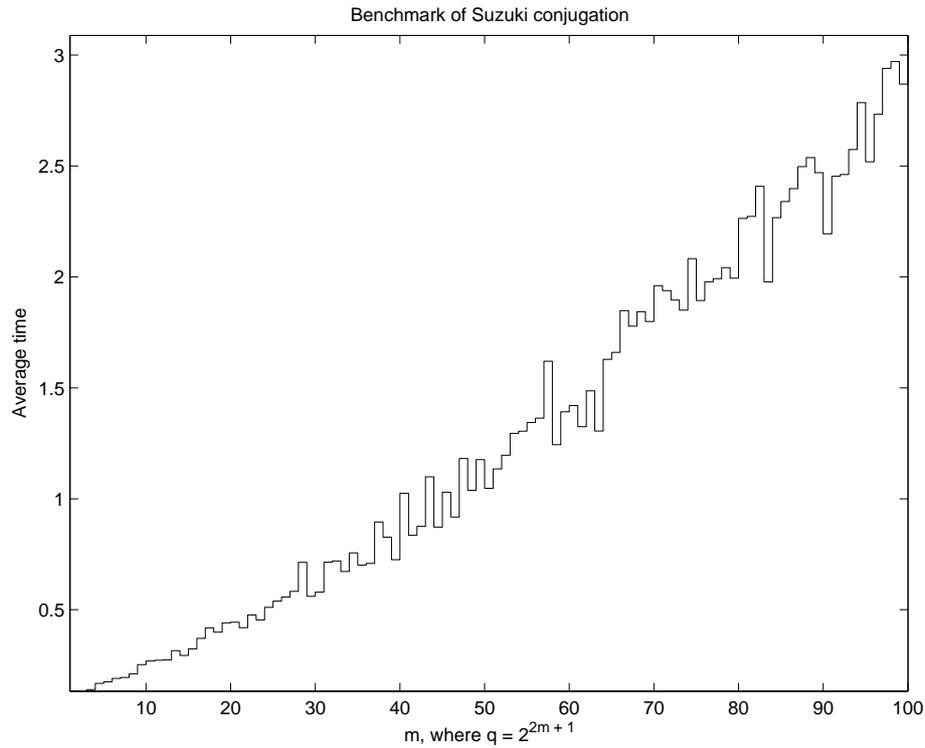}
\caption{Benchmark of Suzuki conjugation}
\label{fig:sz_conj_benchmark}
\end{figure}

We do not include graphs of the tensor decomposition algorithms for
the Suzuki groups. The reason is that at the present time, they can
only be executed on a small number of inputs (certainly not more than
$d \in \set{16, 64, 256}$ and $q \in \set{8, 32, 128}$) before running
out of memory. Hence there is not much of a graph to display.

\section{Small Ree groups}

In the cases of the small Ree groups, the field size is always $q = 3^{2m
  + 1}$ for some $m > 0$. Hence we display the time against $m$. 

\begin{figure}[ht]
\includegraphics[scale=0.7]{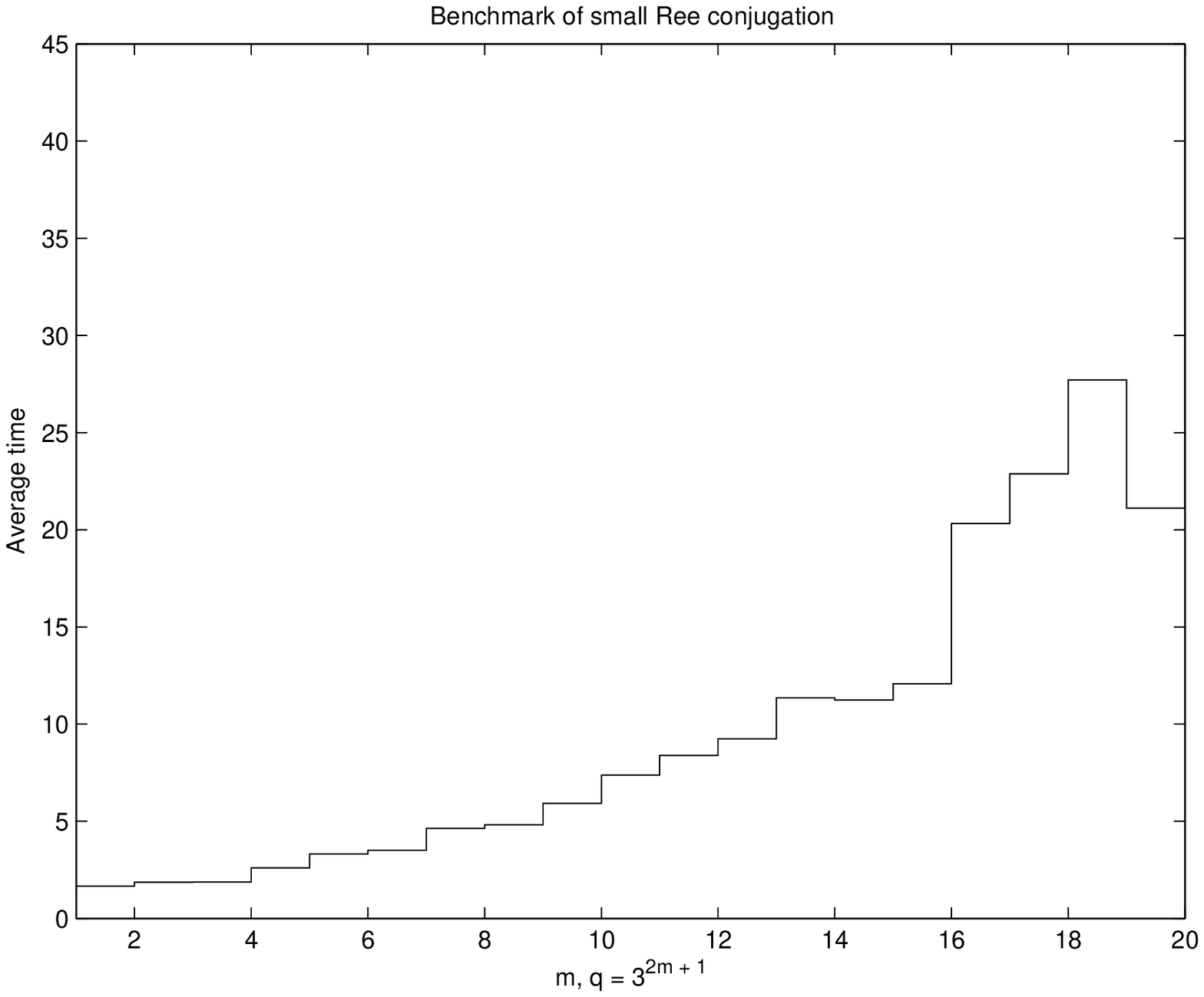}
\caption{Benchmark of small Ree conjugation}
\label{fig:ree_conj_benchmark}
\end{figure}

In Figure \ref{fig:ree_conj_benchmark} we show the benchmark of the
algorithm in Theorem \ref{cl_ree_conjugacy}. For each field size, we
made $100$ runs of the algorithms, using random generating sets of
random conjugates of $\Ree(q)$. As can be seen from the proof of the
Theorem, the algorithm involves many ingredients: discrete logarithms,
$\SLP$ evaluations, $\SL(2, q)$ recognition. To avoid making the graph
unreadable, we avoid displaying the timings for these various steps,
and only display the total time. The graph still has jumps, for
reasons similar as with the Suzuki groups.

We do not include graphs of the tensor decomposition algorithms for
the small Ree groups. The reason is that at the present time, they can
only be executed on a small number of inputs (certainly not more than
$d \in \set{49, 189, 729}$ and $q \in \set{27, 243}$) before running
out of memory. Hence there is not much of a graph to display.

\section{Big Ree groups}

In the cases of the Big Ree groups, the field size is always $q = 2^{2m
  + 1}$ for some $m > 0$. Hence we display the time against $m$. 

\begin{figure}[ht]
\includegraphics[scale=0.7]{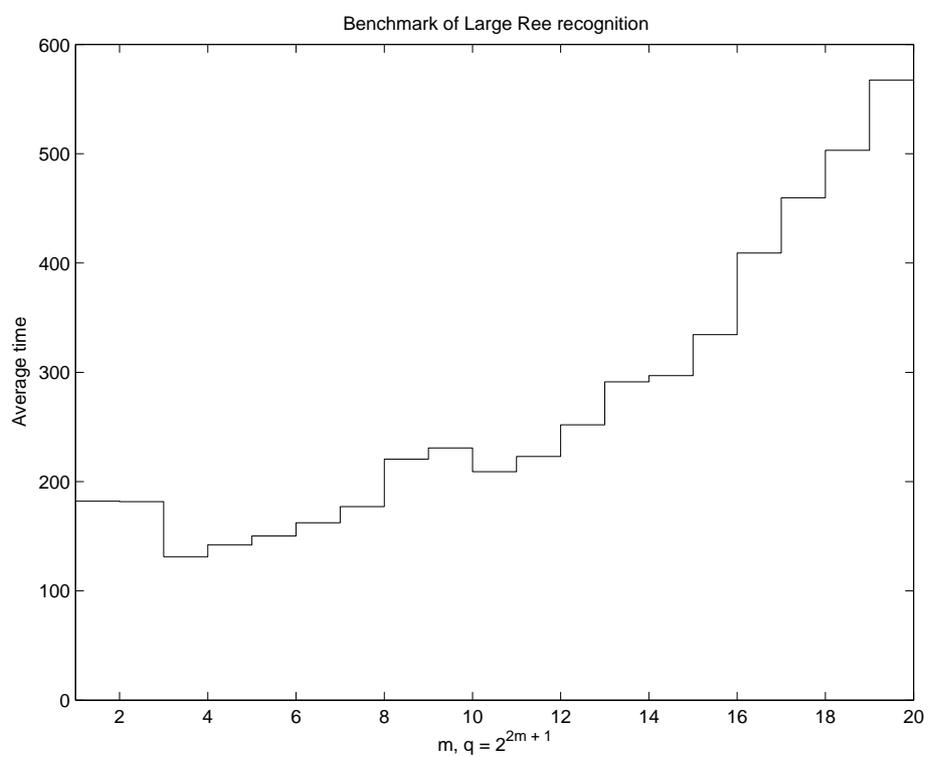}
\caption{Benchmark of Large Ree conjugation}
\label{fig:bigree_recognition_benchmark}
\end{figure}

In Figure \ref{fig:bigree_recognition_benchmark} we show the benchmark of the
algorithm in Theorem \ref{thm_bigree_constructive_recognition}. This
involves all the results presented for the Big Ree groups.

For each field size, we made $100$ runs of the algorithms, using
random generating sets of random conjugates of $\LargeRee(q)$. As can
be seen from the proof of the Theorem, the algorithm involves many
ingredients: discrete logarithms, $\SLP$ evaluations, $\Sz(q)$
recognition, $\SL(2, q)$ recognition. To avoid making the graph
unreadable, we avoid displaying the timings for these various steps,
and only display the total time.

\appendix

\backmatter

\bibliographystyle{amsalpha}
\bibliography{alg_twisted}

\end{document}